\newcounter{myenumi}
\renewcommand{\themyenumi}{(\arabic{myenumi})}
\newenvironment{listi}{%
\setlength{\parindent}{0pt}
\setcounter{myenumi}{0}
\bigskip
\renewcommand{\item}{
\par
\refstepcounter{myenumi}
\makebox[2.5em][l]{\themyenumi}
}
}{
\par
\bigskip
\noindent
\ignorespacesafterend
}
\definecolor{refkey}{gray}{.5}   
\definecolor{labelkey}{gray}{.5} 
\definecolor{Red}{rgb}{1,0,0}
\newtheorem{thm}{Theorem}[subsection]
\newtheorem{prop}[thm]{Proposition}
\newtheorem{lem}[thm]{Lemma}
\newtheorem{cor}[thm]{Corollary}
\newtheorem{conj}[thm]{Conjecture}
\newtheorem{ques}[thm]{Question}
\theoremstyle{definition}
\newtheorem{df}[thm]{Definition}
\newtheorem{const}[thm]{Construction}
\newtheorem{rmk}[thm]{Remark}
\newtheorem{remks}[thm]{Remarks}
\newtheorem{exm}[thm]{Example}
\newtheorem{exms}[thm]{Examples}
\newtheorem{notat}[thm]{Notation}
\numberwithin{equation}{subsection}
	\newcommand{\N}{\mathbb{N}}
	\newcommand{\Z}{\mathbb{Z}}
	\newcommand{\C}{\mathbb{C}}
	\newcommand{\Q}{\mathbb{Q}}
	\newcommand{\R}{\mathbb{R}}
	\renewcommand{\P}{\mathbb{P}}
	\newcommand{\E}{\mathbb{E}}
	\newcommand{\A}{\mathbb{A}}
    \newcommand{\G}{{\mathbb{G}}}
    \newcommand{\bbL}{\mathbb{L}}
    \newcommand{\T}{\mathbb{T}}
 \newcommand{\Mod}{\mathrm{Mod}}
	\newcommand{\Sch}{\mathbf{Sch}}
   \newcommand{\lSch}{\mathbf{lSch}}
   \newcommand{\Sm}{\mathbf{Sm}}
   \newcommand{\lSm}{\mathbf{lSm}}
   \newcommand{\cSm}{\mathbf{cSm}}
   \newcommand{\cSch}{\mathbf{cSch}}
   \newcommand{\SmlSm}{\mathbf{SmlSm}}
    \newcommand{\Shv}{\mathbf{Shv}}
    \newcommand{\Psh}{\mathbf{Psh}}
    \newcommand{\Pshltr}{\mathbf{Psh}^\mathrm{ltr}}
\providecommand{\frac}[1]{\operatorname{Frac}(#1)}  
\providecommand{\Spec}[1]{\operatorname{Spec}(#1)} 	%
\newcommand{\Sing}{\operatorname{Sing}}              
\newcommand{\id}{\operatorname{id}}   			
\newcommand{\Pic}{\operatorname{Pic}}		 
\renewcommand{\hom}{\operatorname{Hom}}				 
\newcommand{\coker}{\operatorname{Coker}}			 
\renewcommand{\ker}{\operatorname{Ker}}				 
\newcommand{\ltr}{{\mathrm{ltr}}}
\newcommand{\colim}{\operatorname{colim}}		 
\renewcommand{\lim}{\operatorname{lim}}			 
\newcommand{\pt}[1]{\mathrm{\pt}_{#1}}
\newcommand{\Cone}{\operatorname{Cone}}	
\newcommand{\colimit}{\mathop{\rm colim}}
\newcommand{\limit}{\mathop{\rm lim}}
\newcommand{\limitone}{\mathop{\rm lim^1}}
\newcommand{\Blow}{{\mathrm{Bl}}}
\newcommand{\Normal}{{\mathrm{N}}}
\newcommand{\Tangent}{{\mathrm{T}}}
\newcommand{\Deform}{{\mathrm{D}}}
\newcommand{\Gr}{{\mathrm{Gr}}}
\newcommand{\BGL}{{\mathrm{BGL}}}
\newcommand{\KGL}{{\mathbf{KGL}}}
\newcommand{\MGL}{{\mathrm{MGL}}}
\newcommand{\spectML}{\mathbf{M\Lambda}}
\newcommand{\Adm}{\mathbf{Adm}}
\newcommand{\SmAdm}{\mathbf{SmAdm}}
\newcommand{\HH}{\mathrm{HH}}
\newcommand{\THH}{\mathrm{THH}}
\newcommand{\Formal}{\mathrm{F}}
\newcommand{\TC}{\mathrm{TC}}
\newcommand{\TP}{\mathrm{TP}}
\newcommand{\Tr}{\mathrm{Tr}}
\newcommand{\Brep}{\operatorname{B^{rep}}}
\newcommand{\Bcy}{\operatorname{B^{cy}}}
\newcommand{\Sphere}{\mathbb{S}}
\newcommand{\Kth}{\mathrm{K}}
\newcommand{\KthNiz}{\mathrm{K}^{\mathrm{Niz}}}
\newcommand{\Bott}{\mathrm{Bott}}
\newcommand{\Thom}{\mathrm{Th}}
\newcommand{\Betti}{\mathrm{Betti}}
\newcommand{\bOmega}{\mathbf{\Omega}}
	\newcommand{\ol}{\overline}
	\renewcommand{\ul}{\underline}
\newcommand{\gp}{{\mathrm{gp}}}
\newcommand{\et}{{\acute{e}t}}
\newcommand{\Real}{\mathrm{Re}}
\newcommand{\ket}{{k\acute{e}t}}
\newcommand{\leta}{{l\acute{e}t}}
\newcommand{\set}{{s\acute{e}t}}
\newcommand{\clspace}{\mathrm{B}}
\newcommand{\Eilenberg}{H}
\newcommand{\sharpp}{\sharp}
\newcommand{\Stab}{\mathrm{Stab}}
\def\bP{\mathbf{P}}
\newcommand{\logKGL}{\mathbf{logKGL}}
\newcommand{\logMGL}{{\mathrm{logMGL}}}
\newcommand{\spectMGL}{{\mathbf{MGL}}}
\newcommand{\spectlogMGL}{{\mathbf{logMGL}}}
\newcommand{\logTHH}{\mathrm{logTHH}}
\newcommand{\spectlogTHH}{\mathbf{logTHH}}
\newcommand{\logTC}{\mathrm{logTC}}
\newcommand{\logTP}{\mathrm{logTP}}
\newcommand{\logTr}{\mathrm{logTr}}
\newcommand{\logHH}{\mathrm{logHH}}
\newcommand{\spectlogTr}{\mathbf{logTr}}
\newcommand{\spectlogTC}{\mathbf{logTC}}
\newcommand{\spectlogTP}{\mathbf{logTP}}
\newcommand{\Kthlog}{\mathrm{logK}}
\newcommand{\logPic}{\operatorname{logPic}}
\newcommand{\spectlogML}{\mathbf{logM\Lambda}}
\newcommand{\cA}{\mathcal{A}}
\newcommand{\cC}{\mathcal{C}}
\newcommand{\cD}{\mathcal{D}}
\newcommand{\cE}{\mathcal{E}}
\newcommand{\cF}{\mathcal{F}}
\newcommand{\cG}{\mathcal{G}}
\newcommand{\cL}{\mathcal{L}}
\newcommand{\cM}{\mathcal{M}}
\newcommand{\cN}{\mathcal{N}}
\newcommand{\cO}{\mathcal{O}}
\newcommand{\cP}{\mathcal{P}}
\newcommand{\cQ}{\mathcal{Q}}
\newcommand{\cR}{\mathcal{R}}
\newcommand{\cT}{\mathcal{T}}
\newcommand{\cV}{\mathcal{V}}
\newcommand{\cX}{\mathcal{X}}
\newcommand{\cY}{\mathcal{Y}}
\newcommand{\cZ}{\mathcal{Z}}
\newcommand{\cW}{\mathcal{W}}
\newcommand{\bA}{\mathbf{A}}
\newcommand{\bC}{\mathbf{C}}
\newcommand{\bE}{\mathbf{E}}
\newcommand{\bT}{\mathbf{T}}
\newcommand{\sF}{\mathscr{F}}
\newcommand{\sX}{\mathscr{X}}
\newcommand{\rR}{\mathrm{R}}
\newcommand{\rT}{\mathrm{T}}
\renewcommand{\epsilon}{\varepsilon}
\renewcommand{\phi}{\varphi}
\newcommand{\boxx}{\square}
\newcommand{\Gmm}{\mathbb{G}_m^{\mathrm{log}}}
\newcommand{\cptsph}[1]{V_{#1}}
\def\pt{\operatorname{pt}}
\newcommand{\eff}{{\rm eff}}
\newcommand{\Spc}{\mathbf{Spc}}
\newcommand{\infSpc}{\mathcal{S}}
\newcommand{\infSpcpt}{\mathcal{S}\mathrm{pc}_\ast}
\newcommand{\infTopos}{\mathcal{T}\mathrm{opos}}
\newcommand{\infPsh}{\mathcal{P}\mathrm{sh}}
\newcommand{\infShv}{\mathcal{S}\mathrm{hv}}
\newcommand{\infSpt}{\mathcal{S}\mathrm{pt}}
\newcommand{\infCycSpt}{\mathcal{C}\mathrm{yc}\mathcal{S}\mathrm{pt}}
\newcommand{\infPro}{\mathcal{P}\mathrm{ro}}
\newcommand{\infDA}{\mathcal{DA}}
\newcommand{\Fun}{\operatorname{Fun}}
\newcommand{\Map}{\operatorname{Map}}
\newcommand{\one}{\mathbf{1}}
\newcommand{\LPr}{\mathcal{P}\mathrm{r}^{\mathrm{L}}}
\newcommand{\RPr}{\mathcal{P}\mathrm{r}^{\mathrm{R}}}
\newcommand{\LPrpt}{\mathcal{P}\mathrm{r}^{\mathrm{L}}_{\ast}}
\newcommand{\cofib}{\operatorname{cofib}}
\newcommand{\tcofib}{\operatorname{tcofib}}
\newcommand{\tfib}{\operatorname{tfib}}
\newcommand{\fib}{\operatorname{fib}}
\newcommand{\unit}{\mathbf{1}}
\newcommand{\ringE}{\mathbf{E}}
\newcommand{\CAlg}{\mathrm{CAlg}}
\newcommand{\horn}{\mathrm{horn}}
\newcommand{\cube}{\mathrm{cube}}
\newcommand{\vertex}{\mathrm{vertex}}
\newcommand{\infC}{\mathcal{C}}
\newcommand{\infD}{\mathcal{D}}
\newcommand{\infSH}{\mathcal{SH}}
\newcommand{\infSHS}{\mathcal{SH}^{\eff}}
\newcommand{\infSHPS}{\mathcal{SH}^{\eff, \P^\bullet}}
\newcommand{\infH}{\mathcal{H}}
\newcommand{\infHpt}{\mathcal{H}_{\ast}}
\newcommand{\infHP}{\mathcal{H}^{\mathbb{P}^\bullet}}
\newcommand{\inflogHptAdm}{\mathrm{log}\mathcal{H}_{\ast}^\Adm}
\newcommand{\inflogH}{\mathrm{log}\mathcal{H}}
\newcommand{\inflogHpt}{\mathrm{log}\mathcal{H}_\ast}
\newcommand{\inflogSH}{\mathrm{log}\mathcal{SH}}
\newcommand{\inflogSHS}{\mathrm{log}\mathcal{SH}^{\mathrm{eff}}}
\newcommand{\inflogDA}{\mathrm{log}\mathcal{DA}}
\newcommand{\inflogFDA}{\mathrm{log}\mathcal{FDA}}
\newcommand{\inflogDAeff}{\mathrm{log}\mathcal{DA}^{\mathrm{eff}}}
\newcommand{\inflogDM}{\mathrm{log}\mathcal{DM}}
\newcommand{\inflogDMeff}{\mathrm{log}\mathcal{DM}^{\mathrm{eff}}}
\newcommand{\infDM}{\mathcal{DM}}
\newcommand{\Einfty}{{\mathbb{E}_\infty}}
\newcommand{\map}{\operatorname{map}}
\newcommand{\Striv}{\mathbb{S}^{\mathrm{triv}}}
\newcommand{\infHPone}{\mathcal{H}^{\mathbb{P}^1}}
\newcommand{\infHPonept}{\mathcal{H}_{\ast}^{\mathbb{P}^1}}
\newcommand{\infSHPoneS}{\mathcal{SH}^{\eff, \mathbb{P}^1}}
\newcommand{\infCat}{\mathcal{C}\mathrm{at}}
\newcommand{\MS}{\mathbf{MS}}
\newcommand{\colim@}[2]{%
  \vtop{\m@th\ialign{##\cr
    \hfil$#1\operator@font hocolim$\hfil\cr
    \noalign{\nointerlineskip\kern1.5\ex@}#2\cr
    \noalign{\nointerlineskip\kern-\ex@}\cr}}%
}
\newcommand{\hocolim}{%
  \mathop{\mathpalette\colim@{\rightarrowfill@\textstyle}}\nmlimits@
}
\newcommand{\ncolim@}[2]{%
  \vtop{\m@th\ialign{##\cr
    \hfil$#1\operator@font colim$\hfil\cr
    \noalign{\nointerlineskip\kern1.5\ex@}#2\cr
    \noalign{\nointerlineskip\kern-\ex@}\cr}}%
}
\newcommand{\varcolim}{%
  \mathop{\mathpalette\ncolim@{\rightarrowfill@\textstyle}}\nmlimits@
}
\newcommand{\varvarcolim}{%
  \mathop{\mathpalette\ncolim@{}}\nmlimits@
}
\newcommand{\holim@}[2]{%
  \vtop{\m@th\ialign{##\cr
    \hfil$#1\operator@font holim$\hfil\cr
    \noalign{\nointerlineskip\kern1.5\ex@}#2\cr
    \noalign{\nointerlineskip\kern-\ex@}\cr}}%
}
\newcommand{\holim}{%
  \mathop{\mathpalette\holim@{\leftarrowfill@\textstyle}}\nmlimits@
}
\definecolor{winered}{rgb}{0.8,0,0}
\definecolor{greensea}{rgb}{0.24, 0.8, 0.54}
\definecolor{indigo}{rgb}{0.3, 0, 0.5}
\newcounter{elno}
\newcounter{elno-abc}
\newif\iftoplevel
\def\@tocline#1#2#3#4#5#6#7{\relax
  \ifnum #1>\c@tocdepth 
  \else
    \par \addpenalty\@secpenalty\addvspace{#2}%
    \begingroup \hyphenpenalty\@M
    \@ifempty{#4}{%
      \@tempdima\csname r@tocindent\number#1\endcsname\relax
    }{%
      \@tempdima#4\relax
    }%
    \parindent\z@ \leftskip#3\relax \advance\leftskip\@tempdima\relax
    \rightskip\@pnumwidth plus4em \parfillskip-\@pnumwidth
    #5\leavevmode\hskip-\@tempdima
      \ifcase #1
      \or\or \hskip 2em \or \hskip 2em \else \hskip 3em \fi%
      #6\nobreak\relax
    \dotfill\hbox to\@pnumwidth{\@tocpagenum{#7}}\par
    \nobreak
    \endgroup
  \fi}
\begin{document}

\author{Federico Binda}
\address{Dipartimento di Matematica ``Federigo Enriques'',  
Universit\`a degli Studi di Milano, Via Cesare Saldini 50, 20133 Milano, Italy}
\email{federico.binda@unimi.it}

\author{Doosung Park}
\address{Bergische Universit{\"a}t Wuppertal,
Fakult{\"a}t Mathematik und Naturwissenschaften, Gau{\ss}strasse 20, 42119 Wuppertal, Germany}
\email{dpark@uni-wuppertal.de}

\author{Paul Arne {\O}stv{\ae}r}
\address{Dipartimento di Matematica ``Federigo Enriques'', 
Universit\`a degli Studi di Milano, Via Cesare Saldini 50, 20133 Milano, Italy} 
\email{paul.oestvaer@unimi.it}
\address{Department of Mathematics, University of Oslo, 
Niels Henrik Abels hus, Moltke Moes vei 35, 0851 Oslo, Norway}
\email{paularne@math.uio.no}

\title{Logarithmic motivic homotopy theory}
\subjclass{}

\begin{abstract}
This work is dedicated to the construction of a new motivic homotopy theory for (log) schemes, generalizing Morel-Voevodsky's (un)stable $\mathbb{A}^1$-homotopy category. 
Our framework can be used to represent 
log topological Hochschild and cyclic homology, 
as well as algebraic $K$-theory of regular schemes. 
Additionally, we can realize the cyclotomic trace as a morphism between motivic spectra.
Among our applications, we provide a generalized framework of oriented cohomology theories that enables us to produce new residue sequences for (topological) Hochschild, periodic, and cyclic homology of classical schemes. We also compute $\THH$ and its variants for Grassmannians, and we define a new version of algebraic cobordism.
Finally, we give a construction of a log \'etale stable realization functor, as well as a Kato-Nakayama realization functor, which is of independent interest for applications in log geometry.
\end{abstract}

\maketitle

\setcounter{tocdepth}{1}
\tableofcontents


\section{Introduction}
\label{section:introduction}

The term "motivic homotopy theory" describes a unified framework for applying methods from algebraic topology to study objects traditionally associated with algebraic and 
arithmetic geometry, see \cite{MV}, \cite{zbMATH01194164}.
An aspect that guided this construction is the idea that the affine line $\mathbb{A}^1$ plays the same role in algebraic geometry as the unit interval $[0, 1]$ in algebraic topology. 

The main aim of this work is to shift the current perspective and develop a homotopy theory for algebraic varieties without relying on the axiom of $\mathbb{A}^1$-homotopy invariance. This research builds on the framework established in \cite{logDM}, where a cycle-theoretic approach is explored through log correspondences over a field.

In this paper, we construct analogs of the Morel-Voevodsky categories $\mathcal{H}(S)$ and $\mathcal{SH}(S)$ over a general base. We demonstrate that many generalized cohomology theories of arithmetic significance, such as topological Hochschild homology ($\THH$), can be represented in our framework, although they cannot be represented within the original motivic context.

Additionally, this paper advances the program outlined in \cite{logDMcras}, which serves to motivate and summarize several of the results presented here.

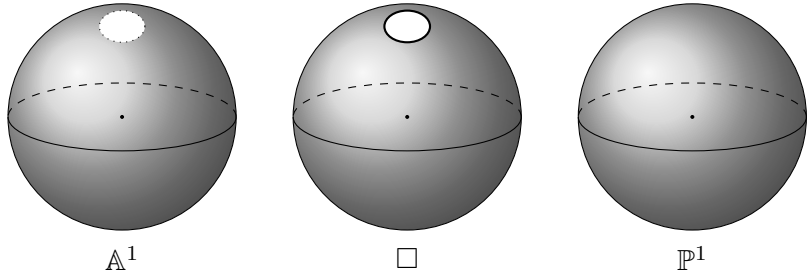
\begin{figure}[tbp]
\centering
\begin{tikzpicture}[scale = 0.75]
  \shade[ball color = gray!40, opacity = 0.4] (0,0) circle (2cm);
  \draw (0,0) circle (2cm);
  \draw (-2,0) arc (180:360:2 and 0.6);
  \draw[dashed] (2,0) arc (0:180:2 and 0.6);
  \fill[fill=black] (0,0) circle (1pt);
  \filldraw[fill=white, dotted] (0,1.6) ellipse (0.4cm and 0.28cm);
  \node at (0,-2.5) {$\mathbb{A}^1$};
\begin{scope}[shift={(5,0)}]
  \shade[ball color = gray!40, opacity = 0.4] (0,0) circle (2cm);
  \draw (0,0) circle (2cm);
  \draw (-2,0) arc (180:360:2 and 0.6);
  \draw[dashed] (2,0) arc (0:180:2 and 0.6);
  \fill[fill=black] (0,0) circle (1pt);
  \filldraw[fill=white, thick] (0,1.6) ellipse (0.4cm and 0.28cm);
    \node at (0,-2.5) {$\boxx$};
\end{scope}
\begin{scope}[shift={(10,0)}]
  \shade[ball color = gray!40, opacity = 0.4] (0,0) circle (2cm);
  \draw (0,0) circle (2cm);
  \draw (-2,0) arc (180:360:2 and 0.6);
  \draw[dashed] (2,0) arc (0:180:2 and 0.6);
  \fill[fill=black] (0,0) circle (1pt);
    \node at (0,-2.5) {$\mathbb{P}^1$};
\end{scope}
\end{tikzpicture}
\caption{The logarithmic interval $\boxx$ is contractible as $\mathbb{A}^1$ and compact as $\mathbb{P}^1$}
\label{fig:exmp}
\end{figure}

\subsection{Why avoid $\mathbb{A}^1$-invariance?} 
One of the effective tools for studying the algebraic $K$-theory of a ring is the theory of trace methods. This involves examining trace maps from $K$-theory to more computable invariants derived from Hochschild homology, such as $\THH$ and $\TC$, along with an analysis of their (homotopy) fibers.

While algebraic $K$-theory is represented in the realm of motivic homotopy theory, the scenario is strikingly different for (topological) Hochschild homology and its variants. It is relatively straightforward to show, following the work of Weibel and Geller \cite{MR0965242}, that when we apply Voevodsky's $\mathbb{A}^1$-localization functor $L_{\mathbb{A}^1}$, we have $L_{\mathbb{A}^{1}} \THH \simeq L_{\mathbb{A}^{1}} \HH \simeq 0$. This indicates that, regardless of any regularity assumptions, it is impossible to extract information about (topological) Hochschild homology within Voevodsky's framework. 

The situation does not improve for $\TC$, as a result by Elmanto \cite{EEnonA1} demonstrates that $L_{\mathbb{A}^{1}} \TC$ vanishes after profinite completion. These no-go results clearly illustrate that, at a fundamental level, $\mathbb{A}^1$-motivic homotopy theory is incompatible with trace methods.

\subsection{Why log geometry?} 
Our setting for overcoming this problem is to work in the context of logarithmic geometry
in the sense of Deligne, Faltings, Fontaine, Illusie, Kato, Tsuji and many others \cite{Ogu}. Porting over some of the ideas developed in  \cite{logDM}, our insight is to use the $1$-point compactification of the affine line $\boxx=(\mathbb{P}^{1},\infty)$ equipped with its canonical logarithmic structure, as a contracting object, instead of the affine line $\mathbb{A}^{1}$. 
The resulting condition is intermediate between $\mathbb{A}^{1}$-invariance and $\mathbb{P}^{1}$-invariance (considered, for example, by Ayoub \cite{AyoubP1loc}). 
Figure \ref{fig:exmp} gives a geometric picture of this situation.

There are several reasons to embrace log geometry as a natural framework for studying non-\(\mathbb{A}^1\)-invariant cohomology theories and for the choice of our contracting object, as discussed in \cite{logDM}. In this discussion, we will focus on a perspective that has not been highlighted in the aforementioned work, specifically the relationship between logarithmic geometry and (topological) Hochschild and cyclic homology.
This idea can be traced back to the work of Hesselholt and Madsen on the \(K\)-theory of local fields. Additionally, the fact that Hochschild homology and its variants lack a satisfactory localization property (unlike \(K\)-theory) motivated Rognes to define (topological) Hochschild homology for a logarithmic ring (or spectrum) \cite{RognesLogHH}. By representing the extension of \(\THH\) in a motivic category that shares certain properties with Morel and Voevodsky's construction \cite{MV}, we can obtain new localization sequences for \(\logTHH\) and its variants, generalizing the results from \cite{RSS}.
It turns out that \(\logTHH\) and its relatives are indeed representable within this framework, leading to the formation of motivic \(\mathbb{P}^1\)-spectra. This fact has numerous implications, which we will explore further.

\subsection{Outline and summary of main results}
We now provide an overview of the content of this work. 

Sections \ref{section:colmhc} and \ref{polmsas} focus on the basic constructions. Our discussion begins with a quasi-compact, quasi-separated, integral quasi-coherent log scheme \( S \) (essentially, any log scheme you would encounter in practice) and the symmetric monoidal \(\infty\)-category \(\mathcal{P}(\lSm/S)\) of presheaves of spaces on log smooth saturated log schemes \(\lSm/S\) of finite type over \( S \).

We start by examining \(\boxx\)-local (in the natural sense) and dividing Nisnevich local presheaves of spectra on \(\lSm/S\). Specifically, we consider presheaves \(\mathcal{F}\) that turn every strict Nisnevich distinguished square into a cartesian square and transform every dividing cover (a class of admissible blow-ups, see Section \ref{ssec:topologies}) into an equivalence. This topology was introduced in \cite{logDM}.

Through this approach, we form the category \(\inflogSH^{\eff}(S)\) of effective log motives over \( S \). The noneffective version, \(\inflogSH(S)\), is derived by considering its \(\mathbb{P}^1\)-stabilization, which involves formally inverting the tensor product with the scheme \(\mathbb{P}^1 = (\mathbb{P}^1, \emptyset)\), treated as a log scheme with trivial log structure. 

While the basic definitions are fundamentally formal, it is a significant result that \(\inflogSH(S)\) can be presented as a category of symmetric spectra. Additionally, it aligns with the formal inversion of the log Tate sphere \(\Gmm := (\mathbb{P}^1, 0 + \infty)\), as specified by the divisor \(0 + \infty\) on \(\mathbb{P}^1\). See Lemma \ref{Ori.55}. The outcome is a stable, presentable symmetric monoidal \(\infty\)-category.

The construction of $\inflogSH$ satisfies essentially by construction the following universal property, as explained in 
\S\ref{subsection:universal}: 
For $S\in \lSch$ and any pointed presentable symmetric monoidal $\infty$-category $\infD^\otimes$, 
there exists a naturally induced fully faithful functor
\[
\Fun^{\otimes,\mathrm{L}}(\inflogSH(S)^\otimes,\infD^\otimes)
\to
\Fun^{\otimes}((\lSm/S)^\times ,\infD^\otimes).
\]
Its essential image consists of all $\cF$ that satisfy $\boxx$-invariance and $dNis$-descent and such that the 
cofiber of $$\cF(i_1)\colon\cF(S)\rightarrow\cF(\P^1)$$ is an invertible object in $\infD^\otimes$, where $i_1$
denotes the unit section.
We refer to Robalo \cite{RobaloThesispublished} for a similar universal property in stable motivic homotopy theory.

In Section \ref{models}, we discuss an alternative model for  $\inflogSH$ when \( S \) is a quasi-compact and quasi-separated scheme with trivial log structure. This context allows us to compare our theory with the classical \(\A^1\)-invariant framework. 
Rather than starting with the category of log smooth fine log schemes over \( S \), we can consider the category \( \SmlSm/S \) consisting of log smooth log schemes whose underlying scheme is smooth over \( S \). Next, we consider the \(\infty\)-categories of \emph{strict} Nisnevich sheaves of spectra, denoted as \( \infShv_{sNis}(\SmlSm/S, \infSpt) \). We focus on the full subcategory spanned by those objects \( \cF \) that are local with respect to the maps \( \cF(X) \to \cF(X \times (\P^n, \P^{n-1})) \) for all integers $n\geq 1$. We call such objects \( \cF \) for \((\P^\bullet, \P^{\bullet-1})\)-invariant. After establishing this, we stabilize with respect to the log Tate circle.
While the condition of being \((\P^\bullet, \P^{\bullet-1})\)-invariant is a priori stronger than \(\boxx\)-invariance, it turns out that the resulting category is indeed equivalent to \( \inflogSH(S) \).

In fact, the following more precise statement holds:
\begin{thm}[Corollaries \ref{ProplogSH.9} and \ref{cor:fundamental_models}]
There are equivalences of symmetric monoidal $\infty$-categories
\begin{align*}
\inflogSH^\eff(S)^\otimes
\simeq &
\infShv_{sNis}(\SmlSm/S,\infSpt)[\square^{-1},\SmAdm^{-1}]^\otimes
\\
\simeq &
\infShv_{sNis}(\SmlSm/S,\infSpt)
[(\P^\bullet,\P^{\bullet-1})^{-1}]^\otimes.
\end{align*}
\end{thm}

 Here, $\SmAdm$ denotes the class of admissible blow-ups along smooth centers in the sense of Definition \ref{logH.35}.
 The previous Theorem is one of the strongest properties of our construction: it implies that one gets \emph{a posteriori} dividing descent and invariance under admissible blow-ups along smooth centers as a consequence of a very simple axiom: the sheaf property plus the invariance with respect to compactified affine spaces. In practice, this latter condition is often straightforward to check and typically follows from the projective bundle formula (note, however, that there are examples of theories satisfying $(\P^\bullet,\P^{\bullet -1})$-invariance, but not the projective bundle formula, like real topological Hochschild homology \cite{2305.04150}).

When $S\in \Sch$ is a scheme, we can also consider an \emph{unstable version} of the above construction. In this case we simply start with the category $\infShv_{sNis}(\SmlSm/S)_{(\ast)}$ of strict Nisnevich sheaves of (pointed) spaces, and we localize with respect to $\boxx$-local and $\SmAdm$-local equivalences. The resulting $\infty$-category is denoted $\inflogH_{(\ast)}(S)$: 
again, it underlies a presentable symmetric monoidal $\infty$-category (often denoted with the same symbols). 
We see it as a non $\mathbb{A}^1$-invariant analogue of $\mathcal{H}_{(\ast)}(S)$. 

With these definitions, 
we deduce an unstable equivalence 
\begin{equation}
\label{equation:logmotivicsphere}
\P^1
\simeq
S^{1}\otimes\Gmm.
\end{equation}
We view \eqref{equation:logmotivicsphere} as the log motivic analogue of the 
fundamental equivalence $\P^{1}\cong S^{1}\otimes \mathbb{G}_{m}$ in \cite{MV}.  We then obtain the following equivalence
\begin{equation}
\inflogSH(S)
\simeq\textup{lim}
\left(\cdots\xrightarrow{\Omega_{\mathbb{P}^{1}}}
\inflogH_{\ast}(S)\xrightarrow{\Omega_{\mathbb{P}^{1}}}
\inflogH_{\ast}(S)\right).
\end{equation}

\

The inevitable question now is whether our logarithmic motivic theory is at all understandable. 
Happily, the short answer is yes at this stage, mainly due to the following reasons discussed in
the main body of the paper.

\subsubsection{Comparison with $\mathbb{A}^1$-motivic homotopy theory} In \S\ref{section:mvlmt} we compare our construction with 
Morel-Voevodsky's $\mathbb{A}^{1}$-invariant motivic homotopy theory of schemes.
If $S\in \Sch$ is a scheme, 
we have natural transformations
\[
\omega_\sharpp \colon \inflogH \to \infH
\text{ and }
\omega_\sharpp\colon \inflogHpt\to \infHpt,
\]
\[
\omega_\sharpp \colon \inflogSHS \to \infSHS
\text{ and }
\omega_\sharpp
\colon
\inflogSH
\to
\infSH.
\]
It turns out that their right adjoints, denoted by $\omega^*$,
are fully faithful functors.
One can view this as providing an embedding of homotopy theories:
\begin{center}
$(\A^1\text{-local motivic homotopy theory})
\subset
(\boxx\text{-local motivic homotopy theory})$
\end{center}
In particular, 
the endomorphism ring of the motivic sphere $\mathbf{1}$ 
--- the unit for the tensor product ---
is isomorphic to the endomorphism ring of $\omega^*(\mathbf{1})$ in $\inflogSH$.
Over fields,
this is the Grothendieck-Witt ring of quadratic forms via Morel's theorem 
on the motivic $\pi_0$ of the sphere spectrum \cite{zbMATH05057435} 
(\cite{zbMATH07472296} shows a similar result for rings of integers in certain number fields).
We expect that $\omega^*(\unit)$ is equivalent to $\unit$ in $\inflogSH$ on a general base.
For the moment, we have the following results. In $\inflogDM$, 
the equivalence $\omega^*(\unit)\simeq \unit$ shown in 
\cite[Theorem 8.2.11]{logDM} uses resolution of singularities, while in \cite[Theorem 1.1]{Doosgroups} it is shown that $\omega^*(\unit) \simeq \unit$ in $\inflogSH(k)$, again for a field admitting resolution of singularities. In fact, more generally, $\omega^*$ is shown to be fully faithful and colimits-preserving for $\inflogSH(k)$.

\subsubsection{Logarithmic Picard groups and $K$-theory}Logarithmic Picard groups $\logPic$ of log schemes are part of our setup, 
see \S\ref{section:lpg}.
We show $(\P^n,\P^{n-1})$-invariance for $\logPic$ and for the log multiplicative group scheme, we have the following Theorem. 
\begin{thm}[{Theorem} \ref{Pic.9}, Corollary \ref{Pic.10}]
For every $S\in \Sch$ there is an equivalence
\[
\clspace \Gmm \simeq \P^\infty
\]
in $\inflogH(S)$,
which is natural in $S$. When $S\in \lSch$, the same equivalence holds in $\inflogSH^\eff(S)$.
\end{thm}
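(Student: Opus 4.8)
The plan is to carry over the classical identification $B\mathbb{G}_m\simeq\P^\infty$ in $\mathcal H(S)$ to the logarithmic setting, with the crucial twist that the non-proper models $\mathbb A^{n+1}\setminus\{0\}$ of the total space of the universal bundle --- which are no longer highly connected once $\mathbb A^1$-invariance is dropped --- must be replaced by \emph{proper}, log smooth approximations. Concretely, I would first produce a tower of log smooth log schemes $E_0\hookrightarrow E_1\hookrightarrow\cdots$ over $S$, each carrying a free action of $\Gmm$ (in the sense of \S\ref{section:lpg}, where $\Gmm$ is treated as a commutative group object), with $E_0=\Gmm$, with $E_n/\Gmm$ equal to $\P^n$ with trivial log structure, and such that the bar construction on this tower models the canonical filtration $E\Gmm\to\clspace\Gmm$. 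The natural candidate for $E_n$ is the relative compactification of $\mathbb A^{n+1}\setminus\{0\}\to\P^n$ inside the $\P^1$-bundle $\mathbb P(\mathcal O_{\P^n}(-1)\oplus\mathcal O_{\P^n})\to\P^n$, equipped with the compactifying log structure along the two disjoint sections at $0$ and $\infty$, with the $\Gmm$-action induced by the scaling action on $\mathcal O(-1)$ and extended over the sections. The identification $E_n/\Gmm\cong\P^n$ should follow from the projective bundle formula together with the fact that $\Gmm$-torsors are locally trivial for the dividing Nisnevich topology, so that the stack quotient agrees with the geometric one. Granting this, one gets $\clspace\Gmm\simeq\varinjlim_n(E_n/\Gmm)=\varinjlim_n\P^n=\P^\infty$ as soon as the colimit of the total spaces $\varinjlim_nE_n$ is contractible, i.e. equivalent to $S$, in $\inflogH^{\SmAdm}(S)$.

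This contractibility is the heart of the matter, and it is where the $\boxx$-local theory diverges from the $\mathbb A^1$-local one: the usual infinite swindle showing $\mathbb A^\infty\setminus\{0\}\simeq\ast$ relies on $\mathbb A^1$-homotopies and is unavailable here --- indeed $\varinjlim_n(\mathbb A^{n+1}\setminus\{0\})$ is \emph{not} contractible in our category. The substitute is the $(\P^n,\P^{n-1})$-invariance of $\logPic$ established just before the statement, combined with $(\P^\bullet,\P^{\bullet-1})$-invariance in $\inflogH^{\SmAdm}(S)$, legitimate by the comparison of Corollary \ref{cor:fundamental_models}. The cleanest route is probably representability: show that $\clspace\Gmm$ represents the dividing Nisnevich local presheaf $X\mapsto\logPic(X)$ (or rather its classifying space), as in \S\ref{section:lpg}; that this presheaf is already $(\P^\bullet,\P^{\bullet-1})$-invariant; and that $\varinjlim_n\P^n$ represents the same localized presheaf. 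Equivalently, one shows directly that the transition maps $E_{n-1}\hookrightarrow E_n$ become increasingly highly connected --- the geometric input being that, strict Nisnevich locally over $\P^n$, the $E_n$ are built from the compactified projective spaces $(\P^j,\P^{j-1})$ --- so that $\varinjlim_nE_n$ is $\boxx$-locally contractible. The main obstacle I anticipate is exactly this connectivity estimate, together with the verification that the tower $(E_n)$ is genuinely compatible with the skeletal filtration of the bar construction on $\Gmm$; the explicit geometry of $\mathbb P(\mathcal O(-1)\oplus\mathcal O)$ should keep it manageable. Naturality in $S$ is then automatic, since every object and map above is obtained by base change.

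For the second assertion I would deduce the equivalence in $\inflogSH^\eff(S,\Lambda)$, for $S\in\Sch$, from the unstable one by applying the natural symmetric monoidal, colimit-preserving functor $\inflogH^{\SmAdm}(S)\to\inflogSH^\eff(S,\Lambda)$, using that $\clspace\Gmm$ and $\P^\infty$ are filtered colimits of the compact objects $\P^n=E_n/\Gmm$. To reach an arbitrary $S\in\lSch$, where the $\SmAdm$-localization and hence $\inflogH^{\SmAdm}$ are not available, I would pull the equivalence back along the structure morphism $S\to\underline S$ to the underlying scheme (or along $S\to\operatorname{Spec}\mathbb Z$): both $\clspace\Gmm$ and $\P^\infty$ are stable under the monoidal base-change functors of the $\Sm$-premotivic structure, and the bar construction commutes with these, so the equivalence over a scheme base propagates to all of $\lSch$. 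Beyond this functoriality I expect no further difficulty; the entire weight of the proof sits in the contractibility of $\varinjlim_nE_n$.
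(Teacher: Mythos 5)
Your geometric setup coincides with the paper's: the compactified total spaces you call $E_n$ are exactly the log schemes $(\Blow_O(\P^{n+1}),H+E)$ used there (the blow-up of $\P^{n+1}$ at a point is the $\P^1$-bundle $\P(\cO_{\P^n}(-1)\oplus\cO)$, and $H$, $E$ are its two disjoint sections), the identification of the bar quotient with $\P^n$ is carried out by the same Zariski-local trivialization of the $\Gmm$-bundle (Lemma \ref{Pic.8}), and the stable statement and the passage to general $S\in\lSch$ are handled just as you suggest (comparison of models plus base change). The gap is in the step you yourself flag as carrying ``the entire weight of the proof'': the contractibility of $\colimit_n E_n$ in $\inflogH^{\SmAdm}(S)$. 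Neither of your two proposed mechanisms is an argument. There is no connectivity theory in the $\boxx$-local, non-$\A^1$-invariant setting that would make ``the transition maps become increasingly highly connected'' meaningful, and the strict Nisnevich local description of $E_n$ over $\P^n$ only exhibits pieces of the form $U\times\Gmm$, which are not contractible; a local analysis cannot detect the global null-homotopy that is actually needed. The representability route fares no better: the $(\P^\bullet,\P^{\bullet-1})$-invariance of $\logPic$ is established in the paper only under restrictive hypotheses (geometrically unibranch bases, or log points), and in any case $\logPic$ computes $H^1_{sNis}(-,\cM^{\gp})$, not unstable mapping spaces into $\clspace\Gmm$ or $\P^\infty$ after $(\boxx\cup\SmAdm)$-localization; identifying the localized presheaf represented by $\P^\infty$ with $\clspace\Gmm$ is essentially the statement to be proved, so as sketched this route is circular.

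What the paper actually does at this point is show that each inclusion of compactified total spaces is equivalent to a \emph{constant} map in $\inflogH^{\SmAdm}(S)$ (Lemmas \ref{Pic.3} and \ref{Pic.5}), and then use that a sequential colimit whose transition maps are null is contractible (Lemma \ref{Pic.2}, which relies on compact generation). The null-homotopy is the genuinely new geometric input: one compactifies the scaling homotopy $(a_1,\ldots,a_n,t)\mapsto(ta_1,\ldots,ta_n,1-t)$, and, as the paper remarks, this homotopy does \emph{not} extend to $\Blow_O(\P^n)\times\P^1$; one must pass to the further admissible blow-up $\Blow_{E\times[1:1]+H\times[0:1]}(\Blow_O(\P^n)\times\P^1)$ with its compactifying log structure, check chart by chart that the map extends there, and then invoke Lemma \ref{ProplogSH.7}. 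Your proposal contains no substitute for this construction, so the contractibility step, and with it the theorem, remains unproved.
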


This is the analogue of the equivalence $\P^\infty\simeq \mathrm{B}\mathbb{G}_m$ proved by Morel and Voevodsky using $\A^1$-homotopy invariance.

In \S\ref{section:kfslogschemes} we construct a logarithmic $K$-theory of log schemes.  
While a notion of perfect complexes on log schemes remains elusive (but see \cite{BLMP} for an approach to solve this problem), 
we employ the infinite Grassmannian to geometrically represent $K$-theory of log schemes  
defined over a regular base scheme. This gives us a motivic spectrum $\logKGL$ that satisfies the following:

\begin{thm}[Theorem \ref{K-theory.7}]
Suppose $S$ is a regular scheme,
and let $d$ be an integer.
For every $X\in \lSm/S$ with compactifying log structure given by the embedding $X-\partial X \hookrightarrow X$, where $\partial X$ is a Cartier divisor on $X$,  there are canonical equivalences of spectra
\[
\Kth(X-\partial X)
\simeq
\map_{\inflogSHS(S)}(\Sigma_{S^1}^\infty X_+,\Omega_{\Gmm}^\infty \logKGL)
\]
and
\[
\Kth(X-\partial X)
\simeq
\map_{\inflogSH(S)}(\Sigma_{\P^1}^d \Sigma_{\P^1}^\infty X_+,\logKGL).
\]
\end{thm}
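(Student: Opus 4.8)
The plan is to collapse the two displayed equivalences into one stable statement, reduce it to the computation of the motivic localization of the infinite Grassmannian presheaf, and then match that with $K$-theory of open complements via the classical Morel--Voevodsky--Riou representability theorem. The first step is purely formal: by the adjunction $\Sigma_{\Gmm}^\infty\dashv\Omega_{\Gmm}^\infty$ between $\inflogSHS^\Adm(S)$ and $\inflogSH^\Adm(S)$ together with $\Sigma_{S^1}^\infty\Sigma_{\Gmm}^\infty\simeq\Sigma_{\P^1}^\infty$, the first equivalence is the same as $\Kth(X-\partial X)\simeq\map_{\inflogSH^\Adm(S)}(\Sigma_{\P^1}^\infty X_+,\logKGL)$; since $\logKGL$ is $\P^1$-periodic (Bott periodicity, which here comes from the projective bundle formula and the identification $\clspace\Gmm\simeq\P^\infty$ of Theorem \ref{Pic.9}), the twist $\Sigma_{\P^1}^d$ in the second equivalence is invisible, so it reduces to the same statement; and the variants over $\inflogSHS(S)$ and $\inflogSH(S)$ follow once we note that the representing object is local for admissible blow-ups along smooth centers (Definition \ref{logH.35}). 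By Lemma \ref{Ori.55} and the adjunction $\Sigma_{\P^1}^\infty\dashv\Omega_{\P^1}^\infty$ it then suffices to identify $\logKGL$ with the object represented by the presheaf $\Kth^{\log}\colon X\mapsto\Kth(X-\partial X)$.

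The essential --- and easy --- observation is that $\Kth^{\log}$, the Thomason--Trobaugh $K$-theory spectrum of the open complement, is already a log motivic spectrum. Strict Nisnevich descent holds because a strict Nisnevich distinguished square restricts on open complements to a Nisnevich distinguished square of regular, quasi-compact quasi-separated $S$-schemes, and $\Kth$ satisfies Nisnevich descent. Invariance under dividing covers and under admissible blow-ups along smooth centers is automatic, as such a morphism is an isomorphism over the locus where the log structure is trivial and hence restricts to an isomorphism on open complements. Finally, $\boxx\times X$ has open complement $\mathbb{A}^1\times(X-\partial X)$, and $X-\partial X$ is smooth over $S$, hence regular because $S$ is regular; so $\boxx$-invariance of $\Kth^{\log}$ is exactly homotopy invariance of $K$-theory on the regular scheme $X-\partial X$. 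Thus $\Kth^{\log}$ defines an object of $\inflogSHS^\Adm(S)$, and the projective bundle formula for $\Kth$ of regular schemes --- again with $\clspace\Gmm\simeq\P^\infty$ --- yields a coherent $\Gmm$-periodic structure promoting it to an object $\mathcal{K}$ of $\inflogSH^\Adm(S)$.

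It remains to identify $\mathcal{K}$ with $\logKGL$. Classifying a finite-rank locally free summand of a trivial bundle and restricting to the open complement gives a map $\mathbb{Z}\times\mathrm{Gr}\to\Kth^{\log}$, hence a map $\logKGL\to\mathcal{K}$ compatible with the periodicities; I would check it is an equivalence, using the model of Corollary \ref{cor:fundamental_models} and the discussion of compact generators, by testing on the twisted suspension spectra $\Sigma_{\P^1}^n\Sigma_{\P^1}^\infty Y_+$ with $Y\in\Sm/S$ (trivial log structure). There $\map(-,\mathcal{K})$ computes $\Kth(Y)$ since $\partial Y=\emptyset$, and $\map(-,\logKGL)$ computes $\Kth(Y)$ too, since on objects with trivial log structure the logarithmic category reduces to the classical one via the fully faithful $\omega^*$ of \S\ref{section:mvlmt}, $\logKGL$ restricts to the classical $KGL$, and $KGL$ represents $K$-theory over the regular base $S$ by Morel--Voevodsky and Riou. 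That these objects generate $\inflogSH^\Adm(S)$ follows from the model comparison and the logarithmic Gysin sequences: after a dividing cover one may assume $\partial X$ is simple normal crossing, and iterating the Gysin sequence along its strata writes $\Sigma_{\P^1}^\infty X_+$ as a finite colimit of $\P^1$-twisted suspension spectra of the smooth $S$-schemes $\partial X_I$.

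The main obstacle is this last identification: the naive sections of the Grassmannian presheaf over $X$ involve the (possibly singular) underlying scheme $\underline X$ rather than $X-\partial X$, so $\logKGL\simeq\mathcal{K}$ is a genuine statement about the $dNis$-local, $\boxx$-invariant, $\Adm$-local replacement, and making it precise requires combining dividing descent (to reach an snc boundary), the logarithmic Gysin sequences (to descend to trivial log structures), and the projective bundle formula for $K$-theory of regular schemes (to control the $\Gmm$-direction) before the classical $\mathbb{A}^1$-motivic computation applies; a secondary point is verifying that the Bott equivalences on $\mathcal{K}$ assemble coherently into a $\Gmm$-spectrum structure, which rests on the naturality and multiplicativity of the projective bundle decomposition.
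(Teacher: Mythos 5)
Your second paragraph is, in substance, the paper's own argument: the spectrum you call $\mathcal{K}$ is literally how $\logKGL$ is defined (Definition \ref{K-theory.16}: $\logKGL:=\Bott(\omega^*\Omega_{S^1}^\infty\Kth)$, with $(sNis,\boxx\cup\Adm)$-locality supplied by Proposition \ref{K-theory.18} and the loops $\P^1$-formula by Proposition \ref{K-theory.10}), and with that definition Theorem \ref{K-theory.7} is exactly the formal unwinding you sketch at the start: periodicity removes the twist $d$, adjunction collapses the two displayed equivalences, and the mapping spectrum unwinds levelwise to $(\Omega_{S^1}^\infty\Kth(X-\partial X),\Omega_{\mathbb{G}_m}\Omega_{S^1}^\infty\Kth(X-\partial X),\ldots)$, which is the Bass spectrum by \eqref{K-theory.14.2} --- note this works for every $X\in\cSm/S$ at once, with no need for generators. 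The Grassmannian model is not the definition but a separate prior result (Theorem \ref{K-theory.2}), so the identification that occupies your last two paragraphs is where your version has to carry the real weight, and there it has a genuine gap.

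The gap is your test case. You assert that for $Y\in\Sm/S$ with trivial log structure, $\map_{\inflogSH^\Adm(S)}(\Sigma_{\P^1}^\infty Y_+,\logKGL)\simeq\Kth(Y)$ because ``the logarithmic category reduces to the classical one via the fully faithful $\omega^*$.'' Full faithfulness of $\omega^*$ identifies $\infSH(S)$ with the $\A^1$-local objects of $\inflogSH(S)$; it says nothing about mapping spectra out of trivially-log-structured representables into an arbitrary object, and in particular it does not compute the $(sNis,\boxx\cup\Adm)$-localization of $\Z\times\Gr$ evaluated on $Y$: that localization is built from data ($Y\times\boxx^n$, admissible blow-ups, dividing covers) that leave the trivial-log-structure world immediately. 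Computing it is precisely the content of Theorem \ref{K-theory.2}, which the paper obtains from the singular functor: Proposition \ref{K-theory.1} (using properness of the Grassmannians to extend maps defined on $Y-\partial Y$ over admissible blow-ups of $Y$ by closing graphs) shows $\Z\times\Gr\to\omega^*(\Z\times\Gr)$ is a $(\boxx\cup\Adm)$-local equivalence, Lemma \ref{K-theory.3} identifies $\Sing^{\boxx\cup\Adm}\omega^*$ with $\omega^*\Sing^{\A^1}$, and only then does the classical computation $\bbL_{Nis}\Sing^{\A^1}(\Z\times\Gr)\simeq\Omega_{S^1}^\infty\Kth$ enter. Without an input of this kind your appeal to Morel--Voevodsky--Riou does not get started. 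A secondary problem is the generation step: the Gysin-sequence induction (with Proposition \ref{Ori.41} writing Thom spaces as cofibers of projective bundles) plausibly shows that $\inflogSH(S)\simeq\inflogSH^{\SmAdm}(S)$ is generated by trivially-log-structured smooth $S$-schemes, but $\inflogSH^{\Adm}(S)$ --- the category in which the theorem is stated --- is generated by pairs in $\cSm/S$ whose underlying scheme may be singular, and over a general regular base these cannot be reduced to snc pairs without resolution of singularities (compare Proposition \ref{ProplogSH.8}, which needs exactly that hypothesis); so even granting the test case, the check-on-generators argument does not close in the $\Adm$-variant.
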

Note that over a base $S$ with trivial log structure, the fact that ``log $K$-theory'' agrees with the $K$-theory of the open complement of the support of the log structure was one of the expected properties.  For a general log scheme $S$, we obtain a motivic spectrum simply by the pullback of $\logKGL$ defined over the integers.

\subsubsection{Oriented cohomology theories} We develop in \S\ref{section:oct} a theory of Chern orientations and Thom classes in our setting. This allows us to prove a projective bundle formula \ref{Ori.11},
a Thom isomorphism theorem \ref{Ori.16}, a Whitney sum formula \ref{Ori.20}, an excess intersection formula \ref{Ori.21}, a self intersection formula \ref{Ori.26}  for any oriented cohomology theory represented in $\inflogSH(S)$. All the above results apply in particular for cohomology theories on classical schemes that admit a natural extension to log schemes, like Hodge, crystalline, prismatic, syntomic cohomology and so on, that are \emph{not} $\A^1$-homotopy invariant. One of the key inputs in the proofs is the following fundamental computation, obtained by adapting the argument in $\inflogDM$.
\begin{thm}[Proposition \ref{Ori.3}] For every $S\in\Sch$, there is an equivalence
\[
\P^n/\P^{n-1}
\cong
(\P^1/\pt)^{\wedge n}
\]
in $\inflogSHS(S)$.
\end{thm}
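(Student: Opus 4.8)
The plan is to argue by induction on $n$, the base case $n=1$ being trivial. First, two preliminary reductions. Since $\inflogSHS$ is $\Sm$-premotivic and every scheme, cover and blow-up appearing below is defined over $\mathbb{Z}$, pulling back along $S\to\mathrm{Spec}\,\mathbb{Z}$ — a symmetric monoidal, colimit-preserving functor — reduces us to the case $S=\mathrm{Spec}\,\mathbb{Z}$; in particular the resulting equivalence is natural in $S$. Moreover, by the model comparison (Corollary \ref{ProplogSH.9}, Corollary \ref{cor:fundamental_models}) we may work in the $(\P^\bullet,\P^{\bullet-1})$-invariant model of strict Nisnevich sheaves on $\SmlSm/S$, so that strict Nisnevich Mayer--Vietoris, $\boxx$-invariance and invariance under admissible blow-ups along smooth centres (Definition \ref{logH.35}) are all at our disposal — all of which, crucially, do not require $\mathbb{A}^1$-invariance.

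For the inductive step it suffices to produce an equivalence $\P^n/\P^{n-1}\simeq (\P^1/\pt)\wedge(\P^{n-1}/\P^{n-2})$, which we do following the cycle-theoretic computation of the analogous statement in \cite{logDM}. A natural starting point: fix a flag $\P^{n-2}\subset\P^{n-1}\subset\P^n$ and a rational point $p\in\P^n\setminus\P^{n-1}$; the open immersions $\A^n:=\P^n\setminus\P^{n-1}\hookrightarrow\P^n$ and $\P^n\setminus\{p\}\hookrightarrow\P^n$ form a Zariski distinguished square with intersection $\A^n\setminus\{p\}$, in which $\P^n\setminus\{p\}$ is — via projection from $p$ — the total space of a line bundle $L$ of degree $\pm1$ over $\P^{n-1}$, with $\P^{n-1}$ as zero section and $\A^n\setminus\{p\}$ as the complement of the zero section. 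Collapsing $\P^{n-1}$ in the associated Mayer--Vietoris pushout gives
\[
\P^n/\P^{n-1}\ \simeq\ \A^n_+\ \sqcup_{(\A^n\setminus\{p\})_+}\ \mathrm{Th}_{\P^{n-1}}(L).
\]
To evaluate this one introduces log structure by compactifying fibrewise: $\mathrm{Bl}_p\P^n$ is the projective bundle $\P(\mathcal{O}\oplus L)$ over $\P^{n-1}$, carrying two disjoint sections — the exceptional divisor $E$ and the strict transform of a hyperplane — and equipping it with the log structure along $E$ makes $(\mathrm{Bl}_p\P^n,E)$ a $\boxx$-bundle over $\P^{n-1}$. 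Moving between this compactified picture, the strict Nisnevich square above, and the admissible blow-up of $(\P^n,\P^{n-1})$ along the linear centre $\P^{n-2}\subset\P^{n-1}$, and using $\boxx$-invariance to kill the twisting by $L$, one peels off a single factor $\P^1/\pt\simeq S^1\wedge\Gmm$ (the last identification being \eqref{equation:logmotivicsphere}) smashed against the configuration one dimension lower, namely $\P^{n-1}/\P^{n-2}$; the inductive hypothesis then finishes the argument.

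The main obstacle is precisely the absence of $\mathbb{A}^1$-invariance: $\A^n$ and the auxiliary $\mathbb{G}_m$-bundles are \emph{not} contractible in $\inflogSHS(S)$, so one cannot take the usual shortcut $\P^n/(\P^n\setminus\{p\})\simeq\A^n/(\A^n\setminus\{p\})$ and read off a motivic sphere $(\P^1)^{\wedge n}$ — the non-invariant pieces must be carried along and made to cancel. Two delicate points stand out. First, the ``$\boxx$-bundle invariance'' invoked above, i.e.\ that a Nisnevich-locally trivial $\boxx$-bundle is contracted to its base: this should follow from $\boxx$-invariance together with strict Nisnevich descent by the standard induction on the number of opens in a trivialising cover, but needs care since the fibre $\boxx$ is not a contractible \emph{scheme}. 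Second, matching the gluing data of $L$ — equivalently the Thom-space bookkeeping for $\mathrm{Th}_{\P^{n-1}}(L)$ — with the inductive data for $\P^{n-1}/\P^{n-2}$: this is the logarithmic purity computation carried out explicitly in \cite{logDM} through the toric geometry of $\P^n$ and its blow-ups along the members of the flag, and porting that argument into $\inflogSHS(S)$ is the bulk of the work.
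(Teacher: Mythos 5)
Your preliminary reductions and part of your geometric setup are fine and even overlap with the paper: the Zariski square on $\A^n$ and $\P^n\setminus\{p\}$, and the observation that $(\Blow_p\P^n,E)$ with its log structure along the exceptional divisor is a $\boxx$-bundle over $\P^{n-1}$, are exactly the kind of inputs the paper uses. The proof, however, breaks at the inductive step. The sentence ``using $\boxx$-invariance to kill the twisting by $L$, one peels off a single factor $\P^1/\pt$ smashed against $\P^{n-1}/\P^{n-2}$'' is not an argument: $L$ is a nontrivial twist of $\cO(1)$ on $\P^{n-1}$, and without $\A^1$-invariance nothing available at this stage untwists it. Thom spaces of \emph{trivial} bundles are computed in Proposition \ref{Ori.64}, and nontrivial ones are trivialized only after choosing an orientation (Theorem \ref{Ori.16}); but orientations, Chern classes and the projective bundle formula are all developed \emph{downstream} of Proposition \ref{Ori.3}, so appealing to any such untwisting here is circular. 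Deferring ``the bulk of the work'' to porting the purity computation of \cite{logDM} concedes the point: that computation \emph{is} the proof, and your outline supplies no substitute for it. There is also a slip in your displayed pushout: the third vertex is $(\P^n\setminus\{p\})/\P^{n-1}$, i.e.\ the total space of $L$ modulo its \emph{zero section}; the Thom space of $L$ (in the paper's sense or the classical one) is the quotient by the complement of the zero section, that is by $\A^n\setminus\{p\}$, and with that reading the displayed formula is false, while with the zero-section reading it is correct but mislabeled --- and it is precisely this quotient by the zero section, classically contractible but not here, that your induction would have to control.

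For contrast, the paper's proof is not inductive and never meets a nontrivial bundle. It compactifies the classical zig-zag $\P^n/\P^{n-1}\xleftarrow{\simeq}\P^n/(\P^n-O)\xrightarrow{\simeq}(\P^1/\A^1)^{\wedge n}$ in one stroke: since $(\Blow_O\P^n,E)\to\P^{n-1}$ is a $\boxx$-bundle, one has $\P^n/\P^{n-1}\simeq\P^n/(\Blow_O\P^n,E)$; Proposition \ref{Ori.37} (resting on invariance under admissible blow-ups along smooth centers) identifies $(\Blow_O\P^n,E)$ with $\colimit(\horn(\P^n,\ol{H}_1+\cdots+\ol{H}_n))$ for the coordinate hyperplanes; and the total-cofiber formalism of Proposition \ref{Thomdf.6} identifies $\P^n/\colimit(\horn(\P^n,\ol{H}_1+\cdots+\ol{H}_n))$ with $(\P^1/\boxx)^{\wedge n}\simeq(\P^1/\pt)^{\wedge n}$. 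Even if you insisted on an inductive scheme, the horn/total-cofiber computation (equivalently, the toric blow-up analysis from \cite{logDM}) would still be needed to produce the factor you want to peel off, so there is no way around supplying it.
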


One of the main applications of our formalism is the construction of Gysin or \emph{residue} sequences for non $\A^1$-invariant cohomology theories: we adopted the expression ``residue sequence'' following \cite{BLPO}, since the graded pieces of the Gysin sequence for $\logHH$ yield the classical residue sequence for Hodge cohomology.   The source of them is the following Theorem.
\begin{thm}[Theorem \ref{Thom.1}, Remark \ref{rmk:Gysin}] Let $S\in \Sch$. Let $X\in \SmlSm/S$ and let $Z\in \SmlSm/S$ be a strict closed subscheme in $X$ that has strict normal crossing with $\partial X$ over $S$. Then there is a cofiber sequence of pointed motivic spaces
\[
(\Blow_Z X,E)_+ \to X_+ \to \Thom(\Normal_Z X)
\]
in $\inflogH_{\ast}(S)$,
functorial with respect to $(X',Z')\to (X,Z)$ such that the induced morphism $Z'\to Z\times_X X'$ is cartesian,
where $\Thom(\Normal_Z(X))$ is the Thom space of the normal bundle of $Z$ in $X$, appropriately defined. 
\end{thm}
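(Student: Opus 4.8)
The strategy is to run the Morel--Voevodsky proof of homotopy purity --- deformation to the normal bundle --- inside logarithmic geometry, with the open complement $X\setminus Z$ replaced by its normal crossings compactification $(\Blow_Z X,E)$ and $\mathbb{A}^1$-invariance replaced by $\boxx$-invariance; this is the unstable incarnation, over a general base, of the Gysin computation of \cite{logDM}.

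The base case is the linear one: when $X=V$ is the total space of $V:=\Normal_Z X\to Z$ with $Z$ the zero section and $\partial X$ pulled back from a strict normal crossings divisor on $Z$, the claim is
\[
\operatorname{cofib}\bigl((\Blow_Z V,E)_+\to V_+\bigr)\simeq\Thom(V)
\]
in $\inflogH_{\ast}^{\SmAdm}(S)$. As $V$ is Zariski-locally trivial and both sides carry strict Nisnevich distinguished squares to cocartesian squares --- since $\Blow_Z(-)$, $\Normal_Z(-)$ and $\P(\Normal_Z(-)\oplus\mathcal{O})$ commute with strict Nisnevich base change and $\inflogH_{\ast}^{\SmAdm}(S)$ satisfies strict Nisnevich descent --- this reduces to $V=\mathcal{O}_Z^{\oplus c}$, where the identification is the $c$-fold smash of the rank one case $\operatorname{cofib}((\mathbb{A}^1,0)_+\to\mathbb{A}^1_+)\simeq\P^1/\pt$, combined with the multiplicativity $\P^n/\P^{n-1}\simeq(\P^1/\pt)^{\wedge n}$ of Proposition \ref{Ori.3} and the projective bundle formula \ref{Ori.11} for $\P(V\oplus\mathcal{O})\to Z$ to keep track of the Thom class. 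The rank one statement is the unstable shadow of the fundamental equivalence \eqref{equation:logmotivicsphere}, and I would prove it by covering $\boxx$ by its two standard charts $(\mathbb{A}^1,0)$ and $\mathbb{A}^1$, glued along $\mathbb{G}_m$ with trivial log structure, and invoking $\boxx$-contractibility and strict Nisnevich descent.

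For the general case, I would form the logarithmic deformation space $\mathcal{D}=\mathcal{D}_Z X$: the complement of the strict transform of $X\times\{\infty\}$ inside $\Blow_{Z\times\{\infty\}}(X\times\boxx)$, equipped with the log structure generated by the special fiber divisor lying over $\infty$ and by the strict transform of $\partial X\times\boxx$. Then $\mathcal{D}\to\boxx$ has generic fiber $(X,\partial X)$ over the trivial log point $1$ and special fiber $\Normal_Z X$ with its induced log structure over the standard log point $\infty$, and $Z\times\boxx$ sits in $\mathcal{D}$ as a strict closed subscheme with strict normal crossings with the boundary (using that $Z$ is smooth and strictly transverse to $\partial X$). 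Let $\mathcal{W}:=\Blow_{Z\times\boxx}\mathcal{D}$, with its exceptional divisor adjoined to the log structure; then the blow-down $\rho\colon\mathcal{W}\to\mathcal{D}$ has generic fiber $(\Blow_Z X,E)\to X$ and special fiber $(\Blow_Z V,E)\to V$, everything lies in $\SmlSm/S$, and the whole diagram is functorial in $(X,Z)$. Setting $\mathcal{C}:=\operatorname{cofib}(\mathcal{W}_+\to\mathcal{D}_+)$, the two closed fiber inclusions produce natural maps
\[
\operatorname{cofib}\bigl((\Blow_Z X,E)_+\to X_+\bigr)\longrightarrow\mathcal{C}\longleftarrow\operatorname{cofib}\bigl((\Blow_Z V,E)_+\to V_+\bigr).
\]

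The \emph{main obstacle} is to show that both of these maps are equivalences, precisely the step where the classical proof uses $\mathbb{A}^1$-invariance in a way we cannot. The key is that $\rho$ is an isomorphism away from $Z\times\boxx$, so by strict Nisnevich descent applied to the cover of $\mathcal{D}$ by $\mathcal{D}\setminus(Z\times\boxx)$ and an open neighborhood $U$ of $Z\times\boxx$, the object $\mathcal{C}$ is computed from an arbitrarily small such $U$ --- and, by the same token, $\operatorname{cofib}((\Blow_Z X,E)_+\to X_+)$ is computed from a neighborhood of $Z$ in $X$. Since $Z$ is smooth and strictly transverse to $\partial X$, the pair $(X,Z)$ together with the boundary is strict Nisnevich-locally standard, so $U$ admits a strict Nisnevich cover over each piece of which $\mathcal{D}$ is the constant family $V\times\boxx$ --- boundary divisors included --- compatibly with $\rho$, under which $\mathcal{W}$ becomes $(\Blow_Z V)\times\boxx$. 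Reducing by descent to this product situation, $\mathcal{C}$ becomes $\operatorname{cofib}((\Blow_Z V,E)_+\to V_+)\otimes\boxx_+$, the left- and right-hand cofibers become $\operatorname{cofib}((\Blow_Z V,E)_+\to V_+)$ itself, and the two fiber maps become the inclusions of rational points of $\boxx$, hence are equivalences by $\boxx$-invariance. Chaining these with the base-case identification produces the equivalence $\operatorname{cofib}((\Blow_Z X,E)_+\to X_+)\simeq\Thom(\Normal_Z X)$; that the first map of the cofiber sequence is the blow-down, and that everything is natural in $(X,Z)$, is forced by the shape of the deformation diagram. Finally, moving from $\operatorname{Spec}\mathbb{Z}$ to an arbitrary $S\in\Sch$ is base change, under which all of the above is stable; Remark \ref{rmk:Gysin} then rewrites the three-term sequence in its residue form.
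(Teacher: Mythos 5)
Your overall strategy is the same as the paper's: compactify the deformation to the normal cone over $\boxx$, localize near the center, and conclude by $\boxx$-invariance and invertibility of admissible blow-ups (the paper reduces to noetherian $S$ by Proposition \ref{noetherian.1} and then adapts Steps 3 and 5 of the proof of \cite[Theorem 7.5.4]{logDM} to the unstable category, using Proposition \ref{Ori.66} and explicit admissible blow-ups along smooth centers). The problem is that your execution fails exactly at the two points where the log setting differs from Morel--Voevodsky. First, the central reduction ``strict Nisnevich-locally $\mathcal{D}$ is the constant family $V\times\boxx$, boundary divisors included'' is not true, and it fails precisely at the compactification point: even for the zero section of a vector bundle $\cE$, the compactified deformation space is not $\cE\times\boxx$ but differs from it by a modification at $\infty$ (cf.\ the proof of Lemma \ref{Ori.36}, where $\Deform_Z\cE$ is identified with the complement of the strict transform of $\cE\times\infty$ in the blow-up of $\cE\times\boxx$ along $Z\times\infty$, not with a product). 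For a general pair one cannot trivialize $(X,Z,\partial X)$ Nisnevich-locally to the normal-bundle model at all; one only obtains standard charts and common \'etale neighborhoods, so the localization must go through the excision statement for strict \'etale maps inducing an isomorphism on $Z$ (Proposition \ref{Ori.66}) together with a Zariski-separation argument for the comparison maps --- neither appears in your sketch, and this is where the paper's chart and fan computations with admissible blow-ups do real work.

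Second, your convention of deforming at $\infty$ with the special-fibre divisor adjoined to the log structure breaks the right-hand comparison map. The strict fibre of your $\mathcal{D}$ over $\infty$ is $\Normal_Z X\times\pt_\N$, which is not an object of $\SmlSm/S$ and is not equivalent to $\Normal_Z X$; moreover there is no morphism from $\Normal_Z X$ with its pullback log structure into your family over $\infty$ (a local equation of the fibre divisor pulls back to $0$, which cannot lift to the compactifying log structure of $\Normal_ZX$; equivalently, a trivial-log point cannot map to the boundary point of $\boxx$). So the map you need in order to produce $\Thom(\Normal_Z X)$ as in the statement does not exist in your setup; the paper avoids this by placing the special fibre over the interior point $0$ of $\boxx$, so that both fibre inclusions are over trivial-log points. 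Two smaller issues: with Definition \ref{Thomdf.1} your ``base case'' is true by definition, and your proposed proof of it via the projective bundle formula (Theorem \ref{Ori.11}) uses a stable, oriented-cohomology statement not available in $\inflogH_{\ast}^{\SmAdm}(S)$ (the relevant unstable inputs are Propositions \ref{Ori.40}, \ref{Ori.64} and \ref{Ori.41}); and passing from $\Spec{\Z}$ to general $S$ is not base change, since a smooth pair over $S$ is not pulled back from $\Z$ --- the paper instead uses noetherian approximation.
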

In the presence of an orientation and in the stable setting, the
Thom space term can be trivialized, thus assuming a familiar form. This applies, for example, to prismatic cohomology, as discussed in \cite{BLMP}, which can be represented by an oriented $\P^1$-spectrum (see \S\ref{subsection:rncpc}).
In \S \ref{ssec:Grass} we also present a complete computation of the cohomology of the Grassmannians $\Gr(r, n)$. 
\begin{thm}[Theorem \ref{Ori.9}] Let $S\in \Sch$ and let $\ringE$ be an oriented homotopy commutative monoid in $\inflogSH(S)$. 
There is a naturally induced ring isomorphism
\[
\varphi_{r,n}
\colon
\ringE^{**}\otimes_{\Z}R_{r,n}
\to
\ringE^{**}(\Gr(r,n)),
\]
where $R_{r,n}$ is the $\Z$-algebra defined in \eqref{eq:ring_grass}. 
\end{thm}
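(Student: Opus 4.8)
The plan is to describe $\varphi_{r,n}$ as the homomorphism classifying the Chern classes of the tautological bundles on $\Gr(r,n)$ and then to prove bijectivity by reducing, through iterated projective bundle formulae, to an explicit computation on the full flag variety over $S$. Write $0\to\mathcal{U}\to\mathcal{O}_S^{n}\to\mathcal{Q}\to 0$ for the tautological short exact sequence on $\Gr(r,n)$, with $\mathcal{U}$ of rank $r$ and $\mathcal{Q}$ of rank $n-r$, and let $c_i(-)$ denote the Chern classes built from the orientation on $\ringE$ as in \S\ref{section:oct}. The map $\varphi_{r,n}$ sends the polynomial generators of $R_{r,n}$ to $c_i(\mathcal{U})$ and $c_j(\mathcal{Q})$ in the appropriate Tate-twisted degrees; its well-definedness reduces to the defining relations of $R_{r,n}$, which encode the identity $c(\mathcal{U})\cdot c(\mathcal{Q})=1$ in positive degrees, and this follows from the Whitney sum formula \ref{Ori.20} together with the vanishing of the positive-degree Chern classes of the trivial bundle $\mathcal{O}_S^{n}$, itself a direct consequence of the projective bundle formula \ref{Ori.11}. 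Multiplicativity and naturality in $S$ are then formal.

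To see that $\varphi_{r,n}$ is an isomorphism, I would invoke the splitting principle, which in the oriented setting follows by iterating \ref{Ori.11}: pulling back to the flag bundle $\mathrm{Fl}(\mathcal{U})\to\Gr(r,n)$ and then to the fibre product $\mathrm{Fl}(\mathcal{U})\times_{\Gr(r,n)}\mathrm{Fl}(\mathcal{Q})$, one obtains the full flag variety $\mathrm{Fl}(n)=\mathrm{Fl}(1,2,\dots,n-1;n)$, and $\ringE^{**}(\mathrm{Fl}(n))$ becomes a finite free module over $\ringE^{**}(\Gr(r,n))$ on which $c_i(\mathcal{U})$ and $c_j(\mathcal{Q})$ are the elementary symmetric functions in Chern roots $x_1,\dots,x_r$ and $x_{r+1},\dots,x_n$. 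Now $\mathrm{Fl}(n)$ is also a tower of projective bundles over $S$, so iterating \ref{Ori.11} (again using $c(\mathcal{O}_S^{n})=1$) identifies $\ringE^{**}(\mathrm{Fl}(n))$ with $\ringE^{**}[x_1,\dots,x_n]/(e_1,\dots,e_n)$, free of rank $n!$ over $\ringE^{**}$; since $\mathrm{Fl}(n)\to\Gr(r,n)$ is a tower of projective bundles with fibre $\mathrm{Fl}(r)\times\mathrm{Fl}(n-r)$, the same module is free of rank $r!\,(n-r)!$ over $\ringE^{**}(\Gr(r,n))$, so $\ringE^{**}(\Gr(r,n))$ is free of rank $\binom{n}{r}$ over $\ringE^{**}$ and pulls back split-injectively into $\ringE^{**}(\mathrm{Fl}(n))$. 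A standard computation with the coinvariant algebra (the $S_r\times S_{n-r}$-invariants in $\ringE^{**}(\mathrm{Fl}(n))$ reduce, fibrewise, to the constants) identifies this image with $\ringE^{**}\otimes_\Z R_{r,n}$, and under this identification $\varphi_{r,n}$ is precisely the resulting isomorphism. An alternative route, closer to the geometry emphasized in this paper, is an induction on $n$: the closed immersion $\Gr(r,n-1)\hookrightarrow\Gr(r,n)$ has complement a vector bundle over $\Gr(r-1,n-1)$, and although this bundle is not contractible in our setting, the residue sequence of Theorem \ref{Thom.1} together with the blow-up formula expresses $\ringE^{**}(\Gr(r,n))$ in terms of $\ringE^{**}(\Gr(r,n-1))$, a Thom twist of $\ringE^{**}(\Gr(r-1,n-1))$, and a contribution supported on the exceptional divisor; feeding in the base case $\Gr(1,m)=\P^{m-1}$ handled by \ref{Ori.11}, and matching with the combinatorics of $R_{r,n}$, then yields the claim.

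The main obstacle is precisely the absence of $\mathbb{A}^1$-invariance. In the classical motivic proof one filters $\Gr(r,n)$ by Schubert cells, whose strata are affine spaces (or affine bundles over smaller Grassmannians) and hence invisible to an $\mathbb{A}^1$-invariant theory, but emphatically not to a general oriented $\ringE$. Every step must therefore be routed through \emph{proper} morphisms --- projective bundles, blow-ups, and the normal-bundle Thom spaces of Theorem \ref{Thom.1} --- which is exactly what the toolkit of \S\ref{section:oct} provides. Concretely, the two delicate points are: establishing that the Chern classes of $\mathcal{U}$ generate $\ringE^{**}(\Gr(r,n))$ as an $\ringE^{**}$-algebra (surjectivity of $\varphi_{r,n}$) without invoking $\mathbb{A}^1$-homotopies, and matching the free-module rank $\binom{n}{r}$ on the geometric side with the $\Z$-rank of $R_{r,n}$, so that surjectivity upgrades to bijectivity; both are handled by the flag-variety reduction, the first because $\mathrm{Fl}(n)\to\Gr(r,n)$ is a composite of projective bundles and the second by the two rank counts for $\ringE^{**}(\mathrm{Fl}(n))$. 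Keeping careful track of the Tate twists throughout the bigraded bookkeeping is the remaining nuisance.
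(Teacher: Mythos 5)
Your main route is genuinely different from the paper's. The paper proves Theorem \ref{Ori.9} by induction on $n$ along the closed immersion $\Gr(r-1,n-1)\to\Gr(r,n)$: its key new geometric input is Lemma \ref{Ori.8}, which shows that the blow-up of $\Gr(r,n)$ along $\Gr(r-1,n-1)$, with its exceptional-divisor log structure, is a $(\P^r,\P^{r-1})$-bundle over $\Gr(r,n-1)$; by $(\P^\bullet,\P^{\bullet-1})$-invariance the Gysin/residue sequence \eqref{Ori.7.9} then reads $\ringE^{*-2s,*-s}(\Gr(r-1,n-1))\xrightarrow{i_*}\ringE^{**}(\Gr(r,n))\xrightarrow{u^*}\ringE^{**}(\Gr(r,n-1))$, the maps are identified using $i_*i^*=(-)\smile c_{n-r}(\cQ_{r,n})$ (Lemmas \ref{Ori.28}--\ref{Ori.31}, which again use Lemma \ref{Ori.8}), and the five lemma against the short exact sequence $0\to R_{r-1,n-1}\to R_{r,n}\to R_{r,n-1}\to 0$ concludes. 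Your flag-bundle/coinvariant-algebra argument bypasses all of this blow-up geometry and the Gysin machinery, using only tools the paper does make available without $\A^1$-invariance: the projective bundle theorem \ref{Ori.11} (iterated over the towers $\mathrm{Fl}(\cO_S^n)\to S$ and $\mathrm{Fl}(\cT_{r,n})\times_{\Gr(r,n)}\mathrm{Fl}(\cQ_{r,n})\to\Gr(r,n)$), Whitney \ref{Ori.20}, naturality \ref{Ori.18}, vanishing for trivial bundles \ref{Ori.50}, and the splitting principle \ref{Ori.19}. This buys a proof that leans on classical invariant theory instead of new geometry; what the paper's route buys is the reusable Lemma \ref{Ori.8} and a sequence argument that also feeds directly into Corollary \ref{Ori.10} and the $\spectlogMGL$ computations. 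Your second, "geometric" sketch is close to the paper's actual proof but misses its decisive point: the blow-up term is not an extra "contribution supported on the exceptional divisor" to be analyzed --- it is \emph{identified} with $\ringE^{**}(\Gr(r,n-1))$ precisely because of the $(\P^r,\P^{r-1})$-bundle structure of Lemma \ref{Ori.8}.

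One step of your primary route is stated too quickly. From "$\ringE^{**}(\mathrm{Fl}(n))$ is free of rank $n!$ over $\ringE^{**}$ and free of rank $r!(n-r)!$ over $\ringE^{**}(\Gr(r,n))$" you cannot directly infer that $\ringE^{**}(\Gr(r,n))$ is free of rank $\binom{n}{r}$ over $\ringE^{**}$, nor that its image in $\ringE^{**}(\mathrm{Fl}(n))$ is the ring of $S_r\times S_{n-r}$-invariants; the surjectivity of $\varphi_{r,n}$ (generation by Chern classes) is exactly the content that a naive rank count does not deliver. The repair is the standard coinvariant comparison you allude to: the relative tower expresses $\ringE^{**}(\mathrm{Fl}(n))$ as a free $\ringE^{**}(\Gr(r,n))$-module on the sub-staircase monomials in the Chern roots, the same monomials form an $R_{r,n}$-basis of $\Z[x_1,\ldots,x_n]/(e_1,\ldots,e_n)$, and comparing coefficients of the unit monomial in the two decompositions forces every class on $\Gr(r,n)$ to lie in the image of $\ringE^{**}\otimes_\Z R_{r,n}$; injectivity then comes from split injectivity of $R_{r,n}\to\Z[x_1,\ldots,x_n]/(e_1,\ldots,e_n)$ together with the split injectivity of pullback along the flag bundle. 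With that step made explicit (and the usual care that the Chern roots in the absolute and relative towers are the first Chern classes of the same successive subquotient line bundles, so that $e_k$ of the roots is $c_k$ of the trivial bundle), your argument goes through with the paper's toolkit.
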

The argument requires new ingredients, since we cannot apply the $\A^1$-homotopies used, for example, in \cite{zbMATH05663788}. Finally, we introduce a logarithmic algebraic cobordism spectrum $\spectlogMGL$ using our definition of Thom spaces.  This is the universally oriented homotopy commutative monoid in $\inflogSH$.

\subsubsection{Logarithmic trace methods} In \S \ref{section:lthh} we construct a $\P^1$-spectrum $\spectlogTHH\in \inflogSH(S)$
for all $S\in \Sch$. The circle action on $\spectlogTHH$ allows us to consider the variants $\spectlogTP$ and, of course, $\spectlogTC^-$, representing, respectively, periodic and negative topological cyclic homology. As seen in \cite{BLPO2}, the cyclotomic structure makes it possible to obtain from $\spectlogTP$ and $\spectlogTC^-$ yet another spectrum, that is $\spectlogTC$, representing
log topological cyclic homology. More precisely, we have the following: 

\begin{thm}[Theorem \ref{Hoch.13}, Theorem \ref{Hoch.34}, Theorem \ref{trace.8}]
   Let $S\in \Sch$. Then there are \emph{oriented} homotopy commutative monoids in $\inflogSH(S)$
   \[
\spectlogTHH,\; \spectlogTC^-,\; \spectlogTP,\; \spectlogTC
\in
\inflogSH(S),\]
and there is an equivalence of spectra
\[
\logTHH(X)
\simeq
\map_{\inflogSHS(S)}(\Sigma_{S^1}^\infty X_+,\Omega_{\Gmm}^\infty \spectlogTHH)\simeq \map_{\inflogSH(S)}(\Sigma_T^\infty X_+,\spectlogTHH)\]
for all $X\in \SmlSm/S$. The spectrum $\spectlogTHH$ is $(2,1)$-periodic, in the sense that 
we have an equivalence of spectra
\[
\spectlogTHH
\simeq
\Sigma^{2,1}\spectlogTHH.\]
Similar results hold for $\spectlogTC^-$, $\spectlogTP$, and $\spectlogTC$ also.
\end{thm}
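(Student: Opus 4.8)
The plan is to realize logarithmic $\THH$ --- regarded as a presheaf of cyclotomic $E_\infty$-ring spectra on $\SmlSm/S$, following Rognes \cite{RognesLogHH} and \cite{BLPO,BLPO2} --- as an object of the $S^1$-stable category $\inflogSHS(S)$ via the descent criterion of Corollary \ref{cor:fundamental_models}, and then to $\Gmm$-deloop it by an explicit logarithmic Hochschild--Kostant--Rosenberg computation; a single such computation will produce the $\P^1$-spectrum $\spectlogTHH\in\inflogSH(S)$, the two representability equivalences, and the $(2,1)$-periodicity. The orientation is then the projective bundle formula via \S\ref{section:oct}, and $\spectlogTC^{-}$, $\spectlogTP$, $\spectlogTC$ are produced from the cyclotomic structure by taking the appropriate limits inside the presentable stable category $\inflogSH(S)$, as in \cite{BLPO2}.

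First I would verify the two localizing conditions for the presheaf $X\mapsto\logTHH(X)$ on $\SmlSm/S$. Strict Nisnevich descent follows from Nisnevich (indeed étale) descent for $\THH$ of the underlying schemes, since a strict Nisnevich distinguished square pulls back log structures and $\logTHH$ depends only on the restriction of the log structure to the square; the $S^1$-action and the cyclotomic Frobenius are compatible with this. The substantive point is $(\P^\bullet,\P^{\bullet-1})$-invariance. Using that $\logTHH$ carries base change along $S$-pushouts (the logarithmic analogue of $\THH(B\otimes_A C)\simeq\THH(B)\otimes_{\THH(A)}\THH(C)$) together with the logarithmic HKR filtration, the map $\logTHH(X)\to\logTHH(X\times_S(\P^n,\P^{n-1}))$ is an equivalence once $\logHH((\P^n,\P^{n-1})/S)\simeq\mathcal{O}_S$, i.e. once the derived pushforward $R\Gamma(\P^n_S,\Omega^p_{(\P^n,\P^{n-1})/S})$ to $S$ vanishes for all $p>0$; this holds over $\mathbb{Z}$ by a direct computation with the explicitly known sheaves $\Omega^p_{\P^n}(\log\P^{n-1})$ --- it reflects the fact that $\P^n\setminus\P^{n-1}=\mathbb{A}^n$ carries the Hodge cohomology of a point --- and passes to an arbitrary base by flatness. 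By Corollary \ref{cor:fundamental_models}, $\logTHH$ therefore defines an object of $\inflogSHS(S)$, and as a byproduct it automatically satisfies dividing descent and invariance under smooth admissible blow-ups.

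Next I would compute the $\Gmm$-loops. The same HKR computation applied to $\Gmm=(\P^1,0+\infty)$ gives $\Omega^1_{\Gmm/S}\cong\mathcal{O}_{\Gmm}$ --- generated by $d\log$ of the coordinate, the logarithmic compactification of the generator $dt/t$ on $\mathbb{G}_m$ --- so that $\logHH(\Gmm/S)\simeq\mathcal{O}_S\oplus\mathcal{O}_S[1]$ and hence, by base change and after splitting off the basepoint $1$, a natural equivalence $\Omega_{\Gmm}\logTHH\simeq\Sigma^{1,0}\logTHH$. Consequently $\logTHH$ extends canonically to a $\Gmm$-$\Omega$-spectrum with $n$-th term $\Sigma^{-n,0}\logTHH$, i.e. (by Lemma \ref{Ori.55}) to an object $\spectlogTHH\in\inflogSH(S)$ with $\Omega^\infty_{\Gmm}\spectlogTHH\simeq\logTHH$; the two stated representability equivalences follow by the standard adjunction yoga relating the $S^1$-stable and $\P^1$-stable categories. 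For the periodicity, since $\Gmm\simeq S^{1,1}$ and $\P^1\simeq S^{2,1}$, shifting this $\Gmm$-spectrum gives $\Sigma_{\Gmm}\spectlogTHH\simeq\Sigma^{-1,0}\spectlogTHH$, hence $\Sigma^{2,1}\spectlogTHH\simeq\Sigma^{1,0}\Sigma_{\Gmm}\spectlogTHH\simeq\spectlogTHH$, an equivalence that is multiplicative by construction.

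Finally, by the formalism of \S\ref{section:oct} an orientation on $\spectlogTHH$ is equivalent to the projective bundle formula \ref{Ori.11}, which follows from relative HKR for a projective bundle $\P(\mathcal{E})\to X$ together with the first Chern class of $\mathcal{O}(1)$ furnished by the $d\log$ map; alternatively $\spectlogTHH$ is an algebra over $\spectlogMGL$. The variants are obtained inside $\inflogSH(S)$ by $\spectlogTC^{-}=\spectlogTHH^{hS^1}$, $\spectlogTP=\spectlogTHH^{tS^1}$ and $\spectlogTC=\mathrm{eq}(\mathrm{can},\varphi)$ on $\spectlogTC^{-}$; these limits exist in $\inflogSH(S)$, and since the $S^1$-action and the cyclotomic Frobenius $\varphi$ on the presheaf $\logTHH$ are the images of the corresponding structure on $\spectlogTHH$, the outputs lie in $\inflogSH(S)$. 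The $(2,1)$-periodicity passes to all three because $\Sigma^{2,1}$ is an invertible exact endofunctor commuting with $(-)^{hS^1}$, $(-)^{tS^1}$ and $\varphi$, and the equivalence $\Sigma^{2,1}\spectlogTHH\simeq\spectlogTHH$ is $S^1$-equivariant and cyclotomic by construction; the orientations are transported through the corresponding $d\log$ Chern classes, as made explicit for the prismatic incarnation in \S\ref{subsection:rncpc}. The main obstacle I anticipate is exactly the $(\P^\bullet,\P^{\bullet-1})$-invariance together with the $\Gmm$-loop computation: one must control logarithmic HKR and base change for $\logTHH$ integrally rather than only over a characteristic-zero field, establish the cohomology vanishing for $\Omega^p_{\P^n}(\log\P^{n-1})$ over an arbitrary base, and --- for the variants --- make the periodicity equivalence simultaneously multiplicative, $S^1$-equivariant and compatible with $\varphi$, so that it genuinely descends to $\spectlogTC^{-}$, $\spectlogTP$ and $\spectlogTC$.
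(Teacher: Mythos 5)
Your route genuinely differs from the paper's at the two computational pillars, and as written it leaves real gaps. The paper proves $(\P^n,\P^{n-1})$-invariance (Theorem \ref{Hoch.13}) by reducing, via the coefficient formula $\logTHH((R,M)\otimes(\Z[P],N))\simeq\logTHH(R,M)\otimes_{\Striv}\Sphere[\Brep(P,N)]$ of Proposition \ref{Hoch.11}, to an explicit $\mathbb{T}$-equivariant computation with the cyclic and replete bar constructions of the chart monoids of the standard cover, showing that the total cofiber of a cube of suspension spectra of monoid spaces vanishes; the loops $\P^1$-formula (Theorem \ref{Hoch.34}) is likewise an explicit cocartesian square of $\Striv$-modules obtained by cancelling the $S^1/C_j$ summands. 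Your HKR replacement (vanishing of $H^q(\P^n,\Omega^p(\log \P^{n-1}))$ for $p>0$, and $\Omega^1_{\Gmm/S}\cong\cO$ generated by $d\log$) is correct at the level of Hodge cohomology, but promoting it from $\logHH(-/S)$ to $\logTHH$ over the sphere requires a base-change formula of the shape $\logTHH(X\times_S Y)\simeq\logTHH(X)\otimes_{\THH(\cO_S)}\logTHH(Y)$ together with a completeness/convergence argument descending the equivalence from associated graded pieces to $\logTHH$ itself; neither is supplied, and this is precisely the integral control you flag as the main obstacle. The bar-construction argument in the paper needs no filtration and works verbatim over an arbitrary (non-noetherian) base.

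Two further gaps. First, you build $\spectlogTC^-$, $\spectlogTP$, $\spectlogTC$ inside $\inflogSH(S)$ by applying $(-)^{h\mathbb{T}}$, $(-)^{t\mathbb{T}}$ and the equalizer to $\spectlogTHH$; this presupposes lifting the full $\mathbb{T}$-action and the cyclotomic Frobenii to the $\P^1$-spectrum compatibly with the Bott bonding maps and the multiplicative periodicity, which is exactly the coherence you acknowledge but do not resolve, and for $\spectlogTP$ and $\spectlogTC$ the representability of $\logTP(X)$, $\logTC(X)$ would additionally require commuting $\map_{\inflogSH(S)}(\Sigma_T^\infty X_+,-)$ past the homotopy-orbit colimit in the Tate construction, i.e.\ compactness of $\Sigma_T^\infty X_+$, which the paper only has for noetherian $S$. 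The paper avoids all of this by applying $(-)^{h\mathbb{T}}$, $(-)^{t\mathbb{T}}$, $\TC$ at the level of presheaves of cyclotomic spectra, deducing invariance and the $\P^1$-formula for each of $\logTC^-$, $\logTP$, $\logTC$ from the same cocartesian square, and only then Bott-constructing each motivic spectrum separately. Second, the orientation: appealing to ``$\spectlogTHH$ is an algebra over $\spectlogMGL$'' is circular, since by Theorem \ref{MGL.5} such an algebra structure is the same datum as an orientation, and the $d\log$-Chern-class alternative is not constructed (one needs a class in $\spectlogTHH^{2,1}(\P^\infty/\pt)$ restricting to $\Sigma^{2,1}\unit$, compatibly with the chosen periodicity equivalence). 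The paper instead produces the orientation in Theorem \ref{trace.8} from the cyclotomic trace on ordinary schemes: the image of the generator of $\Kth_0(\P^\infty/\pt)$ under $\Tr$ and the periodicity isomorphism gives $c_\infty$, the multiplicative formal group law is inherited from $K$-theory, and Proposition \ref{trace.6} transports the orientation to $\spectlogTHH$, $\spectlogTC^-$ and $\spectlogTP$.
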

In particular, when $X = \Spec{R}$ is just the spectrum of a ring with trivial log structure, we find that $\THH(R)$ can be described as a mapping spectrum in our motivic category.  Two main ingredients go into this: a proof that a suitably globalized version of $\logTHH$ satisfies $(\P^\bullet, \P^{\bullet-1})$-invariance and a $\P^1$-bundle formula for $\logTHH$. See \S \ref{ssec:P1bundleTHH}. 

Since the above spectra are, in fact, oriented cohomology theories in the sense of \S\ref{section:oct}, all the results mentioned above apply. For example, we immediately get a full computation of $\THH$ of the Grassmannians, new Gysin sequences, and so on.

When $k$ is a field that admits resolution of singularities, we can put together the results of \S \ref{section:kfslogschemes} and \ref{section:lthh} to obtain the following result.
\begin{thm}[Theorem \ref{trace.4}]
There is a unique homomorphism
\begin{equation}\label{eq:tr_intro}
\spectlogTr\colon \logKGL \to \spectlogTC
\end{equation}
of homotopy commutative monoids in $\inflogSH(k)$ preserving orientations.  
\end{thm}
The assumption on resolution of singularities here comes from the equivalence between $\inflogH(k)$ and a variant denoted $\inflogH^{\Adm}(k)$, in which we invert arbitrary proper birational maps which are isomorphisms on the complement of the log structure, but we do not consider it to be crucial. 
We see \eqref{eq:tr_intro} as a first important step in studying \emph{motivic filtrations} on $\TC$ and variants by using the geometric methods given by motivic homotopy theory, such as the slice filtration. See \S \ref{future} below. 

Note that 
\[
\lambda^*\Omega_{\Gmm}^\infty \spectlogTr\colon \lambda^*\Omega_{\Gmm}^\infty \logKGL \to \lambda^*\Omega_{\Gmm}^\infty \spectlogTC
\]
is equivalent to the cyclotomic trace $\Tr\colon \Kth\to \TC$ in $\infShv_{Nis}(\Sm/k,\infSpt)$.
 This relies on \cite{logSHF1}, see Remark \ref{trace.12}.

\subsubsection{Kato-Nakayama spaces, \'etale realization}In \S\ref{section:rf} we construct log \'etale and Kato-Nakayama realization functors. 
Over a  field $k$ with an embedding  $k\hookrightarrow \mathbb{C}$ into the field of complex numbers, we can consider the functor that sends $X\in \lSch/k$ its Kato-Nakayama space $X^{\log}$. It is a manifold with boundary if $X$ is log smooth, considered here as an object of the category of topological spaces. Since the Kato-Nakayama realization of $\boxx$ is nothing but the closed unit disk (seen as a manifold with boundary), this can be promoted to  an $\infty$-functor
$\mathrm{Re}_{\mathrm{KN}}$, satisfying the following property.
\begin{thm}[Proposition \ref{KNreal.5}]
There is a commutative triangle
\[
\begin{tikzcd}
\inflogH(k)\ar[rr,"\omega_\sharp"]\ar[rd,"\Real_{\mathrm{KN}}"']&
&
\infH(k)\ar[ld,"\Real_\Betti"]
\\
&
\infSpc,
\end{tikzcd}\]
where $\Real_{\Betti}$ denotes the Betti realization functor 
$\infH(k)\to \infSpc$.
There is a similar commutative triangle for $\inflogSH(k)$, $\infSH(k)$, and $\infSpt$.
\end{thm}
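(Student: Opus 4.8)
The plan is to verify the commutation on generators and then extend it by colimits, after which the stable statement follows formally. All three functors $\Real_{\mathrm{KN}}$, $\omega_\sharpp$ and $\Real_{\Betti}$ preserve colimits: the two realization functors are, by construction, the colimit‑preserving extensions of the assignments $X\mapsto X^{\log}$ on $\lSm/k$ and $Y\mapsto Y(\mathbb{C})$ on $\Sm/k$, while $\omega_\sharpp$ is a left adjoint. Since $\inflogH(k)$ is generated under colimits by the objects of $\lSm/k$, and since a colimit‑preserving functor out of $\inflogH(k)$ --- and a natural transformation between two of them --- is determined by its restriction to $\lSm/k$, with the transformation being an equivalence as soon as it is one on $\lSm/k$, it suffices to (i) identify each functor on $X=(\underline X,M_X)\in\lSm/k$, (ii) produce a natural transformation $\Real_{\Betti}\circ\omega_\sharpp\Rightarrow\Real_{\mathrm{KN}}$ over $\lSm/k$, and (iii) check it is an objectwise equivalence.

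For (i): write $X^{\mathrm{triv}}\subseteq\underline X$ for the open trivial locus of $X$, the largest open subscheme on which $M_X$ is trivial; it is smooth over $k$ because $X$ is log smooth, and by the comparison results of \S\ref{section:mvlmt} one has $\omega_\sharpp(X)\simeq X^{\mathrm{triv}}$ in $\infH(k)$ (on the $\SmlSm/k$‑model of Corollary \ref{cor:fundamental_models} this is the familiar formula $\omega_\sharpp(\underline X,\partial X)=\underline X\smallsetminus\partial X$; it is already forced by $\omega_\sharpp(\boxx)=\mathbb{A}^1$, i.e.\ by the contractibility of $\boxx$). Hence $\Real_{\Betti}(\omega_\sharpp(X))\simeq X^{\mathrm{triv}}(\mathbb{C})$ with its analytic topology, while $\Real_{\mathrm{KN}}(X)=X^{\log}$ is the Kato--Nakayama space of $X\times_k\mathbb{C}$.

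For (ii) and (iii): the open immersion of log schemes $j\colon(X^{\mathrm{triv}},\mathrm{triv})\hookrightarrow X$ is functorial in $X\in\lSm/k$ (a log morphism with trivial source automatically factors through the trivial locus of the target; equivalently, it pulls the compactifying divisor back into the compactifying divisor), and $j$ induces $j^{\log}\colon X^{\mathrm{triv}}(\mathbb{C})=(X^{\mathrm{triv}})^{\log}\to X^{\log}$, namely the inclusion of $X^{\mathrm{triv}}(\mathbb{C})$ as an open dense subset of $X^{\log}$; this is the required natural transformation. The heart of the matter --- and the step I expect to be the main obstacle --- is to show that $j^{\log}$ is a weak homotopy equivalence for every $X\in\lSm/k$. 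I would argue locally and then globalize. Nisnevich‑locally on $\underline X$, the structure theory of log smooth morphisms (Kato) presents $X$ as strict‑\'etale over $\mathrm{Spec}\,k[P]$ with its toric log structure, for $P$ a fine saturated sharp monoid; then $X^{\mathrm{triv}}$ is strict‑\'etale over the torus $\mathrm{Spec}\,k[P^{\mathrm{gp}}]$, and analytically $X^{\log}$ is locally $\mathrm{Hom}(P,\mathbb{R}_{\ge 0})\times\mathrm{Hom}(P^{\mathrm{gp}},S^1)$ (times the \'etale factor), with $j^{\log}$ the inclusion $\mathrm{Hom}(P^{\mathrm{gp}},\mathbb{R}_{>0})\hookrightarrow\mathrm{Hom}(P,\mathbb{R}_{\ge 0})$ on the radial coordinate and the identity elsewhere; as both $\mathrm{Hom}(P^{\mathrm{gp}},\mathbb{R}_{>0})\cong\mathbb{R}^{\operatorname{rk}P^{\mathrm{gp}}}$ and the cone $\mathrm{Hom}(P,\mathbb{R}_{\ge 0})$ are contractible, $j^{\log}$ is a local weak equivalence. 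Equivalently, in the $\SmlSm/k$‑model $X^{\log}$ is the real oriented blow‑up of $\underline X(\mathbb{C})$ along $\partial X(\mathbb{C})$, hence a manifold with corners whose interior is exactly $X^{\mathrm{triv}}(\mathbb{C})$, and the inclusion of the interior of a manifold with corners is always a homotopy equivalence. To globalize I would invoke that both $X\mapsto X^{\mathrm{triv}}(\mathbb{C})$ and $X\mapsto X^{\log}$ carry strict Nisnevich distinguished squares to excisive squares of spaces --- for the first this is the standard input to the existence of $\Real_{\Betti}$, for the second it is the strict‑Nisnevich descent built into the construction of $\Real_{\mathrm{KN}}$ in \S\ref{section:rf} --- so that a natural transformation between them which is a local equivalence is a global one.

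It remains to treat the stable case. Because $\mathbb{P}^1=(\mathbb{P}^1,\emptyset)$ carries the trivial log structure, $\omega_\sharpp(\mathbb{P}^1)=\mathbb{P}^1$ and $\Real_{\mathrm{KN}}(\mathbb{P}^1)=\mathbb{P}^1(\mathbb{C})=S^2=\Real_{\Betti}(\mathbb{P}^1)$; so all three stable functors are colimit‑preserving and compatible with $\mathbb{P}^1$‑suspension, and hence the $\mathbb{P}^1$‑suspended and $\mathbb{P}^1$‑desuspended unstable comparison transformation is an equivalence on the generators $\Sigma_{\mathbb{P}^1}^\infty X_+\otimes(\mathbb{P}^1)^{\otimes-m}$ ($X\in\lSm/k$, $m\ge 0$) of $\inflogSH(k)$, hence everywhere. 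The same reasoning gives the commutative triangle relating $\inflogSH(k)$, $\infSH(k)$ and $\infSpt$.
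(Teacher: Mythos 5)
Your proof is correct and takes essentially the same route as the paper: identify both composites on representables, note that the comparison map is the inclusion $(X-\partial X)(\mathbb{C})\hookrightarrow X^{\log}$ of the interior, show this is a homotopy equivalence, and conclude via the universal property/colimit-preservation (Proposition \ref{univlogSH.1}) and $\P^1$-compatibility for the stable case. The only difference is cosmetic: the paper cites Ogus's theorem that $X^{\log}$ is a manifold with boundary whose interior is $(X-\partial X)^{\Betti}$ together with the collar neighborhood theorem, whereas you give a local toric chart computation plus descent — and you in any case also record the equivalent manifold-with-corners/interior argument, which is exactly the paper's.
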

In light of the work of Vistoli and Talpo \cite{MR3869583}, one could consider more refined Betti realization functors, with value in a category of log analytic spaces. We leave this task to future work. 

When $k$ is an arbitrary field and $\ell$ is invertible in $k$, we construct in \S \ref{subsection:ler} a log \'etale $\ell$-adic realization functor
\[
\mathrm{Re}^{l\et}_\ell\colon \inflogSH_{l\et}(k) \to \mathcal{P}\mathrm{ro}(\infSpt)^{\Z/\ell}.
\]
This is obtained by considering the log \'etale homotopy type or, more precisely, the \emph{shape} of $X\in \lSch$, defined as $\Pi(\infShv(X_{l\et}, \infSpc))$ (see \cite[Chapter 7]{HTT} and the work of Hoyois \cite{Hoyoisetalesymmetric} and Zargar \cite{zbMATH07103864}). The basic input is given by the $\boxx$-invariance of Kummer \'etale cohomology with locally constant coefficients, proved in Theorem \ref{ketreal.1}. 
\

In summary, 
the fundamental theories considered in motivic homotopy theory are available in our logarithmic setting, but, in addition, important examples from $p$-adic geometry come into focus.

\begin{rmk}Let us offer a stacky perspective on our set-up, 
cf.\ \cite{zbMATH06552967}, \cite{zbMATH02128586}.
A rank one Deligne-Faltings log structure on a scheme $X$ is equivalent to a morphism 
$X\to [\mathbb{A}^{1}/\mathbb{G}_{m}]$; 
the quotient stack is formed with respect to the canonical $\mathbb{G}_{m}$-action by scaling.
Now, a morphism $f\colon X\to [\mathbb{A}^{1}/\mathbb{G}_{m}]$ is the datum of a 
$\mathbb{G}_m$-torsor on $X$, a.k.a., a line bundle, together with a section.  
For $X$ regular and noetherian, 
this is exactly the datum of a generalized Cartier divisor $D$ on $X$. 
In concrete terms, 
the closed subscheme $D$ can be recovered as $f$'s fiber over 
$\mathrm{B}\mathbb{G}_m \subset [\mathbb{A}^{1}/\mathbb{G}_{m}]$.
In particular, 
for the log scheme $\boxx$ and the divisor $\infty\in \mathbb{P}^{1}$ 
we obtain the cartesian square 
\begin{equation}\label{eq:whi}
\begin{tikzcd}
\{\infty\} \ar[d] \ar[r] & \mathbb{P}^1 \arrow[d] \\
\mathrm{B}\mathbb{G}_m \ar[r] & {[\mathbb{A}^{1}/\mathbb{G}_{m}].}
\end{tikzcd}    
\end{equation}
This provides a kind of analogy between the geometry of $\boxx$-homotopy invariance and 
Annala-Hoyois-Iwasa's work on weighted $\mathbb{A}^{1}$-homotopy invariance in 
$\mathbb{P}^{1}$-inverted stable homotopy theory for schemes ---
they show that the bottom horizontal map in \eqref{eq:whi} is an equivalence assuming 
elementary blow-up excision and $\P^1$-stability, see \cite{AHI}. 
Both of these properties hold by the construction of our logarithmic stable homotopy theory
and Theorem \ref{ProplogSH.5}. See \S \ref{section:mvlmt} for the construction of a comparison functor. 
\end{rmk}

\subsection{Discussions about non $\A^1$-invariant homotopy theory}
There are at least four approaches to non $\A^1$-invariant motivic homotopy theory.
Let us review their advantages and disadvantages as follows.
\begin{listi}
\item The interval $\boxx$ in $\inflogSH$ allows us to adapt various arguments in $\A^1$-homotopy theory to the non $\A^1$-invariant setting,
which is one of the main contents of this paper.
Representing a cohomology theory of schemes in $\inflogSH(k)$ demands, of course, its extension to log schemes.
This is often possible, as we have discussed, but requires extra effort, see, e.g., \cite{BLPO}, \cite{BLPO2}, and \cite{BLMP}.
The unstable categories $\inflogH(S)$ can be used as a replacement for $\infH(S)$ if one removes the $\A^1$-invariant axiom. To our knowledge this is completely new. 
\item The slight variant $\inflogSH^\Adm$ of $\inflogSH$ allows non-toroidal boundaries.
This is useful to remove the assumption of resolution of singularities,
but it is harder to represent a cohomology theory.
\item The theory of motives with modulus due to Kahn-Miyazaki-Saito-Yamazaki \cite{MR4442406} is specialized to understand the ramification behavior at the boundary,
which is not observable in $\inflogSH$ or $\inflogDM$.
For the sake of this,
representing cohomology theories would require even more effort. 
Of course, a representability result in motives with modulus is a priori a stronger statement. 
\item
A different perspective is given by the
$\P^1$-inverted homotopy theory, see Annala-Hoyois-Iwasa \cite{AHI}. 
To represent a cohomology theory in this setting,
one needs to know a priori elementary blow-up excision,
which can be obtained by another input (e.g., blow-up formula for $K$-theory) or deduced from $(\P^n,\P^{n-1})$-invariance if an extension to log schemes is known. 
This theory is stable by construction, 
while unstable computations are possible in our log setting. 
\end{listi}

\subsection{Outlook and future developments}\label{future}
We quickly review some of the consequences of the results presented here that we have either developed or will develop in upcoming works. 
\subsubsection{Residue sequences for $\logHH$} The Hochschild-Kostant-Rosenberg filtration on logarithmic $\HH$ was constructed by Binda, Lundemo, Park and \O stv\ae r in \cite{BLPO}.
The theory of orientations in \S \ref{section:oct} produces the Gysin sequences for $\logHH$.
Even when $X=\Spec{R}$ and $Z=\Spec{R/I}$ are smooth affine schemes, the Gysin sequence relating $\HH(R)$ and $\HH(R/I)$ is completely new if $\mathrm{codim}_Z(X)\geq 2$ since non-affine log schemes are involved.

The variants $\mathrm{logHC}^-$ and $\mathrm{logHP}$ are studied in \cite{BLPO2},
where logarithmic refinements of the Beilinson filtrations \cite{Ant19} are also constructed.

\subsubsection{Blow-up excision for $\mathrm{THR}$} Real topological Hochschild homology $\mathrm{THR}$ is another example of non $\A^1$-invariant cohomology theories.
The purpose of \cite{2305.04150} is to extend $\mathrm{THR}$ to log schemes and to show that it is $(\P^n,\P^{n-1})$-invariant.
Together with Theorem \ref{ProplogSH.5},
this implies that $\mathrm{THR}$ satisfies blow-up excision for closed immersions of smooth schemes.
This statement does not involve log schemes,
but the proof needs log schemes.

\subsubsection{Logarithmic prismatic cohomology via $\logTHH$} The purpose of \cite{BLPO2} is to study logarithmic refinements of the Bhatt-Morrow-Scholze filtrations \cite{BMS19} on $\THH$ and its variants.
In particular,
the definition of Nygaard completed logarithmic prismatic cohomology is obtained.
This study is further carried out in \cite{BLMP},
where
Binda, Lundemo, Merici, and Park
construct a $\P^1$-spectrum version of Nygaard completed logarithmic prismatic cohomology.
The theory of orientations in \S \ref{section:oct} again produces the Gysin maps for Nygaard completed logarithmic prismatic cohomology, and over a perfectoid base also for the non-Nygaard completed version. 
\subsubsection{Filtrations on $\Kthlog$ and $\logTC$} 
We can define the slice filtration on $\inflogSH$ that is directly analogous to Voevodsky's slice filtration \cite{MR1977582} on $\infSH$.
For a perfect field $k$ of characteristic $p$ admitting resolution of singularities,
\cite{BPO3} shows that the logarithmic cyclotomic trace morphisms
\[
\logKGL
\to
\mathbf{logHC}^-
\text{ and }
\logKGL
\to
\spectlogTC(-;\Z_p)
\text{ ($\mathrm{char}$ $p>0$)}
\]
are compatible with the slice filtration on the left-hand side and the Beilinson filtration on $\mathbf{logHC}^-$ \cite{BLPO2} and the Bhatt-Morrow-Scholze filtration on $\spectlogTC(-;\Z_p)$.
This argument breaks down in $\infSH(k)$ since $\mathrm{HC}^-$ and $\TC$ are not $\A^1$-invariant.

\subsubsection{Logarithmic cyclotomic trace}

By Theorem \ref{trace.8},
we obtain a natural log cyclotomic trace as a morphism of spectra
\(
\logTr
\colon
\Kth(X-\partial X)
\to
\logTC(X)
\)
for every log smooth fs log scheme $X$ with $\ul{X}$ smooth over a field $k$,
where $k$ is a perfect field admitting resolution of singularities.
A recent preprint \cite[Theorem F]{logSHF1} improves this and does not rely on resolution of singularities as follows: There exists a natural morphism of $\E_\infty$-rings
\(
\logTr
\colon
\Kth(X-\partial X)
\to
\logTC(X)
\)
for every log regular fs log scheme $X$ with $\ul{X}$ regular such that $\logTr$ agrees with the usual cyclotomic trace if $X$ has trivial log structure.

\subsection{Changes from the previous ArXiv version}
This paper was first published on ArXiv in March 2023 after being circulated informally since 2021. This version is a revision of the previously available ArXiv submission. For the readers' convenience, we highlight some of the most significant changes below.

First, we have minimized the use of model categories in this updated version, while the previous version employed model categories for certain constructions. As a result, we have streamlined many arguments in Section \ref{section:colmhc}.

We removed the noetherian assumption on \( S \) for \( \inflogSH(S) \). To achieve this, we introduced a logarithmic version of the noetherian approximation in Section \ref{noetherian}. This generalization is often desirable since many schemes in \( p \)-adic geometry are not noetherian. In a similar vein, some arguments are now available without the conventional “fs” assumption on the log scheme. Although we have not explored this systematically in this work, for instance, in \cite{BKV}, one must address non-fine log structures on integral perfectoid rings.

Additionally, in the previous version, \( \logTHH \) was defined only for noetherian fs log schemes. We have also eliminated this noetherian assumption by introducing an alternative equivalent definition, as outlined in Definition \ref{THHsheaf.2}.

\subsection{Notation}

We will use the following notation (see also the Index for a more comprehensive list).

\vspace{0.1in}

\begin{tabular}{l|l}
$\Sch$ &  quasi-compact and quasi-separated schemes
\\
$\Sch_{noeth}$ & noetherian schemes of finite Krull dimension
\\
$\lSch$ & quasi-compact and quasi-separated integral, saturated \\ &  quasi-coherent log schemes
\\
$\lSch_{noeth}$ & noetherian fs log schemes of finite Krull dimension
\\
$\Sm/S$ &  smooth schemes of finite type over a scheme $S$
\\
$\lSm/S$ &  log smooth fs log schemes of finite type over an fs log scheme $S\in \lSch$
\\
$\SmlSm/S$ &  full subcategory of $\lSm/S$ of log smooth fs log schemes $X$ such
\\ &
that the underlying scheme $\underline{X}$ is smooth over $\underline{S}$
\\
$\id\xrightarrow{ad} f_*f^*$ &  unit of an adjunction $(f^*,f_*)$
\\
$f^*f_*\xrightarrow{ad'}\id$ &  counit of an adjunction $(f^*,f_*)$
\\
$\Psh(\bC)$ &  presheaves (of sets) on a category $\bC$
\\
$\Shv_\tau(\bC)$ &  $\tau$-sheaves (of sets) on a category $\bC$ with respect to a topology $\tau$ 
\\
$\infPsh(\bC)$ &  $\infty$-category of presheaves of spaces on a category $\bC$
\\
$\infPsh(\bC)_*$ &  $\infty$-category of presheaves of pointed spaces on a category $\bC$
\\
$\infShv_\tau(\bC)$ &  $\infty$-category of $\tau$-sheaves of spaces on a category $\bC$
\\
$\infShv_\tau(\bC)_*$ &  $\infty$-category of $\tau$-sheaves of pointed spaces on a category $\bC$
\\
$\infSpc$ &  $\infty$-category of Kan complexes (spaces)
\\
$\infSpt$ &  $\infty$-category of spectra
\\
$\Map_{\infC}(-,-)$ & mapping space in an $\infty$-category $\infC$
\\
$\hom_{\infC}(-,-)$ & $\pi_0\Map_{\infC}(-,-)$
\\
$\map_{\infC}(-,-)$ & mapping spectra in a stable $\infty$-category $\infC$
\end{tabular}

\

We freely use some classical results from log geometry. Our main reference is Ogus' monograph on the subject \cite{Ogu}. See also Appendix A to \cite{logDM} for a short recollection. If $S$ is a (classical) scheme, $X$ is a smooth $S$-scheme and $D=D_1+\cdots +D_n$ is a strict normal crossing divisor on $X$ over $S$, we will use the ``adding boundary'' notation $(X,D)$ for the log smooth fs log scheme
with the underlying scheme $X$ and the log structure given by the Deligne-Faltings structure \cite[\S III.1.7]{Ogu}.
In this case, $\partial X:=D$ denotes the ``boundary'' of $X$.

If $Y$ is a log scheme, we will denote by $\underline{Y}$ the underlying scheme (without log structure, or considered as a log scheme with trivial log structure).

We will use the adding boundary notation for log schemes with nontrivial log structures too: If $Y$ is a log smooth fs log scheme over $S$ such that $\ul{Y}$ is smooth and $E$ is a divisor on $\ul{Y}$ such that $\partial Y+E$ is a strict normal crossing divisor on $\ul{Y}$ over $S$,
then we set
\[
(Y,E)
:=
(\ul{Y},\partial Y+E).
\]

We will often write $\infC$ instead of $\infC^\otimes$ for a symmetric monoidal $\infty$-category, abusively confusing $\infC$ with $\infC^{\otimes}_{\langle1\rangle}$.
For abbreviation, we will also write $\tau$ for the set of  \v{C}ech nerves of $\tau$-covers, where $\tau$ is a Grothendieck topology on a category $\cC$. For example, write $dNis$ for the set of  \v{C}ech nerves of $dNis$-covers.

\subsection{Acknowledgments}
We thank Ben Antieau, Joseph Ayoub, Marc Hoyois, Tommy Lundemo,  Alberto Merici and Teruhisa Koshikawa
for their encouragement and helpful comments.
The authors gratefully acknowledge the hospitality and support of the 
Centre for Advanced Study at the Norwegian Academy of Science and Letters in Oslo, Norway, 
which funded and hosted the research project “Motivic Geometry" during the 2020/21 academic year, 
and the RCN Frontier Research Group Project no. 250399 “Motivic Hopf Equations" and 
no. 312472 “Equations in Motivic Homotopy."
Binda would like to thank the Isaac Newton Institute for Mathematical Sciences for support and hospitality during the program ``$K$-theory, algebraic cycles, and motivic homotopy theory'' in 2022 when part of the work on this paper was carried out,
partially supported by the EPSRC Grant EP/R014604/1 (UK) and by the PRIN 2022 'The arithmetic of motives and L-functions' at MUR (Italy).  
Park was partially supported by the research training group GRK 2240 ``Algebro-Geometric Methods in Algebra, 
Arithmetic and Topology.''
{\O}stv{\ae}r was partially supported by a Guest Professorship awarded by The Radboud Excellence Initiative.
The work on this paper was supported by The European Commission -- Horizon-MSCA-PF-2022 
``Motivic integral $p$-adic cohomologies."

Finally, we are very grateful to the referee for the detailed and constructive comments, 
which significantly improved the exposition and simplified some proofs.

\newpage

\section{Construction of logarithmic motivic homotopy categories}
\label{section:colmhc}
The purpose of this section is to provide $\infty$-categorical constructions 
for our homotopy categories:
\[
\inflogH,
\;
\inflogHpt,
\;
\inflogSHS, 
\;
\inflogSH,
\;
\inflogDAeff,
\text{ and }
\inflogDA.
\]
The linear versions $\inflogDAeff= \inflogDAeff(S, \Lambda)$ and $\inflogDA(S, \Lambda)$ appeared before in
\cite{logDM}, \cite{BLPO}, and are constructed as localization and $\P^1$-stabilization of the category of presheaves of chain complexes of abelian groups on the category of log smooth log schemes over a fixed log scheme $S$. As in $\A^1$-homotopy theory, there are of course non-linear versions, constructed starting from presheaves of spaces or of spectra. 

The basic idea is the following. For every $S\in \Sch$, 
Morel-Voevodsky \cite{MV} constructed their unstable $\A^1$-motivic homotopy category $\infH(S)$ 
by using the following ingredients:
\[
\text{(1) $\infPsh(\Sm/S)$, (2) Nisnevich topology, (3) $\A^1$-localization}.
\]
Starting with any $S\in \lSch$, 
we construct our unstable logarithmic motivic homotopy category $\inflogH$ by using:
\[
\text{(1) $\infPsh(\lSm/S)$, (2) dividing Nisnevich topology, (3) $(\P^1, \infty)$-localization}.
\]
We will often abbreviate the log scheme $(\P^1, \infty)$ using the symbol $\boxx$. 
The dividing Nisnevich topology was introduced in \cite[Definition 3.1.4]{logDM} as a finer variant of the (strict) Nisnevich topology, obtained by including proper surjective log \'etale monomorphisms as covers.
It played an important role in the development of logarithmic motives in \cite[\S 7]{logDM}.

We will also consider an alternative but equivalent construction, practically useful in many contexts (this is used extensively in \cite{BLPO} and \cite{BLMP}), using 
\[
\text{(1) $\infPsh(\SmlSm/S)$, (2) strict Nisnevich topology, (3) $(\P^n, \P^{n-1})$-localization, $n\geq 1$}.
\]

In our treatment of the general formalism for the construction of motivic $\infty$-categories (suitably tailored to the particular situations of the $\boxx$-localization, or the $(\P^n, \P^{n-1})_{n\geq 1}$-localization), we follow the presentation of Ayoub--Gallauer--Vezzani \cite{AGV}.

\subsection{Stabilizations, spectra and symmetric spectra}\label{sec:spectra} We briefly review the process of constructing the stabilization of an $\infty$-category, and the formal inversion of a given object, see \cite[\S 2.2]{RobaloThesispublished}.

\begin{const}
\label{alter.4}
Let $\infC\in \LPr$ be a presentable $\infty$-category, and suppose we are given an adjunction
\begin{equation}
\label{alter.4.4}
G:
\infC \rightleftarrows \infC
:U.
\end{equation}
The \emph{$G$-stabilization of $\infC$} is defined to be\index[notation]{StabGC @ $\Stab_G(\infC)$}
\begin{equation}
\label{alter.4.3}
\Stab_G(\infC)
:=
\limit(\cdots \xrightarrow{U}\infC \xrightarrow{U}\infC),
\end{equation}
where the limit is taken in $\infCat_\infty$, and we view the diagram $(\cdots \xrightarrow{U}\infC \xrightarrow{U}\infC)$ as a functor of $\infty$-categories $\N^\mathrm{op}\to \infCat_\infty$.

Observe that by \cite[Corollary 3.3.3.2]{HTT}, an object of $\Stab_G(\infC)$ can be written as a sequence of objects $(X_i)_{i\in \mathbb{N}}$ of $\infC$,
together with equivalences (the \emph{bonding maps}) $X_i \xrightarrow{\simeq} U X_{i+1}$ in $\cC$ for all integers $i\geq 0$. 
Such a sequence is called a \emph{$G$-spectrum}\index{spectrum}. 
Morphisms are given by maps of sequences with obvious compatibility conditions. 
 \begin{rmk} 
The inclusion $\RPr\to \infCat_\infty$ preserves small limits by \cite[Proposition 5.5.7.6]{HTT}, and the equivalence of $\infty$-categories
\begin{equation}
\label{alter.4.1}
(\RPr)^\mathrm{op}\xrightarrow{\simeq} \LPr
\end{equation}
in \cite[Corollary 5.5.3.4]{HTT} sends a functor in $\RPr$ to its left adjoint.
Then from \eqref{alter.4.1} we obtain an equivalence of $\infty$-categories
\begin{equation}
\label{alter.4.2}
\Stab_G(\infC)
\simeq
\colim_{\LPr}(\infC\xrightarrow{G}\infC \xrightarrow{G}\cdots)
\end{equation}
where the colimit is taken in $\LPr$, and we view the diagram $(\infC\xrightarrow{G}\infC \xrightarrow{G}\cdots)$ as a functor of $\infty$-categories $\N\to \LPr$.
\end{rmk}

The identity functors from $\cC$ to the 0th $\cC$ in the colimit \eqref{alter.4.2} and from the 0th $\cC$ in the limit \eqref{alter.4.3} to $\cC$ naturally induce functors\index[notation]{SigmaGinfty @ $\Sigma_G^\infty$}\index[notation]{OmegaG @ $\Omega_G^\infty$}
\begin{equation}
G^\infty
:
\infC
\rightleftarrows
\Stab_G(\infC)
:
U^\infty.
\end{equation}
Using \eqref{alter.4.1}, this can be shown to be an adjoint pair.
The functor $G^\infty$ is called the \emph{infinite suspension functor}, and the functor $U^\infty$ is called the \emph{infinite loop space functor}.

For a $G$-spectrum $X:=(X_0,X_1,\ldots)$, we have 
\begin{equation}
U^\infty X\simeq X_0.
\end{equation}
\end{const}

We will use the construction of $\Stab_{G}(\infC)$ when $\infC = \infC_{\langle1\rangle}^{\otimes}$ for a presentably symmetric monoidal  $\infty$-category $\infC^\otimes$ (i.e.,~an object of $\mathrm{CAlg}(\Pr^{\rm L})$) and $G:=\Sigma_B:=(-)\otimes B$ for some object $B\in \infC$. We denote by $\Omega_B$ its right adjoint, and by $\Stab_B(\infC)$ the resulting $\infty$-category. 

In good cases, the $\Sigma_B$-stabilization is equipped with a particularly nice symmetric monoidal structure, using the presentation given by the $\infty$-category of symmetric $B$-spectra in $\infC$ introduced by Hovey in \cite{HoveySpectra}.  Recall  from \cite[Definition 2.16]{RobaloThesispublished} that an object $B$ in a symmetric monoidal $\infty$-category $\cC^\otimes$ is called \emph{symmetric} if the cyclic permutation $\sigma_3$ on $B\otimes B \otimes B$ is equivalent to the identity.

\begin{prop}\label{prop:def-spt}\label{alter.1} Assume that $B$ is a symmetric object of $\infC^\otimes$, and let $\infSpt_B^\Sigma(\infC)^\otimes$ denote the $\infty$-category of \emph{symmetric spectra} as in \cite[Section 2.3]{RobaloThesispublished}.
    Then $\infSpt_B^\Sigma(\infC)^\otimes$ is a  stable presentably symmetric monoidal $\infty$-category, and it is naturally equivalent to the formal inversion  of the object $B$ in $\infC$ constructed in \cite[Proposition 2.9]{RobaloThesispublished}. The underlying $\infty$-category $\infSpt_B^\Sigma(\infC)_{\langle 1 \rangle}$ is equivalent to $\Stab_B(\infC)$. 
    \begin{proof}
Combine \cite[Corollary 2.22, Theorem 2.26]{RobaloThesispublished}.
    \end{proof}
\end{prop}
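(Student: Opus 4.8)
The plan is to reduce the statement to Robalo's analysis of $\otimes$-inversion, by recognizing our Day-convolution presentation as his symmetric-spectra model. First I would note that the underlying $\infty$-category of $\Sym^\infty(*)$ is $\coprod_n B\Sigma_n$, so that $\cC^\Sigma=\Fun^{\rm L}(\Sym^\infty(*),\infC)^\otimes$ equipped with the Day convolution product is exactly Robalo's symmetric monoidal $\infty$-category of symmetric sequences (cf.\ \cite[\S 2.1]{RobaloThesispublished}, \cite[\S 6.3]{HA}). Under this identification, $S_B\in\CAlg(\infC^\Sigma)$ is his free commutative algebra on $B\{-1\}$, $\mathrm{Mod}_{S_B}(\infC^\Sigma)$ is his $\infty$-category of symmetric $B$-pre-spectra, and the Bousfield localization at the maps $\gamma_M$ of \eqref{eq:adjoint_assembly_map} is his model for the formal inversion $\infC[B^{-1}]$ of \cite[Proposition 2.9]{RobaloThesispublished}.

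Granting this dictionary, presentability together with the symmetric monoidal structure is assembled in three steps. Since $\Sym^\infty(*)$ is small and $\infC^\otimes\in\CAlg(\LPr)$, Day convolution produces $\infC^\Sigma\in\CAlg(\LPr)$ (\cite[\S 2.2.6]{HA}); modules over the commutative algebra object $S_B$ again form an object of $\CAlg(\LPr)$ by \cite[\S 3.4, \S 4.5]{HA}; finally the maps $\gamma_M$ may be taken to range over an essentially small set, using accessibility of $\infC^\Sigma$, so the Bousfield localization is presentable, and it is a \emph{monoidal} localization because the class of $\gamma_M$-equivalences is stable under tensoring --- this is precisely \cite[Corollary 2.22]{RobaloThesispublished}. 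Hence $\infSpt_B^\Sigma(\infC)^\otimes\in\CAlg(\LPr)$.

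The identification of $\infSpt_B^\Sigma(\infC)^\otimes$ with the formal inversion of $B$, and the computation of its underlying $\infty$-category, are then \cite[Theorem 2.26]{RobaloThesispublished}; its proof uses essentially that $B$ is \emph{symmetric}, which is what makes the symmetric-spectra model agree with the sequential $\Stab_B(\infC)$ of \eqref{alter.4.3} and renders $-\otimes B$ an equivalence on the localization. Stability follows from the same source: once $B$ is $\otimes$-invertible, so is any of its tensor factors, and in the situations relevant here $S^1$ is such a factor (e.g.\ $\P^1\simeq S^1\otimes\Gmm$ by \eqref{equation:logmotivicsphere}), so the $S^1$-suspension is invertible on $\infSpt_B^\Sigma(\infC)$ and the category is stable.

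The only genuine work is the bookkeeping in the first step: checking that the combinatorial constructions $(-)\{-1\}$, $S_B$, and the assembly maps $\gamma_M$ written here via Day convolution agree on the nose with Robalo's, so that \cite[Corollary 2.22, Theorem 2.26]{RobaloThesispublished} apply verbatim. I expect this comparison to be the main, though mild, obstacle; the remaining assertions are citations.
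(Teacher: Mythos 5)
Your proposal is correct and takes essentially the same route as the paper, whose entire proof is the single citation ``Combine [Robalo, Corollary 2.22, Theorem 2.26]'': you merely make explicit the dictionary between the Day-convolution presentation of symmetric sequences, $S_B$-modules, and the $\gamma_M$-localization on the one hand and Robalo's model on the other, which is exactly the bookkeeping the paper leaves implicit. Your added remark that stability is not automatic for an arbitrary symmetric $B$ but holds in the relevant cases because $B$ has $S^1$ as a tensor factor (or because $\infC$ is already stable) is a fair and slightly more careful reading of the statement than the paper itself offers.
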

The construction provides a morphism $\Sigma^\infty_B\colon \infC^\otimes \to \infSpt_B^\Sigma(\infC)^\otimes$ in $\CAlg(\LPr)$, sending $B$ to a $\otimes$-invertible object, and the category $\infSpt_B^\Sigma(\infC)^\otimes$ is initial for this property. We denote by $\Omega^\infty_B$ its right adjoint. 
We refer to \cite{RobaloThesispublished} for more details.\index[notation]{infSptB @ $\infSpt_B^\Sigma$}

\subsection{Topologies on log schemes}\label{ssec:topologies}
In this subsection, 
we will review various cd-structures and topologies on  log schemes that will be used throughout this paper, and we briefly review some useful notions from log geometry that we will use in a slightly non-standard level of generality. We refer to \cite[Appendix A]{Koshikawa} for a more detailed discussion. All schemes are assumed to be quasi-compact and quasi-separated.  

Recall that for a pre-log ring $(A,M_A)$, we write $\Spec{A,M_A}$ for the log scheme corresponding to $(A,M_A)$, that is, the log scheme whose underlying scheme is $X=\Spec{A}$ and with log structure given by $(M_A\to \mathcal{O}_X)^a$, where $(-)^a$ denotes the associated log structure to the pre-log structure $M_A\to \mathcal{O}_X$. Here, we see $M_A$ as constant sheaf of monoids on $\Spec{A}$.  In this paper we are only interested in quasi-coherent log schemes, i.e., log schemes $X=(\underline{X},\mathcal{M}_X)$ such that the sheaf of monoids $\mathcal{M}_X$ is quasi-coherent in the sense of \cite[Definition II.2.1.5]{Ogu}, i.e., such that Zariski locally on $X$ there is a chart. 

\begin{df} \label{def:logsmooth}
Let $S$ be a  quasi-coherent integral log scheme. A morphism $X\to S$ of quasi-coherent integral log schemes is \emph{log smooth}\index{log smooth morphism} (resp.\ \emph{log \'etale}\index{log \'etale morphism}) if, \'etale locally on $\underline{X}$ and $\underline{S}$ there exists a fine log structure  $\mathcal{M}_{S, 0}$ on $\underline{S}$ such that $\mathcal{M}_{S,0} \subseteq \mathcal{M}_S$, and a log smooth (resp.\ log \'etale) morphism $(\underline{X}, \mathcal{M}_{X,0}) \to (\underline{S}, \mathcal{M}_{S,0})$ in the sense of \cite[Definition IV.3.1.1]{Ogu} or \cite[\S 3.3]{KatoLog} such that $X \cong (\underline{X}, \mathcal{M}_{X,0}) \times_{(\underline{S}, \mathcal{M}_{S,0})} S$.

Note that the class of log smooth (resp.\ log \'etale) morphisms in the previous sense is closed under composition and base-change (in the category of integral log schemes). Also, note that by assumption, the underlying morphism of schemes $\underline{X}\to \underline{S}$ is locally of finite presentation. 
\end{df}
\begin{rmk} Let $S$ be a quasi-coherent integral log scheme. 
    In light of \cite[Proposition 2.4]{BKV}, one has the following result. Let $f\colon X\to Y$ be any morphism of log smooth log schemes over $S$. Then, \'etale locally on $X$, $Y$ and $S$, there is a fine log structure $\mathcal{M}_{S,0} \subseteq \mathcal{M}_{S}$ on $\underline{S}$ and a morphism $f_0\colon (\underline{X}, \mathcal{M}_{X,0})\to (\underline{Y}, \mathcal{M}_{Y,0})$ of log smooth log schemes over $(\underline{S}, \mathcal{M}_{S,0})$ such that $f$ is the pullback of $f_0$ along $(\underline{S}, \mathcal{M}_{S,0}) \to S$.  See also Proposition \ref{noetherian.3} below.
\end{rmk}

\begin{df}\label{def:chart_logsmooth}
    Let $S=\Spec{A,M_A}$ be an affine integral log scheme. A morphism $X\to S$ of log schemes is \emph{chart log smooth} (resp.\ \emph{chart log \'etale}) if, \'etale locally on $\underline{X}$, there exists a chart $P \to \Gamma(X, \mathcal{M}_X)$ over $M_A$ such that the following conditions hold.
    \begin{enumerate}
        \item $P$ is integral and relatively coherent over $M_A$, that is, $P$ is finitely presented over $M_A$;
        \item the map $M_A^\gp \to P^\gp$ is injective, and the torsion part of its cokernel is a finite group of order invertible in $A$ (resp.\ the cokernel is a finite group of order invertible in $A$);
        \item the induced morphism of schemes $X\to \Spec{A} \times_{\Spec{A[M_A]}} \Spec{A[P]}$ is \'etale.
    \end{enumerate}
\end{df}
\begin{rmk}
   (1) This definition depends a priori on the choice of the chart $M_A\to A$. However, if $M_A$ and $\mathcal{M}_X$ are fine, then the above notion of smoothness coincides with Kato's original definition of smoothness in terms of infinitesimal lifting property \cite[\S 3.3]{KatoLog}.  In particular, it is independent of the choice of the chart. See \cite[Remark A.13]{Koshikawa}.

    (2) One could insist on an additional condition, namely that the morphism $M_A\to P$ is integral. This guarantees that the underlying morphism of schemes is flat. This is automatically satisfied when $M_A$ is a valuative monoid in light of \cite[Proposition I.4.6.3(5)]{Ogu}. 
    \end{rmk}
     \begin{prop}Let $f\colon X\to S$ be a morphism of integral quasi-coherent log schemes. Then $f$ is log smooth in the sense of Definition \ref{def:logsmooth} if and only if, strict \'etale locally on $X$ and $S$, it is chart log smooth.
     \begin{proof} This is essentially discussed in \cite{Koshikawa}: we review the argument. A log smooth morphism in the sense of Definition \ref{def:logsmooth} is chart log smooth, since locally it is the base change of a log smooth morphism locally of finite presentation between fine log schemes, and Kato's chart lemma applies. Conversely, assume that $S= \Spec{A,M_A}$ and that $f$ satisfies the conditions of Definition \ref{def:chart_logsmooth}. 
     The assumption that $P$ is finitely presented (as monoid) over $M_A$ implies that $A[P]$ is of finite presentation (as a ring) over $A[M_A]$ so that, in particular, $\underline{X}$ is locally of finite presentation over $A$. By \cite[Proposition A.15]{Koshikawa}, this implies the following: there exists a \emph{fine} submonoid $M_{A,0} \subset M_A$ and a fine log structure $\mathcal{M}_{X,0}$ on $X$ with a chart $M_{A,0} \to P_0$ such that $(X, \mathcal{M}_{X,0}) \to \Spec{A, M_{A,0}}$ is log smooth in the sense of \cite[\S 3.3]{KatoLog}, and $(X, \mathcal{M}_X) = (X, \mathcal{M}_{X,0})\times_{\Spec{A, M_{A,0}}} \Spec{A, M_{A}}$. This completes the proof. \end{proof}
     \end{prop}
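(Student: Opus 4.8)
The plan is to unwind the two definitions and match them up, treating each implication separately; in both directions the substantive input is a classical chart lemma from log geometry, so the argument is essentially a verification that the conditions correspond. I would first record the harmless reduction that a strict \'etale morphism of log schemes is the same datum as an \'etale morphism of underlying schemes carrying the pulled-back log structure, so that strict-\'etale-local conditions on $X$ and $S$ translate into \'etale-local conditions on the underlying schemes; after localizing on $S$ we may then assume $S=\Spec{A,M_A}$ is affine with its global chart $M_A$, which is the setting in which Definition \ref{def:chart_logsmooth} is phrased.

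For the implication that Definition \ref{def:logsmooth} implies chart log smoothness: after localization $f$ is the base change along $(\underline S,\mathcal{M}_{S,0})\to S$ of a log smooth (in Kato's sense) morphism $f_0\colon(\underline X,\mathcal{M}_{X,0})\to(\underline S,\mathcal{M}_{S,0})$ of fine log schemes, with $\mathcal{M}_{S,0}\subseteq\mathcal{M}_S$ and $\underline X\to\underline S$ locally of finite presentation. The plan is to apply Kato's chart lemma for log smooth morphisms of fine log schemes (\cite[\S 3]{KatoLog}, \cite[IV.3.3]{Ogu}): \'etale locally, $f_0$ admits a chart $Q\to P$ of fine monoids such that $Q^\gp\to P^\gp$ is injective with the torsion of its cokernel finite of order invertible on $\underline S$, and $\underline X\to\underline S\times_{\Spec{\Z[Q]}}\Spec{\Z[P]}$ is \'etale. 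The base change leaves $\underline X$ unchanged and only replaces $\mathcal{M}_{X,0}$ by $\mathcal{M}_X$, so it suffices to produce the chart at the level of monoids: after refining the charts \'etale-locally so that $Q$ maps to $M_A$, the monoid $P':=(P\oplus_Q M_A)^{\mathrm{int}}$ is a chart for $\mathcal{M}_X$ over $M_A$. Then condition (2) of Definition \ref{def:chart_logsmooth} holds because the cokernel of $M_A^\gp\to P^\gp\oplus_{Q^\gp}M_A^\gp$ is canonically that of $Q^\gp\to P^\gp$; condition (3) holds because $A[P']$ is, up to integralization, $A[P]\otimes_{A[Q]}A[M_A]$, so the structural map of schemes is a base change of the one for $f_0$; and relative coherence over $M_A$ is inherited from finite presentation of $P$ over $Q$. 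The one genuinely delicate point is controlling the integralization $(-)^{\mathrm{int}}$, so that $P'$ remains integral and the structural scheme map remains \'etale, and this is where the special shape of a Kato chart is used.

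For the converse: with $S=\Spec{A,M_A}$ and $f$ admitting, \'etale locally on $\underline X$, a chart $P\to\Gamma(X,\mathcal{M}_X)$ over $M_A$ satisfying conditions (1)--(3), the finite presentation of $P$ over $M_A$ makes $A[P]$ of finite presentation over $A[M_A]$, hence $\underline X$ locally of finite presentation over $A$. The plan is to descend to a fine model by logarithmic noetherian approximation: by \cite[Proposition A.15]{Koshikawa} there exist a fine submonoid $M_{A,0}\subseteq M_A$ and a fine log structure $\mathcal{M}_{X,0}$ on $\underline X$ with a fine chart $M_{A,0}\to P_0$ such that $(\underline X,\mathcal{M}_{X,0})\to\Spec{A,M_{A,0}}$ is log smooth in Kato's sense and $\mathcal{M}_X$ is its base change along $\Spec{A,M_A}\to\Spec{A,M_{A,0}}$; taking $\mathcal{M}_{S,0}$ to be the log structure associated to $M_{A,0}$, which lies in $\mathcal{M}_S$ since $M_{A,0}\subseteq M_A$, produces exactly the data required by Definition \ref{def:logsmooth}. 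I expect this converse direction to be the main obstacle, and it is concentrated entirely in \cite[Proposition A.15]{Koshikawa}: that is the genuine limit/approximation statement asserting that a finitely presented chart log smooth morphism descends to a fine Kato-log-smooth one, while the remainder of the argument is bookkeeping. For this reason I would, like the authors, present the proposition as a review of \cite{Koshikawa} rather than redeveloping the approximation machinery from scratch.
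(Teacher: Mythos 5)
Your proof follows essentially the same route as the paper: the forward direction reduces, strict \'etale locally, to a base change of a Kato-log-smooth morphism of fine log schemes and invokes Kato's chart lemma (you merely spell out the pushout chart $(P\oplus_Q M_A)^{\mathrm{int}}$ that the paper leaves implicit), and the converse is the paper's argument verbatim — finite presentation of $P$ over $M_A$ gives $\underline{X}$ locally of finite presentation over $A$, and then \cite[Proposition A.15]{Koshikawa} supplies the fine submonoid $M_{A,0}\subseteq M_A$ and fine log structure $\mathcal{M}_{X,0}$ realizing $f$ as a base change, which is exactly Definition \ref{def:logsmooth}. No gaps; the extra care you take with the integralization in the forward direction is a reasonable elaboration of what the paper compresses into ``Kato's chart lemma applies.''
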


\begin{rmk}
    From this moment on, we will further restrict our objects of study to be quasi-coherent \emph{saturated} log schemes. This condition is commonly satisfied in all concrete applications.
\end{rmk}

We freely use some basic properties of toric varieties and fans. Suppose that $P$ is an fs monoid.
Let $\Spec{P}$ be the monoscheme associated with $P$ as in \cite[Definition II.1.2.1]{Ogu}
(denoted by $\mathrm{spec}(P)$ in loc.\ cit.).
Let $\A_P$ be the fs affine log scheme $\Spec{P\to \Z[P]}$, 
see the discussion following \cite[Definition III.1.2.3]{Ogu}.

There is a fully faithful functor from the category of toric fans to the category of monoschemes, 
see \cite[Theorem II.1.9.3]{Ogu}.
If $P$ is an fs monoid such that $P^\gp$ is torsion-free, then $\Spec{P}$ corresponds to the fan in the lattice $(P^\gp)^\vee$ with a single maximal cone $P^\vee$, 
where $(-)^\vee$ denotes the dual cone or lattice.
If $\Sigma$ is a toric fan, 
we let $\T_\Sigma$ denote the associated log scheme in the sense of \cite[p.\ 276, \S III.1.2]{Ogu}.
Observe that the underlying scheme $\ul{\T_{\Sigma}}$ is the toric variety associated with $\Sigma$.
For every fs log scheme $X$,
we set
\[
X-\partial X
:=
\{x\in X:
\ol{\cM}_{X,x}=0\}.
\]
This is an open subset of $X$ by \cite[Proposition III.1.2.8]{Ogu}.
We regard $X-\partial X$ as an open subscheme of $X$.
\begin{rmk}
In fact, the proof of \cite[Proposition III.1.2.8]{Ogu} works under the weaker assumption that $X$ is coherent, that is, if Zariski locally on $X$ there is a chart subordinate to a finitely generated monoid (in particular, without the assumption of being integral or saturated).
\end{rmk}

\begin{df}\label{df:dividing_cover}
Suppose $f\colon Y\to X$ is a morphism in $\lSch$.
We say that $f$ is a \emph{dividing cover} \index{dividing cover} if $f$ is a proper surjective log  \'etale monomorphism.

In this case, the naturally induced morphism of schemes $Y-\partial Y\to X-\partial X$ is an isomorphism by 
\cite[Remark 3.1.7(1)]{logDM}.
\end{df}

\begin{exm}
\label{logtop.8}
Every log blow-up \cite[Definition III.2.6.2]{Ogu} is a dividing cover, see \cite[Example A.11.1]{logDM}.
\end{exm}

\begin{exm}
\label{logtop.8bis} Here's an elementary way to produce dividing covers.
Suppose $X$ is an fs log scheme with a strict morphism $X\to \T_\Sigma$, 
where $\Sigma$ is a fan.
For any subdivision of fans $\Sigma'\to \Sigma$, 
the naturally induced morphism $\T_{\Sigma'}\to \T_{\Sigma}$ is Zariski locally of the form 
$\A_{\theta}\colon \A_Q\to \A_P$, 
where $\theta\colon P\to Q$ is a monomorphism of fs monoids such that $P^\gp\to Q^\gp$ is an isomorphism of 
abelian groups.
Hence the induced morphism $\mathbb{A}_{\theta}$ is log \'etale by \cite[Corollary IV.3.1.10]{Ogu}. By base change, the same holds for the projection $p\colon X\times_{\T_\Sigma}\T_{\Sigma'}\to X$.
Since $\ul{\T_{\Sigma'}}\to \ul{\T_{\Sigma}}$ is proper, the same holds for  $p$.
By \cite[Lemma A.11.4]{logDM}, $p$ is universally surjective.
In conclusion, $p$ is a dividing cover. 
Geometrically, any fan subdivision as above is realized as a blow-up of the corresponding toric variety $\mathbb{T}_\Sigma$. A typical example of a fan subdivision arises in the resolution of singularities of $\mathbb{T}_\Sigma$. 
\end{exm}

\begin{rmk}
\label{Ori.75}
A morphism $f\colon Y\to X$ is called a \emph{log modification} \index{log modification} if Zariski locally on $X$, 
there exists a log blow-up $g\colon Z\to Y$ such that $fg$ is a log blow-up too.
In \cite[Proposition A.11.9]{logDM}, it was claimed that a log modification is the same as a dividing cover.
However, this is false, see the second part of \cite[Example IV.4.3.4]{Ogu}.
To fix this, 
one replaces ``log modification'' by ``dividing cover'' in \cite{logDM}, 
so that \cite[Proposition A.11.9]{logDM} becomes obsolete.
Example A.11.8 is the only part in \cite{logDM} where the definition of log modification is used;
however, we can replace this by Example \ref{logtop.8bis}.
In summary, 
this correction does not affect the content of \cite{logDM}.
\end{rmk}

We refer to \cite[Definition 3.3.22]{logDM} and \cite[Definition 2.10]{Vcdtop} for the notions of 
quasi-bounded cd-structures and regular cd-structures. Recall also that a cd-structure $P$ is called \emph{squarrable}\index{squarrable} if for every morphism $Z\to X$, 
the fiber product $Q\times_X Z$ is representable and belongs to $P$.

\begin{df}
\label{logtop.2}
Suppose $S\in \lSch$.
\begin{enumerate}
\item[(1)]
The \emph{strict Nisnevich} cd-structure\index{strict Nisnevich cd-structure} consists of cartesian squares of the form
\[
\begin{tikzcd}
Y'\ar[r,"g'"]\ar[d,"f'"']&
Y\ar[d,"f"]
\\
X'\ar[r,"g"]&
X
\end{tikzcd}
\]
in $\lSch/S$ such that $g$ is an open immersion, $f$ is strict Nisnevich, and the morphism $f^{-1}(\ul{X'}-g(\ul{X}))\to \ul{X'}-g(\ul{X})$ with reduced scheme structures is an isomorphism.
\item[(2)]
The \emph{dividing} cd-structure \index{dividing cd-structure} consists of cartesian squares of the form
\[
\begin{tikzcd}
\emptyset\ar[d]\ar[r]&
Y\ar[d,"f"]
\\
\emptyset\ar[r]&
X
\end{tikzcd}
\]
in $\lSch/S$ such that $f$ is a dividing cover in the sense of Definition \ref{df:dividing_cover}.
\end{enumerate}
The \emph{dividing Nisnevich} cd-structure \index{dividing Nisnevich cd-structure} 
is the union of the strict Nisnevich cd-structure and dividing cd-structure.

On $\lSch$ we define the \emph{strict Nisnevich topology} to be the topology associated with the strict Nisnevich cd-structure. Similarly, we define the \emph{dividing Nisnevich topology} on $\lSch$ as the topology associated with the dividing Nisnevich cd-structure.
Analogously, one can define the \emph{dividing topology} as the topology associated with the dividing cd-structure.
\index{strict Nisnevich topology}\index{dividing topology}\index{dividing Nisnevich topology.}

We let $sNis$, $dNis$, and $div$ be shorthand for these 
topologies.\index[notation]{sNis @ $sNis$}\index[notation]{div @ $div$}
\end{df}\index[notation]{dNis @ $dNis$}

\begin{rmk}
Every proper surjective \'etale monomorphism of schemes is an isomorphism by \cite[Th\'eor\`eme IV.17.9.1]{EGA}.
On the other hand, 
Example \ref{logtop.8bis} yields oodles of 
proper surjective log  \'etale monomorphisms between fs log schemes that are not isomorphisms. 
The basic underlying idea behind the dividing topology is to eliminate this discrepancy. 
Indeed, 
every dividing cover becomes an isomorphism in the category of dividing sheaves. 

The Nisnevich topology plays a fundamental role in the theory of motives, e.g., in \cite{MV}.
The strict Nisnevich topology is our log geometric generalization of the Nisnevich topology for schemes.
To our delight, 
the dividing Nisnevich topology inherits the desirable properties of both the strict Nisnevich topology 
and the dividing topology.
\end{rmk}

\begin{rmk}
    Let us offer a different perspective on the dividing topology. Elementary examples of log schemes are constructed by considering compactifications of classical schemes, that is, embeddings into proper schemes over the base. The choice of such embedding is not canonical, as we can clearly construct new compactifications from old ones by taking blow-ups with center in the boundary, which is precisely the locus where the log structure is nontrivial. The effect of imposing dividing descent in the category of log motives is to get rid of some of these choices, by inverting a large class of blow-ups with center in the support of the log structure.
\end{rmk}

\begin{exm}
The cartesian commutative square
\[
\begin{tikzcd}
\A_{\N}\ar[d]\ar[r]&
\boxx\ar[d]
\\
\A^1\ar[r]&
\P^1
\end{tikzcd}
\]
is not a strict Nisnevich distinguished square, 
despite the claim made to the contrary in \cite[Example 5.2.11]{logDM} 
since the morphism $\boxx\to \P^1$ is not strict.
Nevertheless, it is cocartesian in the category of log motives.
We refer to Proposition \ref{logH.4} for a more general statement.
\end{exm}

We will often consider other topologies in order to construct useful motivic categories. It is always possible to ``pull back'' a topology from schemes to log schemes, as follows.

\begin{df}
\label{stricttau}
For a topology $\tau$ of schemes,
the \emph{strict $\tau$-topology on log schemes} consists of the families of strict morphisms $\{U_i\to X\}_{i\in I}$ such that $\{\ul{U_i}\to \ul{X}\}_{i\in I}$ is a $\tau$-covering.
An example of strict topology is the strict Nisnevich topology mentioned above. 
As another example, the \emph{strict \'etale topology}\index{strict \'etale topology} is generated by the families $\{u_i\colon U_i\to X\}_{i\in I}$ such that each $u_i$ is strict \'etale and $\amalg_{i\in I}U_i\to X$ is surjective. We write $\set$, for this topology. \index[notation]{set @ $\set$}
\end{df}

Recall that a morphism $\theta\colon P\to Q$ of integral monoids is called Kummer if it is injective and $\mathbb{Q}$-surjective (that is, for all $x\in Q$, there exists $n\in \Z$ such that $nx\in \theta(P)$). A morphism of integral log schemes $X\to Y$ is Kummer if the induced homomorphisms
$\ol{\cM}_{Y,f(x)}\to \ol{\cM}_{X,x}$ are Kummer for all $x\in X$.
We refer to  \cite[\S 2]{MR1922832} for a list of properties of Kummer morphisms and Kummer \'etale morphisms (that is, log \'etale and Kummer) of log schemes. 

\begin{df}
\label{logtop.3}
On $\lSch$, we have the following topologies.
\begin{enumerate}
\item[(1)]
The \emph{Kummer \'etale topology}\index{Kummer \'etale topology} is generated by the families $\{u_i\colon U_i\to X\}_{i\in I}$ such that each $u_i$ is Kummer \'etale and $\amalg_{i\in I}U_i\to X$ is surjective. 
\item[(2)]
The \emph{log \'etale topology}\index{log \'etale topology} is generated by the families $\{u_i\colon U_i\to X\}_{i\in I}$ such that each $u_i$ is log \'etale and $\amalg_{i\in I}U_i\to X$ is \emph{universally surjective}.
\item[(3)]
The \emph{dividing \'etale topology}\index{dividing \'etale topology} is the smallest topology finer than both the dividing topology and strict \'etale topology.
\end{enumerate}
Let $\ket$, $\leta$, and $d\et$ be shorthand notations for these topologies.\index[notation]{ket @ $\ket$}\index[notation]{leta @ $\leta$}\index[notation]{det @ $d\et$}
\end{df}

According to \cite[Proposition 2.2.2]{MR1457738}, any exact surjective morphism is universally surjective.
Therefore, in (1) and (2), the morphism $\amalg_{i\in I}U_i\to X$ is universally surjective.

\begin{df}
\label{logtop.4}
Suppose $S\in \Sch$ and let $\SmlSm/S$\index[notation]{SmlSmS @ $\SmlSm/S$} be the full subcategory of 
$\lSm/S$ consisting of $Y\in \lSm/S$ such that $\ul{Y}\in \Sm/S$.

Suppose $X\in \Sm/S$ and $D$ is a strict normal crossing divisor on $X$ over $S$ as in \cite[Definition 7.2.1]{logDM}.
Let $(X,D)$ denote the fs log scheme
whose underlying scheme is $X$ and whose log structure is given by the Deligne-Faltings structure \cite[\S III.1.7]{Ogu}.
Every object of $\SmlSm/S$ arises as this form, see \cite[Lemma A.5.10]{logDM} and Remark \ref{logtop.9} below.

We set $\boxx:=(\P^1,\infty)$,\index[notation]{box @ $\boxx$} where we regard the point at $\infty$ as an effective Cartier divisor.

For $Y\in \SmlSm/S$ and a divisor $E$ on $\ul{Y}$ such that $\partial Y+E$ is a strict normal crossing divisor on $\ul{Y}$ over $S$,
we will use the ``adding boundary'' notation
\[
(Y,E)
:=
(\ul{Y},E+\partial Y).
\]
\end{df}

The category $\SmlSm/S$ is simpler to deal with than $\lSm/S$, 
but it is unclear how to generalize a useful notion of $\SmlSm/S$ to an arbitrary fs log base scheme $S$.

\begin{rmk}
\label{logtop.9}Let $S$ be a scheme.
The proof of \cite[Lemma A.5.9]{logDM} needs a small modification: it is claimed in loc.\ cit.\ that for a log smooth morphism of fs log schemes $f\colon X\to S$, Zariski locally on $X$,
there exists a neat chart $P$ on $X$ such that the induced morphism $X\to S\times \A_P$ is strict smooth.
However, the claim in \cite[Theorem IV.3.3.1]{Ogu} is \'etale local,
which implies that the above claim holds only \'etale locally on $X$.
To fix this,
use \cite[Theorem III.1.2.7(4)]{Ogu} to obtain a neat chart $0\to P$ of $f$ Zariski locally on $X$, 
which is possible since $P^\gp$ is torsion free.
With this neat chart in hand,
argue as in \cite[Theorem IV.3.3.1]{Ogu} to conclude.
\end{rmk}

\begin{rmk}
\label{logtop.10}
The proof of \cite[Lemma A.11.3]{logDM} has a similar issue.
We refer to \cite[Proposition 3.4.1]{Parthesis} for the correct proof.
\end{rmk}

\begin{prop}
\label{logtop.5}
Suppose $S\in \Sch$.
Then the category $\SmlSm/S$ admits products.
\end{prop}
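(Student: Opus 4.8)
The plan is to reduce the claim to a local statement about charts and then to an elementary fact about fs monoids. First I would recall that for $S \in \Sch$ (a scheme with trivial log structure), an object $X \in \SmlSm/S$ is, Zariski locally on $\underline{X}$, of the form $(\underline{X}, D)$ where $D$ is a strict normal crossing divisor on a smooth $S$-scheme $\underline{X}$; equivalently, by Remark \ref{logtop.9} and \cite[Theorem IV.3.3.1]{Ogu}, Zariski locally there is a neat chart $0 \to P$ with $P^{\gp}$ torsion-free such that the induced map $\underline{X} \to \underline{S} \times \underline{\A_P}$ is strict smooth, where $\A_P = \Spec{P \to \Z[P]}$. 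So the model objects are the fiber products $\underline{S} \times \underline{\A_P} \times_{\underline{S}\times \underline{\A_P}} \underline{X}$, equipped with the pulled-back log structure.

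Given $X, Y \in \SmlSm/S$, the candidate for the product is simply the fiber product $X \times_S Y$ taken in $\lSm/S$ (which exists, since $\lSm/S$ has fiber products — log smoothness is stable under base change by Definition \ref{def:logsmooth}). The content of the proposition is that this fiber product actually lies in the subcategory $\SmlSm/S$, i.e.\ that its underlying scheme is smooth over $\underline{S}$, and moreover that it is the categorical product in $\SmlSm/S$. The second point is formal: $\SmlSm/S$ is a full subcategory of $\lSm/S$, so once $X \times_S Y \in \SmlSm/S$, its universal property as a product in $\lSm/S$ restricts to the universal property in $\SmlSm/S$. So everything comes down to: $\underline{X \times_S Y}$ is smooth over $\underline{S}$.

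For this I would work Zariski locally and pick neat charts $0 \to P$ for $X$ and $0 \to Q$ for $Y$ as above, with $P, Q$ fs monoids having torsion-free groupifications. Then, since strict smooth morphisms and the formation of the underlying scheme of $\A_{(-)}$ both commute with the relevant fiber products, one is reduced to checking that the underlying scheme of $\A_P \times \A_Q \cong \A_{P \oplus Q}$ is smooth over $\Spec{\Z}$ — wait, this is false in general (toric varieties are typically singular). The correct reduction: one does \emph{not} need $\underline{\A_P}$ itself to be smooth, but rather that the \emph{log structure} on $X \times_S Y$ is again a normal crossing (compactifying) log structure on a scheme smooth over $\underline S$. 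The key point is that $X$ and $Y$ individually have \emph{smooth} underlying schemes — this forces the charts to be (locally) of the special form making $\underline{\A_P}$ look like an affine space in the relevant directions, i.e.\ $P$ is (locally, after the étale/strict-smooth modification) isomorphic to $\N^r$. Concretely: $(\underline X, \mathcal M_X) = (\underline X, D_X)$ with $D_X$ snc and $\underline X$ smooth$/\underline S$, similarly for $Y$; then $\underline{X \times_S Y} = \underline X \times_{\underline S} \underline Y$ is smooth over $\underline S$ (fiber product of smooth morphisms), and its log structure is the compactifying one for $D_X \times \underline Y \cup \underline X \times D_Y$, which is snc relative to $\underline S$ because the two divisors meet transversally after a further Zariski-local coordinate choice. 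So $X \times_S Y \in \SmlSm/S$.

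The main obstacle I anticipate is precisely verifying that the union $D_X \times \underline Y \cup \underline X \times D_Y$ (more precisely, its preimages under the strict smooth structure maps) is again a \emph{strict} normal crossing divisor over $\underline S$, so that the resulting log scheme genuinely lies in $\SmlSm/S$ and not merely in $\lSm/S$. This requires checking the transversality/normal-crossing condition is preserved under the product and under the strict smooth base changes used to realize $X$ and $Y$ from their charts; this is where one must be careful about the difference between étale-local and Zariski-local chart existence flagged in Remark \ref{logtop.9}. Once that is settled, the universal property is automatic from fullness of $\SmlSm/S \hookrightarrow \lSm/S$.
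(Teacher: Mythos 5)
Your argument, after the self-corrected detour through monoid charts, lands on exactly the paper's proof: writing $X=(\ul{X},D)$, $Y=(\ul{Y},E)$ and taking the product in $\lSm/S$ to be $(\ul{X}\times_S\ul{Y},\,D\times_S\ul{Y}+\ul{X}\times_S E)$, which lies in $\SmlSm/S$, with the universal property then formal by fullness. The ``main obstacle'' you flag is not one: Zariski locally $X$ and $Y$ admit \'etale maps to $S\times\Spec{\Z[x_1,\ldots,x_n]}$ and $S\times\Spec{\Z[y_1,\ldots,y_m]}$ adapted to $D$ and $E$, and their product over $S$ is \'etale over $S\times\Spec{\Z[x_1,\ldots,x_n,y_1,\ldots,y_m]}$ with the union divisor pulled back from coordinate hyperplanes, so the strict normal crossing condition for $D\times_S\ul{Y}+\ul{X}\times_S E$ is automatic (the two divisors come from different factors), which is why the paper states the formula without further argument.
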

\begin{proof}
Suppose $(X,D),(Y,E)\in \SmlSm/S$.
The product $(X,D)\times_S (Y,E)$ in $\lSm/S$ can be written as $(X\times_S Y,X\times_S E+D\times_S Y)$, which is in $\SmlSm/S$.
\end{proof}

\begin{prop}
\label{logtop.6}
Suppose $S\in \lSch_{noeth}$.
The strict Nisnevich topology and the dividing Nisnevich topology on $\lSm/S$ and $\lSch/S$ are complete, quasi-bounded, and regular.
\end{prop}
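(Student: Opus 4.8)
The plan is to establish the three properties for the two constituent cd-structures separately --- the strict Nisnevich cd-structure and the dividing cd-structure --- and then to deduce the statement for the dividing Nisnevich cd-structure, which is their union, from the stability of completeness, quasi-boundedness and regularity under taking unions, as set up in \cite[\S 3.3]{logDM} following \cite{Vcdtop}. Before doing so I would dispose of $\lSm/S$ versus $\lSch/S$: the maps occurring in the distinguished squares of Definition \ref{logtop.2} --- open immersions, strict Nisnevich morphisms and dividing covers --- are all log \'etale, hence preserve log smoothness and finite type over $S$, so $\lSm/S$ is stable under the relevant fiber products and the restricted cd-structures inherit whatever properties hold on $\lSch/S$. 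The hypothesis $S\in \lSch_{noeth}$ enters by guaranteeing that every object in sight has noetherian underlying scheme of finite Krull dimension, which is what feeds Voevodsky's criteria.

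For the strict Nisnevich cd-structure the key point is that it is the preimage, under the forgetful functor $\lSch/S\to \Sch$, $X\mapsto \ul{X}$, of the Nisnevich cd-structure on $\Sch_{noeth}$: a commutative square in $\lSch/S$ is strict Nisnevich distinguished exactly when the underlying square of schemes is Nisnevich distinguished and all four log structures are pulled back from the lower-right corner (equivalently, the two structural maps are strict). Voevodsky proved that the Nisnevich cd-structure on noetherian schemes of finite Krull dimension is complete, bounded and regular \cite{Vcdtop}; each of those verifications is run against his codimension density structure and against pullback-stability of distinguished squares, both of which are unaffected by adjoining a pulled-back log structure, so they transfer word for word. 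Explicitly, the lifted density structure $D_i(X)=\{U\hookrightarrow X\ \text{open}: \mathrm{codim}_X(X\setminus U)\ge i\}$ bounds the strict Nisnevich cd-structure, and completeness and regularity are checked as for schemes. In particular this cd-structure is bounded, hence quasi-bounded.

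For the dividing cd-structure, completeness and regularity are essentially formal. Dividing covers are proper, surjective, log \'etale monomorphisms, and each of these four classes is stable under base change (cf.\ the discussion after Definition \ref{def:logsmooth}), so every dividing square is squarrable and therefore the cd-structure is complete. Regularity holds because the distinguished vertical map $f\colon Y\to X$ in a dividing square is a monomorphism: the diagonal $Y\to Y\times_X Y$ is then an isomorphism, which forces the auxiliary diagonal squares occurring in Voevodsky's regularity condition to be (isomorphic to) distinguished squares. The substantial part is quasi-boundedness. A single $\N$-indexed density structure cannot work here, since a dividing cover --- for instance a log blow-up, or the toric subdivision of Example \ref{logtop.8bis} --- leaves the Krull dimension of the underlying scheme unchanged; this is exactly the situation that the weakened notion of \cite[Definition 3.3.22]{logDM} is designed for. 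I would construct the required datum by stratifying according to the combinatorial complexity of the log structure (the ranks and numbers of generators of the stalks $\ol{\cM}_{X,x}$, equivalently the structure of the fans attached to fs charts), and feed into the inductive mechanism of \cite[\S 3.3]{logDM} the fact that over a noetherian fs base every dividing cover is refined by a subdivision of such fans, while subdivisions of a fixed fan with torsion-free associated group form a system of bounded complexity that strictly drops under any nontrivial refinement.

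The main obstacle is precisely this last step: showing that the chosen combinatorial invariant genuinely terminates towers of iterated dividing covers over a fixed noetherian fs log scheme, so that the constructed quasi-density datum really does quasi-bound the dividing cd-structure. This is where the noetherian hypothesis on $S$ is indispensable, and it is the reason one can only assert quasi-boundedness rather than Voevodsky's stronger boundedness for the dividing and dividing Nisnevich cd-structures.
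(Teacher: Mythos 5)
First, note that the paper does not actually give an argument here: its proof of Proposition \ref{logtop.6} is a single reference to \cite[Proposition 3.3.30]{logDM}, so what you are attempting is a reconstruction of that cited proof. Your reduction of $\lSm/S$ to $\lSch/S$, your treatment of completeness and regularity of both cd-structures (in particular the observation that regularity of the dividing cd-structure comes for free from dividing covers being monomorphisms), and your lifting of Voevodsky's codimension density structure \cite{Vcdtop} along $X\mapsto \ul{X}$ to bound the strict Nisnevich cd-structure are all fine and consistent with how this is done in \cite[\S 3.3]{logDM}. The problem is the remaining third of the statement: quasi-boundedness of the dividing (hence of the dividing Nisnevich) cd-structure, which you yourself flag as ``the main obstacle'' and then only sketch. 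That sketch is not merely incomplete; the mechanism it relies on cannot work.

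Concretely, you propose a combinatorial complexity invariant of the log structure (ranks/numbers of generators of the stalks $\ol{\cM}_{X,x}$, i.e.\ of the fans of local charts) that ``strictly drops under any nontrivial refinement'', so that towers of iterated dividing covers terminate. But towers of dividing covers do not terminate: already over $\A_{\N^2}$, or any $X$ carrying a chart whose fan has a two-dimensional cone, one can iterate star subdivisions indefinitely, and each step is a nontrivial dividing cover by Example \ref{logtop.8bis}. Moreover, refinement of a fan \emph{increases} the number of cones and of generators, so no invariant of the kind you describe is strictly decreasing along such towers; your sentence that subdivisions of a fixed fan ``form a system of bounded complexity that strictly drops under any nontrivial refinement'' is false. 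This non-termination is exactly why Voevodsky-style boundedness fails for the dividing cd-structure, and the notion of a quasi-bounded cd-structure in \cite[Definition 3.3.22]{logDM} is designed to sidestep it: it is not a descending-chain condition on dividing covers but a reducibility condition relative to a density structure in which one is allowed to replace a distinguished square by its pullback along a suitably chosen dividing cover (produced, over a noetherian fs base, from the fan-theoretic analysis of charts). Verifying that the dividing Nisnevich cd-structure satisfies this condition is the actual content of \cite[Proposition 3.3.30]{logDM}, and it is precisely the step your proposal leaves unproved; as written, the argument establishes completeness and regularity, and boundedness only for the strict Nisnevich part.
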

\begin{proof}
We refer to \cite[Proposition 3.3.30]{logDM}.
\end{proof}

For further reference, we record a comparison result for Nisnevich distinguished squares.

\begin{lem}
\label{K-theory.5}
Suppose
\begin{equation}
\label{K-theory.5.1}
\begin{tikzcd}
Y'\arrow[d,"f'"']\arrow[r,"g'"]&
Y\arrow[d,"f"]
\\
X'\arrow[r,"g"]&
X
\end{tikzcd}
\end{equation}
is a strict Nisnevich distinguished square.
Then
\begin{equation}
\label{K-theory.5.2}
\begin{tikzcd}
Y'-\partial Y'\arrow[d]\arrow[r]&
Y-\partial Y\arrow[d]
\\
X'-\partial X'\arrow[r]&
X-\partial X
\end{tikzcd}
\end{equation}
is a Nisnevich distinguished square.
\end{lem}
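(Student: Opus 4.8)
The claim is that applying the open-complement functor $X \mapsto X - \partial X$ to a strict Nisnevich distinguished square in $\lSch$ produces a Nisnevich distinguished square of schemes. First I would recall what must be verified: in \eqref{K-theory.5.2} the square must be cartesian, the right vertical map $Y - \partial Y \to X - \partial X$ must be étale, the bottom map $X' - \partial X' \to X - \partial X$ must be an open immersion, and the induced map on the reduced closed complement of the open immersion must be an isomorphism — this is precisely Morel–Voevodsky's characterization of a Nisnevich distinguished square. The strategy is to check these point by point, transporting each property of the original square in \eqref{K-theory.5.1} through the functor $(-) - \partial(-)$.

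\emph{Step 1 (cartesianness).} Since $X - \partial X \subseteq X$ is an open subscheme (by \cite[Proposition III.1.2.8]{Ogu}, quoted above) and the formation of $X - \partial X$ is compatible with strict base change — indeed $\ol{\cM}_{X,x} = 0$ is a condition that pulls back correctly along a strict morphism, since a strict morphism induces isomorphisms on the characteristic sheaves $\ol{\cM}$ — one sees that for the cartesian square \eqref{K-theory.5.1} in which $f$, $f'$ are strict Nisnevich and $g$, $g'$ are (strict) open immersions, the square obtained by restricting to the open loci $-\partial$ is again cartesian. Concretely $(Y - \partial Y) = Y \times_X (X - \partial X)$ because $f$ is strict, hence $Y' - \partial Y' = Y' \times_{Y} (Y - \partial Y) = Y' \times_X (X - \partial X) = (X' - \partial X') \times_{X - \partial X} (Y - \partial Y)$, using cartesianness of the original square and the observation that $g$, $g'$ are open immersions of the underlying schemes.

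\emph{Step 2 (the three structural conditions).} The map $f\colon Y \to X$ being strict Nisnevich means $\ul f$ is étale; restricting an étale morphism to an open subscheme of the target keeps it étale, so $Y - \partial Y \to X - \partial X$ is étale. The map $g$ is an open immersion by definition of the strict Nisnevich cd-structure (Definition \ref{logtop.2}(1)), and open immersions restrict to open immersions, giving the bottom map of \eqref{K-theory.5.2}. Finally, the distinguished-square condition from Definition \ref{logtop.2}(1) says that $f^{-1}(\ul{X'} - g(\ul X)) \to \ul{X'} - g(\ul X)$ is an isomorphism with reduced structures; I would intersect this isomorphism with the open locus $X - \partial X$ (and its preimage under $\ul f$), which by Step 1's compatibility identifies with the complement of $(X' - \partial X')$ inside $(X - \partial X)$, and note that an isomorphism restricted over an open subscheme remains an isomorphism. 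This yields exactly the reduced-complement condition needed for \eqref{K-theory.5.2} to be a Nisnevich distinguished square.

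\emph{Main obstacle.} The only genuinely delicate point is the compatibility of $(-) - \partial(-)$ with base change in Step 1: one needs that for a \emph{strict} morphism $f$ one has $\ul{f}^{-1}(\underline{X} - \partial X) = \underline{Y} - \partial Y$ as open subschemes. This is where strictness is essential — it guarantees $\ol{\cM}_{Y,y} \cong \ol{\cM}_{X,f(y)}$, so the vanishing loci match up; for a non-strict morphism (e.g.\ $\boxx \to \P^1$) this fails entirely, which is consistent with the lemma only being asserted for strict Nisnevich squares. Once this compatibility is in hand, everything else is a routine restriction-to-an-open-subscheme argument, so I would state the base-change compatibility as a preliminary observation and then dispatch the four conditions quickly.
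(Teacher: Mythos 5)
Your proposal is correct and follows essentially the same route as the paper: the key observation in both is that, because all four maps in the strict Nisnevich square are strict, the square of open loci \eqref{K-theory.5.2} is exactly the pullback of \eqref{K-theory.5.1} along the open immersion $X-\partial X\to X$, after which the Nisnevich conditions are a routine restriction check. Your Step 2 just spells out explicitly what the paper dismisses as obvious.
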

\begin{proof}
Since $f$, $f'$, $g$, and $g'$ are strict, 
the square \eqref{K-theory.5.2} is the pullback of \eqref{K-theory.5.1} along the canonical open immersion 
$X-\partial X\to X$. The fact that $\eqref{K-theory.5.2}$ is then a Nisnevich square is obvious.
\end{proof}

\begin{rmk}
The dividing Nisnevich topology is \emph{not} subcanonical.
For example,
if $Y\to X$ is a dividing cover of fs log schemes that is not an isomorphism,
then the naturally induced map $\hom_{\lSch}(X,Y)\to \hom_{\lSch}(Y,Y)$ is not a bijection.
This means that the presheaf represented by $Y$ is not a dividing sheaf.
Nevertheless, 
for any $X\in \lSm/S$, 
we will often use the notation $X$ for the sheaf represented by $X$ to avoid lengthy notation.
We often use the notation $\pt:=S$ for the log base scheme.
\end{rmk}

\subsection{Effective logarithmic motives}
\label{alter}
 We now proceed to the definition of the effective version of our motivic categories.

\begin{notat}
For an $\infty$-category $\infC$, write $\infPsh(\infC)$ for the $\infty$-category of presheaves of spaces (or Kan complexes) on $\infC$. When $\infC$ is endowed with a Grothendieck topology $\tau$, we write $\infShv_\tau(\infC) \subseteq \infPsh(\infC)$ for the full subcategory spanned by $\tau$-sheaves.
For a presentable $\infty$-category $\infD$,
write $\infPsh(\infC,\infD)$ for the $\infty$-category of presheaves on $\infC$ with values in $\cD$ and $\infShv_\tau(\infC,\infD)\subseteq \infPsh(\infC,\infD)$ for the full subcategory spanned by $\tau$-sheaves.
For $X\in \infC$,
we use the notation $\Sigma_{S^1}^\infty X_+$ for the sheaf of spectra representable by $X$ when $\tau$ is clear from the context.
\end{notat}

\begin{rmk}[Pointed variants] Recall that for any $\infty$-category $\infC$ there is a canonical equivalence of pointed $\infty$-categories  $\infPsh(\infC,\infSpcpt)
\simeq
\infPsh(\infC)_\ast$. So we will freely identify them. Similarly, we will use the notation $\infShv_\tau(\infC)_\ast$ to denote the category of $\tau$-sheaves of pointed spaces.
\end{rmk}

\begin{df}
Let $\cC$ be a category, and let $\infD$ be a presentable $\infty$-category. Let $\cW$ be a set of morphisms in $\cC$. Let $\cF\in \infPsh(\cC, \infD)$.
We say that $\cF$ is \emph{$\cW$-local} if $\cF(f)$ is an equivalence for all $f\in \cW$. A morphism $f\colon\cF\to \cF'$ is a \emph{$\cW$-equivalence} if $\Map(\cF,\cG) \simeq \Map (\cF', \cG)$ for every $\cW$-local object $\cG$.
\end{df}

\begin{rmk} Suppose that $\infC$ admits small products. Then the Yoneda functor $y\colon \infC \to \infPsh(\infC)$ can be lifted to a symmetric monoidal functor $y\colon \infC^{\times} \to \infPsh(\infC)^{\otimes}$, where the latter is equipped with the Day convolution product by \cite[Corollary 4.8.1.12]{HA}. If $\infC$ is  equipped with a Grothendieck topology $\tau$, then $\infShv_\tau(\infC)$ underlies a symmetric monoidal $\infty$-category $\infShv_\tau(\infC)^\otimes$ such that the sheafification functor $L_\tau$ lifts to a symmetric monoidal functor.  
Indeed, it is enough to notice that for every $Y\in \infC$ and \v{C}ech nerve of the $\tau$ cover $\sX\to X$ with 
$X\in \cC$ the projection 
$\sX\times Y\to X\times Y$ is the \v{C}ech nerve of a $\tau$-cover too.
This is immediate from the definitions.
\end{rmk}

We define our basic objects as follows:
\begin{df}\label{def:logH_andvariants}
Let $S\in \lSch$.
We set 
\[\inflogSH^{\eff}(S)\] to be the full $\infty$-subcategory of $\infShv_{dNis}(\lSm/S,\infSpt)$ spanned by the objects $M$ that are local with respect to the maps $\Sigma_{S^1}^\infty(X \times \boxx)_+ \to \Sigma^\infty_{S^1}X_+$ and their desuspensions. Write $L_\boxx$ for the left adjoint to the inclusion functor.

The category $\inflogSH^{\eff}(S, \Lambda)$ will be called the \emph{effective} or \emph{$S^1$-stable} log motivic homotopy category of $S$. Objects of $\inflogSH^{\eff}(S)$ will be called (effective) log-$S$-motives.

In the previous ArXiv version, $\inflogSH^\eff(S)$ was denoted by $\inflogSH_{S^1}(S)$.
\end{df}

\begin{rmk}
To simplify the notation, write $\boxx$ for the set of maps $X\times \boxx\to X$ for $X\in \lSm/S$.
Then we have an equivalence
\[
\inflogSH^\eff(S)
:=
\infShv_{dNis}(\lSm/S,\infSpt)[\boxx^{-1}]
\simeq
\infPsh(\lSm/S,\infSpt)[(\boxx, dNis)^{-1}],
\]
where $(\boxx, dNis)$ denotes the saturation of the union of the set of $\boxx$-local equivalences and of the \v{C}ech nerves of dividing Nisnevich covers.
We write $L_{\boxx, dNis} = L_{\boxx}\circ L_{dNis}$ for the $(\boxx, dNis)$-localization functor. 
\end{rmk}

\begin{rmk}\label{alter.1.2}
Consider the category $\infShv_{dNis}(\lSm/S)_*[\boxx^{-1}]$ or, equivalently, the full subcategory of $\infShv_{dNis}(\lSm/S)_*$ of presheaves of pointed spaces spanned by $\boxx$-local  objects. Let $\Sigma_{S^1}$
be the $S^1$-suspension functor, induced by $\cF\mapsto S^1\wedge \cF$ for any pointed presheaf $\cF$: it clearly induces a functor on the localized category. Then we have an equivalence of $\infty$-categories $\inflogSHS(S)
\simeq \colim_{\LPr}(\infShv_{dNis}(\lSm/S)_*[\boxx^{-1}])$, where the colimit is given by the iteration of the functor $\Sigma_{S^1}$. This 
 exhibits $\inflogSH^\eff(S)$ as the $S^1$-stabilization  of $\infShv_{dNis}(\lSm/S)_*[\boxx^{-1}]$ (see \cite[Proposition 1.4.2.2]{HA}).
\end{rmk}

Note that clearly the $\infty$-category $\inflogSH^{\eff}(S)$ is presentable, stable, and is generated under colimits and up to desuspensions by the family of $\Sigma^\infty_{S^1}X_+$ for all $X\in \lSm/S$. Moreover, it  underlies a presentable symmetric monoidal $\infty$-category, where the monoidal structure is the Day convolution of the cartesian product in $\lSm/S$.

\begin{rmk}
Observe that the tensor product operation in  $\inflogSH^\eff(S)$ for every $S\in \lSch$ preserves small colimits  in each variable. 
\end{rmk}

\begin{prop}\label{prop:is_premotivic}
Let $\Lambda$ be a connective commutative ring spectrum as in Notation \ref{not:Lambda}. Then
The assignment 
\index[notation]{logSHeff @ $\inflogSH_{\tau}^\eff$}
\index[notation]{logSHveff @ $\inflogSH_{\tau}^{\eff, (\wedge)}$}
\[
S \mapsto \inflogSH^{\eff}(S)
\]
can be promoted into a functor 
\[
     \inflogSH^{\eff}\colon \lSch^\mathrm{op} \to \CAlg( \LPr)
\]
satisfying the following properties:
\begin{enumerate}
    \item for every morphism $f\colon X\to Y$ in $\lSch$, denote by $f^*$ the functor
    \[f^*\colon  \inflogSH^{\eff}(Y) \to \inflogSH^{\eff}(X) \]
    given by the image of $f$. Then $f^*$ is monoidal, and has a right adjoint $f_*$,
    \item for every log smooth morphism $f\colon X\to Y \in \lSch$, the functor $f^*$ has a left adjoint $f_\sharp$, and the canonical map
    \[ f_\sharp(f^* M \otimes N) \xrightarrow{\simeq} M \otimes f_\sharp N\]
    is an equivalence;
    \item for every cartesian square 
    \[
    \begin{tikzcd}
Y'\arrow[d,"f'"']\arrow[r,"g'"]&Y\arrow[d,"f"]
\\
X'\arrow[r,"g"]&X
\end{tikzcd}
    \]
    where $f$ is a log smooth morphism, the Beck-Chevalley transformation $f_\sharp'{g'}^{*}\to g^{*}f_{\sharp}$ is an equivalence.
\end{enumerate}

\begin{proof} This result is ``classical'' and formal. See e.g., \cite[\S 1.4]{Ayoubrig} for a proof using model categories. The fact that all the assignments in sight can be promoted to functors with value in $\CAlg(\LPr)$ is discussed in Robalo's thesis \cite[Chapter 9]{RobaloThesis}.
\end{proof}
\end{prop}

\subsection{The Tate circle and stable motivic homotopy theory}
We begin with discussing several motivic properties that hold in $\infShv_{dNis}(\lSm/S)[\boxx^{-1}]$.

\begin{notat}
\label{Thomdf.2} Let $S\in \lSch$. If $X_0\to X_1$ is a morphism in $\infShv_{dNis}(\lSm/S)_\ast$, 
let $X_1/X_0$\index[notation]{X1X0 @ $X_1/X_0$} be shorthand notation for $\cofib(X_0\to X_1)$.
If $X_0\to X_1$ is a morphism in $\infShv_{dNis}(\lSm/S)$, 
let $X_1/X_0$ be shorthand notation for $\cofib((X_0)_+\to (X_1)_+)$. If no confusion arises, we will use the same notation to denote the cofibers computed in the localizations  $\infShv_{dNis}(\lSm/S)[\boxx^{-1}]$ and $\infShv_{dNis}(\lSm/S)_\ast[\boxx^{-1}]$, that we systematically identify with the subcategories of the $\infty$-categories of (pointed) dividing Nisnevich sheaves spanned by $\boxx$-local objects.

We follow the same convention for the strict Nisnevich sheaves too.
\end{notat}

\begin{rmk}
In $\A^1$-homotopy theory, the notation $X_1/X_0$ is often used when $X_0$ is a subscheme of a scheme $X_1$.
The quotient $\P^1/\A^1$ is such an example.
In $\boxx$-homotopy theory we replace $\A^1$ by $\boxx$ and consequently the quotient $\P^1/\A^1$ 
gets replaced by $\P^1/\boxx$ although $\boxx$ is not, strictly speaking, a subscheme of $\P^1$.
This is the reason why we require more flexibility when using the notation $X_1/X_0$.

In $\A^1$-homotopy theory it is common to write $(X,x)$ for a scheme $X$ pointed at $x$. 
For example, the Tate circle is the pointed motivic space $(\mathbb{G}_m,1)$.
However, 
the same notation refers to a log scheme in our setting.
To avoid possible confusion, we shall use the notation $X/x$.
For example, $\P^1$ pointed at $1$ is written as $\P^1/1$.
\end{rmk}

\begin{df}
\label{logH.26}
The \emph{log multiplicative group scheme} is defined as 
\index[notation]{Gmm @ $\Gmm$}
\[
\Gmm:=(\P^1,0+\infty).
\]
The log Tate circle\index{log Tate circle} 
is\index[notation]{St1 @ $S_t^1$}
\begin{equation}
\label{logH.26.1}
S_t^1
:=
\Gmm/1.
\end{equation}
Morel-Voevodsky \cite{MV} write $T:=\A^1/\G_m$ for the motivic Tate object in the unstable motivic homotopy category $\mathcal{H}(S)$ of a scheme $S$.
In $\infShv_{sNis}(\lSm/S)_\ast[\boxx^{-1}]$,
we use the notation\index[notation]{T @ $T$}
\begin{equation}
\label{logH.26.2}
T
:=
\A^1/(\A^1,0).
\end{equation}
Note that the projection $(\A^1,0)\to \A^1$ is proper, 
while the open immersion $\G_m\to \A^1$ is not proper.
\end{df}

We combine Proposition \ref{logtop.6} and \cite[Corollary 5.10]{Vcdtop} (see also Remark \ref{logH.21}) to deduce the following two properties. 

\begin{prop}
\label{logHprop.1}
Suppose $S\in \lSch$ and let 
\[
\begin{tikzcd}
Y'\ar[d]\ar[r]&
Y\ar[d]
\\
X'\ar[r]&
X
\end{tikzcd}
\]
be a strict Nisnevich distinguished square in $\lSm/S$.
Then this is a cocartesian square in $\infShv_{sNis}(\lSm/S)[\boxx^{-1}]$.
\end{prop}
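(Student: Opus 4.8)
The plan is to derive the statement from Voevodsky's cd-excision criterion, so that it becomes essentially formal. First I would invoke Definition \ref{def:logH_andvariants}: $\inflogH(S)$ is the $\boxx$-localization of $\infShv_{dNis}(\lSm/S)$, and the localization functor $L_\boxx$ preserves all colimits since it is a left adjoint. Hence it is enough to prove that the image of the given square under the functor $X\mapsto L_{dNis}y(X)$ is already cocartesian in $\infShv_{dNis}(\lSm/S)$. Recall that a commutative square in an $\infty$-category is cocartesian if and only if $\Map(-,\cF)$ carries it to a cartesian square of spaces for every object $\cF$; and for $\cF\in\infShv_{dNis}(\lSm/S)$ and $X\in\lSm/S$ one has $\Map(L_{dNis}y(X),\cF)\simeq\cF(X)$, by the Yoneda lemma and the sheafification adjunction. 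Writing $Q$ for the given strict Nisnevich distinguished square, the claim is therefore equivalent to the assertion that $\cF(Q)$ is a cartesian square of spaces for every $dNis$-sheaf $\cF$.

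To conclude I would observe that $Q$ is in particular a dividing Nisnevich distinguished square, because the dividing Nisnevich cd-structure is by definition the union of the strict Nisnevich cd-structure and the dividing cd-structure (Definition \ref{logtop.2}). By Proposition \ref{logtop.6} this cd-structure is complete and regular, so Remark \ref{logH.21} (which repackages \cite[Corollary 5.10]{Vcdtop}) applies and gives exactly what is needed: a presheaf on $\lSm/S$ is a $dNis$-sheaf if and only if it sends every dividing Nisnevich distinguished square, and in particular $Q$, to a cartesian square. Applying $L_\boxx$ then propagates the cocartesianness to $\inflogH(S)$.

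I do not expect any genuine mathematical obstacle here; the content has been front-loaded into Proposition \ref{logtop.6} (completeness and regularity of the relevant cd-structures) and Remark \ref{logH.21} (Voevodsky's criterion), both used as black boxes. The only points that deserve a word of care are bookkeeping: that the translation between the combinatorics of distinguished squares and the reflective localization $\infShv_{dNis}(\lSm/S)\subseteq\infPsh(\lSm/S)$ is legitimate (again Remark \ref{logH.21}), and that Proposition \ref{logtop.6} is stated for noetherian bases --- for a general $S\in\lSch$ one either reduces to the noetherian case via the logarithmic noetherian approximation of \S \ref{noetherian}, or simply notes that completeness and regularity of a cd-structure are conditions that can be checked locally and are insensitive to noetherian hypotheses.
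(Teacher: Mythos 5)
Your argument is correct and is essentially the paper's own: the paper deduces this proposition precisely by combining Proposition \ref{logtop.6} (completeness and regularity of the strict/dividing Nisnevich cd-structures) with Remark \ref{logH.21} (Voevodsky's criterion), and your explicit bookkeeping — reducing to cartesianness of $\cF(Q)$ for $dNis$-sheaves via Yoneda and then pushing forward along the colimit-preserving localization $L_\boxx$ — is exactly the intended reasoning. Your added remark on the noetherian hypothesis in Proposition \ref{logtop.6} is a reasonable extra precaution (the paper passes over it, and its general non-noetherian framework, e.g.\ \S\ref{noetherian}, covers the reduction), so nothing further is needed.
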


\begin{prop}
\label{logHprop.2}
Suppose $S\in \lSch$ and let $Y\to X$ be a dividing cover in $\lSm/S$.
Then $Y\to X$ is an equivalence in $\infShv_{dNis}(\lSm/S)[\boxx^{-1}]$.
\end{prop}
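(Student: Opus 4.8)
The plan is to reduce the assertion to one about dividing Nisnevich sheaves. Recall that $\inflogH(S)=\inflogH_{dNis}(S)$ is the full subcategory of $\infShv_{dNis}(\lSm/S)$ spanned by the $\boxx$-local objects, with left adjoint $L_\boxx$, and that in $\inflogH(S)$ the symbols $X$ and $Y$ stand for $L_\boxx L_{dNis}(y(X))$ and $L_\boxx L_{dNis}(y(Y))$. Since $L_\boxx$ is a functor it preserves equivalences, so it is enough to prove that $L_{dNis}(y(Y))\to L_{dNis}(y(X))$ is an equivalence in $\infShv_{dNis}(\lSm/S)$. In particular $\boxx$-invariance plays no role here; what is really used is dividing Nisnevich descent.

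The two inputs are the following. First, the singleton $\{f\colon Y\to X\}$ is a $dNis$-covering family, since it is precisely the cover attached to the distinguished square of the dividing cd-structure of Definition \ref{logtop.2}(2), which is part of the dividing Nisnevich cd-structure. Second, $f$ is a monomorphism --- it is a proper surjective log \'etale \emph{monomorphism} --- so the diagonal $Y\to Y\times_X Y$ is an isomorphism, and hence the \v{C}ech nerve $\check{C}(Y/X)_\bullet$ of $f$ is the constant simplicial object on $Y$. Therefore, for any $dNis$-sheaf $\cF$, descent along $\{f\colon Y\to X\}$ identifies $\cF(X)$ with the totalization of the constant cosimplicial object $\cF(\check{C}(Y/X)_\bullet)$, i.e.\ $\cF(X)\xrightarrow{\simeq}\cF(Y)$; alternatively one applies Remark \ref{logH.21} to the distinguished square above, using that $\cF(\emptyset)\simeq *$ because $dNis$ is finer than the Zariski topology.

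It remains to translate this into $\inflogH(S)$. For any $\cF\in\inflogH(S)$ the adjunction defining $L_\boxx L_{dNis}$ together with the Yoneda lemma give $\Map_{\inflogH(S)}(X,\cF)\simeq\cF(X)$ and $\Map_{\inflogH(S)}(Y,\cF)\simeq\cF(Y)$, so $f$ induces an equivalence on all mapping spaces out of it, and $Y\to X$ is therefore an equivalence in $\inflogH(S)$ by Yoneda. I do not anticipate a genuine obstacle: the statement is essentially built into the choice of the dividing Nisnevich topology. The only two points deserving a word of care are the definitional fact that a dividing cover is a monomorphism, which is exactly what makes its \v{C}ech nerve collapse, and --- if one wants to allow a non-noetherian base $S\in\lSch$ --- that the cd-structure formalism underlying Proposition \ref{logtop.6} still governs the single distinguished square attached to $f$; this is harmless, and in any case one may reduce to $S\in\lSch_{noeth}$ via the logarithmic noetherian approximation of \S\ref{noetherian}.
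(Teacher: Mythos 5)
Your proof is correct and, at bottom, is the paper's own argument: the paper disposes of Propositions \ref{logHprop.1} and \ref{logHprop.2} in one line by combining Proposition \ref{logtop.6} with Remark \ref{logH.21}, i.e.\ by cd-structure descent applied to the dividing distinguished square with empty corners, which is exactly your ``alternative'' route. Your primary variant --- noting that $\{Y\to X\}$ is a $dNis$-cover whose \v{C}ech nerve collapses because a dividing cover is a monomorphism, so every $dNis$-sheaf satisfies $\cF(X)\xrightarrow{\simeq}\cF(Y)$, and then applying $L_\boxx$ and Yoneda --- is a harmless reformulation that has the mild advantage of not invoking the completeness/regularity statement of Proposition \ref{logtop.6}, whose stated hypothesis is $S\in\lSch_{noeth}$.
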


For the theory of cubes in an $\infty$-category, we refer to Appendix \ref{cubes}.
If $Q$ is an $n$-cube in a category of sheaves of spaces, we let $Q_+$ be the naturally induced $n$-cube in the corresponding category of sheaves of pointed spaces, obtained by applying the functor $(-)_+$ objectwise.

\begin{prop}
\label{logHprop.3}
Suppose $S\in \lSch$ and let $Q$ be the cube associated with a Zariski cover 
$\{U_i\to X\}_{1\leq i\leq n}$ in $\lSm/S$.
Then, in $\infShv_{sNis}(\lSm/S)[\boxx^{-1}]$, we have an equivalence
\[
\colimit(\horn(Q))
\simeq
X.
\]
\end{prop}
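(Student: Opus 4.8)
The plan is to argue by induction on $n$, using the standard fact (valid in any $\infty$-topos of sheaves) that a Zariski cover gives a colimit diagram after sheafification, together with the observation that $\boxx$-localization is a left adjoint and therefore preserves colimits. First I would recall the setup from Appendix \ref{cubes}: for a Zariski cover $\{U_i\to X\}_{1\le i\le n}$, the associated $n$-cube $Q$ has initial vertex $X$, and for a nonempty subset $I\subseteq\{1,\dots,n\}$ the vertex at $I$ is the intersection $U_I=\bigcap_{i\in I}U_i$; the punctured cube $\horn(Q)$ is the restriction of $Q$ to the nonempty subsets. The claim is then exactly that $X$ is the colimit of the $U_I$ over the nonempty $I$'s, i.e. that the Zariski cover is effective-epimorphic with its \v{C}ech-type (intersection) diagram computing the colimit, in $\inflogH(S)$.

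The key steps, in order: (1) Observe that in $\infShv_{sNis}(\lSm/S)$ (equivalently, already in $\infShv_{Zar}(\lSm/S)$, since the strict Nisnevich topology is finer than the Zariski topology and sheafification for $sNis$ factors through $Zar$-sheafification), the sieve generated by a Zariski cover is covering, hence the map $\colim_{\emptyset\ne I} U_I \to X$ is an equivalence of Zariski (hence strict Nisnevich) sheaves; this is a general property of $1$-topoi/$\infty$-topoi and the fact that for a Zariski cover the intersections $U_I$ already form the relevant colimit diagram — one can also see it concretely, on stalks, since Zariski covers split locally and the punctured-cube colimit of a covering family of opens is the union. Alternatively, invoke that the Zariski cd-structure (a sub-cd-structure of the strict Nisnevich one, via $2$-element covers) is complete and regular by Proposition \ref{logtop.6}, run the induction on $n$ via Proposition \ref{logHprop.1} for the case $n=2$, and use that each intersection $U_I$ is again an open of $X$ so the $(n-1)$-subcubes are again Zariski-cover cubes. (2) Apply the localization functor $L_\boxx$: since $\inflogH(S)=\infShv_{sNis}(\lSm/S)[\boxx^{-1}]$ and $L_\boxx$ is a left adjoint, it preserves all colimits, so $\colim(\horn(Q_{\text{sheaf}}))\simeq X$ persists in $\inflogH(S)$. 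Since $L_\boxx$ also preserves the terminal-vertex identification, we conclude $\colim(\horn(Q))\simeq X$ in $\inflogH(S)$.

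I expect the main obstacle to be step (1): making precise that the punctured cube of \emph{intersections} (not the full \v{C}ech nerve) already computes the colimit of a Zariski cover at the level of sheaves. The cleanest route is the induction on $n$: for $n=2$, $\{U_1,U_2\to X\}$ with $U_1\cap U_2$ is a strict Nisnevich (indeed Zariski) distinguished square, so Proposition \ref{logHprop.1} gives $X\simeq U_1\sqcup_{U_1\cap U_2}U_2$, which is the colimit of the punctured $2$-cube. For the inductive step, split off $U_n$: write $V=\bigcup_{i<n}U_i$ (as a sheaf, the colimit of the punctured $(n-1)$-cube on $\{U_i\}_{i<n}$, which equals $X$ restricted to that union — here one uses that $\lSm/S$ has the relevant fiber products, namely opens of $X$), so that $\{V, U_n\to X\}$ is again a $2$-element Zariski cover with intersection $V\cap U_n=\bigcup_{i<n}(U_i\cap U_n)$, and the latter is the punctured-$(n-1)$-cube colimit of the cover $\{U_i\cap U_n\to U_n\}_{i<n}$ by the inductive hypothesis. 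A routine cofinality/Bousfield–Kan argument for cubical diagrams (the colimit of the punctured $n$-cube is the pushout of the colimit of the two punctured $(n-1)$-faces along the colimit of the punctured $(n-1)$-subcube obtained by intersecting with the last coordinate) then assembles these into the desired equivalence; I would cite the relevant lemma from Appendix \ref{cubes} rather than reprove it. Everything here is about sheaves of spaces; the passage to $\inflogH(S)$ via $L_\boxx$ is formal.
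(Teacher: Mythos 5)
Your proposal is correct and follows essentially the same route as the paper: induction on $n$, with the base case a Zariski (hence strict Nisnevich) distinguished square handled by Proposition \ref{logHprop.1}, the inductive step splitting off one member of the cover and applying the induction hypothesis to the punctured faces, and the assembly done by the cube-decomposition lemma of Appendix \ref{cubes} (Proposition \ref{Thomdf.5}, i.e.\ \eqref{Thomdf.5.2}). Just note that the decomposition you invoke is the pushout of the \emph{vertex} of one face and the punctured colimit of the other along the punctured colimit of the first face (not of the two punctured faces), which is exactly what the cited lemma provides, so your argument goes through as the paper's does.
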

\begin{proof}
We proceed by induction on $n$.
If $n=1$, the claim is clear.
If $n>1$, let $Q_i$ be the $(n-1)$-cube $Q|_{\{i\}\times (\Delta^1)^{n-1}}$.
From \eqref{Thomdf.5.2}, we have a cocartesian square
\[
\begin{tikzcd}
\colimit(\horn(Q_0))\ar[d]\ar[r]&
\colimit(\horn(Q_1))\ar[d]
\\
U_1\ar[r]&
\colimit(\horn(Q))
\end{tikzcd}
\]
in $\inflogH(S)$.
Together with the induction hypothesis, this becomes a cocartesian square
\[
\begin{tikzcd}
U_1\cap (U_2\cup \cdots \cup U_n)\ar[d]\ar[r]&
U_2\cup \cdots \cup U_n\ar[d]
\\
U_1\ar[r]&
\colimit(\horn(Q))
\end{tikzcd}
\]
in $\inflogH(S)$.
Proposition \ref{logHprop.1} finishes the proof.
\end{proof}

\begin{df}
\label{Ori.38}
Suppose $S\in \Sch$, $X$ is a scheme in $\Sm/S$, with a strict normal crossing divisor $Z_1+\cdots+Z_m$, 
and $n$ is an integer in $[1,m]$.
For simplicity of notation, we set $Y:=(X,Z_{n+1}+\cdots+Z_m)$ and 
$$Y_I:=(X,Z_{i_1}+\cdots+Z_{i_r}+Z_{n+1}+\cdots+Z_m)$$ 
if $I=\{i_1,\ldots,i_r\}$ is a subset of $[1,n]$.
In this setting, 
let\index[notation]{CubeYZ @ $\cube(Y,Z_1+\cdots+Z_n)$}\index[notation]{HornYZ @ $\horn(Y,Z_1+\cdots+Z_n)$}
\[
\cube(Y,Z_1+\cdots+Z_n)
\text{ and }
\horn(Y,Z_1+\cdots+Z_n)
\]
denote the cube and cubical horn naturally consisting of 
$Y_I$.
For example, if $m=3$ and $n=2$, then $\cube(Y,Z_1+Z_2)$ is the diagram
\[
\begin{tikzcd}
(X,Z_1+Z_2+Z_3)\ar[r]\ar[d]&
(X,Z_1+Z_3)\ar[d]
\\
(X,Z_2+Z_3)\ar[r]&
(X,Z_3).
\end{tikzcd}
\]
We obtain the cubical horn by removing the lower right corner $(X,Z_3)$.
\end{df}

\begin{prop}
\label{Ori.5}
With the above notation, suppose $\{X-Z_i\to X\}_{1\leq i\leq n}$ is a Zariski cover.
Then in  $\infShv_{sNis}(\lSm/S)[\boxx^{-1}]$, there is an equivalence
\[
\colimit(\horn(Y,Z_1+\cdots+Z_n))\simeq Y.
\]
\end{prop}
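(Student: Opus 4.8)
\emph{Plan.} I will compute $\colimit(\horn(Y,Z_1+\cdots+Z_n))$ by Zariski descent on the common underlying scheme $X$, using the cover by the opens $U_i:=X-Z_i$, and then collapse the resulting inner colimit by observing that $Z_i$ becomes empty on $U_i$. Write $\mathsf{H}$ for the poset of non-empty subsets of $[1,n]$ ordered by reverse inclusion and set $D:=Z_{n+1}+\cdots+Z_m$, so that $\horn(Y,Z_1+\cdots+Z_n)$ is the diagram $\mathsf{H}\to\inflogH(S)$, $I\mapsto Y_I=(X,D+\sum_{i\in I}Z_i)$, and the claim is $\colimit_{I\in\mathsf{H}}Y_I\simeq Y$. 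The hypothesis means precisely that $Z_1\cap\cdots\cap Z_n=\emptyset$, i.e.\ that $\{U_i\}_{1\le i\le n}$ covers $X$; for $J\in\mathsf{H}$ put $U_J:=\bigcap_{j\in J}U_j=X-\bigcup_{j\in J}Z_j$. Since restricting a compactifying log structure to an open is again compactifying, $Y_I|_{U_J}=(U_J,(D+\sum_{i\in I}Z_i)\cap U_J)\in\SmlSm/S$ for all $I,J$.

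\emph{A double colimit.} For fixed $I$, the strict open immersions $\{Y_I|_{U_i}\to Y_I\}$ form a Zariski cover of $Y_I\in\lSm/S$, so Proposition \ref{logHprop.3}, being functorial in the covered log scheme, gives a natural equivalence $\colimit_{J\in\mathsf{H}}Y_I|_{U_J}\simeq Y_I$. Feeding this in and interchanging the two iterated colimits,
\[
\colimit_{I\in\mathsf{H}}Y_I\;\simeq\;\colimit_{I\in\mathsf{H}}\colimit_{J\in\mathsf{H}}Y_I|_{U_J}\;\simeq\;\colimit_{J\in\mathsf{H}}\colimit_{I\in\mathsf{H}}Y_I|_{U_J}.
\]

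\emph{Collapsing the inner colimit.} Fix $J\in\mathsf{H}$ and put $K:=[1,n]\setminus J$. On $U_J$ each $Z_j$ with $j\in J$ is empty, so $Y_I|_{U_J}$ depends only on $I\cap K$; hence $I\mapsto Y_I|_{U_J}$ factors as $[\,L\mapsto(U_J,(D+\sum_{i\in L}Z_i)\cap U_J)\,]\circ p$, where $p\colon\mathsf{H}\to\mathsf{Q}_K$, $I\mapsto I\cap K$, and $\mathsf{Q}_K$ denotes the poset of \emph{all} subsets of $K$ under reverse inclusion. The functor $p$ is final: for $L\in\mathsf{Q}_K$ the comma category $L\!\downarrow\! p$ is the full subposet of $\mathsf{H}$ on the $I$ with $I\subseteq L\cup J$, which has the initial object $L\cup J$ (here we use $J\neq\emptyset$) and is hence weakly contractible. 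As $\mathsf{Q}_K$ also has a terminal object, namely $\emptyset$, we conclude $\colimit_{I\in\mathsf{H}}Y_I|_{U_J}\simeq\colimit_{L\in\mathsf{Q}_K}(U_J,(D+\sum_{i\in L}Z_i)\cap U_J)\simeq(U_J,D\cap U_J)=Y|_{U_J}$, naturally in $J$. Substituting this into the right-hand term of the display and applying Proposition \ref{logHprop.3} once more, now to the Zariski cover $\{Y|_{U_i}\to Y\}$ of $Y$, gives $\colimit_{J\in\mathsf{H}}Y|_{U_J}\simeq Y$, which finishes the argument.

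\emph{Main obstacle.} The only nonformal inputs are Proposition \ref{logHprop.3} (used twice) and the finality of $p$; the remaining ingredients---that the restrictions lie in $\SmlSm/S$, that compactifying log structures restrict well to opens, that a colimit over a poset with a terminal object is the value there, and the interchange of iterated colimits---are routine. The point demanding a little care is the \emph{naturality} used to assemble the double colimit: one needs the descent equivalence of \ref{logHprop.3} to depend functorially on the covered log scheme, which is the case because that proof is built from the cocartesian squares of Proposition \ref{logHprop.1} in a natural fashion. Alternatively one could try to imitate the inductive proof of \ref{logHprop.3} directly, but then the inductive step forces one to work over the open $X-\bigcap_{i\ge 2}Z_i$ rather than over $X$, with comparable bookkeeping.
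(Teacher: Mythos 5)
Your proof is correct and follows essentially the same route as the paper's: reduce by Zariski descent (Proposition \ref{logHprop.3}) to the opens $U_J$, where the horn diagram degenerates because the $Z_j$ with $j\in J$ become empty there. The only difference is in how the local collapse is carried out — you use cofinality of $I\mapsto I\cap K$ together with the terminal object of the subset poset, whereas the paper invokes the cocartesian square \eqref{Thomdf.5.2} and the isomorphisms $Y_{\{1\}\cup I}\cong Y_I$ — a cosmetic rather than substantive distinction.
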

\begin{proof}
By \v{C}ech descent for the Zariski topology,
we are reduced to consider the intersections $(X-Z_{i_1})\cap \cdots \cap (X-Z_{i_r})$ instead of $X$.
Without loss of generality, we may assume $i_1=1$.
In this intersection, $Z_1$ becomes $0$.
This means that $Y_{\{1\}\cup I}\to Y_I$ is an isomorphism of fs log schemes for every subset $I$ of $[2,n]$.
Together with \eqref{Thomdf.5.2}, this concludes the proof.
\end{proof}
We record the following elementary but important consequence of the above Proposition.
\begin{prop}
\label{logH.1}
For every $S\in \lSch$ there is a cocartesian square
\[
\begin{tikzcd}
(\P^1,0+\infty)\arrow[d]\arrow[r]&
(\P^1,0)\arrow[d]
\\
(\P^1,\infty)\arrow[r]&
\P^1
\end{tikzcd}
\]
in $\infShv_{sNis}(\lSm/S)[\boxx^{-1}]$.
\end{prop}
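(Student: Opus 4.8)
The plan is to deduce this from Proposition \ref{Ori.5}, applied to the elementary configuration $0+\infty$ on $\P^1$, after first reducing to the universal base.

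First I would reduce to the case $S=\Spec{\Z}$ with trivial log structure. For an arbitrary $S\in\lSch$, let $f\colon S\to\Spec{\Z}$ be the structure morphism. Since $\inflogH(-)$ is an $\lSm$-premotivic $\infty$-category, $f^*\colon\inflogH(\Spec{\Z})\to\inflogH(S)$ is symmetric monoidal and colimit-preserving; moreover, by Construction \ref{logH.3} and compatibility of $f^*$ with the $(\boxx,\tau)$-localization, it carries the (sheafified, $\boxx$-local) representables of $(\P^1,0+\infty)$, $(\P^1,0)$, $(\P^1,\infty)$, $\P^1$ over $\Spec{\Z}$ to those of their base changes over $S$ (each of which lies in $\lSm/S$, log smoothness being stable under base change). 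Hence a cocartesian square over $\Spec{\Z}$ produces the desired one over $S$.

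Over $S=\Spec{\Z}$, I would invoke Proposition \ref{Ori.5} with $X=\P^1$, $m=n=2$, $Z_1=\{0\}$ and $Z_2=\{\infty\}$. The divisor $0+\infty$ is visibly a strict normal crossing divisor on $\P^1$, and $\{\P^1-\{0\}\to\P^1,\ \P^1-\{\infty\}\to\P^1\}$ is a Zariski cover, both complements being copies of $\mathbb{A}^1$. Since $n=m$, in the notation of Definition \ref{Ori.38} one has $Y=(\P^1,\emptyset)=\P^1$ and $Y_I=(\P^1,\sum_{i\in I}Z_i)$, so that $\cube(Y,Z_1+Z_2)$ is exactly the square in the statement with lower-right vertex $\P^1$, and $\horn(Y,Z_1+Z_2)$ is that square with the vertex $\P^1$ deleted. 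Proposition \ref{Ori.5} then yields $\colimit(\horn(Y,Z_1+Z_2))\simeq Y=\P^1$ via the canonical comparison map induced by the cube, i.e.\ the map from the pushout $(\P^1,0)\sqcup_{(\P^1,0+\infty)}(\P^1,\infty)$ to $\P^1$.

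Finally, since a commutative square in an $\infty$-category is cocartesian precisely when the induced map from the pushout of its punctured diagram to the remaining vertex is an equivalence, the conclusion of Proposition \ref{Ori.5} in this case is exactly the assertion that the square is cocartesian in $\inflogH(S)$. I do not expect a real obstacle here; the content is entirely packaged in Proposition \ref{Ori.5} (which rests on strict Nisnevich descent, Proposition \ref{logHprop.1}, together with \v{C}ech descent for the Zariski cover). The only point meriting a line of care is to note that the equivalence furnished by Proposition \ref{Ori.5} is the one induced by the cone structure of $\cube(Y,Z_1+Z_2)$, so that it translates into cocartesianness of the given square and not merely into an abstract equivalence.
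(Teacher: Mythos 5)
Your proof is correct and is essentially the paper's own argument: the paper also notes that for $S\in\Sch$ the square is the special case of Proposition \ref{Ori.5} with $X=\P^1$, $Z_1=0$, $Z_2=\infty$, and handles general $S\in\lSch$ by applying $p^*$ along the structure morphism $p\colon S\to\Spec{\Z}$. Your extra remarks (base-change compatibility, identification of the cube/horn, cocartesianness being equivalent to the comparison map from the pushout being an equivalence) just spell out the same reduction in more detail.
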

\begin{proof}
If $S\in \Sch$, this is a special case of Proposition \ref{Ori.5}.
For general $S\in \lSch$, apply $p^*$ to have a required cocartesian square, where $p\colon S\to \Z$ is the structure morphism.
\end{proof}

\begin{prop}
\label{logH.4}
For every $S\in \lSch$ and $n\geq 1$ there is a cocartesian square
\[
\begin{tikzcd}
(\Blow_O(\A^n),E)\ar[d]\ar[r]&
\A^n\ar[d]
\\
(\Blow_O(\P^n),E)\ar[r]&
\P^n
\end{tikzcd}
\]
in $\infShv_{sNis}(\lSm/S)[\boxx^{-1}]$
Here $O$ is the origin of $\A^n$ and $E$ is the exceptional divisor.
It follows that there is an equivalence
\[
T
\xrightarrow{\simeq}
\P^1/(\P^1,0)
\]
in $\infShv_{sNis}(\lSm/S)_\ast[\boxx^{-1}]$.
\end{prop}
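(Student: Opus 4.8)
The plan is to first establish the cocartesian square and then deduce the equivalence $T \xrightarrow{\simeq} \P^1/(\P^1,0)$ by a Thom-space-style argument that reduces the statement to the case $n=1$, which in turn follows from Proposition \ref{logH.1}. For the cocartesian square itself, the idea is to realize $\P^n$ as glued from $\A^n$ and a Zariski open neighborhood $V$ of the hyperplane at infinity $H_\infty=\P^{n-1}$ along $V\setminus H_\infty$, and to do the same for $(\Blow_O(\P^n),E)$, so that by \v{C}ech descent (Proposition \ref{logHprop.3}) we are reduced to an affine-local statement. More precisely, the blow-up $\Blow_O(\P^n)$ agrees with $\P^n$ away from $O$, so over the open set $V$ containing $H_\infty$ but not $O$ the relevant maps are isomorphisms; the only place the blow-up does anything is over a standard affine chart $\A^n\subset\P^n$ centered at $O$. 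Thus the square in question is, Zariski-locally on $\P^n$, either an identity square or the square
\[
\begin{tikzcd}
(\Blow_O(\A^n),E)\ar[d]\ar[r]& \A^n\ar[d]\\
(\Blow_O(\A^n),E)\ar[r]& \A^n
\end{tikzcd}
\]
up to the gluing locus; after applying $L_{\boxx,dNis}$, Proposition \ref{logHprop.3} assembles these local squares into the asserted global cocartesian square. (As in Proposition \ref{logH.1}, the case of a general $S\in\lSch$ follows from the case $S\in\Sch$ by applying $p^*$ for $p\colon S\to\Z$ the structure morphism, since $p^*$ preserves colimits.)

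Granting the square, here is how I would extract the equivalence $T\simeq\P^1/(\P^1,0)$. Taking pointed quotients (horizontal cofibers) in the cocartesian square of Proposition \ref{logH.4} with $n=1$ gives
\[
\cofib\big((\Blow_O(\A^1),E)\to \A^1\big)\;\simeq\;\cofib\big((\Blow_O(\P^1),E)\to \P^1\big).
\]
But for $n=1$ the blow-up at a point is an isomorphism, $\Blow_O(\A^1)=\A^1$ and $\Blow_O(\P^1)=\P^1$, and the exceptional divisor $E$ is the point $O$ itself; so the left-hand side is $\cofib((\A^1,0)\to\A^1)=T$ by Definition \ref{logH.26}, while the right-hand side is $\cofib((\P^1,0)\to\P^1)=\P^1/(\P^1,0)$. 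Hence $T\xrightarrow{\simeq}\P^1/(\P^1,0)$ in $\inflogHpt(S)$, where the comparison map is the one induced by the open immersion $\A^1\hookrightarrow\P^1$ (this gives a map of squares from the $n=1$ square to the ``constant in the vertical direction'' degenerate square, hence a well-defined map on cofibers).

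The main obstacle is the careful bookkeeping in the Zariski-local analysis of the square: one must check that the center $O$ of the blow-up and the hyperplane $H_\infty$ (which carries the log structure) interact correctly, i.e.\ that the log structures on $(\Blow_O(\P^n),E)$ and on $(\Blow_O(\A^n),E)$ are compatible under restriction to the overlapping open sets, and that the exceptional divisor $E$ does not meet $H_\infty$ so that the two cd-square gluings genuinely match. The point at $\infty$ of $\P^1$ in Definition \ref{logtop.4} carries the divisor log structure, so one has to keep track of which divisor is which when passing between the $n$-dimensional picture and its $1$-dimensional specialization; the $\boxx$-localization plays no essential role beyond making Proposition \ref{logHprop.3} and the cofiber computation available, so the real content is entirely about reducing, via strict Nisnevich (here just Zariski) descent, to a purely local geometric statement about blow-ups at a point, which is standard.
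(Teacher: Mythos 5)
Your proposal is correct and in substance it is the paper's own argument: the cover of $\P^n$ by $\A^n$ and $\P^n-O$ (your $V$) is exactly what the paper uses, except that it packages the descent step via the commutative diagram $\A^n-O\to(\Blow_O(\A^n),E)\to\A^n$ over $\P^n-O\to(\Blow_O(\P^n),E)\to\P^n$, observing that the left and outer squares are strict Nisnevich (Zariski) distinguished squares, hence cocartesian by Proposition \ref{logHprop.1}, and invoking the pasting law to get the right square, rather than reassembling local cocartesian squares over a cover. Your $n=1$ specialization (using $(\Blow_O(\A^1),E)=(\A^1,0)$, $(\Blow_O(\P^1),E)=(\P^1,0)$ and comparing horizontal cofibers) is exactly the intended content of the paper's ``It follows''; your worries about log structures along $H_\infty$ are moot since the only log divisor in sight is $E$, and the base-change remark to general $S$ is unnecessary because Propositions \ref{logHprop.1} and \ref{logHprop.3} already hold over any $S\in\lSch$.
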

\begin{proof}
Let $(a)$ and $(b)$ be the left and right squares in the commutative diagram
\[
\begin{tikzcd}
\A^n-O\ar[d]\ar[r]&
(\Blow_O(\A^n),E)\ar[d]\ar[r]&
\A^n\ar[d]
\\
\P^n-O\ar[r]&
(\Blow_O(\P^n),E)\ar[r]&
\P^n.
\end{tikzcd}
\]
Since $(a)$ and the composite square are  both strict Nisnevich distinguished squares, they are cocartesian in $\infShv_{sNis}(\lSm/S)[\boxx^{-1}]$.
By applying the pasting law, 
see e.g., \cite[Lemma 4.4.2.1]{HTT}, 
we deduce that $(b)$ is cocartesian in $\infShv_{sNis}(\lSm/S)[\boxx^{-1}]$.
If $n=1$, then we get $T \xrightarrow{\simeq} \P^1/(\P^1,0)$.
\end{proof}

\begin{prop}
\label{logH.2}
If $S\in \lSch$, 
then in  $\infShv_{sNis}(\lSm/S)_\ast[\boxx^{-1}]$ there is an equivalence
\[
\P^1/1 \simeq S^1\wedge S_t^1.
\]
\end{prop}
\begin{proof}
Follows from Proposition \ref{logH.1} since $(\P^1,0)\cong (\P^1,\infty)=\boxx$.
\end{proof}

\begin{prop}
\label{logH.5}
If $S\in \lSch$, 
then in $\infShv_{sNis}(\lSm/S)_\ast[\boxx^{-1}]$ there is an equivalence
\[
T
\simeq
S^1\wedge S_t^1
\]
\end{prop}
\begin{proof}
Combine Propositions \ref{logH.4} and \ref{logH.2}, and use the equivalence $(\P^1,0)\simeq \pt$.
\end{proof}

\begin{df}
Let $S\in \lSch$.
We define \index[notation]{logSH @ $\inflogSH$}
\[
\inflogSH(S) = \infSpt^{\Sigma}_T(\inflogSH^\eff(S)).
\]
As in the $\A^1$-setting, this construction can be promoted to
a functor
\[
\inflogSH\colon \lSch^\mathrm{op} \to \CAlg(\LPr)
\]
satisfying the properties (1)--(3) of Proposition \ref{prop:is_premotivic}. 
\end{df}
\begin{rmk} Let $S\in \lSch$.
Write $\Sigma_{\Gmm}$ for $S^1_t\wedge - $ in $\inflogSH^{\eff}(S)$. We will see in light of Proposition \ref{Ori.56} and Remark \ref{Ori.74} below that $\Gmm$ is a symmetric object of $\inflogSH^\eff(S)$. Therefore
we have an equivalence of $\infty$-categories
\begin{equation}
\label{alter.1.1}
\inflogSH(S)
\simeq
\colim_{\LPr}(\inflogSH^{\eff}(S) \xrightarrow{\Sigma_{\Gmm}}\inflogSH^{\eff}(S) \xrightarrow{\Sigma_{\Gmm}} \cdots ),
\end{equation}
thanks to Proposition \ref{prop:def-spt}. In particular, the category $\inflogSH(S)$ is a \emph{stable} $\infty$-category.
\end{rmk}

\begin{df}\label{def:sphere_suspension_logH}
Suppose $S\in \lSch$.
The functors
\[
S^1\wedge (-), T\wedge (-)
\colon 
\infShv_{sNis}(\lSm/S)_\ast[\boxx^{-1}]\to  \infShv_{sNis}(\lSm/S)_\ast[\boxx^{-1}]
\]
naturally induce equivalences\index[notation]{SigmaS1 @ $\Sigma_{S^1}$}
\[
\Sigma_{S^1},\Sigma_T\colon \inflogSH(S,\Lambda)
\to
\inflogSH(S,\Lambda).
\]
These have right adjoints $\Omega_{S^1}$ and $\Omega_T$, 
which we also write as $\Sigma_{S^1}^{-1}$ and $\Sigma_T^{-1}$.
By combining these functors, 
we define\index[notation]{Sigmapq @ $\Sigma_{p,q}$}
\[
\Sigma^{p,q}:=\Sigma_{S^1}^{p-2q}\Sigma_T^q \colon \inflogSH(S, \Lambda)\to \inflogSH(S, \Lambda)
\]
for all integers $p$ and $q$.
Similarly, we can define $\Sigma^{p,q}\colon \inflogSHS(S, \Lambda)\to \inflogSHS(S, \Lambda)$ for all integers $q\geq 0$ and $p$.
\end{df}

\subsection{Auxiliary models and unstable logarithmic motivic homotopy theory}

Next, we explain  two classes of morphisms that we often want to invert in our log motivic homotopy categories. Throughout this section, $S$ will denote a quasi-compact and quasi separated scheme, with trivial log structure. In this case we are able to introduce an unstable (resp.\ pointed) variant of the above construction, that will be denoted by $\inflogH_{(*)}(S)$. Unlike its stable analogue, its definition involves inverting an extra class of morphisms, namely admissible blow-ups. Notably, after $S^1$-stabilization this extra class becomes automatically  inverted in $\inflogSHS(S)$, see    Corollary \ref{ProplogSH.9}.

\begin{df}
\label{logH.35}
Suppose that $X\in \SmlSm/S$ (in particular, $\ul{X}$ is of finite presentation over $S$).
If $Z\in \Sm/S$ is a closed subscheme of $\ul{X}$ that has strict normal crossing
with $\partial X$ in the sense of \cite[Definition 7.2.1]{logDM}, 
the \emph{blow-up of $X$ along $Z$}, \index{blow-up} denoted 
$\Blow_Z X$,\index[notation]{BlZX @ $\Blow_Z X$} is the fs log scheme whose underlying scheme is the 
blow-up $\Blow_Z \ul{X}$, 
and whose log structure is the compactifying log structure associated with the open immersion 
$X-(\partial X\cup Z)\to \Blow_Z \ul{X}$.
Observe that there is a canonical projection $\Blow_Z X \to X$.

In the case of $Z\subset \partial X$, 
we say that $\Blow_Z X$ is an 
\emph{admissible blow-up along a smooth center}.\index{admissible blow-up along a smooth center}
The smallest class of morphisms in $\lSm/S$ containing all admissible blow-ups along smooth centers 
and closed under compositions is denoted by $\SmAdm/S$.\index[notation]{SmAdm @ $\SmAdm/S$}
\end{df}

\begin{lem}
The class $\SmAdm/S$ is closed under pullbacks along arbitrary morphisms $S'\to S$ of schemes.
\end{lem}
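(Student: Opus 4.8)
The plan is to reduce the statement to a single generator of $\SmAdm/S$ and then to compute the pullback by hand. Write $u\colon S'\to S$ and let $u^*\colon \SmlSm/S\to \SmlSm/S'$, $X\mapsto X\times_S S'$, be the base-change functor: this is well defined since the underlying scheme $\ul X\times_S\ul{S'}$ is smooth of finite presentation over $S'$, and the pullback of the compactifying log structure attached to a strict normal crossing divisor $\partial X$ over $S$ is the compactifying log structure attached to $\partial X\times_S S'$, which is again strict normal crossing over $S'$ (its components stay smooth over $S'$, remain effective Cartier because $\ul X$ and each component of $\partial X$ are flat over $S$, and their incidence pattern is preserved, as one checks \'etale-locally where $\partial X$ is a union of coordinate hyperplanes). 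Since $u^*$ is compatible with composition and $\SmAdm/S'$ is closed under composition by definition, it is enough to show that for a single admissible blow-up along a smooth center $p\colon \Blow_Z X\to X$ — that is, $Z\in\Sm/S$ a closed subscheme of $\ul X$ with $Z\subseteq\partial X$ and having strict normal crossing with $\partial X$ — the morphism $u^*p$ lies in $\SmAdm/S'$.

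I claim that $u^*p$ is itself an admissible blow-up along a smooth center over $S'$, namely $\Blow_{Z'}(X\times_S S')\to X\times_S S'$ with $Z':=Z\times_S S'$. That $Z'$ is smooth over $S'$, is contained in $\partial(X\times_S S')=\partial X\times_S S'$, and has strict normal crossing with it follows exactly as in the first paragraph. On underlying schemes one must identify $\Blow_Z(\ul X)\times_S S'$ with $\Blow_{Z'}(\ul X\times_S S')$. Here $Z\hookrightarrow\ul X$ is a regular immersion, being a closed immersion of smooth $S$-schemes; and since $\ul X$ and $Z$ are both flat over $S$, the base change along $\ul X\times_S S'\to\ul X$ is Tor-independent with $Z\hookrightarrow\ul X$, so $Z'\hookrightarrow\ul X\times_S S'$ is again regular of the same local codimension and the blow-up commutes with the base change. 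Finally the log structure on $u^*(\Blow_Z X)$ is, since the base-change morphism $X\times_S S'\to X$ is strict, the pullback of the compactifying log structure of $X-(\partial X\cup Z)\hookrightarrow\Blow_Z\ul X$; because $(\partial X\cup Z)\times_S S'$ is again a strict normal crossing divisor on $\Blow_{Z'}(\ul X\times_S S')$ — using the flatness statements above and the commutation of the blow-up — this pullback is precisely the compactifying log structure defining $\Blow_{Z'}(X\times_S S')$. Hence $u^*p\in\SmAdm/S'$, and we are done.

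The only input that is not purely formal is the commutation of the blow-up with the base change $u$, which need not be flat; the step requiring care is the Tor-independence, i.e.\ that smoothness over $S$ forces $Z$ and $\ul X$ to be flat over $S$ and hence $Z'\hookrightarrow\ul X\times_S S'$ to remain a regular immersion of unchanged codimension. Once this is established, compatibility of the blow-up (a $\mathrm{Proj}$ of a Rees algebra) with Tor-independent base change gives the scheme-level identity, and the remaining log-structure bookkeeping is transparent after passing to an \'etale cover of $\ul X$ on which $\partial X$ is a union of coordinate hyperplanes and $Z$ a coordinate subspace, where the blow-up and its compactifying log structure are given by explicit toric formulae that are manifestly stable under base change.
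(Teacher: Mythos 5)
Your argument is correct, but it runs along a genuinely different track than the paper's. The paper never confronts the non-flat base change $S'\to S$ directly: it works strict \'etale locally on $X$ to put $Z\subset \ul X$ in the standard form pulled back from a universal model $Z_0\subset \ul{X_0}=\A^{m+m'}\times\ul{\A_\N^{n+n'}}$ defined over $\Spec\Z$, via a composite $\ul X\to \ul{X_0}\times S\to\ul{X_0}$ with the first arrow \'etale. Then $\Blow_Z X$ is the pullback of $\Blow_{Z_0}X_0$ along the \emph{\'etale} (hence flat) map $\ul X\to\ul{X_0}\times S$, so base-changing along $S'\to S$ merely replaces the factor $S$ by $S'$ in that fiber-product expression; the only commutation of blow-up with base change ever invoked is the textbook flat case, and the log-structure identification is immediate from the explicit model. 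You instead base-change the blow-up along the possibly non-flat $S'\to S$ itself and justify the commutation by Tor-independence, which indeed holds because $\ul X$ and $Z$ are flat over $S$: one has $\cO_Z\otimes^{\bbL}_{\cO_{\ul X}}\cO_{\ul X\times_S S'}\simeq\cO_Z\otimes^{\bbL}_{\cO_S}\cO_{S'}\simeq\cO_{Z\times_S S'}$. The one step you should spell out is that commutation of $\mathrm{Proj}$ of the Rees algebra with base change needs $\mathrm{Tor}_1^{\cO_{\ul X}}(\cO_{\ul X}/I^n,\cO_{\ul X\times_S S'})=0$ for \emph{all} powers $I^n$, not just for $n=1$; Tor-independence with $\cO_Z$ alone would not suffice for a general ideal, but here it propagates to all $n$ by induction because the immersion is regular, so each $I^n/I^{n+1}\cong\mathrm{Sym}^n(I/I^2)$ is locally free over $\cO_Z$. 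With that supplement your route is a complete, self-contained alternative; the paper's route buys a shorter proof by outsourcing all the work to the universal toric chart over $\Z$, while yours makes no appeal to the local model beyond the snc bookkeeping for the log structures, which you handle correctly.
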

\begin{proof}
By working strict \'etale locally on $X$,
we reduce to the case when $Z\to \ul{X}$ is given by a pullback of
\[
Z_0:=\A^m \times \{0\}\times \ul{\A_\N^n}\times \{0\} \to \ul{X_0}:=\A^{m+m'}\times \ul{\A_{\N}^{n+n'}}
\]
with $n'>0$ along a composite morphism $\ul{X}\to \ul{X_0}\times S\to \ul{X_0}$,
where the first morphism is \'etale and the second morphism is the projection.
We set $X_0:=\A^{m+m'}\times \A_{\N}^{n+n'}$.
We have an isomorphism
\[
\Blow_Z X\times_S S'
\cong
(\Blow_{Z_0} X_0 \times S')\times_{\ul{X_0}\times S}\times \ul{X} 
\]
To conclude,
recall that $\ul{X}\to \ul{X_0}\times S$ is \'etale and the blow-up construction is closed under flat base change.
\end{proof}

\begin{df}
Let $\cSch$ be the category of pairs $X=(\ul{X},X-\partial X)$ of quasi-compact and quasi-separated
schemes
satisfying the following conditions:
\begin{enumerate}
\item[(i)] $X-\partial X$ is a dense open subscheme of $\ul{X}$,
\item[(ii)] the complement of $X-\partial X$ in $\ul{X}$ is the support of a Cartier divisor on $X$ (possibly empty).
\end{enumerate}
A morphism $f\colon Y\to X$ in $\cSch$ is a commutative square of schemes
\[
\begin{tikzcd}
Y-\partial Y\ar[d]\ar[r,"f-\partial f"]&
X-\partial X\ar[d]
\\
Y\ar[r,"f"]&
X.
\end{tikzcd}
\]

We can endow $X\in \cSch$ with the compactifying log structure associated with $X-\partial X\to \ul{X}$.
According to \cite[Proposition III.1.6.2]{Ogu},
$\cSch$ is a full subcategory of log schemes.
By \cite[Proposition III.1.6.3]{Ogu},
if $\ul{X}$ is locally noetherian,
then the said log structure is integral,
and $\ol{\cM}_{X,x}$ is nontrivial for a point $x$ of $X$ if and only if $x\in X-\partial X$.

Observe that for all morphisms $Y,Z\to X$ in $\cSch$,
the fiber product is given by the formula
\[
Y\times_X Z
\simeq
(
\ul{Y}\times_{\ul{X}}\ul{Z},
(Y-\partial Y)\times_{X-\partial X}(Z-\partial Z)
).
\]
Since $\cSch$ admits a final object $(\Spec{\Z},\Spec{\Z})$,
$\cSch$ admits finite limits.
We use the notation $\boxx:=(\P^1,\A^1)$ in $\cSch$ in analogy with $\boxx$ in $\lSch$.
The “c” in $\cSch$ refers to the compactifying log structure.
\end{df}

\begin{rmk}
Unlike the theory of modulus pairs developed in \cite{zbMATH07341096} and \cite{2106.12837},
a morphism $Y\to X$ in $\cSch$ does not require extra conditions on the boundaries $\partial Y$ and $\partial X$.
\end{rmk}

\begin{df}
Let $\cSm$ denote the class of morphisms $f\colon Y\to X$ in $\cSch$ such that the morphism $f-\partial f\colon Y-\partial Y\to X-\partial X$ is smooth.

The class $\cSm$ is closed under compositions and pullbacks.
For $S\in \Sch$,
$\cSm/S$ contains $\lSm/S$ as a full subcategory.
\end{df}

\begin{df}
\label{ProplogSH.2}
An \emph{admissible blow-up} \index{admissible blow-up} is a morphism $p\colon Y\to X$ in $\cSch$ 
such that $\ul{p}\colon \ul{Y}\to \ul{X}$ is proper and the naturally induced morphism of schemes 
$Y-\partial Y\to X-\partial X$ is an isomorphism.
The class of admissible blow-ups in $\cSch$ is denoted by $\Adm$.\index[notation]{Adm @ $\Adm$}

Observe that $\Adm$ is closed under compositions and pullbacks and $\Adm\subset \cSm$.
\end{df}
\begin{df}
We say that a morphism $f\colon Y\to X$ in $\cSch$ is \emph{strict} if the naturally induced morphism $Y-\partial Y\to (X-\partial X)\times_X Y$ is an isomorphism.
We say that $f$ is a \emph{strict Nisnevich cover} if $f$ is strict and $\ul{f}\colon \ul{Y}\to \ul{X}$ is a Nisnevich cover of schemes.
\end{df}

\begin{rmk}
When discussing a non $\A^1$-invariant cohomology theory $H^i$ of a scheme $X$ over a field $k$,  
such as Hodge cohomology, 
it is often convenient to consider a compactification $Y\in \SmlSm/k$ such that $Y-\partial Y\cong X$ 
and $Y$ is proper over $k$.
One may want to extend $H^i$ to $\SmlSm/k$ and take $H^i(Y)$ as a new cohomology group of $X$, 
but a technical problem of this approach is to show independence of the choice of $Y$.
To achieve this, 
it is necessary to show that $H^i(Y)$ is invariant under admissible blow-ups $Y'\to Y$ for $
Y,Y'\in \SmlSm/k$ with $Y$ proper.
This is the reason why we are considering admissible blow-ups.
\end{rmk}
\begin{const}\label{constr:Adm_unstable}
For every $S\in \Sch$ as above, write \index[notation]{logH @ $\inflogH$}\index[notation]{logHpt @ $\inflogHpt$}
\[
\inflogH_{(\ast)}(S)=\infShv_{sNis} (\SmlSm/S)_{(\ast)}[(\boxx\cup \SmAdm)^{-1}] \subset \infShv_{sNis} (\SmlSm/S)_{(\ast)}
\]
for the full subcategory of the $\infty$-category of (pointed) strict Nisnevich sheaves on $\SmlSm/S$ spanned by the objects $M$ that are local with respect to the maps $(X\times \boxx)_{(+)}\to X_{(+)}$ for all $X\in \SmlSm/S$ and the maps $f\in \SmAdm/S$. Note that this construction underlies a presentable symmetric monoidal $\infty$-category $\inflogH_{(\ast)}(S)^\otimes$.

In a similar fashion, we can consider for every $S\in \cSch$ the $\infty$-categories

\[\inflogH^{\Adm}_{(\ast)}(S)\subset\infShv_{sNis}(\cSm/S)_{(\ast)}\]
spanned by the objects $M$ that are $\boxx$-local, and $\Adm$-local in the obvious sense. Again, this construction underlies a presentable symmetric monoidal $\infty$-category $\inflogH^{\Adm}_{(\ast)}(S)^\otimes$.
\end{const}

The following result is a verbatim translation of the previously seen arguments. 
\begin{prop} \label{logH.9} The assignments 
\[
S\mapsto \inflogH_{(\ast)}(S)
\]
can be promoted into functors 
\[
\inflogH,
\inflogH_{\ast}
\colon
 \Sch^\mathrm{op}\to \CAlg(\LPr)
\]
that
satisfy the compatibilities as in Proposition \ref{prop:is_premotivic}.
Similarly, we can define \index[notation]{logHAdm @ $\inflogH^{\Adm}$}\index[notation]{logHptAdm @ $\inflogHpt^{\Adm}$}
\[
\inflogH^{\Adm},
\inflogH_{\ast}^{\Adm}
\colon
\cSch^\mathrm{op}
\to
\CAlg(\LPr).
\]
\end{prop}

\begin{rmk}\label{rmk:base_point}
The adding a base-point functor $\infSpc\to \infSpc_*$   induces a symmetric monoidal functor 
\[(-)_+\colon \inflogH(S) \to \inflogH_{\ast}(S).\]

Note that the $\infty$-category $\inflogH(S)$, (resp.\  $\inflogH_{\ast}(S)$) is presentable, and is generated under colimits by the family of $X$ (resp.\ $X_+$) for all $X\in \lSm/S$. Moreover, $\inflogH(S)$, (resp.\ \ $\inflogH_{\ast}(S)$) underlies a presentable symmetric monoidal $\infty$-category, where the monoidal structure is the Day convolution of the cartesian product in $\lSm/S$.
\end{rmk}

\begin{rmk}
\label{rmk:unclearequivalence}
Let $S\in \Sch$ as above. 
There is a natural localization functor
\[
\infShv_{dNis}(\lSm/S)[\boxx^{-1}]\to \inflogH(S)
\]
obtained by inverting every morphism in $\SmAdm$. Remarkably, thanks to Corollary \ref{ProplogSH.9}, this localization becomes an equivalence after stabilization, and it is unclear  whether such equivalence already holds in the unstable setting. 

When $S$ is equipped with a nontrivial log structure, it is tempting to define an unstable version of $\inflogSH^{\eff}(S)$ (which we introduced for arbitrary $S\in \lSch$) by considering a localization of $\infShv_{dNis}(\lSm/S)$ to $\boxx$-local equivalences. However, when the log structure on $S$ is not trivial it is unclear how to define the analogue of $\SmAdm/S$. Since many fundamental properties of $\inflogH(S)$ depend on the invariance under admissible blow-ups, it's also unclear whether the category of $\boxx$-local $dNis$-sheaves (without further conditions) is actually the correct setting for studying unstable phenomena.
\end{rmk}

\subsection{Log \'etale motivic homotopy categories and compact generators}\label{sec:cmpt_objects}
In this section we consider a variant of $\inflogSH$ using a topology other than the dividing Nisnevich topology  (e.g., the log \'etale topology).
We also discuss compact generators for such a variant.

\begin{notat}
\label{not:Lambda}
Let $\Lambda\in \CAlg(\infSpt_{\geq 0})$ be a commutative connective ring spectrum, and let $\Mod_\Lambda$ be the $\infty$-category of $\Lambda$-modules. We denote by $\infPsh(\infC, \Lambda)=\Fun(\infC^\mathrm{op},  \Mod_{\Lambda})$ the $\infty$-category of $\infC$-presheaves of $\Lambda$-modules and by $\infShv_\tau( \infC, \Lambda)$ the full $\infty$-subcategory of $\tau$-sheaves, if $\infC$ is endowed with a Grothendieck topology $\tau$.
For $X\in \infC$, let $\Lambda_\tau(X)$ be the $\tau$-sheaf of $\Lambda$-modules represented by $X$.
\end{notat}

\begin{rmk}
    We will mostly be interested in the case where $\Lambda = \mathrm{H} R$ is the Eilenberg-MacLane spectrum of an ordinary commutative ring $R$, or $\Lambda = \mathbb{S}$ is the sphere spectrum (or a localization of the sphere spectrum at a prime $p$). 
\end{rmk}

\begin{rmk}(Hypercomplete version and descent by squares)\label{logH.21} Suppose that $\cC$ is a category with an initial object, and let $P$ be a complete and regular cd-structure with the associated topology $\tau$. Then $\cF\in \infPsh(\cC)$ is a $\tau$-sheaf if and only if $\cF(Q)$ is cartesian for every distinguished square $Q$  by \cite[Corollary 5.10]{Vcdtop}.

If $P$ is a quasi-bounded, regular, and squarrable cd-structure,
then $\infShv_\tau(\cC)$ is hypercomplete.
If $P$ were a bounded, complete, and regular cd-structure, 
then this follows from \cite[Proposition 3.8(a),(b)]{Vcdtop}.
In our setting, use \cite[Lemma 3.4.10]{logDM} instead of \cite[Lemma 3.5]{Vcdtop}.

The situation is more subtle when $\tau$ is not the topology associated with a cd-structure. Assume that $\Lambda$ is as in Notation \ref{not:Lambda}, and assume  that $\Lambda$ is eventually coconnective (that is, $\pi_i(\Lambda)=0$ for $i$ large enough). Assume moreover that the following conditions are satisfied:
\begin{enumerate}
    \item $(\infC, \tau)$ has finite local $\Lambda$-cohomological dimension, and $\infC$ is an ordinary category  with fiber products.
    \item there exists a full subcategory $\infC_0\subseteq \infC$, stable under fiber products, spanned by quasi-compact objects (in the sense of \cite[Expos\'e VI, D\'efinitions 1.1]{SGA4}) of finite $\Lambda$-cohomological dimension, and such that every object of $\infC$ admits a $\tau$-cover by objects of $\infC_0$.\end{enumerate}
Then every $\tau$-sheaf of $\Lambda$-modules is an hypersheaf.
This is the content of \cite[Corollary 2.4.6]{AGV}.

A classical example of application of the above criterion is when $\infC=\lSm/S$ for $S$ a quasi-compact and quasi-separated fs log scheme, and $\tau$ is log \'etale or the Kummer \'etale, or the dividing \'etale, or the strict \'etale topology and $\Lambda$ is a (classical) torsion ring. This follows from \cite[Theorem 7.2]{NakayamaII}. If $\tau$ is for example the strict \'etale topology, the same holds for the case $\Lambda$ a $\mathbb{Q}$-algebra (in this case the Nisnevich and the \'etale cohomology coincide for any discrete sheaf, since the rational Galois cohomology groups vanish in positive degree).  
\end{rmk}

\begin{const}
Let $S\in \Sch$,
and let $\tau$ be a topology on $\lSch$ finer than the dividing Nisnevich topology and generated by a family of morphisms in $\lSm$.
With $\Lambda$ as in Notation \ref{not:Lambda},
we construct
\[
\inflogSH_\tau^\eff(S,\Lambda)
\text{ and }
\inflogSH_\tau(S,\Lambda)
\]
from $\infShv_\tau(\lSm/S,\Lambda)$ as we have constructed $\inflogSH^\eff(S)$ and $\inflogSH(S)$ from $\infShv_{dNis}(\lSm/S,\infSpt)$.
\end{const}

\begin{rmk}
When $\Lambda$ is the Eilenberg-MacLane spectrum of a classical (discrete) ring, the category $\inflogSH_{\tau}^\eff(S, \Lambda)$ coincides with the category $\inflogDAeff_\tau(S, \Lambda)$ considered elsewhere (notably in \cite{BLPO}, \cite{BLMP}). When $\Lambda=\mathbb{S}$ is the sphere spectrum, we will simply write $\inflogSH_{\tau}^\eff(S)$.
\end{rmk}

\begin{prop}
\label{alter.6}
Let $\infC$ be a symmetric monoidal $\infty$-category generated by a family $\sF$, and let $B$ be an object of $\infC$.
Then the family of $\Sigma_B^{-i} \Sigma_B^\infty X$ for all $X\in \sF$ and $i\geq 0$ generates $\Stab_B(\infC)$.
\end{prop}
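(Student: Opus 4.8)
The approach is to play off the two presentations of the stabilization recorded in Construction~\ref{alter.4}: the colimit presentation $\Stab_B(\infC)\simeq\colim_{\LPr}(\infC\xrightarrow{\Sigma_B}\infC\xrightarrow{\Sigma_B}\cdots)$ with structure functors $\iota_n\colon\infC\to\Stab_B(\infC)$ (and $\iota_0=\Sigma_B^\infty$), against the dual limit presentation $\Stab_B(\infC)\simeq\limit(\cdots\xrightarrow{\Omega_B}\infC\xrightarrow{\Omega_B}\infC)$, whose projections $\pi_n$ are right adjoint to the $\iota_n$ and, being the structure maps of a limit of $\infty$-categories, jointly conservative. Since $\Sigma_B$ induces an autoequivalence of $\Stab_B(\infC)$ (with inverse induced by $\Omega_B$), the cocone identities $\iota_{n+1}\circ\Sigma_B\simeq\iota_n$ together with the compatibility $\Sigma_B\circ\iota_n\simeq\iota_n\circ\Sigma_B$ of the induced functor with the cocone give equivalences $\iota_n\simeq\Sigma_B^{-n}\circ\Sigma_B^\infty$. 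Hence the family in the statement coincides, up to equivalence, with $\{\iota_n(X):X\in\sF,\ n\geq 0\}$, and the problem reduces to showing that this family generates $\Stab_B(\infC)$ under colimits.

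First I would set $\infD_0\subseteq\Stab_B(\infC)$ to be the smallest full subcategory closed under small colimits and containing all $\iota_n(X)$ with $X\in\sF$ (a small set, as it is in all cases of interest; recall $\infC$ is presentable). For each fixed $n$, the full subcategory $\{c\in\infC:\iota_n(c)\in\infD_0\}$ is closed under small colimits --- because $\iota_n$ preserves colimits and $\infD_0$ is colimit-closed in $\Stab_B(\infC)$ --- and it contains $\sF$, hence is all of $\infC$; so $\iota_n$ factors through $\infD_0$ as a colimit-preserving functor $\iota_n'\colon\infC\to\infD_0$. Being generated under colimits by a small set inside a presentable $\infty$-category, $\infD_0$ is presentable, so the fully faithful, colimit-preserving inclusion $j\colon\infD_0\hookrightarrow\Stab_B(\infC)$ admits a right adjoint $j^R$ by \cite[Corollary~5.5.2.9]{HTT}; and from $\iota_n\simeq j\circ\iota_n'$ one gets $\pi_n\simeq(\iota_n')^R\circ j^R$.

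The final step is to check that $j$ is an equivalence, which gives $\infD_0=\Stab_B(\infC)$ and hence the claim. For $X\in\Stab_B(\infC)$, consider the counit $\varepsilon_X\colon jj^RX\to X$. As $j$ is fully faithful the unit of $(j,j^R)$ is an equivalence, so a triangle identity shows $j^R(\varepsilon_X)$ is an equivalence, whence $\pi_n(\varepsilon_X)\simeq(\iota_n')^R(j^R(\varepsilon_X))$ is an equivalence for every $n$; joint conservativity of $\{\pi_n\}$ then forces $\varepsilon_X$ itself to be an equivalence, i.e.\ $X$ lies in the essential image of $j$. Thus $j$ is essentially surjective, hence an equivalence. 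I expect the only non-formal point to be the bookkeeping in the first paragraph --- matching the colimit and limit presentations so that $\pi_n$ is genuinely right adjoint to $\iota_n$ with $\{\pi_n\}$ jointly conservative, and identifying $\iota_n$ with $\Sigma_B^{-n}\Sigma_B^\infty$ --- which is all contained in \S\ref{sec:spectra} and \cite[\S 2.2]{RobaloThesispublished} but needs to be invoked with care; everything after that is the standard ``a jointly conservative family of right adjoints detects generation under colimits'' argument.
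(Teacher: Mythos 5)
Your argument is correct, but it takes a genuinely different route from the paper. The paper's proof is a short conservativity check using the limit presentation of $\Stab_B(\infC)$: writing an object $\cF$ as a sequence $(\cF_0,\cF_1,\ldots)$ with $\cF_i\simeq \Omega_B\cF_{i+1}$, a map is an equivalence if and only if it is one levelwise (equivalences in a limit of $\infty$-categories are detected by the projections), and the adjunction $\Map_{\Stab_B(\infC)}(\Sigma_B^{-i}\Sigma_B^\infty X,\cF)\simeq \Map_{\infC}(X,\cF_i)$ identifies the levelwise condition at the generators with mapping out of the family $\Sigma_B^{-i}\Sigma_B^\infty X$; so the paper reads ``generates'' as ``jointly detects equivalences'' and stops there. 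You instead prove generation under colimits outright: you form the colimit closure $\infD_0$ of the shifted suspensions, use generation of $\infC$ by $\sF$ to factor each $\iota_n$ through $\infD_0$, invoke presentability of $\infD_0$ and the adjoint functor theorem to produce $j^R$, and then use joint conservativity of the projections $\pi_n\simeq (\iota_n')^R\circ j^R$ to see that the counit is an equivalence. Both proofs ultimately rest on the same two inputs, namely $\iota_n\dashv\pi_n$ (equivalently $\iota_n\simeq \Sigma_B^{-n}\Sigma_B^\infty$ with right adjoint the $n$-th projection) and joint conservativity of the $\pi_n$; what your version buys is the literal colimit-generation statement, which is the form used downstream (Lemma \ref{alter.7}, Proposition \ref{logH.31}), at the cost of requiring $\sF$ to be a small set and of the unreferenced assertion that the colimit closure of a small set of objects in a presentable $\infty$-category is presentable --- this is true and standard (close $\sF$ under $\kappa$-small colimits for $\kappa$ large enough, then under $\kappa$-filtered colimits), but it deserves a citation or a line of proof. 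The paper's shorter argument yields joint conservativity, which in the presentable setting is equivalent to colimit-generation precisely by the kind of localization argument you wrote out, so the two proofs are complementary rather than in conflict.
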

\begin{proof}
Suppose $f\colon \cF\to \cG$ is a map in $\Stab_B(\infC)$, and we write $\cF(X)$ and $\cG(X)$ as $(\cF_0(X),\cF_1(X),\ldots)$ and $(\cG_0(X),\cG_1(X),\ldots)$ for $X\in \sF$.
Then $f$ is an equivalence if and only if $\cF_i(X)\to \cG_i(X)$ is an equivalence of spaces for all $X\in \sF$ and integer $i\geq 0$.
Since $\cF_i(X)\to \cG_i(X)$ can be written as
\[
\Map_{\Stab_B(\infC)}(\Sigma_B^{-i} \Sigma_B^\infty X,\cF)
\to
\Map_{\Stab_B(\infC)}(\Sigma_B^{-i} \Sigma_B^\infty X,\cG),
\]
we conclude.
\end{proof}
\begin{lem}
\label{alter.7}
Suppose $S\in \lSch$.
Then the family of $\Sigma_T^\infty \Lambda_\tau(X)$
 for all $X\in \lSm/S$ generates $\inflogSH(S, \Lambda)$ under colimits,  desuspensions, and negative $T$-suspensions.
A similar result holds for the $\Adm$-variants.
\end{lem}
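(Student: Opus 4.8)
The plan is to deduce this purely formally from the presentation of $\inflogSH(S,\Lambda)$ as a $T$-stabilization together with the generation results already established. First I would recall that, by definition, $\inflogSH(S,\Lambda)^{\otimes}=\infSpt_{T}^{\Sigma}(\inflogSH^{\eff}(S,\Lambda))^{\otimes}$, and that by Proposition \ref{prop:def-spt} its underlying $\infty$-category is $\Stab_{T}(\inflogSH^{\eff}(S,\Lambda))$. By Proposition \ref{prop:unstable_generators}, the $\infty$-category $\inflogSH^{\eff}(S,\Lambda)$ is generated under colimits by the family
\[
\sF:=\{\Sigma_{S^{1}}^{-n}(\Sigma_{S^{1}}^{\infty}X_{+}\otimes\Lambda)\ :\ X\in\lSm/S,\ n\geq 0\},
\]
noting that, since $\inflogSH^{\eff}(S,\Lambda)$ is stable, the ``desuspensions'' appearing in Proposition \ref{prop:unstable_generators} are the equivalences $\Sigma_{S^{1}}^{-n}$.

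Next I would invoke Proposition \ref{alter.6} with $B=T$ and $\infC=\inflogSH^{\eff}(S,\Lambda)$: it gives that the family $\{\Sigma_{T}^{-i}\Sigma_{T}^{\infty}M:M\in\sF,\ i\geq 0\}$ generates $\Stab_{T}(\inflogSH^{\eff}(S,\Lambda))=\inflogSH(S,\Lambda)$ under colimits. It then remains only to rewrite these generators in the asserted shape. Since $\Sigma_{T}^{\infty}$ is symmetric monoidal and colimit-preserving, it commutes with $\Sigma_{S^{1}}=S^{1}\wedge(-)$; as source and target are stable, $\Sigma_{S^{1}}$ is an equivalence on both, so $\Sigma_{T}^{\infty}$ also commutes with each $\Sigma_{S^{1}}^{-n}$. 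Moreover, the compatibility of the (monoidal, colimit-preserving) functors $\Sigma_{S^{1}}^{\infty}$, $\Sigma_{T}^{\infty}$ and the base change $-\otimes\Lambda$ identifies $\Sigma_{T}^{\infty}(\Sigma_{S^{1}}^{\infty}X_{+}\otimes\Lambda)$ with $\Sigma_{T}^{\infty}X_{+}\otimes\Lambda$. Hence the generators above are exactly $\Sigma_{T}^{-i}\Sigma_{S^{1}}^{-n}(\Sigma_{T}^{\infty}X_{+}\otimes\Lambda)$ for $X\in\lSm/S$ and $i,n\geq 0$, which is the claim that $\{\Sigma_{T}^{\infty}X_{+}\otimes\Lambda:X\in\lSm/S\}$ generates $\inflogSH(S,\Lambda)$ under colimits, $S^{1}$-desuspensions, and negative $T$-suspensions.

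For the $\SmAdm$-variants I would run the identical argument, with $\lSm/S$ replaced by $\SmlSm/S$ and $\inflogSH^{\eff}(S,\Lambda)$, $\inflogSH(S,\Lambda)$ replaced by $\inflogSH_{S^{1}}^{\SmAdm}(S,\Lambda)$, $\inflogSH^{\SmAdm}(S,\Lambda)$ respectively; the only extra input needed is the generation statement for $\inflogSH_{S^{1}}^{\SmAdm}(S,\Lambda)$, proved exactly as Proposition \ref{prop:unstable_generators} since that category is again a Bousfield localization of a presheaf $\infty$-category on $\SmlSm/S$. I do not expect any genuine obstacle here: no geometric input enters, and the only point requiring a little care is the bookkeeping of the composite suspension functors and the base change along $\Lambda$, together with the routine observation that $\Sigma_{T}^{\infty}$ passes through $S^{1}$-desuspensions because it is monoidal and all categories in sight are stable.
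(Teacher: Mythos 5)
Your proposal is correct and follows exactly the paper's route: the paper's proof is precisely "immediate from Proposition \ref{alter.6} using Proposition \ref{prop:unstable_generators}," i.e., apply the stabilization-generation result with $B=T$ to the generators of $\inflogSH^{\eff}(S,\Lambda)$. Your extra bookkeeping identifying $\Sigma_{T}^{\infty}(\Sigma_{S^{1}}^{\infty}X_{+}\otimes\Lambda)$ with $\Sigma_{T}^{\infty}X_{+}\otimes\Lambda$ and commuting $\Sigma_{T}^{\infty}$ past $S^{1}$-desuspensions is the same routine verification the paper leaves implicit.
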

\begin{proof}
Immediate from Proposition \ref{alter.6}.
\end{proof}

\begin{prop}
\label{logH.31}
Suppose $S\in \lSch_{noeth}$, and assume that either
\begin{enumerate}
    \item $\tau$ is  the dividing Nisnevich topology or,
    \item $\tau = d\et , l\et$, and $\Lambda$ satisfies conditions (1) and (2) of Remark \ref{logH.21}. 
\end{enumerate}
Then, for all $X\in \lSm/S$ and integer $p$ (resp.\ integers $p$ and $q$), $\Sigma_{S^1}^p  \Lambda_\tau(X)$ (resp.\ $\Sigma^{p,q}\Sigma_T^\infty \Lambda_\tau(X)$) is compact in $\inflogSH_{\tau}^\eff(S, 
\Lambda)$ (resp.\ $\inflogSH_\tau(S, \Lambda)$). In particular, the categories $\inflogSH^\eff_{\tau}(S, \Lambda)$ and $\inflogSH_\tau(S, \Lambda)$ are all compactly generated stable $\infty$-categories.
\end{prop}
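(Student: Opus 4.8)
The plan is to reduce the compactness statement to a recognizable criterion about sheaves and localization. First I would recall that for a presentable $\infty$-category obtained as a Bousfield localization $L\colon \cD \to \cD_0 \subseteq \cD$ of a compactly generated $\cD$, an object of the form $L(c)$ with $c \in \cD$ compact is compact in $\cD_0$ provided the localization functor $L$ (equivalently, the inclusion $\cD_0 \hookrightarrow \cD$) preserves filtered colimits; this is \cite[Lemma 5.5.7.2]{HTT} type reasoning, or more precisely the observation that $\Map_{\cD_0}(L(c), -) \simeq \Map_{\cD}(c, -)|_{\cD_0}$ and the latter commutes with filtered colimits computed in $\cD$, which agree with those in $\cD_0$ once the inclusion is accessible and colimit-preserving on filtered diagrams. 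So the whole statement comes down to: (a) $\infShv_\tau(\lSm/S, \Lambda)$ is compactly generated by the $\Lambda_\tau(X)$ and their shifts; and (b) the successive localizations --- first $\tau$-sheafification, then $\boxx$-localization, then $T$-stabilization (via symmetric spectra as in \S\ref{sec:spectra}) --- all have fully faithful right adjoints whose inclusions preserve filtered colimits.

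The key steps, in order: (1) Handle $\tau = dNis$. Here I would invoke Proposition \ref{logtop.6}: the dividing Nisnevich cd-structure is complete, quasi-bounded, and regular, so by Remark \ref{logH.21} the $\tau$-sheaves coincide with $\tau$-hypersheaves, and more importantly $dNis$-sheafification is given by a \emph{bounded} process, so that $\infShv_{dNis}(\lSm/S, \Lambda) \hookrightarrow \infPsh(\lSm/S, \Lambda)$ preserves filtered colimits --- this is the standard fact for sheaves with respect to a bounded, complete, regular cd-structure, cf. \cite[Proposition 3.8]{Vcdtop} and \cite[Lemma 3.4.10]{logDM}. Since $S$ is noetherian of finite Krull dimension, boundedness is available. (2) Handle $\tau = d\et, l\et$ with $\Lambda$ eventually coconnective satisfying the cohomological-dimension hypotheses (1),(2) of Remark \ref{logH.21}: there the relevant input is \cite[Corollary 2.4.6]{AGV}, which gives $\infShv_\tau(\lSm/S,\Lambda) \simeq \infShv_\tau^\wedge(\lSm/S, \Lambda)$ and finite local cohomological dimension, from which one deduces (by the Morel--Voevodsky / AGV argument, essentially a Postnikov-tower / finite-descent-spectral-sequence argument bounded uniformly in $X \in \lSm_0/S$) that $\tau$-sheafification commutes with filtered colimits. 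This is the step where the hypothesis on $\Lambda$ genuinely enters. (3) $\boxx$-localization: the $\boxx$-local objects form a localization with respect to the \emph{small} set $\{\Lambda_\tau(X \times \boxx) \to \Lambda_\tau(X)\}_{X \in \lSm/S}$ between compact objects; localizing a compactly generated stable $\infty$-category at a set of maps between compact objects yields again a compactly generated category in which the inclusion preserves filtered colimits (the local objects are those orthogonal to a set of compact objects, hence the condition is closed under filtered colimits), so $L_\boxx \Lambda_\tau(X)$ stays compact --- this handles $\inflogSH_\tau^\eff(S,\Lambda)$. (4) $T$-stabilization: by Proposition \ref{prop:def-spt} the $T$-stable category is presented by symmetric $T$-spectra, equivalently $\Stab_T$ of the effective category; since $T \simeq S^1 \wedge S^1_t$ (Proposition \ref{logH.5}) is, up to shift, given by smashing with a compact object, $\Omega_T$ preserves filtered colimits, so the limit $\limit(\cdots \xrightarrow{\Omega_T} \inflogSH^\eff_\tau \xrightarrow{\Omega_T} \inflogSH^\eff_\tau)$ is a limit of compactly generated categories along colimit-and-filtered-colimit-preserving right adjoints; by the standard compact-generation-of-stabilization lemma (Proposition \ref{alter.6} combined with the fact that each $\Sigma_T^{-i}$ of a compact generator is compact because $\Sigma_T$ has a filtered-colimit-preserving right adjoint) we conclude that $\Sigma^{p,q}\Sigma_T^\infty \Lambda_\tau(X)$ is compact in $\inflogSH_\tau(S,\Lambda)$. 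The "in particular" clause then follows since Lemma \ref{alter.7} (and Proposition \ref{prop:unstable_generators}) already tell us these objects generate.

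The main obstacle I expect is step (2): establishing that $d\et$- and $l\et$-sheafification commutes with filtered colimits. For the dividing Nisnevich topology (step (1)) this is clean because everything is controlled by a bounded cd-structure and noetherianity gives uniform bounds on Krull dimension hence on cohomological dimension. But for the Kummer-étale and log-étale topologies there is no cd-structure, and one must instead combine the hypercompleteness statement of \cite[Corollary 2.4.6]{AGV} with a genuinely uniform bound on cohomological dimension across all $X \in \lSm/S$ --- this is why the hypotheses import conditions (1) and (2) of Remark \ref{logH.21}, which provide a generating subcategory $\lSm_0/S$ of quasi-compact objects of finite, uniformly bounded $\Lambda$-cohomological dimension, together with eventual coconnectivity of $\Lambda$. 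The delicate point is that the Postnikov/descent spectral sequence argument needs both the finite cohomological dimension (to truncate the descent spectral sequence) and the eventual coconnectivity of $\Lambda$ (to control the $t$-structure from above), and one must check these interact correctly so that the sheafification is a finite iteration of operations each commuting with filtered colimits; I would handle this by directly citing the relevant consequence of \cite[\S 2.4]{AGV} rather than reproving it. Once filtered-colimit-preservation of all three localization/stabilization steps is in hand, the rest is the formal lemma on compact objects surviving such localizations, which is routine.
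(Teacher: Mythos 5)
Your proposal is correct and follows essentially the same route as the paper: compactness of $\Lambda_\tau(X)$ in $\infShv_\tau(\lSm/S,\Lambda)$ via hypercompleteness and the finiteness hypotheses (the paper cites \cite[Lemma 2.4.5]{AGV} uniformly for both cases, where you split into the cd-structure and cohomological-dimension arguments), then closure of the $\boxx$-local objects under filtered colimits, then the $T$-stable case via the fact that $\Omega_{\Gmm}$ preserves filtered colimits because its left adjoint preserves compacts, and finally compact generation from Lemma \ref{alter.7}. The only cosmetic difference is that the paper verifies the stabilization step by hand, computing filtered colimits of $\Gmm$-spectra levelwise, rather than invoking a general lemma on compact generation of stabilizations.
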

\begin{proof}  Under the above assumptions, the $\infty$-category $\infShv_\tau(\lSm/S, \Lambda)$ is hypercomplete. Hence by e.g., \ \cite[Lemma 2.4.5]{AGV},  $\Lambda_\tau(X)\in \infShv_\tau(\lSm/S, \Lambda)$ is compact for every $X\in \lSm/S$.

Recall now that $\inflogSH^\eff_{\tau}(S, \Lambda)$ is defined as the full subcategory of $\infShv_\tau(\lSm/S, \Lambda)$ spanned by $\boxx$-local objects. If $\cF=\colim \cF_i$ is a filtered colimit of $\boxx$-local sheaves, we have equivalences
\begin{align*}
\Map_{\infShv_\tau(\lSm/S, \Lambda)}(\Lambda_\tau(X), \colim \cF_i) &\simeq \colim \Map_{\infShv_\tau(\lSm/S, \Lambda)}(\Lambda_\tau(X), \cF_i) \\
&\simeq  \colim \Map_{\inflogSH^\eff_{\tau}(S, \Lambda)}( \Lambda_\tau(X), \cF_i)
\end{align*}
so that $\Lambda_\tau(X)$ is compact in $\inflogSH^\eff_{\tau}(S, \Lambda)$, noting that $\colim \cF_i$ is again $\boxx$-local. The case of $\Sigma^p_{S^1} \Lambda_\tau(X)$ for arbitrary $p$ follows immediately.
We deduce that $\inflogSH^\eff_\tau(S, \Lambda)$ is compactly generated.

For $\inflogSH_{\tau}(S, \Lambda)$, we only need to consider the case when $p=q=0$ since $\Sigma^{p,q}$ preserves small colimits.
Let $\{\cF_i\}_{i\in I}$ be a filtered system in $\inflogSH_\tau(S, \Lambda)$, and express each $\cF_i$ as a $\Gmm$-spectrum in $\inflogSH_{\tau}^\eff(S, \Lambda)$
\[
\cF_i=(\cF_{i0},\cF_{i1},\ldots)
\]
For every integer $j\geq 0$, we set $\cG_j:=\colimit_{i\in I} \cF_{ij}$.
Since $\Omega_{\Gmm}$ commutes with filtered colimits (since its left adjoint $\Sigma_{\Gmm}$ preserves $\omega$-compact objects), we have an equivalence  $\cG_j\xrightarrow{\simeq} \Omega_{\Gmm}\cG_{j+1}$.
Hence we can form the $\Gmm$-spectrum in $\inflogSH_{\tau}^\eff(S, \Lambda)$
\[
\cG:=(\cG_0,\cG_1,\ldots).
\]
One can readily check that $\cG$ is indeed the filtered colimit of $\{\cF_i\}_{i\in I}$.
Since we have equivalences of spaces (we omit $\Lambda$ for brevity)
\begin{align*}
& \Map_{\inflogSH_{\tau}(S)}(\Sigma_{T}^\infty X_+,\cG)
\simeq
\Map_{\inflogSH_{\tau}^\eff(S)}(\Sigma_{S^1}^\infty X_+,\cG_0)
\\
\simeq &
\colimit_{i\in I} \Map_{\inflogSH_{\tau}^\eff(S)}(\Sigma_{S^1}^\infty X_+,\cF_{i0})
\simeq
\colimit_{i\in I} \Map_{\inflogSH_{\tau}(S)}(\Sigma_T^\infty X_+,\cF_{i}),
\end{align*}
we conclude. In light of Lemma \ref{alter.7}, we get the statement about compact generations as well.
\end{proof}
\begin{rmk}\label{rmk:cpt_gen_variants}
    A similar argument show that $\inflogH(S)$ and $\inflogH_{\ast}(S)$ are compactly generated $\infty$-categories as well, for every $S\in \Sch_{noeth}$.
\end{rmk}

\begin{rmk}Let $S$ be any scheme,  with trivial log structure. Let $\Lambda$ be a commutative connective ring spectrum. The $\infty$-categories
$\infShv_{\tau}(\lSm/S, \Lambda)$ are compactly generated  if $\tau=sNis$ or if $\tau=\set$ and $\Lambda$ is eventually coconnective. This follows using the same argument of \cite[Proposition 2.4.20]{AGV} (see also \cite[Remark 2.4.23]{AGV}), by showing that in both cases that $\Lambda_\tau(X)$ is compact for every $X\in \lSm/S$. Our argument can be used to show that the construction of $\inflogDA(S,\Lambda)$ starting from $\SmlSm/S$ produces a compactly generated stable $\infty$-category without noetherianity assumption. Note that this argument would break if $\Lambda = \Sphere$ (which is clearly not eventually coconnective).
\end{rmk}

\subsection{Abelian motivic homotopy categories}

We will briefly discuss the $\infty$-categories  $\inflogDAeff$ and $\inflogDA$ in this section;  
as explained above, these are abelian analogues of $\inflogSHS$ and $\inflogSH$.
When $k$ is a field,
the homotopy category of $\inflogDAeff$ was discussed in \cite{logDM}.

\begin{df}
Suppose $\tau$ is a topology on $\lSch$ generated by a family of morphisms in $\lSm$, and $\Lambda$ is a (discrete) commutative ring.
Let\index[notation]{logDAeff @ $\inflogDAeff$}\index[notation]{logDA @ $\inflogDA$}
\[
\inflogDAeff_\tau(-,\Lambda),\;
\inflogDA_\tau(-,\Lambda)
\colon
\lSch^\mathrm{op}\to \CAlg(\LPr_{\Lambda})
\]
be the $\Lambda$-linear
$\infty$-categories $\inflogSH_\tau^\eff(-, H \Lambda)$ and  $\inflogSH_\tau(-, H \Lambda)$ respectively, where $H\Lambda$ is the Eilenberg-MacLane spectrum of $\Lambda$. 

We often omit $\tau$ in this notation when $\tau=dNis$.
\end{df}
The base change functor along $\Sphere\to H\Lambda$ induces a 
natural transformation
\index[notation]{betasharp @ $\beta_\sharp$}
\begin{equation}
\label{logDA.3.2}
\beta_\sharp
\colon
\inflogSH_{\tau}^{\eff}(-)  
\to \inflogDAeff_\tau(-,\Lambda)
\end{equation}
By considering $\infSpt_{\Gmm}^\Sigma(-)$ on both sides, we obtain a
natural transformation
\begin{equation}
\label{logDA.3.3}
\beta_\sharp
\colon
\inflogSH_\tau(-)
\to
\inflogDA_\tau(-, \Lambda).
\end{equation}
We denote the right adjoint by $\beta^*$,
which exists since $\beta_\sharp$ preserves small colimits.

\begin{df}
\label{logDA.4}
Suppose that $\Lambda$ is a commutative ring, and $k$ is a field.
Let $\Pshltr(k,\Lambda)$\index[notation]{Pshltr @ $\Pshltr(k,\Lambda)$} be the category of presheaves with log transfers introduced in \cite[4]{logDM}
For $X\in \lSm/k$,
let $\Lambda_{\ltr}(X)$ be the representable presheaf of $\Lambda$-modules in this category. We refer to \cite[Definition 4.1.1]{logDM} for details. We write $\infPsh^{\ltr}(k, \Lambda)$  for the stable $\infty$-category of complexes of presheaves with log transfers, defined in the obvious way. 

  Let $\tau$ be one of the topologies $dNis$, $d\et$, or $l\et$. We denote by $\infShv^{\ltr}_\tau(k, \Lambda)$ the  stable $\infty$-subcategory of $\infPsh^{\ltr}(k, \Lambda)$ spanned by the objects $\mathcal{F}$ that are local with respect to the maps  $\Lambda_{\ltr}(\sX)\to \Lambda_{\ltr}(X)$ for all $\tau$-hypercovers $\sX\to X$.   Note that every $\tau$ as above is compatible with log transfers in the sense of \cite[\S 4.2]{logDM}. As such, the sheafification functor preserves the transfer structure. See \cite[\S 4.3]{logDM} for a construction using  the language of model categories. 
 
 We let \index[notation]{logDMeff @ $\inflogDM^{\mathrm{eff}}$ }\index[notation]{logDM @ $\inflogDM$}
 \[
 \inflogDM_\tau^{\mathrm{eff}}(k,\Lambda),
\text{ and }
\inflogDM_\tau(k,\Lambda)
 \] 
be, respectively, the full $\infty$-subcategory of $\infShv^{\ltr}_\tau(k, \Lambda)$ spanned by $\boxx$-local objects (in the obvious sense), and the stable $\infty$-category of $\Gmm$-symmetric spectra $\infSpt^{\Sigma}_{\Gmm}(\inflogDM^\eff_\tau(k,\Lambda))$. Both categories underlie presentable stable symmetric monoidal $\infty$-categories.

\ 

For $X\in \lSm/k$,
the \emph{motive of $X$} is the object $M(X)\in \inflogDM_{\tau}^{\mathrm{eff}}(k,\Lambda)$ associated with 
$\Lambda_{\ltr}(X)$.
As before, we have the $\infty$-suspension functor
\begin{equation}
\label{logDA.4.1}
\Sigma_T^\infty
\colon
\inflogDM_\tau^{\mathrm{eff}}(k,\Lambda)
\to
\inflogDM_\tau(k,\Lambda).
\end{equation}

We observe that the homotopy category of $\inflogDM^\mathrm{eff}_\tau(k,\Lambda)$ is equivalent to the one given in \cite[Definitions 5.2.1, 5.3.1]{logDM}.
\end{df}

\begin{rmk}
\label{logDA.5}
We keep using the notation in Definition \ref{logDA.4}.
According to \cite[\S 4.3]{logDM}, 
there is  a colimit preserving symmetric monoidal functor\index[notation]{gammasharp @ $\gamma_\sharp$}
\begin{equation}
\gamma_\sharp
\colon
\inflogDA_\tau^{\mathrm{eff}}(k,\Lambda)
\to
\inflogDM_\tau^{\mathrm{eff}}(k,\Lambda).
\end{equation}

Similarly, we have the colimit preserving symmetric monoidal functor
\begin{equation}
\label{logDA.5.2}
\gamma_\sharp
\colon
\inflogDA_\tau(k,\Lambda)
\to
\inflogDM_\tau(k,\Lambda).
\end{equation}
We denote the right adjoint by $\gamma^*$, 
which exists since $\gamma_\sharp$ preserves colimits.
\end{rmk}

\newpage

\section{Properties of logarithmic motivic spaces and spectra}
\label{polmsas}

The fundamental properties of \emph{effective} log motives were proven in the stable linear setting of \cite[\S 7]{logDM}.
We will generalize many of these properties to the unstable setting of log motivic spaces, and to the non-effective (that is, $T$-stable) setting. 
For the sake of flexibility of our theory, 
we will provide various models for $\inflogSH(S)$ (at least when $S$ is a classical scheme).

Compared to the analogous discussion in \cite{logDM}, the main results of this section are contained in \S \ref{proplogH}. In particular, we offer a proof of the fact that the Tate circle $T$ is a symmetric object: this implies that the category of $T$-symmetric spectra is indeed a stable category, as explained in the previous section. Compared to the $\mathbb{A}^1$-invariant case, the proof is more challenging and requires a new explicit geometric construction. 

\subsection{Non-noetherian fs log schemes}
\label{noetherian}

In this subsection,
we apply Grothendieck's technique \cite[\S IV.8]{EGA} of noetherian approximation to deal with some non-noetherian fs log schemes using filtered limits. This is necessary in light of the applications discussed in \cite{BLMP}, \cite{BLPO2}, where the base is typically an integral perfectoid ring. 

Throughout this subsection,
we fix the following:
Let $S:=\lim_{i\in I} S_i$ be a filtered limit of schemes such that for every morphism $i\to j$,
the morphism $S_j\to S_i$ is affine.
Assume that $I$ has a final object $0$ and $S_0$ is quasi-compact quasi-separated.

\begin{exm}
Assume that $S=\Spec{A}$ for some ring $A$.
Then $A$ is isomorphic to a colimit of its subrings $A_i$ that are $\Z$-algebras of finite type.
In this case,
we have $S\cong \lim_{i\in I} S_i$ with $S_i:=\Spec{A_i}$,
and this limit satisfies the above condition.
\end{exm}

\begin{df}
\label{ProplogSH.1}
Suppose $W\to X$ is a closed immersion in $\Sm/S$ for $S\in \Sch$, 
and $Z$ is a strict normal crossing divisor on $X$ over $S$.
We say that \emph{$W$ has strict normal crossing with $Z$ over $S$}\index{strict normal crossing} 
if Zariski locally on $X$, there exists a commutative diagram
\begin{equation}
\label{ProplogSH.1.1}
\begin{tikzcd}[column sep=small, row sep=small]
W\ar[d]\ar[r]&
S\times \Spec{\Z[x_1,\ldots,x_n]/(x_{i_1},\ldots, x_{i_s})}\ar[d]
\\
X\ar[d,leftarrow]\ar[r]&
S\times \Spec{\Z[x_1,\ldots,x_n]}\ar[d,leftarrow]
\\
Z\ar[r]&
S\times \Spec{\Z[x_1,\ldots,x_n]/(x_1\cdots x_r)}
\end{tikzcd}
\end{equation}
of cartesian squares for some $0\leq r\leq n$ and $1\leq i_1<\cdots<i_s\leq n$. 
Here the right vertical morphisms are the obvious closed immersions, 
and the horizontal morphisms are \'etale.
\end{df}

\begin{lem}
\label{noetherian.5}
Let $X_0\to S_0$ be a finitely presented morphism of schemes, and we set $X:=X_0\times_{S_0} S$.
If $f\colon U\to X$ is a Zariski (resp.\ \'etale) cover such that it is finitely presented,
then there exists $i\in I$ and a Zariski (resp.\ \'etale) cover $f_i\colon U_i\to X_i$ such that it is finitely presented and $f\cong f_i\times_{S_i} S$.
\end{lem}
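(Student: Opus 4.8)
The plan is to run Grothendieck's limit formalism \cite[\S IV.8]{EGA} for the cofiltered system with affine transition morphisms. Since $X_0\to S_0$ is finitely presented and $S_0$ is quasi-compact and quasi-separated, $X_0$ is itself quasi-compact and quasi-separated; setting $X_i:=X_0\times_{S_0}S_i$ one has $X=\lim_{i\in I}X_i$, and the transition morphisms $X_j\to X_i$ are affine, being pullbacks of the affine morphisms $S_j\to S_i$. So the system $(X_i)_{i}$ satisfies the standing hypotheses, and the same remains true after passing to any cofinal sub-system of the index category.

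First I would spread out the morphism. Applying \cite[Th\'eor\`eme IV.8.8.2]{EGA} to the finitely presented morphism $f\colon U\to X$ produces an index $i_1\in I$ and a finitely presented morphism $f_{i_1}\colon U_{i_1}\to X_{i_1}$ together with an isomorphism $f\cong f_{i_1}\times_{X_{i_1}}X$. Writing $f_i\colon U_i\to X_i$ for the pullback of $f_{i_1}$ along $X_i\to X_{i_1}$ (for $i$ refining $i_1$), these morphisms are finitely presented, mutually compatible, and satisfy $f\cong f_i\times_{S_i}S$ (note $X_i\times_{S_i}S=X$).

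Next I would spread out the relevant properties. In the \'etale case: $f$ is \'etale and $f=\lim_i f_i$ with each $f_i$ finitely presented, so \cite[Proposition IV.8.10.5]{EGA} provides an index $i_2$ refining $i_1$ with $f_{i_2}$ \'etale, hence $f_i$ \'etale for all $i$ refining $i_2$ by base change; and since $f$ is surjective and finitely presented, \cite[\S IV.8]{EGA} yields an index $i_3$ refining $i_2$ with $f_i$ surjective for all $i$ refining $i_3$. Any such $i$ gives the desired finitely presented \'etale cover $f_i\colon U_i\to X_i$ with $f\cong f_i\times_{S_i}S$. In the Zariski case I would present the cover as a finite disjoint union $U=\coprod_{\alpha=1}^n U_\alpha$ of quasi-compact open subschemes $U_\alpha\hookrightarrow X$, spread out each $U_\alpha$ by \cite[Th\'eor\`eme IV.8.8.2]{EGA}, and use \cite[Proposition IV.8.10.5]{EGA} to arrange that each $U_{\alpha,i}\to X_i$ is an open immersion (a single index works, there being finitely many $\alpha$), obtaining a finitely presented morphism $f_i\colon\coprod_\alpha U_{\alpha,i}\to X_i$. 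The open subset of $X_i$ swept out by the images of the $U_{\alpha,i}$ has full preimage in $X$, so its complement, a closed quasi-compact quasi-separated subscheme with empty preimage in $X$, is empty for $i$ large enough; for such $i$ the morphism $f_i$ is a Zariski cover of $X_i$ with $f\cong f_i\times_{S_i}S$.

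The only delicate input is the descent of surjectivity: it is not detected by a merely constructible condition on a single $X_i$, and one must use either \cite[\S IV.8]{EGA} directly or the underlying fact that a cofiltered limit of nonempty quasi-compact quasi-separated schemes is nonempty — and in either form the finite presentation hypothesis on $f$ is essential. Beyond that, the argument is routine bookkeeping, combining the finitely many ``eventually true'' statements at a single index of the filtered system.
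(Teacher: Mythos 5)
Your argument is correct and is essentially the paper's proof written out in full: the paper simply cites EGA IV.8.8.2, IV.8.10.5(iii),(iv) and IV.17.7.8, which are exactly the spreading-out statements you invoke (spreading out the morphism, the open-immersion/surjectivity conditions, and étaleness, respectively). The only cosmetic point is that the étale case is more precisely \cite[Proposition IV.17.7.8]{EGA} rather than IV.8.10.5, as in the paper's citation.
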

\begin{proof}
This is a consequence of \cite[Th\'eor\`emes IV.8.8.2, IV.8.10.5(iii),(iv), Proposition IV.17.7.8]{EGA}.
\end{proof}

\begin{prop}
\label{noetherian.1}
For every $X\in \SmlSm/S$,
there exists $i\in I$ and $X_i\in \SmlSm/S_i$ such that $S\times_{S_i} X_i\cong X$.
If $W\to \ul{X}$ is a closed immersion in $\Sm/S$ such that $W$ has strict normal crossing with $\partial X$ over $S$,
then we may further assume that there exists a closed immersion $W_i\to \ul{X_i}$ in $\Sm/S_i$ such that $W\to \ul{X}$ is its pullback along $S\to S_i$ and $W_i$ is strict normal crossing with $\partial X_i$ over $S$.
\end{prop}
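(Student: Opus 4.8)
The plan is to spread $X$ out chart by chart via noetherian approximation. First I would write $X = (\ul{X}, D)$ as in \cite[Lemma A.5.10]{logDM} and Remark \ref{logtop.9}, with $\ul{X} \in \Sm/S$ finitely presented and $D = D_1 + \cdots + D_m$ a strict normal crossing divisor on $\ul{X}$ over $S$; thus each $D_j$ is a finitely presented closed subscheme of $\ul{X}$ smooth over $S$, and, since $\ul{X}$ is quasi-compact, there is a \emph{finite} Zariski cover $\{U_\alpha \to \ul{X}\}$ together with \'etale $S$-morphisms $U_\alpha \to \mathbb{A}^{n_\alpha}_S$ under which $D \cap U_\alpha$ is the preimage of a union of coordinate hyperplanes. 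If a closed immersion $W \hookrightarrow \ul{X}$ as in the statement is also given, then, refining the cover, we may take on each $U_\alpha$ a diagram of the shape \eqref{ProplogSH.1.1} witnessing that $W$ has strict normal crossing with $\partial X$ over $S$.

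Next I would descend all of this to a finite level $i$. By \cite[Th\'eor\`emes IV.8.8.2, IV.8.10.5, Proposition IV.17.7.8]{EGA} there is $i \in I$ and a smooth finitely presented $\ul{X_i} \to S_i$ with $\ul{X_i} \times_{S_i} S \cong \ul{X}$. Enlarging $i$, the closed immersions $D_j \hookrightarrow \ul{X}$ and $W \hookrightarrow \ul{X}$ descend to morphisms over $S_i$ which, after a further enlargement, are again closed immersions with source smooth over $S_i$, by \cite[Th\'eor\`emes IV.8.8.2, IV.8.10.5, Proposition IV.17.7.8]{EGA}; set $D_i := D_{1,i} + \cdots + D_{m,i}$. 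By Lemma \ref{noetherian.5} the finite Zariski cover $\{U_\alpha \to \ul{X}\}$ descends to a finite Zariski cover $\{U_{\alpha,i} \to \ul{X_i}\}$; by \cite[Th\'eor\`eme IV.8.8.2]{EGA} the chart morphisms and the closed immersions appearing in the local diagrams descend to morphisms over $S_i$; and by \cite[Th\'eor\`eme IV.8.10.5]{EGA} (the loci where a finitely presented morphism is \'etale, is an isomorphism, or where a square is cartesian, are compatible with the filtered limit) one more enlargement of $i$ makes all descended chart morphisms \'etale and all descended squares cartesian. Since there are only finitely many indices $\alpha$ and finitely many conditions to impose, a single $i \in I$ works for all of them. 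This produces $X_i := (\ul{X_i}, D_i) \in \SmlSm/S_i$ with $X_i \times_{S_i} S \cong X$, with $D_i$ a strict normal crossing divisor over $S_i$ (witnessed by the descended charts), and with $W_i$ having strict normal crossing with $\partial X_i = D_i$ over $S_i$.

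I expect the main obstacle to be the bookkeeping rather than any single hard point: one must verify that every ingredient of ``strict normal crossing over the base'' --- smoothness of the $D_j$ (and of $W$ and of their mutual intersections) over $S$, \'etaleness of the chart morphisms, cartesianness of the defining squares, and the isomorphism of reduced schemes in \eqref{ProplogSH.1.1} --- is a condition that both descends from $S$ to some finite level and is stable under the subsequent base change back to $S$, so that all of them can be realized simultaneously over one common $i$. This is exactly what the spreading-out results of \cite[\S IV.8]{EGA} provide, but it has to be organized carefully: the Zariski cover of $\ul{X}$ must be chosen and refined (to accommodate $W$) \emph{before} descending, and one should keep track that the finitely many \'etale charts over $S_i$ still witness the strict normal crossing property after pulling back to $S$, which is automatic since pullback of the descended diagrams along $S \to S_i$ recovers the original ones.
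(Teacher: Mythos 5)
Your proposal is correct and follows essentially the same route as the paper: spread out the data $W\to\ul{X}\leftarrow \partial X$ via \cite[Th\'eor\`eme IV.8.8.2]{EGA}, reduce to a chart of the form \eqref{ProplogSH.1.1} using Lemma \ref{noetherian.5} to descend the Zariski cover, descend the local diagrams, and use \cite[Proposition IV.17.7.8]{EGA} to make the horizontal morphisms \'etale at a finite level. The only difference is organizational — you descend the finitely many components and charts explicitly, while the paper phrases this as the question being Zariski local on $X$ — so the substance is identical.
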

\begin{proof}
We set $Z:=\partial X$.
By \cite[Th\'eor\`eme IV.8.8.2]{EGA},
there exists $i\in I$ and morphisms $Z_i,W_i\to \ul{X_i}$ of schemes over $S_i$ such that $W\to \ul{X}\leftarrow Z$ is the pullback of $W_i\to \ul{X_i}\leftarrow  Z_i$.
We need to show that there exists a morphism $j\to i$ in $I$ such that the pullback $W_j\to \ul{X_j}\leftarrow Z_j$ along $S_j\to S_i$ satisfies the desired property.
This question is Zariski local on $X$ by Lemma \ref{noetherian.5},
so we may assume that there exists a commutative diagram $Q$ of the form \eqref{ProplogSH.1.1} for $W\to \ul{X}\leftarrow Z$.

By \cite[Th\'eor\`eme IV.8.8.2]{EGA} again,
there exists $j\to i$ and a commutative diagram 
\[
Q_j
:=
\begin{tikzcd}[column sep=small, row sep=small]
W_j\ar[d]\ar[r]&
S_j\times \Spec{\Z[x_1,\ldots,x_n]/(x_{i_1},\ldots, x_{i_s})}\ar[d]
\\
\ul{X_j}\ar[d,leftarrow]\ar[r]&
S_j\times \Spec{\Z[x_1,\ldots,x_n]}\ar[d,leftarrow]
\\
Z_j\ar[r]&
S_j\times \Spec{\Z[x_1,\ldots,x_n]/(x_1\cdots x_r)}
\end{tikzcd}
\]
of cartesian squares such that $Q\cong Q_j\times_{S_j}S$.
We may further assume that the horizontal morphisms of $Q_j$ are \'etale using \cite[Proposition IV.17.7.8]{EGA},
which finishes the proof.
\end{proof}

\begin{prop}
\label{noetherian.2}
For every vector bundle $p\colon \cE\to X$ with $X\in \SmlSm/S$,
there exists $i\in I$ and a vector bundle $p_i\colon \cE_i\to X_i$ with $X_i\in \SmlSm/S_i$ such that $p\cong p_i\times_{S_i}S$.
\end{prop}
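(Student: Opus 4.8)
The plan is to combine Proposition \ref{noetherian.1} with Grothendieck's spreading-out results for finitely presented modules. First I would apply Proposition \ref{noetherian.1} to obtain an index $i\in I$ and $X_i\in \SmlSm/S_i$ together with an isomorphism $X_i\times_{S_i}S\cong X$; after relabelling we work over this $X_i$. A vector bundle $p\colon \cE\to X$ is the datum of a locally free $\mathcal{O}_{\ul X}$-module $\cV$ of finite rank (say $\cE=\mathrm{Spec}_{\ul X}(\mathrm{Sym}_{\mathcal{O}_{\ul X}}\cV^\vee)$) together with the log structure on $\cE$ pulled back along the strict morphism $\cE\to X$; so it suffices to descend $\cV$ as a locally free module and then transport the log structure.

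Since $\cV$ is finitely presented and $\ul X=\lim_{j\to i}\ul{X_j}$ with affine transition maps, \cite[Th\'eor\`eme IV.8.5.2]{EGA} provides, after increasing $i$, a finitely presented quasi-coherent $\mathcal{O}_{\ul{X_i}}$-module $\cV_i$ whose pullback to $\ul X$ is $\cV$; for $j\to i$ write $\cV_j$ for the pullback of $\cV_i$ to $\ul{X_j}$. It remains to arrange that $\cV_j$ is locally free. Here one uses that the locus $U_j\subseteq \ul{X_j}$ on which a finitely presented module is locally free is open, that $U_j$ pulls back to the corresponding locus for $j'\to j$, and that the preimage of $U_i$ in $\ul X$ is $\lim_j U_j$. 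Since $\cV$ is locally free, this preimage is all of $\ul X$, so the closed complements $Z_j=\ul{X_j}\setminus U_j$ form a cofiltered system of quasi-compact (spectral) spaces with empty limit; hence some $Z_j$ is empty, i.e.\ $\cV_j$ is locally free of finite rank. Equivalently, one may invoke the spreading-out of flatness (\cite[\S IV.11.2]{EGA}) together with the fact that a finitely presented flat module is locally free of finite rank.

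Relabelling this $j$ as $i$, I would then set $\cE_i:=\mathrm{Spec}_{\ul{X_i}}(\mathrm{Sym}_{\mathcal{O}_{\ul{X_i}}}\cV_i^\vee)$, equipped with the log structure pulled back along the strict smooth projection $p_i\colon \cE_i\to X_i$. Since $X_i\in \SmlSm/S_i$ and $p_i$ is strict smooth, the composite $\cE_i\to X_i\to S_i$ is log smooth of finite type and $\ul{\cE_i}\to \ul{S_i}$ is smooth, so $\cE_i\in \SmlSm/S_i$. Because $\mathrm{Sym}$, fibre products of schemes, and pullbacks of log structures all commute with base change, one gets $\cE_i\times_{S_i}S\cong \cE_i\times_{X_i}X\cong \mathrm{Spec}_{\ul X}(\mathrm{Sym}_{\mathcal{O}_{\ul X}}\cV^\vee)\cong \cE$ compatibly with the projections to $X$, i.e.\ $p\cong p_i\times_{S_i}S$. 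The only step that is not purely formal is the descent of the \emph{local freeness} of $\cV$ (the underlying finitely presented module descends tautologically by \cite[\S IV.8.5]{EGA}); this is exactly where the quasi-compactness/limit argument — equivalently, the spreading-out of flatness — is needed, and I expect it to be the main point to get right.
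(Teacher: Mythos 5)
Your proof is correct and follows essentially the same route as the paper: apply Proposition \ref{noetherian.1} to descend $X$, descend the locally free sheaf by Grothendieck's limit results, and then form the vector bundle with the log structure pulled back along the strict projection. The only difference is that you re-derive the spreading-out of local freeness by hand (openness of the locally free locus plus a quasi-compactness argument), whereas the paper simply cites \cite[Th\'eor\`eme IV.8.5.2, Proposition IV.8.5.5]{EGA}, which contain exactly that statement.
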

\begin{proof}
We get $i\in I$ and $X_i\in \SmlSm/S_i$ by Proposition \ref{noetherian.1}.
Let $\cF$ be the locally free sheaf of $\cO_{\ul{X}}$-modules corresponding to the vector bundle $\ul{p}\colon \ul{\cE}\to \ul{X}$.
By \cite[Th\'eor\`eme IV.8.5.2, Proposition IV.8.5.5]{EGA}
there exists a morphism $j\to i$ in $I$ and a locally free sheaf $\cF_j$ of $\cO_{\ul{X_j}}$-modules such that $\cF$ is isomorphic to the pullback of $\cF_j$.
Let $\ul{\cE_j}\to \ul{X_j}$ be the corresponding vector bundle.
Then the pullback $\cE_j:=\ul{\cE_j}\times_{\ul{X_j}}X_j\to X_j$ satisfies the desired property.
\end{proof}

\begin{prop}
\label{noetherian.3}
For every $X\in \lSm/S$,
there exists $i\in I$ and $X_i\in \lSm/S$ such that $S\times_{S_i}X_i\cong X$.
If $f\colon Y\to X$ is a morphism in $\lSm/S$,
then there exists $i\in I$ and a morphism $f_i\colon Y_i\to X_i$ in $\lSm/S_i$ such that $f\cong f_i\times_{S_i}S$.
If $f$ satisfies one of the following properties:
\begin{enumerate}
\item[\textup{(1)}] log \'etale,
\item[\textup{(2)}] log smooth,
\item[\textup{(3)}] log \'etale monomorphism,
\item[\textup{(4)}] dividing cover,
\end{enumerate}
then we may further assume that $f_i$ has the same property.
\end{prop}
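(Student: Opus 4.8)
The plan is to reduce all three assertions to Grothendieck's noetherian approximation for schemes \cite[\S IV.8]{EGA} by means of the chart description of log smoothness, the key observation being that the toric models $\A_P$ are defined over $\Z$, hence over every $S_i$: so only the underlying \'etale (or smooth) morphisms, the log structures, and the gluing data have to be moved down a finite level. Note first that $S=\lim_i S_i$ is quasi-compact and quasi-separated (a limit of the qcqs scheme $S_0$ along affine transition morphisms), so that for $X\in\lSm/S$ the scheme $\underline X$ is qcqs.

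\textbf{Descent of objects.} Given $X\in\lSm/S$, I would work Zariski-locally on $S$ (so $S$ is affine with trivial log structure) and strict \'etale-locally on $X$: since $X\to S$ is log smooth and $\underline X$ is quasi-compact, the chart criterion attached to Definition \ref{def:chart_logsmooth} supplies a \emph{finite} strict \'etale cover $\{U_a\to X\}_{a\in A}$ such that each $U_a$ carries a chart by an fs monoid $P_a$ for which $\underline{U_a}\to\underline{\A_{P_a}}\times\underline S$ is \'etale and $\mathcal{M}_{U_a}$ is the pullback of the canonical log structure of $\A_{P_a}$. Writing $(\A_{P_a})_{S_i}:=\A_{P_a}\times_{\Spec{\Z}}\underline{S_i}$, one has $\lim_i(\A_{P_a})_{S_i}\cong(\A_{P_a})_S$ with log structures pulled back compatibly, and $\underline{U_a}\to\underline{(\A_{P_a})_S}$ is finitely presented, so by \cite[Th\'eor\`eme IV.8.8.2, Proposition IV.17.7.8]{EGA} it descends, for some index, to an \'etale $\underline{U_{a,i}}\to\underline{(\A_{P_a})_{S_i}}$; endowing it with the pulled-back log structure gives $U_{a,i}\in\lSm/S_i$ (strict \'etale over something log smooth, hence log smooth; fs since strictness preserves fs). It then remains to descend the gluing datum, i.e.\ the fs log schemes $U_a\times_X U_b$ with their strict \'etale projections and the cocycle identities on triple overlaps; these involve only finitely presented data over $S$ (a morphism of fs log schemes being a finitely presented morphism of schemes together with a finitely presented morphism of quasi-coherent sheaves of monoids), so over the finitely many pairs they descend to a common index $i$, and gluing produces $X_i\in\lSm/S_i$ with $S\times_{S_i}X_i\cong X$. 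For $X\in\SmlSm/S$ this recovers Proposition \ref{noetherian.1}, now without the smoothness hypothesis on $\underline X$.

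\textbf{Descent of morphisms.} Given $f\colon Y\to X$ in $\lSm/S$, I would descend $X$ and $Y$ by the previous step, pass to a common index $k$, and then descend $f$ itself: $\underline f\colon\underline Y\to\underline X$ is a morphism of finitely presented $S$-schemes, hence descends after enlarging $k$ by \cite[Th\'eor\`eme IV.8.8.2]{EGA}, and the structural homomorphism $f^*\mathcal{M}_X\to\mathcal{M}_Y$ of (finitely presented) sheaves of monoids descends likewise; together they furnish $f_k\colon Y_k\to X_k$ over $S_k$ with $f_k\times_{S_k}S\cong f$. (Alternatively one could present $f$ strict \'etale-locally as a pullback of a morphism of fine log schemes, by the Remark following Definition \ref{def:logsmooth}, and argue chartwise, but the direct descent is cleaner once the objects are in place.)

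\textbf{Preservation of properties, and the main obstacle.} For the last part I would use that, after the above descent, each of the four properties becomes a property of finitely presented (log) schemes over $S_{k'}$ that is stable under enlarging the index, via \cite[IV.8.10.5, Proposition IV.17.7.8]{EGA}. Log smoothness, resp.\ log \'etaleness, of $f$ is, strict \'etale-locally on $Y_{k'}$ after descending a chart of $f$, the conjunction of a condition on a monoid homomorphism, which is discrete and hence unaffected by the approximation, with smoothness, resp.\ \'etaleness, of the explicit finitely presented morphism of $S$-schemes occurring in the chart criterion, and the latter descends by \cite[Proposition IV.17.7.8]{EGA}. Being a monomorphism means that $\Delta_f\colon Y\to Y\times_X Y$ is an isomorphism; having first arranged $f_{k'}$ to be log \'etale, the fs fiber product $Y_{k'}\times_{X_{k'}}Y_{k'}$ is available in $\lSm/S_{k'}$ and, up to the strict base change $S\to S_{k'}$, agrees with $Y\times_X Y$, so ``$\Delta$ is an isomorphism'' descends by \cite[IV.8.10.5]{EGA}; properness and surjectivity of $\underline f$ descend by the same reference. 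Assembling these, a morphism that over $S$ is simultaneously proper, surjective, and a log \'etale monomorphism descends to one with all these properties, i.e.\ to a dividing cover. I expect the essential difficulty to be bookkeeping rather than conceptual --- tracking that the full fs log structure, and for the monomorphism test the saturation inside the fs fiber product, descends compatibly over a finite cover with all its overlaps --- the one genuinely delicate point being the commutation of the fs fiber product with the strict base changes $S\to S_i$, which is precisely why securing log \'etaleness first is convenient.
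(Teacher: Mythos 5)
Your global strategy (EGA noetherian approximation plus Kato's chart criterion, exploiting that the toric models $\A_P$ are defined over $\Z$) is the same as the paper's, but the execution differs in one essential respect: the paper never descends log structures or morphisms of monoid sheaves at all. It descends only scheme-theoretic data --- the open immersions $X-\partial X\to \ul{X}$ and $Y-\partial Y\to \ul{Y}$, the morphism of underlying schemes, and, \'etale (resp.\ Zariski) locally, the factorizations $X\to S\times \A_P$ and $Y\to X\times_{\A_P}\A_Q$ coming from a chart of $f$ --- then \emph{defines} $X_i$ and $Y_i$ by the compactifying log structures of the descended open immersions and recovers the log morphism $f_i$ from $\ul{f_i}$ together with the condition $f_i(V_i)\subset U_i$, using \cite[Proposition III.1.6.2]{Ogu}. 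This device is exactly what makes your gluing and monoid-sheaf-descent steps unnecessary. Moreover, for case (3) the paper does not argue via the diagonal: it uses the Zariski-local characterization of log \'etale monomorphisms from \cite[Lemma A.11.3]{logDM} (cf.\ Remark \ref{logtop.10}), namely that $Y\to X\times_{\A_P}\A_Q$ may be taken to be an open immersion with $\theta^{\gp}$ an isomorphism, so that the whole condition reduces to ``open immersion'' on schemes, which descends by \cite[Th\'eor\`eme IV.8.10.5]{EGA}. Case (4) is treated exactly as you do, by combining (3) with descent of properness and surjectivity.

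Two steps of your argument are genuinely incomplete as written. First, in the object case, ``gluing produces $X_i$'' from the chart-local pieces $U_{a,i}$ along descended overlaps is \'etale descent of schemes, which is not effective in general; you should instead descend $\ul{X}$ itself by \cite[Th\'eor\`eme IV.8.8.2]{EGA} (it is finitely presented over the qcqs scheme $S$), descend the strict \'etale cover mapping to it as in Lemma \ref{noetherian.5}, and then only the log structure needs to be glued --- harmless, since log structures are \'etale sheaves --- or, as in the paper, take the compactifying log structure and avoid gluing altogether. Second, in case (3) the point you flag as delicate is in fact automatic: since $S\to S_{k'}$ is strict, the fs fiber product commutes with this base change by the general compatibility of fiber products with base change; the point you do \emph{not} address is the real one, namely descending ``$\Delta_f$ is an isomorphism of fs log schemes'' through the limit. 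Besides the scheme-level isomorphism (EGA), you must show that the log-structure morphism $\Delta_{f_j}^*\cM_{Y_j\times_{X_j}Y_j}\to \cM_{Y_j}$ becomes an isomorphism at some finite stage, which is again a descent statement for coherent sheaves of monoids requiring a chart argument; the same remark applies to your assertion that $f^*\cM_X\to\cM_Y$ ``descends likewise''. These points are repairable with fs charts, but they are precisely the work that the paper's compactifying-log-structure reconstruction and its appeal to \cite[Lemma A.11.3]{logDM} are designed to bypass.
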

\begin{proof}
The case (4) is a consequence of the case (3) and \cite[Th\'eor\`eme IV.8.10.5,(vi),(xii)]{EGA}.

For the cases (1)--(3),
we set $U:=X-\partial X$ and $V:=Y-\partial Y$.
Consider the induced commutative square
\[
Q:=
\begin{tikzcd}
V\ar[d,"v"']\ar[r]&
U\ar[d,"u"]
\\
\ul{Y}\ar[r,"f"]&
\ul{X}.
\end{tikzcd}
\]
By \cite[Th\'eor\`eme IV.8.8.2]{EGA},
there exists $i\in I$ and a commutative diagram
\[
Q_i:=
\begin{tikzcd}
V_i\ar[d,"v_i"']\ar[r]&
U_i\ar[d,"u_i"]
\\
\ul{Y_i}\ar[r,"f_i"]&
\ul{X_i}
\end{tikzcd}
\]
such that $Q\cong Q_i\times_{S_i}S$.
Using \cite[Th\'eor\`eme IV.8.10.5(iii)]{EGA},
we may further assume that $u_i$ and $v_i$ are open immersions.
Let $X_i$ (resp.\ $Y_i$) be the log scheme with the underlying scheme $\ul{X_i}$ (resp.\ $\ul{Y_i}$) and with the compactifying log structure associated with $u_i$ (resp.\ $v_i$).

We need to show that there exists a morphism $j\to i$ in $I$ such that $X_j,Y_j\in \lSm/S$, $U_j\cong X_j-\partial X_j$, $V_i\cong Y_j-\partial Y_j$, and $f_j(V_j)\subset U_j$ since we can recover a morphism $f_j\colon Y_j\to X_j$ by \cite[Proposition III.1.6.2]{Ogu}.
If $f$ is log \'etale (resp.\ log smooth),
then we also require that $f_j$ is log \'etale (resp.\ log smooth).
This question is \'etale local on $\ul{X}$ and $\ul{Y}$ by Lemma \ref{noetherian.5}.
Hence, we may assume that there exists a chart $\theta\colon P\to Q$ of $f$.
If $f$ is log \'etale (resp.\ log smooth),
then by \cite[Theorem IV.3.3.1]{Ogu},
we may also assume that the induced morphisms $X\to S\times \A_P$ and $Y\to X\times_{\A_P}\A_Q$ are strict \'etale, $\theta$ is injective,
and $\lvert \coker(\theta^\gp)\rvert$ (resp.\ $\lvert \coker(\theta^\gp)^\mathrm{tor}\rvert$) is invertible in $\cO_X$.
If $f$ is a log \'etale monomorphism,
then we also require that $f_j$ is a log \'etale monomorphism.
This question is Zariski local on $\ul{X}$ and $\ul{Y}$ by Lemma \ref{noetherian.5}.
Hence, by \cite[Lemma A.11.3]{logDM} (see also Remark \ref{logtop.10}),
we may also assume that the induced morphism $Y\to X\times_{\A_P}\A_Q$ is an open immersion and $\theta^\gp$ is an isomorphism.

By \cite[Th\'eor\`eme IV.8.8.2]{EGA},
there exists a morphism $j\to i$ and a commutative square
\[
\begin{tikzcd}[column sep=large]
\ul{Y_j}\ar[d]\ar[r,"f_j"]&
\ul{X_j}\ar[d]
\\
S_j\times \ul{\A_Q}\ar[r,"\id\times \ul{\A_\theta}"]&
S_j\times \ul{\A_P}
\end{tikzcd}
\]
such that $U_j\cong S_j\times (\A_P-\partial \A_P)$ and $V_j\cong S_j\times (\A_Q-\partial A_Q)$.
If $f$ is log \'etale (resp.\ log smooth),
then by \cite[Proposition IV.17.7.8]{EGA},
we may also assume that the induced morphisms $\ul{X_j}\to S_j\times \ul{\A_P}$ and $\ul{Y_j}\to \ul{X_j}\times_{\ul{\A_P}}\ul{\A_Q}$ are \'etale.
If $f$ is a log \'etale monomorphism,
then by \cite[Th\'eor\`eme IV.8.10.5(3)]{EGA},
we may also assume that the induced morphism $\ul{Y_j}\to \ul{X_j}\times_{\ul{\A_P}}\ul{\A_Q}$ is an open immersion.
The induced morphism of fs log schemes $f_j\colon Y_j\to X_j$ satisfies the desired properties.
\end{proof}

\subsection{Properties of (pointed) log motivic spaces}\label{proplogH}

The purpose of this subsection is to explore several basic properties that are satisfied in the unstable motivic 
homotopy categories $\inflogH$ and $\inflogHpt$ and their variants.
Many of the results can be transferred to $\inflogSHS$ and $\inflogSH$ by applying infinite suspension functors.

\begin{rmk} To study motivic properties for $\inflogHpt(S)$ we will attempt to apply the arguments in \cite[\S 7]{logDM}.  
However, 
in our setting, 
we \emph{cannot} take the following for given:
\begin{equation}
\label{logHprop.0.4}
\text{For a map $\cG\to \cF$ in $\inflogHpt(S)$, $\cofib(\cG\to \cF)\simeq 0$ implies $\cG\simeq \cF$.}
\end{equation}
But this is the only obstruction, in the sense that if an argument in \cite[\S 7]{logDM} 
does not appeal to \eqref{logHprop.0.4}, 
then it carries over to our setting.
More precisely, 
it will be important to provide alternatives to \cite[Propositions C.1.5, C.1.7]{logDM} 
since their proofs are based upon \eqref{logHprop.0.4}.
With this in mind, 
we will proceed by adapting the arguments in \cite[\S 7]{logDM} to our setting.
\end{rmk}

Suppose $X$ is a scheme in $\Sm/S$ with a strict normal crossing divisor $Z_1+\cdots+Z_n$.
If $\cY$ is the cubical horn associated with the Zariski cover $\{X-Z_i\to X-(Z_1\cap \cdots \cap Z_r)\}_{1\leq i\leq n}$ 
in the sense of Definition \ref{Thomdf.9}, then by Zariski descent there is an equivalence
\[
\colimit(\cY)\simeq X-(Z_1\cap \cdots \cap Z_r)
\]
in $\infH(S)$. 
Our compactified version of the previous equivalence reads as follows.

\begin{prop}
\label{Ori.37}
Suppose $S\in \Sch$ and $X$ is a scheme in $\Sm/S$ with a strict normal crossing divisor $Z_1+\cdots+Z_n$.
Then there is an equivalence
\[
\colimit(\horn(X,Z_1+\cdots+Z_n))
\simeq
(\Blow_{W}X,E)
\]
in $\inflogH(S)$, where $W=Z_1\cap \cdots \cap Z_n$ and $E$ is the exceptional divisor,
and see \textup{Definition \ref{Ori.38}} for $\horn$.
\end{prop}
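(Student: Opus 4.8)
The plan is to reduce to the two extreme cases $n=1$ and the blow-up of a point, and then to argue by a double induction on $n$ and on the codimension of the strata, using the cocartesian squares already established (Propositions \ref{logHprop.1}, \ref{logH.4}, \ref{Ori.5}, and \ref{Ori.37}'s unstable ancestor \ref{Ori.67}). First I would set up notation: write $W = Z_1 \cap \cdots \cap Z_n$ and recall that $\horn(X, Z_1 + \cdots + Z_n)$ is the cubical horn whose vertices are the log schemes $(X, Z_{i_1} + \cdots + Z_{i_r})$ for $\emptyset \neq I = \{i_1, \ldots, i_r\} \subsetneq \{1, \ldots, n\}$, obtained by deleting the terminal vertex $X$ from the full cube $\cube(X, Z_1 + \cdots + Z_n)$. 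By the inductive formula for total cofibers of cubes (i.e., \eqref{Thomdf.5.2}) and the argument already used in Propositions \ref{logHprop.3} and \ref{Ori.5}, the colimit $\colimit(\horn(X, Z_1 + \cdots + Z_n))$ can be computed by slicing the cube into two $(n-1)$-subcubes along the first coordinate, giving a cocartesian square
\[
\begin{tikzcd}
\colimit(\horn(X, Z_2 + \cdots + Z_n)) \ar[d] \ar[r] & \colimit(\horn((X, Z_1), Z_2 + \cdots + Z_n)) \ar[d] \\
X \ar[r] & \colimit(\horn(X, Z_1 + \cdots + Z_n))
\end{tikzcd}
\]
in $\inflogH^{\SmAdm}(S)$, where I am using the $n=1$ case (the colimit over the empty horn in the slice is just the relevant vertex) to identify the bottom-left term as $X$ and, in the strict-Nisnevich-local situation, the other terms by induction.

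The key geometric input is a local model computation. Working strict Nisnevich locally on $X$ — which is legitimate by the Zariski separation property of Notation \ref{not:M}, verified for our $M$ in Proposition \ref{Ori.67}, together with the $sNis$-descent and blow-up invariance that define $\inflogH^{\SmAdm}(S)$ — I would reduce to the standard model $X = \A^n \times (\text{smooth factor})$ with $Z_i$ the coordinate hyperplane $\{x_i = 0\}$, so $W = \{0\} \times (\text{smooth factor})$ and $\Blow_W X = \Blow_0 \A^n \times (\text{smooth factor})$. On any chart $X - Z_{i_1} \cap \cdots \cap Z_{i_k}$ that omits the deepest stratum, the divisors $Z_i$ still meet in a strict normal crossing configuration but with empty common intersection, so there the blow-up $\Blow_W X$ has no effect and one is reduced to the Zariski-descent statement $\colimit(\horn) \simeq X$ already recorded before Proposition \ref{Ori.37}. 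The actual content is concentrated over a Zariski neighbourhood of $W$, where the admissible blow-up invariance (built into $\inflogH^{\SmAdm}(S)$) identifies $(X, Z_1 + \cdots + Z_n)$-type vertices with their blow-ups, allowing one to replace $X$ by $(\Blow_W X, E)$ coherently across the horn. Assembling these pieces via the pasting law for cocartesian squares (as in the proof of Proposition \ref{logH.4}) gives the desired equivalence.

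I expect the main obstacle to be the bookkeeping of the double induction: one must simultaneously track how the strict normal crossing divisor $Z_1 + \cdots + Z_n$ restricts to each blow-up chart, how the exceptional divisor $E$ is built up as one blows up along $W$ (which is itself the intersection of the $Z_i$), and verify that the admissible blow-up invariance applies at every vertex of the horn with the correct boundary divisor — in particular that each $(X, Z_{i_1} + \cdots + Z_{i_r})$ with $I \neq \{1,\ldots,n\}$ is $\SmAdm$-equivalent, after blowing up, to the corresponding vertex of $\horn((\Blow_W X, E), \ldots)$. This is the step where the passage from the $\A^1$-invariant arguments of \cite[\S 7]{logDM} is most delicate, since we can only use cofiber sequences and not the cancellation property \eqref{logHprop.0.4}; but as remarked after Notation \ref{not:M}, the cube/horn formalism of Appendix \ref{cubes} supplies the needed replacements for \cite[Propositions C.1.5, C.1.7]{logDM}, so the argument goes through once the local models are correctly normalized.
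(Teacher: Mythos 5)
Your route diverges substantially from the paper's, and as written it has two genuine problems. First, the cocartesian square you extract from \eqref{Thomdf.5.2} is incorrect: slicing the cube along the $Z_1$-direction, the bottom-left corner must be the \emph{vertex of the slice that still carries $Z_1$}, namely $(X,Z_1)$; the object $X$ is exactly the terminal vertex that the horn deletes, so it cannot appear in the pushout presentation of $\colimit(\horn(X,Z_1+\cdots+Z_n))$ (and the top horizontal arrow should go from the horn of the $Z_1$-slice to the horn of the $Z_1$-free slice, not the other way). Your parenthetical justification ("the colimit over the empty horn in the slice is just the relevant vertex") does not make sense, which suggests the slicing is where the confusion enters.

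Second, and more importantly, your plan never delivers the computation of the horn colimit after blowing up, which is the heart of the matter. The actual argument is short and needs no induction or local models: (i) for every nonempty $I=\{i_1,\ldots,i_r\}$ the center $W=Z_1\cap\cdots\cap Z_n$ lies in $Z_{i_1}\subset\partial(X,Z_{i_1}+\cdots+Z_{i_r})$, so $(\Blow_W X,\widetilde{Z}_{i_1}+\cdots+\widetilde{Z}_{i_r}+E)\to(X,Z_{i_1}+\cdots+Z_{i_r})$ is an admissible blow-up along a smooth center, hence an equivalence in $\inflogH^{\SmAdm}(S)$; this identifies $\horn(X,Z_1+\cdots+Z_n)$ termwise with $\horn((\Blow_W X,E),\widetilde{Z}_1+\cdots+\widetilde{Z}_n)$. (ii) On $\Blow_W X$ the strict transforms $\widetilde{Z}_1,\ldots,\widetilde{Z}_n$ have empty common intersection, so $\{\Blow_W X-\widetilde{Z}_i\to\Blow_W X\}_i$ is a Zariski cover and Proposition \ref{Ori.5} yields $\colimit(\horn((\Blow_W X,E),\widetilde{Z}_1+\cdots+\widetilde{Z}_n))\simeq(\Blow_W X,E)$. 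You do flag step (i) at the end as the "main obstacle", but it is immediate from the definition of $\SmAdm$; what is genuinely missing is step (ii), and your substitute for it --- a strict-Nisnevich-local reduction to coordinate hyperplanes assembled by the pasting law --- is not justified: the Zariski separation property of Notation \ref{not:M} compares relative objects $M(X'/Y')\to M(X/Y)$ over a cover, it does not allow you to compute a horn colimit locally; the paper's local-to-global device is the \v{C}ech-descent argument inside Proposition \ref{Ori.5}, applied to the blow-up rather than to $X$. Once (ii) is in place, the double induction, the local-model normalization, and the concern about \eqref{logHprop.0.4} all become unnecessary.
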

\begin{proof}
Let $\widetilde{Z}_i$ be the strict transform of $Z_i$ in $\Blow_W X$ for all $1\leq i\leq n$.
Then $\{\Blow_{W}X-\widetilde{Z}_i\to \Blow_{W}X\}_{1\leq i\leq n}$ is a Zariski cover.
Apply Proposition \ref{Ori.5} to the scheme $\Blow_{W}E$ with the strict normal crossing divisor $\widetilde{Z}_1+\cdots+\widetilde{Z}_n+E$ to have an equivalence
\begin{equation}
\label{Ori.37.1}
\colimit(\horn((\Blow_W X,E),\widetilde{Z}_1+\cdots+\widetilde{Z}_n))
\simeq
(\Blow_W X,E)
\end{equation}
in $\inflogH(S)$.
For all integers $1\leq i_1<\cdots<i_r\leq n$, the morphism
\[
(\Blow_W X,\widetilde{Z}_{i_1}+\cdots+\widetilde{Z}_{i_r}+E)
\xrightarrow{\simeq}
(X,Z_{i_1}+\cdots+Z_{i_r})
\]
is an equivalence in $\inflogH(S)$ since it is an admissible blow-up along a smooth center.
Hence we have an equivalence
\begin{equation}
\label{Ori.37.2}
\colimit(\horn((\Blow_W X,E),\widetilde{Z}_1+\cdots+\widetilde{Z}_n))
\simeq
\colimit(\horn(X,Z_1+\cdots+Z_n))
\end{equation}
in $\inflogH(S)$.
Combine \eqref{Ori.37.1} and \eqref{Ori.37.2} to have the desired isomorphism.
\end{proof}

Next we recall logarithmic vector bundles and their Thom spaces.

\begin{df}
If $Y\in \lSch$, 
then a \emph{vector bundle on $Y$}\index{vector bundle} is a strict morphism $p\colon \cE\to Y$ in $\lSch$ 
such that $\ul{p}\colon \ul{\cE}\to \ul{Y}$ is a vector bundle.
The \emph{rank of $p$} is the rank of the underlying vector bundle $\ul{p}$.
\end{df}

\begin{df}
\label{Thomdf.1}
Suppose $S\in \Sch$ and $X\in \SmlSm/S$.
If $p\colon \cE\to X$ is a rank $d$ vector bundle with zero section $Z\to \cE$,
then the \emph{Thom space}\index{Thom space}\index[notation]{Thom @ $\Thom(\cE/X)$} associated with $p$ is defined as
\begin{equation}
\label{Thomdf.1.1}
\Thom(\cE/X)
:=
\cE/(\Blow_Z(\cE),E)\in \inflogHpt(S),
\end{equation}
where $E$ is the exceptional divisor on $\Blow_Z(\cE)$.
We often omit $X$ in the notation when no confusion seems likely to arise.
\end{df}

\begin{rmk}
In $\A^1$-homotopy theory, 
there are at least two definitions of the Thom space of a vector bundle $\ul{p}\colon \ul{\cE}\to \ul{X}$, 
namely 
\begin{equation}
\label{Thomdf.1.2}
\Thom(\ul{\cE}/\ul{X})
:=
\ul{\cE}/(\ul{\cE}-\ul{Z})
\end{equation}
and
\begin{equation}
\label{Thomdf.1.3}
\Thom(\ul{\cE}/\ul{X})
:=
\ul{p}_\sharp \ul{i}_* \one.
\end{equation}

The localization property \cite[Theorem 2.21, p.\ 114]{MV} 
implies the equivalence between \eqref{Thomdf.1.2} and \eqref{Thomdf.1.3} in $\A^1$-homotopy theory.
The localization property 
seems to fail in $\boxx$-homotopy theory as explained in the introduction of \cite{logA1}, 
so we do not expect that \eqref{Thomdf.1.2} and \eqref{Thomdf.1.3} are equivalent in $\boxx$-homotopy theory.
However, 
it is currently unclear whether \eqref{Thomdf.1.1} and \eqref{Thomdf.1.3} are equivalent or not.
\vspace{0.1in}
\end{rmk}
\begin{df}
We write $\wedge$ for the monoidal product of $\inflogHpt(S)$ and $\one$ for the monoidal unit of $\inflogHpt(S)$.
For all $\cF\in \inflogHpt(S)$ and integers $p\geq q\geq 0$, we set
\[
\Sigma^{p,q}\cF
:=
T^{\wedge q}\wedge S^{p-q}\wedge \cF
\]
and $\Sigma^{p,q}:=\Sigma^{p,q}\one$. See Definition \ref{def:sphere_suspension_logH}.
\end{df}

\begin{prop}
\label{Ori.64}
Suppose $S\in \Sch$, $X\in \SmlSm/S$, and $p\colon \cE\to X$ is a rank $d$ trivial bundle.
Then, in $\inflogHpt(S)$, there is an equivalence
\[
\Thom(\cE/X)
\simeq
\Sigma^{2d,d}X_+.
\]
\end{prop}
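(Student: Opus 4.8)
The plan is to reduce the statement to the case $d=1$ by induction, using the fact that $M$ is symmetric monoidal together with the known equivalence $M(T)\simeq S^1\wedge S_t^1$ (Proposition \ref{logH.5}). First I would observe that a trivial bundle $\cE\to X$ of rank $d$ is just $X\times \A^d\to X$, with zero section $Z = X\times\{0\}$; since the Thom space construction \eqref{Thomdf.1.1} is defined as $M(\cE/(\Blow_Z\cE, E))$, and $\Blow_Z(X\times\A^d) = X\times \Blow_O(\A^d)$ with exceptional divisor $E = X\times E_0$, the $\boxx$-invariance and the symmetric monoidality of $M$ should let me factor out $M(X)$, so that it suffices to prove $\Thom(\A^d/S)\simeq \Sigma^{2d,d}$, i.e.\ $M(\A^d/(\Blow_O\A^d, E))\simeq M(T)^{\wedge d}$ in $\infC$ (pulled back from $S=\Spec \Z$).

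The core is therefore the computation $M(\A^n/(\Blow_O(\A^n), E))\simeq M(T)^{\wedge n}$. The key input is Proposition \ref{logH.4}, which gives the cocartesian square expressing $M(\P^n/(\Blow_O\P^n, E))$ and, more relevantly, the identification $T\simeq \P^1/(\P^1,0)$ for $n=1$; combined with Proposition \ref{Ori.37}, which computes $\colim(\horn(X, Z_1+\cdots+Z_n))\simeq (\Blow_W X, E)$ for $W = Z_1\cap\cdots\cap Z_n$. Taking $X = \P^n$ (or $\A^n$) with $Z_i$ the coordinate hyperplanes through $O$, the horn $\horn(\A^n, Z_1+\cdots+Z_n)$ is built out of the log schemes $(\A^n, Z_{i_1}+\cdots+Z_{i_r})$, and passing to cofibers (relative to $\A^n$ with no divisors, which is $\boxx$-contractible via the origin section after removing the $Z_i$) identifies the total cofiber of this cube with an $n$-fold smash. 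Concretely, I would set up the $n$-cube whose vertices are $M(\A^n/(\A^n, Z_{i_1}+\cdots+Z_{i_r}))$ and show its total cofiber is $\bigwedge_{i=1}^n \cofib(M(\A^n, Z_i)\to M(\A^n))$, each factor of which is $M(T)$ by the $n=1$ case of Proposition \ref{logH.4} (using $\boxx$-invariance to collapse the ambient $\A^{n-1}$ directions), while simultaneously Proposition \ref{Ori.37} identifies the same total cofiber with $M(\A^n/(\Blow_O\A^n, E)) = \Thom(\A^n)$. Matching the two gives $\Thom(\A^n)\simeq M(T)^{\wedge n}$. Naturality of all the squares in $S$ gives the canonical equivalence over a general base.

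\textbf{Main obstacle.} The delicate point is the total-cofiber bookkeeping for the $n$-cube: in \cite[\S 7]{logDM} analogous arguments freely used the cancellation property \eqref{logHprop.0.4}, which fails here, so I must argue with the cubes and their iterated cofibers directly, only ever taking cofibers (never cones), and verify that the cube assembled from the pieces $(\A^n, Z_I)$ is, vertex by vertex and face by face, the external product of $n$ copies of the one-dimensional cube $\big((\A^1\times\A^{n-1})\xrightarrow{} (\A^1,0)\times\A^{n-1}\big)$ after applying $\boxx$-invariance in the complementary coordinates. This is a routine but careful induction on $n$ using the pasting law for cocartesian squares (as in the proof of Proposition \ref{logH.4}) and the compatibility of total cofibers of cubes with the smash product; once that compatibility is in place the identification of each one-dimensional factor with $M(T)$ is exactly Proposition \ref{logH.4}, and the final smash $M(T)^{\wedge d}\simeq (S^1\wedge S_t^1)^{\wedge d} = \Sigma^{2d,d}$ is then formal from the definition of $\Sigma^{p,q}$.
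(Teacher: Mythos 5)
Your proposal is correct and follows essentially the same route as the paper: reduce to the trivial bundle over the base, apply Proposition \ref{Ori.37} to the coordinate hyperplanes $H_1,\dots,H_d$ in $\A^d$, and identify the total cofiber of the resulting cube of log schemes $(\A^d,H_I)$ with $M(T)^{\wedge d}=\Sigma^{2d,d}$ using monoidality and $\boxx$-invariance. The cube bookkeeping you single out as the main obstacle is exactly Proposition \ref{Thomdf.6} (smash decomposition of total cofibers), which the paper cites directly, so no separate induction is required.
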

\begin{proof} This is an analogue of \cite[Proposition 7.4.5]{logDM}.
By base change (since the bundle is trivial), we reduce to the case when $X=S$.
Let $H_1,\ldots,H_d$ be the standard hyperplanes in $\A^d$.
Due to Proposition \ref{Ori.37}, there is an equivalence in $\inflogH(S)$
\begin{equation}
\label{Ori.64.1}
\colimit(\horn(\A^d,H_1+\cdots+H_d))
\simeq
(\Blow_O \A^d,E).
\end{equation}
Here $O:=(0,\ldots,0)\in \A^d$ and $E$ is the exceptional divisor.
Due to Proposition \ref{Thomdf.6}, there is an equivalence
\begin{equation}
\label{Ori.64.2}
\Sigma^{2d,d}
\simeq
\tcofib(\cube((\A^d,H_1+\cdots+H_d)_+))
\end{equation}
in $\inflogHpt(S)$
ee Definition \ref{Ori.38} for $\cube(-)$.
Combine \eqref{Ori.64.1} and \eqref{Ori.64.2} to deduce the desired equivalence.
\end{proof}

\begin{exm}
\label{Ori.57}
If $S\in \Sch$, 
then Propositions \ref{logH.4} and \ref{Ori.64} yield equivalences
\begin{equation}
\label{Ori.57.2}
\P^n/(\Blow_O(\P^n),E)
\simeq
\A^n/(\Blow_O(\A^n),E)
\simeq
\Sigma^{2n,n}
\end{equation}
in $\inflogHpt(S)$ for every integer $n\geq 1$.
\end{exm}

We will frequently encounter the following situation.

\begin{lem}
\label{ProplogSH.7}
Suppose $S\in \Sch$ and let $f_0,f_1\colon X\to Y$ be morphisms in $\SmlSm/S$.
If there is a diagram
\[
\begin{tikzcd}
X\ar[r,shift left=0.75ex,"i_1'"]\ar[r,shift right=0.75ex,"i_0'"']&
X'\ar[d,"p"]\ar[r,"g"]&
Y
\\
&
X\times \boxx
\end{tikzcd}
\]
in $\SmlSm/S$ such that $p$ is a composite of admissible blow-ups along smooth centers,
$pi_0'$ (resp.\ $pi_1'$) is the zero section (resp.\ one section), $f_0=gi_0'$, and $f_1=gi_1'$, then $f_0$ and $f_1$ are
homotopic in $\inflogH(S)$.
\end{lem}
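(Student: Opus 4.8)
The plan is to reduce the statement to $\boxx$-invariance applied to $X'$. First I would observe that, since $p$ is a composite of admissible blow-ups along smooth centers, the map $M(p)\colon M(X')\to M(X\times\boxx)$ is an equivalence in $\inflogH^{\SmAdm}(S)$ by invariance under admissible blow-ups along smooth centers (one of the basic properties assumed for $M$ throughout the subsection, applied via Proposition \ref{Ori.67}). Composing with $\boxx$-invariance, the projection $X\times\boxx\to X$ induces an equivalence $M(X\times\boxx)\xrightarrow{\simeq} M(X)$; hence $M(X')\xrightarrow{\simeq} M(X)$, and this equivalence sends both $M(i_0')$ and $M(i_1')$ to $M(\mathrm{id}_X)$ because $p i_0$ is the zero section and $p i_1$ is the one section, and the zero and one sections compose with $X\times\boxx\to X$ to the identity.

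Next I would turn this into the assertion that $M(f_0)$ and $M(f_1)$ agree in $\inflogH^{\SmAdm}(S)$. Concretely: in the $\infty$-category $\inflogH^{\SmAdm}(S)$ the maps $i_0', i_1'\colon X\to X'$ become equivalent (they are two sections of the equivalence $M(X')\simeq M(X)$ that both split it, so they are canonically equivalent as morphisms $X\to X'$ in the localized category). Then $f_0 = g i_0'$ and $f_1 = g i_1'$ become equivalent after post-composing with $g$. Care should be taken that "equivalent morphisms" here means equivalent as objects of the mapping space $\Map_{\inflogH^{\SmAdm}(S)}(X,Y)$ — this is the natural $\infty$-categorical meaning of $\boxx$-homotopy, and it is exactly what is needed for later applications (e.g.\ Lemma \ref{ProplogSH.7} is used to produce $\boxx$-homotopies between explicitly constructed maps).

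I would phrase the argument so that it does not appeal to \eqref{logHprop.0.4}: we never need to conclude "$\mathrm{cofib}=0\Rightarrow$ equivalence"; instead we directly use that $p$ and the projection are equivalences on motives, which follows from the assumed properties of $M$ without any stability input. So the proof is: consider the map $\boxx$-homotopy datum, note $X\times\boxx\to X$ induces an equivalence and both $i_0,i_1$ are its sections after the blow-up modification, deduce $i_0'\simeq i_1'$ in $\inflogH^{\SmAdm}(S)$, post-compose with $g$.

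The main obstacle I anticipate is bookkeeping the homotopy coherence: one must check that the chosen equivalences fit together so that $i_0'$ and $i_1'$ are identified \emph{compatibly with the projection data}, i.e.\ that the two composites $X\xrightarrow{i_0'} X'\to X$ and $X\xrightarrow{i_1'} X'\to X$ are identified with $\mathrm{id}_X$ via the \emph{same} path that exhibits $M(X')\simeq M(X)$. This is where the hypotheses "$pi_0$ is the zero section" and "$pi_1$ is the one section" are used — they guarantee both composites land in the locus where $\boxx$-invariance trivializes them. Once that compatibility is set up, the conclusion $f_0\simeq f_1$ in $\inflogH^{\SmAdm}(S)$ is formal. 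Everything is local in character and follows the template of \cite[\S 7]{logDM}, with the cofiber-based arguments there replaced by the present direct use of $\boxx$- and admissible-blow-up invariance.
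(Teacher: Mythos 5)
Your argument is correct and is essentially the paper's own proof: in $\inflogH^{\SmAdm}(S)$ the blow-up composite $p$ and the projection $X\times\boxx\to X$ are equivalences by construction, so $i_0'$ and $i_1'$ (being sections over the zero and one sections) are equivalent, and post-composing with $g$ gives $f_0\simeq f_1$. The "coherence" worry you raise is not a genuine obstacle — once $p\,i_0'\simeq p\,i_1'$ and $p$ is an equivalence, the identification $i_0'\simeq i_1'$ in the mapping space is automatic.
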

\begin{proof}
By $\square$-invariance and invariance under admissible blow-ups along smooth centers, $pi_0'$ and $pi_1'$ are homotopic in $\inflogH(S)$.
From these, we deduce that $i_0'$ and $i_1'$ are homotopic in $\inflogH(S)$.
Compose $i_0'$ and $i_1'$ with $g$ to complete the proof.
\end{proof}

\begin{prop}
\label{Ori.40}
Suppose $\cE_1 \to X_1$ and $\cE_2\to X_2$ are vector bundles in $\SmlSm/S$, where $S\in \Sch$.
Then there is a canonical equivalence
\begin{equation}
\label{Ori.40.1}
\Thom(\cE_1)\wedge \Thom(\cE_2)
\xrightarrow{\simeq}
\Thom(\cE_1\times_S \cE_2)
\end{equation}
in $\inflogHpt(S)$.
\end{prop}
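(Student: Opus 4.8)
The plan is to reduce the statement to the case of trivial vector bundles, where \eqref{Ori.40.1} follows at once from Proposition \ref{Ori.64} and the symmetric monoidality of $M$. Concretely, if $\cE_1=\cO_{X_1}^{d_1}$ and $\cE_2=\cO_{X_2}^{d_2}$ are trivial, then $\cE_1\times_S\cE_2$ is the trivial bundle $\cO_{X_1\times_S X_2}^{d_1+d_2}$ on $X_1\times_S X_2$, and combining Proposition \ref{Ori.64} with the equivalences $M(X_1)\wedge M(X_2)\simeq M(X_1\times_S X_2)$ and $\Sigma^{2d_1,d_1}\wedge\Sigma^{2d_2,d_2}\simeq\Sigma^{2(d_1+d_2),d_1+d_2}$ produces a chain
\[
\Thom(\cE_1)\wedge\Thom(\cE_2)\simeq \Sigma^{2d_1,d_1}M(X_1)\wedge\Sigma^{2d_2,d_2}M(X_2)\simeq \Sigma^{2(d_1+d_2),d_1+d_2}M(X_1\times_S X_2)\simeq \Thom(\cE_1\times_S\cE_2).
\]
This chain is manifestly compatible with the inclusions of trivializations: the only geometric input is the tautological identification $\A^{d_1}\times_S\A^{d_2}\cong \A^{d_1+d_2}$ of fibres, which is already what enters the proof of Proposition \ref{Ori.64}.

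To treat the general case I would first construct a comparison morphism $\Thom(\cE_1)\wedge\Thom(\cE_2)\to\Thom(\cE_1\times_S\cE_2)$, natural in the pair $(\cE_1,\cE_2)$, and then verify it is an equivalence using the Zariski separation property of $M$ (Notation \ref{not:M}). Choosing finite Zariski covers $\{U_i\to X_1\}$ and $\{V_j\to X_2\}$ that trivialize $\cE_1$ and $\cE_2$, the products $\{U_i\times_S V_j\to X_1\times_S X_2\}$ trivialize $\cE_1\times_S\cE_2$, and on each of these covers — and on the intersections — the first paragraph shows the comparison morphism is an equivalence; Zariski separation then propagates this to $X_1\times_S X_2$. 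The ingredients making the passage along opens legitimate are that the formation of $\Blow_Z(-)$ commutes with the open immersions $\cE\times_X U_i\hookrightarrow\cE$, that $M$ satisfies $sNis$-descent (hence Mayer--Vietoris, and so finite Zariski descent), and that the cofiber and the monoidal product of $\infC$ preserve the relevant colimits.

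The crux — and the step I expect to be the main obstacle — is the construction of the natural comparison morphism. Writing $B_i:=(\Blow_{Z_i}\cE_i,E_i)$, one identifies $\Thom(\cE_1)\wedge\Thom(\cE_2)$ with the iterated (total) cofiber of the square
\[
\begin{tikzcd}[column sep=1.3em, row sep=1.3em]
M(B_1\times_S B_2)\ar[r]\ar[d]&
M(\cE_1\times_S B_2)\ar[d]
\\
M(B_1\times_S \cE_2)\ar[r]&
M(\cE_1\times_S \cE_2),
\end{tikzcd}
\]
whereas $\Thom(\cE_1\times_S\cE_2)=\cofib\bigl(M(\Blow_{Z_1\times_S Z_2}(\cE_1\times_S\cE_2),E)\to M(\cE_1\times_S\cE_2)\bigr)$. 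One cannot simply compare these via a morphism of the underlying squares of log schemes: the blow-up $\Blow_{Z_1\times_S Z_2}(\cE_1\times_S\cE_2)$ admits no morphism to, nor from, the partial blow-ups $B_1\times_S\cE_2$ or $\cE_1\times_S B_2$ over $\cE_1\times_S\cE_2$, since the ideal of $Z_1\times_S Z_2$ fails to become invertible there. I expect to resolve this by organizing $\Thom$ as a functor on the category of pairs $(X,\cE)$ of a log smooth scheme and a vector bundle, and by producing the comparison from the functoriality and strict Nisnevich descent of $M$ so that naturality holds by construction; being an equivalence is then the Zariski-local question settled in the first paragraph.
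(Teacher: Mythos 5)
There is a genuine gap, and you have put your finger on it yourself: the comparison morphism $\Thom(\cE_1)\wedge\Thom(\cE_2)\to\Thom(\cE_1\times_S\cE_2)$ is never actually constructed. Your observation that $\Blow_{Z_1\times_S Z_2}(\cE_1\times_S\cE_2)$ maps neither to nor from the partial blow-ups $B_1\times_S\cE_2$ and $\cE_1\times_S B_2$ is exactly the obstruction, and the phrase ``organizing $\Thom$ as a functor on pairs $(X,\cE)$ and producing the comparison from functoriality and strict Nisnevich descent'' does not overcome it: functoriality of $\Thom$ only gives maps along morphisms of pairs, and no morphism of pairs relates the two sides here. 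Without the map, the Zariski separation/trivialization argument of your second paragraph has nothing to apply to (and one would additionally have to check that the map, once built, restricts on $U_i\times_S V_j$ to the equivalence of your first paragraph, which is again geometric content, not formal).

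The paper supplies precisely this missing content, and not by constructing a single natural map. After reducing to $S$ noetherian via Proposition \ref{noetherian.2}, it imports the auxiliary fs log schemes $T_I$ of \cite[Construction 7.4.14]{logDM} (with $T=\cE_1\times_S\cE_2$ and $T_4$ the blow-up along the product zero section, with its exceptional log structure), where the maps $T_{14}\to T_1$, $T_{24}\to T_2$, $T_{124}\to T_{12},T_4$ are composites of admissible blow-ups along smooth centers, hence invertible in $\infC$ by hypothesis. This lets one replace the square computing $\Thom(\cE_1)\wedge\Thom(\cE_2)$ (your square with corners $M(T_{12}),M(T_1),M(T_2),M(T)$) by the square $Q'$ with corners $M(T_{124}),M(T_{14}),M(T_{24}),M(T)$, whose corners now do map to $M(T_4)$. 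The vanishing $\tcofib(Q'')\simeq 0$ for the square ending in $M(T_4)$ (Step 3 of \cite[Proposition 7.4.15]{logDM}) together with a cocartesian square then identifies $\tcofib(Q')$ with $\cofib(M(T_4)\to M(T))=\Thom(\cE_1\times_S\cE_2)$ — an argument deliberately arranged to avoid the stability property \eqref{logHprop.0.4}, which your outline also does not address since $\infC$ is only assumed pointed with colimits. So the equivalence is obtained as an identification of total cofibers through a zig-zag of admissible blow-ups, and that geometric construction is the essential step your proposal leaves open.
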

\begin{proof}
When $S\in \Sch_{noeth}$, the claim
was proved in \cite[Proposition 7.4.15]{logDM} in the stable setting. The noetherianity was not actually used in the proof. 
Since the proof used \eqref{logHprop.0.4}, we will explain an alternative approach.

We set $T:=\cE_1\times_S \cE_2\times_S \cE_3$, where $\cE_3:=S$
(in \cite[Proposition 7.4.16]{logDM} we used a nontrivial $\cE_3$).
In \cite[Construction 7.4.14]{logDM}, 
we constructed 55 fs log schemes $T_I$ corresponding to all nonempty subset $I\subset\{1,\ldots,6\}$ such that 
$I\cap \{4,5,6\}\neq \{4,5\}$.
Furthermore, 
it was shown that the morphisms $T_{14}\to T_1$, $T_{24}\to T_2$, and $T_{124}\to T_{12},T_4$ 
are composites of admissible blow-ups along smooth centers.

To construct \eqref{Ori.40.1}, 
we first observe that $\Thom(\cE_1)\wedge \Thom(\cE_2)$ is equivalent to the total cofiber of the square
\[
Q
:=
\begin{tikzcd}
T_{12}\ar[d]\ar[r]&
T_1 \ar[d]
\\
T_2\ar[r]&
T.
\end{tikzcd}
\]
Use the invariance under admissible blow-ups along smooth centers to see that $\tcofib(Q)$ is equivalent 
to the total cofiber of the square
\[
Q'
:=
\begin{tikzcd}
T_{124}\ar[d]\ar[r]&
T_{14} \ar[d]
\\
T_{24}\ar[r]&
T.
\end{tikzcd}
\]
Step 3 of the proof of \cite[Proposition 7.4.15]{logDM} shows that the total cofiber of the square
\[
Q''
:=
\begin{tikzcd}
T_{124}\ar[d]\ar[r]&
T_{14}\ar[d]
\\
T_{24}\ar[r]&
T_4.
\end{tikzcd}
\]
is equivalent to $0$ in $\inflogHpt(S)$.
Since there is a cocartesian square
\[
\begin{tikzcd}
\tcofib(Q')\ar[d]\ar[r]&
\tcofib(Q'')\ar[d]
\\
T_4\ar[r]&
T,
\end{tikzcd}
\]
we have an equivalence
\[
\tcofib(Q')
\simeq
T/T_4
=
\Thom(\cE_1\times_S \cE_2).
\]
This finishes the construction of the equivalence \eqref{Ori.40.1}.
\end{proof}

\begin{prop}
\label{Ori.65}
Suppose $S\in \Sch$ and let $\cE_i\to X$ be a vector bundle in $\SmlSm/S$ for $i=1,2,3$.
Then the diagram
\[
\begin{tikzcd}
\Thom(\cE_1)\wedge \Thom(\cE_2)\wedge \Thom(\cE_3)\ar[r,"\simeq"]\ar[d,"\simeq"']&
\Thom(\cE_1)\wedge \Thom(\cE_2\times_S \cE_3)\ar[d,"\simeq"]
\\
\Thom(\cE_1\times_S \cE_2)\wedge \Thom(\cE_3)\ar[r,"\simeq"]&
\Thom(\cE_1\times_S \cE_2\times_S \cE_3)
\end{tikzcd}
\]
commutes in $\inflogHpt(S)$.
\end{prop}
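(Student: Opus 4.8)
The plan is to reduce to the Noetherian affine base treated in \cite[Construction 7.4.14]{logDM} and then to run the construction underlying Proposition \ref{Ori.40} with all three bundles present, so that the four edges of the square are simultaneously induced from a single commutative diagram of fs log schemes; coherence of the cofiber constructions in $\infC$ then forces commutativity. For the first reduction: the assertion is Zariski local on $\underline{S}$, so we may assume $S=\Spec{A}$, and then by Proposition \ref{noetherian.2} (applied to each $\cE_i\to X$) together with the fact that the log schemes $T_I$ and the admissible blow-ups entering the proof of \ref{Ori.40} are built from fiber products and blow-ups compatible with a filtered presentation $S\simeq\lim_i S_i$, we see that $\Thom$, $\wedge$ and the equivalences of \ref{Ori.40} are all compatible with such a presentation; hence we may assume $S\in\Sch_{noeth}$.

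Next I would recall the geometric input. For a triple of vector bundles $\cE_1,\cE_2,\cE_3$ on $X\in\SmlSm/S$, \cite[Construction 7.4.14]{logDM} produces fs log schemes $T_I$ for all admissible nonempty $I\subset\{1,\dots,6\}$ together with a distinguished vertex $T=\cE_1\times_S\cE_2\times_S\cE_3$, arranged so that the morphisms $T_{i4}\to T_i$ ($i=1,2$) and $T_{124}\to T_{12},T_4$ — and, symmetrically, the analogous morphisms attached to the indices $5$ and $6$, which record $\cE_2\times_S\cE_3$ and $\cE_1\times_S\cE_2$ — are composites of admissible blow-ups along smooth centers, and so that the "Step 3" total-cofiber vanishings of \cite[Proposition 7.4.15]{logDM} hold for the relevant squares. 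As in the proof of \ref{Ori.40}, $\Thom(\cE_1)\wedge\Thom(\cE_2)\wedge\Thom(\cE_3)$ is exhibited as the total cofiber of a cube whose vertices lie among the $M(T_I)$, and after invoking invariance under admissible blow-ups along smooth centers this cube may be replaced by one whose two-dimensional slices realize the corners $\Thom(\cE_i)\wedge\Thom(\cE_j)$ and $\Thom(\cE_i\times_S\cE_j)$ appearing in the square.

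Finally, the composite $\Thom(\cE_1)\wedge\Thom(\cE_2)\wedge\Thom(\cE_3)\to\Thom(\cE_1\times_S\cE_2)\wedge\Thom(\cE_3)\to\Thom(\cE_1\times_S\cE_2\times_S\cE_3)$ is, by construction, the result of folding this cube first in the $\cE_1$--$\cE_2$ direction and then in the remaining direction, while the other composite folds it first in the $\cE_2$--$\cE_3$ direction; both are iterated-cofiber evaluations of one and the same cube in $\infC$, hence canonically equivalent, since finite colimits in $\infC$ are functorial and compute total cofibers independently of the order of folding. This is exactly the content of \cite[Proposition 7.4.16]{logDM}, with the one caveat that the uses of \eqref{logHprop.0.4} through \cite[Propositions C.1.5, C.1.7]{logDM} in loc.\ cit.\ must be replaced throughout by the cocartesian-square gluing argument used in the proof of Proposition \ref{Ori.40}. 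I expect the main obstacle to be precisely this bookkeeping: checking that the \emph{same} family $\{T_I\}$ and the same collection of admissible blow-ups simultaneously witness both composites, and that the two sub-cube decompositions agree on overlaps. Once the diagram is in place and the functoriality of cofibers replaces the appeals to \eqref{logHprop.0.4}, commutativity of the square is formal.
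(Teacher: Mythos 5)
Your proof takes essentially the same route as the paper: reduce to a noetherian base via Proposition \ref{noetherian.2} and then invoke the argument of \cite[Proposition 7.4.16]{logDM} behind Proposition \ref{Ori.40}. The only divergence is your closing caveat about replacing appeals to \eqref{logHprop.0.4} via \cite[Propositions C.1.5, C.1.7]{logDM}: the paper observes that the proof of loc.\ cit.\ never uses \eqref{logHprop.0.4}, so it applies verbatim to $\infC$ and none of the repair or cube-folding bookkeeping you anticipate is actually needed.
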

\begin{proof}
As before,
we reduce to the case when $S\in \Sch_{noeth}$.
With this assumption, the claim
in the stable setting was established in \cite[Proposition 7.4.16]{logDM}.
However, 
the proof does not use \eqref{logHprop.0.4}, 
and hence applies verbatim to $\inflogHpt(S)$.
\end{proof}

We often regard $\P^{n-1}$ as the hyperplane defined by $x_n=0$ in $\P^n$, 
where we write $[x_0:\cdots:x_n]$ for the coordinate on $\P^n$.
We are interested in the fs log scheme $(\P^n,\P^{n-1})$.\index[notation]{PnPn1 @ $(\P^n,\P^{n-1})$} Clearly, this is a log compactification of $\A^n$ different from $\boxx^n$.
The contractibility of $\boxx$ trivially implies the contractibility of $\boxx^n$, but the following is not an immediate result. In fact, it is one of the main features of our theory, as seen in \cite{logDM}.

\begin{prop}
\label{ProplogSH.4}
For all $S\in \Sch$, 
$X\in \SmlSm/S$, 
and $n\geq 1$, the map
\[
X\times (\P^n,\P^{n-1})
\to
X
\]
naturally induced by the projection $X\times (\P^n,\P^{n-1})\to X$ is an equivalence in $\inflogH(S)$.
\end{prop}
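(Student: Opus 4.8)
The plan is to reduce the statement to the contractibility of $\boxx$ (which holds by construction) by expressing $(\P^n,\P^{n-1})$ as an iterated admissible blow-up / dividing-cover manipulation of a product of copies of $\boxx$, combined with $sNis$-descent. The key geometric observation is that $\P^n$ has the standard toric cover by $n+1$ affine charts $U_j\cong \A^n$, and on the chart $U_0$ (where $x_0\neq 0$) the hyperplane $\P^{n-1}=(x_n=0)$ becomes a single coordinate hyperplane. More precisely, one works with the fan picture: $(\P^n,\P^{n-1})$ is a toric log scheme, and a suitable toric blow-up (subdivision of the fan of $\P^n$, refining the star of the ray corresponding to $\P^{n-1}$) realizes it, after the dividing-cover identifications of Proposition~\ref{logHprop.2} and the admissible-blow-up invariance built into $\inflogH^{\SmAdm}$, as being glued from charts of the form $X\times \boxx\times(\text{something already known to be contractible})$. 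So the strategy is: (1) set up the toric/combinatorial model; (2) run an induction on $n$; (3) use $sNis$-descent along the standard affine cover to glue.

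Concretely, I would argue by induction on $n$, the base case $n=1$ being $(\P^1,\P^0)=(\P^1,\infty)=\boxx$, which is contractible by definition of $\inflogH$. For the inductive step, cover $\P^n$ by $V_0:=\P^n\setminus\{[0:\cdots:0:1]\}$ and $V_1:=\P^n\setminus\P^{n-1}\cong\A^n$; their intersection is $V_0\cap V_1\cong\A^n\setminus\{0\}$, and the pair $\P^{n-1}$ restricts to the coordinate hyperplane inside $V_0$ (which admits a projection $V_0\to\P^{n-1}$ exhibiting it as a line bundle), while on $V_1$ and $V_0\cap V_1$ the log structure is trivial. This is not quite a strict Nisnevich distinguished square because the relevant open immersions and the log structures interact, so instead I would realize $(\P^n,\P^{n-1})$ via the toric subdivision of Example~\ref{logtop.8bis}: take the star subdivision of the fan of $\P^n$ along the ray of $\P^{n-1}$, giving a dividing cover $\widetilde{\P^n}\to\P^n$ whose charts are, up to admissible blow-up, of the shape $\A^{n-1}\times\boxx$ or $(\text{blow-up of }\A^n)\times(\text{trivial})$. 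Using Proposition~\ref{logHprop.2} (dividing covers are equivalences) and the invariance under admissible blow-ups along smooth centers in $\inflogH^{\SmAdm}(S)$, together with $\boxx$-invariance in the last factor and the inductive hypothesis applied to the $\P^{n-1}$-direction via the projective-bundle-type fibration $V_0\to\P^{n-1}$, one concludes $M(X\times(\P^n,\P^{n-1}))\simeq M(X)$ after the $X$-factor is carried along by base change (the functor $p^*$ from $\inflogH^{\SmAdm}(\Z)$, exactly as in Proposition~\ref{logH.1}).

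The hard part will be the bookkeeping in step (1)--(3): matching the toric charts of the subdivided fan with the four basic properties of $M$ in Notation~\ref{not:M}, and in particular checking that the pieces that are \emph{not} immediately of the form $(\text{contractible})\times X$ get killed by combining $sNis$-descent with the admissible-blow-up invariance — this is precisely the kind of argument that in $\A^1$-homotopy theory would be a one-line appeal to the projective bundle formula but here requires an explicit geometric model, as already flagged in the remark after Lemma~\ref{Ori.55}. One subtlety I would keep an eye on is that we are working in $\inflogH^{\SmAdm}(S)$ rather than $\inflogH(S)$, which is essential: the admissible blow-ups (along centers in the boundary $\P^{n-1}$ as well as along smooth centers of $\A^n$) used to pass between $(\P^n,\P^{n-1})$ and the $\boxx^n$-type models are only inverted there. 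Finally, since $\infC$ is not assumed stable one cannot use the vanishing-of-cofiber shortcut \eqref{logHprop.0.4}, so every identification must be produced as an honest colimit/equivalence, using Proposition~\ref{logHprop.3} and Proposition~\ref{Ori.5} for the Zariski/cubical gluings rather than a cancellation argument.
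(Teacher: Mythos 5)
There is a genuine gap, and your route is in any case not the paper's: the paper Zariski-localizes on $S$, reduces to $S$ noetherian by the approximation result of Proposition \ref{noetherian.1}, and then simply quotes the stable case proved in \cite[Proposition 7.3.1]{logDM}, observing that that proof never invokes the cancellation \eqref{logHprop.0.4} and hence applies verbatim to an unstable $\infC$; your closing remark about avoiding \eqref{logHprop.0.4} is the one point of contact. Your sketch, by contrast, tries to redo the geometry, and as written it does not close up. In the cover of $\P^n$ by $V_1=\A^n$ and $V_0=\P^n-\{[0:\cdots:0:1]\}$, the pieces with trivial log structure ($V_1$ and $V_0\cap V_1\cong \A^n-0$) are exactly where the absence of $\A^1$-invariance bites: $M(X\times \A^n)$ and $M(X\times(\A^n-0))$ are not computable from the standing hypotheses, so the Mayer--Vietoris square does not determine $M(X\times(\P^n,\P^{n-1}))$, and asserting that the leftover terms ``get killed by $sNis$-descent plus admissible-blow-up invariance'' is precisely the content that must be proved (it is the bulk of the argument in \cite[\S 7.3]{logDM}), not bookkeeping. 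The inductive appeal via the projection $V_0\to\P^{n-1}$ also fails: with the log structure inherited from $(\P^n,\P^{n-1})$, this is an $(\A^1,0)$-bundle, not a $\boxx$-bundle, and $(\A^1,0)=\A_\N$ is not contractible under the hypotheses on $M$. Finally, the toric fallback is ill-posed: a ``star subdivision along the ray of $\P^{n-1}$'' is the identity subdivision, since that ray already belongs to the fan of $\P^n$; and the log structure of $(\P^n,\P^{n-1})$ is the compactifying structure of the single divisor $\P^{n-1}$, not the full toric boundary, so a general fan subdivision of $\P^n$ is neither a dividing cover nor an admissible blow-up along a smooth center for this log scheme unless its centers lie in $\P^{n-1}$ --- and even then, charts of the shape $\A^{n-1}\times\boxx$ do not help by themselves, again because the $\A^{n-1}$-factor is not contractible.

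Two smaller points. The proposition concerns an arbitrary symmetric monoidal functor $M\colon \sS/S\to\infC$ satisfying only the properties of Notation \ref{not:M}; it is not a statement internal to $\inflogH^{\SmAdm}(S)$, so the base-change functor $p^*$ you invoke ``as in Proposition \ref{logH.1}'' is not available (nor needed, since $S$ is a scheme with trivial log structure), and your remark that working in $\inflogH^{\SmAdm}$ rather than $\inflogH$ is essential is moot, because invariance under admissible blow-ups along smooth centers is already one of the standing hypotheses on $M$. Also, a self-contained argument must handle non-noetherian $S$ (the paper does this via Proposition \ref{noetherian.1} before citing \cite{logDM}); your sketch never addresses that reduction.
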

\begin{proof}
The question is Zariski local on $S$.
Hence we may assume that $S$ is affine.
By Proposition \ref{noetherian.1},
we reduce to the case when $S\in \Sch_{noeth}$.
With this assumption, the claim
was proven in \cite[Proposition 7.3.1]{logDM} in the stable setting.
Since \eqref{logHprop.0.4} was not used in the proof, the same argument holds.
\end{proof}

\begin{prop}
\label{Ori.41}
Suppose $\cE\to X$ is a vector bundle in $\SmlSm/S$, where $S\in \Sch$.
Then there is a canonical equivalence
\[
\Thom(\cE)
\simeq
\P(\cE\oplus \cO)/\P(\cE)
\]
in $\inflogHpt(S)$, where we view $\P(\cE)$ as the divisor at $\infty$.
\end{prop}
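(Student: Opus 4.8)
The plan is to transport the $\A^1$-homotopy-theoretic identification of the Thom space with the projective completion modulo the hyperplane at infinity, but in a form that never invokes property \eqref{logHprop.0.4} (so that it remains valid in the possibly non-stable $\infC$), adapting \cite[\S 7.4]{logDM}. Both sides commute with base change and $M$ satisfies Zariski separation, so the assertion is Zariski-local on $S$; I would thus first take $S$ affine and then, approximating $\cE\to X$ by Proposition \ref{noetherian.2}, reduce to $S\in\Sch_{noeth}$. Next I would set up the projective completion $\overline{\cE}:=\P(\cE\oplus\cO)$, with its zero section $Z_0\cong X$ placed so that $\overline{\cE}-\P(\cE)\cong\cE$ with $Z_0$ the zero section of $\cE$, and blow it up: $W:=\Blow_{Z_0}\overline{\cE}$, with exceptional divisor $E$. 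Since $Z_0\cap\P(\cE)=\emptyset$, the strict transform of $\P(\cE)$ in $W$ is again $\P(\cE)$, disjoint from $E$, and $W-\P(\cE)=\Blow_{Z_0}\cE$; hence $(\Blow_{Z_0}\cE,E)$ is the open complement of $\P(\cE)$ in $(W,E)$, and (a fact I would use only for a low-rank sanity check) $W\to\P(\cE)$ is a Zariski-locally trivial $\P^1$-bundle with $E$ and $\P(\cE)$ as disjoint sections.

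The core is then two applications of the Zariski separation property of Notation \ref{not:M}. Consider the commutative square in $\SmlSm/S$ whose rows are the open immersions $(\Blow_{Z_0}\cE,E)\hookrightarrow(W,E)$ and $\cE\hookrightarrow\overline{\cE}$, both with complement $\P(\cE)$, and whose columns are the blow-down maps $(\Blow_{Z_0}\cE,E)\to\cE$ and $(W,E)\to\overline{\cE}$. Restricting along the Zariski cover $\{\cE,\ \overline{\cE}-Z_0\}$ of $\overline{\cE}$ and their intersection, the square degenerates in each case to an isomorphism of arrows (the blow-down is an isomorphism over $\overline{\cE}-Z_0$, and over $\cE$ the rows become identities), so by Zariski separation the square is cocartesian in $\infC$; taking vertical cofibers yields $\Thom(\cE)=M(\cE/(\Blow_{Z_0}\cE,E))\simeq M(\overline{\cE}/(W,E))$. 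The same mechanism applied to the square with rows $(W,E+\P(\cE))\to(W,E)$ and $(\overline{\cE},\P(\cE))\to\overline{\cE}$ (forgetting the boundary $\P(\cE)$) and columns the two blow-downs shows it too is cocartesian, whence --- once $\P(\cE\oplus\cO)/\P(\cE)$ is identified with the Thom-space recipe applied to the compactification $(\overline{\cE},\P(\cE))$, that is, with $M\big((\overline{\cE},\P(\cE))/(W,E+\P(\cE))\big)$ --- one gets $\P(\cE\oplus\cO)/\P(\cE)\simeq M(\overline{\cE}/(W,E))$ as well. Splicing the two equivalences proves the claim; for $\cE=\cO$ it recovers $T\simeq\P^1/\pt$ (Propositions \ref{logH.4}, \ref{logH.2}), and for $\mathrm{rk}\,\cE=1$ (so $W=\overline{\cE}$, $E=Z_0$) the second cocartesian square is exactly the cubical-horn square of Proposition \ref{Ori.5} for the two disjoint sections of the ruled surface $\overline{\cE}$.

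The two points I expect to cost the most effort are, first, unwinding the notation $\P(\cE\oplus\cO)/\P(\cE)$ into the correct quotient of $M$-objects: it must be built from $\overline{\cE}$ carrying its divisor at infinity together with its blow-up along $Z_0$, since the naive guess $\cofib(M(\P(\cE))\to M(\overline{\cE}))$ fails here --- essentially because $\A^1$ is not contractible in $\inflogH$, unlike $\boxx$ --- and $\cofib(M(\overline{\cE},\P(\cE))\to M(\overline{\cE}))$ is, by the residue sequence, the Thom space of the normal bundle $N_{\P(\cE)}\overline{\cE}$, not of $\cE$. Second, there is the formal but lengthy bookkeeping of re-expressing each step of \cite[\S 7.4]{logDM} as an explicit cocartesian square checked Zariski-locally, in order to bypass \eqref{logHprop.0.4}, precisely as is done for Proposition \ref{Ori.40}; in particular one must keep track of the two roles of the section $E\cong\P(\cE)$ --- mapping to the zero section $Z_0$ under the blow-down while mapping isomorphically to $\P(\cE)$ under the ruling --- so that the two cocartesian squares above are genuinely compatible over $M(\overline{\cE})$ and not merely abstractly isomorphic.
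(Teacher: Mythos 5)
Your first reduction and your first Zariski-separation square are fine (and are in the spirit of the actual argument), but the proof as written does not prove the stated proposition, because you have misread what $\P(\cE\oplus\cO)/\P(\cE)$ means. By Definition \ref{Thomdf.2}, $\P(\cE\oplus\cO)/\P(\cE)$ \emph{is} the ``naive guess'' you dismiss: it is $\cofib\bigl(M(\P(\cE))\to M(\P(\cE\oplus\cO))\bigr)$, where $\P(\cE)\hookrightarrow\P(\cE\oplus\cO)$ is the closed immersion at infinity and both carry only the log structure pulled back from $\partial X$ (no boundary along $\P(\cE)$). This is confirmed by every subsequent use of Proposition \ref{Ori.41}: the exact sequence \eqref{Ori.43.1} and the construction of the Thom class in Definition \ref{Ori.15} need exactly the cofiber sequence $M(\P(\cE))\to M(\P(\cE\oplus\cO))\to \Thom(\cE)$ together with surjectivity of $\ringE^{**}(\P(\cE\oplus\cO))\to\ringE^{**}(\P(\cE))$ from the projective bundle formula, and the proof of Lemma \ref{Ori.49} uses $\P(\cT_{1,n}\oplus\cO)/\P(\cT_{1,n})\cong\P^{n+1}/\pt$ via blow-up excision, which only makes sense for the naive quotient. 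So your step (c) --- identifying $\P(\cE\oplus\cO)/\P(\cE)$ with $M\bigl((\overline{\cE},\P(\cE))/(W,E+\P(\cE))\bigr)$ --- is not a notational unwinding; it is a substantive claim you never prove, and your two cocartesian-square arguments only establish $\Thom(\cE)\simeq M(\overline{\cE}/(W,E))\simeq M\bigl((\overline{\cE},\P(\cE))/(W,E+\P(\cE))\bigr)$, which is a different (if a posteriori equivalent) statement.

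The missing bridge is precisely the heart of the matter, and it is what makes the ``naive'' reading correct despite $\A^1$ not being contractible: the blow-up $W=\Blow_{Z_0}\P(\cE\oplus\cO)$ is a $\P^1$-bundle over $\P(\cE)$ with the exceptional divisor $E$ and the strict transform of the infinity divisor as disjoint sections, so $(W,E)\to\P(\cE)$ is a Zariski-locally trivial $\boxx$-bundle; hence $M(W,E)\simeq M(\P(\cE))$, and since the infinity section is a section of this bundle, the composite $M(\P(\cE))\to M(W,E)\to M(\overline{\cE})$ (which is the inclusion at infinity) identifies $\cofib\bigl(M(\P(\cE))\to M(\overline{\cE})\bigr)$ with $\cofib\bigl(M(W,E)\to M(\overline{\cE})\bigr)$, and your step (a) then gives the proposition literally. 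For comparison, the paper's own proof is much shorter: after the noetherian reduction via Proposition \ref{noetherian.2} it simply invokes \cite[Proposition 7.4.5]{logDM} (whose argument contains exactly the $\boxx$-bundle input above) and observes that property \eqref{logHprop.0.4} is never used there, so the stable proof applies verbatim in $\infC$. A final small point: Zariski separation as stated in Notation \ref{not:M} only yields the equivalence of the two horizontal (or vertical) cofibers, not that the square of $M$'s is cocartesian; in the unstable $\infC$ these are not the same, so you should phrase your conclusions in terms of cofibers --- which, fortunately, is all your argument actually uses.
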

\begin{proof}
As before,
we reduce to the case when $S\in \Sch_{noeth}$.
With this assumption, the claim
was proven in \cite[Proposition 7.4.5]{logDM} in the stable setting.
Again, \eqref{logHprop.0.4} was not used in the proof.
\end{proof}

The following Lemma is a key result for understanding the $\P^1$-stabilization. We are grateful to the anonymous referee for suggesting the current proof, which significantly simplifies our previous argument.
\begin{lem}
\label{Ori.55}
Suppose $S\in \Sch$ and let $f\colon \Thom(\cO^2)\to \Thom(\cO^2)$ be the map given by the matrix
\[
\rho:=
\left(
\begin{array}{cc}
1 & 1
\\
0 & 1
\end{array}
\right)
\colon
\cO^2
\to 
\cO^2.
\]
Then there is a homotopy $f\simeq \id$ in $\inflogHpt(S)$.
\end{lem}
\begin{proof}
The matrix $\rho$ extends to a morphism $g\colon \P^2\to \P^2$ given by
\[
[z_0:z_1:z_2] \mapsto [z_0+z_1:z_1:z_2],
\]
and this sends the divisor $\P^1$ at $\infty$ into $\P^1$.
Using Proposition \ref{Ori.41},
we see that $f$ is identified with the naturally induced morphism $g\colon \P^2/\P^1\to \P^2/\P^1$.

Consider the rational map $h\colon \P^2\times \P^1\dashrightarrow \P^2$ given by
\[
([z_0:z_1:z_2],[t_0:t_1])\mapsto [t_1 z_0+t_0z_1:t_1z_1:t_1z_2],
\]
whose restriction to $\P^2\times \A^1$ yields an $\A^1$-homotopy between $g$ and $\id$, and its further restriction to $\P^1\times \A^1$ is an $\A^1$-homotopy too.
To use Lemma \ref{ProplogSH.7},
let us find a suitable composite of admissible blow-ups along smooth centers $Y\to \P^2\times \boxx$ such that $h$ is defined on $\ul{Y}$.

On $\P^2\times (\P^1-O)\cong \P^2\times \A^1$, the rational map is given by
\begin{equation}
\label{Ori.55.13}
([z_0:z_1:z_2],t)\mapsto [tz_0+z_1:tz_1:tz_2].
\end{equation}
If $z_1=1$, then \eqref{Ori.55.13} is a morphism.
If $z_0=1$ or $z_2=1$, then we can write \eqref{Ori.55.13} as
\[
(z_1,z_2,t)\mapsto [t+z_1:tz_1:tz_2],
\text{ }
(z_0,z_1,t)\mapsto [tz_0+z_1:tz_1:t].
\]
Take the admissible blow-up along the line $L:=(z_1=t=0)$ and use the new coordinates $w_1:=z_1/t$ and $u:=t/z_1$ to get
\begin{gather*}
(w_1,z_2,t)\mapsto [1+w_1:tw_1:z_2],
\text{ }
(z_1,z_2,u)\mapsto [u+1:uz_1:uz_2]
\\
(z_0,w_1,t)\mapsto [z_0+w_1,tw_1:1],
\text{ }
(z_0,z_1,u)\mapsto [uz_0+1:uz_1:u].
\end{gather*}
The third and fourth rational maps are morphisms.
The first and second rational maps become morphisms after taking the admissible blow-up along the point
\[
P:=(w_1=-1)\cap (z_2=t=0)=(u=-1)\cap (z_1=z_2=0).
\]
In summary, $h$ is defined on $\Blow_P (\Blow_L (\P^2\times \P^1))$.
Let $Y$ be the fs log scheme with this underlying scheme and the compactifying log structure associated with $\P^2\times \A^1$.
To finish the proof, apply Lemma \ref{ProplogSH.7} to the composite of admissible blow-ups $Y\to \P^2\times \square$ along smooth centers.
\end{proof}
\begin{rmk}
    The analogous result of   Lemma \ref{Ori.55} in $\A^1$-homotopy theory can be easily proven by considering the $\A^1$-homotopies
\[
\A^2\times \A^1\to \A^2
\text{ and }
(\A^2-(0,0))\times \A^1\to \A^2-(0,0)
\]
given by $(x,y,t)\mapsto (x+ty,y)$. In our case we consider a compactification of the above morphisms, and this creates a nontrivial geometric problem.
\end{rmk}

\begin{prop}
\label{Ori.56}
If $S\in \Sch$, 
then the permutation $(123)\colon T^{\wedge 3} \to T^{\wedge 3}$ is equivalent to the identity on $\inflogHpt(S)$. In particular, $T$ is a symmetric object of $\inflogHpt(S)$.
\end{prop}
\begin{proof}
Owing to Proposition \ref{Ori.64}, there is an equivalence in $\inflogHpt(S)$
\[
T^{\wedge n}
\simeq
\Thom(\cO^n).
\]
Let $f\colon \Thom(\cO^3)\to \Thom(\cO^3)$ be the map given by the matrix
\[
A:=\left(
\begin{array}{ccc}
0 & 1 & 0
\\
0 & 0 & 1
\\
1 & 0 & 0
\end{array}
\right)
\colon
\cO^3 \to \cO^3.
\]
It suffices to show that $f\simeq \id$ in $\inflogHpt(S)$.
Similarly to the above,
let $g\colon \Thom(\cO^2)\to \Thom(\cO^2)$ be the map given by the matrix
\[
B:=
\left(
\begin{array}{cc}
0 & 1
\\
-1 & 0
\end{array}
\right)
\colon 
\cO^2 \to \cO^2.
\]
Since the two matrices
\[
\left(
\begin{array}{cc}
1 & 1
\\
0 & 1
\end{array}
\right)
\text{ and }
\left(
\begin{array}{cc}
1 & 0
\\
-1 & 1
\end{array}
\right)
\]
are conjugate,
we can apply Lemma \ref{Ori.55}  to the relation
\[
B
=
\left(
\begin{array}{cc}
1 & 1
\\
0 & 1
\end{array}
\right)
\left(
\begin{array}{cc}
1 & 0
\\
-1 & 1
\end{array}
\right)
\left(
\begin{array}{cc}
1 & 1
\\
0 & 1
\end{array}
\right)
\]
to obtain a homotopy
\begin{equation}
\label{Ori.56.1}
g\simeq \id
\end{equation}
in $\inflogHpt(S)$.
Apply \eqref{Ori.56.1} to the relation
\[
A
=
\left(
\begin{array}{ccc}
0 & -1 & 0
\\
1 & 0 & 0
\\
0 & 0 & 1
\end{array}
\right)
\left(
\begin{array}{ccc}
1 & 0 & 0
\\
0 & 0 & -1
\\
0 & 1 & 0
\end{array}
\right)
\]
to finish the proof.
\end{proof}

\begin{prop}
\label{Ori.66}
Suppose $S\in \Sch$, $X\in \SmlSm/S$, 
and $Z\in \SmlSm/S$ is a strict closed subscheme of $X$ having strict normal crossing with $\partial X$ over $S$ such that $Z$ is contained in $\partial X$.
If $f\colon X'\to X$ is a strict \'etale morphism such that $Z':=f^{-1}(Z)\to Z$ is an isomorphism, 
then the naturally induced square
\begin{equation}
\label{Ori.66.1}
\begin{tikzcd}
(\Blow_{Z'}X',E')\ar[d]\ar[r]&
X'\ar[d]
\\
(\Blow_Z X,E)\ar[r]&
X
\end{tikzcd}
\end{equation}
is cocartesian in $\inflogH(S)$, where $E$ and $E'$ are the exceptional divisors.
\end{prop}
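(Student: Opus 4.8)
The plan is to realize \eqref{Ori.66.1} as the right-hand square of a horizontal pasting of two strict Nisnevich distinguished squares, and then to conclude by $sNis$-descent for $M$ together with the pasting law for pushouts.

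First I would record the relevant geometric facts. Since $f\colon X'\to X$ is strict étale, we have $\partial X'=f^{-1}(\partial X)$ and $Z'=f^{-1}(Z)$, so that $X'-(\partial X'\cup Z')=f^{-1}(X-(\partial X\cup Z))$; as étale morphisms are flat and the blow-up of a scheme along a closed subscheme commutes with flat base change, $\Blow_{Z'}\ul{X'}\cong \Blow_Z\ul{X}\times_{\ul{X}}\ul{X'}$, and the compactifying log structure is compatible with the strict étale base change along $f$. Consequently the canonical morphism $(\Blow_{Z'}X',E')\to (\Blow_Z X,E)$ is strict étale and identifies $(\Blow_{Z'}X',E')$ with $(\Blow_Z X,E)\times_X X'$ in $\lSch$; here we freely identify $(\Blow_Z X,E)$ and $(\Blow_{Z'}X',E')$ with the blow-ups in the sense of Definition \ref{logH.35}. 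All of $X$, $X'$, $(\Blow_Z X,E)$, $(\Blow_{Z'}X',E')$, as well as the open sub-log-schemes $U$ of $X$ and $U'$ of $X'$ with underlying schemes $\ul{X}-Z$ and $\ul{X'}-Z'$, lie in $\SmlSm/S$, since the blow-up of a smooth scheme along a smooth center is smooth and $E$ together with the strict transform of $\partial X$ form a strict normal crossing divisor.

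Next I would exhibit the diagram in $\SmlSm/S$
\[
\begin{tikzcd}
U'\ar[r]\ar[d]&
(\Blow_{Z'}X',E')\ar[r]\ar[d]&
X'\ar[d]\\
U\ar[r]&
(\Blow_Z X,E)\ar[r]&
X,
\end{tikzcd}
\]
where $U\to (\Blow_Z X,E)$ and $U'\to (\Blow_{Z'}X',E')$ are the open immersions with closed complements $E$ and $E'$, the right horizontal maps are the blow-downs, and the vertical maps are strict étale. Both horizontal composites are open immersions ($U\hookrightarrow X$ with complement $Z$, and $U'\hookrightarrow X'$ with complement $Z'$), the left square and the outer rectangle are cartesian, and I claim that the left square $Q_1$ and the outer rectangle $Q_2$ are strict Nisnevich distinguished squares. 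For $Q_2$ this is immediate from the hypotheses, since $f$ is strict étale and $f^{-1}(Z)=Z'\to Z$ is an isomorphism. For $Q_1$ one must check that the strict étale map $(\Blow_{Z'}X',E')\to (\Blow_Z X,E)$ restricts to an isomorphism over the closed complement $E$ of $U$; but its pullback over $E$ is $E\times_{\Blow_Z X}(\Blow_Z X\times_X X')\cong E\times_X X'\cong E\times_Z Z'$, which is isomorphic to $E$ because $Z'\to Z$ is an isomorphism. Hence $Q_1$ is a strict Nisnevich distinguished square.

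Finally, $sNis$-descent for $M$ gives that $M(Q_1)$ and $M(Q_2)$ are cocartesian squares in $\infC$ (equivalently, $M((\Blow_{Z'}X',E')/U')\xrightarrow{\simeq}M((\Blow_Z X,E)/U)$ and $M(X'/U')\xrightarrow{\simeq}M(X/U)$, using that $\infC$ is pointed with small colimits). Applying $M$ to the displayed diagram yields a $2\times 3$ grid in $\infC$ whose left square $M(Q_1)$ is a pushout and whose outer rectangle $M(Q_2)$ is a pushout; by the pasting law for pushout squares, see e.g.\ \cite[Lemma 4.4.2.1]{HTT}, the right-hand square is then a pushout, and this right-hand square is exactly \eqref{Ori.66.1}. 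The one point requiring care is the verification that $Q_1$ is a strict Nisnevich distinguished square — concretely, the identification $E'\cong E$ arising from $Z'\cong Z$, together with the compatibility of the compactifying log structures with the strict étale base change $X'\to X$ — after which the statement is a formal consequence of descent and the pasting law. If one prefers, one may first reduce to $S\in \Sch_{noeth}$ via Proposition \ref{noetherian.1}, as in the preceding proofs, although this does not seem necessary here.
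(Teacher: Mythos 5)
Your proposal is correct and follows essentially the same route as the paper: the paper also forms the diagram $X'-Z'\to(\Blow_{Z'}X',E')\to X'$ over $X-Z\to(\Blow_Z X,E)\to X$, observes that the left square and the outer rectangle are strict Nisnevich distinguished squares, and concludes by applying $M$ and the pasting law. Your extra verifications (compatibility of the blow-up with the strict étale base change and the identification $E'\cong E$ via $Z'\cong Z$) are exactly the details implicit in the paper's argument.
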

\begin{proof}
This was proven in \cite[Proposition 7.4.2]{logDM} in the stable setting.
To avoid \eqref{logHprop.0.4}, consider the naturally induced commutative diagram
\[
\begin{tikzcd}
X'-Z'\ar[d]\ar[r]\ar[rd,description,phantom,"(a)"]&
(\Blow_{Z'}X',E')\ar[d]\ar[r]\ar[rd,description,phantom,"(b)"]&
X'\ar[d]
\\
X-Z\ar[r]&
(\Blow_Z X,E)\ar[r]&
X.
\end{tikzcd}
\]
Both $(a)$ and $(a)\cup (b)$ are strict Nisnevich distinguished squares.
Use the pasting law to deduce that 
\eqref{Ori.66.1} is cocartesian in $\inflogH(S)$.
\end{proof}

Suppose $S\in \Sch$, and that $X$ in $\SmlSm/S$ is of the form $(\ul{X},Z_1+\cdots+Z_r)$. Let 
$Z$ be a smooth closed subscheme of $X$ having strict normal crossing with $Z_1+\cdots+Z_r$. 
We define\index[notation]{DZX @ $\Deform_Z X$}
\[
\Deform_Z X
:=
\Blow_{Z}(X\times \boxx)-\Blow_Z X,
\]
where we regard $Z$ as the closed subscheme $Z\times \{0\}$ of $X\times \boxx$.
The underlying scheme of the blow-up $\Blow_Z (X\times \boxx)$ contains the underlying scheme of $Z\times \boxx$ as a closed subscheme.
Let $\Normal_Z X:=(\Normal_{\ul{Z}}\ul{X})\times_{\ul{X}}X$ 
be the normal bundle\index{normal bundle}\index[notation]{NZX @ $\Normal_Z X$} of $Z$ in $X$.
With these definitions, 
we obtain the commutative diagram for the deformation to the normal cone
\begin{equation}
\label{Thom.1.6}
\begin{tikzcd}[row sep=small]
\Blow_Z X\ar[r]\ar[d]&
\Blow_{Z\times \boxx}(\Deform_Z X)\ar[r,leftarrow]\ar[d]&
\Blow_Z (\Normal_Z X)\ar[d]
\\
X\ar[r]\ar[d]&
\Deform_Z X\ar[r,leftarrow]\ar[d]&
\Normal_Z X\ar[d]
\\
\pt\ar[r,"i_1"]&
\boxx\ar[r,leftarrow,"i_0"]&
\pt,
\end{tikzcd}
\end{equation}
where $i_0$ is the zero section and $i_1$ is the one section. It is a compactified version of the $\mathbb{A}^1$-deformation to the normal cone.

\begin{lem}
\label{Ori.36}
Suppose $\cE\to X$ is a rank $n$ vector bundle in $\lSm/S$, where $S\in \Sch$.
Let $Z$ be the zero section of $\cE$,
which is isomorphic to $X$.
With the identification $\cE\cong \Normal_Z \cE$, the two maps
\begin{equation}
\label{Ori.36.3}
\cE/(\Blow_Z \cE,E)
\rightrightarrows
\Deform_Z \cE/(\Blow_{Z\times \boxx}(\Deform_Z \cE),E^D)
\end{equation}
obtained by \eqref{Thom.1.4} are homotopic in $\inflogHpt(S)$, where $E$ and $E^D$ are the exceptional divisors.
\end{lem}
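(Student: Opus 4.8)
The plan is to recast the claim as the assertion that the Gysin equivalence of the zero section of a vector bundle is the identity, and then to establish this by producing an explicit geometric $\boxx$-homotopy — a logarithmic substitute for the classical $\G_m$-triviality of the deformation space.

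First I would unwind the definitions. The identification $\cE\cong\Normal_Z\cE$ coming from the vector bundle structure turns the third term of \eqref{Thom.1.4} into $M(\cE/(\Blow_Z\cE,E))$ as well, and with this identification $M(\cE/(\Blow_Z\cE,E))=\Thom(\cE)$ by Definition \ref{Thomdf.1}; so the two maps in the statement are exactly the arrow $\mu_1$ of \eqref{Thom.1.4} induced by the one section $i_1\colon\cE\hookrightarrow\Deform_Z\cE$ and the arrow $\mu_0$ induced by the zero section $i_0\colon\Normal_Z\cE\hookrightarrow\Deform_Z\cE$. By the construction of \eqref{Thom.1.5} one has $\mu_0^{-1}\mu_1=\mathfrak{p}_{\cE,Z}$, so the claim is equivalent to $\mathfrak{p}_{\cE,Z}\simeq\id$. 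Next I would simplify the geometry: using the Zariski separation of $M$ together with Proposition \ref{Ori.66} and the functoriality of the Gysin sequence (Remark \ref{rmk:Gysin}), it suffices to treat a trivial bundle $\cE=X\times\A^n$; then, reducing to $S\in\Sch_{noeth}$ via Proposition \ref{noetherian.2} and using functoriality along the cartesian base change $(X\times\A^n,X\times O)\to(\A^n,O)$, one is reduced to the standard toric pair $(\cE,Z)=(\A^n,O)$.

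The core of the argument is then a relative $\boxx$-homotopy between $i_0$ and $i_1$. Concretely, I would construct a composite of admissible blow-ups along smooth centers $p\colon W\to\cE\times\boxx$ — here $\boxx$ plays the role of a \emph{homotopy parameter} and all centers are contained in the boundary component $\ul{\cE}\times\{\infty\}$, so that $p$ is an isomorphism over the complement of $\infty$, in particular over the zero and one sections of $\boxx$ — together with a morphism of pairs $g\colon W\to\Deform_Z\cE$, compatible with the relevant sub-blow-ups, whose restrictions to the fibres of $p$ over the zero section and over the one section recover $i_0$ and $i_1$ respectively. Granting $(W,g)$, the composite $W\xrightarrow{p}\cE\times\boxx\to\cE$ is an equivalence in $\infC$ (invariance under admissible blow-ups along smooth centers, then $\boxx$-invariance), so the two sections $\cE\rightrightarrows W$ of $p$ over $0$ and $1$ induce the same map on motives, being both inverse to this equivalence; hence, applying $M$ to the corresponding cofibres as in the argument of Lemma \ref{ProplogSH.7} (with morphisms of pairs in place of morphisms), $\mu_0=M(i_0)=M(g)\circ(\text{section over }0)=M(g)\circ(\text{section over }1)=M(i_1)=\mu_1$, and undoing the initial reduction by descent finishes the proof.

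The hard part is the construction of $(W,g)$. In the $\A^1$-invariant world this is immediate: $\Deform_Z\cE$ restricted to the affine line is canonically $\cE\times\A^1$ via the $\G_m$-scaling (Rees algebra) trivialization, and $i_0,i_1$ are simply the sections over $0$ and $1$, so one takes $W=\cE\times\A^1$ and $g=\id$. In our setting the $\boxx$-compactification destroys this product structure — $\Deform_Z\cE$ acquires an extra boundary divisor along the special fibre over $0\in\boxx$, while its fibre over $\infty$ is twisted, so $\Deform_Z\cE\not\cong\cE\times\boxx$ and the naive compactification of the classical homotopy does not extend over $\infty$. Producing $(W,g)$ therefore requires an explicit, somewhat delicate blow-up bookkeeping; in the toric case it becomes a finite computation with fans, in the same spirit as — and of comparable difficulty to — Lemma \ref{Ori.55}, Proposition \ref{Ori.40}, and Step 5 of the proof of Theorem \ref{Thom.1}. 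Once $(W,g)$ is in hand it pulls back to every base, and the remaining steps are purely formal.
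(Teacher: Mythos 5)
Your reduction of the statement to ``$\mathfrak{p}_{\cE,Z}\simeq\id$'' and your diagnosis of the difficulty (the Rees/$\G_m$-scaling trivialization $\Deform_Z\cE|_{\A^1}\cong\cE\times\A^1$ does not extend over $\infty\in\boxx$, so an explicit compactified homotopy is needed) are both correct and match the spirit of the paper. But the proof has a genuine gap at exactly the point where the mathematics lives: you never construct the pair $(W,g)$. Saying that it ``becomes a finite computation with fans, in the same spirit as Lemma \ref{Ori.55}'' defers the entire content of the lemma. The paper's proof consists precisely of carrying out this construction, in a different and slightly slicker form: one blows up $\cE\times\boxx$ along $Z\times\infty$, removes the strict transform of $\cE\times\infty$, and exhibits an explicit isomorphism of fs log schemes of this complement $X_1$ with $\Deform_Z\cE$, given locally by $x_i\mapsto x_i/t$; it then shows that the induced maps $M(X_1/X_4)\to M(X_3/X_6)\to M(X_2/X_5)$ are equivalences by identifying them Zariski-locally with the maps (7.5.3), (7.5.4) of \cite{logDM}, and finally checks that under these identifications your $\mu_0,\mu_1$ become the zero and one sections of $\cE\times\boxx$, so $\boxx$-invariance finishes. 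None of this is ``purely formal once $(W,g)$ is in hand''; it is the proof.

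A second, more structural problem is your reduction to the trivial bundle $(\A^n,O)$. Zariski separation, as formulated in Notation \ref{not:M}, lets you check that a \emph{single} map of quotient objects is an \emph{equivalence} locally; it does not let you check that \emph{two} maps are \emph{homotopic} locally, since local homotopies need not glue (there is no descent statement for equality of morphisms out of $\Thom(\cE)$ in $\infC$). The paper avoids this by never localizing the statement itself: the isomorphism $\Deform_Z\cE\cong X_1$ is constructed globally for an arbitrary bundle, with the local formulas checked to be compatible with the (linear) transition matrices of $\cE$ so that they glue, and only the auxiliary assertions (that certain maps are equivalences, and an equality of scheme morphisms) are verified Zariski-locally, which is legitimate. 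So even granting your toric construction of $(W,g)$ for $(\A^n,O)$, your argument would only prove the lemma for trivial bundles; to handle general $\cE$ you would need to redo the gluing-compatibility analysis that the paper performs, which is the other half of its proof.
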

\begin{proof}
We assume that $X$ has trivial log structure for simplicity.
The general case follows from the same argument.
With this assumption,
let us construct a natural open immersion $\ul{\Deform_Z \cE}\to \Blow_{Z\times \infty}(\cE\times \P^1)$.
If $\cE$ is a trivial rank $n$ bundle and $X=\Spec{A}$ for some ring $A$,
then
\[
\ul{\Deform_Z \cE}
\cong
\Spec{A[x_1/t,\ldots,x_n/t,t]}
\cup
\Spec{A[x_1,\ldots,x_n,1/t]}.
\]
With $y_i:=x_i/t$ for each $i$,
we have
\[
\ul{\Deform_Z \cE}
\cong
\Spec{A[y_1,\ldots,y_n,t]}
\cup
\Spec{A[y_1t,\ldots,y_nt,1/t]}.
\]
Compare this with the expression
\[
\cE\times \P^1
\cong
\Spec{A[y_1,\ldots,y_n,t]}\cup \Spec{A[y_1,\ldots,y_n,1/t]}
\]
to construct $\ul{\Deform_Z \cE}\to \Blow_{Z\times \infty}(\cE\times \P^1)$ for the local case. We can glue these local constructions to construct it for the general case too.

Consider the naturally induced diagram
\begin{equation*}
\begin{tikzcd}[column sep=small, row sep=small]
{[\Blow_{Z\times \P^1}(\ul{\Deform_{Z}\cE}),(\cE-Z)\times \A^1]}\ar[d]\ar[r,"\alpha"]
&
{[\ul{\Deform_{Z}\cE},\cE\times \A^1]}\ar[d]
\\
{[\Blow_{Z\times \P^1}(\Blow_{Z\times \infty}(\cE\times \P^1)),(\cE-Z)\times \A^1]}\ar[d]\ar[r]
&
{[\Blow_{Z\times \infty}(\cE\times \P^1),\cE\times \A^1]}\ar[d]
\\
{[\Blow_{Z\times \P^1}(\cE\times \P^1),(\cE-Z)\times \A^1]}\ar[r,"\beta"]
&
{[\cE\times \P^1,\cE\times \A^1]},
\end{tikzcd}
\end{equation*}
where the notation $[Y,V]$ for an open immersion of scheme $V\to Y$ means $(Y,Y-V)$.
The upper square is cartesian by Proposition \ref{Ori.66},
and the lower vertical morphisms are equivalences in $\inflogHpt(S)$ due to invariance under admissible blow-ups along smooth centers.
With this in hand,
the two morphisms
\begin{equation}
\label{Ori.36.6}
\cE/(\Blow_Z \cE,E)
\rightrightarrows
\Deform_Z \cE/(\Blow_{Z\times \boxx}(\Deform_Z \cE),E^D)
=
\cofib(\alpha)
\end{equation}
are identified with the zero and one sections
\begin{equation}
\label{Ori.36.7}
\cE/(\Blow_Z \cE,E)
\rightrightarrows
(\cE\times \square)/((\Blow_Z \cE,E)\times \square)=\cofib(\beta)
\end{equation}
We finish the proof by $\square$-invariance.
\end{proof}

\begin{thm}
\label{Thom.1}
Suppose $S\in \Sch$ and $X\in \SmlSm/S$. Let $Z$ be a smooth strict closed subscheme of $X$ having strict normal crossing with $\partial X$.
There are canonical equivalences in $\inflogHpt(S)$
\begin{equation}
\label{Thom.1.4}
X/(\Blow_Z X,E)
\xrightarrow{\simeq}
\Deform_Z X/(\Blow_{Z\times \boxx}(\Deform_Z X),E^D)
\xleftarrow{\simeq}
\Normal_Z X/(\Blow_Z (\Normal_Z X),E^N).
\end{equation}
 
Here, $E$, $E_D$, and $E_N$ denote the exceptional divisors on $\Blow_Z X$, 
$\Blow_{Z\times \boxx}(\Deform_Z X)$, 
and $\Blow_Z (\Normal_Z X)$, 
respectively.
\end{thm}
\begin{proof}
As before,
use Proposition \ref{noetherian.1} to reduce to the case where $S\in \Sch_{noeth}$.
With this assumption,
this was proven in \cite[Theorem 7.5.4]{logDM} in the stable setting.
Our generalization to the unstable case modifies parts of the proof detailed below.

In Step 3 of the proof of \cite[Theorem 7.5.4]{logDM}, 
we use Proposition \ref{Ori.66} instead of \cite[Proposition 7.4.2]{logDM}.

Using Steps 1--4 in \cite[Theorem 7.5.4]{logDM}, we reduce to the case when $X=\A_S^p$ and $Z=\{0\}\times S$.
With $\cE:=X$,
we need to show that the two morphisms in \eqref{Ori.36.6} are equivalences in $\inflogHpt(S)$ and equivalently the two morphisms in \eqref{Ori.36.7} are equivalences in $\inflogHpt(S)$.
We finish the proof by $\square$-invariance.
\end{proof}

As a consequence of Theorem \ref{Thom.1}, 
we obtain the Gysin equivalence\index{Gysin equivalence}\index[notation]{pxZ @ $\mathfrak{p}_{X,Z}$}
\begin{equation}
\label{Thom.1.5}
\mathfrak{p}_{X,Z}
\colon
 X/(\Blow_Z X,E)
\xrightarrow{\simeq}
\Thom(\Normal_Z X)
\end{equation}
in $\inflogHpt(S)$.
Moreover, we have the commutative diagram
\begin{equation}
\label{THom.1.10}
\begin{tikzcd}
\Blow_Z X\ar[r]\ar[d]&
\Blow_{Z\times \boxx}(\Blow_Z (X\times \boxx))\ar[d]\ar[r,leftarrow]&
\Blow_Z(\P(\Normal_Z X\oplus \cO))\ar[d]
\\
X\ar[r]&
\Blow_Z (X\times \boxx)\ar[r,leftarrow]&
\P(\Normal_Z X\oplus \cO).
\end{tikzcd}
\end{equation}
Combining Proposition \ref{Ori.41} and \eqref{Thom.1.4} yields canonical equivalences in $\inflogHpt(S)$
\begin{equation}
\label{Thom.1.9}
\begin{split}
 X/(\Blow_Z X,E)
&\xrightarrow{\simeq}
 \Blow_Z(X\times \boxx)/(\Blow_{Z\times \boxx}(\Blow_Z (X\times \boxx)),E^D)
\\
&\xleftarrow{\simeq}
 \P(\Normal_Z X\oplus \cO)/(\Blow_Z(\P(\N_Z X\oplus \cO)),E^N).
\end{split}
\end{equation}

\begin{rmk}\label{rmk:Gysin}The Gysin equivalence \eqref{Thom.1.5} provides a cofiber sequence in $\inflogHpt(S)$
\[
(\Blow_Z X,E)_+ \to X_+ \to \Thom(\Normal_Z X)
\]
that we call the \emph{Gysin sequence} or the \emph{residue sequence} \index{residue sequence} for the pair $(X,Z)$. This is functorial in $(X,Z)$ in the following sense:
suppose that
\[
\begin{tikzcd}
Z'\ar[r]\ar[d]&
X'\ar[d]
\\
Z\ar[r]&
X
\end{tikzcd}
\]
is a cartesian square in $\SmlSm/S$ such that $Z$ and $Z'$ have strict normal crossing with $\partial X$ and $\partial X'$ respectively.
Then there is a commutative diagram
\begin{equation}
\label{Thom.1.8}
\begin{tikzcd}[row sep=tiny, column sep=tiny]
&
\Blow_{Z'} X'\ar[rr]\ar[dd]\ar[ld]&&
\Blow_{Z'\times \boxx}(\Deform_{Z'} X')\ar[rr,leftarrow]\ar[dd]\ar[ld]&&
\Blow_{Z'} (\Normal_{Z'} X')\ar[dd]\ar[ld]
\\
\Blow_Z X\ar[rr,crossing over]\ar[dd]&&
\Blow_{Z\times \boxx}(\Deform_Z X)\ar[rr,leftarrow,crossing over]&&
\Blow_Z (\Normal_Z X)
\\
&
X'\ar[rr]\ar[ld]&&
\Deform_{Z'} X'\ar[rr,leftarrow]\ar[ld]&&
\Normal_{Z'} X'\ar[ld]
\\
X\ar[rr]&&
\Deform_Z X\ar[rr,leftarrow]\ar[uu,crossing over,leftarrow]&&
\Normal_Z X.\ar[uu,crossing over,leftarrow]
\end{tikzcd}
\end{equation}

For the above we obtain the naturally induced commutative diagram
\begin{equation}
\label{Thom.1.7}
\begin{tikzcd}
X/(\Blow_Z X,E)\ar[d]\ar[r,"\mathfrak{p}_{X,Z}"]&
\Thom(\Normal_Z X)\ar[d]
\\
X'/(\Blow_{Z'} X',E')\ar[r,"\mathfrak{p}_{X',Z'}"]&
\Thom(\Normal_{Z'}X').
\end{tikzcd}
\end{equation}
\end{rmk}

\begin{thm}
\label{ProplogSH.3}
Suppose $S=\Spec{k}$,
where $k$ is a perfect field that admits resolution of singularities.
If $f\colon Y\to X$ is an admissible blow-up in $\SmlSm/S$,
then $f$ is an equivalence in $\inflogH(k)$.
\end{thm}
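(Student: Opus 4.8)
The plan is to reduce a general admissible blow-up in $\SmlSm/S$ to finite composites of admissible blow-ups along smooth centers --- for which $M$ is an equivalence by hypothesis --- and then to conclude by a purely formal retract argument that never invokes the cancellation property \eqref{logHprop.0.4}.

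First I would establish the following geometric input: for any admissible blow-up $g\colon Y'\to X'$ in $\SmlSm/S$ there exist a composite of admissible blow-ups along smooth centers $p\colon \widetilde{X}\to X'$ and a morphism $r\colon \widetilde{X}\to Y'$ in $\SmlSm/S$ with $g\circ r=p$. Since $\underline{g}$ is proper and restricts to an isomorphism over the dense open $X'-\partial X'$, the rational map from $\underline{X'}$ to $\underline{Y'}$ inverse to $\underline{g}$ is defined on $X'-\partial X'$, so its indeterminacy locus is contained in $\partial X'$; principalization of ideals (equivalently, elimination of indeterminacy by blow-ups along smooth centers), available because $k$ is perfect and admits resolution of singularities, then produces such a tower $p$ with all centers lying over this indeterminacy locus, hence inside $\partial X'$. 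Choosing the centers in strict normal crossing with the boundary at each stage keeps every intermediate scheme in $\SmlSm/S$, and $p$ factors through $\underline{g}$. The precise statements in this log-geometric form are essentially those already used in \cite[\S 7]{logDM} (see also \cite[Appendix A]{logDM}). I would also note that $r\colon \widetilde{X}\to Y'$ is again an admissible blow-up in $\SmlSm/S$: it is proper by the cancellation property for proper morphisms, since $p=g\circ r$ is proper and $g$ is separated, and it is an isomorphism over $X'-\partial X'$.

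Granting this input, I would apply it twice to $f\colon Y\to X$. The first application produces a composite of admissible blow-ups along smooth centers $p\colon X'\to X$ and an admissible blow-up $q\colon X'\to Y$ with $f\circ q=p$; applying the input to $q$ produces a composite of admissible blow-ups along smooth centers $p'\colon X''\to Y$ and a morphism $q'\colon X''\to X'$ with $q\circ q'=p'$. Since $M$ is a functor of $\infty$-categories that sends admissible blow-ups along smooth centers, and therefore finite composites of them, to equivalences, both $M(p)$ and $M(p')$ are equivalences in $\infC$. Passing to the homotopy category of $\infC$ (recall that a morphism in an $\infty$-category is an equivalence exactly when its image in the homotopy category is invertible): the relation $M(p)\simeq M(f)\circ M(q)$ exhibits $M(p)^{-1}\circ M(f)$ as a left inverse of $M(q)$, while $M(p')\simeq M(q)\circ M(q')$ exhibits $M(q')\circ M(p')^{-1}$ as a right inverse of $M(q)$. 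A morphism possessing both a left and a right inverse is invertible, so $M(q)$ is an equivalence; hence $M(f)\simeq M(p)\circ M(q)^{-1}$ is an equivalence in $\infC$.

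The step I expect to be the main obstacle is the geometric input of the second paragraph, namely ensuring that the resolution tower can be taken entirely within $\SmlSm/S$: every intermediate scheme must be smooth with strict normal crossing boundary, and every center must be smooth, contained in the boundary, and in strict normal crossing with it. This is precisely what the functorial resolution and principalization theorems provide, and it is the same input already invoked for the stable analogue in \cite[\S 7]{logDM}; what is genuinely new here is only the organization of the final step, which relies on nothing beyond invariance under admissible blow-ups along smooth centers and the elementary fact that a morphism with a left inverse and a right inverse is invertible, thereby avoiding any appeal to \eqref{logHprop.0.4}.
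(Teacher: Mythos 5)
Your proof is correct and follows essentially the same route as the paper: in both, the key input supplied by resolution of singularities is that any admissible blow-up in $\SmlSm/S$ is dominated by a composite of admissible blow-ups along smooth centers, and the conclusion is purely formal from invariance under the latter class. The only difference is one of packaging: where the paper appeals to the calculus of right fractions and the localization lemma from \cite[Propositions 7.6.4, 7.6.6, Lemma C.2.1]{logDM}, you inline the standard double-domination argument (left inverse from $M(p)=M(f)\circ M(q)$, right inverse from $M(p')=M(q)\circ M(q')$), which is exactly what that cited machinery encodes.
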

\begin{proof}
By resolution of singularities,
there exists a sequence of admissible blow-ups $Y_n\to \cdots \to Y_1\to X$ along smooth centers such that the composite morphism $Y_n\to X$ factors through $f$.
As in \cite[Propositions 7.6.4,  7.6.6]{logDM},
one shows that the classes of admissible blow-ups along smooth centers and 
admissible blow-ups in $\SmlSm/S$ satisfy a calculus of right fractions.
To finish the proof, 
we appeal to \cite[Lemma C.2.1]{logDM}.
\end{proof}

\subsection{A universal property of \texorpdfstring{$\inflogSH$}{logSH}}
\label{subsection:universal}
It is a well-known fact that the category of presheaves of sets on an ordinary category $\bC$ is the free cocompletion of $\bC$. As such, it enjoys an obvious universal property. Passing from presheaves of sets to a homotopical context,
Lurie showed that for every $\infty$-category $\infC$,
the Yoneda embedding
\[
\infC\to \infPsh(\infC)
\]
exhibits $\infPsh(\infC)$
as a universal presentable $\infty$-category 
built from $\infC$ (this is a rephrasing of a classical result of Dugger,  \cite[Proposition 2.1]{Dugger}).
Based on Lurie's result, 
Robalo \cite[Corollary 2.39]{RobaloThesispublished} proved that $\infSH$ enjoys an intuitive universal property among the stable presentable $\infty$-categories with a functor from $\Sch$.
We follow Robalo's argument and deduce an analogous statement for $\inflogSH$. The proofs are  formal, and we leave the details to the interested reader.

\begin{prop}
\label{univlogSH.1}
For every $S\in \Sch$ and $\infty$-category $\infD$ with small colimits, 
the Yoneda functor $\lSm/S\to \inflogH(S)$ naturally induces a fully faithful functor
\[
\Fun^{\mathrm{L}}(\inflogH(S),\infD)
\to
\Fun(\SmlSm/S,\infD).
\]
Here the left-hand side is the full subcategory of
$\Fun(\inflogH(S),\infD)$ spanned by functors preserving colimits.
Its essential image consists of functors satisfying $\boxx$-invariance, strict Nisnevich descent, and $\SmAdm$-invariance. 
\end{prop}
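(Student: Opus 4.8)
The plan is to deduce this statement from Lurie's universal property of presheaf $\infty$-categories together with the behavior of left Bousfield localizations. First I would recall that by \cite[Theorem 5.1.5.6]{HTT}, for any $\infty$-category $\mathcal{E}$ the restriction along the Yoneda embedding $\lSm/S\to \infPsh(\lSm/S,\infSpc)$ induces an equivalence
\[
\Fun^{\mathrm{L}}(\infPsh(\lSm/S,\infSpc),\infD)\xrightarrow{\simeq}\Fun(\lSm/S,\infD),
\]
identifying colimit-preserving functors out of presheaves with arbitrary functors out of $\lSm/S$. Now $\inflogH_\tau(S)$ is, by Definition \ref{def:logH_andvariants} and the subsequent remark, the localization $\infPsh(\lSm/S,\infSpc)[(\boxx,\tau)^{-1}]$ at the (saturated) set $W$ generated by the $\boxx$-projections $X\times\boxx\to X$ and the \v{C}ech nerves of $\tau$-covers. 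By the general theory of reflective localizations (\cite[Proposition 5.5.4.20]{HTT}), precomposition with the localization functor $L_{\boxx,\tau}$ identifies $\Fun^{\mathrm{L}}(\inflogH_\tau(S),\infD)$ with the full subcategory of $\Fun^{\mathrm{L}}(\infPsh(\lSm/S,\infSpc),\infD)$ spanned by those colimit-preserving functors $F$ that send every morphism in $W$ to an equivalence in $\infD$.

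Next I would transport this condition across the equivalence above. Under the identification $\Fun^{\mathrm{L}}(\infPsh(\lSm/S,\infSpc),\infD)\simeq \Fun(\lSm/S,\infD)$, a colimit-preserving functor $F$ corresponds to its restriction $f\colon \lSm/S\to\infD$, and $F$ is the left Kan extension of $f$. The key bookkeeping step is then: $F$ inverts the class $W$ if and only if $f$ satisfies $\boxx$-invariance and $\tau$-descent. For the $\boxx$-direction this is immediate, since $F(X\times\boxx\to X)$ is computed as $f$ applied to representables. For $\tau$-descent one uses that $F$ preserves colimits, so $F$ inverts the \v{C}ech nerve $\mathrm{\check{C}}(\mathcal{U})\to X$ of a $\tau$-cover precisely when $f(X)\simeq \lim_{\Delta} f(\mathrm{\check{C}}(\mathcal{U})_\bullet)$, which is the statement that $f$ is a $\tau$-sheaf (here one should note that since $W$ is closed under the saturation, it suffices to check the generating morphisms, and that localizing at \v{C}ech nerves of $\tau$-covers produces exactly $\tau$-sheaves by \cite[Proposition 6.2.3.20]{HTT} or the cd-structure description in Remark \ref{logH.21}). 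Composing the two equivalences
\[
\Fun^{\mathrm{L}}(\inflogH_\tau(S),\infD)\xrightarrow{\simeq}\{F\in\Fun^{\mathrm{L}}(\infPsh,\infD): F \text{ inverts } W\}\xrightarrow{\simeq}\{f\in\Fun(\lSm/S,\infD): f \text{ is } \boxx\text{-invariant and a }\tau\text{-sheaf}\}
\]
gives the desired fully faithful functor with the claimed essential image.

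The main obstacle is purely bookkeeping rather than conceptual: one must be careful that the localizing set $W$ used to define $\inflogH_\tau(S)$ is the \emph{saturation} of the union of $\boxx$-equivalences and \v{C}ech nerves of $\tau$-covers, and that inverting a saturated class is equivalent to inverting any generating subclass, so that "$F$ inverts $W$" really is equivalent to the two separate conditions "$\boxx$-invariance" and "$\tau$-descent" on the restricted functor $f$. A secondary point requiring a small argument is that $\tau$-descent for a presheaf of \emph{spaces} means descent along \v{C}ech nerves (not hypercovers) unless $\tau$ is hypercomplete; since the statement only claims the essential image is "$\tau$-descent" in the sense already fixed in Definition \ref{def:logH_andvariants} (namely being a $\tau$-sheaf, i.e.\ local with respect to \v{C}ech nerves), this matches on the nose. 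As the authors note, the argument is formal, and having isolated these two points the remaining verifications are routine; I would simply remark that the same argument works verbatim for the pointed and $\Lambda$-linear variants, replacing $\infSpc$ by $\infSpc_*$ or $\Mod_\Lambda$ throughout.
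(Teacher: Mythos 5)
Your overall route is exactly the paper's: the proof there is a two-line appeal to the universal property of $\infPsh(-,\infSpc)$ (Theorem \ref{present.2}) and the universal property of reflective localizations \cite[Proposition 5.5.4.20]{HTT}, following Robalo, and your write-up just spells out the same two steps. So there is no difference in strategy to report.

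There is, however, one step that is wrong as written. When you transport the condition ``$F$ inverts the \v{C}ech nerve $\check{C}(\mathcal{U})\to X$'' to a condition on the restriction $f\colon \lSm/S\to\infD$, the correct statement is a \emph{colimit} condition: since $F$ is the colimit-preserving left Kan extension of $f$ and the \v{C}ech nerve in $\infPsh(\lSm/S)$ is the geometric realization of a simplicial diagram of (coproducts of) representables, $F$ inverts this map if and only if the natural map
\[
\colimit_{\Delta^{\mathrm{op}}} f(\check{C}(\mathcal{U})_\bullet)\to f(X)
\]
is an equivalence. You instead wrote $f(X)\simeq\lim_{\Delta} f(\check{C}(\mathcal{U})_\bullet)$ and glossed this as ``$f$ is a $\tau$-sheaf''; this does not even typecheck in general, because $\infD$ is only assumed to admit small colimits (limits need not exist), and the covariant functor $f$ is not a presheaf, so ``being a $\tau$-sheaf'' is not the condition in the essential image. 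This is also how the descent condition is actually used later in the paper (e.g.\ Lemma \ref{ketreal.17}, where $\Pi^{\leta}(X)\simeq\colimit_i\Pi^{\leta}(U_i)$). With the direction corrected, the rest of your bookkeeping (inverting a saturated class is detected on generators; \v{C}ech nerves rather than hypercovers, matching Definition \ref{def:logH_andvariants}) is fine and the argument goes through.
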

\begin{proof}
As in \cite[Theorem 2.30]{RobaloThesispublished}, 
this follows from the universal property of $\infPsh(-,\infSpc)$ 
and the universal property of localizations given in \cite[Proposition 5.5.4.20]{HTT}.
\end{proof}

Equipping $\SmlSm/S$ with the cartesian symmetric monoidal structure,
we have the naturally induced symmetric monoidal functor
\begin{equation}
\label{univlogSH.4.1}
\SmlSm/S^\times
\to
\inflogH(S)^\otimes.
\end{equation}
where the target is equipped with the Day convolution product. We have the following monoidal enhancement of Proposition \ref{univlogSH.1}.

\begin{prop}
\label{univlogSH.4}
For every $S\in \Sch$ and symmetric monoidal $\infty$-category $\infD^\otimes$ with small colimits, 
the functor \eqref{univlogSH.4.1} naturally induces a fully faithful functor
\[
\Fun^{\otimes,\mathrm{L}}(\inflogH(S)^{\otimes},\infD^\otimes)
\to
\Fun^{\otimes}(\SmlSm/S^\times,\infD^\otimes).
\]
Here the left-hand side is the full subcategory of $\Fun^{\otimes}(\inflogH (S)^{\otimes},\infD^\otimes)$ 
spanned by functors preserving colimits.
Its essential image consists of the functors satisfying $\boxx$-invariance, strict Nisnevich descent, and invariance with respect to $\SmAdm$.
\end{prop}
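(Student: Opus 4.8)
The plan is to deduce this as the symmetric monoidal refinement of Proposition \ref{univlogSH.1}, adapting verbatim the argument of Robalo \cite[Theorem 2.30, Corollary 2.39]{RobaloThesispublished}. The whole thing is formal: there is a two-step factorization $\lSm/S^\times \to \infPsh(\lSm/S)^\otimes \to \inflogH_\tau(S)^\otimes$, where the first arrow is the Day-convolution Yoneda functor and the second is a symmetric monoidal Bousfield localization, and one just chases universal properties through this factorization, finishing by identifying the essential image with the local objects. Concretely, I would proceed in the following steps.

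First, invoke the symmetric monoidal universal property of the Day convolution: restriction along the symmetric monoidal Yoneda functor $\lSm/S^\times \to \infPsh(\lSm/S)^\otimes$ (see \cite[Corollary 4.8.1.12]{HA}) induces an equivalence
\[
\Fun^{\otimes,\mathrm{L}}(\infPsh(\lSm/S)^\otimes,\infD^\otimes)
\xrightarrow{\ \simeq\ }
\Fun^{\otimes}(\lSm/S^\times,\infD^\otimes),
\]
with inverse given by operadic left Kan extension. Second, recall from Construction \ref{logH.3} and Remark \ref{rmk:loc_premotivic} that the strongly saturated class $\cW$ of $(\boxx,\tau)$-local equivalences in $\infPsh(\lSm/S)$ is closed under $-\otimes y(Y)$ for representables $Y$, hence --- $\infPsh(\lSm/S)$ being generated under colimits by representables --- under $-\otimes Z$ for every object $Z$. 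By \cite[Proposition 2.2.1.9, Proposition 4.1.7.2]{HA} the localization $L_{\boxx,\tau}$ is therefore a morphism in $\CAlg(\LPr)$, that is, a symmetric monoidal localization $\infPsh(\lSm/S)^\otimes \to \inflogH_\tau(S)^\otimes$, and its universal property \cite[Proposition 5.5.4.20]{HTT} gives a fully faithful functor
\[
\Fun^{\otimes,\mathrm{L}}(\inflogH_\tau(S)^\otimes,\infD^\otimes)
\hookrightarrow
\Fun^{\otimes,\mathrm{L}}(\infPsh(\lSm/S)^\otimes,\infD^\otimes)
\]
whose image consists of the colimit-preserving symmetric monoidal functors inverting $\cW$. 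Composing with the first equivalence yields the functor of the statement, and one checks that the composite is restriction along \eqref{univlogSH.4.1}.

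It remains to identify the essential image, and here I would argue exactly as for Proposition \ref{univlogSH.1}: under the Day-convolution equivalence a colimit-preserving symmetric monoidal $F$ is the left Kan extension of $f := F\circ y$, and since a colimit-preserving functor inverts a strongly saturated class precisely when it inverts a set of generators, $F$ inverts $\cW$ if and only if it inverts the maps $y(X\times\boxx)\to y(X)$ and the \v{C}ech nerves of $\tau$-covers; by construction this translates into $\boxx$-invariance and $\tau$-descent of $f$, matching the description of the essential image in Proposition \ref{univlogSH.1}.

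As for the main obstacle: there is essentially none of a conceptual nature, since the proof is bookkeeping with universal properties. The one genuinely load-bearing input --- that the $(\boxx,\tau)$-localization is compatible with the tensor structure, i.e.\ closure of $\cW$ under $-\otimes Z$ --- is already established in the basic set-up of \S\ref{alter} and Remark \ref{rmk:loc_premotivic}, which is precisely what makes everything formal. The only point requiring a little care is the precise meaning of ``$\tau$-descent'' for a functor out of $\lSm/S$ (sheaf- versus hyperdescent), but this is inherited verbatim from Proposition \ref{univlogSH.1} and the discussion of topologies in \S\ref{ssec:topologies}.
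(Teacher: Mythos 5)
Your proposal is correct and follows essentially the route the paper intends: the paper leaves the proof to the reader as a formal consequence of Robalo's argument, proving the non-monoidal Proposition \ref{univlogSH.1} via the universal property of presheaves plus \cite[Proposition 5.5.4.20]{HTT}, and supplying the Day-convolution equivalence \eqref{HA.16.1} and the symmetric monoidal localization (Proposition \ref{HA.3}, Remark \ref{rmk:loc_premotivic}) as exactly the two ingredients you compose. The only point worth noting is that the Day-convolution universal property \cite[Corollary 4.8.1.12]{HA} is stated for targets whose tensor product preserves colimits in each variable, a hypothesis implicitly assumed here as in the paper's own formulation.
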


We can pass to the pointed versions.
Let $\LPrpt$ be the full subcategory $\LPr$ spanned by pointed presentable $\infty$-categories 
and the functors that preserve pointed objects.
As observed in the paragraph prior to \cite[Corollary 2.32]{RobaloThesispublished}, 
the inclusion functor $\CAlg(\LPrpt)\to \CAlg(\LPr)$ admits a left adjoint
\begin{equation}
\label{univlogSH.3.2}
\CAlg(\LPr)\to \CAlg(\LPrpt)
\end{equation}
sending $\infC\in \LPr$ to $\infC_\ast$.
Note that we have a canonical equivalence of $\infty$-categories
\begin{equation}
\label{univlogSH.3.1}
\inflogH_{\ast}(S)
\simeq
\inflogH(S)_\ast
\end{equation}
justifying our choice of notation.

\begin{thm}
\label{Ori.68}
Let $f\colon Y\to X$ be an admissible blow-up along a smooth center (in the sense of Definition \ref{logH.35}) in $\SmlSm/S$,
where $S\in \Sch$.
Then the morphism
\[
\Sigma_{S^1}^\infty f_+
\colon
\Sigma_{S^1}^\infty Y_+
\to
\Sigma_{S^1}^\infty X_+
\]
in $\inflogSH^\eff(S)$ is an equivalence.
\end{thm}
\begin{proof}
The proof of \cite[Theorem 7.2.10]{logDM} applies.
\end{proof}

\begin{cor}
Let $S\in \Sch$.
Then there exists a colimit preserving symmetric monoidal functor
\[
\Sigma_{S^1}^\infty 
\colon
\inflogHpt(S)
\to
\inflogSH^\eff(S)
\]
sending $X_+$ to $\Sigma_{S^1}^\infty X_+$ for all $X\in \SmlSm/S$.
\end{cor}
\begin{proof}
This is a consequence of Theorem \ref{Ori.68} and the universality of $\inflogHpt$ in Proposition \ref{univlogSH.4}.
\end{proof}

\begin{rmk}
\label{Ori.74}
Hence for $S\in \Sch$,
the properties proved for $\inflogHpt(S)$ in \S \ref{proplogH} can be transported to $\inflogSH^\eff(S)$ and further to $\inflogSH(S)$. In particular, $T\in \inflogSH^\eff(S)$ is a symmetric object by Theorem \ref{Ori.56}.
\end{rmk}

In the stable case, we have the following result.
\begin{prop}
\label{univlogSH.5}
For every $S\in \lSch$ and pointed presentable symmetric monoidal stable $\infty$-category $\infD^\otimes$, 
there is a naturally induced fully faithful functor
\[
\Fun^{\otimes,\mathrm{L}}(\inflogSH(S)^\otimes,\infD^\otimes)
\to
\Fun^{\otimes}(\lSm/S^\times ,\infD^\otimes).
\]
Its essential image consists of the functors $\cF$ satisfying $\boxx$-invariance and dividing Nisnevich descent 
such that the cofiber of $\cF(1)\xrightarrow{\cF(i_1)} \cF(\P^1)$ is an invertible object,
where $i_1$ is the one section.
\end{prop}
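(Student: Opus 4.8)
The plan is to chain together the universal properties already established for the unstable, pointed, and $T$-stable constructions, exactly as Robalo does in passing from $\inflogH$ to $\inflogSH$. First I would invoke Proposition \ref{univlogSH.4} to identify $\Fun^{\otimes,\mathrm{L}}(\inflogH_\tau^\otimes(S),\infD^\otimes)$ with the full subcategory of $\Fun^\otimes(\lSm/S^\times,\infD^\otimes)$ spanned by functors satisfying $\boxx$-invariance and $\tau$-descent. Then I would pass to pointed objects using the equivalence \eqref{univlogSH.3.1} together with the adjunction \eqref{univlogSH.3.2}: since $\infD^\otimes$ is already pointed and presentable, a colimit-preserving symmetric monoidal functor out of $\inflogH_{\ast,\tau}^\otimes(S)\simeq \inflogH_\tau(S)_\ast$ is the same datum as one out of $\inflogH_\tau^\otimes(S)$, so no new conditions appear at this stage. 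This gives a fully faithful embedding $\Fun^{\otimes,\mathrm{L}}(\inflogH_{\ast,\tau}^\otimes(S),\infD^\otimes)\hookrightarrow \Fun^\otimes(\lSm/S^\times,\infD^\otimes)$ with the same essential image.

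Next I would handle the $T$-stabilization. Recall from \eqref{alter.1.3} that $\inflogSH_\tau(S)\simeq \colim_{\LPr}(\inflogH_{\ast,\tau}(S)\xrightarrow{\Sigma_T}\cdots)$, and that $\Sigma_T = (-)\otimes T$ with $T = M(\A^1/(\A^1,0))\simeq \P^1/\pt$ in $\inflogH_{\ast,\tau}(S)$ by Proposition \ref{logH.4}. The universal property of the formal inversion of $T$ — as in \cite[Corollary 2.39]{RobaloThesispublished}, using that $T$ is a symmetric object via Proposition \ref{Ori.56} and Remark \ref{rmk:symmetric_object} — says that $\Fun^{\otimes,\mathrm{L}}(\inflogSH_\tau^\otimes(S),\infD^\otimes)$ is fully faithful inside $\Fun^{\otimes,\mathrm{L}}(\inflogH_{\ast,\tau}^\otimes(S),\infD^\otimes)$, with essential image those colimit-preserving symmetric monoidal $\cF$ for which $\cF(T)$ is $\otimes$-invertible in $\infD^\otimes$. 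Composing with the previous paragraph, I get a fully faithful functor $\Fun^{\otimes,\mathrm{L}}(\inflogSH_\tau^\otimes(S),\infD^\otimes)\hookrightarrow \Fun^\otimes(\lSm/S^\times,\infD^\otimes)$ whose essential image consists of the $\boxx$-invariant, $\tau$-descent functors $\cF$ with $\cF(T)$ invertible.

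Finally I would rewrite the invertibility condition $\cF(T)$ invertible as the condition stated in the proposition, namely that the cofiber of $\cF(i_1)\colon \cF(\unit)=\cF(S)\to \cF(\P^1)$ is invertible. Since $\cF$ is symmetric monoidal and colimit-preserving and satisfies $\boxx$-invariance, applying $\cF$ to the cocartesian square of Proposition \ref{logH.1} (which gives $\P^1/\boxx$ with $(\P^1,0)\cong\boxx$) together with the equivalence $T\xrightarrow{\simeq}\P^1/(\P^1,0)$ of Proposition \ref{logH.4} identifies $\cF(T)$ with $\cofib(\cF(i_1)\colon \cF(\P^1/1)$-style data$)$; more precisely $\cofib(\cF(i_1))\simeq \cF(T)$ once one uses that $\cF(\boxx)\simeq\cF(\pt)$. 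Conversely, if $\cofib(\cF(i_1))$ is invertible then so is $\cF(T)$. Thus the two descriptions of the essential image agree, completing the proof. The main obstacle I anticipate is purely bookkeeping: making the symmetric-monoidal enhancements of each universal property fit together compatibly (i.e. checking that the identifications are compatible with the $\otimes$-structures and the colimit-preservation), and verifying carefully that the pointed-ification step \eqref{univlogSH.3.2} introduces no extra conditions — all of this is formal but must be tracked, which is exactly why the authors leave it "to the interested reader."
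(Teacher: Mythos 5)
Your proposal is correct and follows essentially the same route as the paper: both arguments combine Proposition \ref{univlogSH.4}, the pointedness identifications \eqref{univlogSH.3.1}--\eqref{univlogSH.3.2}, and Robalo's universal property of the formal inversion to cut out the essential image. The only cosmetic difference is that you spell out the translation of ``$\cF(T)$ invertible'' into ``$\cofib(\cF(i_1))$ invertible'' via Propositions \ref{logH.1} and \ref{logH.4} and $\boxx$-invariance, whereas the paper absorbs this by stating Robalo's essential-image condition directly in terms of $\P^1/1\simeq T$.
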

\begin{proof}
We can use the analogue of \eqref{univlogSH.3.1} and \eqref{univlogSH.3.2} replacing $\inflogH(S)^{\otimes}$ with the category of $\boxx$-local sheaves $\infShv_{dNis}(\lSm/S)[\boxx^{-1}]^\otimes$ and its pointed variant to have an equivalence of $\infty$-categories
\[
\Fun^{\otimes,\mathrm{L}}(\infShv_{dNis}(\lSm/S)_\ast[\boxx^{-1}]^\otimes,\infD^\otimes)
\xrightarrow{\simeq}
\Fun^{\otimes,\mathrm{L}}(\infShv_{dNis}(\lSm/S)[\boxx^{-1}]^\otimes,\infD^\otimes).
\]
Since $\cD$ is stable,
use \cite[Proposition 2.9]{RobaloThesispublished} for $S^1$ to have an equivalence of $\infty$-categories
\[
\Fun^{\otimes,\mathrm{L}}(\inflogSH^\eff(S)^\otimes,\infD^\otimes)
\xrightarrow{\simeq}
\Fun^{\otimes,\mathrm{L}}(\infShv_{dNis}(\lSm/S)[\boxx^{-1}]^\otimes,\infD^\otimes).
\]
By \cite[Proposition 2.9]{RobaloThesispublished} again and Remark \ref{Ori.74}, 
we have a fully faithful functor of $\infty$-categories
\[
\Fun^{\otimes,\mathrm{L}}(\inflogSH(S)^\otimes,\infD^\otimes)
\to
\Fun^{\otimes,\mathrm{L}}(\inflogSH^\eff(S)^\otimes,\infD^\otimes)
\]
whose essential image consists of the functors mapping $\P^1/1$ to an invertible object.
The argument given in the proof of  Proposition \ref{univlogSH.4} allows to conclude.
\end{proof}

We now discuss some properties that hold in
$\inflogSH^\eff$,
which can be also transported to  $\inflogSH$ using the functor $\Sigma_{\Gmm}^\infty$.

\begin{thm}
\label{ProplogSH.5}
Suppose $S\in \Sch$ and $Z\to X$ is a closed immersion in $\Sm/S$. 
Then the naturally induced square
\[
\begin{tikzcd}
\Sigma_{S^1}^\infty(Z\times_X \Blow_Z X)_+\ar[d]\ar[r]&
\Sigma_{S^1}^\infty\Blow_Z X_+\ar[d]
\\
\Sigma_{S^1}^\infty Z_+\ar[r]&
\Sigma_{S^1}^\infty X_+
\end{tikzcd}
\]
is cocartesian in $\inflogSH^\eff(S)$.
\end{thm}
\begin{proof}
As before,
use Proposition \ref{noetherian.1} to reduce to the case where $S\in \Sch_{noeth}$.
With this assumption,
this is \cite[Theorem 7.3.3]{logDM}.
The outline of the proof is as follows.
We first reduce to the case where $X=Z\times \A^n$ for some integer $n\geq 0$ using the deformation to the normal cone.
After compactification,
we work with the case where $X=Z\times (\P^n,\P^{n-1})$.
Then the morphism $\Sigma_{S^1}^\infty Z_+\to \Sigma_{S^1}^\infty X_+$ is an equivalence in $\inflogSH^\eff(S)$ by Proposition \ref{ProplogSH.4}.
Since $\Blow_Z X$ is a $\boxx$-bundle on $\P_Z^{n-1}$,
the morphism $\Sigma_{S^1}^\infty(Z\times_X \Blow_Z X)_+\to \Sigma_{S^1}^\infty\Blow_Z X_+$ is an equivalence in $\inflogSH^\eff(S)$.
\end{proof}

We end this subsection by providing the $(\P^n,\P^{n-1})$-invariance.

\begin{thm}\label{thm:PnPn-1_invariance}
For all $S\in \lSch$, 
$X\in \lSm/S$ and integer $n\geq 1$, 
the map
\[
\Sigma_{S^1}^\infty (X\times (\P^n,\P^{n-1}))_+
\to
\Sigma_{S^1}^\infty X_+
\]
naturally induced by the projection is an equivalence in $\inflogSHS(S)$.
\end{thm}
\begin{proof}
If $S$ is a scheme, this is done in Proposition \ref{ProplogSH.4}.
For general $S$, using the monoidal product, we reduce to the case when $X=S$.
Let $p\colon S\to \Spec{\Z}$ be the structure morphism.
The map
\[
\Sigma_{S^1}^\infty (\P^n,\P^{n-1})_+\to  \Sigma_{S^1}^\infty \pt_+
\]
is an equivalence in $\inflogSHS(\Spec{\Z})$.
Applying $p^*$ to this map finishes the proof.
\end{proof}

\subsection{Various models for \texorpdfstring{$\inflogSH$}{logSH}}
\label{models}Let $S$ be a scheme equipped with trivial log structure.
The standard construction of $\inflogSH^\eff(S)$ and $\inflogSH(S)$ is obtained by considering the category of dividing Nisnevich (or \'etale) sheaves on $\lSm/S$, localized to $\boxx$-local equivalences (followed by Tate stabilization in the non-effective case). This model has a major disadvantage in applications: any cohomology theory representable in $\inflogSH(S)$ has, by design, dividing descent. This condition is not easy to verify in practice: concretely, it boils down to requiring invariance under admissible blow-ups. 

As we have recalled in Theorem \ref{thm:PnPn-1_invariance}, dividing descent and $\boxx$-invariance together imply $(\P^n, \P^{n-1})$-invariance. It turns out that, after restriction to the subcategory $\SmlSm/S$, the converse is true: $(\P^n, \P^{n-1})$-invariance and \emph{strict} Nisnevich descent imply dividing descent. This provides an alternative, but equivalent, motivic category, that is much easier to use in applications. This is one of the key results in \cite{logDM} for $\inflogDMeff(k)$ and $\inflogDA^\eff(S)$. In what follows, we explain the modifications necessary to adapt the arguments to our more general framework.

For every $S\in \Sch$, 
the inclusion of categories $\iota\colon \SmlSm/S\to \lSm/S$ naturally induces adjunctions
\begin{equation}
\label{logSH.12.1}
\iota_\sharpp : \infPsh(\SmlSm/S) \rightleftarrows \infPsh(\lSm/S): \iota^*.
\end{equation}

\begin{lem}
\label{logSH.12}
For every $S\in \Sch$ the functor $\iota_\sharpp$ naturally induces an equivalence of $\infty$-categories
\[
\infShv_{dNis}(\SmlSm/S)
\xrightarrow{\simeq}
\infShv_{dNis}(\lSm/S).
\]
\end{lem}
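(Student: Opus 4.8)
The plan is to reduce the statement to a comparison of sites, using the fact that $\SmlSm/S$ is a full subcategory of $\lSm/S$ which is ``dividing-dense'' in the sense that every object of $\lSm/S$ admits a dividing cover by objects of $\SmlSm/S$. First I would recall that for $S\in\Sch$ (trivial log structure), any $X\in\lSm/S$ is, strict Nisnevich locally, of the form $X_0\times_{\T_\Sigma}\T_{\Sigma'}$ for a suitable toric datum, and by toric resolution of singularities there is a subdivision $\Sigma''\to\Sigma'$ such that $\T_{\Sigma''}$ is smooth; composing, one obtains a dividing cover $X'\to X$ with $X'\in\SmlSm/S$ (this is essentially \cite[\S A.5, \S A.11]{logDM}, together with Example \ref{logtop.8bis}). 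Moreover fibre products of such $X'$ over $X$ remain in $\lSm/S$ and again admit dividing covers from $\SmlSm/S$, so the \v{C}ech nerve of $X'\to X$ is dividing-locally in $\SmlSm/S$.

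The key step is then a general site-comparison lemma: if $\bC_0\subseteq\bC$ is a full subcategory, $\tau$ a topology on $\bC$ restricting to a topology $\tau_0$ on $\bC_0$, such that (i) every object of $\bC$ has a $\tau$-cover by objects of $\bC_0$, (ii) $\bC_0$ is stable under the relevant fibre products inside $\bC$ up to refinement, and (iii) the covers and their nerves can be refined into $\bC_0$, then restriction $\infShv_\tau(\bC)\to\infShv_{\tau_0}(\bC_0)$ is an equivalence, with inverse induced by left Kan extension $\iota_\sharpp$. This is the $\infty$-categorical form of the comparison lemma for sites (cf.\ \cite[Appendix A]{AGV} or the classical statement in \cite{SGA4}); here one checks it directly by verifying that $\iota^*$ preserves sheaves and that the unit and counit of \eqref{logSH.12.1} become equivalences after sheafification, which amounts exactly to the dividing-density just discussed. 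The hypercomplete version follows since hypercompletion is a localization compatible with the equivalence, or one argues with the quasi-bounded regular cd-structure of Proposition \ref{logtop.6} directly via Remark \ref{logH.21}. The $\Lambda$-linear case is identical: $\iota_\sharpp$ and $\iota^*$ are defined the same way on presheaves of $\Lambda$-modules, $\Lambda_\tau(-)$ is compatible with $\iota_\sharpp$, and the comparison lemma applies verbatim since it only used the Grothendieck topologies and not the coefficients.

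The main obstacle I anticipate is verifying hypothesis (ii)–(iii) of the comparison lemma carefully, i.e.\ that one can refine not just objects but the entire \v{C}ech nerve of a dividing cover into $\SmlSm/S$: given $X'\to X$ a dividing cover with $X'\in\SmlSm/S$, the fibre product $X'\times_X X'$ lies in $\lSm/S$ but need not lie in $\SmlSm/S$, so one must produce a further dividing cover $X''\to X'\times_X X'$ with $X''\in\SmlSm/S$ and iterate, checking that the resulting simplicial object computes the same sheafification. This is where toric resolution of singularities and the closure of $\SmAdm$ under pullback (the Lemma following Definition \ref{logH.35}) do the real work; it is also the point where one invokes that dividing covers restricted to $\SmlSm/S$ generate the same topology as their restriction, so that $\infShv_{dNis}(\SmlSm/S)$ is genuinely computed by nerves living in $\SmlSm/S$. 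Once this combinatorial-geometric input is in place, the rest is the formal machinery of the comparison lemma, and the statement for both sheaves and hypersheaves, with or without $\Lambda$-coefficients, follows uniformly.
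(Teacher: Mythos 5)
Your proposal is correct and follows essentially the same route as the paper: reduce to the (Zariski-local, noetherian) case, use charts and toric resolution of singularities to show every dividing Nisnevich cover in $\lSm/S$ refines to one by objects of $\SmlSm/S$, and then invoke the comparison lemma (SGA4 III.4.1 for sheaves of sets, and \cite[Lemma 2.1.4]{AGV} for spaces, hypersheaves, and $\Lambda$-modules). The ``main obstacle'' you flag — refining the whole \v{C}ech nerve into $\SmlSm/S$ — is not actually needed: the comparison lemma only requires that every object admit a cover by objects of the full subcategory (with the induced topology), so the density argument already suffices, exactly as in the paper.
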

\begin{proof}
The question is Zariski local on $S$. Hence we can assume that $S=\Spec{A}$ is affine.
Suppose $\{U_i\to X\}_{i\in I}$ is a dividing Nisnevich cover.
It has a refinement $\{V_i\to X\}_{i\in J}$ such that each $V_i$ has a chart $\A_{P_i}$.
There is a dividing cover of $\A_{P_i}$ that belongs to $\SmlSm/S$ by toric resolution of singularities
\cite[Theorem 11.1.9]{CLStoric}.
Hence $\{U_i\to X\}_{i\in I}$ has a refinement $\{W_i\to X\}_{i\in K}$ such that $W_i$ belongs to $\SmlSm/S$.
By the implication (i)$\Rightarrow$(ii) in \cite[Th\'eor\`eme III.4.1]{SGA4}, we conclude in the case of sheaves of sets. For sheaves of spaces, 
we can use \cite[Lemma 2.1.4]{AGV}.
\end{proof}
An immediate consequence of Lemma \ref{logSH.12} is the following result.
\begin{prop}
\label{logSH.14}
For $S\in \Sch$,
there is an equivalence of symmetric monoidal $\infty$-categories
\[
\inflogSH^\eff(S)
\simeq
\infShv_{dNis}(\SmlSm/S,\infSpt)[\square^{-1}].
\]
\end{prop}

\begin{cor}
\label{ProplogSH.9}
There is an equivalence of symmetric monoidal $\infty$-categories
\[
\inflogSH^\eff(S)
\simeq
\infShv_{dNis}(\SmlSm/S,\infSpt)[\square^{-1},\SmAdm^{-1}].
\]
\end{cor}
\begin{proof}
By Proposition \ref{logSH.14},
the right-hand side is a localization of the left-hand side.
Theorem \ref{Ori.68} finishes the proof.
\end{proof}
\begin{df}
Let $S\in \Sch$.
Write $(\P^\bullet,\P^{\bullet-1})$ for the set of projections $X\times (\P^n,\P^{n-1})\to X$ for all $X\in \SmlSm/S$ (or $X\in \lSm/S$) and integer $n\geq 1$.
\end{df}

\begin{thm}
\label{logSH.7}
For $S\in \Sch$, there are equivalences of symmetric monoidal $\infty$-categories
\begin{align*}
\inflogSH^\eff(S)
\simeq &
\infShv_{dNis}(\lSm/S, \infSpt)[(\P^\bullet,\P^{\bullet-1})^{-1}]
\\
\simeq &
\infShv_{dNis}(\SmlSm/S, \infSpt)[(\P^\bullet,\P^{\bullet-1})^{-1}].
\end{align*}
\end{thm}
\begin{proof}
By definition (resp.\ Proposition \ref{logSH.14}), the second (resp.\ third) $\infty$-category is a localization of $\inflogSH^\eff(S)$.
Theorem \ref{thm:PnPn-1_invariance} finishes the proof.
\end{proof}

The following theorem is crucial for us.

\begin{thm}
\label{logSH.15}
For $S\in \Sch$, there is an equivalence of symmetric monoidal $\infty$-categories 
\[
\infShv_{sNis}(\SmlSm/S, \infSpt)
[(\P^\bullet,\P^{\bullet-1})^{-1}]
\simeq
\infShv_{dNis}(\SmlSm/S, \infSpt)
[(\P^\bullet,\P^{\bullet-1})^{-1}].
\]
\end{thm}
\begin{proof}
According to Remark \ref{logH.21}, it is enough to show that every morphism of the form
$
\Sigma_{S^1}^\infty Y_+
\to
\Sigma_{S^1}^\infty X_+
$
for all dividing covers $Y\to X$ is an equivalence in $\infShv_{sNis}(\SmlSm/S, \infSpt)
[(\P^\bullet,\P^{\bullet-1})^{-1}]$.

This question is Zariski local on $S$.
Hence we may assume that $S$ is affine.
By Proposition \ref{noetherian.3}(4),
we reduce to the case when $S\in \Sch_{noeth}$.
This is done in
\cite[Theorem 7.7.4]{logDM}.
Indeed, the homotopy category of $\infShv_{sNis}(\SmlSm/S, \infSpt)
[(\P^\bullet,\P^{\bullet-1})^{-1}]$ satisfies the assumption in \cite[\S 7.7]{logDM}, and then set $M:=\Sigma_{S^1}^\infty$,
see also Remark \ref{Ori.75} for the terminology change.
\end{proof}

\begin{cor}\label{cor:fundamental_models}
For $S\in \Sch$,
there are equivalences of symmetric monoidal $\infty$-categories
  \begin{align*}
 \inflogSH^\eff(S)
 \simeq &
\infShv_{sNis}(\SmlSm/S,\infSpt)
[(\P^\bullet,\P^{\bullet-1})^{-1}],
\\
 \inflogSH(S) \simeq &\infSpt^\Sigma_{T}(\infShv_{sNis}(\SmlSm/S, \infSpt)
[(\P^\bullet,\P^{\bullet-1})^{-1}])
\\
\simeq & \infSpt^\Sigma_{T}(\infShv_{sNis}(\SmlSm/S)_\ast
[(\P^\bullet,\P^{\bullet-1})^{-1}]).
 \end{align*}
\end{cor}
\begin{proof}
The first two equivalences are consequences of Theorem \ref{logSH.15}.
The last equivalence is a consequence of Proposition \ref{logH.2}.
\end{proof}

In particular,
for $S\in \Sch$,
we have an equivalence of $\infty$-categories
\begin{equation}
\label{alter.1.5}
\inflogSH(S)
\simeq
\limit(\cdots \xrightarrow{\Omega_T}\inflogHpt(S)\xrightarrow{\Omega_T}\inflogHpt(S)).
\end{equation}

\begin{rmk} 
Proposition \ref{logSH.14} and Theorems \ref{logSH.7} and \ref{logSH.15} hold for $T$-spectra and $\P^1$-spectra too. Note that the arguments work \emph{verbatim} if the strict Nisnevich topology is replaced by the strict \'etale topology, and the dividing Nisnevich topology is replaced by the dividing \'etale topology.
 \end{rmk}

In summary, 
when $S\in \Sch$, 
we have the following models for $\inflogSH^\eff(S)$ and $\inflogSH(S)$:
\begin{center}
\begin{tabular}{l | lll}
&
Category&
Topology&
Localization
\\
\hline
Model 1 & $\lSm/S$ & $dNis$ & $(\P^\bullet,\P^{\bullet-1})$
\\
Model 2 & $\lSm/S$ & $dNis$ & $\boxx$
\\
Model 3 & $\SmlSm/S$ & $dNis$ & $\boxx$
\\
Model 4 & $\SmlSm/S$ & $dNis$ & $(\P^\bullet,\P^{\bullet-1})$
\\
Model 5 & $\SmlSm/S$ & $sNis$ & $(\P^\bullet,\P^{\bullet-1})$
\end{tabular}
\end{center}
Note that, in the last three models, every map in $\SmAdm$ is moreover automatically inverted.

\newpage

\section{Motivic versus logarithmic motivic theories}
\label{section:mvlmt}

We begin by fixing some notation and introducing the functors relating the categories of log schemes with the ordinary categories of schemes.

\begin{df}
\label{compth.1}
Suppose $S\in \Sch$.
We define functors\index[notation]{rho @ $\rho$}\index[notation]{lambda @ $\lambda$}
\[
\rho\colon \SmlSm/S \to \Sm/S,
\;
\lambda\colon \Sm/S\to \SmlSm/S,\;\lSm/S
\]
and\index[notation]{omega @ $\omega$}
\[
\omega\colon \SmlSm/S,\; \lSm/S\to \Sm/S
\]
as follows:
$\rho(X):=\ul{X}$, $\lambda(X):=X$, and $\omega(X):=X-\partial X$.
One can readily check that 
\begin{equation}
\label{compth.1.1}
\begin{tikzcd}
\SmlSm/S
\ar[r,shift left=1.5ex,"\rho"]
\ar[r,"\lambda" description,leftarrow]
\ar[r,shift right=1.5ex,"\omega"']&
\Sm/S
\end{tikzcd}
\end{equation}
and
\begin{equation}
\label{compth.1.2}
\begin{tikzcd}
\lSm/S
\ar[r,"\lambda",shift left=0.75ex,leftarrow]
\ar[r,shift right=0.75ex,"\omega"']&
\Sm/S
\end{tikzcd}
\end{equation}
are adjoint functors.
Our convention for such diagrams is that $\rho$ is left adjoint to $\lambda$, and $\lambda$ is left adjoint to $\omega$.
\end{df}
The diagrams \eqref{compth.1.1} and \eqref{compth.1.2} naturally induces sequences of adjunctions of presheaves of (pointed) spaces
\begin{equation}
\label{compth.1.3}
\begin{tikzcd}[column sep=large]
\infPsh(\SmlSm/S)_{(*)}
\ar[rr,shift left=4ex,"\rho_\sharpp"]
\ar[rr,shift left=2ex,"\rho^*\cong \lambda_\sharpp" description,leftarrow]
\ar[rr,"\rho_*\cong \lambda^*\cong \omega_\sharpp" description]
\ar[rr,shift right=2ex,"\lambda_*\cong \omega^*" description,leftarrow]
\ar[rr,shift right=4ex,"\omega_*"']&&
\infPsh(\Sm/S)_{(*)}
\end{tikzcd}
\end{equation}
and
\begin{equation}
\label{compth.1.4}
\begin{tikzcd}[column sep=large]
\infPsh(\lSm/S)_{(*)}
\ar[rr,shift left=3ex,"\rho^*\cong \lambda_\sharpp",leftarrow]
\ar[rr,shift left=1ex, "\rho_*\cong \lambda^*\cong \omega_\sharpp" description]
\ar[rr,shift right=1ex,"\lambda_*\cong \omega^*" description,leftarrow]
\ar[rr,shift right=3ex,"\omega_*"']&&
\infPsh(\Sm/S)_{(*)}.
\end{tikzcd}
\end{equation}

We now recall the construction of the motivic homotopy categories due to Morel and Voevodsky \cite{MV} in our context.

\begin{df}
\label{compth.2}
Let\index[notation]{H @ $\infH$}\index[notation]{Hpt @ $\infH_{\ast}$}
\[
 \infH_{(\ast)}\colon  \Sch^\mathrm{op}\to \CAlg(\LPr)
\]
be the
functor
associated with $\infShv_{Nis}(\Sm/-)[(\A^1)^{-1}]_{(\ast)}$.
Let\index[notation]{SH @ $\infSH$}\index[notation]{SHS @ $\infSH_{S^1}$}
\[
\infSH^{\eff}, \infSH \colon \Sch^\mathrm{op}\to \CAlg(\LPr)
\]
be the $S^1$ (resp.\ $T$) stabilization.

By e.g.,  Proposition \ref{alter.1} we have equivalences of $\infty$-categories
\begin{equation}
\label{compth.2.1}
\infSH(S)
\simeq
\colim_{\LPr}(\infHpt(S) \xrightarrow{\Sigma_{\P^1}}\infHpt(S) \xrightarrow{\Sigma_{\P^1}}\cdots )
\end{equation}
and
\begin{equation}
\infSH(S)
\simeq
\limit(\cdots \xrightarrow{\Omega_{\P^1}}\infHpt(S) \xrightarrow{\Omega_{\P^1}}\infHpt(S) ).
\end{equation}
\end{df}

The idea of contracting the projective line $\mathbb{P}^1$ instead of the affine line $\mathbb{A}^1$ was considered \index{P1-localization @ $\P^1$-localization} in \cite{AyoubP1loc}.
It is not clear how to compare this theory directly with our construction, but we offer a variant of Ayoub's construction, in which we further localize with respect to all projections $X\times \P^n\to X$ for all $X\in \Sm/S$ and integers $n\geq 1$. We refer to this localization as the \emph{$\P^\bullet$-localization}.
The definition of the $\P^\bullet$-localized motivic homotopy category is completely straightforward.

\begin{df}
\label{compth.3}
Let\index[notation]{HP @ $\infHP$}
\[
\infHP_{(\ast)} \colon  \Sch^\mathrm{op}\to \CAlg(\LPr)
\]
be the functor
associated with  $\infShv_{Nis}(\Sm/-)[(\P^\bullet)^{-1}]_{(\ast)}$, where  for each $S$, the $\infty$-category $\infShv_{Nis}(\Sm/S)[(\P^\bullet)^{-1}]_{(\ast)}$ denotes the full subcategory of $\infShv_{Nis}(\Sm/S)_{(\ast)}$ spanned by objects which are local with respect to all projections $X\times \P^n\to X$.
Let\index[notation]{SHPS @ $\infSHPS$}
\[
\infSH^{\eff, \P^\bullet} \colon \Sch^\mathrm{op}\to \CAlg(\LPr)
\]
be its $S^1$-stabilization.
We can similarly define $\infHPone$, $\infHPonept$, and $\infSH^{\eff, \P^1}$.
\index[notation]{HPone @ $\infHPone$}\index[notation]{HPonept @ $\infHPonept$}\index[notation]{HPoneS @ $\infSH^{\eff, \P^1}$}
\end{df}

\begin{ques}
\label{compth.5}
Is there a natural equivalence of functors
\[
\infSHPoneS
\simeq
\infSHPS
?
\]
\end{ques}

\begin{exm}
\label{compth.6}
Let $X$ be a scheme.
If $\cF$ be a quasi-coherent sheaf on $X$, we regard $\cF$ as a Nisnevich sheaf on $\Sm/X$ associated with the presheaf given by
\[
\underline{\cF}(Y):=\Gamma(Y,(Y\to X)^*\cF).
\]
If $X=\Spec{A}$ is affine and $\cF$ corresponds to an $A$-module $M$, then $R\Gamma(Y, \underline{\cF}) = R\Gamma(Y, \cO_Y)\otimes_A M$; 
see \cite[\S 3]{AyoubP1loc}. 
The sheaf $\underline{\cF}$ is $\P^n$-local for every $n\geq 1$  
by the argument for \cite[Proposition 3.3]{AyoubP1loc}.
\end{exm}

\begin{const}
\label{compth.4}
The functor $\rho_\sharpp$ maps $(\P^n,\P^{n-1})$ to $\P^n$ for every integer $n\geq 1$ and every strict Nisnevich distinguished square to a Nisnevich distinguished square. By Corollary \ref{cor:fundamental_models},
we obtain a
natural transformation
\[
\rho_\sharpp \colon \inflogSHS \to \infSHPS.
\]
We denote its objectwise right adjoint by $\rho^*$.
\end{const}

By using the functor $\rho_\sharpp$, various properties of motives in $\inflogSHS$ can be translated into properties of motives in $\infSHPS$.
For example, we have the following result.

\begin{prop}
\label{compth.11}
Suppose $S\in \Sch$ and $i\colon Z\to X$ is a closed immersion in $\Sm/S$.
Then the projection $p\colon \Blow_Z X\to X$ is an equivalence in $\infSHPS(S)$.
\end{prop}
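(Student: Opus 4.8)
The strategy is to transport the blow-up equivalence from the logarithmic setting, where it is already available, to $\infSHPS(S)$ via the functor $\rho_\sharpp$ constructed in Construction \ref{compth.4}. Concretely, consider the object $X$ equipped with the compactifying log structure associated with the open immersion $X-Z\hookrightarrow X$, i.e.\ the log scheme $(X,Z)\in \SmlSm/S$; note that $Z$ is a strict closed subscheme of $(X,Z)$ having strict normal crossing with the boundary $\partial(X,Z)=Z$ in the trivial sense (it \emph{is} the boundary). Then $\Blow_Z(X,Z)$ in the sense of Definition \ref{logH.35} has underlying scheme $\Blow_Z X$, and its structure morphism $\Blow_Z(X,Z)\to (X,Z)$ is an admissible blow-up along the smooth center $Z\subset \partial(X,Z)$. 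By Theorem \ref{Ori.68} (applicable since $\inflogSHS$ satisfies Zariski separation, $\boxx$-invariance, $sNis$-descent and $div$-descent by Proposition \ref{Ori.69}), this morphism becomes an equivalence in $\inflogSHS(S)$.

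First I would apply $\rho_\sharpp$ to the equivalence $\Sigma_{S^1}^\infty \Blow_Z(X,Z)_+\xrightarrow{\simeq}\Sigma_{S^1}^\infty (X,Z)_+$ in $\inflogSHS(S)$. Since $\rho_\sharpp$ is a left adjoint and hence preserves equivalences, and since $\rho_\sharpp$ sends a log scheme to its underlying scheme (so $\rho_\sharpp(X,Z)=X$ and $\rho_\sharpp\Blow_Z(X,Z)=\Blow_Z X$), we obtain that $\Sigma_{S^1}^\infty(\Blow_Z X)_+\to \Sigma_{S^1}^\infty X_+$ is an equivalence in $\infSHPS(S)$, which is exactly the statement that $p$ is an equivalence there. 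The one point requiring a small check is that the morphism $\rho_\sharpp$ applies to the structure map $\Blow_Z(X,Z)\to (X,Z)$ is indeed (the infinite suspension of) the projection $p\colon \Blow_Z X\to X$; this is immediate from the definitions of $\rho_\sharpp$ and of $\Blow_Z(-)$.

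The main (minor) obstacle is bookkeeping around the log structures: one must make sure that the object $(X,Z)$ genuinely lies in $\SmlSm/S$ — this requires $Z$ to be a strict normal crossing divisor on $X$ over $S$, which is \emph{not} automatic for an arbitrary closed immersion $Z\to X$. To handle the general case, I would first reduce, working strict \'etale locally on $X$ (using $sNis$-descent, already available on both sides via $\rho_\sharpp$, together with Proposition \ref{Ori.66}-type arguments), to the standard local model $Z_0=\A^m\times\{0\}\hookrightarrow X_0=\A^{m+m'}$, where $Z_0$ is a coordinate subspace and hence a strict normal crossing divisor; then apply the argument above in that case and glue. Alternatively, and more efficiently, one invokes Theorem \ref{ProplogSH.5}: the cocartesian square there, pushed forward along $\rho_\sharpp$ to $\infSHPS(S)$, combined with the fact that $\rho_\sharpp$ sends $(\P^n,\P^{n-1})$ to $\P^n$ which is contractible in $\infSHPS$ by $\P^\bullet$-localization (Example \ref{compth.6} and Definition \ref{compth.3}), already yields the claim after the deformation-to-the-normal-cone reduction internal to the proof of Theorem \ref{ProplogSH.5}. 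Either route works; the cleanest write-up is via Theorem \ref{Ori.68} and functoriality of $\rho_\sharpp$, relegating the normal-crossing subtlety to a strict \'etale-local reduction.
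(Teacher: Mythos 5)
Your primary route has a genuine gap. You propose to put on $X$ the compactifying log structure of $X-Z\hookrightarrow X$ and view $\Blow_Z(X,Z)\to(X,Z)$ as an admissible blow-up along a smooth center. But the statement allows $Z$ of arbitrary codimension, and if $\operatorname{codim}_X Z\geq 2$ the pair $(X,Z)$ is not an object of $\SmlSm/S$ at all: the complement of $X-Z$ is not (the support of) a Cartier divisor, and in fact the compactifying log structure is trivial since units on $X-Z$ extend over a codimension $\geq 2$ smooth center. In the only case where your construction makes sense, namely $Z$ an SNC divisor, the blow-up $\Blow_Z X\to X$ is already an isomorphism of schemes, so the route proves nothing in the interesting range. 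Your local repair does not work as written either: the assertion that the coordinate subspace $Z_0=\A^m\times\{0\}\subset\A^{m+m'}$ ``is a strict normal crossing divisor'' is false for $m'\geq 2$. What one could do is choose, locally, an auxiliary SNC divisor containing $Z$ (the union of the $m'$ coordinate hyperplanes), put the log structure along that divisor, and then invoke Definition \ref{logH.35} and Theorem \ref{Ori.68}; but these auxiliary divisors are local choices that do not glue, so one would still have to supply a Nisnevich-local description of the smooth pair $(X,Z)$ and a descent argument identifying $p$ with the glued local equivalences — none of which appears in your write-up.

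Your parenthetical ``alternative'' route is essentially the paper's actual proof, and it is the one you should develop: Theorem \ref{ProplogSH.5} holds for an arbitrary closed immersion in $\Sm/S$ (no normal-crossing hypothesis, no auxiliary log structure), giving a cocartesian square
\[
\begin{tikzcd}
\P(\Normal_Z X)\ar[r]\ar[d]& \Blow_Z X\ar[d,"p"]\\
Z\ar[r]& X
\end{tikzcd}
\]
in $\inflogSHS(S)$; applying the colimit-preserving functor $\rho_\sharpp$ of Construction \ref{compth.4} keeps it cocartesian in $\infSHPS(S)$, so $p$ is an equivalence as soon as $\P(\Normal_Z X)\to Z$ is. Here the missing step you gloss over is not a further deformation-to-the-normal-cone reduction (that is internal to the proof of Theorem \ref{ProplogSH.5} and already used up), but the projective bundle claim: $\P^\bullet$-invariance in $\infSHPS(S)$ is imposed only for trivial bundles $X\times\P^n\to X$, so one reduces to that case by induction on the number of opens in a trivializing cover of $\Normal_Z X$, using Nisnevich/Zariski descent; saying ``$\P^n$ is contractible'' is not by itself the needed statement. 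With that induction spelled out, the argument is complete and coincides with the paper's.
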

\begin{proof}
We may assume that $i$ has codimension $d$.
Theorem \ref{ProplogSH.5} gives a cocartesian square
\begin{equation}
\label{compth.11.1}
\begin{tikzcd}
\P(\Normal_Z X)\ar[r]\ar[d]&
\Blow_Z X\ar[d,"p"]
\\
Z\ar[r,"i"]&
X
\end{tikzcd}
\end{equation}
in $\inflogSHS(S)$.
Apply $\rho_\sharpp$ to \eqref{compth.11.1} to see that \eqref{compth.11.1} is a cocartesian square in $\infSHPS(S)$ too.
Hence it remains to check that the projection
\[
\P(\Normal_Z X)\to Z
\]
is an equivalence in $\infSHPS(S)$.

By induction on the number of open subsets in a trivialization of the projective bundle $\P(\Normal_Z X)$, we reduce to showing that the projection
\[
Z\times \P^n\to Z
\]
is an equivalence in $\infSHPS(S)$ for every integer $n\geq 1$.
This follows from the definition.
\end{proof}

\begin{const}
\label{compth.7}
The functor $\omega_\sharpp$ maps $\boxx$ to $\A^1$, $\P^1$ to $\P^1$, and every strict Nisnevich distinguished square to a Nisnevich distinguished square.
This naturally induces a natural transformation
\[
\omega_\sharpp
\colon
\inflogSH
\to
\infSH.
\]
We similarly have natural transformations
\[
\omega_\sharpp \colon \inflogH \to \infH,
\omega_\sharpp\colon \inflogHpt\to \infHpt,
\text{ and }
\omega_\sharpp \colon \inflogSHS \to \infSHS.
\]
Their right adjoints, denoted by $\omega^*$,
are both fully faithful by \cite[Proposition 2.5.7]{logA1}.
This means there is an embedding of motivic homotopy theories:
\[
(\A^1\text{-local homotopy theory})
\subset
(\boxx\text{-local homotopy theory})
\]

\begin{rmk} For the convenience of the reader,
we outline the proof for $\omega^*\colon \infH(S)\to \inflogH(S)$.
To begin with, 
the left adjoint functor $\omega_\sharp\colon \inflogH(S)\to \infH(S)$ admits a factorization
\[
\inflogH(S)
\xrightarrow{L_{\A^1}}
\inflogH(S)[(\A^1)^{-1}]
\xrightarrow{\omega_\sharp}
\infH(S),
\]
where $L_{\A^1}$ denotes the localization functor.
Let us explain why the latter $\omega_\sharp$ is an equivalence of $\infty$-categories, 
which shows the claim since a right adjoint of $L_{\A^1}$ is fully faithful.
There is a functor
\[
\lambda_\sharp
\colon
\infH(S)
\to
\inflogH(S)[(\A^1)^{-1}]
\]
sending $X\in \Sm/S$ to $X$.
We have an isomorphism of schemes $\omega\lambda(X)\cong X$,
and this implies $\omega_\sharp \lambda_\sharp\simeq \id$.
On the other hand,
we have an isomorphism of schemes $\lambda\omega(Y)\cong Y-\partial Y$ for all $Y\in \lSm/S$.
One can show $Y-\partial Y\simeq Y$ in $\inflogH(S)[(\A^1)^{-1}]$ as in \cite[Proposition 2.4.3]{logA1},
and this implies $\lambda_\sharp \omega_\sharp \simeq \id$.
Hence $\omega_\sharp$ is an equivalence of $\infty$-categories.\end{rmk}
\end{const}

\begin{rmk}
If $S\in \lSch$ has a nontrivial log structure and $f\colon X\to S$ is a log smooth morphism of fs log schemes,
then taking the complement $X-\partial X$ is a drastic operation since it removes everything over the boundary 
$\partial S$.
Instead,
we can remove the ``vertical boundary'' $\partial_S X$,
which is defined to be the set of points $x\in X$ such that
$\cM_{S,f(x)}\to \cM_{X,x}$ is not vertical 
in the sense of \cite[Definition I.4.3.1]{Ogu}.

However,
there is a technical difficulty of removing vertical boundaries because there is no functor 
$\omega\colon \lSm/S\to \lSm/S$ sending $X\in \lSm/S$ to $X-\partial_S X$ 
(unless $S$ has trivial log structure).
For example,
consider the morphisms
\[
Z:=\A_\N\times \A_\N
\xrightarrow{h}
Y:=\A_\N\times \A_\N
\xrightarrow{g}
X:=\A_\N\times \A^1
\xrightarrow{f}
S:=\A_\N,
\]
where $f$ is the first projection,
$g$ removes the log structure on the second factor,
and $h$ is naturally induced by the homomorphism $\N^2\to \N^2$ sending $(x,y)$ to $(x,x+y)$.
Then we have isomorphisms of fs log schemes $X-\partial_S X\cong X$ and
$Y-\partial_S Y\cong \A_\N\times \G_m$.
Since the diagonal homomorphism $\N\to \N^2$ is vertical,
we also have an isomorphism $Z-\partial_S Z\cong Z$.
Taking the existence of $\omega$ for granted, 
then $f$ agrees with $\omega(Z)\to \omega(X)$ up to isomorphism.
However, unlike $\omega(Z)\to \omega(X)$, the map $f$ does not factor through $w(Y)$.
\end{rmk}

\begin{const}
\label{compth.13}
For $S\in \Sch$,
let $\omega\colon \cSm/S\to \Sm/S$ be the functor sending $X\in \cSm/S$ to $X-\partial X$.
As in Construction \ref{compth.7},
we have natural transformations
\begin{gather*}
\omega_\sharp
\colon
\inflogH^\Adm
\to
\infH,
\text{ }
\omega_\sharp
\colon
\inflogHpt^\Adm
\to
\infHpt,
\\
\omega_\sharp
\colon
\inflogSH^{\eff,\Adm}
\to
\infSHS,
\text{ and }
\omega_\sharp
\colon
\inflogSH^\Adm
\to
\infSH,
\end{gather*}
where ${\inflogSH}^{\eff,\Adm}$ and $\inflogSH^\Adm$ are the $S^1$ and $T$-stabilization of the $\Adm$-categories considered in Construction \ref{constr:Adm_unstable}. 
Let $\omega^*$ denote the corresponding right adjoints.
\end{const}

\begin{const}
\label{compth.14}
Let $S\in \Sch$, and let $\psi\colon \SmlSm/S\to \cSm/S$ be the natural functor.
This maps $\boxx$ to $\boxx$, $\P^1$ to $\P^1$, every strict Nisnevich cover to strict Nisnevich cover, 
and every admissible blow-up along a smooth center to an admissible blow-up.

Using Corollary \ref{ProplogSH.9}
we have a natural transformation
\[
\psi_\sharpp
\colon
\inflogSH
\to
\inflogSH^\Adm
\]
We similarly have a natural transformation
\[
\psi_\sharpp \colon \inflogSHS \to {\inflogSH}^{\eff,\Adm}.
\]
The corresponding right adjoints are denoted by $\psi^*$.

Observe also that there is a commutative triangle
\begin{equation}
\label{compth.14.1}
\begin{tikzcd}
\inflogSH\ar[rd,"w_\sharp"']\ar[rr,"\psi_\sharp"]&
&
\inflogSH^\Adm\ar[ld,"w_\sharp"]
\\
&
\infSH.
\end{tikzcd}
\end{equation}
We also have similar commutative triangles for $\inflogSHS$.
\end{const}

\begin{rmk}
For a normal $S\in \Sch$,
the log structure on $X\in \SmlSm/S$, which was given by the Deligne-Faltings structure, agrees with the compactifying log structure associated with the open immersion $X-\partial X\to X$ by \cite[Proposition III.1.7.3(4)]{Ogu}.
Together with \cite[Proposition III.1.6.2]{Ogu},
we see that $\psi\colon \SmlSm/S\to \cSm/S$ is fully faithful.
\end{rmk}

\begin{prop}
\label{ProplogSH.8}
Suppose that $k$ is a perfect field admitting resolution of singularities.
Then the functors
\[
\psi_\sharp
\colon
\inflogSHS(k)
\to
{\inflogSH}^{\eff,\Adm}(k)
\text{ and }
\psi_\sharp
\colon
\inflogSH(k)
\to
\inflogSH^\Adm(k)
\]
are equivalences. In particular, every map in $\Adm$ is invertible in $\inflogSH(k)$.
\end{prop}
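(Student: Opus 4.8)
The plan is to show that $\psi_\sharp$ is an equivalence by exhibiting a quasi-inverse, following the strategy already used in Construction \ref{compth.7} for the pair $(\omega_\sharp,\omega^*)$. First I would observe that since $\psi_\sharp$ is a left adjoint in $\CAlg(\LPr)$, it suffices to check that both the unit and counit of the adjunction $(\psi_\sharp,\psi^*)$ are equivalences; equivalently, that $\psi^*$ is fully faithful and essentially surjective. The key input is Theorem \ref{ProplogSH.3}: over a perfect field $k$ admitting resolution of singularities, every admissible blow-up $f\colon Y\to X$ in $\SmlSm/k$ induces an equivalence $M(f)$ in any $\infC$ satisfying Zariski separation, $\boxx$-invariance, $sNis$-descent, and $div$-descent — in particular in $\inflogSHS(k)$ and $\inflogSH(k)$ by Proposition \ref{Ori.69} and Remark \ref{rmk:symmetric_object}. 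Thus in $\logSH(k)$ (and its effective version) every map in $\Adm$ coming from $\SmlSm/k$ is already invertible.

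Next I would use the definition of $\inflogSH_{(S^1)}^\Adm(k)$ as the localization of $\infShv_{sNis}(\cSm/k)_{(*)}[\boxx^{-1}]$ (followed by the appropriate stabilization) at the class $\Adm$, together with Corollary \ref{ProplogSH.9}, which identifies $\inflogSH_{(S^1)}(k)$ with $\inflogSH^{\SmAdm}_{(S^1)}(k)$, i.e.\ the localization of $\infShv_{dNis}(\SmlSm/k)_{(*)}[\boxx^{-1}]$ at $\SmAdm$. The functor $\psi\colon \SmlSm/k\to \cSm/k$ sends strict Nisnevich covers to strict Nisnevich covers, $\boxx$ to $\boxx$, and $\SmAdm$ to $\Adm$, so $\psi_\sharp$ is the induced localization-compatible functor. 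Concretely, I would factor $\psi_\sharp$ as
\[
\inflogSH_{(S^1)}(k)
\xrightarrow{L_\Adm}
(\Adm)^{-1}\inflogSH_{(S^1)}(k)
\xrightarrow{\psi_\sharp}
\inflogSH_{(S^1)}^\Adm(k),
\]
where the first functor is inverting the admissible blow-ups of objects of $\SmlSm/k$ viewed inside the log motivic category — which by the previous paragraph is already an equivalence since those maps are invertible. It then remains to show that the second functor is an equivalence. For this I would argue as in the proof of Theorem \ref{logSH.15} and \cite[Theorem 7.7.4]{logDM}: resolution of singularities guarantees that every object of $\cSm/k$ admits, after an admissible blow-up, a model in $\SmlSm/k$ (one resolves the boundary to a strict normal crossing divisor on a smooth total space), so the subcategory $\SmlSm/k\subset \cSm/k$ becomes cofinal after inverting $\Adm$; and the calculus-of-right-fractions argument of \cite[Propositions 7.6.4, 7.6.6, Lemma C.2.1]{logDM} shows that admissible blow-ups along smooth centers are cofinal among all admissible blow-ups, which is what lets one compare the $sNis$-topology on $\cSm/k$ localized at $\Adm$ with the $dNis$-topology on $\SmlSm/k$ localized at $\SmAdm$.

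The main obstacle I anticipate is the comparison of topologies and localizing classes at the level of sheaf categories: one must check that inverting $\Adm$ on $\infShv_{sNis}(\cSm/k)$ produces the same $\infty$-category as inverting $\SmAdm$ (equivalently, $dNis$-localizing — recall $\inflogH^{\SmAdm}_{sNis}\simeq \inflogH^{\SmAdm}_{dNis}$) on $\infShv_{sNis}(\SmlSm/k)$, and this requires knowing both that $\SmlSm/k$ is a "dense" subcategory of $\cSm/k$ for the relevant topology after localization and that the generators $\Sigma^\infty X_+$ for $X\in \cSm/k$ become identified with generators coming from $\SmlSm/k$. Resolution of singularities is exactly the tool that makes this work — it is the sole reason the hypothesis on $k$ appears — and the bookkeeping in \cite[\S 7.6, \S 7.7]{logDM}, especially the right-fractions argument, is what must be transported verbatim to the present $\infty$-categorical and non-effective setting. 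The last sentence of the statement, that every map in $\Adm$ is invertible in $\logSH(k)$, then follows: such a map becomes an isomorphism in $\inflogSH^\Adm(k)$ by construction, and $\psi_\sharp$ being an equivalence transports this back to $\inflogSH(k)$.
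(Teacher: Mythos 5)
Your overall strategy---prove that $\psi^*$ is an equivalence, using Theorem \ref{ProplogSH.3} together with resolution of singularities---is the right one, and it is indeed the paper's strategy, but the decisive step is missing. The factorization through $(\Adm)^{-1}\inflogSH_{S^1}(k)$ carries essentially no content: the only $\Adm$-maps visible at that stage are admissible blow-ups between objects of $\SmlSm/k$ (or $\lSm/k$), and these are already invertible by Theorem \ref{ProplogSH.3}. The real work is the passage from sheaves on $\SmlSm/k$ to sheaves on the much larger site $\cSm/k$, whose objects may have singular total space and non-strict-normal-crossing boundary. Asserting that $\SmlSm/k$ ``becomes cofinal after inverting $\Adm$'' gives at best essential surjectivity on generators; it does not compare mapping spaces, i.e.\ it does not yield full faithfulness of $\psi^*$ (equivalently, that the unit and counit of $(\psi_\sharp,\psi^*)$ are equivalences). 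The results you lean on for this step (Theorem \ref{logSH.15}, \cite[Theorem 7.7.4]{logDM}) concern the strict versus dividing Nisnevich topologies on $\SmlSm$ and do not address the enlargement to $\cSm$ at all.

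What the paper does, and what your sketch needs, is an explicit quasi-inverse: for $\cF\in\inflogSH_{S^1}(k)$ and $X\in\cSm/k$ one sets $\alpha(\cF)(X):=\colimit_{Y\in(\Adm\downarrow X)\cap\lSm/k}\cF(Y)$. To make this usable one must prove that the index category is filtered: it is connected because resolution of singularities produces a common refinement in $\SmlSm/k$ of any two admissible blow-ups with log smooth source, and there is at most one morphism between any two objects because two such maps agree on the dense open complement of the boundary and the targets are separated. Filteredness together with Theorem \ref{ProplogSH.3} identifies $\alpha(\cF)(X)\simeq\cF(Y)$ for any single smooth model $Y$ (which exists by resolution of singularities); from this one reads off $\boxx$-invariance and the remaining locality of $\alpha(\cF)$, and the equivalences $\alpha\psi^*\simeq\id$ and $\psi^*\alpha\simeq\id$, and finally one passes to $\Gmm$-stabilization to treat $\inflogSH(k)$. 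The filteredness argument---in particular the density/separatedness trick forcing parallel maps of resolutions to coincide---is the ingredient your proposal never supplies; without it the colimit over admissible blow-ups is not computed by a single resolution, and neither the unit nor the counit of the adjunction can be checked.
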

\begin{proof}
We define the functor
\[
\alpha
\colon
\inflogSH^{\eff}(k)
\to
\infPsh(\cSm/k,\infSpt)
\]
by setting
\[
\alpha(\cF)(X)
:=
\colimit_{Y\in (\Adm\downarrow X)\cap \lSm/k} \cF(Y)
\]
for all $\cF\in \inflogSH^{\eff}(k)$ and $X\in \cSm/k$.
Here, 
the colimit runs over the category $(\Adm\downarrow X)\cap \lSm/k$, 
whose objects are admissible blow-ups $Y\to X$ with $Y\in \lSm/k$ and whose morphisms are 
admissible blow-ups $Z\to Y$ over $X$.

Suppose $Y,Y'\in (\Adm\downarrow X)\cap \lSm/k$.
By resolution of singularities,
there exists a blow-up $\ul{Y''}\to \ul{Y}\times_{\ul{X}} \ul{X'}$ such that $\ul{Y''}\in \Sm/k$ and the preimage $Y''-\partial Y''$ of $X-\partial X$ in $\ul{Y''}$ is the complement of a strict normal crossing divisor on $\ul{Y''}$.
We set $Y'':=(\ul{Y''},Y''-\partial Y'')$,
which is an object of $\SmlSm/k$.
Then the naturally induced morphisms $Y''\to Y,Y'$ are admissible blow-ups,
so the category $(\Adm\downarrow X)\cap \lSm/k$ is connected.

Suppose $f,g\colon Y'\rightrightarrows Y$ are two morphisms in $(\Adm\downarrow X)\cap \lSm/k$.
Since $f$ and $g$ agree on the dense open subset $Y'-\partial Y'$ on $Y'$,
we have $f=g$.
Hence the category $(\Adm\downarrow X)\cap \lSm/k$ is filtered.
Together with Theorem \ref{ProplogSH.3},
we have $\alpha(\cF)(X)\simeq \cF(Y)$ whenever $Y\in (\Adm\downarrow X)\cap \lSm/k$.

If $X\in \cSm/k$,
then there exists an object $Y$ of $(\Adm\downarrow X)\cap \lSm/k$ by resolution of singularities.
Since $Y\times \boxx\in (\Adm\downarrow X\times \boxx)\cap \lSm/k$,
we have equivalences of spectra
\[
\alpha(\cF)(X)
\simeq
\cF(Y)
\simeq
\cF(Y\times \boxx)
\simeq
\alpha(\cF)(X\times \boxx).
\]
Hence $\alpha(\cF)$ is $\boxx$-invariant,
and we obtain the factorization  
\[
\alpha
\colon
\inflogSH^{\eff}(k)
\to
\inflogSH^{\eff,\Adm}(k).
\]

Conversely,
we have the functor
\[
\psi^*
\colon
\inflogSH^{\eff,\Adm}(k)
\to
\inflogSH^{\eff}(k)
\]
given by
\[
\psi^*(\cG)(Y)
:=
\cG(Y)
\]
for all $\cG\in \inflogSH(k)$ and $Y\in \lSm/k$.
There is a natural equivalence $\alpha\psi^*\simeq \id$.
Due to Theorem \ref{ProplogSH.3},
we also have a natural equivalence $\psi^*\alpha\simeq\id$.
Hence $\psi^*$ is an equivalence of $\infty$-categories.
This implies that $\psi_\sharp$ is an equivalence of $\infty$-categories too.

The claim about $\inflogSH$ holds true by passing to the $\Gmm$-stabilization.
\end{proof}

\begin{rmk}
\label{compth.15}
Suppose $k$ is a perfect field admitting resolution of singularities.
As above,
one can similarly show that there are equivalences of symmetric monoidal $\infty$-categories
\[
\psi_\sharp
\colon
\inflogH(k)
\xrightarrow{\simeq}
\inflogH^\Adm(k)
\text{ and }
\psi_\sharp
\colon
\inflogHpt(k)
\xrightarrow{\simeq}
\inflogHpt^\Adm(k).
\]
\end{rmk}

\begin{const}
\label{compth.9} The functor $\lambda$ maps $\P^1$ to $\P^1$ and every Nisnevich distinguished square to a strict 
Nisnevich distinguished square. We obtain
natural transformations:
\[
\lambda_\sharpp\colon \infShv_{Nis}(\Sm/-)\to \inflogH
\]
\[
\lambda_\sharpp\colon \infShv_{Nis}(\Sm/-,\infSpt)\to \inflogSHS.
\]
We denote the right adjoints by $\lambda^*$.
\end{const}

\begin{ques}
\label{compth.10}
Is the functor
\[
\lambda^*\colon \inflogSHS(S) \to \infShv_{Nis}(\Sm/S,\infSpt)
\]
fully faithful for $S\in \Sch$?
\end{ques}

\begin{rmk}\label{rmk:non_A1_stable}
We can regard the essential image of $\lambda^*$ as a \emph{homotopy theory of schemes without $\A^1$-invariance}. 
We find it an appealing question to describe the said image using exclusively the language of schemes with no mention of logarithmic structures. An analogous (but less structured) problem was considered in \cite[3.3.5]{BThesisPubl}.
\end{rmk}

\begin{const}
\label{compth.16}
Suppose $S\in \Sch$.
Let us compare $\inflogSH(S)$ with the $\infty$-category $\MS_S$ (following the notation in \cite[\S 4]{AHI}) of $\P^1$-spectra due to Annala-Iwasa \cite{AI}.
Consider the full subcategory of $\infPsh(\Sm/S,\infSpt)$ spanned by the presheaves satisfying Zariski descent and elementary blow-up excision,
see \cite[Definition 2.1(i)]{AHI} for the latter notion.
Then $\MS_S$ is defined to be the $\infty$-category of $\P^1$-spectra in this $\infty$-category.

The functor $\lambda_\sharp\colon \infPsh(\Sm/S,\infSpt)\to \infPsh(\lSm/S,\infSpt)$ maps $\P^1$ to $\P^1$ and every Zariski distinguished square to a strict Nisnevich distinguished square and every elementary blow-up square to an elementary blow-up square.
Furthermore, an elementary blow-up square is cocartesian in $\inflogSH^{\SmAdm}(S)$ and hence in $\inflogSH(S)$ by Proposition \ref{Ori.66}.
It follows that we have the induced natural transformation
\[
\lambda_\sharp\colon\MS\to \inflogSH.
\]
We denote its right adjoint by $\lambda^*$. We expect this functor to be fully faithful for $S\in \Sch$. 
\end{const}

\begin{rmk} Let us further elaborate on 
Construction \ref{compth.16}.
    The construction of $\MS_S$ is inherently \emph{stable} (since it is built as the category of $\P^1$-spectra). Our approach allows us to consider an \emph{unstable} motivic homotopy theory without $\A^1$-invariance, given by the essential image of \[\lambda^*\colon \inflogH(S) \to \infShv_{Nis}(\Sm/S)\]
    for any $S\in \Sch$. 
    Let us provisionally denote the latter category by $\mathcal{H}^{\SmAdm}(S)$.
   It is an interesting question to study explicitly $\mathcal{H}^{\SmAdm}(S)$, without referring to log structures.  We make a couple of immediate remarks. First, every elementary blow-up square is cocartesian in $\inflogH(S)$, thus the same is true in $\mathcal{H}^{\SmAdm}(S)$; that is, $\mathcal{H}^{\SmAdm}(S)$ is a subcategory of the category of $\infShv_{Nis}(\Sm/S, \infSpc)$ spanned by those sheaves $\cF$ satisfying elementary blow-up excision.
Second, the Gysin sequence in Remark \ref{rmk:Gysin} suggests that the objects of $\mathcal{H}^{\SmAdm}(S)$ are local with respect to the inclusions $\P^{n-1} \to \P^n$.
    We leave the study of this category for future work. 
\end{rmk}

\newpage

\section{Logarithmic Picard groups and \texorpdfstring{$\clspace \Gmm$}{BGm}}
\label{section:lpg}
The goal of this section is to discuss the cohomology of different sheaves of \emph{units} associated with a log scheme, namely $\mathcal{M}^{\gp}$ and $\Gmm$. Our main result is Theorem \ref{Pic.9}, proving that for every $S\in \Sch$ there is an equivalence $\clspace \Gmm \simeq \P^\infty$ in $\inflogH(S)$. By considering the pullback along $\ul{S}\to S$, the same equivalence will hold in  $\inflogSH(S)$ for $S\in \lSch$ thanks to Corollary \ref{ProplogSH.9}.

\subsection{\texorpdfstring{$(\P^n,\P^{n-1})$}{(Pn,Pn-1)}-invariance of logarithmic Picard groups}
Let us start with a definition.

\begin{df}
If $X$ is a log scheme, 
we define the \emph{logarithmic Picard group}\index{logarithmic Picard group}\index[notation]{logPic @ $\logPic$} 
as 
\begin{equation}
\label{logPic.4.3}
\logPic(X)
:=
H_{sNis}^1(X,\cM^{\gp}).
\end{equation}
If $X$ is a scheme, we observe that there is an isomorphism of abelian groups $\logPic(X)\cong \Pic(X)$ with the Picard group.
\end{df}
\begin{rmk}
Our $\logPic(X)$ is different from Kato's original definition in \cite{zbMATH00721812} as 
\[
H_{\set}^1(X,\cM^{\gp}),
\]
which we denote by $\logPic_{\set}(X)$.
It is natural to ask whether the canonical homomorphism of abelian groups
\begin{equation}
\label{logPic.4.4}
\logPic(X)
\to
\logPic_{\set}(X)
\end{equation}
is an isomorphism or not.
This question is a log version of Hilbert's theorem 90.
\end{rmk}

In this subsection we study the $(\P^n,\P^{n-1})$-invariance of $\cM^{\gp}$, $\logPic$, and $\logPic_{\set}$, 
i.e., whether the naturally induced homomorphisms of abelian groups
\begin{equation}
\label{logPic.4.5}
\begin{split}
\cM^{\gp}(X) \to \cM^{\gp}(X\times  (\P^n,\P^{n-1}))
&,\;
\logPic(X)\to \logPic(X\times  (\P^n,\P^{n-1}))
,
\\
\logPic_{\set}(X)\to &\logPic_{\set}(X\times  (\P^n,\P^{n-1}))
\end{split}
\end{equation}
are isomorphisms or not for, say,  fs log schemes $X$.

We will show that the homomorphisms in \eqref{logPic.4.5} are isomorphisms in the following two cases:
\begin{enumerate}
\item[(1)] $S$ is a geometrically unibranch scheme, and $X\in \SmlSm/S$.
\item[(2)] $X$ is a saturated log scheme such that $\ul{X}$ is the spectrum of a field.
\end{enumerate}
We will also show that the homomorphism in \eqref{logPic.4.4} is an isomorphism in the case (1).

\begin{prop}
\label{logPic.7}
Let $X$ be a locally noetherian connected scheme.
Then there is an isomorphism of abelian groups
\[
\Pic(X)\times \Z
\xrightarrow{\cong}
\Pic(X\times \P^n)
\]
sending $(\cL,d)\in \Pic(X)\times \Z$ to $q^*\cL(d)$, where $q\colon X\times \P^n\to X$ is the projection.
\end{prop}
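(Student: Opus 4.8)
The plan is to reduce the statement to the well-known computation of the Picard group of projective space over a base, using standard descent and the structure of line bundles on $\P^n_X$. First I would recall that for a locally noetherian connected scheme $X$, the projection $q\colon X\times\P^n\to X$ has a section (say the constant section at a rational point of $\P^n$), and $q_*\cO_{X\times\P^n}\cong\cO_X$ by the projection formula and the computation $H^0(\P^n,\cO)=\cO$, since $q$ is proper, flat, with geometrically connected and reduced fibers. This gives that pullback $q^*\colon\Pic(X)\to\Pic(X\times\P^n)$ is split injective, with a retraction given by restricting along the section. So the content is to identify the cokernel with $\Z$, generated by $\cO(1)$.

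The key step is to show that every line bundle $\cL$ on $X\times\P^n$ is, Zariski-locally on $X$, of the form $q^*\cM\otimes\cO(d)$ for a \emph{fixed} integer $d$ independent of the open set, and that then $\cL\otimes\cO(-d)$ descends to a line bundle on $X$. Locally on $X$ (say $X=\Spec A$ with $A$ local noetherian), one uses that $\Pic(\P^n_A)\cong\Pic(\A^1_A?)$—more precisely one invokes the standard fact (\cite[Exercise II.7.9 or III.12]{Hartshorne-type reference}, or \cite{EGA} IV) that for $A$ a local ring $\Pic(\P^n_A)\cong\Z$ generated by $\cO(1)$; this can be proved by restricting $\cL$ to the closed fiber $\P^n_{k}$ where it is $\cO(d)$ for a unique $d$, twisting to assume $d=0$, and then using cohomology and base change together with Nakayama to show $\cL\cong\cO_{\P^n_A}$. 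The integer $d$ is locally constant on $X$, hence constant since $X$ is connected. Then $\cL(-d)$ restricts to the trivial bundle on each fiber and, again by cohomology and base change, $q_*(\cL(-d))$ is a line bundle $\cM$ on $X$ with $q^*\cM\xrightarrow{\simeq}\cL(-d)$; this globalizes the local computation.

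Concretely, I would organize the argument as: (i) $q^*$ is split injective via a section of $q$; (ii) for $\cL\in\Pic(X\times\P^n)$, define $d(x)\in\Z$ as the degree of $\cL|_{\P^n_{\kappa(x)}}$ and check it is locally constant, hence equals a fixed $d$; (iii) set $\cM:=q_*(\cL\otimes q^*\cO(-d))$—wait, $\cO(-d)$ is on $\P^n$, so write $\cM:=q_*(\cL\otimes\cO_{X\times\P^n}(-d))$—and use cohomology and base change (the fibers of $\cL(-d)$ are $\cO_{\P^n}$, which has $h^0=1$, $h^i=0$ for $i>0$) to conclude $\cM$ is invertible and the counit $q^*\cM\to\cL(-d)$ is an isomorphism; (iv) assemble the isomorphism $\Pic(X)\times\Z\xrightarrow{\simeq}\Pic(X\times\P^n)$, $(\cL,d)\mapsto q^*\cL\otimes\cO(d)$, with inverse $\cN\mapsto(q_*(\cN(-d(\cN))),d(\cN))$.

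The main obstacle is step (iii): making cohomology and base change work cleanly requires the local freeness of the higher direct images or at least the Grauert-type criterion, and one must handle the passage from a local statement on $X$ to a global one. The noetherian hypothesis is exactly what makes the base change theorems available (and also guarantees $q$ is proper of finite presentation so the formal function / base change machinery applies). Connectedness is used once, to upgrade ``locally constant $d$'' to ``constant $d$''. I expect no serious difficulty beyond invoking these standard theorems in the right order; the statement is essentially a parametrized version of $\Pic(\P^n_k)=\Z$.
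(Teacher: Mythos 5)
Your argument is correct, and it is precisely the standard proof of this classical fact: split injectivity of $q^*$ via a section, local constancy of the fiberwise degree (constant by connectedness), and cohomology and base change applied to $\cL(-d)$ over the locally noetherian base. The paper itself gives no argument but only cites EGA II, Remarque 4.2.7 and Mumford's Lectures on Curves on an Algebraic Surface (Lecture 13), whose proofs run along the same lines as yours, so there is nothing to correct.
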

\begin{proof}
We refer to \cite[Remarque II.4.2.7]{EGA} and \cite[Proposition 3, Lecture 13]{zbMATH03299277}.
\end{proof}

\begin{prop}
\label{logPic.9}
Let $X$ be an irreducibile unibranch (resp.\ geometrically unibranch) scheme.
Then we have the vanishing
\[
H_{Nis}^1(X,\Z)=0
\text{ (resp.\ }
H_{\et}^1(X,\Z)=0
\text{)}.
\]
\end{prop}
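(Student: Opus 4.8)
The statement asks for the vanishing of $H^i_{Nis}(X,\Z)$ (resp.\ $H^i_{\et}(X,\Z)$) for $i>0$ when $X$ is unibranch (resp.\ geometrically unibranch). The constant sheaf $\Z$ is the obstruction to this being trivial, so the plan is to reduce to understanding $\Z$ on a unibranch scheme. First I would reduce to the local case: Nisnevich (resp.\ \'etale) cohomology can be computed via the local rings at points (henselian local rings for $Nis$, strictly henselian for $\et$), and being unibranch (resp.\ geometrically unibranch) is preserved under passing to these local rings. Concretely, one uses that $H^i_\tau(X,\Z)$ vanishes for all $i>0$ if and only if the stalks of the higher direct images $R^i\epsilon_* \Z$ vanish, where $\epsilon$ compares $\tau$ with the Zariski topology, together with a descent spectral sequence argument; alternatively one invokes that a sheaf with vanishing higher cohomology on all local rings is acyclic.

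The key local input is the following: if $A$ is a henselian (resp.\ strictly henselian) local ring which is unibranch (resp.\ geometrically unibranch), then $\Spec A$ is ``connected in codimension data'' enough that the constant sheaf $\Z$ is flasque for the Nisnevich (resp.\ \'etale) topology. The crucial point is that a unibranch henselian local ring has the property that its normalization is again local henselian with the same residue characteristic behaviour, and more importantly that $\Spec A$ minus closed subsets stays connected in the relevant sense; the standard fact is that for a geometrically unibranch scheme the \'etale fundamental group acts trivially on $\Z$ and $H^i_{\et}(\Spec A_{\mathrm{sh}}, \Z) = 0$ for $i>0$ because $\Spec A_{\mathrm{sh}}$ is ``\'etale-locally irreducible''. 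I would cite or reprove that for $X$ geometrically unibranch the natural map $\Z \to R\pi_* \Z$ (pushforward from the small \'etale site of the normalization, which is a universal homeomorphism onto $X$ up to the unibranch condition) is an equivalence; since a universal homeomorphism induces an equivalence on \'etale topoi (topological invariance of the \'etale site, SGA1), and the normalization of a geometrically unibranch scheme is normal hence has the expected vanishing, one concludes. For the Nisnevich case the argument is the same but easier, using that Nisnevich cohomological dimension is controlled by the Krull dimension and that the relevant henselian local rings are unibranch.

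In more detail, the steps I would carry out: (1) observe $\Z$-cohomology is a Nisnevich (resp.\ \'etale) sheaf cohomology and use the cd-structure / point-local criterion to reduce to $X = \Spec A$ with $A$ (strictly) henselian local and (geometrically) unibranch; (2) let $\nu\colon X' \to X$ be the normalization — under the unibranch hypothesis $\nu$ is a universal homeomorphism, in particular integral, radicial and surjective; (3) apply topological invariance of the small \'etale (resp.\ Nisnevich) site to get $H^i_\tau(X,\Z) \cong H^i_\tau(X',\Z)$; (4) conclude by the known vanishing of $H^i_\tau(X',\Z)$ for $i>0$ when $X'$ is normal (for a normal integral scheme the constant sheaf $\Z$ on the \'etale site has no higher cohomology when the scheme is moreover local strictly henselian, since such a scheme is connected and remains so after removing any closed subset — equivalently $\pi_1^{\et}$ and all the relevant Mayer–Vietoris patching vanish). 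I expect the main obstacle to be the precise form of step (4): making rigorous the statement that $\Z$ is acyclic on a normal (strictly) henselian local scheme for the given topology. The clean way is to note that for the \'etale topology on a normal scheme, $\Z$ agrees with $\nu_*\Z$ for any finite \'etale cover being trivialized, and a strictly henselian normal local ring has no nontrivial connected finite \'etale covers and no nontrivial Nisnevich covers with disconnected special fiber behaviour; I would likely just cite the standard reference (e.g.\ the fact that geometrically unibranch schemes have the same \'etale cohomology with torsion-free constant coefficients as their normalizations, together with \cite[Exp.\ IX]{SGA4} type results on normal schemes) rather than reprove it, since the authors clearly intend this as a lemma feeding into Proposition~\ref{logPic.9}'s application.
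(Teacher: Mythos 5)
Your proposal has a genuine gap at its very first step, the local-to-global reduction. The criterion you invoke -- ``a sheaf with vanishing higher cohomology on all (strictly) henselian local rings is acyclic'' -- is vacuous and false: henselian (resp.\ strictly henselian) local schemes are the points of the Nisnevich (resp.\ \'etale) topos, so \emph{every} abelian sheaf has vanishing higher cohomology on them, and the criterion would prove every sheaf acyclic. The Leray spectral sequence you allude to (for $\epsilon\colon X_{\et}\to X_{Zar}$) does not reduce to the (strict) henselizations either: the stalk of $R^q\epsilon_*\Z$ at $x$ is $H^q_{\et}(\Spec{\cO_{X,x}},\Z)$, the cohomology of the \emph{Zariski} local ring, which contains Galois cohomology of residue/function fields. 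This is not a repairable defect of exposition but an obstruction to the strategy in degrees $\geq 2$: for $X=\Spec{k}$ with $k$ not separably closed (a regular, hence geometrically unibranch scheme) one has
\[
H^2_{\et}(\Spec{k},\Z)\;\cong\;H^1(\mathrm{Gal}(k_s/k),\Q/\Z)\;=\;\mathrm{Hom}_{\mathrm{cont}}(\mathrm{Gal}(k_s/k),\Q/\Z)\;\neq\;0,
\]
so no argument that only sees strictly henselian local rings (or the normalization, which here is $X$ itself) can yield the \'etale vanishing beyond degree one; note that degree one is in fact the only case the paper ever uses (Propositions \ref{logPic.3}, \ref{logPic.8}, \ref{logPic.6}). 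Your steps (2)--(4) have a further problem on the Nisnevich side: for a merely unibranch scheme the normalization need not be a universal homeomorphism (the residue field extension at a point can be a nontrivial separable extension), so the topological-invariance argument does not apply to the half of the statement it is supposed to cover; radiciality is exactly what the ``geometrically'' in geometrically unibranch buys you.

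For comparison, the paper does not argue from scratch at all: the \'etale case is a direct citation of \cite[Proposition XI.3.6(2)]{SGA4}, and the Nisnevich case is asserted to follow by the same argument with the EGA statement about henselizations of unibranch local rings (\cite[Proposition IV.18.12.6]{EGA}) replacing the corresponding statement about strict henselizations. Your background intuition (normalization, universal homeomorphisms, triviality of $\Z$-torsors on normal schemes) is indeed the standard route to the degree-one vanishing $H^1_{\et}(X,\Z)=0$ for geometrically unibranch $X$, but as written the proposal neither proves the statement in the claimed range nor supplies a correct reduction for the range where it is provable; if you want a self-contained argument, you should restrict to degree one (or to torsion-free constant coefficients in degree one, as in Proposition \ref{logPic.6}) and make the reduction via the cited SGA4/EGA inputs on irreducibility of (strict) henselizations of unibranch local rings explicit.
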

\begin{proof}
For the \'etale case, we refer to \cite[Proposition IX.3.6(2)]{SGA4}.
The Nisnevich case can be proven similarly, 
with reference to \cite[Proposition IV.18.12.6]{EGA} instead of \cite[Proposition IV.18.14.15]{EGA}.
\end{proof}

\begin{prop}
\label{logPic.3}
Let $X$ be a geometrically unibranch irreducible scheme.
Then the homomorphisms in \eqref{logPic.4.5} are isomorphisms.
\end{prop}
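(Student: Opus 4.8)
The statement concerns three homomorphisms in \eqref{logPic.4.5} for $X$ geometrically unibranch and irreducible: invariance of $\cM^\gp$, of $\logPic = H^1_{sNis}(-,\cM^\gp)$, and of $\logPic_{\set} = H^1_{\set}(-,\cM^\gp)$. The natural strategy is to use the short exact sequence of sheaves
\[
1 \to \Gm \to \cM^\gp \to \ol{\cM}^\gp \to 1
\]
on the (strict Nisnevich, resp.\ strict \'etale) site of $X$, together with the analogous one on $X\times(\P^n,\P^{n-1})$, and to compare the resulting long exact cohomology sequences via the projection $q\colon X\times(\P^n,\P^{n-1})\to X$. The first thing I would establish is that $q$ induces an isomorphism on $\ol{\cM}^\gp$ and more generally that $Rq_*$ applied to $\ol{\cM}^\gp$ on the target recovers $\ol{\cM}^\gp$ on the source (i.e.\ the characteristic sheaf is insensitive to adding the factor $(\P^n,\P^{n-1})$), since the log structure of $X\times(\P^n,\P^{n-1})$ is, away from the strict normal crossing divisor $X\times \P^{n-1}$, pulled back from $X$, and along $\P^{n-1}$ one picks up only a copy of $\N$ which is constant in the $X$-direction. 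This should be checked on stalks using the chart description.

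\textbf{Next steps.} With the $\ol{\cM}^\gp$-part under control, the problem reduces to understanding $q_*\Gm$ and $R^1q_*\Gm$ on $X\times(\P^n,\P^{n-1})$. Here I would use Proposition \ref{logPic.7}: on the underlying schemes, $\Pic(X\times\P^n) \cong \Pic(X)\times \Z$, the extra $\Z$ being generated by $\cO(1)$. Since the log structure on $X\times(\P^n,\P^{n-1})$ restricted over $\P^{n-1}$ contributes exactly the divisor class of $\cO(1)$ — the characteristic sheaf $\ol{\cM}^\gp$ globally sees this $\Z$ — the extra $\Z$ in $\Pic$ should be exactly cancelled by the extra contribution in $H^0(\ol{\cM}^\gp)$ when one passes from $\Gm$-cohomology to $\cM^\gp$-cohomology. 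Concretely, I would run the five-term exact sequence
\[
0 \to H^0(\Gm) \to H^0(\cM^\gp) \to H^0(\ol{\cM}^\gp) \to H^1(\Gm) \to H^1(\cM^\gp) \to H^1(\ol{\cM}^\gp)
\]
on both $X$ and $X\times(\P^n,\P^{n-1})$ and chase the comparison maps. The geometrically unibranch hypothesis enters through Propositions \ref{logPic.9} and \ref{logPic.7}: it guarantees $H^i(X,\Z) = 0$ for $i>0$, which is needed because $\ol{\cM}^\gp$ is (strict-locally, after passing to a chart) built out of constant sheaves $\Z$, so its higher cohomology on geometrically unibranch schemes vanishes in the relevant range, allowing the diagram chase to close up. For the $\cM^\gp$-invariance itself (the first map), I would argue directly: a global section of $\cM^\gp$ on $X\times(\P^n,\P^{n-1})$ is a pair (line bundle with a trivialization of its image in $\ol{\cM}^\gp$, i.e.\ a section away from the boundary), and the classification of line bundles on $X\times\P^n$ forces this to come from $X$ up to the known $\cO(1)$-twist, which is killed by the boundary condition.

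\textbf{Main obstacle.} The hard part will be the bookkeeping in the comparison of long exact sequences, specifically verifying that the "new" classes appearing on $X\times(\P^n,\P^{n-1})$ in $H^0(\ol{\cM}^\gp)$ and in $H^1(\Gm) = \Pic$ are matched precisely by the connecting homomorphism, so that they cancel and leave $H^1(\cM^\gp)$ unchanged. This requires identifying the image of the boundary class of $\P^{n-1}$ under $H^0(\ol{\cM}^\gp) \to H^1(\Gm)$ as the class of $\cO(1)$, which is a local-to-global computation with the chart $\N$ along $\P^{n-1}$. A secondary subtlety is making sure the strict Nisnevich and strict \'etale cases are treated uniformly; since the proof only uses the sheaf sequence, Proposition \ref{logPic.7} (which is topology-independent, being about actual Picard groups), and Proposition \ref{logPic.9} (which is stated in both $Nis$ and $\et$ versions), this should go through in parallel, but I would state it carefully. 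I would also need $X$ irreducible (not just connected) to ensure $\ol{\cM}^\gp$ of the product behaves well, and to invoke the cited Picard group computations which assume connectedness/irreducibility.
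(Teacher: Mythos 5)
Your operative argument coincides with the paper's own proof: for a scheme $X$ (trivial log structure) the sequence $1\to\G_m\to\cM^\gp\to\ol{\cM}^\gp\to 1$ on $X\times(\P^n,\P^{n-1})$ is exactly the sequence $0\to\cO^*_{X\times\P^n}\to\cM^\gp_{X\times(\P^n,\P^{n-1})}\to i_*\Z\to 0$ that the paper uses, where $i\colon X\times\P^{n-1}\to X\times\P^n$ is the divisor at infinity, and the paper concludes by precisely your cancellation mechanism: irreducibility gives $H^0(i_*\Z)\cong\Z$, geometric unibranchness of $X\times\P^{n-1}$ (Proposition \ref{logPic.9}, in both its Nisnevich and \'etale forms) kills $H^1(i_*\Z)$, the connecting map sends the generator to $\cO(1)$ via the \v{C}ech computation with the local lifts $x_n/x_i$ of the boundary section, and Proposition \ref{logPic.7} then makes all three comparison maps isomorphisms.

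One correction: your announced first step --- that $q$ induces an isomorphism on $\ol{\cM}^\gp$, i.e.\ that the characteristic sheaf is ``insensitive'' to the factor $(\P^n,\P^{n-1})$ --- is false and should be dropped. Here $\ol{\cM}^\gp_X=0$, whereas $\ol{\cM}^\gp$ of the product is $i_*\Z\neq 0$, and $q_*$ of it is the constant sheaf $\Z$ on $X$. That extra $\Z$ is exactly the ``extra contribution in $H^0(\ol{\cM}^\gp)$'' which your later steps correctly exploit to cancel the $\cO(1)$-summand of $\Pic(X\times\P^n)$; so the claim contradicts, and is not needed for, the rest of your plan.
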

\begin{proof}
Let $i\colon X\times \P^{n-1} \to X\times \P^n$ be the closed immersion at $\infty$.
There is an exact sequence of sheaves of abelian groups
\[
0
\to
\cM_{X\times \P^n}^{\gp}
\to
\cM_{X\times (\P^n,\P^{n-1})}^{\gp}
\to
i_*\Z
\to
0.
\]
Hence we have a long exact sequence
\begin{equation}
\label{logPic.3.1}
\begin{split}
0&
\to
\cM^{\gp}(X\times \P^n)
\to
\cM^{\gp}(X\times (\P^n,\P^{n-1}))
\to
H_{\et}^0(X\times \P^n,i_*\Z)
\\
&\xrightarrow{\delta}
\Pic(X\times \P^n)
\to
\Pic(X\times  (\P^n,\P^{n-1}))
\to
H_{\et}^1(X\times \P^n,i_*\Z)\to \cdots.
\end{split}
\end{equation}

Since $X\times \P^{n-1}$ is irreducible, there are isomorphisms of abelian groups
\begin{equation}
\label{logPic.3.3}
\Gamma(X\times \P^n,i_*\Z)
\cong
\Gamma(X\times \P^{n-1},\Z)
\cong
\Z.
\end{equation}
Furthermore, since $X\times \P^{n-1}$ is geometrically unibranch, 
Proposition \ref{logPic.9} implies the vanishing
\begin{equation}
\label{logPic.3.2}
H_{Nis}^1(X\times \P^n,i_*\Z)
=
H_{Nis}^1(X\times \P^{n-1},\Z)
=
H_{\et}^1(X\times \P^n,i_*\Z)
=
H_{\et}^1(X\times \P^{n-1},\Z)
=0.
\end{equation}
The element $1\in \Z$ corresponds to a section $s\in \Gamma(X\times \P^n,i_*\Z)$.
There is a Zariski cover of $(\P^n,\P^{n-1})$ consisting of
\[
U_i:=
\Spec{\Z[x_0/x_i,\ldots,x_n/x_i],\N(x_n/x_i)}
\]
for $0\leq i\leq n-1$ and
\[
U_n:=\Spec{\Z[x_0/x_n,\ldots,x_n/x_n]}.
\]
The section $s$ can be locally lifted to
\[
x_n/x_i\in \Gamma(X\times U_i,\cM^\gp)
\]
for $0\leq i\leq n$.
The Zariski cover $\{X\times U_i\}$ of $X\times (\P^n,\P^{n-1})$ defines a sequence
\[
0 \to \bigoplus_i \Gamma(X\times U_i,\cM^{\gp}) \to \bigoplus_{i\neq j} \Gamma(X\times (U_i\cap U_j),\cM^{\gp}).
\]
The section $(x_n/x_0,\ldots,x_n/x_n)$ maps to a section whose components have the form 
\[
x_j/x_i\in \Gamma(X\times (U_i\cap U_j),\cO^*)\subset \Gamma(X\times (U_i\cap U_j),\cM^{\gp}).
\]
This is exactly the element in the \v{C}ech cohomology associated with $\cO(1)\in \Pic(X\times \P^n)$.
Hence the boundary map $\delta$ in \eqref{logPic.3.1} sends $s$ to $\cO(1)\in \Pic(X\times \P^n)$.
Together with Proposition \ref{logPic.7}, \eqref{logPic.3.3}, and \eqref{logPic.3.2} we conclude.
\end{proof}

\begin{prop}
\label{logPic.8}
Suppose $S$ is a geometrically unibranch scheme and $X\in \SmlSm/S$.
Then the homomorphisms in \eqref{logPic.4.4} and \eqref{logPic.4.5} are isomorphisms.
\end{prop}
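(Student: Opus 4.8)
The plan is to reduce the statement about $X\in \SmlSm/S$ to the case of irreducible geometrically unibranch schemes already handled in Proposition \ref{logPic.3}. The key observation is that, since $X\in \SmlSm/S$, the scheme $\underline{X}$ is smooth over $S$, and smoothness combined with $S$ being geometrically unibranch implies that $\underline{X}$ is geometrically unibranch as well (smooth morphisms have geometrically unibranch fibers and the property ascends along smooth morphisms). Moreover $\underline{X}$ is locally noetherian. The only discrepancy with the hypotheses of Proposition \ref{logPic.3} is irreducibility: $\underline{X}$ need not be irreducible, but since it is geometrically unibranch and locally noetherian, its irreducible components are disjoint (this is precisely the normal-like local behavior encoded by ``unibranch''), so $\underline{X}$ decomposes as a disjoint union $\coprod_\alpha X_\alpha$ of open-and-closed irreducible geometrically unibranch subschemes, each still in $\SmlSm/S$ after equipping it with the induced log structure.

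First I would verify that all three functors in \eqref{logPic.4.5} — the global sections $\cM^\gp(-)$, $\logPic(-)=H^1_{sNis}(-,\cM^\gp)$, and $\logPic_{\set}(-)=H^1_{\set}(-,\cM^\gp)$ — send disjoint unions of schemes to products of abelian groups. This is clear for $\cM^\gp(-)$ and follows for the cohomology groups from the fact that sheaf cohomology on a disjoint union is the product of the cohomologies of the pieces (Zariski, Nisnevich, or \'etale site all behave this way, since the topos of a disjoint union is the product of the topoi). Since the product $X\times(\P^n,\P^{n-1})$ decomposes correspondingly as $\coprod_\alpha X_\alpha\times(\P^n,\P^{n-1})$, the map in \eqref{logPic.4.5} for $X$ is the product over $\alpha$ of the maps for $X_\alpha$. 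Then I would invoke Proposition \ref{logPic.3} for each irreducible geometrically unibranch $X_\alpha$ to conclude that each component map is an isomorphism, hence so is the product.

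The main technical point — and the step I would be most careful about — is the claim that $\underline{X}$ is geometrically unibranch with clopen irreducible components. This requires: (i) that geometric unibranchness of $S$ is inherited by a smooth $S$-scheme (one reduces to the local statement that the strict henselization of a local ring of $\underline{X}$ has an irreducible spectrum, using \cite[Proposition IV.18.12.6]{EGA}-type results or EGA IV.6 on formal/geometric unibranchness under smooth base change); and (ii) that a locally noetherian geometrically unibranch scheme has disjoint irreducible components, which holds because at each point the local ring has irreducible spectrum (after henselization, hence already on the Zariski-local level the point lies on a unique component). Both facts are standard in the EGA framework but are the real content here. Once these are in place, the rest is a formal bookkeeping argument combining Proposition \ref{logPic.3} with additivity of cohomology over disjoint unions.

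One should also note for completeness that the case $S=\Spec k$ with $k$ a field recovers a range of examples, and that the same disjoint-union reduction shows the statement is genuinely local on $S$; so if desired one could first reduce to $S$ irreducible (or even local) before running the argument, though this is not necessary. I would present the proof essentially as: decompose $\underline{X}=\coprod X_\alpha$ into irreducible geometrically unibranch pieces (citing the EGA results on smooth descent of geometric unibranchness), apply additivity of $\cM^\gp$, $H^1_{sNis}$, $H^1_{\set}$ over this decomposition, and invoke Proposition \ref{logPic.3} componentwise.
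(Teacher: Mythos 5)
Your reduction of $X$ to its irreducible components is harmless and essentially matches the paper's first step (the paper cites \cite[Proposition 17.5.7]{EGA} to see that $X$ is geometrically unibranch, so its connected components are irreducible). The decisive gap is the final invocation of Proposition \ref{logPic.3}: that proposition is stated for a geometrically unibranch irreducible \emph{scheme}, i.e.\ with trivial log structure, whereas an irreducible component of $X\in\SmlSm/S$ is a log scheme whose boundary $\partial X$ may be a nonempty strict normal crossing divisor. In that case $\cM^{\gp}_X$ is the compactifying log structure, not $\cO_X^*$, and neither the statement nor the proof of Proposition \ref{logPic.3} (which rests on $\Pic(X\times\P^n)\cong\Pic(X)\times\Z$ and on the single extension $0\to\cM^{\gp}_{X\times\P^n}\to\cM^{\gp}_{X\times(\P^n,\P^{n-1})}\to i_*\Z\to 0$) applies. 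So your argument only proves the proposition when $\partial X=\emptyset$, which is not the content of the statement.

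What is missing is the dévissage on the boundary, which is the actual core of the paper's proof: for $X$ irreducible one inducts on the number $r$ of irreducible components of $\partial X$, with $r=0$ being Proposition \ref{logPic.3}. For $r>0$ one chooses a component $D$ of $\partial X$, lets $Y$ be $\ul{X}$ equipped with the compactifying log structure of $X-\overline{\partial X-D}\to X$, and uses the exact sequences $0\to\cM^{\gp}_Y\to\cM^{\gp}_X\to i_*\Z\to 0$ and its analogue after multiplying with $(\P^n,\P^{n-1})$, where $i\colon D\to X$ is the closed immersion. Since $D$ and $D\times(\P^n,\P^{n-1})$ are again geometrically unibranch by \cite[Proposition 17.5.7]{EGA}, one has $\Gamma(X,i_*\Z)\cong\Z$ and the vanishing of $H^1_{sNis}$ and $H^1_{\set}$ with coefficients $i_*\Z$, so the resulting long exact sequences assemble into a five-lemma comparison that propagates the isomorphisms from $Y$ (with $r-1$ boundary components) to $X$, both for $\logPic$ and for $\logPic_{\set}$. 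Without some version of this induction your proof does not establish the proposition.
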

\begin{proof}
By \cite[Proposition 17.5.7]{EGA}, $X$ is geometrically unibranch.
Hence we only need to consider the case when $X$ is irreducible since every connected component of $X$ is an irreducible component.

We proceed by induction on the number of irreducible components $r$ of $\partial X$.
The case $r=0$ is done by arguing as in  \cite[\href{https://stacks.math.columbia.edu/tag/03P7}{Tag 03P7}]{stacks-project} for \eqref{logPic.4.4} and by Proposition \ref{logPic.3} for \eqref{logPic.4.5}.
If $r>0$, let $D$ be an irreducible component of $\partial X$.
Let $Y$ be the fs log scheme whose underlying scheme is $\ul{X}$ and whose log structure is the compactifying log structure associated with the open immersion
\[
X-\overline{\partial X-D}\to X.
\]
For simplicity of notation, we set
\[
X':=X\times (\P^n,\P^{n-1}),
\text{ }
Y':=Y\times (\P^n,\P^{n-1}),
\text{ and }
D':=D\times (\P^n,\P^{n-1}).
\]
Let $i\colon D\to X$ and $i'\colon D'\to X'$ denote the closed immersions.
Since $D$ and $D'$ are geometrically unibranch by \cite[Proposition 17.5.7]{EGA}, 
as in \eqref{logPic.3.3} and \eqref{logPic.3.2} there are isomorphisms of abelian groups
\begin{equation}
\label{logPic.8.1}
\Gamma(X,i_*\Z) \cong \Gamma(X',i_*'\Z)\cong \Z.
\end{equation}
and the vanishing
\begin{equation}
\label{logPic.8.2}
H_{sNis}^1(X,i_*\Z)
=
H_{sNis}^1(X',i_*'\Z)
=
H_{\set}^1(X,i_*\Z)
=
H_{\set}^1(X',i_*'\Z)
=
0.
\end{equation}
There are exact sequences
\[
0\to \cM_Y^{\gp} \to \cM_X^{\gp} \to i_*\Z\to 0
\text{ and }
0\to \cM_{Y'}^{\gp} \to \cM_{X'}^{\gp} \to i_*'\Z\to 0.
\]
Together with \eqref{logPic.8.1} and \eqref{logPic.8.2}, these give a commutative diagram 
\begin{equation}
\label{logPic.8.3}
\begin{tikzcd}[column sep=small]
0\ar[r]&
\cM^{\gp}(Y)\ar[r]\ar[d,"a"']&
\cM^{\gp}(X)\ar[r]\ar[d,"b"]&
\Z\ar[r]\ar[d,"\cong"]&
\logPic(Y)\ar[r]\ar[d,"c"]&
\logPic(X)\ar[r]\ar[d,"d"]&
0
\\
0\ar[r]&
\cM^{\gp}(Y')\ar[r]&
\cM^{\gp}(X')\ar[r]&
\Z\ar[r]&
\logPic(Y')\ar[r]&
\logPic(X')\ar[r]&
0
\end{tikzcd}
\end{equation}
with exact rows.
We have a similar commutative diagram for $\logPic_{\set}$.
By induction, $a$ and $c$ are isomorphisms.
An application of the five lemma shows that $b$ and $d$ are isomorphisms too.

To show that \eqref{logPic.4.4} is an isomorphism, replace the lower row of \eqref{logPic.8.3} by the corresponding sequence for $\logPic_{\set}$, and argue similarly as above.
\end{proof}

We do not know whether there is an isomorphism of abelian groups
\[
\logPic(X)
\xrightarrow{\cong}
\logPic(X\times (\P^n,\P^{n-1}))
\]
for arbitrary fs log schemes $X$ and $n\geq 1$.
The assumption imposed in Proposition \ref{logPic.8} seems too restrictive.
In the following, we investigate another case.

\begin{prop}
\label{logPic.6}
Let $X$ be a saturated log scheme such that $\ul{X}$ is the spectrum of a field.
Then the homomorphisms in \eqref{logPic.4.5} are isomorphisms.
\end{prop}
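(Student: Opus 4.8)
The plan is to reduce the statement to an explicit computation over the spectrum of a field $k$, following the template already used in Proposition~\ref{logPic.3} and Proposition~\ref{logPic.8}, but now without the smoothness hypothesis on the log structure. Since $\underline{X} = \Spec{k}$, the underlying scheme is a point, so $\Pic(\underline{X} \times \P^n) \cong \Z$ by Proposition~\ref{logPic.7} (taking $\Pic(\Spec k) = 0$), and all higher Nisnevich and \'etale cohomology of $\underline{X}$ and $\underline{X} \times \P^{n-1}$ with constant coefficients vanishes because $\Spec{k}$ and $\P^{n-1}_k$ are geometrically unibranch (Proposition~\ref{logPic.9}). So the cohomological input is identical to the one in the previous propositions; the only novelty is that $\overline{\cM}_X$ need not come from a strict normal crossing divisor, hence the sheaf $\cM_X^\gp$ on $X \times (\P^n, \P^{n-1})$ must be analyzed directly.

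First I would set up the short exact sequence of sheaves of abelian groups on $X \times \P^n$ associated with the closed immersion $i\colon X \times \P^{n-1} \to X\times \P^n$ at $\infty$:
\[
0 \to \cM_{X\times \P^n}^{\gp} \to \cM_{X\times (\P^n,\P^{n-1})}^{\gp} \to i_*\Z \to 0,
\]
which yields the long exact sequence in $sNis$- (resp.\ $\set$-) cohomology exactly as in \eqref{logPic.3.1}. The key points are: $\Gamma(X\times\P^n, i_*\Z) = \Gamma(X\times \P^{n-1}, \Z) \cong \Z$ since $X \times \P^{n-1}$ is connected (the underlying scheme $\P^{n-1}_k$ being irreducible), and $H^1(X\times \P^n, i_*\Z) = H^1(X \times \P^{n-1}, \Z) = 0$ by Proposition~\ref{logPic.9} applied to $\P^{n-1}_k$. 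Then I would trace the boundary map $\delta\colon \Z \to \Pic(X\times\P^n)$ using the standard Zariski cover $U_0,\dots,U_n$ of $(\P^n,\P^{n-1})$ as in the proof of Proposition~\ref{logPic.3}: the canonical generator $s$ of $\Gamma(X\times\P^n,i_*\Z)$ lifts locally to $x_n/x_i \in \Gamma(X\times U_i, \cM^\gp)$, and the resulting \v{C}ech cocycle $(x_j/x_i)$ represents $\cO(1) \in \Pic(X\times\P^n)$. Since $\delta$ sends a generator to $\cO(1)$, which generates $\Pic(X\times\P^n)/q^*\Pic(X)\cong \Z$, the map $\delta$ is injective, forcing $\cM^\gp(X\times\P^n) \xrightarrow{\cong} \cM^\gp(X\times(\P^n,\P^{n-1}))$ and $\logPic(X\times\P^n) \xrightarrow{\cong} \logPic(X\times(\P^n,\P^{n-1}))$; combined with $\cM^\gp(X)\cong \cM^\gp(X\times\P^n)$ and Proposition~\ref{logPic.7} one concludes isomorphism in \eqref{logPic.4.5}. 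The same diagram works for $\logPic_{\set}$ verbatim, replacing $sNis$ by $\set$.

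The main obstacle I anticipate is justifying the identity $\cM^\gp(X) \cong \cM^\gp(X\times\P^n)$ and, more delicately, verifying that the local lifts $x_n/x_i$ of $s$ actually exist as sections of $\cM^\gp_{X\times(\P^n,\P^{n-1})}$ over $X\times U_i$ when $X$ carries an arbitrary saturated (not necessarily fine) log structure — i.e.\ that adjoining the rational function $x_n/x_i$ interacts correctly with the chart of $\cM_X$ on the product. Here the saturation hypothesis on $X$ should be exactly what is needed: the product log structure on $X\times(\P^n,\P^{n-1})$ is again saturated, and $\overline{\cM}$ on the locus $x_i \neq 0$ is simply $\overline{\cM}_X \oplus \N$, so $x_n/x_i$ lifts the generator of the $\N$-summand. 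Once this local picture is pinned down, the rest is a routine diagram chase identical to the earlier cases. A secondary point is that since $\underline{X}$ is a field, there is no induction on irreducible components of a boundary to run — the boundary of $X$ is encoded entirely in $\overline{\cM}_X$ over the single point — so the argument is actually shorter than Proposition~\ref{logPic.8}, the entire content being the single exact sequence above plus the $\delta$-computation.
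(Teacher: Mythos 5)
Your route is genuinely different from the paper's, but as written it has a real gap at its central step. The long exact sequence attached to $0\to\cM^{\gp}_{X\times\P^n}\to\cM^{\gp}_{X\times(\P^n,\P^{n-1})}\to i_*\Z\to 0$ does \emph{not} force ``$\logPic(X\times\P^n)\xrightarrow{\cong}\logPic(X\times(\P^n,\P^{n-1}))$''; at best it shows this map is surjective with kernel generated by $\delta(1)=[\cO(1)]$. Already for $X=\Spec{k}$ with trivial log structure (a legitimate instance of the proposition) one has $\logPic(X\times\P^n)=\Pic(\P^n_k)\cong\Z$ while $\logPic(X\times(\P^n,\P^{n-1}))=0$, so the asserted isomorphism is false. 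Consequently, to reach the statement you still need to identify $\logPic(X\times\P^n)$ with $\logPic(X)\oplus\Z\cdot[\cO(1)]$ and $\cM^{\gp}(X\times\P^n)$ with $\cM^{\gp}(X)$, i.e.\ a projective-bundle formula for the \emph{log} Picard group of $X\times\P^n$ with the pulled-back log structure. Proposition~\ref{logPic.7}, which you invoke, only concerns $\Pic$ of the underlying schemes and does not supply this. Relatedly, $\delta$ lands in $\logPic(X\times\P^n)=H^1(\cM^{\gp}_{X\times\P^n})$, not in $\Pic(\ul{X}\times\P^n)$, so ``$\cO(1)$ generates $\Pic/q^*\Pic$'' does not yet give injectivity of $\delta$: you must check that $[\cO(1)]$ has infinite order in $\logPic(X\times\P^n)$, e.g.\ by showing $\cM^{\gp}(X\times\P^n)\to\Gamma(\ol{\cM}^{\gp})=G$ is surjective so that $\Pic(\ul{X}\times\P^n)\to\logPic(X\times\P^n)$ is injective.

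The missing comparison is also exactly where the saturation hypothesis enters, and your diagnosis of its role is off. The local lifts $x_n/x_i$ exist for any compactifying log structure and have nothing to do with saturation. Saturation is used to make $G=\ol{\cM}^{\gp}(X)$ torsion-free, so that the vanishing $H^1(\ul{X},G)=H^1(\ul{X}\times\P^{n-1},G)=0$ (and likewise on $\ul{X}\times\P^n$) holds in both the strict Nisnevich and strict \'etale topologies, via \cite[Proposition XI.3.6(2), Remarques XI.3.7]{SGA4} on geometrically unibranch schemes; so the cohomological input is \emph{not} identical to the earlier propositions. The paper's proof avoids your boundary-divisor sequence altogether: it uses $0\to\cM^{\gp}_{\ul{X}}\to\cM^{\gp}_{X}\to G\to 0$ and its analogue over the product to produce a map of five-term exact sequences comparing $X$ with the trivial-log-structure case $\ul{X}$, which is already settled by Proposition~\ref{logPic.3}, and concludes with the five lemma. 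If you prefer to keep your route, you must add the computation of $\cM^{\gp}(X\times\P^n)$ and $\logPic(X\times\P^n)$ relative to $X$ by the same exact sequence, which is precisely where the torsion-freeness of $G$ is needed.
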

\begin{proof}
We set $G:=\ol{\cM}^{\gp}(X)$ as abbreviation.
The assumption that $X$ is saturated implies that $G$ is torsion-free.
Hence, 
\cite[Proposition XI.3.6(2)]{SGA4} is applicable to $G$ due to \cite[Remarques XI.3.7]{SGA4}.
Since $\ul{X}$ and $\ul{X}\times \P^{n-1}$ are irreducible and geometrically unibranch, as in \eqref{logPic.3.3} and \eqref{logPic.3.2} there are isomorphisms of abelian groups
\begin{equation}
\label{logPic.6.1}
\Gamma(X,G) \cong \Gamma(X\times (\P^n,\P^{n-1}),G)\cong G
\end{equation}
and the vanishing
\begin{equation}
\label{logPic.6.2}
H_{sNis}^1(X,G)
=
H_{sNis}^1(X\times \P^{n-1},G)
=
H_{\set}^1(X,G)
=
H_{\set}^1(X\times \P^{n-1},G)
=
0.
\end{equation}

Let $i\colon \ul{X}\times \P^{n-1}\to \ul{X}\times \P^n$ be the closed immersion at $\infty$.
There are exact sequences
\[
0\to \cM_{\ul{X}}^\gp\to \cM^\gp_{X}\to G \to 0
\text{ and }
0\to \cM_{\ul{X}\times (\P^n,\P^{n-1})}^{\gp}\to \cM^\gp_{X\times (\P^n,\P^{n-1})}\to i_*G \to 0.
\]
As abbreviations, we set $Y:=\ul{X}$, $X'=X\times (\P^n,\P^{n-1})$, and $Y':=\ul{X}\times (\P^n,\P^{n-1})$.
Together with \eqref{logPic.6.1} and \eqref{logPic.6.2}, these give a commutative diagram with exact rows:
\[
\begin{tikzcd}[column sep=small]
0\ar[r]&
\cM^{\gp}(Y)\ar[r]\ar[d,"a"']&
\cM^{\gp}(X)\ar[r]\ar[d,"b"]&
G\ar[r]\ar[d,"\cong"]&
\logPic(Y)\ar[r]\ar[d,"c"]&
\logPic(X)\ar[r]\ar[d,"d"]&
0
\\
0\ar[r]&
\cM^{\gp}(Y')\ar[r]&
\cM^{\gp}(X')\ar[r]&
G\ar[r]&
\logPic(Y')\ar[r]&
\logPic(X')\ar[r]&
0.
\end{tikzcd}
\]
We have a similar commutative diagram for $\logPic_{\set}$.
By Proposition \ref{logPic.3}, $a$ and $c$ are isomorphisms.
The five lemma lets us conclude that also $b$ and $d$ are isomorphisms.
\end{proof}

\subsection{The infinite projective space}
Morel-Voevodsky showed the equivalence
\begin{equation}
\label{Pic.0.1}
\P^\infty \xrightarrow{\simeq} \clspace\G_m
\end{equation}
in $\infH(S)$, where $S\in \Sch$, see \cite[Proposition 3.7, p.\ 138]{MV}.
As argued in this text, 
$\Gmm$ replaces $\G_m$ in $\boxx$-motivic homotopy theory.
The purpose of this section is to show that there is an equivalence
\begin{equation}
\label{Pic.0.2}
\P^\infty \to \clspace \Gmm
\end{equation}
in $\inflogH(S)$ for every $S\in \Sch$.

\

There is an explicit description of $\A^1$ -homotopies that yields the equivalence
\eqref{Pic.0.1} in \cite[\S 2]{zbMATH05635033}.
Our method of establishing \eqref{Pic.0.2} is to ``compactify'' their argument.

\begin{df}
In this subsection, we use the abbreviation 
\[
\cptsph{n}:=(\Blow_O(\P^n),H+E)
\]
for every integer $n\geq 1$, 
where $O$ is the point $[0:\cdots:0:1]$ in $\P^n$,
$H$ is the
projective hyperplane given by the equation $z_n=0$ if the coordinates of $\P^n$ are written as $[z_0:\cdots:z_n]$,
and $E$ is the exceptional divisor.
Observe that the log structure on $\cptsph{n}$ is the compactifying log structure 
associated with the open immersion $\A^n-0\to \Blow_O(\P^n)$.
\end{df}

There is a closed immersion
\[
\P^n\to \P^{n+1}
\]
sending $[a_1:\cdots:a_{n+1}]$ to $[a_1:\cdots:a_n:0:a_{n+1}]$.
Since the preimage of $O\in \P^{n+1}$ is $O\in \P^n$, there is a naturally induced  morphism
\begin{equation}
\label{Pic.1.3}
\Blow_O(\P^n)\to \Blow_O(\P^{n+1}).
\end{equation}
This is an extension of the closed immersion
\begin{equation}
\label{Pic.1.2}
\A^n-0\to \A^{n+1}-0
\end{equation}
sending $(a_1,\ldots,a_{n})$ to $(a_1,\ldots,a_{n},0)$.
Combine \eqref{Pic.1.3} and \eqref{Pic.1.2} to produce a strict closed immersion
\begin{equation}
\label{Pic.1.1}
i\colon \cptsph{n}\to \cptsph{n+1}.
\end{equation}
Finally, 
we form a colimit in the category of presheaves 
\[
\cptsph{\infty}
:=
\colimit_n \cptsph{n}.
\]

For every integer $n\geq 1$, let $v_1,\ldots,v_n$ be the points $(1,0,\ldots,0)$, $\ldots$, $(0,\ldots,0,1)$ in $\A^n-0$.

\begin{lem}
\label{Pic.3}
Suppose $S\in \Sch$.
For every integer $n\geq 1$, the inclusion
\[
i\colon \cptsph{n} \rightarrow \cptsph{n+1}
\]
in \eqref{Pic.1.1} is homotopic to the constant map to $v_{n+1}\in V_{n+1}$ in $\inflogH(S)$.
\end{lem}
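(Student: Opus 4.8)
The strategy is to ``compactify'' the explicit $\mathbb{A}^1$-homotopy from \cite[\S 2]{zbMATH05635033} that contracts the inclusion $\mathbb{A}^n-0\to \mathbb{A}^{n+1}-0$ onto a coordinate point. In the $\mathbb{A}^1$-invariant setting one uses a chain of elementary automorphisms (shears) of $\mathbb{A}^{n+1}-0$ together with the straight-line homotopy in the last coordinate, moving $(a_1,\ldots,a_n,0)$ first to $(a_1,\ldots,a_n,1)$, then via shears to $(0,\ldots,0,1)=v_{n+1}$. The point is that each shear has the form appearing in Lemma \ref{ProplogSH.7}: it is a morphism $X'\to Y$ out of the total space of a composite of admissible blow-ups along smooth centers over $X\times \boxx$, restricting to $f_0$ and $f_1$ over the zero and one sections. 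So the plan is to realize each elementary step of the classical homotopy as such a diagram in $\SmlSm/S$, invoke Lemma \ref{ProplogSH.7} to conclude the two ends are equivalent in $\inflogH^{\SmAdm}(S)$, and compose.

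Concretely, I would proceed as follows. First, reduce to $S=\Spec{\mathbb{Z}}$ by pulling back along the structure morphism $p\colon S\to \Spec{\mathbb{Z}}$, since $p^*$ is a functor of premotivic $\infty$-categories and everything in sight is defined over $\mathbb{Z}$. Second, I would write down explicitly the chain of morphisms $\cptsph{n}\to \cptsph{n+1}$ through intermediate log schemes: the homotopy $(a_1,\ldots,a_n,0)\rightsquigarrow(a_1,\ldots,a_n,1)$ is the linear path in the $(n+1)$-st coordinate, which extends to a morphism $\cptsph{n}\times\boxx\to \cptsph{n+1}$ after suitable blow-ups (so that the map is defined near the boundary $H+E$ and the path stays inside $\Blow_O(\P^{n+1})$); then the shear moving the remaining coordinates to zero is likewise handled. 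Third, for each such step I would exhibit the lift as in Lemma \ref{ProplogSH.7}: identify $X$, $X'$, $Y$, the composite of admissible blow-ups $p\colon X'\to X\times\boxx$, and check $pi_0'=$ zero section, $pi_1'=$ one section. Finally I would chain the resulting equivalences in $\inflogH^{\SmAdm}(S)$ and identify the composite with the constant map at $v_{n+1}$.

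The main obstacle is the geometric bookkeeping at the boundary: the classical homotopies are linear maps on $\mathbb{A}^{n+1}-0$, but to promote them to morphisms of $\SmlSm/S$ one needs them to extend to the compactification $\Blow_O(\P^{n+1})$ with its log structure along $H+E$, and a generic linear homotopy does \emph{not} extend --- it will have indeterminacy locus meeting the boundary, requiring a sequence of admissible blow-ups along smooth centers to resolve (exactly as happens in the much harder Lemma \ref{Ori.55}, whose proof occupies several pages of toric computations). So the real work is: choosing the right order of shears and the right blow-ups so that at each stage the ambient space is a composite of admissible blow-ups along smooth centers over $\cptsph{n}\times\boxx$ (hence an equivalence in $\inflogH^{\SmAdm}(S)$), and verifying that the strict transforms of the divisors $H$ and $E$ behave correctly (in particular that $Z$ has strict normal crossing with the boundary, so the blow-ups are admissible in the sense of Definition \ref{logH.35}). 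I expect this to be routine in spirit but delicate in execution, analogous to --- though lighter than --- the construction in Lemma \ref{Ori.55}; once the diagram is set up, the conclusion is immediate from Lemma \ref{ProplogSH.7} and $\boxx$-invariance.
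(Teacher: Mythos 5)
Your overall strategy is the same as the paper's: reduce to $S=\Spec{\Z}$ by pulling back, compactify an explicit $\A^1$-homotopy contracting $\A^n-0\hookrightarrow\A^{n+1}-0$ onto $v_{n+1}$, and conclude via Lemma \ref{ProplogSH.7}. But as written the proposal has a genuine gap: the entire content of the lemma is the step you defer — namely exhibiting a \emph{specific} composite of admissible blow-ups along smooth centers $X'\to \cptsph{n}\times\boxx$ and a morphism $X'\to\cptsph{n+1}$ that restricts to the homotopy on the open part, with lifts of the zero and one sections. You assert that each elementary step of your homotopy "extends to a morphism $\cptsph{n}\times\boxx\to\cptsph{n+1}$ after suitable blow-ups" and that this is "routine in spirit," but no extension is constructed and no verification is given that the required centers are smooth, contained in the boundary, and have strict normal crossing with it (Definition \ref{logH.35}); until that is done, Lemma \ref{ProplogSH.7} cannot be invoked. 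Note that this is not a formality: the paper explicitly remarks that its homotopy does \emph{not} extend to $\Blow_O(\P^n)\times\P^1$ itself, so identifying the correct blow-up center is the actual work.

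For comparison, the paper avoids your two-step chain (linear path in the last coordinate, then shears/scalings) and uses the single homotopy $h(a_1,\ldots,a_n,t)=(ta_1,\ldots,ta_n,1-t)$, which simultaneously moves the last coordinate to $1$ and scales the others to $0$. It then blows up $\Blow_O(\P^n)\times\P^1$ along $E\times[1:1]+H\times[0:1]$ and checks, by explicit Zariski-local inclusions of coordinate rings, that $h$ extends to this blow-up with target $\Blow_O(\P^{n+1})$, after which the projection to $\cptsph{n}\times\boxx$ is a composite of admissible blow-ups along smooth centers and Lemma \ref{ProplogSH.7} applies once. Your decomposition would force you to repeat this extension-and-verification analysis for every intermediate step (and some of your "shears" are in fact scalings whose indeterminacy along $H$ at the endpoint is exactly of the kind that needs resolving), so it multiplies rather than reduces the work. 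To turn your proposal into a proof you must either carry out those explicit local computations for each step, or switch to the single homotopy above and supply the coordinate verification the paper gives.
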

\begin{proof}
Let $u\colon S\to \Spec{\Z}$ be the structure morphism.
If we show the claim for $\Spec{\Z}$, then we can use $u^*$ to deduce the claim for $S$.
Hence we may assume $S=\Spec{\Z}$.

The closed immersion $v$ in \eqref{Pic.1.2} is $\A^1$-homotopic to a constant map via the morphism
\[
h\colon (\A^n-0)\times \A^1\to \A^{n+1}-0
\]
sending $(a_1,\ldots,a_n,t)$ to $(ta_1,\ldots,ta_{n},1-t)$.
We have the corresponding rational map $\P^n\times \P^1\dashrightarrow \P^{n+1}$ given by
\[
([z_0:\cdots:z_n],[t_0:t_1])
\mapsto 
[z_0 t_0:\cdots:z_{n-1} t_0:z_n(t_1-t_0):z_n t_1].
\]
One can check that this rational map becomes a morphism of schemes after the blow-up at $H\times [0:1]$,
where $H$ is the divisor $z_n=0$.
Hence we have the morphism
\[
\Blow_{H\times [0:1]}(\P^n \times \P^1)
\to
\P^{n+1}.
\]
The inverse image of $O\in \P^{n+1}$ is $(O,[1:1])$.
Hence we have the induced morphism
\[
\Blow_{(O,[1:1])}(\Blow_{H\times [0:1]}(\P^n \times \P^1))
\to
\Blow_O (\P^{n+1}).
\]
One can also check that there is a proper birational morphism
\[
\Blow_{E\times [1:1]+\widetilde{H}\times [0:1]}(\Blow_O(\P^n)\times \P^1)
\to
\Blow_{(O,[1:1])}(\Blow_{H\times [0:1]}(\P^n \times \P^1)),
\]
where $\widetilde{H}$ is the strict transform of $H$,
and $E$ is the exceptional divisor on $\Blow_O(\P^n)$.
Hence we have the composite
\[
g\colon
\Blow_{E\times [1:1]+\widetilde{H}\times [0:1]}(\Blow_O(\P^n)\times \P^1)
\to
\Blow_O(\P^{n+1}).
\]

Let $Y$ be the fs log scheme with the underlying scheme 
$\Blow_{E\times [1:1]+\widetilde{H}\times [0:1]}(\Blow_O(\P^n)\times \P^1)$, 
and whose log structure is the compactifying log structure associated with $(\A^n-0)\times \A^1$.

Next, $g$ gives a morphism
\[
f\colon Y\to \cptsph{n+1}
\]
extending $h$, and there is a projection $p\colon Y\to  \cptsph{n}\times \boxx$.
The zero section and one section $i_0,i_1\colon  \cptsph{n}\to  \cptsph{n}\times \boxx$ admit lifts
\[
i_0',i_1'\colon  \cptsph{n}\to Y.
\]
Since $p$ is a composite of admissible blow-ups along smooth centers, 
we can apply Lemma \ref{ProplogSH.7} to deduce that $i=fi_1'$ is homotopic to the constant map 
$fi_0'$ in $\inflogH(S)$.
\end{proof}

\begin{rmk}
We note that $h$ appearing in Lemma \ref{Pic.3} cannot be extended to a morphism
\[
\Blow_O(\P^n)\times \P^1\to \Blow_O(\P^{n+1})
\]
since $g$ does not factor through the blow-down $\Blow_O(\P^n)\times \P^1$.
\end{rmk}

\begin{lem}
\label{Pic.5}
For every $S\in \lSch$ and $n\geq 2$, the two constant maps
\[
\pt \rightrightarrows V_n
\]
to $v_1$ and $v_n$ are homotopic in $\inflogH(S)$.
\end{lem}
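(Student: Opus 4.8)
The plan is to connect the two points $v_1$ and $v_n$ in $V_n = \cptsph{n}$ through a chain of $\boxx$-homotopies, exploiting the coordinate permutation symmetry together with Lemma \ref{Pic.3}. The key point is that $V_n \subset \A^n-0$ is invariant under permuting the coordinate axes; in particular there is an automorphism of $\cptsph{n}$ swapping $v_1$ and $v_n$ (induced by the coordinate transposition $(x_1,\ldots,x_n)\mapsto (x_n,x_2,\ldots,x_{n-1},x_1)$, which fixes $O$, $H$, and $E$, hence lifts to an automorphism of $\Blow_O(\P^n)$ compatible with the log structure). So it suffices to show that \emph{some} fixed constant map $\pt \to V_n$, say the one at $v_n$, becomes equivalent after this automorphism to the constant map at $v_1$; but since the automorphism carries $v_n$ to $v_1$ functorially, this is automatic \emph{once we know} the two constant maps are homotopic at all. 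The real content is therefore to produce \emph{one} nontrivial homotopy relating two of the vertices.

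First I would use the inclusion $i\colon \cptsph{n-1}\to \cptsph{n}$ of \eqref{Pic.1.1}, which by Lemma \ref{Pic.3} (applied with $n-1$ in place of $n$) is equivalent in $\inflogH^{\SmAdm}(S)$ to the constant map at $v_n\in V_n$. On the other hand, the vertices $v_1,\ldots,v_{n-1}\in V_{n-1}$ are carried by $i$ to $v_1,\ldots,v_{n-1}\in V_n$. So for each $j\in\{1,\ldots,n-1\}$, precomposing the equivalence ``$i\simeq \mathrm{const}_{v_n}$'' with the constant map $\pt\to V_{n-1}$ at $v_j$ shows that $\mathrm{const}_{v_j}\simeq \mathrm{const}_{v_n}$ as maps $\pt\to V_n$. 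In particular $\mathrm{const}_{v_1}\simeq \mathrm{const}_{v_n}$, which for $n\geq 2$ is exactly the assertion. (The hypothesis $n\geq 2$ is needed precisely so that $V_{n-1}$ is nonempty and the vertex $v_1$ already exists in $V_{n-1}$; for $n=1$ the statement is vacuous or false since $V_1$ is a point.) Since $\inflogH^{\SmAdm}(S)$ maps to $\inflogH(S)$ (every admissible blow-up along a smooth center being in particular a dividing cover, inverted in $\inflogH$ by Proposition \ref{logHprop.2}), the equivalence descends to $\inflogH(S)$, which is the stated category.

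The one subtlety is whether Lemma \ref{Pic.3}, stated for the inclusion $\cptsph{n}\to\cptsph{n+1}$, really gives what is needed at the previous level; this is just reindexing, so applying it verbatim with $n$ replaced by $n-1$ gives that $i\colon \cptsph{n-1}\to\cptsph{n}$ is equivalent to $\mathrm{const}_{v_n}$, as used above. A second point worth spelling out: the homotopy from Lemma \ref{Pic.3} is constructed over $\Spec{\Z}$ and pulled back along the structure morphism $S\to\Spec{\Z}$, so it is automatically natural in $S\in\Sch$; but the present lemma is stated for $S\in\lSch$, so one should first prove it for $S\in\Sch$ and then, for general $S\in\lSch$, apply $p^*$ where $p\colon S\to\Spec{\Z}$ is the structure morphism, exactly as in the proof of Proposition \ref{logH.1}. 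I expect the main (minor) obstacle to be bookkeeping: making sure the coordinate conventions for $v_1,\ldots,v_n$ are consistent between $V_{n-1}$ and $V_n$ under the chosen closed immersion, so that $i(v_j)=v_j$ for $1\le j\le n-1$ and $v_n$ is genuinely the ``new'' vertex that \eqref{Pic.1.2} collapses to. No deep input beyond Lemmas \ref{ProplogSH.7} and \ref{Pic.3} is required.
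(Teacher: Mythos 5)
Your core manoeuvre—reindexing Lemma \ref{Pic.3} to the inclusion $i\colon V_{n-1}\to V_n$ (which sends $v_j$ to $v_j$ for $j\le n-1$) and precomposing the equivalence $i\simeq \mathrm{const}_{v_n}$ with the constant map $\pt\to V_{n-1}$ at $v_1$—is sound, but it only yields $\mathrm{const}_{v_1}\simeq\mathrm{const}_{v_n}$ in $\inflogH^{\SmAdm}(S)$, and only for $S\in\Sch$, because that is where Lemma \ref{Pic.3} lives (its proof genuinely inverts a composite of admissible blow-ups along smooth centers via Lemma \ref{ProplogSH.7}). Your final transfer step to $\inflogH(S)$ is where the argument breaks. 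First, the justification is false: an admissible blow-up along a smooth center is not a dividing cover. For instance the blow-up of $(\A^2,\{x_1=0\})$ at the origin is an admissible blow-up along a smooth center, but on the chart where the exceptional divisor is the whole preimage of the boundary the morphism is strict while its underlying scheme morphism is not \'etale, so it is not log \'etale and a fortiori not a dividing cover; indeed, if $\SmAdm$ consisted of dividing covers, the question recorded in Remark \ref{rmk:unclearequivalence} (whether $\inflogH\simeq\inflogH^{\SmAdm}$) would be settled trivially. Second, the direction of the comparison is wrong anyway: $\inflogH^{\SmAdm}(S)$ is a \emph{further} Bousfield localization of $\inflogH(S)$ (its objects are the $\SmAdm$-local ones), so knowing that two maps $\pt\rightrightarrows V_n$ become equivalent after applying the localization functor $\inflogH(S)\to\inflogH^{\SmAdm}(S)$ does not give their equivalence in $\inflogH(S)$, since $V_n$ has no reason to be $\SmAdm$-local; equivalences do not descend from a localization back to the source. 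Third, since $\inflogH^{\SmAdm}$ is only defined over scheme bases, you cannot base-change the $\SmAdm$-statement along $p\colon S\to\Spec{\Z}$ for $S\in\lSch$ with nontrivial log structure; your reduction to $\Spec{\Z}$ requires the statement already in $\inflogH(\Spec{\Z})$, which is precisely what is missing.

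The paper sidesteps all of this by writing down an explicit $\boxx$-homotopy inside $V_n$ itself: the morphism $\P^1\to\P^n$, $[a:b]\mapsto[a:0:\cdots:0:b-a:b]$, carries $\A^1$ into $\A^n$ and misses the blown-up point $O$, hence lifts to $h\colon\boxx\to V_n$ with $h([0:1])=v_n$ and $h([1:1])=v_1$; $\boxx$-invariance then gives the equivalence directly in $\inflogH(S)$, for every $S\in\lSch$ by pullback from $\Z$, with no appeal to admissible blow-ups. Your argument does prove the weaker assertion ``equivalent in $\inflogH^{\SmAdm}(S)$ for $S\in\Sch$,'' which happens to be all that Proposition \ref{Pic.4} uses, but it does not prove Lemma \ref{Pic.5} as stated. (Two minor points: the opening paragraph on the coordinate-swapping automorphism is circular, as you concede; and $V_1$ is not a point—it is $\Gmm$.)
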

\begin{proof}
Let $f\colon \P^1\to \P^{n}$ be the morphism given by
\[
[a:b]
\mapsto 
[a:0\cdots : 0:b-a:b].
\]
Since $f(\A^1)\subset \A^n$, 
$f$ naturally induces a morphism $g\colon \boxx\to (\P^n,H)$, where $H$ is the divisor $\P^n-\A^n$.
The image of $f$ does not intersect with the point $O:=[0:\cdots:0:1]$, so $g$ can be lifted to a morphism
\[
h\colon \boxx \to V_n.
\]
To conclude, observe that $h([0:1])=v_n$ and $h([1:1])=v_1$.
\end{proof}

\begin{prop}
\label{Pic.4}
For every $S\in \Sch$, 
the object $\cptsph{\infty}$ is equivalent to $\pt$ in $\inflogH(S)$.
\end{prop}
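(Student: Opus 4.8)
The plan is to apply Lemma \ref{Pic.2} to the sequence
\[
\pt=\cptsph{1}\to \cptsph{2}\to \cdots \to \cptsph{n}\to \cptsph{n+1}\to \cdots,
\]
whose colimit (in presheaves, hence in $\inflogH^{\SmAdm}(S)$ after localization) is $\cptsph{\infty}$. By the universal property of the colimit, it suffices to check the hypothesis of Lemma \ref{Pic.2}: for every $i\geq 1$ there is some $j>i$ such that the composite $\cptsph{i}\to \cptsph{j}$ is equivalent in $\inflogH^{\SmAdm}(S)$ to the constant map factoring through $\pt$. First I would reduce to the base $S=\Spec{\Z}$ using the structure morphism $u\colon S\to \Spec{\Z}$ and $u^*$, exactly as in the proof of Lemma \ref{Pic.3}; all the constructions below are defined over $\Z$ and pull back.

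The core of the argument is a three-step chain of homotopies for the composite $\cptsph{n}\to \cptsph{n+2}$. Step one: by Lemma \ref{Pic.3}, the inclusion $i\colon \cptsph{n}\to \cptsph{n+1}$ is equivalent in $\inflogH^{\SmAdm}(S)$ to the constant map at the point $v_{n+1}\in V_{n+1}$. Composing with $i\colon \cptsph{n+1}\to \cptsph{n+2}$, we learn that $\cptsph{n}\to \cptsph{n+2}$ is equivalent to the constant map at the image of $v_{n+1}$ in $V_{n+2}$, i.e.\ the point $v_{n+1}\in V_{n+2}$ (using the explicit formula \eqref{Pic.1.2} for the closed immersion, which sends $(a_1,\dots,a_{n+1})\mapsto(a_1,\dots,a_{n+1},0)$, so $v_{n+1}\mapsto v_{n+1}$). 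Step two: by Lemma \ref{Pic.5} applied with $n+2\geq 2$ in place of $n$, the two constant maps $\pt\rightrightarrows V_{n+2}$ at $v_1$ and at $v_{n+2}$ are equivalent in $\inflogH(S)$, hence in $\inflogH^{\SmAdm}(S)$; more to the point, the constant map at $v_{n+1}$ (for $n\geq 1$, $n+1$ is among $1,\dots,n+2$) is equivalent to the constant map at $v_1$. Actually, it is cleanest to note that Lemma \ref{Pic.5} gives $v_1\simeq v_m$ for every $2\le m\le n+2$ (by symmetry of the coordinates, or by reindexing the morphism $f$ in its proof), so in particular $v_{n+1}\simeq v_1$ in $V_{n+2}$. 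Step three: observe that the composite $\pt\xrightarrow{v_1} V_{n+2}=\cptsph{n+2}$ — or rather the constant map $\cptsph{n}\to\cptsph{n+2}$ with value $v_1$ — factors as $\cptsph{n}\to \pt\to V_1=\cptsph{1}\to\cdots\to\cptsph{n+2}$, because the point $v_1\in V_{n+2}$ lies in the image of the whole chain of closed immersions starting from $\cptsph{1}$ (indeed $v_1\in V_1\subset V_2\subset\cdots$ compatibly with the maps \eqref{Pic.1.1}). Hence $j:=n+2$ works, and Lemma \ref{Pic.2} applies to give $\colimit_i\cptsph{i}\simeq \pt$, which is the assertion.

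The step I expect to require the most care is verifying compatibility of the three homotopies: namely, that after identifying $\cptsph{n}\to\cptsph{n+1}$ with the constant map at $v_{n+1}$ via Lemma \ref{Pic.3}, the further composite with $\cptsph{n+1}\to\cptsph{n+2}$ is genuinely the \emph{constant} map at $v_{n+1}\in V_{n+2}$ and not merely homotopic to something of that shape only after an additional argument — but this is immediate since a constant map post-composed with any map is again constant, and the image of $v_{n+1}$ under \eqref{Pic.1.1} is computed directly from \eqref{Pic.1.2}. The second genuine point is to make sure Lemma \ref{Pic.5}, stated for the two specific basepoints $v_1$ and $v_n$ in $V_n$, actually yields $v_{n+1}\simeq v_1$ in $V_{n+2}$; this follows either by permuting coordinates (the symmetric group on the last $n+2$ homogeneous coordinates acts on $\P^{n+2}$, preserves $O$, hence acts on $V_{n+2}$ and permutes the $v_j$, and this action is by automorphisms so realizes the required equivalence from $v_{n+1}$ to $v_{n+2}$, then Lemma \ref{Pic.5} gives $v_{n+2}\simeq v_1$), or by rerunning the proof of Lemma \ref{Pic.5} with the morphism $f$ adapted to hit $v_{n+1}$ and $v_1$. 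Everything else is a routine bookkeeping of indices and the compactness/generation facts from Remark \ref{rmk:cpt_gen_variants} that already went into Lemma \ref{Pic.2}.
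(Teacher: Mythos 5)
Your strategy is the paper's own (feed the tower of $\cptsph{n}$'s into Lemma \ref{Pic.2}, using Lemmas \ref{Pic.3} and \ref{Pic.5} to see that the transition maps are constant through the basepoint), but two steps as written do not hold up. First, $\cptsph{1}$ is not $\pt$: blowing up $\P^1$ at the point $O$ does nothing, so $\cptsph{1}\cong(\P^1,0+\infty)=\Gmm$, which is certainly not contractible in $\inflogH^{\SmAdm}(S)$ (if it were, $S^1_t$ and hence the whole stable theory would collapse, contradicting for instance Theorem \ref{Pic.9} and the representability results that follow). This is harmless for the shape of the argument, but it means the sequence you hand to Lemma \ref{Pic.2} has to be $\pt\xrightarrow{v_1}\cptsph{1}\to\cptsph{2}\to\cdots$ (same colimit), with the constant maps factoring through this chosen basepoint; your Step three in fact uses exactly this, so the opening identification should simply be deleted.

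Second, and more substantively, the primary justification in your Step two is a non sequitur: an automorphism of $V_{n+2}$ carrying $v_{n+1}$ to $v_{n+2}$ does not by itself make the two constant maps $\pt\rightrightarrows V_{n+2}$ equivalent — for that you would need the automorphism to be equivalent to the identity. What is true is that you may transport the equivalence of Lemma \ref{Pic.5} along a coordinate permutation fixing the other point (e.g.\ post-compose $\mathrm{const}_{v_1}\simeq\mathrm{const}_{v_{n+2}}$ with the transposition of the $(n+1)$-st and $(n+2)$-nd affine coordinates, which fixes $v_1$ and sends $v_{n+2}$ to $v_{n+1}$), or fall back on your alternative of rerunning the proof of Lemma \ref{Pic.5} with an adapted morphism $f$ — either repairs the step. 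But the entire detour through $V_{n+2}$, which is what creates the problem, is unnecessary: Lemma \ref{Pic.5} applied with its ``$n$'' equal to $n+1\geq 2$ says precisely that the constant maps $\pt\rightrightarrows V_{n+1}$ at $v_1$ and at $v_{n+1}$ are equivalent, so combined with Lemma \ref{Pic.3} the single inclusion $\cptsph{n}\to\cptsph{n+1}$ is already equivalent to the constant map at $v_1$, and $j=n+1$ verifies the hypothesis of Lemma \ref{Pic.2}. That shorter route is exactly the paper's proof.
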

\begin{proof}
By Lemmas \ref{Pic.3} and \ref{Pic.5}, 
the morphism $V_n\to V_{n+1}$ is equivalent to the the constant map to $v_1$ in $\inflogH(S)$,
which finishes the proof.
\end{proof}

\begin{df}
\label{Pic.11}
Suppose $S\in \lSch$ and $G\in \infShv_{dNis}(\lSm/S)$ is a group object acting on $X\in \infShv_{dNis}(\lSm/S)$.
Let $X/^h G$\index[notation]{Xhg @ $X/^h G$} denote the homotopy colimit of the simplicial object
\begin{equation}
\label{Pic.11.1}
\begin{tikzcd}
\cdots\ar[r,shift left=0.5ex]\ar[r,shift right=0.5ex]\ar[r,shift left=1.5ex]\ar[r,shift right=1.5ex]&
X\times G^2\ar[r,shift left=1ex]\ar[r,shift right=1ex]\ar[r]&
X\times G\ar[r,shift left=0.5ex]\ar[r,shift right=0.5ex]&
X.
\end{tikzcd}
\end{equation}
If $X\to Y$ is a $G$-equivariant morphism in $\infShv_{dNis}(\lSm/S)$ with trivial $G$-action on $Y$, 
there is a naturally induced morphism $X/^h G\to Y$.
\end{df}

\begin{exm}
\label{Ori.73}
For every $S\in \lSch$ there is no morphism
\[
m \colon \Gmm\times \Gmm \to \Gmm
\]
in $\lSm/S$ that extends the multiplication $\G_m\times \G_m\to \G_m$ sending $(x,y)$ to $xy$
since it is impossible to define $m$ at $(0,\infty)$ and $(\infty,0)$.
However, $m$ exists in $\infShv_{dNis}(\lSm/S)$.
To show this, consider the morphism
\[
p\colon Y:= 
(\Blow_{(0,\infty)\cup 
(\infty,0)}(\P^1\times \P^1),\widetilde{H_1}+\widetilde{H_2}+\widetilde{H_3}+\widetilde{H_4}+E_1+E_2) 
\to 
\Gmm\times \Gmm,
\]
where $\widetilde{H_i}$ is the strict transforms of $0\times \P^1$, $\P^1\times 0$, $\infty\times \P^1$, 
or $\P^1\times \infty$, and $E_1$ and $E_2$ are the exceptional divisors.
Since $p$ is a dividing cover,
$p$ is an equivalence as a morphism of sheaves.
We now claim that there  exists a morphism $n\colon Y\to \Gmm$ extending the multiplication morphism $\G_m\times \G_m\to \G_m$.
To show this,
note that this admits an extension $\Blow_{(0,\infty)+ (\infty 0)}(\P^1\times \P^1)\to \P^1$,
and check that
this induces $p$.
We set $m$ to be the composite $np^{-1}$.

The unit of $\Gmm$ is defined to be the one section $u\colon 1\to \Gmm$.
The morphism $\P^1\to \P^1$ sending $[a:b]$ to $[b:a]$ sends $\G_m$ onto $\G_m$.
By \cite[Proposition III.1.6.2]{Ogu}, this naturally induces a morphism $i\colon \Gmm\to \Gmm$.

\

With the construction described above, the fact that $\G_m$ is a group object readily implies that $\Gmm$ is a group object in $\infShv_{dNis}(\lSm/S)$.
\end{exm}

\begin{lem}
\label{Pic.8}
Suppose $S\in \lSch$.
There is an equivalence
\begin{equation}
\label{Pic.8.1}
\cptsph{n+1}/^h \Gmm
\xrightarrow{\simeq}
\P^n
\end{equation}
in $\inflogH(S)$.
\end{lem}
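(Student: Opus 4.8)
The plan is to exhibit the projection $q\colon \cptsph{n+1}\to \P^n$ --- the resolution of the linear projection $\P^{n+1}\dashrightarrow \P^n$ away from $O$ --- as a $\Gmm$-torsor over $\P^n$ in $\infShv_{dNis}(\lSm/S)$, and to conclude by descent. Since the objects $\cptsph{n+1}$, $\Gmm$, $\P^n$ and the morphism $q$ are all pulled back along the structure morphism $p\colon S\to \Spec{\Z}$, and $p^*$ preserves colimits, we may assume $S=\Spec{\Z}$. The geometric input is that $\Blow_O(\P^{n+1})\to \P^n$ is a $\P^1$-bundle with two disjoint sections, the exceptional divisor $E$ and the divisor $H$ (the strict transform of $\P^{n+1}-\A^{n+1}$, which misses $O$); the log structure of $\cptsph{n+1}$ is exactly the one along $H+E$, and the open complement is $\A^{n+1}-0$. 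Over the standard affine cover $\{U_i\}_{1\le i\le n+1}$ of $\P^n$ the sections normalizing the $i$-th homogeneous coordinate to $1$ trivialize the $\G_m$-torsor $\A^{n+1}-0\to \P^n$; since a $\P^1$-bundle with two disjoint sections trivializes compatibly with the sections, these extend to isomorphisms of log schemes $\cptsph{n+1}|_{U_i}\cong \Gmm\times U_i$ with transition cocycle the units $x_j/x_i\in \G_m(U_{ij})$.

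First I would construct the $\Gmm$-action. Using the multiplication $m\colon \Gmm\times \Gmm\to \Gmm$ and the inversion morphism of $\Gmm$ from the preceding Example, one has on each $U_i$ the translation action of $\Gmm$ on $\Gmm\times U_i$; these glue to a morphism $\Gmm\times\cptsph{n+1}\to\cptsph{n+1}$ in $\infShv_{dNis}(\lSm/S)$ lifting the scaling action on $\A^{n+1}-0$, the gluing being legitimate because the transition cocycle takes values in the honest group scheme $\G_m\subset\Gmm$, whose translations are genuine automorphisms of the log scheme $\Gmm$. The same local computation shows that the shear map $\Gmm\times\cptsph{n+1}\to\cptsph{n+1}\times_{\P^n}\cptsph{n+1}$ is an equivalence of $dNis$-sheaves (over $U_i$ it is the shear $(g,h,u)\mapsto(gh,h,u)$, with inverse built from $m$ and the inversion of $\Gmm$), so $q$ makes $\cptsph{n+1}$ a $\Gmm$-torsor over $\P^n$. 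Moreover the unit sections $U_i=\{1\}\times U_i\hookrightarrow\Gmm\times U_i$ show that $q$ admits a section Zariski-locally, hence $q$ is an effective epimorphism in the $\infty$-topos $\infShv_{dNis}(\lSm/S)$. Therefore $\P^n\simeq\colim_{\Delta^{\mathrm{op}}}\check{C}(q)_\bullet$, and iterating the shear equivalence identifies $\check{C}(q)_\bullet$, as a simplicial object, with the bar construction $\cdots\rightrightarrows\cptsph{n+1}\times\Gmm^2\rightrightarrows\cptsph{n+1}\times\Gmm\rightrightarrows\cptsph{n+1}$ of Definition \ref{Pic.11}. Passing to the colimit gives $\cptsph{n+1}/^h\Gmm\simeq\P^n$ already in $\infShv_{dNis}(\lSm/S)$, and applying the colimit-preserving localization $L_\boxx$ yields the asserted equivalence in $\inflogH(S)$.

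The main obstacle is the first step: because $\Gmm$ is not a group scheme but only a group object of $\infShv_{dNis}(\lSm/S)$ (its multiplication existing only after a dividing cover, as in the preceding Example), neither the $\Gmm$-action on $\cptsph{n+1}$ nor the torsor identity can be checked with honest scheme morphisms. The usable structure is that the defining cocycle of $q$ lies in the subgroup $\G_m$, which reduces all equivariant gluing to the genuine $\G_m$-action on $\Gmm$; an alternative would be a direct dividing-cover description of the action parallel to the treatment of $m$ in the preceding Example. The remaining arguments --- effective epimorphism, \v{C}ech nerve versus bar construction, passage to $L_\boxx$ --- are formal $\infty$-topos descent.
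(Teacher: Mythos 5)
Your proof is correct, and its geometric core coincides with the paper's: the paper also trivializes $\cptsph{n+1}$ over the standard cover $\{U_i\}$ of $\P^n$ via the charts $V_i=\Spec{\N(x_i)\to\Z[x_i,x_0/x_i,\ldots,x_n/x_i]}$ and $V_i'=\Spec{\N(1/x_i)\to\Z[1/x_i,x_0/x_i,\ldots,x_n/x_i]}$, obtaining $V_I\cup V_I'\cong U_I\times\Gmm$ on all intersections, and glues these identifications (whose transitions are the units $x_j/x_i$) to produce both the $\Gmm$-equivariant morphism $f\colon\cptsph{n+1}\to\P^n$ and the $\Gmm$-action, exactly as you do. The only divergence is the concluding descent step: the paper restricts along the \v{C}ech nerve of the cover of $\P^n$, so that it suffices to show $(U_I\times\Gmm)/^h\Gmm\simeq U_I$, which is immediate because the bar construction of a trivially acted-on product admits an evident simplicial retraction (extra degeneracy); you instead prove the global torsor statement --- the shear map $\Gmm\times\cptsph{n+1}\to\cptsph{n+1}\times_{\P^n}\cptsph{n+1}$ is an equivalence and $q$ is an effective epimorphism --- and identify the \v{C}ech nerve of $q$ with the bar construction of Definition \ref{Pic.11}. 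Both are formal $\infty$-topos arguments resting on the same local computations; the paper's version is more hands-on and avoids invoking general principal-bundle theory in $\infShv_{dNis}(\lSm/S)$, while yours buys a cleaner conceptual statement (the quotient is computed because $q$ is a $\Gmm$-torsor) at the cost of the extra verifications of the shear equivalence and of compatibility of the glued action with $m$, which you correctly reduce to the fact that the transition cocycle lies in $\G_m\subset\Gmm$ and hence acts by genuine log scheme automorphisms.
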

\begin{proof}
Consider the rational map $\P^{n+1}\dashrightarrow \P^n$ given by
\[
[z_0:\cdots:z_{n+1}]
\mapsto
[z_0:\cdots:z_n].
\]
This becomes a morphism of schemes $g\colon \Blow_O(\P^{n+1})\to \P^n$ after the blow-up at $O\in \P^{n+1}$.
Observe that $g$ is a $\P^1$-bundle associated with a line bundle.
The $0$-section (resp.\ $\infty$-section)
is the exceptional divisor $E$ (resp.\ hyperplane $H$ given by $z_{n+1}=0$).
Hence the induced morphism $f\colon V_{n+1}\to \P^n$ is a $\Gmm$-bundle,
and then $f$ is $\Gmm$-equivariant with trivial $\Gmm$-action on $\P^n$.
This constructs the morphism in \eqref{Pic.8.1}.
We show that it is an equivalence.
By considering the \v{C}ech nerve of $\P^n$ associated with a Zariski covering $\{U_i\}_{1\leq i\leq n}$ that trivializes the $\Gmm$-bundle $f$,
it suffices to show
\[
(U_I\times \Gmm)/^h \Gmm \simeq U_I
\]
for all nonempty subsets $I$ of $\{0,\ldots,n\}$,
where $U_I:=\cap_{i\in I}U_i$.
This follows since the simplicial diagram \eqref{Pic.11.1} defining $(U_I\times \Gmm)/^h \Gmm$ 
admits an evident retraction.
\end{proof}

\begin{thm}
\label{Pic.9}
For every $S\in \Sch$ there is an equivalence
\[
\clspace \Gmm \simeq \P^\infty
\]
in $\inflogH(S)$,
which is natural in $S$.
\end{thm}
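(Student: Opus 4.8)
The plan is to deduce the theorem formally from the two geometric inputs already established: the contractibility $\cptsph{\infty}\simeq \pt$ in $\inflogH^{\SmAdm}(S)$ from Proposition \ref{Pic.4}, and the identification $\cptsph{n+1}/^h\Gmm\simeq \P^n$ in $\inflogH(S)$ from Lemma \ref{Pic.8}, which exhibits $\cptsph{n+1}$ as a $\Gmm$-torsor over $\P^n$. Since all the schemes, maps, and group structures involved are defined over $\Spec{\Z}$ and pulled back along the structure morphism $S\to \Spec{\Z}$, the naturality in $S$ will come for free at the end, because the pullback functors of the premotivic structure commute with the localizations used.

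First I would check that the closed immersions $i\colon \cptsph{n+1}\to \cptsph{n+2}$ of \eqref{Pic.1.1} are $\Gmm$-equivariant for the scaling actions of Lemma \ref{Pic.8}, and that they lie over the standard closed immersions $\P^n\to \P^{n+1}$. This is an immediate verification on the explicit affine charts appearing in \eqref{Pic.1.2}, \eqref{Pic.1.3} and in the proof of Lemma \ref{Pic.8}: adding a zero coordinate commutes with scaling and extends compatibly to the blow-ups and their compactifying log structures. Forming the colimit over $n$ in $\infShv_{dNis}(\lSm/S)\simeq \infShv_{dNis}(\SmlSm/S)$ (Lemma \ref{logSH.12}), the object $\cptsph{\infty}=\colimit_n \cptsph{n+1}$ then inherits a $\Gmm$-action, and the morphisms $\cptsph{n+1}\to \P^n$ assemble into a $\Gmm$-equivariant morphism $\cptsph{\infty}\to \P^\infty$ with trivial action on the target.

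Next, since the homotopy quotient $(-)/^h\Gmm$ is a colimit, it commutes with the filtered colimit over $n$, so Lemma \ref{Pic.8} gives $\cptsph{\infty}/^h\Gmm\simeq \colimit_n(\cptsph{n+1}/^h\Gmm)\simeq \colimit_n \P^n=\P^\infty$ in $\inflogH(S)$, and hence in $\inflogH^{\SmAdm}(S)$ after applying the localization functor. On the other hand, the localization $\infShv_{dNis}(\SmlSm/S)\to \inflogH^{\SmAdm}(S)$ is symmetric monoidal and colimit-preserving by construction (the maps $X\times \boxx\to X$ and the admissible blow-ups along smooth centers are stable under $-\otimes Y$, cf.\ Remark \ref{rmk:loc_premotivic}), so it carries the simplicial object $[k]\mapsto \cptsph{\infty}\times \Gmm^{k}$ computing $\cptsph{\infty}/^h\Gmm$ to the one computing $\clspace\Gmm=\pt/^h\Gmm$, using $\cptsph{\infty}\simeq \pt$ from Proposition \ref{Pic.4}. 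Thus $\cptsph{\infty}/^h\Gmm\simeq \clspace\Gmm$ in $\inflogH^{\SmAdm}(S)$, and combining the two computations yields $\clspace\Gmm\simeq \P^\infty$; naturality in $S$ is inherited from the base change along $S\to \Spec{\Z}$.

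The substantive work is thus already packaged in Proposition \ref{Pic.4} and Lemma \ref{Pic.8}, and the theorem is their formal consequence. For the statement itself the only points requiring a little care are the compatibility of the $\Gmm$-torsor structures with the stabilization maps $\cptsph{n+1}\to\cptsph{n+2}$ (a routine chart computation), and the interchange of the two colimit operations — the localization, which collapses $\cptsph{\infty}$ to a point, and the homotopy quotient by $\Gmm$ — which is precisely where the monoidality and cocontinuity of the localization functor are used. I expect this interchange to be the only place where one must be genuinely attentive.
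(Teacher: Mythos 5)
Your proposal is correct and follows essentially the same route as the paper: the paper likewise obtains $\cptsph{\infty}/^h\Gmm\simeq\P^\infty$ by passing to the colimit in Lemma \ref{Pic.8} and then combines this with Proposition \ref{Pic.4}, invoking \cite[Proposition 2.14, p.\ 74]{MV} for exactly the step you argue by hand (that the $\Gmm$-equivariant equivalence $\cptsph{\infty}\simeq\pt$ in the localized category identifies the bar constructions, so $\cptsph{\infty}/^h\Gmm\simeq\clspace\Gmm$). Your explicit check of the $\Gmm$-equivariance of the stabilization maps and the interchange of the two colimits is a sound way of filling in details the paper leaves implicit, but it is not a different proof.
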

\begin{proof}
Colimits preserve equivalences, so Lemma \ref{Pic.8} yields an equivalence
\begin{equation}
\label{Pic.9.1}
\cptsph{\infty}/^h \Gmm
\simeq
\P^\infty
\end{equation}
in $\inflogH(S)$.
Proposition \ref{Pic.4} and \cite[Proposition 2.14, p.\ 74]{MV} finish the proof.
\end{proof}

\begin{cor}\label{Pic.10}
For every $S\in \lSch$,
there is an equivalence
\[
\clspace \Gmm \simeq \P^\infty
\]
in $\inflogSH^\eff(S)$ and $\inflogSH(S)$,
which is natural in $S$.
\end{cor}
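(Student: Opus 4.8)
\textbf{Proof proposal for Corollary \ref{Pic.10}.}
The plan is to transport the equivalence $\clspace \Gmm \simeq \P^\infty$ of Theorem \ref{Pic.9} from $\inflogH^{\SmAdm}(S)$ into the linear stable categories by applying the appropriate localization/suspension functors, and then to promote the statement from schemes $S$ to arbitrary log schemes $S\in \lSch$ by pullback along the structure morphism $p\colon S\to \Spec{\Z}$. First I would settle the case $S\in \Sch$. By Corollary \ref{ProplogSH.9} we have an equivalence $\inflogSH_{dNis}^\eff(S,\Lambda)^\otimes \simeq \inflogSH_{S^1}^{\SmAdm}(S,\Lambda)^\otimes$, and the latter is the $S^1$-stabilization followed by $\Lambda$-linearization of $\inflogH^{\SmAdm}_{\ast}(S)$; composing with the functor $(-)_+$ of Remark \ref{rmk:base_point} and the base-change functor along $\Sphere\to H\Lambda$ (cf.\ \eqref{logDA.3.2}), all of which preserve colimits, one obtains a colimit-preserving, symmetric monoidal functor $\inflogH^{\SmAdm}(S) \to \inflogSH_\tau^\eff(S,\Lambda)$. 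Since $\clspace\Gmm = \colim_n \clspace_{\le n}\Gmm$ and $\P^\infty = \colim_n \P^n$ are filtered colimits of representables, and the functor carries $\clspace\Gmm$ and $\P^\infty$ to their images, the equivalence of Theorem \ref{Pic.9} is sent to an equivalence $\clspace\Gmm \simeq \P^\infty$ in $\inflogSH^\eff(S,\Lambda)$; one then applies $\Sigma_T^\infty$ to get the statement in $\inflogSH(S,\Lambda)$.

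For a general $S\in \lSch$, let $p\colon S\to \Spec{\Z}$ be the structure morphism. The objects $\clspace\Gmm$ and $\P^\infty$ in $\inflogSH^{(\eff)}(S,\Lambda)$ are, by definition, $p^*$ of the corresponding objects over $\Spec{\Z}$ (the presheaves represented by $\Gmm=(\P^1_\Z,0+\infty)$ and $\P^n_\Z$ are pulled back from the integers). Since $p^*$ is a colimit-preserving symmetric monoidal functor of premotivic $\infty$-categories, it sends the equivalence $\clspace\Gmm\simeq\P^\infty$ over $\Spec{\Z}$ to the desired equivalence over $S$, and naturality in $S$ follows from naturality of $p^*$ in $S$ together with the naturality already recorded in Theorem \ref{Pic.9}.

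The main point requiring care — and the step I expect to be the only genuine obstacle — is the very first one: one must check that the comparison functor $\inflogH^{\SmAdm}(S)\to \inflogSH_\tau^\eff(S,\Lambda)$ is well-defined, i.e.\ that $\boxx$-invariance, $sNis$-descent, and invariance under admissible blow-ups along smooth centers are all respected, so that the functor factors through the relevant localizations. This is precisely the content of Corollary \ref{ProplogSH.9} combined with Theorem \ref{Ori.68} (which guarantees that maps in $\SmAdm/S$ become invertible in the stable $dNis$-local, $\boxx$-invariant setting), so in fact the obstacle has already been removed; what remains is the bookkeeping of composing the known colimit-preserving functors $(-)_+$, $\Sigma^\infty_{S^1}$, $-\otimes\Lambda$, and the equivalence of Corollary \ref{ProplogSH.9}. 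Everything else is formal: filtered colimits of representables are preserved, equivalences are preserved, and $p^*$ handles the passage to non-trivial log base.
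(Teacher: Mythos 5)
Your proposal is correct and is essentially the paper's own argument: establish the equivalence over a scheme base by combining Theorem \ref{Pic.9} with Corollary \ref{ProplogSH.9}, then transport it to an arbitrary $S\in\lSch$ by pullback along the structure morphism (the paper pulls back from $\ul{S}$ rather than $\Spec{\Z}$, an inessential difference since $\Gmm$, $\P^n$, and hence $\clspace\Gmm$ and $\P^\infty$ are pulled back from the integers either way). Your extra bookkeeping about the colimit-preserving, monoidal comparison functors is exactly what the paper leaves implicit, so there is nothing to add.
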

\begin{proof}
The claim holds for $\ul{S}$ by Proposition \ref{ProplogSH.9} and Theorem \ref{Pic.9}.
We deduce the claim for $S$ by pulling back.
\end{proof}

\newpage

\section{\texorpdfstring{$K$}{K}-theory of fs log schemes}
\label{section:kfslogschemes}
\subsection{Setting and summary of the results}
Morel and Voevodsky \cite[Propositions 3.7 and 3.9]{MV} proved that the algebraic $K$-theory of regular schemes is represented 
geometrically by the infinite Grassmannian $\Z\times \Gr$ in the unstable motivic homotopy category $\mathcal{H}(S)_\ast$ for any noetherian scheme $S$ of finite Krull dimension. This result was later refined by Voevodsky in \cite{zbMATH01194164}, where the Bott periodic $\P^1$-spectrum $\KGL$ was constructed. In \cite{zbMATH01194164} Voevodsky announced that $\KGL$
 represents homotopy invariant algebraic $K$-theory for  schemes in the \emph{stable} motivic homotopy category.
Cisinski completed the proof in \cite[Th\'eor\`eme 2.20]{zbMATH06156613}. See also \cite{PPR} for a detailed discussion.

\

The main goal of this section is to construct a logarithmic $K$-theory spectrum $\logKGL$ in $\inflogSH^\Adm(S)$ for any 
regular scheme $S$  (see Definition \ref{K-theory.16}), and 
show  that there is an equivalence
\begin{equation}
\label{K-theory.0.2}
\logKGL
\simeq
(\bbL_{sNis,\boxx\cup \Adm}\Z\times \Gr,\bbL_{sNis,\boxx\cup \Adm}\Z\times \Gr,\ldots)
\end{equation}
in $\inflogSH^\Adm(S)$.
Here the pointed log motivic space $\bbL_{sNis,\boxx\cup \Adm}\Z\times \Gr$ can be viewed as a 
logarithmic replacement for the infinite Grassmannian employed by Morel-Voevodsky \cite{MV}.
Our proof of the equivalence \eqref{K-theory.0.2} uses the singular functor $\Sing$ discussed 
in \S\ref{subsection:applicationsSing}. 
This is the main reason for assuming that $S$ is a regular scheme
in \eqref{K-theory.0.2}.  At the end of this  section,  
we will compare our logarithmic $K$-theory with Niziol's $K$-theories of fs log schemes defined in \cite{MR2452875}.
Elementary computations 
show that the two theories are different.

The main application of the above-mentioned result will be
discussed in \S\ref{subsection:logtrace}, where we will use the geometric representability of logarithmic $K$-theory 
for constructing a logarithmic cyclotomic trace map, as a morphism internal to our motivic homotopy theory. Note that the corresponding construction would dramatically fail in $\mathcal{SH}(S)$.

\begin{rmk}Let $\Kth\in \infPsh(\Sch_{noeth},\infSpt)$\index[notation]{Kth @ $\Kth$} 
denote the Bass $K$-theory presheaf of spectra.
If $X\in \Sch$, then
\[
\pi_{n}\Kth(X)
\cong
\Kth_n(X)
\]
for all integers $n$, where $\Kth_n(X)$ is the $n$th Bass $K$-group of perfect complexes on $X$.

Due to \cite[Propositions 3.9]{MV}, 
there is a canonical equivalence of sheaves of pointed spaces
\begin{equation}
\label{K-theory.14.1}
\Omega_{S^1}^\infty \Kth
\simeq
\bbL_{Nis} \Omega \clspace \big( \coprod_{n\geq 0} \BGL_n \big),
\end{equation}
where $\bbL_{Nis}$ is the Nisnevich localization functor, and $\Omega_{S^1}^\infty$ is the infinite suspension functor.

If $X\in \Sch_{noeth}$ is regular, 
then the Bass $K$-theory spectrum $\Kth(X)$ and $\Omega_{S^1}^\infty$ participates in an equivalence of spectra
\begin{equation}
\label{K-theory.14.2}
\Kth(X)
\simeq
(\Omega_{S^1}^\infty \Kth(X),\Omega_{\mathbb{G}_m}\Omega_{S^1}^\infty \Kth(X),\Omega_{\mathbb{G}_m}^2\Omega_{S^1}^\infty \Kth(X),\ldots),
\end{equation}
where the bonding maps
\[
\Sigma_{S^1}\Omega_{\mathbb{G}_m}^n\Omega_{S^1}^\infty \Kth(X)
\to
\Omega_{\mathbb{G}_m}^{n+1}\Omega_{S^1}^\infty \Kth(X)
\]
are naturally induced by the map
\[
\Omega_{S^1}^\infty \Kth\to \Omega_{\P^1}\Omega_{S^1}^\infty \Kth
\]
appearing in the projective bundle formula \cite[Theorem 7.3]{TT} for $K$-theory.
Restricting $\Kth$ to $\Sm/S^\mathrm{op}$ defines an object of $\infSHS(S)$, 
since $\Kth(-)$ is $\A^1$-invariant for regular schemes and satisfies Nisnevich descent.
\end{rmk}
\subsection{The singular functor}
\label{subsection:tsf}

In this subsection,
we begin by reviewing the singular functor in \cite[\S 6]{logDM}.

Throughout, 
$\bT$ is a small ordinary category with finite limits, 
$\pt$ denotes the initial object, and $\cA$ is a set of morphisms in $\bT$ that admits a calculus of right fractions.
Moreover, 
$I$ is an interval object of $\bT$ in the sense of \cite[\S 2.3]{MV},
i.e., an object of $\bT$ together with morphisms
\[
i_0,i_1\colon \pt\to I,
\;
p\colon I\to \pt
\]
in $\bT$ and with a morphism
\[
\mu\colon I\times I\to I
\]
in $\bT[\cA^{-1}]$ satisfying the identities
\begin{gather*}
\mu\circ (i_0\times \id)=(\id\times i_0)\circ \mu=i_0p,
\\
\mu\circ (i_1\times \id)=(\id\times i_1)\circ \mu=\id.
\end{gather*}
We also assume that $i_0\amalg i_1\colon \pt\amalg \pt \to I$ is a monomorphism.

There is a localization functor
\[
v\colon \bT\rightarrow \bT[\cA^{-1}]
\]
inducing an adjunction
\[
\begin{tikzcd}
\infPsh(\bT)\arrow[rr,shift left=1.5ex,"v_\sharp "]\arrow[rr,"v^*" description,leftarrow]\arrow[rr,shift right=1.5ex,"v_*"']&&\infPsh(\bT[\mathcal{A}^{-1}]).
\end{tikzcd}
\] 
Here $v_\sharp$ is left adjoint to $v^*$, 
and $v^*$ is left adjoint to $v_*$.
Let $\cA\downarrow X$ denote the over category of $X$, 
whose objects are the morphisms $f\colon Y\to X$ in $\cA$,  with the obvious convention for maps.
The functors $v_\sharp$ and $v^*$ are then given objectwise by  
\begin{equation}
\label{Singboxx.0.1}
v_\sharp \cF(X)
\cong
\colimit_{Y\in \cA\downarrow X}\cF(Y), \quad v^*\cG(X):=\cG(X)
\end{equation}
for any presheaf of Kan complexes $\cF\in \infPsh(\bT)$, $\cG\in \infPsh(\bT[\cA^{-1}])$, and $X\in \bT$.

For abbreviation, let $I$ be the set of projections $X\times I\to X$ for all $X\in \bT$.

\begin{prop}
\label{Singboxx.5}
The unit $\id \to v^*v_\sharp$ is an $\cA$-local equivalence.
\end{prop}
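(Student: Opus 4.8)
The plan is to show that for every $\cA$-local presheaf $\cG \in \infPsh(\bT)$, the map of mapping spaces
\[
\Map_{\infPsh(\bT)}(v^*v_\sharp \cF, \cG) \to \Map_{\infPsh(\bT)}(\cF, \cG)
\]
induced by precomposition with the unit $\cF \to v^*v_\sharp \cF$ is an equivalence. This is exactly the statement that the unit is an $\cA$-local equivalence, by definition of $\cA$-local equivalence. First I would observe that, since $\cG$ is $\cA$-local, $\cG$ lies in the essential image of $v^* \colon \infPsh(\bT[\cA^{-1}]) \to \infPsh(\bT)$; more precisely, the counit $v^*v_* \cG \to \cG$ is an equivalence when $\cG$ is $\cA$-local, so we may write $\cG \simeq v^*\cH$ for $\cH := v_*\cG \in \infPsh(\bT[\cA^{-1}])$ (indeed $\infPsh(\bT[\cA^{-1}])$ is a reflective localization of $\infPsh(\bT)$ with the $\cA$-local objects as essential image, since $\cA$ admits a calculus of right fractions, so $\bT[\cA^{-1}]$ exists as an honest category and $v^*$ is fully faithful).

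Then the key step is the adjunction chase: using the adjunction $(v_\sharp, v^*)$, we have
\[
\Map_{\infPsh(\bT)}(v^*v_\sharp \cF, v^*\cH)
\simeq
\Map_{\infPsh(\bT[\cA^{-1}])}(v_\sharp v^* v_\sharp \cF, \cH),
\]
while on the other side
\[
\Map_{\infPsh(\bT)}(\cF, v^*\cH)
\simeq
\Map_{\infPsh(\bT[\cA^{-1}])}(v_\sharp \cF, \cH).
\]
So it suffices to show that the map $v_\sharp \cF \to v_\sharp v^* v_\sharp \cF$ induced by the unit is an equivalence in $\infPsh(\bT[\cA^{-1}])$; equivalently, since $v^*$ is fully faithful, that the unit $\mathrm{id} \to v^* v_\sharp$ becomes an equivalence after applying $v_\sharp$, i.e.\ that $v_\sharp$ inverts all $v^*$-local equivalences (the maps inverted by $v^*$). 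But $v_\sharp \dashv v^*$ with $v^*$ fully faithful realizes $\infPsh(\bT[\cA^{-1}])$ as the localization of $\infPsh(\bT)$ at precisely the class $W$ of morphisms sent to equivalences by $v^*$, and the unit of such a Bousfield-type localization is by construction a $W$-equivalence. Concretely, the triangle identities give that $v_\sharp(\mathrm{unit})$ is a section of the counit $v_\sharp v^* v_\sharp \cF \to v_\sharp \cF$, which is an equivalence because $v^*$ is fully faithful (the counit $v_\sharp v^* \to \mathrm{id}$ is an equivalence exactly when $v^*$ is fully faithful); a section of an equivalence is an equivalence. This closes the argument.

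The main obstacle, and the place where the hypotheses on $\cA$ (calculus of right fractions) and on the formulas \eqref{Singboxx.0.1} are actually used, is the justification that $v^* \colon \infPsh(\bT[\cA^{-1}]) \to \infPsh(\bT)$ is fully faithful, equivalently that $\infPsh(\bT[\cA^{-1}])$ is a (co)reflective localization of $\infPsh(\bT)$ with the explicit pointwise filtered-colimit formula for $v_\sharp$. I would derive this from the fact that, because $\cA$ admits a calculus of right fractions, each over-category $\cA \downarrow X$ is cofiltered (so $(\cA\downarrow X)^{\mathrm{op}}$ is filtered), hence the formula $v_\sharp \cF(X) \simeq \colim_{Y \in \cA\downarrow X} \cF(Y)$ computes a filtered colimit; filteredness is what makes $v_\sharp$ compatible enough with limits and the Yoneda embedding that $v^* v_\sharp y(X) \simeq$ the representable presheaf on $v(X) \in \bT[\cA^{-1}]$, which is the input to showing $v_\sharp v^* \simeq \mathrm{id}$. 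Once full faithfulness of $v^*$ is in hand, everything else is the purely formal adjunction manipulation above. I would also remark that the interval structure $(I, i_0, i_1, \mu)$ plays no role in this particular proposition — it is relevant only for the subsequent statements about the singular functor $\Sing$ — so I would not invoke it here.
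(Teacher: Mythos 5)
Since the paper's own treatment of this proposition is only a citation to \cite[Proposition 6.1.11]{logDM}, there is no in-paper argument to compare yours with, so let me assess it on its own terms. The formal skeleton is correct: granting (a) that $v^*$ is fully faithful and (b) that every $\cA$-local presheaf lies in the essential image of $v^*$, writing $\cG\simeq v^*\cH$, transposing along $v_\sharp\dashv v^*$, and using the triangle identity to see that $v_\sharp(\eta_{\cF})$ is a section of the counit (an equivalence by (a)) does prove the claim; and you are right that the interval $I$ plays no role here.

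The soft spot is the justification of (a) and, above all, (b). Your stated reason for (b) --- that $\bT[\cA^{-1}]$ exists as an honest category and $v^*$ is fully faithful --- is not sufficient: full faithfulness only yields that the essential image of $v^*$ is a reflective subcategory \emph{contained in} the $\cA$-local presheaves. The reverse inclusion is exactly the non-formal point for space-valued presheaves, because an $\cA$-inverting functor $\bT^{\mathrm{op}}\to\infSpc$ factors a priori only through the $\infty$-categorical localization of $\bT$ at $\cA$, which for a general class $\cA$ does not agree with the ordinary category $\bT[\cA^{-1}]$. What rescues the claim is, once again, the calculus of right fractions: for $\cA$-local $\cG$ the diagram $(\cA\downarrow X)^{\mathrm{op}}\to\infSpc$, $(Y\to X)\mapsto \cG(Y)$, computing $v^*v_\sharp\cG(X)$ via \eqref{Singboxx.0.1} receives a levelwise equivalence from the constant diagram at $\cG(X)$ (with components $\cG(s)$, $s\in\cA$), and since the index category is filtered, hence weakly contractible, the unit $\cG(X)\to\colimit_{Y\in\cA\downarrow X}\cG(Y)$ is an equivalence; thus $\cG\simeq v^*v_\sharp\cG$. (Note this is precisely the proposition in the special case of an $\cA$-local $\cF$, strengthened to a genuine equivalence, so your proof in effect reduces the general case to that case plus (a).) Similarly, for (a) the observation that $v_\sharp y(X)\simeq y(vX)$ is formal for \emph{any} functor and is not where filteredness enters; what you actually need is that the counit $v_\sharp v^*\to\id$ is an equivalence on representables, i.e.\ that
\[
\colimit_{(Y\to W)\in(\cA\downarrow W)^{\mathrm{op}}}\hom_{\bT[\cA^{-1}]}(Y,Z)\to\hom_{\bT[\cA^{-1}]}(W,Z)
\]
is an equivalence of spaces; filteredness guarantees both that this homotopy colimit of discrete spaces is discrete and that it agrees with the target, and full faithfulness then follows because $v_\sharp v^*$ preserves colimits and representables generate. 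With these two computations made explicit, your argument is complete.
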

\begin{proof}
We refer to \cite[Proposition 6.1.11]{logDM}.
\end{proof}

By assumption, $I$ is an interval object of $\bT[\cA^{-1}]$ in the sense of \cite[\S 2.3]{MV}. Recall that there is a functor
\[
\Sing^I\colon \infPsh(\bT[\cA^{-1}])\to \infPsh(\bT[\cA^{-1}]).
\]
that is defined as follows: let $\Delta_I^\bullet$ be the simplicial object of $\bT[\cA^{-1}]$ defined objectwise by $[n]\mapsto I^n$ (see \cite[p.88]{MV}, or \cite[6.2]{logDM}). Then we have the $\infty$-functor
\[\cF \mapsto \underline{\Map}(y(\Delta_I^\bullet), \cF) \in \Fun(\Delta^\mathrm{op}, \infPsh(\bT[\cA^{-1}]). \]
We let $\Sing^I(\cF)$ be the geometric realization of $\underline{\Map}(y(\Delta_I^\bullet), \cF)$. Using e.g., \ \cite[A.2.9.30]{HTT} one can readily verify that it agrees with \cite[Definition 6.2.1]{logDM}.
\begin{df}
\label{Singboxx.1}
The singular functor\index[notation]{SingIA @ $\Sing^{I\cup \cA}$}
\[
\Sing^{I\cup \cA}\colon \infPsh(\bT)\to \infPsh(\bT)
\]
is given by $v^*\Sing^I v_\sharp$.
\end{df}

\begin{df}
\label{Singboxx.2}
Let $f,g\colon \cF\to \cG$ be two morphisms in $\infPsh(\bT)$.
An \emph{elementary $I\cup \cA$-homotopy between $f$ and $g$}\index{homotopy} is a morphism
\[
h\colon v_\sharp \cF\times I \to v_\sharp \cG
\]
in $\infPsh(\bT[\cA^{-1}])$ such that
\[
h\circ i_0=f
\text{ and }
h\circ i_1=g.
\]
Two morphisms are called \emph{$I\cup \cA$-homotopic} if they can be connected by 
a finite number of elementary $I\cup \cA$-homotopies.

A morphism $f\colon \cF\to \cG$ in $\infPsh(\bT)$ is called a 
\emph{strict $I\cup \cA$-homotopy equivalence}\index{strict homotopy equivalence} if there exists 
a morphism $g\colon \cG\to \cF$ in $\infPsh(\bT)$ such that the compositions $f\circ g$ and $g\circ f$ are both $I\cup \cA$-homotopic to the respective identities.
\end{df}

\begin{prop}
\label{Singboxx.3}
The following statements hold true.
\begin{enumerate}
\item [{\rm (1)}]
The morphism
\[
i_0\colon \pt \to I
\]
is a strict $I\cup \cA$-homotopy equivalence.
\item [{\rm (2)}]
If $f$ and $g$ are $I\cup \cA$-homotopic morphisms in $\infPsh(\bT)$, 
then $\Sing^{I\cup \cA}(f)$ and $\Sing^{I\cup \cA}(g)$ are  homotopic.
\item [{\rm (3)}]
For every $X\in \bT$, the morphism
\[
\Sing^{I\cup \cA}(X) \to \Sing^{I\cup \cA}(X\times \boxx) 
\]
naturally induced by $i_0$ is an equivalence equivalence of presheaves of Kan complexes.
\item [{\rm (4)}]
For every $\cF\in \infPsh(\bT)$, the canonical morphism
\[
\cF\to \Sing^{I\cup \cA}(\cF)
\]
is an $I\cup \cA$-local equivalence.
\end{enumerate}
\end{prop}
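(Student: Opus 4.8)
\textbf{Proof proposal for Proposition \ref{Singboxx.3}.}

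The plan is to deduce all four statements from the construction of $\Sing^{I\cup\cA}=v^*\Sing^I v_\sharp$ together with the classical properties of $\Sing^I$ in \cite[\S 2.3]{MV} (or \cite[\S 6.2]{logDM}), transporting everything along the adjunction $(v_\sharp, v^*)$ and using Proposition \ref{Singboxx.5}. The key preliminary observation, which I would establish first, is that $v_\sharp$ preserves finite products (at least products with $I$ and with the objects appearing in the simplicial resolution $\Delta_I^\bullet$): since $\cA$ admits a calculus of right fractions and $\bT$ has finite limits, the over-categories $\cA\downarrow(X\times I)$ and $\cA\downarrow X\times \cA\downarrow I$ are cofinal in one another, so the formula \eqref{Singboxx.0.1} gives $v_\sharp(\cF\times I)\simeq v_\sharp\cF\times I$ in $\infPsh(\bT[\cA^{-1}])$, naturally. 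With this in hand, an elementary $I\cup\cA$-homotopy $h\colon v_\sharp\cF\times I\to v_\sharp\cG$ is literally the same datum as an elementary $I$-homotopy in $\infPsh(\bT[\cA^{-1}])$ between $v_\sharp f$ and $v_\sharp g$, which makes statements (1) and (2) essentially formal.

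For (1): the morphism $i_0\colon \pt\to I$ in $\bT$ becomes, after applying $v_\sharp$, the morphism $i_0\colon\pt\to I$ in $\bT[\cA^{-1}]$, and the interval structure $(i_0,i_1,p,\mu)$ on $I$ — which by hypothesis exists in $\bT[\cA^{-1}]$ — exhibits $i_0$ as a strict $I$-homotopy equivalence there (with homotopy inverse $p$, the contracting homotopy being $\mu$). Pulling back along $v^*$, and using the identification of $I\cup\cA$-homotopies with $I$-homotopies above, shows $i_0$ is a strict $I\cup\cA$-homotopy equivalence in $\infPsh(\bT)$. For (2): if $f$ and $g$ are connected by a chain of elementary $I\cup\cA$-homotopies, then $v_\sharp f$ and $v_\sharp g$ are connected by a chain of elementary $I$-homotopies, hence $\Sing^I(v_\sharp f)\simeq\Sing^I(v_\sharp g)$ by the standard fact that $\Sing^I$ sends $I$-homotopic maps to (simplicially) homotopic maps \cite[Lemma 2.3.4 and its proof, p.\ 89]{MV}; apply $v^*$. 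Statement (3) is the special case of (2)–(1) combined: since $i_0\colon X\to X\times\boxx$ is a strict $I\cup\cA$-homotopy equivalence by (1) (after multiplying by $X$ and using that $v_\sharp$ preserves the relevant products), its image under $\Sing^{I\cup\cA}$ is a homotopy equivalence of presheaves of Kan complexes.

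For (4), the canonical map $\cF\to\Sing^{I\cup\cA}(\cF)$ factors as $\cF\xrightarrow{\,\eta\,} v^*v_\sharp\cF\to v^*\Sing^I v_\sharp\cF$, where $\eta$ is the unit. By Proposition \ref{Singboxx.5}, $\eta$ is an $\cA$-local equivalence, hence a fortiori an $I\cup\cA$-local equivalence. It remains to see that $v^*v_\sharp\cF\to v^*\Sing^I v_\sharp\cF$ is an $I\cup\cA$-local equivalence; equivalently, testing against $I\cup\cA$-local objects $\cG$ — which are exactly the $\cA$-local objects $\cG$ such that $v_\sharp\cG$ (equivalently $\cG$ itself, viewed on $\bT[\cA^{-1}]$) is $I$-local — this reduces to the statement that $v_\sharp\cF\to\Sing^I v_\sharp\cF$ is an $I$-local equivalence in $\infPsh(\bT[\cA^{-1}])$, which is the classical fact that $\cG\mapsto\Sing^I\cG$ is an $I$-localization up to $I$-equivalence \cite[Corollary 2.3.8, p.\ 89]{MV}. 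The main obstacle I anticipate is the compatibility bookkeeping in this last step: one must check carefully that the class of $I\cup\cA$-local objects of $\infPsh(\bT)$ corresponds under $(v_\sharp, v^*)$ precisely to the class of $I$-local objects of $\infPsh(\bT[\cA^{-1}])$ that are moreover $\cA$-local, so that the two localizations can be performed "one after the other" — this is where the hypothesis that $\cA$ has a calculus of right fractions (ensuring $v_\sharp$ is well-behaved, in particular left exact on the relevant diagrams) and the monomorphism hypothesis on $i_0\amalg i_1$ are used, exactly as in \cite[\S 6.2]{logDM}. Once that identification is in place, all four assertions follow by transporting the corresponding statements for $\Sing^I$ across the adjunction.
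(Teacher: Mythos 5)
Your overall strategy is the intended one: the paper gives no argument of its own for Proposition \ref{Singboxx.3} beyond citing \cite[Propositions 6.2.3, 6.2.4, 6.2.6, Corollary 6.2.5]{logDM}, and those proofs do what you propose, namely transport Morel--Voevodsky's statements about $\Sing^I$ for the interval $I$ of $\bT[\cA^{-1}]$ across the adjunction $(v_\sharp,v^*)$ using Proposition \ref{Singboxx.5}. Parts (1)--(3) are fine as written: since elementary $I\cup\cA$-homotopies are by definition morphisms in $\infPsh(\bT[\cA^{-1}])$, since $v_\sharp y(Y)\simeq y(v(Y))$, and since $v$ preserves finite products (calculus of right fractions), the interval data $(i_0,i_1,p,\mu)$ in $\bT[\cA^{-1}]$ gives (1), the standard simplicial-homotopy argument gives (2), and (3) follows.

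The genuine soft spot is exactly the step you flag in (4), and as stated it does not go through. First, to move the test object across $v^*$ you must use its \emph{right} adjoint $v_*$, not $v_\sharp$: for $I\cup\cA$-local $\cG$ one has $\Map(v^*(-),\cG)\simeq \Map(-,v_*\cG)$, so what you need is that $v_*\cG$ is $I$-local in $\infPsh(\bT[\cA^{-1}])$. Second, the parenthetical claim that an $\cA$-local $\cG$ is ``$\cG$ itself, viewed on $\bT[\cA^{-1}]$'' (i.e.\ lies in the essential image of $v^*$) is not automatic: a calculus of fractions does not identify the $1$-categorical localization $\bT[\cA^{-1}]$ with the $\infty$-categorical one (quasi-isomorphisms in a homotopy category of chain complexes admit a calculus of fractions, yet space-valued functors inverting them do not all factor through the triangulated derived category), so ``$\cA$-local'' is a priori weaker than ``restricted from $\bT[\cA^{-1}]$''. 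Fortunately neither point requires new input. For $Y\in\bT$ the map $v^*y(v(Y)\times I)\to v^*y(v(Y))$ is identified with $v^*v_\sharp$ applied to the projection $y(Y\times I)\to y(Y)$; by Proposition \ref{Singboxx.5} and naturality of the unit it is therefore connected to that projection by $\cA$-local equivalences, hence is an $I\cup\cA$-local equivalence, and mapping into $\cG$ shows $v_*\cG(v(Y))\to v_*\cG(v(Y)\times I)$ is an equivalence, i.e.\ $v_*\cG$ is $I$-local. Since $\bT[\cA^{-1}]$ has finite products and $I$ is an interval there, Morel--Voevodsky's argument gives that $v_\sharp\cF\to\Sing^I v_\sharp\cF$ is an $I$-local equivalence, so $\Map(v^*\Sing^I v_\sharp\cF,\cG)\to\Map(v^*v_\sharp\cF,\cG)$ is an equivalence; combined with Proposition \ref{Singboxx.5} for the factor $\cF\to v^*v_\sharp\cF$, this completes (4). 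With this correction your proof is complete and coincides in substance with the cited one.
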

\begin{proof}
We refer to \cite[Propositions 6.2.3, 6.2.4, 6.2.6, Corollary 6.2.5]{logDM}.
\end{proof}

\subsection{Applications of \texorpdfstring{$\Sing$}{Sing}}
\label{subsection:applicationsSing}

Our theory of $\Sing$ is mainly applied to the case when $\bT=\cSm/S$, 
where $S$ is a scheme,
$I=\boxx$, and $\cA$ is the set of admissible blow-ups $\Adm$ in $\cSm/S$.

\begin{prop}
\label{Singboxx.6}
For every $S\in \Sch$, 
the class of admissible blow-ups in $\cSm/S$ admits a calculus of right fractions.
\end{prop}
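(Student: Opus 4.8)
We must verify the three axioms for a calculus of right fractions (in the sense used in \cite{logDM}, following the classical Gabriel--Zisman conditions) for the class $\Adm$ of admissible blow-ups inside $\cSm/S$: namely that $\Adm$ contains all identities and is closed under composition; that every pair $Y \xrightarrow{f} X \xleftarrow{s} X'$ with $s \in \Adm$ can be completed to a commutative square $Y' \to X'$, $Y' \to Y$ with $Y' \to Y$ in $\Adm$; and that if $f,g\colon Y \rightrightarrows X$ are morphisms and $s\colon X \to X''$ lies in $\Adm$ with $sf = sg$, then there is $t\colon Y' \to Y$ in $\Adm$ with $ft = gt$. The first axiom is immediate from Definition \ref{ProplogSH.2}, where it is already observed that $\Adm$ is closed under composition and pullbacks, and identities are trivially admissible.

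\textbf{Key steps.} For the second (Ore-type) axiom, I would take the fiber product: given $f\colon Y \to X$ and an admissible blow-up $s\colon X' \to X$, form $Y' := Y \times_X X'$ in $\cSch$, using the explicit fiber-product formula recorded in the definition of $\cSch$, namely $Y' = (\ul{Y}\times_{\ul{X}}\ul{X'}, (Y-\partial Y)\times_{X-\partial X}(X'-\partial X'))$. Since $\Adm$ is closed under pullbacks (Definition \ref{ProplogSH.2}), the projection $Y' \to Y$ is again an admissible blow-up; it lies in $\cSm/S$ because $\Adm \subset \cSm$ and $\cSm$ is closed under composition and pullback, so $Y' \to Y \to S$ is in $\cSm/S$. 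This gives the required square. For the third axiom, the essential point is that an admissible blow-up $s$ is by definition an isomorphism away from the boundary: the induced morphism $X - \partial X \to X'' - \partial X''$ is an isomorphism (here I am writing $s\colon X \to X''$). Hence $sf = sg$ forces $f$ and $g$ to agree on the dense open $Y - \partial Y$ (dense by condition (i) in the definition of $\cSch$); since a morphism in $\cSch$ is determined by its restriction to the dense open complement of the boundary together with the ambient map — and the target $X$ has $X - \partial X$ dense — one concludes $f = g$ directly, so one may take $t = \id_Y$. Actually I should double-check whether $f = g$ as maps of \emph{schemes} $\ul{Y} \to \ul{X}$ follows: it does, because $\ul{Y}$ is reduced? — no, we do not assume reducedness. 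The correct argument is that $\ul{X} - \partial\ul{X} \to \ul{X}''-\partial\ul{X}''$ is an isomorphism and $\ul{s}$ is a \emph{monomorphism} on that locus; but $\ul{s}$ itself need not be a monomorphism globally. So the cleanest route is: restrict to the situation where $\ul Y$ is reduced by noting the problem is really about the open dense locus, or invoke that $\ul s$ restricted over $\ul X - \partial \ul X$ is an iso and $\ul Y \to \ul X$ factoring differently would still have to agree there, and then use separatedness of $\ul X$ over $S$ (all schemes in $\cSch$ are separated) together with density to conclude equality of the two maps on all of $\ul Y$ when $\ul Y$ is reduced — and for the calculus of fractions it is in fact enough to produce \emph{some} $t$, so one can compose with the reduction $\ul{Y}_{\mathrm{red}} \to \ul Y$... but that is not in $\Adm$.

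\textbf{Main obstacle.} The delicate point is exactly the third axiom: I expect the proof to proceed not by the naive density argument but by mimicking \cite[Propositions 7.6.4, 7.6.6]{logDM} and the discussion around Theorem \ref{ProplogSH.3}, where it is shown that admissible blow-ups along smooth centers and general admissible blow-ups interact well — in particular that $\ul s$ is proper and birational, so two morphisms $\ul f, \ul g$ to $\ul X$ with $\ul s \ul f = \ul s \ul g$ and which agree over a dense open must agree, since the graph of $(\ul f, \ul g)$ is a closed subscheme of $\ul Y \times_S \ul X$ (using separatedness) containing the dense open where they coincide, hence all of $\ul Y$ provided $\ul Y$ has no embedded components away from the boundary — and in the end, as in the proof of Proposition \ref{ProplogSH.8} where it is noted that two morphisms in $(\Adm\downarrow X)\cap \lSm/k$ agreeing on a dense open are equal, one gets $f = g$ outright, so $t = \id$ works. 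I therefore anticipate that the proof will be short, citing Definition \ref{ProplogSH.2} for closure properties and reproducing the density argument already used in the proof of Proposition \ref{ProplogSH.8} for the cancellation axiom; the only real care needed is to track that the boundary-complement is dense, which is built into the definition of $\cSch$.
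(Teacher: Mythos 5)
Your overall route is the paper's route: closure under composition and identities is read off from Definition \ref{ProplogSH.2}, the right Ore condition follows from closure of $\Adm$ (and $\cSm$) under pullback via the explicit fiber product in $\cSch$, and the cancellability axiom is settled by showing $f=g$ outright (so $t=\id$ works), exactly as in the paper's two-line argument and as in the proof of Proposition \ref{ProplogSH.8}.

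The one place where your write-up wobbles is the cancellation step, where you flag the non-reducedness worry ("provided $\ul{Y}$ has no embedded components away from the boundary") and never close it; topological density of $Y-\partial Y$, which is all you end up invoking, is indeed not enough for a non-reduced $\ul{Y}$. The missing observation is condition (ii) in the definition of $\cSch$: the complement of $Y-\partial Y$ is the support of a Cartier divisor, so locally it is contained in the vanishing locus of a nonzerodivisor, and hence $Y-\partial Y$ is \emph{schematically} dense in $\ul{Y}$ (the map $\cO_{\ul{Y}}\to j_*\cO_{Y-\partial Y}$ is injective). Now if $hf=hg$ with $h$ an admissible blow-up, then $f-\partial f=g-\partial g$ because $h-\partial h$ is an isomorphism, so $\ul{f}$ and $\ul{g}$ agree on $Y-\partial Y$; their equalizer is a closed subscheme of $\ul{Y}$ (the schemes in $\cSch$ are separated) containing a schematically dense open, hence is all of $\ul{Y}$. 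No reducedness, graph argument, or passage to $\ul{Y}_{\mathrm{red}}$ is needed, and there is no need to mimic \cite[Propositions 7.6.4, 7.6.6]{logDM}, which address a different point (comparing blow-ups along smooth centers with general admissible blow-ups).
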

\begin{proof}
It is obvious that the class of admissible blow-ups is closed under compositions.

The class of admissible blow-ups is closed under pullbacks,
so the class of admissible blow-ups satisfies the right Ore condition.

Suppose $f,g\colon Y\to X$ and $h\colon X\to X'$ be morphisms in $\cSm/S$ such that $hf=hg$.
Then $f$ and $g$ agree on the dense open subscheme $Y-\partial Y$ of $Y$,
so we have $f=g$.
This shows that the class of admissible blow-ups satisfies the right cancellability condition.
\end{proof}

\begin{prop}
\label{Singboxx.4}
For every $S\in\Sch$ and  presheaf $\cF\in \infPsh(\cSm/S)$, 
the canonical morphism
\[
\cF\to \Sing^{\boxx\cup\Adm}(\cF)
\]
is a $\boxx\cup \Adm$-local  equivalence.
\end{prop}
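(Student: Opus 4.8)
\textbf{Proof plan for Proposition \ref{Singboxx.4}.}

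The statement asserts that for every $S \in \Sch$ and every $\cF \in \infPsh(\cSm/S)$, the canonical map $\cF \to \Sing^{\boxx \cup \Adm}(\cF)$ is a $\boxx \cup \Adm$-local equivalence. By Proposition \ref{Singboxx.6}, the class $\Adm$ of admissible blow-ups in $\cSm/S$ admits a calculus of right fractions, so the general machinery of \S\ref{subsection:tsf} applies with $\bT = \cSm/S$, $\cA = \Adm$, and the interval object $I = \boxx = (\P^1,\A^1)$. One must check that $\boxx$ is indeed an interval object of $\cSm/S$ in the required sense: it comes with the two sections $i_0, i_1 \colon \pt \to \boxx$ and the projection $p \colon \boxx \to \pt$, the multiplication $\mu \colon \boxx \times \boxx \to \boxx$ exists already as a morphism in $\lSch$ (or at worst after inverting an admissible blow-up, exactly as the multiplication on $\Gmm$ was handled in the example following Definition \ref{Pic.11}), and $i_0 \amalg i_1$ is a monomorphism. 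Granting this, the proposition is the specialization of Proposition \ref{Singboxx.3}(4) to this $\bT$, $I$, $\cA$.

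The plan is therefore as follows. First I would verify that $(\cSm/S, \boxx, \Adm)$ satisfies all the hypotheses imposed at the start of \S\ref{subsection:tsf}: $\cSm/S$ is (essentially) small with finite limits (its fiber products are computed componentwise by the formula recorded in the definition of $\cSch$, and it has the final object $(\Spec \Z, \Spec \Z)$ restricted over $S$), $\Adm$ admits a calculus of right fractions by Proposition \ref{Singboxx.6}, and $\boxx$ is an interval object, the only delicate point being the existence of $\mu \colon \boxx \times \boxx \to \boxx$ in $\cSm/S[\Adm^{-1}]$; here one mimics the construction of the multiplication $m$ on $\Gmm$ given in the example after Definition \ref{Pic.11}, passing to a suitable admissible blow-up of $\P^1 \times \P^1$ on which the naive formula $([a_0:a_1],[b_0:b_1]) \mapsto [a_0 b_0 : a_1 b_1]$ extends to a morphism compatible with the distinguished open subschemes, and then inverting that blow-up. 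The identities $\mu \circ (i_0 \times \id) = i_0 p$ and $\mu \circ (i_1 \times \id) = \id$ hold because they hold on the dense open $\A^1 \times \A^1$ and $\cSm/S$-morphisms are determined by their restriction to the dense open (the right-cancellability argument in the proof of Proposition \ref{Singboxx.6}). Second, with these hypotheses in place, I would simply invoke Proposition \ref{Singboxx.3}(4): the canonical morphism $\cF \to \Sing^{I \cup \cA}(\cF)$ is an $I \cup \cA$-local equivalence for every $\cF \in \infPsh(\bT)$, which is exactly the assertion with $I = \boxx$ and $\cA = \Adm$.

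The main obstacle is the verification that $\boxx$ is an interval object of $\cSm/S$ modulo $\Adm$, i.e., the construction of the multiplication $\mu$; everything else is a direct application of the already-established results in \S\ref{subsection:tsf}. In fact, examining the proof of Proposition \ref{Singboxx.3}(4) (which is \cite[Proposition 6.2.6]{logDM} together with \cite[Corollary 6.2.5]{logDM}), one sees that $\mu$ enters only through the homotopy equivalence statements of parts (1)--(3), so if one prefers one can sidestep the full interval structure and instead check directly that, for each $X$, the maps induced by $i_0$ and $i_1$ on $\Sing^{\boxx \cup \Adm}(X)$ are homotopic via the simplicial structure of $\Delta_{\boxx}^{\bullet}$, using that $\boxx$ is $\boxx \cup \Adm$-contractible after applying $v_\sharp$ — but this is merely a repackaging of the same point. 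In either formulation the proof reduces to citing \cite[\S 6.2]{logDM} essentially verbatim, since none of the arguments there used stability or the fact \eqref{logHprop.0.4}.
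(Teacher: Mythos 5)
Your proposal is correct and follows essentially the same route as the paper: there, Proposition \ref{Singboxx.4} is deduced in one line as an immediate consequence of Proposition \ref{Singboxx.3}(4), with the calculus-of-right-fractions hypothesis for $\Adm$ supplied by Proposition \ref{Singboxx.6}. Your extra verification that $\boxx$ is an interval object of $\cSm/S$ modulo $\Adm$ (the multiplication extending over an admissible blow-up of $\boxx\times\boxx$ with centers in the boundary) is precisely the standing hypothesis of \S\ref{subsection:tsf} that the paper leaves implicit, so it is added diligence rather than a different argument.
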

\begin{proof}
Immediate from Proposition \ref{Singboxx.3}(4).
\end{proof}

\begin{prop}
\label{K-theory.1}
Suppose $S\in \Sch$ and $\cF\in \infPsh(\cSm/S)$ is of the form $\colimit \cF_i$,  
where each $\cF_i$ is representable by a smooth and proper scheme over $S$.
Then the unit map
\[
\cF\rightarrow  \omega^* \omega_\sharp \cF
\]
is a $\boxx\cup\Adm$-local  equivalence.
\end{prop}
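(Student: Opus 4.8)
The statement compares two presheaves on $\cSm/S$ attached to $\cF=\colimit_i\cF_i$ with each $\cF_i$ smooth and proper over $S$: the identity functor and the composite $\omega^*\omega_\sharpp$, where $\omega\colon \cSm/S\to \Sm/S$ sends $X$ to $X-\partial X$. Unwinding the definitions, $\omega_\sharpp$ is the left Kan extension along $\omega$ and $\omega^*$ is restriction along $\omega$ (its right adjoint), so for $X\in \cSm/S$ one has $\omega^*\omega_\sharpp\cF(X)\simeq \colimit_{(Y\to X-\partial X)\in \Sm/(X-\partial X)}\cF(?)$ — more precisely $\omega_\sharpp\cF$ is a presheaf on $\Sm/S$ and $\omega^*\omega_\sharpp\cF(X)=\omega_\sharpp\cF(X-\partial X)$. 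The content of the proposition is that, $\boxx\cup\Adm$-locally, the "motive" of a proper compactification $\cF$ only depends on the open part $\cF\mapsto \cF(-\partial)$; this is the logarithmic analogue of the fact that a proper variety and its open dense locus become indistinguishable after the relevant localization.

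The plan is to reduce to the representable case $\cF=X$ with $X$ smooth and proper over $S$ (using that both $\Sing$-type localizations, colimits, and the functors $\omega_\sharpp,\omega^*$ commute with filtered colimits — $\omega_\sharpp$ is a left adjoint, and the $\boxx\cup\Adm$-local equivalences are detected after applying $\Sing^{\boxx\cup\Adm}$, which by Proposition~\ref{Singboxx.3} and the formula \eqref{Singboxx.0.1} is built from colimits). So assume $X$ is smooth proper over $S$; since $X$ is proper, $X-\partial X$ makes sense but $\partial X$ may be nonempty — however the key point is that $\underline{X}$ itself is proper over $S$, so $X$ with \emph{trivial} log structure, i.e.\ $(\underline X,\underline X)\in \cSm/S$, maps to $X$ and this is an admissible blow-up (an isomorphism on underlying schemes with trivial boundary change is vacuously in $\Adm$ when $\partial=\emptyset$)... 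Actually the cleanest route: first observe $\omega_\sharpp X = \underline X \in \Sm/S$ viewed as a representable presheaf (since $\omega(X)=X-\partial X$ and we must track this carefully), and then use the unit $X\to \omega^*\omega_\sharpp X$. I would factor the argument through Proposition~\ref{Singboxx.4}: both $\cF$ and $\omega^*\omega_\sharpp\cF$ receive maps from $\Sing^{\boxx\cup\Adm}$ of themselves which are $\boxx\cup\Adm$-local equivalences, so it suffices to show $\Sing^{\boxx\cup\Adm}(\cF)\to \Sing^{\boxx\cup\Adm}(\omega^*\omega_\sharpp\cF)$ is an objectwise equivalence of presheaves of spaces. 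Evaluating at $T\in \cSm/S$, the left side is $\colimit_{[n]\in\Delta^{\mathrm{op}}}\colimit_{T'\in \Adm\downarrow(T\times\boxx^n)}\cF(T')$ and similarly on the right; now I would use that for $X$ proper over $S$ and any $T'$, every map $T'\to X$ in $\cSm/S$ (i.e.\ a map $T'-\partial T'\to X-\partial X$) extends — by valuative-criterion/properness of $\underline X$ and admissible-blow-up resolution — after a further admissible blow-up of $T'$ to a map compatible with the boundaries, which is exactly the comparison between $\cF$-sections and $\omega_\sharpp\cF$-sections.

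The main obstacle I anticipate is the \emph{extension step}: showing that a morphism $T'-\partial T'\to X-\partial X$, after a suitable admissible blow-up $T''\to T'$, extends to a morphism $T''\to X$ in $\cSm/S$ and that this assignment is suitably coherent (functorial up to the calculus of right fractions on $\Adm$, which is available by Proposition~\ref{Singboxx.6}). Properness of $\underline X$ gives the existence of a valuative/proper extension of the rational map $\underline{T'}\dashrightarrow \underline X$ after blowing up, and one then equips the blow-up with the compactifying log structure pulled back appropriately so that it lies in $\cSm/S$ and the blow-up morphism is admissible; the delicate part is that $X$ may itself carry a nontrivial boundary, so the extension must be arranged to pull back $\partial X$ into $\partial T''$. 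This is where I expect to need the smoothness/properness hypothesis on the $\cF_i$ in an essential way, together with the machinery of $\Sing^{\boxx\cup\Adm}$ to absorb the non-canonical choice of blow-up. Once this extension lemma is in hand, the rest is a diagram chase comparing the two filtered colimits and invoking Propositions~\ref{Singboxx.3}, \ref{Singboxx.5}, and \ref{Singboxx.4}. I would model the bookkeeping on the proof of \cite[Proposition~6.1.11]{logDM} and the cofinality arguments for the categories $\Adm\downarrow(-)$.
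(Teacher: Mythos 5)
You have the right core lemma in mind---the comparison $\colimit_{Y'\in\Adm\downarrow Y}\cF(Y')\to\cF(Y-\partial Y)$, with existence of extensions coming from properness and coherence from the calculus of right fractions---and this is indeed the heart of the paper's argument. But as written the proposal has genuine gaps. First, you misread the hypothesis: each $\cF_i$ is represented by a smooth and proper \emph{scheme} over $S$, i.e.\ an object of $\cSm/S$ with empty boundary, so morphisms $Y'\to X$ in $\cSm/S$ are just morphisms of $S$-schemes $\ul{Y'}\to X$, and your ``delicate part'' about arranging the extension so that $\partial X$ pulls back into the boundary of the blow-up is vacuous. More importantly, the extension step you defer as the main obstacle is exactly the proof: given $f\colon Y-\partial Y\to X$, let $\ul{Y'}$ be the closure of the graph of $f$ in $\ul{Y}\times\ul{X}$; properness of $X$ over $S$ makes $Y':=(\ul{Y'},Y'-\partial Y')\to Y$ an admissible blow-up and the second projection extends $f$, giving surjectivity of \eqref{K-theory.1.1}. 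You also omit injectivity entirely: after using the right-fraction calculus (Proposition \ref{Singboxx.6}) to bring two extensions onto a common admissible blow-up, they coincide because $Y-\partial Y$ is dense there and $X$ is separated. Without both halves the objectwise identification is not established.

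Second, your framing is an unnecessary and somewhat shaky detour. You propose to show that $\Sing^{\boxx\cup\Adm}$ of the unit map is an objectwise equivalence, asserting that $\boxx\cup\Adm$-local equivalences ``are detected'' after applying $\Sing^{\boxx\cup\Adm}$; only the sufficient direction (objectwise equivalence after $\Sing$, together with Proposition \ref{Singboxx.3}(4) and two-out-of-three) is available, and even that needlessly drags the $\boxx^n$-directions into the colimits. The paper instead applies Proposition \ref{Singboxx.5}: the unit $\cF\to v^*v_\sharp\cF$ is an $\Adm$-local, hence $\boxx\cup\Adm$-local, equivalence, and $v^*v_\sharp\cF(Y)\cong\colimit_{Y'\in\Adm\downarrow Y}\cF(Y')$ by \eqref{Singboxx.0.1}; combined with the identification $\omega_\sharp\simeq\lambda^*$, so that $\omega^*\omega_\sharp\cF(Y)\cong\cF(Y-\partial Y)$---a point your Kan-extension formula with the question mark leaves unresolved---everything reduces, after commuting with the filtered colimit over $i$, to the bijection \eqref{K-theory.1.1} proved as above.
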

\begin{proof}
By Proposition \ref{Singboxx.5}, there is a $\boxx\cup \Adm$-local  equivalence
\[
\cF\to v^*v_\sharp \cF.
\]
Use \eqref{Singboxx.0.1} to have an equivalence of Kan complexes
\[
v^*v_\sharp \cF(Y)\cong \colimit_{Y'\in \Adm\downarrow Y} \cF(Y').
\]
On the other hand, there is an equivalence
\[
\omega^*\omega_\sharp \cF(Y)
\cong
\cF(Y-\partial Y).
\]
Hence we only need to show that the map
\[
\colimit_{Y'\in \Adm \downarrow Y}\cF(Y')
\to
\cF(Y-\partial Y)
\]
is an equivalence.
We only need to show this for each $\cF_i$.
Hence we only need to show that the map
\begin{equation}
\label{K-theory.1.1}
\colimit_{Y'\in \Adm\downarrow Y} \hom(Y',X)
\to
\hom(Y-\partial Y,X)
\end{equation}
is an isomorphism for every smooth and proper scheme $X$ over $S$.

Suppose $f\colon Y-\partial Y\to X$ is a morphism of schemes.
Let $\ul{Y'}$ be the closure of the image of the graph morphism 
$Y-\partial Y\to (Y-\partial Y)\times \ul{X}$ in $\ul{Y}\times \ul{X}$.
By setting $Y':=(\ul{Y'},Y'-\partial Y')$ and $Y'-\partial Y'$ as the preimage of $Y-\partial Y$ in $Y'$,
we obtain an admissible blow-up $p$ participating 
in a commutative diagram of fs log schemes
\[
\begin{tikzcd}
Y'\arrow[d,"p"']\arrow[rd,"g"]
\\
Y\arrow[r,"f"] & X.
\end{tikzcd}
\]
This shows that \eqref{K-theory.1.1} is surjective.

It remains to show that \eqref{K-theory.1.1} is injective.
Suppose $Y'$ and $Y''$ are two admissible blow-ups of $Y$ with two morphisms $f'\colon Y'\to X$ and 
$f''\colon Y''\to X$ such that the restrictions of $f'$ and $f''$ to $Y-\partial Y$ are equal.
We need to show that there exists a common admissible blow-up $Y'''$ of $Y'$ and $Y''$ such that 
we have the same morphism from $f'$ and $f''$ by composing $Y'''\to Y'$ and $Y'''\to Y''$.
A common admissible blow-up always exists since $\Adm\downarrow Y$ admits a calculus of 
right fractions by Proposition \ref{Singboxx.6}, so we may assume that $Y'=Y''$.
To conclude, 
observe that $Y-\partial Y$ is dense in $Y'$ and $X$ is separated.
\end{proof}

\begin{lem}
\label{K-theory.3}
For $\cF\in \infPsh(\cSm/S)$, where $S\in \Sch$, 
there is a natural isomorphism 
\[
\Sing^{\boxx\cup \Adm}\omega^*\cF\cong  \omega^*\Sing^{\A^1}\cF.
\]
\end{lem}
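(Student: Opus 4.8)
\textbf{Proof proposal for Lemma \ref{K-theory.3}.}

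The plan is to unwind both sides using the explicit description of $\Sing$ in terms of $v_\sharp$, $v^*$ and the cosimplicial interval object, and then to match them levelwise using Proposition \ref{K-theory.1} (applied to the representables $\Delta_\boxx^n = \boxx^n$, which are smooth and proper over $S$). Concretely, on the logarithmic side we have by Definition \ref{Singboxx.1} that $\Sing^{\boxx\cup\Adm} = v^*\Sing^\boxx v_\sharp$, where $v\colon \cSm/S\to (\cSm/S)[\Adm^{-1}]$ is the localization and $\Sing^\boxx$ is the geometric realization of $\underline{\Map}(y(\Delta_\boxx^\bullet),-)$. So $\Sing^{\boxx\cup\Adm}\omega^*\cF$ is the geometric realization of the simplicial presheaf $[n]\mapsto v^*\underline{\Map}(y(\boxx^n), v_\sharp \omega^*\cF)$, evaluated via \eqref{Singboxx.0.1} as $Y\mapsto \colim_{Y'\in\Adm\downarrow Y}\Map(Y'\times \boxx^n, \omega^*\cF)$. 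On the $\A^1$ side, $\Sing^{\A^1}\cF$ is the geometric realization of $[n]\mapsto \underline{\Map}(y((\A^1)^n),\cF)$, so $\omega^*\Sing^{\A^1}\cF$ sends $Y\mapsto |[n]\mapsto \Map((\A^1)^n\times (Y-\partial Y),\cF)|$ (using that $\omega^*$ is given objectwise by $\omega^*\cG(Y) = \cG(Y-\partial Y)$ and that geometric realization, being a colimit, commutes with the restriction functor $\omega^*$, which preserves colimits since it is a left adjoint).

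First I would observe that $\omega\colon \cSm/S\to \Sm/S$ sends $\boxx^n$ to $(\A^1)^n$, so that $\omega^* y((\A^1)^n)$ is the presheaf represented — on $\cSm/S$ — by $\boxx^n$ with its log structure removed; more precisely, for the cosimplicial objects there is a natural isomorphism $\omega(\Delta_\boxx^\bullet)\cong \Delta_{\A^1}^\bullet$ of cosimplicial objects (both levelwise given by the $n$-fold product, and the face/degeneracy maps of $\Delta_\boxx^\bullet$ are built from $i_0,i_1\colon\pt\to\boxx$ and the multiplication, which $\omega$ carries to the corresponding structure maps of $\Delta_{\A^1}^\bullet$). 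Since geometric realization commutes with $\omega^*$, it then suffices to produce, for each $n$, a natural isomorphism
\[
v^*\underline{\Map}_{\cSm/S}(y(\boxx^n), v_\sharp \omega^*\cF)
\;\cong\;
\omega^*\underline{\Map}_{\Sm/S}(y((\A^1)^n),\cF)
\]
compatible with the simplicial structure maps. Evaluating at $Y\in \cSm/S$, the right-hand side is $\Map_{\Sm/S}((\A^1)^n\times(Y-\partial Y),\cF)$. For the left-hand side I would use the exponential adjunction in $\infPsh$ together with \eqref{Singboxx.0.1}: the internal hom out of a representable is $\underline{\Map}(y(\boxx^n),\cG)(Y') = \cG(Y'\times \boxx^n)$, so after applying $v_\sharp$ and $v^*$ we get $Y\mapsto \colim_{Y'\in\Adm\downarrow Y}(v_\sharp\omega^*\cF)(Y'\times\boxx^n)$, and a cofinality argument (the admissible blow-ups of $Y'\times\boxx^n$ that arise are refined by products of admissible blow-ups of $Y'$ with $\boxx^n$, using that $\Adm$ is stable under product with $\boxx^n$) collapses the double colimit to $\colim_{Y''\in \Adm\downarrow (Y\times\boxx^n)}\omega^*\cF(Y'') = \colim_{Y''}\cF(Y''-\partial Y'')$.

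The main obstacle — and the point where I would spend the real work — is the identification
\[
\colim_{Y''\in \Adm\downarrow (Y\times\boxx^n)}\cF(Y''-\partial Y'')
\;\cong\;
\cF\big((\A^1)^n\times(Y-\partial Y)\big),
\]
naturally in $Y$. This is exactly the content of (the argument of) Proposition \ref{K-theory.1}: the colimit over admissible blow-ups of a compactification, of the values of $\cF$ on the open complements, recovers $\cF$ on the common open part $(Y-\partial Y)\times(\A^1)^n$ — and here I should be slightly careful, since Proposition \ref{K-theory.1} as stated concerns $\cF$ of a special form (a colimit of smooth proper representables), whereas in our application $\cF$ is an arbitrary presheaf on $\Sm/S$. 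The way around this is that for an arbitrary presheaf $\cF$ the required statement is in fact purely formal: the left-hand colimit is computed in $\infPsh$, $\cF$ is a colimit of representables $y(X)$ with $X\in\Sm/S$ (not necessarily proper), and the colimit-over-$\Adm$ construction commutes with colimits in $\cF$; so one reduces to $\cF = y(X)$ and must show $\colim_{Y''\in\Adm\downarrow(Y\times\boxx^n)}\Hom(Y''-\partial Y'',X)\cong\Hom((Y-\partial Y)\times(\A^1)^n,X)$, which is immediate since $Y''-\partial Y''$ is canonically $(Y-\partial Y)\times(\A^1)^n$ for every admissible blow-up $Y''$ (an admissible blow-up is an isomorphism on the open complement by definition), so the colimit is over a filtered (hence connected, by Proposition \ref{Singboxx.6}) category of copies of the same hom-set. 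Tracking this naturality in the simplicial direction and checking that all the identifications above are compatible with faces and degeneracies is then routine bookkeeping, and assembling the levelwise isomorphisms through geometric realization yields the desired natural isomorphism $\Sing^{\boxx\cup\Adm}\omega^*\cF\cong\omega^*\Sing^{\A^1}\cF$.
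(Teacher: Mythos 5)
Your proof is correct and follows essentially the same route as the paper's: identify $\Sing_n^{\boxx\cup\Adm}\omega^*\cF(X)$ with $\colimit_{Y\in\Adm\downarrow(X\times\boxx^n)}\cF(Y-\partial Y)$, collapse this colimit to $\cF((X-\partial X)\times\A^n)=\omega^*\Sing_n^{\A^1}\cF(X)$ using that admissible blow-ups are isomorphisms on the open complements, and then check compatibility with the face and degeneracy maps. The detour through Proposition \ref{K-theory.1} and the reduction to representables are unnecessary (the levelwise diagram over $\Adm\downarrow(X\times\boxx^n)$ is already constant, as you yourself note), but they do not affect correctness.
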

\begin{proof}Let $\cG\in \infPsh(\cSm/S)$. Recall that $\Sing^{\boxx\cup \Adm}\cG$ is defined as the geometric realization of the simplicial presheaf \[[n]\mapsto \Sing_n^{\boxx\cup \Adm}\cG=v^*\Sing_n^{\boxx}v_\sharp\cG= v^*\underline{\Map}(\boxx^n, v_\sharp \cG).\]
For every $X\in \cSm/S$, 
we have then
\begin{equation}
\label{K-theory.3.1}
\Sing_n^{\boxx\cup\Adm}\omega^*\cF(X)
=
\colimit_{Y\in \Adm\downarrow (X\times \boxx^n)}\omega^*\cF(Y)
=
\colimit_{Y\in \Adm\downarrow (X\times \boxx^n)}\cF(Y-\partial Y).
\end{equation}
If $Y\to X\times \boxx^n$ is an admissible blow-up, then the morphism
\[
Y-\partial Y\to (X-\partial X)\times \A^n
\]
is an isomorphism of schemes.
Hence we have
\begin{equation}
\label{K-theory.3.2}
\colimit_{Y\in \Adm\downarrow X\times \boxx^n}\cF(Y-\partial Y)
\cong
\cF((X-\partial X)\times \A^n)
\cong
\Sing_n^{\A^1}\cF(X-\partial X)
=
\omega^*\Sing_n^{\A^1}\cF(X).
\end{equation}
Combine \eqref{K-theory.3.1} and \eqref{K-theory.3.2} to have an isomorphism of presheaves
\[
\Sing_n^{\boxx\cup\Adm}\omega^*\cF
\cong
\omega^*\Sing_n^{\A^1}\cF
\]
To conclude, 
observe that the degeneracy and face maps for $\Sing_n^{\boxx\cup\Adm}\omega^*\cF$ and $\omega^*\Sing_n^{\A^1}\cF$ 
are compatible with this isomorphism.
\end{proof}

\subsection{Infinite Grassmannians and \texorpdfstring{$\Omega_{S^1}^\infty \Kth$}{K}}

In the following,
we review how to obtain the infinite dimensional Grassmannians from the finite dimensional ones.

\begin{df} Fix integers $0<r<n$. Let $\Gr(r,n)$\index[notation]{Grrn @ $\Gr(r,n)$} denote the $(r, n)$ Grassmannian\index{Grassmannian}, that is, the scheme representing the functor $S\mapsto \Gr(r,n)(S)$, where $\Gr(r,n)(S)$ is the set of isomorphism classes of surjections $\cO^{\oplus n}_S \to \mathcal{Q}$, where $\mathcal{Q}$ is a locally free $\cO_S$-module of rank $n-r$, see \cite[\href{https://stacks.math.columbia.edu/tag/089R}{Tag 089R}]{stacks-project}.

We equip $\Gr(r,n)$ with the trivial log structure.
In particular, for every  log scheme $X$, there is a bijection 
\[
\hom(X,\Gr(r,n))\cong \hom(\ul{X},\Gr(r,n)).
\]
There are canonical morphisms
\[
\Gr(r,n)\to \Gr(r,n+1)
\text{ and }
\Gr(r,n)\to \Gr(r+1,n+1)
\]
taking $V\subset \cO^n$ to $V\oplus \{0\}\subset \cO^n\oplus \cO$ and $V\oplus \cO\subset \cO^n\oplus \cO$, 
where $\cO$ denotes the standard linear space of dimension $1$.
From these, 
we can form colimits
\[
\Gr(r,\infty):=\colimit_{n\geq r}\Gr(r,n)
\text{ and }
\Gr:=\colimit_{r} \Gr(r,\infty).
\]
Observe that $\Gr(r,\infty)$ and $\Gr$ are filtered colimits of smooth schemes.
There is a natural morphism
\[
\Gr(r,n)\times \Gr(r',n')\to \Gr(r+r',n+n')
\]
sending $(V,W)$ to $V\oplus W$.
This gives a (non-commutative) monoid structure on the coproduct $\coprod_{r=1}^\infty \Gr(r,\infty)$
in the category of  log schemes.
\end{df}

\begin{prop}
\label{K-theory.15}
If $X$ is a regular affine noetherian scheme, then the canonical morphism
\begin{equation}
\label{K-theory.15.1}
\Sing^{\A^1}(\Z\times \Gr)(X)
\to
\Omega_{S^1}^\infty \Kth(X)
\end{equation}
is an equivalence of Kan complexes.
In particular, if $Y$ is a regular noetherian scheme, then the canonical morphism
\begin{equation}
\label{K-theory.15.2}
L_{Nis}\Sing^{\A^1}(\Z\times \Gr)(Y)
\to
\Omega_{S^1}^\infty \Kth(Y)
\end{equation}
is an equivalence of Kan complexes.
\end{prop}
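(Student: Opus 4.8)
The plan is to reduce the statement to the well-known description of algebraic $K$-theory of regular schemes and to the $\A^1$-rigidity of the Grassmannian. First I would recall that for a regular noetherian affine scheme $X$, the simplicial set $\Sing^{\A^1}(\Z\times\Gr)(X)$ is computed degreewise by $(\Z\times\Gr)(X\times\A^n)=\Z\times\Gr(X\times\A^n)$, so its geometric realization is a model for the \emph{$\A^1$-homotopy invariant} Grassmannian evaluated at $X$. By a classical result --- going back to the identification of $\Z\times\BGL_\infty$ with connective $K$-theory, together with homotopy invariance of $K$-theory of regular rings (Quillen) --- the canonical map $\Sing^{\A^1}(\Z\times\Gr)(X)\to\Omega^\infty_{S^1}\Kth(X)$ is an equivalence. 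Concretely, one uses that $\Gr=\colim_r\colim_n\Gr(r,n)$ together with the plus-construction description $\Omega^\infty_{S^1}\Kth(X)\simeq \Z\times\BGL_\infty(X)^+$, and that over an affine base the $+$-construction is accounted for precisely by the $\A^1$-localization $\Sing^{\A^1}$; this is essentially \cite[Proposition 3.9]{MV} combined with \cite[Theorem 7.3]{TT}. So the first statement \eqref{K-theory.15.1} is a recollection, and the main point is to carefully cite the right form of it (the presheaf $X\mapsto \Sing^{\A^1}(\Z\times\Gr)(X)$ on $\Sm/Y$ is precisely the presheaf whose Nisnevich localization appears in \eqref{K-theory.14.1}).

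Next I would deduce the general (non-affine) regular case \eqref{K-theory.15.2} from the affine case by a Nisnevich-local argument. Both sides of \eqref{K-theory.15.1} are presheaves of spaces on $\Sm/Y$; the right-hand side $\Omega^\infty_{S^1}\Kth$ already satisfies Nisnevich descent (it is a Nisnevich sheaf, being the infinite loop space of the $K$-theory sheaf of spectra, by Thomason--Trobaugh), and by \eqref{K-theory.14.1} it equals $\bbL_{Nis}$ of the plus-constructed Grassmannian. Applying the Nisnevich localization functor $L_{Nis}$ to the map of \eqref{K-theory.15.1}, one gets a map $L_{Nis}\Sing^{\A^1}(\Z\times\Gr)\to L_{Nis}\Omega^\infty_{S^1}\Kth\simeq\Omega^\infty_{S^1}\Kth$. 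To check it is an equivalence of sheaves it suffices to check it on stalks, i.e. on henselian local rings, which are in particular regular affine noetherian; there \eqref{K-theory.15.1} applies. Hence \eqref{K-theory.15.2} follows. One should be slightly careful that $Y$ is assumed noetherian so that the category $\Sm/Y$ and its Nisnevich topology behave well and stalks are indeed henselizations of regular local rings, but this is exactly the hypothesis in the statement.

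The step I expect to be the main obstacle is pinning down the affine identification \eqref{K-theory.15.1} with the precise normalization used in this paper, rather than proving something genuinely new. In particular one must match: (i) the indexing of the colimit defining $\Gr$ and the monoid structure $\coprod_r\Gr(r,\infty)$ with the $\Omega\clspace\bigl(\coprod_n\BGL_n\bigr)$ appearing in \eqref{K-theory.14.1}; (ii) the $\Z$-factor, coming from group-completing the monoid $\pi_0$; and (iii) the fact that $\Sing^{\A^1}$ of a filtered colimit of smooth schemes computes the homotopy colimit correctly, so that $\Sing^{\A^1}(\Z\times\Gr)(X)=\Z\times\colim_{r,n}\Sing^{\A^1}\Gr(r,n)(X)$ for $X$ affine (filtered colimits commute with finite products and with geometric realization). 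Once these bookkeeping points are settled, the equivalence is precisely the content of \cite[Proposition 3.9]{MV} (for the unstable statement) as refined by \cite{TT} and \cite[Théorème 2.20]{zbMATH06156613} for the compatibility with $K$-theory, and the argument above upgrades it from affine to general regular noetherian $Y$ by Nisnevich descent. I would therefore present the proof as: (1) reduce to affine by Nisnevich descent and stalk-checking; (2) invoke \eqref{K-theory.14.1}, \eqref{K-theory.14.2}, homotopy invariance of $K$-theory on regular rings, and the plus-construction identification, to conclude the affine case.
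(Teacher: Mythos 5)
Your proposal is correct and takes essentially the same route as the paper: the affine equivalence \eqref{K-theory.15.1} is treated as a quotation of Morel--Voevodsky (the paper cites \cite[Proposition 3.16, p.\ 141]{MV}, you cite the equivalent circle of results around \cite[Proposition 3.9]{MV}, \cite{TT} and Cisinski), and the general case \eqref{K-theory.15.2} is then deduced, exactly as in the paper, from the fact that $\Omega_{S^1}^\infty \Kth$ satisfies Nisnevich descent, so that the map becomes an equivalence after Nisnevich localization by checking it Nisnevich-locally (on affines/stalks).
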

\begin{proof}
For \eqref{K-theory.15.1}, see the proof of \cite[Proposition 3.16, p.\ 141]{MV}.
This implies that \eqref{K-theory.15.2} is a 
weak equivalence since $\Omega_{S^1}^\infty \Kth$ satisfies Nisnevich descent.
\end{proof}

\begin{prop}
\label{K-theory.18}
Suppose $S$ is a regular noetherian scheme.
Then for all $X\in \cSm/S$, there is a canonical equivalence of spaces
\[
\Omega_{S^1}^\infty \Kth(X-\partial X)
\simeq
\Map_{\inflogHpt^\Adm(S)}(X_+,\omega^*\Omega_{S^1}^\infty \Kth).
\]
\end{prop}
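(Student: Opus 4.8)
\textbf{Proof proposal for Proposition \ref{K-theory.18}.}

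The plan is to identify the mapping space on the right-hand side with a concrete model for $\boxx\cup\Adm$-local objects and then transport the $\A^1$-invariant computation from Proposition \ref{K-theory.15} across the adjunction $(\omega_\sharp,\omega^*)$. First I would observe that, by the very definition of $\omega^*$ and the unstable category $\inflogHpt^\Adm(S)$, for any $\cF\in \infShv_{sNis}(\Sm/S,\infSpcpt)$ the presheaf $\omega^*\cF$ on $\cSm/S$ is given by $X\mapsto \cF(X-\partial X)$, and its image in $\inflogHpt^\Adm(S)$ is a $\boxx\cup\Adm$-local pointed sheaf whenever $\cF$ is $\A^1$-local: indeed $\omega^*$ sends $\A^1$ to $\boxx$, sends admissible blow-ups to isomorphisms of the open complements, and is compatible with $sNis$-descent by Lemma \ref{K-theory.5}. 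Applying this to $\cF=\Omega_{S^1}^\infty\Kth$, which is $\A^1$-invariant on regular schemes and satisfies Nisnevich descent, I get that $\omega^*\Omega_{S^1}^\infty\Kth$ really is an object of $\inflogHpt^\Adm(S)$, so the mapping space on the right is computed in the ambient sheaf category after applying the localization functor $\bbL_{sNis,\boxx\cup\Adm}$ to the source.

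Next I would compute that localization on the source $X_+$. Here the key input is the singular functor machinery of \S\ref{subsection:tsf}--\ref{subsection:applicationsSing}: by Proposition \ref{Singboxx.4} the canonical map $\cG\to \Sing^{\boxx\cup\Adm}(\cG)$ is a $\boxx\cup\Adm$-local equivalence for every presheaf $\cG$ on $\cSm/S$, so combined with strict Nisnevich sheafification, $\bbL_{sNis,\boxx\cup\Adm}(X_+)$ is modelled by $\bbL_{sNis}\Sing^{\boxx\cup\Adm}(X_+)$ once one checks that $\Sing^{\boxx\cup\Adm}$ preserves strict Nisnevich-local equivalences (which follows from the explicit colimit formula \eqref{Singboxx.0.1} and the fact that $\Adm$-covers refine to strict ones on the complements). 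Therefore
\[
\Map_{\inflogHpt^\Adm(S)}(X_+,\omega^*\Omega_{S^1}^\infty\Kth)
\simeq
\Map_{\infShv_{sNis}(\cSm/S,\infSpcpt)}(\bbL_{sNis}\Sing^{\boxx\cup\Adm}(X_+),\,\omega^*\Omega_{S^1}^\infty\Kth).
\]
Now I would use the adjunction isomorphism $\Map(\omega_\sharp(-),\Omega_{S^1}^\infty\Kth)\simeq \Map(-,\omega^*\Omega_{S^1}^\infty\Kth)$ together with Lemma \ref{K-theory.3}, which gives the natural identification $\Sing^{\boxx\cup\Adm}\omega^*\cong \omega^*\Sing^{\A^1}$; dually this says $\omega_\sharp$ intertwines $\Sing^{\boxx\cup\Adm}$ on $\cSm/S$ with $\Sing^{\A^1}$ on $\Sm/S$ up to the base-point issue. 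Concretely, unwinding \eqref{K-theory.3.1}--\eqref{K-theory.3.2} for the representable $X$ shows $\omega_\sharp\Sing^{\boxx\cup\Adm}(X_+)\simeq \Sing^{\A^1}((X-\partial X)_+)$ as pointed Nisnevich sheaves on $\Sm/S$. Feeding this in, the right-hand mapping space becomes
\[
\Map_{\infShv_{sNis}(\Sm/S,\infSpcpt)}(\bbL_{Nis}\Sing^{\A^1}((X-\partial X)_+),\,\Omega_{S^1}^\infty\Kth),
\]
and since $\Omega_{S^1}^\infty\Kth$ is already $\A^1$-local and $Nis$-local this is just $\Map_{\infHpt(S)}((X-\partial X)_+,\Omega_{S^1}^\infty\Kth)$, i.e. $\Omega_{S^1}^\infty\Kth(X-\partial X)$ by the Yoneda lemma (using $X-\partial X\in\Sm/S$). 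Tracking naturality in $X$ throughout gives the canonical equivalence claimed.

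The main obstacle I anticipate is the careful bookkeeping at the third step: verifying that $\Sing^{\boxx\cup\Adm}$ is compatible with strict Nisnevich descent (so that one is allowed to interchange $\bbL_{sNis}$ and $\Sing^{\boxx\cup\Adm}$), and that after applying $\omega_\sharp$ one really lands on the $\A^1$-singularization of $(X-\partial X)_+$ rather than some larger colimit — this is exactly where the regularity of $S$ enters, via Proposition \ref{K-theory.15}, since $\Kth$ is only $\A^1$-invariant and Nisnevich-descending on regular inputs. The pointed-versus-unpointed subtlety (the functor $(-)_+$ does not commute with $\omega_\sharp$ on the nose, only after the relevant localizations) also needs a line or two of care, but it is handled by the same argument that produces \eqref{K-theory.14.1}. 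Everything else is formal manipulation of the adjunctions in \eqref{compth.1.3}--\eqref{compth.1.4} and the localization results of \S\ref{alter}.
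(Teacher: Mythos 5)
Your first paragraph is, in essence, the paper's entire proof: the verification that $\omega^*\Omega_{S^1}^\infty \Kth$ is $\boxx$-local (from $\A^1$-invariance of $K$-theory on regular schemes), $\Adm$-local (admissible blow-ups are isomorphisms on complements), and strict Nisnevich local (Lemma \ref{K-theory.5} plus Nisnevich descent of $\Kth$) is exactly what the paper does, and at that point you are done: since the target is $(sNis,\boxx\cup\Adm)$-local, mapping out of $X_+$ in $\inflogHpt^\Adm(S)$ agrees with mapping out of $X_+$ in the ambient (pre)sheaf category, which by Yoneda is $\omega^*\Omega_{S^1}^\infty\Kth(X)\simeq \Omega_{S^1}^\infty\Kth(X-\partial X)$. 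No localization of the source, no $\Sing$, no $\omega_\sharp$ is needed. Note also that regularity of $S$ enters here only through $\A^1$-invariance and Nisnevich descent of $\Kth$ on $X-\partial X$, not through Proposition \ref{K-theory.15}; the latter (and the Grassmannian model) is what Theorem \ref{K-theory.2} needs, not this proposition.

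The detour you then take has a genuine gap. The claimed identification $\omega_\sharp\Sing^{\boxx\cup\Adm}(X_+)\simeq \Sing^{\A^1}((X-\partial X)_+)$ does not follow "dually" from Lemma \ref{K-theory.3}: that lemma is a statement about the right adjoint $\omega^*$, namely $\Sing^{\boxx\cup\Adm}\omega^*\cong\omega^*\Sing^{\A^1}$, and there is no formal way to transpose it to $\omega_\sharp$ ($\Sing$ is a geometric realization of right adjoints and has no left adjoint, and $\omega_\sharp$ is a left Kan extension computed by a colimit over all objects of $\cSm/S$ mapping to the given smooth scheme, not by evaluation on complements). Unwinding \eqref{K-theory.3.1}--\eqref{K-theory.3.2} computes $\Sing^{\boxx\cup\Adm}\omega^*\cF$, not $\omega_\sharp\Sing^{\boxx\cup\Adm}$ of a representable; the statements in the paper that do control colimits of the shape $\colimit_{Y'\in\Adm\downarrow Y}\hom(Y',X)$ (Proposition \ref{K-theory.1}, and the argument inside Theorem \ref{trace.1}) rely on properness of $X$ or on the specific $\Sing^{\A^1}$-structure of the target, and neither applies to an arbitrary $X\in\cSm/S$. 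So as written, the route you actually follow to the conclusion rests on an unproven step — fortunately one you never needed, since your own locality check already finishes the proof by the paper's (shorter) argument.
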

\begin{proof}
For every $X\in \cSm/S$, we have an equivalence
\[
\omega^*\Omega_{S^1}^\infty \Kth(X)
\simeq
\Omega_{S^1}^\infty \Kth(X-\partial X).
\]
The $\A^1$-invariance of $K$-theory implies that $\omega^*\Omega_{S^1}^\infty \Kth$ is $\boxx$-local.
If $Y\to X$ is an admissible blow-up, then $Y-\partial Y\to X-\partial X$ is an isomorphism of schemes.
It follows that $\omega^*\Omega_{S^1}^\infty \Kth$ is $\Adm$-local.
Furthermore, 
together with Lemma \ref{K-theory.5}, 
we deduce that $\omega^*\Omega_{S^1}^\infty \Kth$ is strict Nisnevich local since $\Kth$ is Nisnevich local.
In conclusion, $\omega^*\Omega_{S^1}^\infty \Kth$ is $(sNis,\boxx\cup \Adm)$-local.
This gives the desired equivalence.
\end{proof}

The following is the most important result in this subsection, 
which shows that the unstable $K$-theory spectrum $\omega^*\Omega_{S^1}^\infty \Kth$ admits a geometric model 
in our logarithmic setting.

\begin{thm}
\label{K-theory.2}
Suppose $S$ is a regular scheme.
Then there is a canonical equivalence
\begin{equation}
\label{K-theory.2.9}
\Z\times \Gr
\to
\omega^*\Omega_{S^1}^\infty \Kth
\end{equation}
in $\inflogHptAdm(S)$.
\end{thm}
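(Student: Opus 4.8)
The plan is to build the equivalence \eqref{K-theory.2.9} by passing through the singular functor $\Sing^{\boxx\cup\Adm}$ and reducing to the classical Morel--Voevodsky result \eqref{K-theory.15.1}. First I would observe that the object $\Z\times \Gr$ in $\inflogHptAdm(S)$ is obtained from the presheaf $\Z\times \Gr$ on $\cSm/S$ (with trivial log structure, pulled back from $\Sm/S$, i.e.\ of the form $\omega^*$ applied to the corresponding presheaf on $\Sm/S$, once we note $\Gr(r,n)$ carries the trivial log structure so that $\hom(X,\Gr(r,n))=\hom(X-\partial X,\Gr(r,n))$ fails in general --- more precisely $\Gr$ is a filtered colimit of smooth \emph{proper} schemes, which is the relevant point). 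The starting remark is that, by Proposition \ref{Singboxx.4}, the canonical map $\Z\times\Gr \to \Sing^{\boxx\cup\Adm}(\Z\times\Gr)$ is a $\boxx\cup\Adm$-local equivalence, hence an equivalence in $\inflogHptAdm(S)$ after strict Nisnevich localization. So it suffices to produce a canonical equivalence
\[
\mathbb{L}_{sNis}\Sing^{\boxx\cup\Adm}(\Z\times\Gr)
\simeq
\omega^*\Omega_{S^1}^\infty\Kth
\]
in $\infShv_{sNis}(\cSm/S)_\ast$, since the right-hand side is already $\boxx\cup\Adm$-local and strict Nisnevich local by Proposition \ref{K-theory.18} (so its image in $\inflogHptAdm(S)$ is computed by just evaluating).

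The key computational input is Lemma \ref{K-theory.3}, which gives a natural isomorphism $\Sing^{\boxx\cup\Adm}\omega^*\cF \cong \omega^*\Sing^{\A^1}\cF$ for $\cF\in\infPsh(\cSm/S)$; I would apply this with $\cF = \Z\times\Gr$ regarded as a presheaf pulled back from $\Sm/S$. This reduces the problem to showing that $\omega^*$ of the map \eqref{K-theory.15.1} induces an equivalence $\mathbb{L}_{sNis}\,\omega^*\Sing^{\A^1}(\Z\times\Gr) \to \omega^*\Omega_{S^1}^\infty\Kth$. Since $S$ is regular, for every $X\in\cSm/S$ the scheme $X-\partial X$ is regular, so Proposition \ref{K-theory.15} (equivalence \eqref{K-theory.15.2}) applied to $X-\partial X$ --- after Nisnevich localization on $\cSm/S$, which matches Nisnevich localization on $\Sm/S$ pulled back along $\omega$, using Lemma \ref{K-theory.5} that strict Nisnevich squares in $\cSm/S$ restrict to Nisnevich squares on the opens $-\partial(-)$ --- yields that this map is a sectionwise equivalence. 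The required compatibility of the Nisnevich localization functor with $\omega^*$ and with taking interiors is the one slightly delicate bookkeeping point, but it is formal given that $\omega^*$ sends strict Nisnevich covers to Nisnevich covers and is compatible with the relevant colimits.

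Finally I would package this: the naturality in $S$ is automatic since all the constructions ($\Sing$, $\omega^*$, Nisnevich localization, the classical comparison of \cite{MV}) are natural in the base, and the pointing is the evident one coming from the basepoint $0\times\{pt\}\in\Z\times\Gr$. The main obstacle, as anticipated, is not any single deep input but rather the careful identification of $\Sing^{\boxx\cup\Adm}(\Z\times\Gr)$ with $\omega^*\Sing^{\A^1}(\Z\times\Gr)$ \emph{together with} the verification that strict Nisnevich localization on $\cSm/S$ intertwines correctly with $\omega^*$ and with $\mathbb{L}_{Nis}$ on $\Sm/S$; once Lemma \ref{K-theory.3}, Proposition \ref{K-theory.15}, Proposition \ref{K-theory.18}, and Proposition \ref{Singboxx.4} are in place, the equivalence \eqref{K-theory.2.9} follows by concatenating these identifications. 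I expect the proof to essentially read: ``Combine Proposition \ref{Singboxx.4}, Lemma \ref{K-theory.3}, Proposition \ref{K-theory.15}, and Proposition \ref{K-theory.18}.''
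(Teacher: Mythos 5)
Your overall route coincides with the paper's: identify $\Z\times\Gr$ with $\omega^*(\Z\times\Gr)$ up to $\boxx\cup\Adm$-local equivalence, pass through $\Sing^{\boxx\cup\Adm}$, rewrite via Lemma \ref{K-theory.3} as $\omega^*\Sing^{\A^1}(\Z\times\Gr)$, and compare with $\omega^*\Omega_{S^1}^\infty\Kth$ using Morel--Voevodsky and Proposition \ref{K-theory.18}. One bookkeeping remark before the main point: the identification $\Z\times\Gr\simeq\omega^*(\Z\times\Gr)$ is not automatic from the log structure being trivial; it is exactly Proposition \ref{K-theory.1}, whose proof uses properness of the $\Gr(r,n)$ to extend maps from $X-\partial X$ after an admissible blow-up. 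You gesture at this (``filtered colimit of smooth proper schemes \dots the relevant point''), but your final list of inputs drops it, and without it Lemma \ref{K-theory.3} does not apply to the representable presheaf $\Z\times\Gr$.

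The genuine gap is the step you declare ``formal'': that strict Nisnevich localization on $\cSm/S$ ``matches'' Nisnevich localization on $\Sm/S$ pulled back along $\omega$. Lemma \ref{K-theory.5} only says that $\omega$ carries strict Nisnevich distinguished squares to Nisnevich distinguished squares; this shows that $\omega^*$ preserves local objects (which is what Proposition \ref{K-theory.18} uses), but it does not give $L_{sNis}\,\omega^*\simeq\omega^*L_{Nis}$: the interiors $X-\partial X$ admit many Nisnevich covers that do not arise from strict Nisnevich covers of $X$, so $\omega^*$ has no reason to commute with sheafification, and appealing to \eqref{K-theory.15.2} alone cannot identify $L_{sNis}\,\omega^*\Sing^{\A^1}(\Z\times\Gr)$ with $\omega^*\Omega_{S^1}^\infty\Kth$. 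The paper closes this gap non-formally: it considers the canonical map $\omega^*\Sing^{\A^1}(\Z\times\Gr)\to\omega^*L_{Nis}\Sing^{\A^1}(\Z\times\Gr)$ and shows it becomes an equivalence after $L_{sNis}$ by using the affine, sectionwise form \eqref{K-theory.15.1} of Morel--Voevodsky's theorem together with a geometric input absent from your argument: every point of $X\in\cSm/S$ has an affine open neighborhood $U$ on which the Cartier divisor $\partial X$ is principal, so that $U-\partial U$ is affine. On such $U$ the source evaluates to $\Omega_{S^1}^\infty\Kth(U-\partial U)$ by \eqref{K-theory.15.1}, and the target does as well by \eqref{K-theory.15.2}, so the map is an equivalence on a basis of opens and hence an $sNis$-local equivalence. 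Your proof needs this argument (or some substitute) in place of the claimed formal compatibility; as written, that step does not go through.
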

\begin{proof}
Due to Proposition \ref{K-theory.1}, there is a $\boxx\cup\Adm$-local  equivalence 
\begin{equation}
\label{K-theory.2.1}
\Z\times \Gr
\simeq
\omega^*(\Z\times \Gr).
\end{equation}
Apply Proposition \ref{Singboxx.4} to have a $\boxx\cup\Adm$-local  equivalence 
\begin{equation}
\label{K-theory.2.2}
\omega^*(\Z\times \Gr)
\simeq
\Sing^{\boxx\cup\Adm} \omega^*(\Z \times  \Gr).
\end{equation}
Furthermore, 
Lemma \ref{K-theory.3} gives an equivalence
\begin{equation}
\label{K-theory.2.3}
\Sing^{\boxx\cup\Adm} \omega^*(\Z \times  \Gr)
\cong
\omega^* \Sing^{\A^1}(\Z \times \Gr).
\end{equation}
Next we combine \eqref{K-theory.2.1}, \eqref{K-theory.2.2}, and \eqref{K-theory.2.3} 
to deduce the $(sNis,\boxx\cup\Adm)$-local  equivalence
\begin{equation}
\label{K-theory.2.10}
\Z\times \Gr
\simeq
L_{sNis}\omega^*\Sing^{\A^1}(\Z\times \Gr).
\end{equation}

There is a canonical morphism of  presheaves
\begin{equation}
\label{K-theory.2.8}
\omega^* \Sing^{\A^1}(\Z\times \Gr)
\to
\omega^* L_{Nis}\Sing^{\A^1}(\Z\times \Gr).
\end{equation}
If $x$ is a point of $X\in \cSm/S$,
then there exists an open affine neighborhood $U$ of $x$ in $X$ such that the restriction $\partial U$ 
of the Cartier divisor $\partial X$ to $U$ is principal.
We note that $U-\partial U$ is affine.
By Proposition \ref{K-theory.15}, the morphism of Kan complexes
\[
\omega^* \Sing^{\A^1}(\Z\times \Gr)(U)
\to
\omega^* L_{Nis}\Sing^{\A^1}(\Z\times \Gr)(U)
\]
obtained from \eqref{K-theory.2.8} becomes $\Omega_{S^1}^\infty \Kth(U-\partial U)\xrightarrow{\simeq} \Omega_{S^1}^\infty \Kth(U-\partial U)$.
This means that $L_{sNis}$ turns \eqref{K-theory.2.8} into a pointwise weak equivalence.
Together with \eqref{K-theory.15.2} and \eqref{K-theory.2.10}, we obtain the desired equivalence.
\end{proof}

\subsection{The logarithmic \texorpdfstring{$K$}{K}-theory spectrum}

Next we explain how to construct a $(2,1)$-periodic log motivic spectrum from a 
pointed log space satisfying the following condition.

\begin{df}
\label{K-theory.8}
A pointed log motivic space $\cF\in \inflogHpt(S)$ satisfies 
the \emph{loops $\P^1$-bundle formula} \index{loops P1-bundle formula @ loops $\P^1$-bundle formula} 
if there exists an equivalence
\[
\alpha_{\cF}\colon \cF \to \Omega_{\P^1}\cF
\]
in $\inflogHpt(S)$.
\end{df}

\begin{const}
\label{K-theory.9}
If a pointed log motivic space $\cF\in \inflogHpt(S)$ satisfies the loops $\P^1$-bundle formula, 
then by adjunction we have a morphism
\[
\beta_{\cF}\colon \Sigma_{\P^1} \cF \to \cF.
\]
We use $\beta_{\cF}$ as the bonding maps in the log $\P^1$-spectrum 
\[
\Bott(\cF):=(\cF,\cF,\ldots)\in \inflogSH(S).
\]
This is called \emph{the logarithmic motivic Bott spectrum of 
$\cF$}.\index{logarithmic motivic Bott spectrum} \index[notation]{Bott @ $\Bott$}
By construction, 
there are canonical equivalences in $\inflogSH(S)$
\begin{equation}
\label{K-theory.9.1}
\Bott(\cF) \simeq  \Sigma_{\P^1} \Bott(\cF)
\end{equation}
and
\begin{equation}
\label{K-theory.9.2}
\Omega_{\Gmm}^\infty \Bott(\cF)
\simeq
(\cF,\Omega_{\Gmm}\cF,\Omega_{\Gmm}^2\cF,\ldots).
\end{equation}
The bonding maps in \eqref{K-theory.9.2} are induced by $\cF\to \Omega_{\P^1}\cF\simeq \Omega_{S^1}\Omega_{\Gmm}\cF$.
\end{const}

\begin{const}
\label{K-theory.19}
If a pointed log motivic space $\cF\in \inflogHpt(S)$ satisfies the loops $\P^1$-bundle formula,
then we can view $\Bott(\cF)$ as a $(\P^1)^{\wedge 2}$-spectrum
\begin{equation}
\label{K-theory.19.1}
(\cF,\cF,\ldots)
\end{equation}
with bonding maps given by the composite $\Sigma_{\P^1}^2\cF\to \Sigma_{\P^1}\cF\to \cF$.
We can also view $\Bott(\cF)\wedge \Bott(\cF)$ as a $(\P^1)^{\wedge 2}$-spectrum
\begin{equation}
\label{K-theory.19.2}
(\cF\wedge \cF,\cF\wedge \cF,\ldots),
\end{equation}
where the bonding maps are given by
\[
\cF\wedge \cF\wedge \P^1\wedge \P^1
\xrightarrow{\simeq}
\cF\wedge \P^1\wedge \cF\wedge \P^1
\xrightarrow{\beta_{\cF}\wedge \beta_{\cF}}
\cF\wedge \cF.
\]

Suppose $\cF$ is a homotopy commutative monoid in $\inflogHpt(S)$ with multiplication 
$\mu\colon \cF\wedge \cF\to \cF$ such that the diagram
\begin{equation}
\label{K-theory.19.3}
\begin{tikzcd}
\cF\wedge \cF\wedge \P^1\wedge \P^1\ar[r,"\simeq"]\ar[d,"\mu"']&
\cF\wedge \P^1\wedge \cF\wedge \P^1\ar[r,"\beta_{\cF}\wedge \beta_{\cF}"]&
\cF\wedge \cF\ar[d,"\mu"]
\\
\cF\wedge \P^1\wedge \P^1\ar[rr,"\beta_{\cF}^2"]&
&
\cF
\end{tikzcd}
\end{equation}
commutes. 
Then, since the bonding maps are compatible, $\mu$ induces a map
\[
\Bott(\cF)\wedge \Bott(\cF)\to \Bott(\cF)
\]
Furthermore, 
the assumptions on $\cF$ ensures $\Bott(\cF)$ is a homotopy commutative monoid in $\inflogSH(S)$.

The composite of the two maps in the upper horizontal row of \eqref{K-theory.19.3} is
equivalent to the composite 
\begin{equation}
\label{K-theory.19.4}
\cF\wedge \cF\wedge \P^1\wedge \P^1
\xrightarrow{\alpha_{\cF}\wedge \alpha_{\cF}}
\Omega_{\P^1}\cF\wedge \Omega_{\P^1}\cF\wedge \P^1\wedge \P^1
\xrightarrow{\simeq}
\Omega_{\P^1}\cF\wedge \P^1\wedge \Omega_{\P^1}\cF\wedge \P^1
\xrightarrow{\gamma\wedge \gamma}
\cF\wedge \cF,
\end{equation}
where $\gamma\colon \Omega_{\P^1}\cF\wedge \P^1\to \cF$ is the canonical map.
The adjoint of the composite of the two right-most morphisms in \eqref{K-theory.19.4} takes the form
\[
\nu
\colon
\Omega_{\P^1}\cF\wedge \Omega_{\P^1}\cF
\to
\Omega_{(\P^1)^{\wedge 2}}(\cF\wedge \cF).
\]
To check that \eqref{K-theory.19.3} commutes, it suffices to check that the diagram
\begin{equation}
\label{K-theory.19.5}
\begin{tikzcd}
\cF\wedge \cF\ar[d,"\mu"']\ar[r,"\alpha_{\cF}\wedge \alpha_{\cF}"]&
\Omega_{\P^1}\cF\wedge \Omega_{\P^1}\cF \ar[r,"\nu"]&
\Omega_{(\P^1)^{\wedge 2}}(\cF\wedge \cF)\ar[d,"\mu"]
\\
\cF\ar[rr,"\alpha_{\cF}^2"]&
&
\Omega_{(\P^1)^{\wedge 2}}(\cF)
\end{tikzcd}
\end{equation}
commutes by adjunction.
\end{const}

\begin{rmk}
The above discussions on the loops $\P^1$-bundle formula and logarithmic motivic Bott spectrum can be adapted in the setting of $\inflogHpt^\Adm(S)$ and $\inflogSH^\Adm(S)$.
\end{rmk}

\begin{prop}
\label{K-theory.10}
Suppose $S$ is a regular scheme. 
Then $\omega^*\Omega_{S^1}^\infty \Kth \in \inflogHpt^\Adm(S)$ satisfies the loops $\P^1$-bundle formula.
A similar result holds for $\omega^*\Omega_{S^1}^\infty \Kth \in \inflogHpt(S)$ too.
\end{prop}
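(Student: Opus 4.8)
The plan is to reduce the loops $\P^1$-bundle formula for $\omega^*\Omega_{S^1}^\infty \Kth$ to the classical projective bundle formula for algebraic $K$-theory \cite[Theorem 7.3]{TT}, transported across the functor $\omega^*$. Recall from \eqref{K-theory.14.1} and the discussion preceding it that the map
\[
\Omega_{S^1}^\infty \Kth \to \Omega_{\P^1}\Omega_{S^1}^\infty \Kth
\]
in $\infShv_{Nis}(\Sm/S,\infSpc)_\ast$ induced by the projective bundle formula is an equivalence on regular noetherian schemes. First I would check that $\omega^*$ carries this map to a morphism
\[
\alpha_{\cF}\colon \omega^*\Omega_{S^1}^\infty \Kth \to \omega^*\bigl(\Omega_{\P^1}\Omega_{S^1}^\infty \Kth\bigr)
\]
in $\inflogHpt^\Adm(S)$, and then identify the target with $\Omega_{\P^1}\bigl(\omega^*\Omega_{S^1}^\infty \Kth\bigr)$ computed inside $\inflogHpt^\Adm(S)$. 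The point of this last identification is that $\P^1$ (with trivial log structure) lies in the image of the relevant functors and $\omega_\sharp \P^1 \simeq \P^1$, so that $\omega^*$ commutes with $\Omega_{\P^1}$ up to canonical equivalence; this is the kind of Beck-Chevalley compatibility that holds because $\omega_\sharp$ (equivalently $\lambda^*$) is monoidal and sends $\P^1$ to $\P^1$, together with the fact that $\omega^*$ is a right adjoint hence preserves the limits defining $\Omega_{\P^1}(-) = \underline{\Hom}(\P^1/1,-)$.

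The key steps, in order, would be: (1) Use Theorem \ref{K-theory.2} to replace $\omega^*\Omega_{S^1}^\infty \Kth$ by the geometric model $\Z\times \Gr$ in $\inflogHptAdm(S)$, so that all computations may be done with an explicit presheaf; alternatively, work directly with $\omega^*\Omega_{S^1}^\infty \Kth$ as a $(sNis,\boxx\cup\Adm)$-local object, which was the content of the proof of Proposition \ref{K-theory.18}. (2) Observe that $\omega^*\Omega_{S^1}^\infty \Kth$ is $\boxx\cup\Adm$-local and $sNis$-local, so it is a genuine object of $\inflogHpt^\Adm(S)$, and that $\Omega_{\P^1}$ of such an object is computed by the internal mapping space in the localized category. (3) Evaluate: for $X\in \cSm/S$ one has $\omega^*\Omega_{S^1}^\infty\Kth(X)\simeq \Omega_{S^1}^\infty\Kth(X-\partial X)$ and, since $\P^1$ has trivial log structure, $(X\times (\P^1/1))-\partial(X\times\P^1) \cong (X-\partial X)\times(\P^1/1)$, so that the loops along $\P^1$ on the log side matches the loops along $\P^1$ on the scheme side applied to $X-\partial X$. (4) Conclude by the classical projective bundle formula on the regular scheme $X-\partial X$ (reducing to affine opens $U$ with $\partial U$ principal, hence $U-\partial U$ affine, exactly as in the proof of Theorem \ref{K-theory.2}), which gives an equivalence $\Omega_{S^1}^\infty\Kth(X-\partial X)\xrightarrow{\simeq}\Omega_{\P^1}\Omega_{S^1}^\infty\Kth(X-\partial X)$, natural in $X$. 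Naturality plus $(sNis,\boxx\cup\Adm)$-descent upgrades this to the desired equivalence $\alpha_{\cF}\colon \cF\xrightarrow{\simeq}\Omega_{\P^1}\cF$ in $\inflogHpt^\Adm(S)$. The statement for $\inflogHpt(S)$ then follows by applying $\psi^*$ (or, over a field admitting resolution of singularities, via Remark \ref{compth.15}), but in general one simply repeats the argument replacing $\cSm/S$ by $\SmlSm/S$ and using that $\omega^*\Omega_{S^1}^\infty\Kth$ restricted to $\SmlSm/S$ is still $\boxx$-local, $sNis$-local, and $\SmAdm$-local.

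The main obstacle I anticipate is step (2)–(3): making precise that $\Omega_{\P^1}$ computed in the \emph{localized} category $\inflogHpt^\Adm(S)$ (rather than in presheaves) commutes with $\omega^*$, i.e.\ that no extra localization is introduced when forming internal mapping objects out of $\P^1/1$. This requires knowing that $\omega^* $ is compatible with the symmetric monoidal closed structure on both sides — concretely that $\omega^*\underline{\Hom}_{\infH_\ast(S)}(\omega_\sharp\P^1/1,-)\simeq\underline{\Hom}_{\inflogHpt^\Adm(S)}(\P^1/1,\omega^*(-))$ — which should follow formally from $\omega_\sharp=\lambda^*$ being monoidal and $\omega^*$ being its right adjoint, but needs to be spelled out. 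A clean way to sidestep this is to argue entirely at the level of the explicit model $\Z\times\Gr$ from Theorem \ref{K-theory.2}, where everything is a concrete presheaf of spaces and the equivalence $\Sing^{\boxx\cup\Adm}\omega^*(-)\cong\omega^*\Sing^{\A^1}(-)$ of Lemma \ref{K-theory.3} does the heavy lifting; then the loops $\P^1$-bundle formula is a direct consequence of the $\A^1$-local projective bundle formula for $K$-theory transported through $\Sing$ and $\omega^*$. Everything else is routine bookkeeping with the localizations already established in \S\ref{subsection:applicationsSing} and \S\ref{section:kfslogschemes}.
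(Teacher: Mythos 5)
Your proposal is correct and follows essentially the same route as the paper: one applies $\omega^*$ to the presheaf-level equivalence $\Omega_{S^1}^\infty \Kth \simeq \Omega_{\P^1}\Omega_{S^1}^\infty \Kth$ coming from the projective bundle formula and uses the isomorphism $(X\times \P^1)-\partial(X\times\P^1)\cong (X-\partial X)\times\P^1$ to get $\omega^*\Omega_{\P^1}\simeq\Omega_{\P^1}\omega^*$, which is exactly your steps (3)--(4). The compatibility you worry about in step (2) is handled in the paper simply by working at the level of presheaves (the source and target are already $(sNis,\boxx\cup\Adm)$-local as in Proposition \ref{K-theory.18}), so the proposed detour through the geometric model $\Z\times\Gr$ and $\Sing^{\boxx\cup\Adm}$ is unnecessary.
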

\begin{proof}
We focus  on the case of $\omega^*\Omega_{S^1}^\infty \Kth \in \inflogHpt^\Adm(S)$ since the proofs are similar.
By the projective bundle formula for $K$-theory of schemes, we have an equivalence of presheaves of spaces
\begin{equation}
\label{K-theory.10.1}
\Omega_{S^1}^\infty \Kth
\simeq
\Omega_{\P^1}\Omega_{S^1}^\infty \Kth.
\end{equation}
For every $X\in \lSm/S$, we have a canonical isomorphism of schemes
\[
(X\times \P^1)-\partial(X\times \P^1)
\cong
(X-\partial X)\times \P^1.
\]
This implies that there is a natural equivalence
\begin{equation}
\label{K-theory.10.2}
\omega^*\Omega_{\P^1}
\simeq
\Omega_{\P^1}\omega^*.
\end{equation}
Apply $\omega^*$ to \eqref{K-theory.10.1}, and use \eqref{K-theory.10.2} to obtain the desired equivalence.
\end{proof}

\begin{df}
\label{K-theory.16}
Let $S$ be a regular scheme. 
Proposition \ref{K-theory.10} shows that 
$\omega^*\Omega_{S^1}^\infty \Kth \in \inflogHpt^\Adm(S)$ satisfies the loops $\P^1$-bundle formula.
With reference to Construction \ref{K-theory.9}, 
we define the \emph{logarithmic $K$-theory spectrum of $S$} \index{logarithmic $K$-theory spectrum} to be the 
Bott periodic $\P^1$-spectrum
\begin{equation}
\label{K-theory.16.1}
\logKGL:=\Bott(\omega^*\Omega_{S^1}^\infty \Kth)=(\omega^*\Omega_{S^1}^\infty \Kth,\omega^*\Omega_{S^1}^\infty 
\Kth,\ldots)\in \inflogSH^\Adm(S).
\end{equation}
\index[notation]{logKGL @ $\logKGL$}
By Theorem \ref{K-theory.2},
we have an equivalence in $\inflogSH^{\Adm}(S)$
\begin{equation}
\label{K-theory.16.2}
\logKGL
\simeq
(\bbL_{sNis,\boxx\cup \Adm}\Z\times \Gr,\bbL_{sNis,\boxx\cup \Adm}\Z\times \Gr,\ldots).
\end{equation}

We also have the logarithmic $K$-theory spectrum
\begin{equation}
\label{K-theory.16.3}
\logKGL
:=
\Bott(\omega^*\Omega_{S^1}^\infty \Kth)\in \inflogSH(S).
\end{equation}

Using the functor $\psi^*\colon \inflogSH^\Adm(S)\to \inflogSH(S)$ in \ref{compth.14},
we can relate \eqref{K-theory.16.1} and \eqref{K-theory.16.3} by the equivalence
\begin{equation}
\psi^*\logKGL
\simeq
\logKGL
\end{equation}
since the triangle \eqref{compth.14.1} commutes.
\end{df}

\begin{thm}
\label{K-theory.7}
Suppose $S$ is a regular scheme.
For all $X\in \cSm/S$ and integer $d\in\mathbb{Z}$, there are canonical equivalences of spectra
\begin{equation}
\label{K-theory.7.3}
\Kth(X-\partial X)
\simeq
\map_{{\inflogSH}^{\eff, \Adm}(S)}(\Sigma_{S^1}^\infty X_+,\Omega_{\Gmm}^\infty \logKGL)
\end{equation}
and
\begin{equation}
\label{K-theory.7.4}
\Kth(X-\partial X)
\simeq
\map_{\inflogSH^\Adm(S)}(\Sigma_{\P^1}^d \Sigma_{\P^1}^\infty X_+,\logKGL).
\end{equation}
Similar results hold for $\inflogSHS(S)$ and $\inflogSH(S)$ too.
\end{thm}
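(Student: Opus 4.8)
The plan is to deduce the statement from the geometric model \eqref{K-theory.16.2} together with the adjunctions relating the various stable categories. The key point is that the right-hand sides of \eqref{K-theory.7.3} and \eqref{K-theory.7.4} are mapping spectra whose values we can compute by unwinding the constructions. First I would establish \eqref{K-theory.7.3}. By definition $\logKGL = \Bott(\omega^*\Omega_{S^1}^\infty\Kth)$, and by \eqref{K-theory.9.2} the $S^1$-infinite-loop space $\Omega_{\Gmm}^\infty\logKGL$ has underlying space $\omega^*\Omega_{S^1}^\infty\Kth$ with the bonding maps built from the loops $\P^1$-bundle formula. Thus the mapping spectrum $\map_{\inflogSHS^\Adm(S)}(\Sigma_{S^1}^\infty X_+, \Omega_{\Gmm}^\infty\logKGL)$ is, levelwise, $\map(\Sigma_{S^1}^\infty X_+, \Sigma_{S^1}^\infty(\Omega_{\Gmm}^n\omega^*\Omega_{S^1}^\infty\Kth))$; by the adjunction $(\Sigma_{S^1}^\infty, \Omega_{S^1}^\infty)$ and Proposition \ref{K-theory.18}, the $n=0$ term computes to the $K$-theory space of $X - \partial X$, so one only needs to check that the $\Gmm$-bonding maps assemble these spaces into the $K$-theory \emph{spectrum} of $X-\partial X$, via the $\Gmm$-deloopings coming from the projective bundle formula \eqref{K-theory.10.1}--\eqref{K-theory.10.2}. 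This matches exactly the spectrum-level description \eqref{K-theory.14.2} of Bass $K$-theory after applying $\omega^*$, because $\omega^*$ intertwines $\Omega_{\P^1}$ with $\Omega_{\P^1}$ and $\omega^*\Omega_{S^1}^\infty\Kth(X) \simeq \Omega_{S^1}^\infty\Kth(X-\partial X)$.

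\textbf{Second step.} For \eqref{K-theory.7.4} I would use the adjunction equivalence \eqref{alter.5.1}, which gives
\[
\map_{\inflogSH^\Adm(S)}(\Sigma_{\P^1}^\infty X_+, \logKGL)
\simeq
\map_{\inflogSHS^\Adm(S)}(\Sigma_{S^1}^\infty X_+, \Omega_{\Gmm}^\infty\logKGL),
\]
since $\Sigma_{\P^1}^\infty \simeq \Sigma_{\Gmm}^\infty\Sigma_{S^1}^\infty$ and $\Omega_{\Gmm}^\infty$ is right adjoint to $\Sigma_{\Gmm}^\infty$; this reduces \eqref{K-theory.7.4} for $d=0$ to \eqref{K-theory.7.3}. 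The case of general $d$ follows because $\Sigma_{\P^1}$ is invertible in $\inflogSH^\Adm(S)$ (by construction via $\P^1$-stabilization, using $\P^1 \simeq S^1 \otimes \Gmm$ and the Bott-periodicity equivalence \eqref{K-theory.9.1}), so $\Sigma_{\P^1}^d\Sigma_{\P^1}^\infty X_+$ has the same mapping spectrum into $\logKGL$ up to the shift $\Sigma_{\P^1}^{-d}\logKGL \simeq \logKGL$ provided by \eqref{K-theory.16.1}. Finally, for the parenthetical ``similar results for $\inflogSHS(S)$ and $\inflogSH(S)$'', I would apply the functor $\psi^*$ from Construction \ref{compth.14} and use the commuting triangle \eqref{compth.14.1} together with $\psi^*\logKGL \simeq \logKGL$ (the displayed equivalence in Definition \ref{K-theory.16}): since $\psi_\sharp$ is left adjoint to $\psi^*$ and $\psi_\sharp\Sigma_{S^1}^\infty X_+ \simeq \Sigma_{S^1}^\infty X_+$ for $X \in \SmlSm/S \subset \cSm/S$, the mapping spectra agree.

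\textbf{Main obstacle.} I expect the main difficulty to be the bookkeeping in the first step: verifying that the $\Gmm$-bonding maps in $\Omega_{\Gmm}^\infty\logKGL$ — which come from the loops-$\P^1$-bundle-formula map $\alpha_{\cF}$ of Proposition \ref{K-theory.10} applied to $\cF = \omega^*\Omega_{S^1}^\infty\Kth$ — are identified, after applying $\map(\Sigma_{S^1}^\infty X_+, -)$, with the bonding maps of the Bass $K$-theory spectrum coming from the $K$-theory projective bundle formula \cite[Theorem 7.3]{TT}. This is essentially a compatibility statement between two appearances of the same projective bundle formula (one at the level of schemes via $X-\partial X$, one at the level of log motivic spaces via $\omega^*$), and it should follow formally from \eqref{K-theory.10.2} and the naturality of the adjunction counit, but care is needed because the target category has been localized at $\boxx \cup \Adm$ and one must check $\omega^*\Omega_{S^1}^\infty\Kth$ is genuinely $(sNis, \boxx\cup\Adm)$-local, which is precisely Proposition \ref{K-theory.18}. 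Once that identification is in place, the rest is a sequence of standard adjunction manipulations, and no further geometric input beyond Theorem \ref{K-theory.2} and Proposition \ref{K-theory.10} is required.
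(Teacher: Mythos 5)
Your argument is essentially the paper's own proof: both unwind $\Omega_{\Gmm}^\infty\logKGL$ via \eqref{K-theory.9.2}, identify the levelwise mapping spectra out of $\Sigma_{S^1}^\infty X_+$ with $\Omega_{\G_m}^n\Omega_{S^1}^\infty\Kth(X-\partial X)$ using Proposition \ref{K-theory.18} (resp.\ Proposition \ref{K-theory.10} for the bonding maps), compare with the Bass-spectrum description \eqref{K-theory.14.2}, and then get \eqref{K-theory.7.4} from \eqref{K-theory.7.3} by the $(\Sigma_{\Gmm}^\infty,\Omega_{\Gmm}^\infty)$-adjunction together with Bott periodicity \eqref{K-theory.9.1} to handle general $d$. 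The only (minor, harmless) deviation is your treatment of the parenthetical cases $\inflogSHS(S)$ and $\inflogSH(S)$ via the adjunction $\psi_\sharp\dashv\psi^*$ and $\psi^*\logKGL\simeq\logKGL$, which covers $X\in\SmlSm/S$, whereas the paper simply reruns the same argument in those categories.
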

\begin{proof}
We focus on the $\Adm$-cases ${\inflogSH}^{\eff,\Adm}(S)$ and ${\inflogSH}^{\Adm}(S)$, since the proofs without $\Adm$-localization are similar.
Due to \eqref{K-theory.9.1}, we may assume $d=0$.
Then, by adjunction, \eqref{K-theory.7.4} is a consequence of \eqref{K-theory.7.3}.

By \eqref{K-theory.9.2}, there is an equivalence in ${\inflogSH}^{\eff,\Adm}(S)$
\begin{equation}
\label{K-theory.7.5}
\Omega_{\Gmm}^\infty \logKGL
\simeq
(\omega^*\Omega_{S^1}^\infty \Kth,\Omega_{\Gmm}\omega^*\Omega_{S^1}^\infty \Kth,\ldots).
\end{equation}
This gives an equivalence of spectra
\begin{equation}
\label{K-theory.7.1}
\begin{split}
& \map_{{\inflogSH}^{\eff,\Adm}(S)}(\Sigma_{S^1}^\infty X_+,\Omega_{\Gmm}^\infty\logKGL)
\\
\simeq &
(\Omega_{S^1}^\infty \Kth(X-\partial X),\Omega_{\mathbb{G}_m}\Omega_{S^1}^\infty \Kth(X-\partial X),\ldots).
\end{split}
\end{equation}
Compare with \eqref{K-theory.14.2} to conclude.
\end{proof}

\begin{prop}
\label{K-theory.20}
The motivic space $\Z\times \Gr$ is a homotopy commutative monoid in $\inflogHpt^\Adm(S)$. If $S$ is a regular scheme,
 $\logKGL$ is a homotopy commutative monoid in $\inflogSH^\Adm(S)$. 
\end{prop}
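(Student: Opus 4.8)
The plan is to produce the homotopy commutative monoid structure on $\Z\times \Gr$ first, and then transport it to $\logKGL$ via Construction \ref{K-theory.19}. For the first part, I would start from the classical fact that $\coprod_{r\geq 0}\Gr(r,\infty)$ carries a (non-commutative) monoid structure in $\lSm/S$ given by direct sum $(V,W)\mapsto V\oplus W$, as recalled in the paragraph preceding this proposition; passing to $\pi_0$-group-completion one obtains on $\Z\times \Gr$ an $\mathrm{E}_\infty$-monoid structure in the $\infty$-category of (pointed) presheaves of spaces, reflecting the fact that the $K$-theory space is an infinite loop space. Concretely, $\Z\times \Gr$ represents $\Omega^\infty_{S^1}\Kth$ after Nisnevich-$\A^1$-localization over a regular base, and $\Kth$ is an $\mathrm{E}_\infty$-ring spectrum, so $\Omega^\infty_{S^1}\Kth$ is an $\mathrm{E}_\infty$-monoid; in particular it is a homotopy commutative monoid. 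Since $\inflogHpt^\Adm(S)$ is a localization of $\infShv_{sNis}(\cSm/S)_\ast$ and the monoidal structure (Day convolution) descends to the localization, the object $\Z\times \Gr$, viewed in $\inflogHpt^\Adm(S)$ via the functor of Theorem \ref{K-theory.2}, inherits a homotopy commutative monoid structure. Alternatively, one can argue directly: the map $\Gr(r,\infty)\times\Gr(r',\infty)\to\Gr(r+r',\infty)$ is commutative up to the $\A^1$-homotopy (hence $\boxx\cup\Adm$-homotopy) swapping the two summands, and this is exactly the input needed.

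For the second part, I would invoke Construction \ref{K-theory.19}. By Proposition \ref{K-theory.10}, $\cF:=\omega^*\Omega^\infty_{S^1}\Kth$ satisfies the loops $\P^1$-bundle formula over a regular $S$, so $\Bott(\cF)=\logKGL$ is defined; and by the first part $\cF$ is a homotopy commutative monoid in $\inflogHpt^\Adm(S)$, with multiplication $\mu$ coming from the ring structure on $\Kth$. To conclude that $\Bott(\cF)$ is a homotopy commutative monoid in $\inflogSH^\Adm(S)$ it remains, per Construction \ref{K-theory.19}, to verify the compatibility square \eqref{K-theory.19.3}, equivalently the adjoint square \eqref{K-theory.19.5}. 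Here I would use that the periodicity equivalence $\alpha_\cF\colon\cF\to\Omega_{\P^1}\cF$ is itself induced, via $\omega^*$, by the Bott/projective-bundle map $\Omega^\infty_{S^1}\Kth\to\Omega_{\P^1}\Omega^\infty_{S^1}\Kth$, which is a map of $\mathrm{E}_\infty$-monoids (the projective bundle formula for $K$-theory is multiplicative — this is part of the statement that $\Kth$ is an $\mathrm{E}_\infty$-ring and $\mathcal O(1)-1\in K_0(\P^1)$ is a Bott element); hence $\alpha_\cF$ is multiplicative and the square \eqref{K-theory.19.5} commutes up to homotopy. Applying $\Bott(-)$ to $\mu$ then yields a multiplication $\logKGL\wedge\logKGL\to\logKGL$, and the homotopy commutativity and unitality follow from those of $\mu$ together with the compatibility just checked.

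The main obstacle I anticipate is the verification that the periodicity map $\alpha_\cF$ is compatible with multiplication in the precise sense required by \eqref{K-theory.19.5} — i.e.\ that the Bott element and the ring structure interact correctly — and that this survives the application of $\omega^*$ and the passage to the localized category $\inflogSH^\Adm(S)$. In the $\A^1$-invariant setting this is the standard fact that $\KGL$ is a homotopy commutative (indeed $\mathrm{E}_\infty$) ring spectrum, proved using the multiplicativity of the projective bundle formula for $K$-theory (see \cite{PPR}, \cite{zbMATH06156613}); the content here is to observe that the arguments pass through $\omega^*$ and the $(sNis,\boxx\cup\Adm)$-localization without change, since $\omega^*$ is (lax) monoidal, $\Kth$-theory of the open complements is unchanged along admissible blow-ups, and the relevant equivalences were already established in Theorem \ref{K-theory.2} and Proposition \ref{K-theory.10}. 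The remaining steps — deducing unitality, associativity up to homotopy, and commutativity up to homotopy for $\Bott(\mu)$ from the corresponding properties of $\mu$ — are formal consequences of the fact that $\Bott(-)$, restricted to objects satisfying the loops $\P^1$-bundle formula together with the compatibility \eqref{K-theory.19.3}, is a lax monoidal functor into $\inflogSH^\Adm(S)$.
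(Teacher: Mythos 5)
Your main argument is correct and, at bottom, is the same reduction the paper uses: everything is pulled back from the $\A^1$-local world through $\omega^*$. The paper's proof, however, is organized slightly differently and it is worth noting how. It first defines the multiplication $\mu$ on $\Z\times\Gr$ geometrically (via the maps $\Gr(r,n)\times\Gr(s,l)\to\Gr(r+s,n+l)$), then composes with the equivalence of Theorem \ref{K-theory.2} so that every diagram to be checked has target $\omega^*\Omega_{S^1}^\infty\Kth$, and verifies commutativity by the adjunction $\omega_\sharp\dashv\omega^*$: since $\omega_\sharp$ of (smash powers of) $\Z\times\Gr$ is again $\Z\times\Gr$, each square commutes because the corresponding square with $\omega^*$ removed commutes in $\infHpt(k)$; the compatibility \eqref{K-theory.19.3} for $\omega^*\Omega_{S^1}^\infty\Kth$ is handled by exactly the same trick. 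Your packaging — transport the homotopy commutative monoid structure on $\Omega_{S^1}^\infty\Kth$ through the lax monoidal $\omega^*$ (Construction \ref{compth.13}), carry it across Theorem \ref{K-theory.2}, and deduce \eqref{K-theory.19.5} from multiplicativity of the classical Bott/projective-bundle map together with $\omega^*\Omega_{\P^1}\simeq\Omega_{\P^1}\omega^*$ — is a legitimate formal reformulation of the same idea and buys nothing essentially new, though the paper's explicit geometric $\mu$ is what gets used later (e.g.\ in the trace-map comparison). One genuine warning: your ``alternative'' direct argument, that the swap of summands is ``commutative up to the $\A^1$-homotopy (hence $\boxx\cup\Adm$-homotopy)'', is not valid as stated. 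In this framework an $\A^1$-homotopy does \emph{not} automatically yield a $\boxx\cup\Adm$-homotopy; one must exhibit a compactification of the homotopy through admissible blow-ups as in Lemma \ref{ProplogSH.7}, and precisely this failure is why Lemma \ref{Ori.55} required a nontrivial explicit construction. Fortunately your proof does not rely on that aside, but it should be deleted or replaced by the adjunction argument above.
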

\begin{proof}
The morphism $\Gr(r,n)\times \Gr(s,l)\to \Gr(r+s,n+l)$ sending any pair $(V\in \cO^n,W\in \cO^l)$ to $V\oplus W$ naturally induces a morphism $\mu\colon (\Z\times \Gr)\wedge (\Z\times \Gr)\to \Z\times \Gr$.
Together with Theorem \ref{K-theory.2}, we have a map $\mu'\colon (\Z\times \Gr)\wedge (\Z \times \Gr)\to \omega^*\Omega_{S^1}^\infty \Kth$.
The square
\[
\begin{tikzcd}
(\Z\times \Gr)\wedge (\Z\times \Gr)\wedge (\Z\times \Gr)\ar[d,"\mu\wedge \id"']\ar[r,"\id\wedge \mu"]&
(\Z\times \Gr)\wedge (\Z\times \Gr)\ar[d,"\mu"]
\\
(\Z\times \Gr)\wedge (\Z\times \Gr)\ar[r,"\mu"]&
\omega^*\Omega_{S^1}^\infty \Kth
\end{tikzcd}
\]
commutes since the corresponding square removing $\omega^*$ in the lower right corner commutes in $\infHpt(k)$.
This shows that $\mu$ satisfies the associative axiom.
One can similarly show that $\mu$ satisfies the unital and commutative axioms, and hence $\Z\times \Gr$ is a homotopy commutative monoid in $\inflogHpt^\Adm(S)$.

It remains to check that \eqref{K-theory.19.3} commutes for $\omega^*\Omega_{S^1}^\infty \Kth$.
In other words, we need to show that the diagram
\[
\begin{tikzcd}[column sep=tiny]
(\Z\times \Gr)\wedge (\Z\times \Gr)\wedge \P^1\wedge \P^1\ar[r]\ar[d]&
(\Z\times \Gr)\wedge \P^1\wedge (\Z\times \Gr)\wedge \P^1\ar[r]&
(\Z\times \Gr)\wedge (\Z\times \Gr)\ar[d]
\\
(\Z\times \Gr)\wedge \P^1\wedge \P^1\ar[rr]&
&
\omega^*\Omega_{S^1}^\infty \Kth
\end{tikzcd}
\]
commutes.
This follows from the commutativity of the corresponding square in $\infHpt(k)$, where $\omega^*$ in the lower right corner is removed.
\end{proof}

\begin{df}
\label{K-theory.11}
Suppose $S$ is a regular scheme and $f\colon X\to S$ is a morphism in $\lSch$.
The \emph{logarithmic $K$-theory of $S$} \index{logarithmic $K$-theory} \index[notation]{Klog @ $\Kthlog$} 
is the mapping spectrum
\[
\Kthlog(X):=\map_{\inflogSH(X)}(\Sigma_{\P^1}^\infty X_+,f^*\logKGL).
\]
For every integer $n\in\mathbb{Z}$, we set
\[
\Kthlog_n(X):=\pi_n\Kthlog(X).
\]
\end{df}

If $f$ is log smooth, 
then we have equivalences of spectra
\begin{equation}
\label{K-theory.11.1}
\Kthlog(X)
\simeq
\map_{\inflogSH(S)}(\Sigma_{\P^1}^\infty X_+,\logKGL)
\simeq
\Kth(X-\partial X),
\end{equation}
where the right equivalence is due to 
Theorem \ref{K-theory.7}.
In particular, if $S\in \Sm/k$, then we have an equivalence of spectra
\[
\Kthlog(S)
\simeq
\Kth(S).
\]
One may ask if this is true in a more general situation.

\begin{conj}
\label{K-theory.12}
For every $S\in \Sch$ there exists an equivalence of spectra
\[
\Kthlog(S)
\simeq
\Kth(S).
\]
\end{conj}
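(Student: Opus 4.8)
The plan is to deduce the conjecture from the regular case, which is already contained in Theorem~\ref{K-theory.7} together with \eqref{K-theory.11.1}, by first removing the noetherian hypothesis via noetherian approximation and then running a blow-up dévissage that matches the descent properties of Bass $K$-theory against those of the motivic spectrum $f^*\logKGL$, where $f\colon S\to \Spec{\Z}$ is the structure morphism and $\logKGL\in \inflogSH(\Spec{\Z})$ is the spectrum over the integers.

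To begin with, I would reduce to $S$ noetherian of finite Krull dimension. Writing $S=\lim_i S_i$ as in \S\ref{noetherian} with each $S_i$ of finite type over $\Z$, Proposition~\ref{noetherian.3} together with the constructions of \S\ref{section:colmhc} gives $\inflogSH(S)\simeq \colim_i\inflogSH(S_i)$ compatibly with the pullbacks of $\logKGL$ from $\Spec{\Z}$, so that $\Kthlog(S)\simeq \colim_i\Kthlog(S_i)$; since Bass $K$-theory of qcqs schemes is continuous along affine transition morphisms \cite{TT}, we also have $\Kth(S)\simeq \colim_i\Kth(S_i)$. Hence it is enough to treat $S\in\Sch_{noeth}$ of finite Krull dimension.

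The next step is to produce a natural comparison map $c_S\colon \Kth(S)\to \Kthlog(S)$, functorial in $S\in\Sch$ and restricting to the equivalence \eqref{K-theory.11.1} for $S$ regular. Over $\Spec{\Z}$ every $X\in\lSm/\Z$ has an underlying scheme that is smooth, or a toric modification thereof; invoking $\A^1$-invariance of $K$-theory of regular schemes, Gubeladze's theorem for the toric charts, and Thomason's blow-up and projective bundle formulas, the presheaf $X\mapsto \Kth(X-\partial X)$ on $\lSm/\Z$ is already $\boxx$- and $\Adm$-invariant and satisfies strict Nisnevich and dividing descent; this identifies $\logKGL$ over $\Spec{\Z}$ with the $(dNis,\boxx\cup\Adm)$-localisation of the $K$-theory presheaf, and hence yields an equivalence $c_{\Spec{\Z}}$. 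Pulling back along $f$ and using functoriality of $K$-theory produces $c_S$ for general $S$.

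The heart of the matter is to show that $c_S$ is an equivalence for singular $S$. Both functors satisfy Nisnevich descent on $\Sch_{noeth}$, so by induction on $\dim S$ and resolution of singularities (or de~Jong alterations, keeping track of the degree) one can reduce to comparing the two sides along an abstract blow-up square $(\Blow_Z X,E,Z,X)$. For $\Kth$ this is pro-cdh descent in the sense of Kerz--Strunk--Tamme; for $f^*\logKGL$ one wants to exploit that $\inflogSH$ inverts the log blow-up of $X$ along $Z$ and that the residue sequence of Theorem~\ref{Thom.1}, together with Theorem~\ref{ProplogSH.5}, pins down the behaviour of $f^*\logKGL$ on such a square --- the conceptual point being that the logarithmic structure on the blow-up retains exactly the infinitesimal thickening data that appears on the pro side of the Kerz--Strunk--Tamme square, so that $f^*\logKGL$ should satisfy honest rather than merely pro-descent. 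The main obstacle, and in my view the genuine difficulty of the conjecture, is precisely this last identification: Theorems~\ref{ProplogSH.5} and \ref{Thom.1} are established for closed immersions of schemes smooth over the base, whereas here $Z\hookrightarrow X$ lives over a singular $S$, so one must either extend those formulas over arbitrary noetherian bases --- plausibly through the universal property of \S\ref{subsection:universal} combined with a limit presentation of $S$ by regular data --- or argue directly that the $\boxx$-localisation over a singular base does not destroy the $K$-theory of $S$. This last point is delicate because $\boxx$-invariance on $\lSm/S$ forces $\Kthlog(S\times\boxx)\simeq\Kthlog(S)$, whereas $\Kth(\A^1_S)\neq\Kth(S)$ for singular $S$; resolving the apparent tension amounts to showing that $\Kthlog(S\times\boxx)$ is \emph{not} computed by $\Kth(\A^1_S)$ when $S$ is singular, i.e. that the geometric representability of Theorem~\ref{K-theory.7} genuinely fails to extend beyond the regular case, but in a manner still compatible with the conjecture.
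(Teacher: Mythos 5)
This statement is Conjecture \ref{K-theory.12}: the paper states it as an open problem and gives no proof, so there is nothing of the authors' to compare your argument with; the only guidance in the text is the remark following the conjecture, which records precisely the $\boxx$-invariance tension you raise at the end. Your write-up is a strategy outline rather than a proof, and its two load-bearing steps are missing. First, the comparison map $c_S$ is never actually constructed: for singular $S$ the object $S$ is the monoidal unit of $\inflogSH(S)$, and ``pulling back along $f$ and using functoriality of $K$-theory'' does not produce a map $\Kth(S)\to\map_{\inflogSH(S)}(\unit,f^*\logKGL)$, because $f^*$ acts on the localized categories and never sees $\Kth(S)$. The natural attempt is to localize the presheaf $X\mapsto\Kth(X-\partial X)$ on $\lSm/S$ (or $\cSm/S$) and compare the localization with $f^*\logKGL$; but identifying the pullback of the localization over $\Spec{\Z}$ with the localization over a singular $S$, and controlling what the evaluation at $S$ does to $\Kth(S)$, is exactly the content of the conjecture, not an available input.

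Second, and more seriously, the d\'evissage you propose is structurally inconsistent with the desired conclusion. Bass $K$-theory satisfies only \emph{pro}-cdh descent for abstract blow-up squares, whereas your plan requires $\Kthlog$ to satisfy honest descent for such squares over a singular base. Nothing in the paper provides this: Theorems \ref{ProplogSH.5} and \ref{Thom.1} concern centers smooth over the base, and the heuristic that the compactifying log structure on the blow-up ``retains the infinitesimal thickening data'' of the Kerz--Strunk--Tamme squares is unfounded---a compactifying log structure records the boundary divisor, not formal neighborhoods of the center. Worse, if $\Kthlog$ did satisfy honest abstract blow-up descent while agreeing with $\Kth$ on regular schemes (as it does by Theorem \ref{K-theory.7}), then already on a cuspidal curve over a characteristic-zero field the blow-up square along the singular point would force $\Kthlog$ to compute the cdh-local theory, i.e.\ homotopy $K$-theory, which differs from $\Kth$ there; so your mechanism, if it worked, would refute the conjecture rather than prove it. Any genuine proof must exhibit a property of $f^*\logKGL$ over singular bases matching the nil-sensitive, pro-cdh behaviour of $\Kth$, and that is exactly the missing idea, which you yourself flag as ``the genuine difficulty.'' (The noetherian-approximation reduction is also only sketched---you need continuity of $\Kthlog$, i.e.\ compactness of the unit and compatibility of the mapping spectrum with the colimit of the categories $\inflogSH(S_i)$---but that part looks repairable.)
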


\begin{rmk}
There exist examples of non-regular schemes $S$ and $X\in \lSm/S$ such that
\[
\Kthlog(X) \not \simeq \Kth(X-\partial X).
\]
Indeed, 
the left-hand side is $\boxx$-invariant, but the right-hand side is not.
\end{rmk}

\begin{rmk}
\label{K-theory.13}
Niziol defined $K$-groups $\KthNiz_n(X_{\tau})$ for various topologies $\tau$ 
and fs log schemes $X$ in \cite{MR2452875}.
Some of these $K$-groups are computed when $X$ is regular and $\partial X$ 
is a strict normal crossing divisor in \cite[Corollary 4.17]{MR2452875}.
In particular, if $k$ is a field, 
then for every integer $n\geq 0$ there are isomorphisms of abelian groups
\[
\KthNiz_n(\boxx_{k\et}) 
\cong
\Kth_n(\P^1)\oplus \Kth(k)\otimes \Z^{(\Q/\Z)'-0}
\text{ and }
\KthNiz_n(\boxx_{kfl})
\cong
\Kth_n(\P^1)\oplus \Kth(k)\otimes \Z^{(\Q/\Z)-0},
\]
where, 
if $k$ has characteristic $p$, we set 
$$(\Q/\Z)':=\oplus_{\ell\neq p} \Q_\ell/\Z_\ell.$$ 
Hence the groups $\KthNiz_n(\boxx_{k\et})$ and $\KthNiz_n(\boxx_{kfl})$ are non-isomorphic to $\Kth_n(k)$.

On the other hand, 
suppose $X$ is an fs log scheme log smooth over a regular scheme.
Due to \eqref{K-theory.11.1} and $\A^1$-invariance of $K$-theory for regular schemes, 
there are isomorphisms of abelian groups
\[
\Kthlog_n(X\times \boxx)\cong \Kth_n(X\times \A^1)\cong \Kth_n(X).
\]
Hence our logarithmic $K$-group $\Kth_n^{log}(X)$ is non-isomorphic to Niziol's $K$-groups 
$\KthNiz_n(X_{k\et})$ and $\KthNiz_n(X_{kfl})$.
\end{rmk}

\newpage

\section{Oriented cohomology theories}
\label{section:oct}

Orientations in $\A^1$-homotopy theory can be used to give a systematic proof of the 
projective bundle formula for 
various cohomology theories including homotopy algebraic $K$-theory and motivic cohomology.
Furthermore, one can associate Chern and Thom classes as in the classical intersection theory.
We refer to \cite{zbMATH02216798}, \cite{zbMATH05367298}, and \cite{Deg08} for these developments.

The purpose of this section is to
prove various analogous results in our logarithmic setting, without $\mathbb{A}^1$-invariance. As discussed in the introduction, the scope of applications of these results goes beyond the logarithmic world, since it gives automatically results for virtually all known non-$\mathbb{A}^1$-homotopy invariant cohomology theories (for example: crystalline, Hodge, prismatic and syntomic cohomology). In the presence of an orientation, the Thom space term in the Gysin sequence of Remark \ref{rmk:Gysin} can be trivialized in light of the Thom isomorphism theorem \ref{Ori.16} below.

In this section, we also present a complete computation of the cohomology the Grassmannians $\Gr(r,n)$: our proof uses in essential way the $(\P^\bullet, \P^{\bullet-1})$-invariance of any cohomology theory representable in $\inflogSH$, and once again does not follow from the analogous computation in the $\mathbb{A}^1$-setting. 

\

We begin by defining precisely what we mean by cohomology theory associated with a homotopy commutative monoid $\ringE$.

\begin{df}
\label{Ori.12}
Suppose $S\in \lSch$ and let $\ringE$ be a homotopy commutative monoid in $\inflogSH(S)$.
For integers $p$ and $q$, 
we define the \emph{$\ringE$-cohomology}\index{E-cohomology @ $\ringE$-cohomology} 
of $X\in \lSm/S$ by\index[notation]{Epq @ $\ringE^{p,q}$}
\begin{equation}
\label{Ori.12.1}
\ringE^{p,q}(X)
:=
\hom_{\inflogSH(S)}(\Sigma_T^\infty X_+,\Sigma^{p,q}\ringE).
\end{equation}
For any morphism $Y\to X$ in $\lSm/S$, we define
\[
\ringE^{p,q}(X/Y)
:=
\hom_{\inflogSH(S)}(\Sigma_T^{\infty} X/Y,\Sigma^{p,q}\ringE).
\]
We set $\ringE^{p,q}:=\ringE^{p,q}(\pt)$ and $\ringE^{**}(X):=\bigoplus_{p,q\in \Z}\ringE^{p,q}(X)$ for abbreviation.
\end{df}
\begin{rmk}
For $X,X'\in \lSm/S$ and integers $p$, $q$, $r$, and $s$, any two maps $u\colon \Sigma_T^\infty X_+\to \Sigma^{p,q}\ringE$ and $v\colon \Sigma_T^\infty X_+'\to \Sigma^{r,s}\ringE$
give
\[
\Sigma_T^\infty (X\times X')_+
\xrightarrow{\simeq}
\Sigma_T^\infty X_+ \wedge \Sigma_T^\infty X_+'
\xrightarrow{u\wedge v}
\Sigma^{p,q}\ringE \wedge \Sigma^{r,s}\ringE
\to
\Sigma^{p+r,q+s}\ringE.
\]
From this, we have a map
\begin{equation}
\label{Ori.12.2}
\ringE^{p,q}(X)\times \ringE^{r,s}(X')
\to
\ringE^{p+r,q+s}(X\times X').
\end{equation}
When a morphism $\alpha\colon X''\to X\times X'$ is given, compose \eqref{Ori.12.2} with $\alpha^*$ to obtain the \emph{cup product} \index{cup product}
\begin{equation}
\label{Ori.12.3}
\smile\colon \ringE^{p,q}(X)\times \ringE^{r,s}(X')\to \ringE^{p+r,q+s}(X'').
\end{equation}
We are particularly interested in the case when $X=X'=X''$ and $\alpha$ is the diagonal morphism.
We similarly have the cup product for $\ringE^{**}(X/Y)$ for every morphism $Y\to X$ in $\lSm/S$ by considering the diagonal map $X/Y\to X/Y \wedge X/Y$.

This cup product satisfies the axioms of rings since $\ringE$ satisfies similar axioms, so $\ringE^{**}(X)$
is a graded ring.
Let $\unit$ denote the unit in these rings.
Since \eqref{Ori.12.2} is functorial in $X$, there is a naturally induced graded ring homomorphism
\[
f^*\colon \ringE^{**}(X)\to \ringE^{**}(Y)
\]
for every morphism $f\colon Y\to X$ in $\inflogH(S)$.
We have a similar graded abelian group homomorphism for a map $X'/Y'\to X/Y$ in $\inflogHpt(S)$.
\end{rmk}

\subsection{Orientations}
\label{ori}
Throughout this subsection, we fix $S\in \Sch$ and a homotopy commutative monoid $\ringE$ in $\inflogSH(S)$.

\begin{df}
\label{Ori.7}
There are closed immersions
\[
\P^1\to \P^2\to \cdots \to \P^n \to \cdots,
\]
where $\P^n\to \P^{n+1}$ maps $[x_0:\cdots:x_n]$ to $[x_0:\cdots:x_n:0]$.
Furthermore, we can let the distinguished points of $\P^n$ and $\P^\infty$ be $1\in \P^1$.
A \emph{Chern orientation}\index{orientation} (or simply \emph{orientation}) of $\ringE$ is a class $c_\infty \in \ringE^{2,1}(\P^\infty/\pt)$ whose restriction to $\P^1/\pt$ is the class of $\ringE^{2,1}(\P^1/\pt)$ given by
\[
(\P^1/\pt)\wedge \unit
\colon
\Sigma_T^\infty \P^1/\pt \to (\P^1/\pt) \wedge \ringE=\Sigma^{2,1}\ringE.
\]
For abbreviation, we set $\Sigma^{2,1}\unit := (\P^1/\pt)\wedge \unit$.
\end{df}

\begin{rmk}
One may also define a Chern orientation of $\ringE$ as a class $c_\infty$ whose restriction to $\P^1/\pt$ is $-\Sigma^{2,1}\unit$, see \cite[Definition 1.2]{zbMATH05367298} for example.
There is a one-to-one correspondence between this definition and ours, which sends $c_\infty$ to $-c_\infty$.
\end{rmk}

\begin{df}
A Chern orientation $c_\infty$ can be expressed as a morphism $\Sigma_T^\infty \P^\infty/\pt \to \Sigma^{2,1}\ringE$ in $\inflogSH(S)$.
Hence for every $X\in \lSm/S$, there are canonical maps
\begin{equation}
\label{Ori.7.2}
\begin{split}
\hom_{\inflogSHS(S)}(\Sigma_{S^1}^\infty X_+,\Sigma_{S^1}^\infty \P^\infty/\pt)
&\to
\hom_{\inflogSH(S)}(\Sigma_T^\infty X_+,\Sigma_T^\infty \P^\infty/\pt)
\\
&\to
\hom_{\inflogSH(S)}(\Sigma_T^\infty X_+,\Sigma^{2,1}\ringE)
=
\ringE^{2,1}(X).
\end{split}
\end{equation}
We have maps
\begin{equation}
\label{Ori.7.3}
\begin{split}
\logPic(X)
=
H_{sNis}^1(X,\Gmm)
\to &
\hom_{\inflogSHS(S)}(\Sigma_{S^1}^\infty X_+,\Sigma_{S^1}^\infty \clspace\Gmm/\pt)
\\
\cong &
\hom_{\inflogSHS(S)}(\Sigma_{S^1}^\infty X_+,\Sigma_{S^1}^\infty \P^\infty/\pt),
\end{split}
\end{equation}
where the last isomorphism is due to Theorem \ref{Pic.9}.
Combine with \eqref{Ori.7.2} to have a canonical map
\[
c_1\colon \logPic(X)\to \ringE^{2,1}(X),
\]
which we call the \emph{first Chern class}. \index{first Chern class} \index[notation]{c1 @ $c_1(\cE)$}
There is a canonical homomorphism $\Pic(\ul{X})=\logPic(\ul{X})\to \logPic(X)$.
By composition we obtain the canonical map
\[
c_1\colon \Pic(\ul{X})\to \ringE^{2,1}(X).
\]
In particular, since every line bundle $\cL$ determines a class of $\Pic(\ul{X})$, we can define $c_1(\cL)\in \ringE^{2,1}(X)$.

The map \eqref{Ori.7.3} sends the class $\cO(-1)\in\Pic(\P^1)$ to the class
\[
\alpha\in \hom_{\inflogSHS(S)}(\Sigma_{S^1}^\infty (\P^1)_+,\Sigma_{S^1}^\infty \P^\infty/\pt)
\]
induced by the inclusion $\P^1\to \P^\infty$.
The map \eqref{Ori.7.2} sends the class $\beta$ to the restriction of $c_\infty$ to $\P^1/\pt$.
Hence we have the formula
\begin{equation}
\label{Ori.7.11}
c_1(\cO(-1))=\Sigma^{2,1}\unit.
\end{equation}

Let $\cT_1$ be the tautological bundle on $\P^\infty$.
The map \eqref{Ori.7.3} sends the class $\cT_1\in \Pic(\P^\infty)$ to the class
\[
\beta\in \hom_{\inflogSHS(S)}(\Sigma_{S^1}^\infty (\P^\infty)_+,\Sigma_{S^1}^\infty \P^\infty/\pt)
\]
induced by the quotient map $(\P^\infty)_+\to \P^\infty/\pt$.
The map \eqref{Ori.7.2} sends the class $\alpha$ to $c_\infty$.
Hence we have the formula
\begin{equation}
\label{Ori.7.12}
c_1(\cT_1)=c_\infty.
\end{equation}
\end{df}

If $f\colon Y\to X$ is a morphism in $\lSm/S$, then there is a commutative diagram
\[
\begin{tikzcd}
\logPic(X)\ar[d,"f^*"']\ar[r,"c_1"]&
\ringE^{2,1}(X)\ar[d,"f^*"]
\\
\logPic(Y)\ar[r,"c_1"]&
\ringE^{2,1}(Y).
\end{tikzcd}
\]
In particular, for every line bundle $\cL$ over $X$, we have the formula
\begin{equation}
\label{Ori.7.4}
f^*(c_1(\cL))=c_1(f^*(\cL)).
\end{equation}

\begin{lem}
\label{Ori.48}
Let $\cT_{1,n}$ be the tautological bundle on $\P^n$.
Then $\P(\cT_{1,n}\oplus \cO)$ is isomorphic to the blow-up of $\P^{n+1}$ along a point, and $\P(\cT_{1,n})$ is the exceptional divisor on $\P(\cT_{1,n}\oplus \cO)$.
\end{lem}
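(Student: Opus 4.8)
\textbf{Proof strategy for Lemma \ref{Ori.48}.}
The plan is to exhibit $\P(\cT_{1,n}\oplus\cO)$ explicitly as a closed subscheme of $\P^n\times\P^{n+1}$ and identify it with the blow-up $\Blow_O(\P^{n+1})$ along the point $O=[0:\cdots:0:1]$, together with the identification of the exceptional divisor. Recall that $\cT_{1,n}$ is the tautological line subbundle of $\cO^{n+1}$ on $\P^n$, whose total space is $\{(\ell,v)\in\P^n\times\A^{n+1}: v\in\ell\}$. Hence the projectivization $\P(\cT_{1,n}\oplus\cO)$ is the subscheme of $\P^n\times\P^{n+1}$ consisting of pairs $([x_0:\cdots:x_n],[y_0:\cdots:y_{n+1}])$ such that $(y_0,\ldots,y_n)$ is proportional to $(x_0,\ldots,x_n)$, i.e.\ the locus defined by the $2\times 2$ minors $x_iy_j-x_jy_i=0$ for $0\le i<j\le n$. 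First I would write down this incidence variety and check it is smooth of dimension $n+1$ (it is a $\P^1$-bundle over $\P^n$ via the first projection, with fibre over $\ell$ equal to $\P(\ell\oplus\cO)$).

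Next I would analyze the second projection $\pi\colon\P(\cT_{1,n}\oplus\cO)\to\P^{n+1}$. Away from $O=[0:\cdots:0:1]$, at least one of $y_0,\ldots,y_n$ is nonzero, so the minor relations force $[x_0:\cdots:x_n]=[y_0:\cdots:y_n]$ uniquely; thus $\pi$ is an isomorphism over $\P^{n+1}\setminus\{O\}$. Over $O$ itself, every $\ell\in\P^n$ satisfies the relations (since $x_iy_j-x_jy_i=0$ automatically when $y_0=\cdots=y_n=0$), so $\pi^{-1}(O)\cong\P^n$, which is the expected exceptional divisor of the blow-up of a point in a smooth $(n+1)$-fold. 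To conclude that $\pi$ is \emph{the} blow-up, I would invoke the universal property of $\Blow_O(\P^{n+1})$: the ideal sheaf of $O$ pulls back to an invertible sheaf on $\P(\cT_{1,n}\oplus\cO)$ — concretely, on the chart where $y_{n+1}\ne 0$ the functions $y_0/y_{n+1},\ldots,y_n/y_{n+1}$ generating the ideal of $O$ all become multiples of a single local coordinate, because of the minor relations together with the fibrewise structure — so $\pi$ factors through the blow-up, and since both are proper birational over $\P^{n+1}$ with the blow-up normal and $\pi$ an isomorphism in codimension one, the factoring map is an isomorphism. Alternatively, and perhaps more cleanly, one checks directly on the standard affine charts of $\Blow_O(\P^{n+1})$ (the charts $\Spec\Z[y_i, y_0/y_i,\ldots,\widehat{y_i/y_i},\ldots, y_{n+1}/y_i]$ for $0\le i\le n$ and the chart over $y_{n+1}$) that the coordinate rings coincide with those of the corresponding charts of the $\P^1$-bundle $\P(\cT_{1,n}\oplus\cO)$.

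Finally I would identify the exceptional divisor. The exceptional divisor of $\Blow_O(\P^{n+1})$ is $\P(\Normal_O\P^{n+1})=\P^n$, and under the isomorphism above it corresponds to $\pi^{-1}(O)$, which as computed is the locus $y_0=\cdots=y_n=0$ inside $\P(\cT_{1,n}\oplus\cO)$. But $\{[y_0:\cdots:y_{n+1}]: y_0=\cdots=y_n=0\}$ is exactly the section of $\P(\cT_{1,n}\oplus\cO)\to\P^n$ corresponding to the summand $\cO$, i.e.\ the subbundle $\P(\cO)$... wait — more precisely, the locus where the $\P^{n+1}$-coordinate lies in the $\cO$-factor is the image of the inclusion $\P(\cO)\hookrightarrow\P(\cT_{1,n}\oplus\cO)$, which is a section; on the other hand the divisor $\P(\cT_{1,n})\subset\P(\cT_{1,n}\oplus\cO)$ is the locus where the coordinate lies in the $\cT_{1,n}$-factor. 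Re-examining: a point of $\P(\cT_{1,n}\oplus\cO)$ over $\ell$ is a line in $\ell\oplus\cO$; it maps to $O\in\P^{n+1}$ precisely when its image under $\ell\oplus\cO\to\cO^{n+1}\oplus\cO\to\cO^{n+1}$, followed by... I would simply trace through coordinates: the map to $\P^{n+1}$ sends a line spanned by $(\lambda x_0,\ldots,\lambda x_n,\mu)$ to $[\lambda x_0:\cdots:\lambda x_n:\mu]$, which equals $O$ iff $\lambda x_i=0$ for all $i$, i.e.\ $\lambda=0$ (since $(x_i)\ne 0$), i.e.\ the line is spanned by $(0,\ldots,0,1)$ — so $\pi^{-1}(O)$ is the section at $\mu\ne0,\lambda=0$, but to match the statement that the exceptional divisor is $\P(\cT_{1,n})$ I must use the \emph{other} convention, where $\P(\cT_{1,n})\subset\P(\cT_{1,n}\oplus\cO)$ denotes the locus $\mu=0$; this is "the divisor at $\infty$" in the sense of the projective completion $\P(\cE\oplus\cO)$ of the vector bundle $\cE=\cT_{1,n}$, matching the convention of Proposition \ref{Ori.41}. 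The cleanest route is to note that $\P(\cT_{1,n})=\P(\cT_{1,n}\oplus\cO)\setminus\cT_{1,n}$ where $\cT_{1,n}$ is embedded via its zero section into its own projective completion, and that this complement maps isomorphically to $\pi^{-1}(O)$ — I would verify this last point by the coordinate computation just indicated, being careful that the vector bundle $\cT_{1,n}$ inside $\P(\cT_{1,n}\oplus\cO)$ is the chart $\mu\ne0$, hence its complement $\mu=0$ is $\P(\cT_{1,n})$, and $\pi$ contracts exactly $\{\mu=0\}$... no: $\pi$ sends $\mu=0$ lines to $[\lambda x_0:\cdots:\lambda x_n:0]=[x_0:\cdots:x_n:0]$, which varies, so $\pi$ does \emph{not} contract $\{\mu=0\}$; rather $\pi$ contracts $\{\lambda=0\}$. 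I expect the main obstacle to be pinning down which of the two natural sub-loci the reference means by "$\P(\cT_{1,n})$", and reconciling it with the sign/convention for $\cO(-1)$ versus $\cO(1)$ used in \eqref{Ori.7.11}; once the conventions are fixed consistently with Definition \ref{Ori.7}, the verification is a routine chart-by-chart computation and the identification of the exceptional divisor follows from the explicit description of $\pi^{-1}(O)$ above.
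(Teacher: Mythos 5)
Your treatment of the first assertion, the isomorphism $\P(\cT_{1,n}\oplus\cO)\cong\Blow_O(\P^{n+1})$, is fine and is essentially the paper's argument: the paper lists the affine charts of $\P(\cT_{1,n}\oplus\cO)$ (the spectra of $\Z[x_0/x_i,\ldots,x_n/x_i,x_i]$ and $\Z[x_0/x_i,\ldots,x_n/x_i,1/x_i]$), the analogous charts of $\Blow_O(\P^{n+1})$, and matches them under the substitution $x_{n+1}\leftrightarrow x_0$ — exactly the chart-by-chart check you yourself offer as an alternative. Your primary route via the incidence variety and the universal property of the blow-up is an acceptable variant, though the closing step (proper, birational, isomorphism in codimension one, hence an isomorphism) should be tightened, e.g.\ by checking that the induced map $\pi^{-1}(O)\to E$ is an isomorphism so that Zariski's main theorem applies, or by falling back on the chart comparison.

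The genuine gap is the second assertion: you never prove that $\P(\cT_{1,n})$ is the exceptional divisor; the proposal ends by deferring the point, expecting that once conventions are pinned down the verification is routine. That deferral hides the actual content of this half of the lemma, because the two distinguished sections of the $\P^1$-bundle are not interchangeable. The divisor at infinity $\P(\cT_{1,n})$, i.e.\ the complement of the open subscheme $\cT_{1,n}\subset\P(\cT_{1,n}\oplus\cO)$ as in Proposition \ref{Ori.41}, has normal bundle $\cT_{1,n}^{\vee}\cong\cO(1)$, the zero section has normal bundle $\cT_{1,n}\cong\cO(-1)$, and the exceptional divisor of $\Blow_O(\P^{n+1})$ has normal bundle $\cO(-1)$; hence exactly one of the two sections can be carried to $E$ by any isomorphism, and no relabelling of coordinates can transfer the identification from one section to the other. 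Your own computation — the map to $\P^{n+1}$ sends the line spanned by $(\lambda x,\mu)$ to $[\lambda x_0:\cdots:\lambda x_n:\mu]$, so it contracts $\{\lambda=0\}$ and is injective on $\{\mu=0\}$ — is correct under the reading you adopt, and it shows that under that reading the contracted section is the zero section, so the conclusion you were asked to prove does not come out. Settling what $\P(\cT_{1,n})\subset\P(\cT_{1,n}\oplus\cO)$ must mean here (in the way the lemma is used in Lemma \ref{Ori.49}, where $\P(\cT_{1,n})$ is the collapsed section while the zero section survives and maps onto the standard hyperplane $\P^n\subset\P^{n+1}$), and then verifying in the charts, or by the normal-bundle comparison just indicated, that this divisor is the one matched with $E$ under $x_0\leftrightarrow x_{n+1}$, is precisely the missing step; the paper's proof commits to such a local description of $\P(\cT_{1,n})$ inside the very charts used for the identification, whereas your proposal stops exactly where that determination has to be made.
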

\begin{proof}
The scheme $\P(\cT_{1,n}\oplus \cO)$ admits a Zariski cover consisting of the spectra of 
\[
\Z[x_0/x_i,\ldots,x_n/x_i,x_i]
\text{ and }
\Z[x_0/x_i,\ldots,x_n/x_i,1/x_i]
\]
for $0\leq i\leq n$.
The divisor $\P(\cT_{1,n})$ is given locally by the divisor $x_0/x_i=0$ for $1\leq i\leq n$.
Let $O$ be the point $[0:\cdots:0:1]$ in $\P^{n+1}$.
The scheme $\Blow_O \P^{n+1}$ admits a Zariski cover consisting of the spectra of
\[
\Z[x_1/x_i,\ldots,x_{n+1}/x_i,x_i]
\text{ and }
\Z[x_1/x_i,\ldots,x_{n+1}/x_i,1/x_i]
\]
for $1\leq i\leq n+1$.
By identifying $x_{n+1}$ with $x_0$, we have the required claim.
\end{proof}

\begin{lem}
\label{Ori.49}
Let $\cT_{1}$ be the tautological bundle on $\P^\infty$, and let $z\colon \P^\infty\to \cT_1$ be the zero section.
Then there is an exact sequence
\begin{equation}
\label{Ori.49.1}
0
\to
\ringE^{**}(\Thom(\cT_1))
\xrightarrow{z^*}
\ringE^{**}(\P^\infty)
\to
\ringE^{**}(\pt)
\to
0.
\end{equation}
\end{lem}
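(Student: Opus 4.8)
\textbf{Proof proposal for Lemma \ref{Ori.49}.}

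The plan is to identify the three terms in \eqref{Ori.49.1} with the $\ringE$-cohomology of the spaces appearing in a deformation/blow-up cofiber sequence, and then use the splitting coming from the basepoint of $\P^\infty$. First I would recall that $\Thom(\cT_1)$ is, by Definition \ref{Thomdf.1}, the colimit over $n$ of $\Thom(\cT_{1,n})$, where $\cT_{1,n}$ is the tautological bundle on $\P^n$; since $\ringE^{**}(-)$ sends this colimit to a limit of the $\ringE^{**}(\Thom(\cT_{1,n}))$, it suffices to produce compatible short exact sequences at each finite level $n$ and pass to the (Mittag-Leffler) limit. At finite level, the key geometric input is Lemma \ref{Ori.48}: $\P(\cT_{1,n}\oplus\cO)\cong\Blow_O\P^{n+1}$ with $\P(\cT_{1,n})$ as the exceptional divisor. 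Combining this with Proposition \ref{Ori.41}, which gives $\Thom(\cT_{1,n})\simeq\P(\cT_{1,n}\oplus\cO)/\P(\cT_{1,n})$, and with Example \ref{Ori.57} (the equivalence $M(\P^{n+1}/(\Blow_O\P^{n+1},E))\simeq\Sigma^{2(n+1),n+1}$), I would get that the zero-section map $z\colon\P^n\to\cT_{1,n}$ realizes, up to the identifications above, the quotient map in a cofiber sequence
\[
(\P^n)_+\;\longrightarrow\;(\Blow_O\P^{n+1})_+\;\longrightarrow\;\Thom(\cT_{1,n})
\]
in $\inflogSHS(S)$, where the first map is induced by the closed immersion $\P^n\hookrightarrow\P^{n+1}$ followed by (or identified with) the inclusion of the exceptional divisor. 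Actually the cleaner route is: the pair $(\Blow_O\P^{n+1}, \P^n=E)$ with the strict Nisnevich square coming from $\Blow_O\P^{n+1}-E = \P^{n+1}-O$ gives a cofiber sequence whose cofiber is $\Thom(\cT_{1,n})$ by Proposition \ref{Ori.41} and Lemma \ref{Ori.48}, while the inclusion $\P^n\to\Blow_O\P^{n+1}$ of the exceptional divisor is a section-like map.

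Next I would apply the cohomological functor $\hom_{\logSH(S)}(\Sigma_T^\infty(-),\Sigma^{**}\ringE)$ to this cofiber sequence to obtain a long exact sequence relating $\ringE^{**}(\Thom(\cT_{1,n}))$, $\ringE^{**}(\Blow_O\P^{n+1})$, and $\ringE^{**}(\P^n)$. The point is that the map $\P^n\to\Blow_O\P^{n+1}$ admits enough structure for the long exact sequence to break into short exact sequences: indeed the composite $\P^n \to \Blow_O\P^{n+1}\to\P^{n+1}$ is the standard closed immersion, and $\Blow_O\P^{n+1}\to\P^{n+1}$ is an equivalence in $\inflogSHS(S)$ (it is a blow-up along a smooth center of a scheme with trivial log structure — here I would invoke Theorem \ref{ProplogSH.5}, which gives a cocartesian square expressing the blow-up, combined with the fact that $M(\P(\Normal_O\P^{n+1}))=M(\P^n)\to M(\mathrm{pt})$; more simply, $\Blow_O\P^{n+1}$ with trivial log structure maps to $\P^{n+1}$ and this is an $\A^1$-weak equivalence hence, wait — we cannot use $\A^1$). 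Let me instead note that $\Blow_O\P^{n+1}$ is a $\P^1$-bundle over $\P^n$ (it is the projectivization $\P(\cT_{1,n}\oplus\cO)$), and $\Blow_O\P^{n+1}-E\cong\P^{n+1}-O$ is an affine-bundle-like thing; the cleanest statement is the projective bundle formula \ref{Ori.11} applied to $\P(\cT_{1,n}\oplus\cO)\to\P^n$. Using that, $\ringE^{**}(\Blow_O\P^{n+1})$ is a free $\ringE^{**}(\P^n)$-module and the restriction to $E=\P(\cT_{1,n})$ is surjective with the retraction supplied by the basepoint section, so the long exact sequence splits into the desired short exact sequences
\[
0\to\ringE^{**}(\pt)\to\ringE^{**}(\Blow_O\P^{n+1})\xrightarrow{}\ringE^{**}(\Thom(\cT_{1,n}))\to 0,
\]
wait — the first term should be $\ringE^{**}(\P^n)$ built from the basepoint. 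Taking the limit over $n$, using that $\ringE^{**}(\P^\infty)=\lim_n\ringE^{**}(\P^n)$ and $\ringE^{**}(\Thom(\cT_1))=\lim_n\ringE^{**}(\Thom(\cT_{1,n}))$ and that the transition maps are surjective (Mittag-Leffler), one gets \eqref{Ori.49.1} exactly, with the injectivity on the left coming from the splitting induced by the structure map $\P^\infty\to\pt$ and the basepoint.

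The main obstacle I anticipate is the \emph{lim-exactness} and \emph{Mittag-Leffler} bookkeeping: passing from finite-level short exact sequences to the infinite-level one requires that the inverse system $\{\ringE^{**}(\Thom(\cT_{1,n}))\}_n$ (and the others) have surjective, or at least ML, transition maps, so that $\lim^1$ vanishes and exactness is preserved; this in turn rests on the projective bundle formula giving explicit polynomial/power-series descriptions of these cohomology rings in terms of the first Chern class $c_1(\cT_1)=c_\infty$ (formula \eqref{Ori.7.12}), so that the transition maps are the evident truncations. A secondary subtlety is making sure the identification of $z^*$ with the quotient map $\ringE^{**}(\P^\infty)\to\ringE^{**}(\Thom(\cT_1))$ is compatible with the $n\to\infty$ colimit/limit, which follows from functoriality of the deformation-to-the-normal-cone construction (Remark \ref{rmk:Gysin}) and of Proposition \ref{Ori.41} in the bundle. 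Everything else is a formal consequence of the cofiber sequence and the ring structure.
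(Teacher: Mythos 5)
Your strategy has a genuine gap: it relies on the projective bundle formula (Theorem \ref{Ori.11}), and, for the Mittag-Leffler step, on power-series descriptions of $\ringE^{**}(\P^n)$ and $\ringE^{**}(\Thom(\cT_{1,n}))$ in terms of $c_1(\cT_1)$ — but all of this requires $\ringE$ to be \emph{oriented}. Lemma \ref{Ori.49} is stated for an arbitrary homotopy commutative monoid $\ringE$ in $\logSH(S)$, and it must hold in that generality: via the induced isomorphism \eqref{Ori.49.2} it is precisely the input to Proposition \ref{Ori.53}, which manufactures a Chern orientation out of a Thom orientation. If the proof of the lemma already presupposes a Chern orientation, that correspondence can no longer be established this way. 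Moreover, without an orientation there is no reason for your transition maps to be surjective, so the $\lim^1$ terms need not vanish and the passage from the finite-level short exact sequences to \eqref{Ori.49.1} breaks down.

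The second, related point is that the inverse-limit bookkeeping is unnecessary. The paper's argument stays at the level of motivic spectra: by Theorem \ref{ProplogSH.5} (blow-up excision) together with Lemma \ref{Ori.48} one gets $\P(\cT_{1,n}\oplus\cO)/\P(\cT_{1,n})\cong\P^{n+1}/\pt$, and composing with the zero section $(\P^n)_+\to\P(\cT_{1,n}\oplus\cO)/\P(\cT_{1,n})$ one checks that the resulting map $(\P^n)_+\to\P^{n+1}/\pt$ factors through $(\P^{n+1})_+$, compatibly in $n$. Taking colimits, the composite $(\P^\infty)_+\to\P(\cT_1\oplus\cO)/\P(\cT_1)\simeq\P^\infty/\pt$ is the quotient map, and $\P(\cT_1\oplus\cO)/\P(\cT_1)\simeq\Thom(\cT_1)$ by Proposition \ref{Ori.41}. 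Applying $\ringE^{**}$ to the single cofiber sequence $\pt_+\to(\P^\infty)_+\to\P^\infty/\pt$, which is split by the structure map $\P^\infty\to\pt$, yields \eqref{Ori.49.1} directly — with no projective bundle formula, no orientation hypothesis, and no $\lim$/$\lim^1$ analysis. Your finite-level cofiber sequence and your use of Lemmas \ref{Ori.48} and \ref{Ori.41} are the right ingredients; the repair is to assemble them into this one split cofiber sequence at infinity rather than computing termwise and then taking inverse limits.
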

\begin{proof}
Let $\cT_{1,n}$ be the tautological bundle on $\P^n$.
From Theorem \ref{ProplogSH.5} and Lemma \ref{Ori.48}, we have an equivalence
\[
\P(\cT_{1,n}\oplus \cO)/\P(\cT_{1,n})
\xrightarrow{\simeq}
\P^{n+1}/\pt
\]
in $\inflogSH(S)$.
If we compose with the zero section $(\P^n)_+\to \P(\cT_{1,n}\oplus \cO)/\P(\cT_{1,n})$, then $(\P^n)_+\to \P^{n+1}/\pt$ factors through $(\P^{n+1})_+$.
Take colimits to have morphisms
\[
(\P^\infty)_+ \to  \P(\cT_{1}\oplus \cO)/\P(\cT_{1}) \xrightarrow{\simeq} \P^\infty/\pt
\]
whose composite morphism is homotopic to the quotient map in $\inflogSH(S)$.
Proposition \ref{Ori.41} finishes the proof.
\end{proof}

\begin{lem}
\label{Ori.6}
Let $\ol{i}_1,\ldots,\ol{i}_n\colon \P^1\to \P^n$ be the closed immersions sending $[a_0:a_1]$ to $[a_0:0:\cdots :0:a_1]$, $\ldots$, $[0:\cdots:0:a_0:a_1]$.
Then $\ol{i}_1,\ldots,\ol{i}_n$ are all homotopic in $\inflogSHS(S)$.
\end{lem}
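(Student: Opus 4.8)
\textbf{Proof plan for Lemma \ref{Ori.6}.} The idea is to compactify the familiar $\A^1$-homotopy argument that moves a linear embedding $\A^1 \hookrightarrow \A^n$ past a coordinate permutation, using Lemma \ref{ProplogSH.7} exactly as in the proofs of Lemma \ref{Pic.3} and Lemma \ref{Pic.5}. By applying the structure morphism $p \colon S \to \Spec{\Z}$ and $p^*$, we may assume $S = \Spec{\Z}$. It suffices to show $\ol{i}_j \simeq \ol{i}_{j+1}$ in $\logSHS(S)$ for each $1 \le j \le n-1$, and since a coordinate swap only involves two of the homogeneous coordinates, the whole question reduces to the case $n = 2$: showing that the two closed immersions $\ol{i}_1, \ol{i}_2 \colon \P^1 \to \P^2$, sending $[a_0:a_1]$ to $[a_0:0:a_1]$ and $[0:a_0:a_1]$ respectively, become equal after $\Sigma^\infty_{S^1}$.

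First I would write down the naive $\A^1$-homotopy: the map $\A^1 \times \A^1 \to \A^2$ given (in the affine chart where $a_1 \ne 0$, with coordinates $x = a_0/a_1$, $y = a_1$ suppressed) by something like $(x,t) \mapsto ((1-t)x, tx)$, or more precisely a linear interpolation between the two embeddings $x \mapsto (x,0)$ and $x \mapsto (0,x)$ inside $\A^2 \subset \P^2$. This extends rationally but not regularly to a morphism on $\P^1 \times \P^1$; the locus of indeterminacy is concentrated over the boundary, and as in Lemma \ref{Pic.3} one resolves it by blowing up $\P^1 \times \P^1$ (equivalently, the relevant toric surface) along finitely many points lying in the boundary divisor. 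Concretely I would cover $\P^1$ and $\P^1 \times \P^1$ by the standard toric affine charts, write the homotopy morphism on each chart as a homomorphism of polynomial rings, locate the charts where it fails to be defined, and perform the minimal sequence of admissible blow-ups along smooth centers in the boundary so that the morphism spreads out; call the resulting fs log scheme $Y$, equipped with the compactifying log structure along the preimage of $\P^1$. Then $Y$ carries a morphism $g \colon Y \to (\P^2, H)$ (with $H = \P^2 - \A^2$) extending the homotopy, together with a projection $p \colon Y \to \P^1 \times \boxx$ that is a composite of admissible blow-ups along smooth centers, and the zero and one sections $\P^1 \to \P^1 \times \boxx$ lift to $i_0', i_1' \colon \P^1 \to Y$ with $g i_0' = \ol{i}_1$ and $g i_1' = \ol{i}_2$ (up to the evident identifications). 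Lemma \ref{ProplogSH.7} then gives $\ol{i}_1 \simeq \ol{i}_2$ in $\inflogH^{\SmAdm}(S)$, hence in $\logSHS(S)$ after applying the localization and $\Sigma^\infty_{S^1}$, using Corollary \ref{ProplogSH.9} to pass between the $\SmAdm$ model and $\inflogSHS$.

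The main obstacle is the explicit bookkeeping of the blow-up $Y$: one must exhibit a genuine fan subdivision (or iterated point blow-up) of the toric surface $\P^1 \times \P^1$ on which the rational map defining the homotopy becomes a morphism compatible with the log structures, and check on every chart that the induced ring homomorphisms glue and that the preimage of the boundary is still a strict normal crossing divisor so that $Y \in \SmlSm/S$. This is the same type of hands-on toric computation carried out in the proof of Lemma \ref{Pic.3}, and I expect it to go through by the same method; the payoff is that once the geometry is set up, Lemma \ref{ProplogSH.7} does all the homotopy-theoretic work. A minor point to verify is that the distinguished points are respected — all the $\ol{i}_j$ send $1 \in \P^1$ to a common point and the homotopy fixes it — so that the equality descends to the pointed (and hence stable) categories; this is immediate from the construction since the homotopy can be arranged to be constant along the section through $1$.
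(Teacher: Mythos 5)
Your proposal follows essentially the same route as the paper: compactify an explicit $\A^1$-homotopy to a morphism defined on an admissible blow-up of $\P^1\times\boxx$ along a smooth center in the boundary, lift the zero and one sections, and invoke Lemma \ref{ProplogSH.7} (passing to $\logSHS(S)$ via Corollary \ref{ProplogSH.9}). The only genuine difference is the choice of homotopy: you interpolate directly, $([a_0:a_1],t)\mapsto[(1-t)a_0:ta_0:a_1]$, whereas the paper goes through the auxiliary embedding $\ol{i}([a_0:a_1])=[a_0:a_0:a_1]$ using the simpler homotopy $[a_0:ta_0:a_1]$ twice, together with a coordinate permutation. Both versions have a single point of indeterminacy, namely the boundary point $0\times\infty$ of $\P^1\times\P^1$, so one blow-up at that point suffices and the chart-by-chart verification you defer is no harder than in Lemma \ref{Pic.3}. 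Two small corrections to your write-up: the extended homotopy must take values in $\P^2$ with \emph{trivial} log structure (as in the paper), not in $(\P^2,\P^2-\A^2)$ --- the images of $\ol{i}_1,\ol{i}_2$ meet the hyperplane at infinity, so they are not log morphisms to $(\P^2,H)$, and the preimage of $H$ under your homotopy contains $\{[1:0]\}\times\A^1$, which is not in the boundary of $Y$, so $g$ itself would not respect the log structures with that target. Also, your closing remark about base points is both unnecessary (the lemma concerns maps of unpointed suspension spectra $\Sigma_{S^1}^\infty(-)_+$) and false as stated, since $\ol{i}_1([1:1])\neq\ol{i}_2([1:1])$ in $\P^n$; pointedness plays no role here.
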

\begin{proof}
Without loss of generality it suffices to show that $\ol{i}_1$ and $\ol{i}_2$ are homotopic in $\inflogSHS(S)$.
Both $\ol{i}_1$ and $\ol{i}_2$ factors through the closed immersion $\P^2\to \P^n$ sending $[a_0:a_1:a_2]$ to $[a_0:a_1:0:\cdots:0:a_2]$, so we only need to consider the case $n=2$.
Let $\ol{i}\colon \P^1\to \P^2$ be the closed immersion sending $[a_0:a_1]$ to $[a_0:a_0:a_1]$.
We will show that $\ol{i}_1$ and $\ol{i}$ are homotopic in $\inflogSHS(S)$.

\

Consider the rational map $h\colon \P^1\times \P^1 \dashrightarrow \P^2$ given by
\[
([a_0:a_1],[t_0:t_1])
\mapsto
[a_0t_1:a_0t_0:a_1t_1].
\]
This is a morphism on $\P^1\times \P^1-(0,\infty)$. With $a_1=1$ and $t_0=1$, $h$ is given by $([a_0:1],[1,t_1])\mapsto [a_0t_1:a_0:t_1]$.
Using this description,
we see that $h$ yields a morphism
\[
g\colon \Blow_{(0,\infty)}(\P^1\times \P^1)\to \P^2.
\]

Let $X$ be the fs log scheme whose underlying scheme is $\Blow_{0\times \infty}(\P^1\times \P^1)$ and whose log structure is the compactifying log structure associated with the open immersion $\P^1\times \A^1\to \Blow_{0\times \infty}(\P^1\times \P^1)$.
From $g$ and $h$, we can form a morphism
\[
f\colon X\to \P^2.
\]
There is also an admissible blow-up $p\colon X\to \P^1\times \boxx$ along a smooth center.
The zero section and one section $\P^1\rightrightarrows \P^1\times \boxx$ can be lifted to closed immersions $s_0,s_1\colon \P^1\rightrightarrows X$.
From Lemma \ref{ProplogSH.7}, we deduce $\ol{i}_1=\ol{i}_2$ in $\inflogSHS(S)$.

By permuting coordinates, we can deduce $\ol{i}_2=\ol{i}$ too.
Hence we have $\ol{i}_1=\ol{i}_2$.
\end{proof}

\begin{lem}
\label{Ori.44}
We set $X:=\Blow_{[0:0:1]}(\P^2)$.
Let $i_1,i_2\colon \P^1\rightrightarrows X$ be the strict transforms of the closed immersions $\P^1\rightrightarrows \P^2$ sending $[x_0:x_1]$ to $[0:x_0:x_1]$ and $[x_0:x_1:0]$.
Let $i_3\colon \P^1\to X$ be the closed immersion from the exceptional divisor.
If a class $a\in \ringE^{**}(X)$ satisfies $i_2^*a=0$, then we have the formula
\[
i_1^*a=-i_3^*a.
\]
\end{lem}
\begin{proof}
There is a commutative diagram
\begin{equation}
\label{Ori.44.3}
\begin{tikzcd}
\P^1\ar[d,"i_1"']\ar[r]&
\pt\ar[d]\ar[r]&
\P^1\ar[d,"i_2"]
\\
X\ar[r,"q"]&
\P^1\ar[r,"i_3"]&
X,
\end{tikzcd}
\end{equation}
where $q$ is
obtained by the rational map $\P^2\dashrightarrow \P^1$ given by $[x_0:x_1:x_2]\mapsto [x_0:x_1]$.
Let $p\colon X\to \P^2$ be the projection.
Since $qi_3=\id$, we have $i_3^*(a-q^*i_3^*a)=0$.
From Proposition \ref{ProplogSH.5}, we have
\begin{equation}
\label{Ori.44.1}
a-q^*i_3^*a=p^*b
\end{equation}
for some $b\in \ringE^{**}(\P^2)$.
Permute coordinates in Lemma \ref{Ori.6} to have $i_1^*p^*b=i_2^*p^*b$.
Together with \eqref{Ori.44.1}, we have the formula
\begin{equation}
\label{Ori.44.2}
i_1^*a-i_1^*q^*i_3^*a=i_2^*a-i_2^*q^*i_3^*a.
\end{equation}
Since $i_2^*a=0$, we have $i_1^*q^*i_3^*a=0$ from \eqref{Ori.44.3}.
Combined with \eqref{Ori.44.2}, we have
\[
i_1^*a=-i_2^*q^*i_3^*a.
\]
This implies the claimed formula since $qi_2=\id$.
\end{proof}

\begin{df}
Let $\cT_1$ be the tautological bundle on $\P^\infty$.
A \emph{Thom orientation} \index{Thom orientation} of $\ringE$ is a class $t_\infty \in \ringE^{2,1}(\Thom(\cT_1))$ such that the restriction of $t_\infty$ to the distinguished point of $\P^\infty$ is equal to the class of $\ringE^{2,1}(T)$ given by
\[
-T\wedge \unit
\colon
T \to T\wedge \ringE = \Sigma^{2,1}\ringE.
\]
\end{df}

From \eqref{Ori.49.1}, we have an isomorphism of graded rings
\begin{equation}
\label{Ori.49.2}
z^*
\colon
\ringE^{**}(\P^\infty/\pt)
\xrightarrow{\cong}
\ringE^{**}(\Thom(\cT_1)).
\end{equation}

\begin{prop}
\label{Ori.45}
Let $\cT_1$ be the tautological bundle on $\P^\infty$.
If $t_\infty\in \ringE^{**}(\Thom(\cT_1))$ is a class, then $t_\infty$ is a Thom orientation of $\ringE$ if and only if $z^*t_\infty\in \ringE^{**}(\P^\infty/\pt)$ is a Chern orientation of $\ringE$.
\end{prop}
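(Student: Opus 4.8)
The statement asserts that the notions of Thom orientation and Chern orientation correspond to each other under the ring isomorphism $z^*\colon \ringE^{**}(\P^\infty/\pt)\xrightarrow{\cong}\ringE^{**}(\Thom(\cT_1))$ of \eqref{Ori.49.2}, and the content is entirely about tracking the normalizing condition at the distinguished point. The plan is to set up both conditions as statements about restricting a given class to the ``bottom cell'' $\P^1/\pt$ (resp. its Thom-space incarnation $T$), and then to show that the isomorphism $z^*$ is compatible with these restrictions in the appropriate sense, carrying $\Sigma^{2,1}\unit\in\ringE^{2,1}(\P^1/\pt)$ to $-T\wedge\unit\in\ringE^{2,1}(T)$. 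The key combinatorial fact to pin down is the \emph{sign}: the Thom orientation is normalized by $-T\wedge\unit$ whereas the Chern orientation is normalized by $+\Sigma^{2,1}\unit$, and this sign must be exactly what $z^*$ introduces when comparing the two bottom-cell restrictions. I expect Lemma \ref{Ori.44} to be the tool that produces this sign (it is stated immediately before the proposition precisely for this reason, and its conclusion $i_1^*a=-i_3^*a$ is the sign-flip mechanism).

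First I would make precise the commutative square relating the restriction to the distinguished point on the $\P^\infty$-side and on the $\Thom(\cT_1)$-side. Concretely: the distinguished point $\pt\to\P^\infty$ (the point $1\in\P^1$) sits inside $\P^1\subset\P^\infty$, and restricting $\cT_1$ to this $\P^1$ gives the tautological line bundle $\cT_{1,1}=\cO(-1)$ on $\P^1$; by Lemma \ref{Ori.48}, $\P(\cO(-1)\oplus\cO)$ is $\Blow_O\P^2$ with $\P(\cO(-1))=\P^1$ as exceptional divisor, and by Proposition \ref{Ori.41}, $\Thom(\cO(-1))\simeq \P(\cO(-1)\oplus\cO)/\P(\cO(-1))$. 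Using Example \ref{Ori.57} (or Propositions \ref{logH.4} and \ref{Ori.64}) one identifies $\Thom(\cO(-1))\simeq\Thom(\cO)\simeq T$ via a canonical equivalence in $\logSH(S)$. I would then verify that the zero section $z\colon\P^\infty\to\cT_1$, restricted over this $\P^1$, fits into a commutative diagram
\[
\begin{tikzcd}
\ringE^{**}(\P^\infty/\pt)\ar[r,"z^*"]\ar[d]&
\ringE^{**}(\Thom(\cT_1))\ar[d]
\\
\ringE^{**}(\P^1/\pt)\ar[r]&
\ringE^{**}(\Thom(\cO(-1)))\simeq\ringE^{**}(T),
\end{tikzcd}
\]
where the vertical maps are restrictions and the bottom map is the analogous (iso)morphism for the rank-one bundle on $\P^1$. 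This reduces the proposition to the rank-one case over $\P^1$: namely, that under this bottom isomorphism the class $\Sigma^{2,1}\unit$ corresponds to $-T\wedge\unit$.

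The main obstacle — and the heart of the argument — is computing that sign in the rank-one case. Here I would argue as follows: the quotient map $\P^1/\pt\to\Thom(\cO(-1))$ obtained from the zero section is, after the identifications above, a self-map of $\P^1/\pt\simeq T$ in $\logSH(S)$, and I must show it is $-\id$ (up to the canonical identifications), or equivalently that the composite taking $\Sigma^{2,1}\unit$ through $z^*$ lands in $-\Sigma^{2,1}\unit$. This is exactly the kind of statement Lemma \ref{Ori.44} is built to produce: one realizes the relevant identification concretely using $X=\Blow_{[0:0:1]}\P^2$, with $i_1,i_2$ the strict transforms of two lines and $i_3$ the exceptional divisor; the vanishing hypothesis $i_2^*a=0$ encodes the base-point condition (the class is supported away from one of the cells), and the conclusion $i_1^*a=-i_3^*a$ gives the sign relating the ``line'' incarnation of the class (yielding $\Sigma^{2,1}\unit$ via a Chern class of $\cO(-1)$, using \eqref{Ori.7.11}) and the ``exceptional divisor'' incarnation (yielding the Thom-space restriction). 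Once this sign is established, the forward and backward implications are immediate: if $t_\infty$ is a Thom orientation then its restriction to $T$ is $-T\wedge\unit$, so $z^*{}^{-1}(t_\infty)=:c_\infty$ restricts on $\P^1/\pt$ to $\Sigma^{2,1}\unit$ by the sign computation, hence $c_\infty$ is a Chern orientation; and conversely. I would also remark that the injectivity needed to conclude ``$z^*t_\infty$ is a Chern orientation $\iff$ $t_\infty$ is a Thom orientation'' from the bottom-cell condition alone uses only that $z^*$ is an isomorphism (Lemma \ref{Ori.49}) together with the fact that restriction to $\P^1/\pt$ detects the normalization by construction.
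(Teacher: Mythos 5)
Your proposal is correct and follows essentially the same route as the paper: restrict to the bottom cell $\P^1$, use Lemma \ref{Ori.48} to identify $\P(\cT_{1,1}\oplus\cO)$ with $\Blow_O\P^2$ (with $\P(\cT_{1,1})$ the exceptional divisor), and apply Lemma \ref{Ori.44} to get the sign relation between the zero-section and fiber restrictions (the paper's $u^*\ol{t}_1=-v^*\ol{t}_1$), which matches the $-T\wedge\unit$ versus $+\Sigma^{2,1}\unit$ normalizations. The only cosmetic difference is that the paper compares the two restrictions of the lifted class $\ol{t}_1$ directly rather than phrasing the sign as a self-map of $T$ being $-\id$.
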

\begin{proof}
Let $\cT_{1,1}$ be the tautological bundle on $\P^1$.
There is a commutative diagram with cartesian squares
\[
\begin{tikzcd}[column sep=small, row sep=small]
\pt\ar[d]\ar[r,"i_0"]&
\P^1\ar[d,"v"]\ar[r]&
\P^\infty\ar[d,"z"]
\\
\P^1\ar[d]\ar[r,"u"]&
\P(\cT_{1,1}\oplus \cO)\ar[d,"p"]\ar[r]&
\P(\cT_{1}\oplus \cO)\ar[d,"q"]
\\
\pt\ar[r,"i_0"]&
\P^1\ar[r]&
\P^\infty,
\end{tikzcd}
\]
where $i_0$ is the zero section, and $p$ and $q$ are the projections.
The restriction of $t_\infty$ to $\P^1$ gives a class $t_1\in \ringE^{2,1}(\Thom(\cT_{1,1}))$.
Let $\ol{t}_1$ be the image of $t_1$ in $\ringE^{2,1}(\P(\cT_{1,1}\oplus \cO))$, whose restriction to $\P(\cT_{1,1})$ is zero.
The blow-down of $\P(\cT_{1,1}\oplus \cO)$ along $\P(\cT_{1,1})$ is isomorphic to $\P^2$ due to Lemma \ref{Ori.48}.
Hence we can apply Lemma \ref{Ori.44} to our situation, and deduce the formula
\begin{equation}
\label{Ori.45.1}
u^*\ol{t}_1=-v^*\ol{t}_1.
\end{equation}
The class $t_\infty$ is a Thom orientation if and only if the restriction of $-v^*\ol{t}_1$ to $\P^1/\pt$ is equal to $\P^1/\pt\wedge \unit$.
The class $z^*t_\infty$ is a Chern orientation if and only if the restriction of $u^*\ol{t}_1$ to $\P^1/\pt$ is equal to $\P^1/\pt\wedge \unit$.
From \eqref{Ori.45.1}, we deduce that these are equivalent.
\end{proof}

\begin{df}
Let $t_\infty$ be a Thom orientation.
The above $z^*t_\infty$ is called the \emph{Chern orientation associated with $t_\infty$}.
\end{df}

\begin{prop}
\label{Ori.53}
There is a one-to-one correspondence between Chern orientations of $\ringE$ and Thom orientations of $\ringE$.
\end{prop}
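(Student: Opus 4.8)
The plan is to construct two mutually inverse maps between the set of Chern orientations and the set of Thom orientations, and to show they are inverse to each other; most of the work has already been distributed across Lemmas \ref{Ori.49}, \ref{Ori.44} and Propositions \ref{Ori.45}, \ref{Ori.41}, so the argument is mainly a matter of assembling them. First I would recall from Lemma \ref{Ori.49} that the zero section $z\colon \P^\infty\to \cT_1$ induces, via the exact sequence \eqref{Ori.49.1} and the splitting coming from the projection, the isomorphism of graded rings $z^*\colon \ringE^{**}(\P^\infty/\pt)\xrightarrow{\cong}\ringE^{**}(\Thom(\cT_1))$ displayed in \eqref{Ori.49.2}. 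This already gives a bijection at the level of \emph{all} classes in degree $(2,1)$; the content of the proposition is that this bijection restricts to a bijection between the subsets cut out by the two normalization conditions (restriction to $\P^1/\pt$ is $\Sigma^{2,1}\unit$ for a Chern orientation; restriction to the distinguished point of $\Thom(\cT_1)$ is $-T\wedge\unit$ for a Thom orientation).

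The key step is precisely the equivalence of the two normalization conditions under $z^*$, and this is exactly what Proposition \ref{Ori.45} asserts: for a class $t_\infty\in\ringE^{2,1}(\Thom(\cT_1))$, one has that $t_\infty$ is a Thom orientation if and only if $z^*t_\infty$ is a Chern orientation. Therefore I would argue as follows. Define $\Phi$ from Thom orientations to Chern orientations by $\Phi(t_\infty):=z^*t_\infty$; this is well-defined by the ``only if'' direction of Proposition \ref{Ori.45}. Since $z^*$ in \eqref{Ori.49.2} is an isomorphism of abelian groups, it has an inverse $(z^*)^{-1}$ on $\ringE^{2,1}(-)$; define $\Psi$ from Chern orientations to Thom orientations by $\Psi(c_\infty):=(z^*)^{-1}(c_\infty)$, which lands in the set of Thom orientations by the ``if'' direction of Proposition \ref{Ori.45}. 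The composites $\Phi\circ\Psi$ and $\Psi\circ\Phi$ are the identity simply because $z^*$ and $(z^*)^{-1}$ are mutually inverse bijections on the ambient group $\ringE^{2,1}(\P^\infty/\pt)\cong\ringE^{2,1}(\Thom(\cT_1))$. This yields the claimed one-to-one correspondence, and moreover it is the one already christened in the definition following Proposition \ref{Ori.45} (``the Chern orientation associated with $t_\infty$'').

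The only genuine subtlety — and the step I expect to be the main obstacle to write cleanly — is making sure the bookkeeping of signs and distinguished points in Proposition \ref{Ori.45} is invoked correctly: the Thom normalization involves $-T\wedge\unit$ while the Chern normalization involves $+\Sigma^{2,1}\unit=(\P^1/\pt)\wedge\unit$, and the reconciliation of these passes through the sign identity $u^*\ol t_1=-v^*\ol t_1$ of \eqref{Ori.45.1}, which in turn rests on Lemma \ref{Ori.44} (the blow-up of $\P^2$ at a point, with its exceptional line, and the relation $i_1^*a=-i_3^*a$ for $a$ with $i_2^*a=0$) together with the identification $\P(\cT_{1,1}\oplus\cO)\cong\Blow_{[0:0:1]}\P^2$ from Lemma \ref{Ori.48}. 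Since all of this is encapsulated in the statement of Proposition \ref{Ori.45}, which I am allowed to assume, the proof of Proposition \ref{Ori.53} itself reduces to the formal two-line argument above: $z^*$ is an isomorphism, and by Proposition \ref{Ori.45} it matches the two normalization conditions, hence it restricts to the desired bijection. I would also remark that the correspondence is natural in $\ringE$ and compatible with the assignments $c_1(\cO(-1))=\Sigma^{2,1}\unit$ and $c_1(\cT_1)=c_\infty$ recorded in \eqref{Ori.7.11} and \eqref{Ori.7.12}, though that compatibility is not needed for the bare statement.
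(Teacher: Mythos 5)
Your proof is correct and is essentially the paper's own argument: the paper's proof of Proposition \ref{Ori.53} is simply "combine Lemma \ref{Ori.49} and Proposition \ref{Ori.45}," which is exactly the bijection via the isomorphism $z^*$ of \eqref{Ori.49.2} together with the matching of normalization conditions that you spell out. Your extra remarks about the sign bookkeeping are already encapsulated in Proposition \ref{Ori.45}, as you note, so nothing further is needed.
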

\begin{proof}
Combine Lemma \ref{Ori.49} and Proposition \ref{Ori.45}.
\end{proof}

\subsection{Cohomology of projective spaces}
Throughout this subsection, we fix $S\in \Sch$ and a homotopy commutative monoid $\ringE$ in $\inflogSH(S)$.
In this subsection, we will present several results about projective spaces in $\inflogSHS(S)$.
Our approach culminates with a proof of the projective bundle formula.
The following theorem is a fundamental property of our motivic category.
\begin{prop}
\label{Ori.3}
In $\inflogSHS(S)$, there is an equivalence
\begin{equation}
\label{Ori.3.1}
\P^n/\P^{n-1}
\cong
(\P^1/\pt)^{\wedge n}.
\end{equation}
\end{prop}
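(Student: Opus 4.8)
The strategy is to induct on $n$, using the filtration of $\P^n$ by linear subspaces and the key geometric inputs already available: the $(\P^\bullet,\P^{\bullet-1})$-invariance (Theorem \ref{thm:PnPn-1_invariance}), the blow-up square of Theorem \ref{ProplogSH.5}, and the fundamental equivalence $\P^1/1\simeq S^1\wedge S^1_t$ (Proposition \ref{logH.2}), together with $\P^n/(\Blow_O\P^n,E)\simeq \Sigma^{2n,n}$ from Example \ref{Ori.57}. The case $n=1$ is the definition of $\P^1/\pt$. For the inductive step, I would realize $\P^{n}$ with the chain of linear closed subschemes $\P^{n-1}=H_\infty\subset \P^n$ and, separately, track the pair $(\P^n,\P^{n-1})$ and the blow-up $(\Blow_O\P^n,E)$ at the point $O=[0:\cdots:0:1]\notin \P^{n-1}$.

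The heart of the argument is to produce a cofiber sequence identifying $\P^n/\P^{n-1}$ with $(\P^1/\pt)\wedge(\P^{n-1}/\P^{n-2})$. Concretely, I would consider the blow-up $p\colon (\Blow_O\P^n,E)\to \P^n$ at $O$: the exceptional divisor is $\P^{n-1}$ and $\Blow_O\P^n$ is a $\P^1$-bundle over the hyperplane $\P^{n-1}=H_\infty$ not through $O$; the strict transform of $\P^{n-1}=H_\infty$ is a section of this bundle. Using Theorem \ref{ProplogSH.5} applied to $O\hookrightarrow \P^n$ one gets the cocartesian square relating $M(\P^n)$, $M(\P^{n-1})$ (exceptional), $M(\Blow_O\P^n)$ and $M(\mathrm{pt})$; combined with the identification of $\Blow_O\P^n$ as a $\boxx$-bundle (cf. the proof of Theorem \ref{ProplogSH.5}) and $\boxx$-invariance, I would deduce $M(\Blow_O\P^n,E)/M(\P^{n-1}_{\mathrm{strict}})$ agrees with a suspension of $M(\P^{n-1}/\P^{n-2})$, and then feed in $\P^n/(\Blow_O\P^n,E)\simeq \Sigma^{2n,n}\simeq (\P^1/\pt)^{\wedge n}$ from Example \ref{Ori.57} and Proposition \ref{logH.2}. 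Threading the various cofiber sequences (the octahedral/pasting law for the triple $\P^{n-1}\subset (\Blow_O\P^n,E)\to \P^n$ of cofibrations, in the stable category $\inflogSHS(S)$) then yields $\P^n/\P^{n-1}\simeq (\P^1/\pt)\wedge(\P^{n-1}/\P^{n-2})\simeq (\P^1/\pt)^{\wedge n}$ by induction.

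An alternative, perhaps cleaner route that I would pursue in parallel: work directly with the $sNis$ cover of $(\P^n,\P^{n-1})$ by $\A^n$ and a tubular neighborhood of the hyperplane at infinity. By Proposition \ref{logHprop.1} one has a strict Nisnevich cocartesian square; since $(\P^n,\P^{n-1})$ is contractible in $\inflogSHS(S)$ by Theorem \ref{thm:PnPn-1_invariance}, this expresses $\P^n/\P^{n-1}$ as the cofiber of the inclusion of the ``link'' of $\P^{n-1}$, which is a $\Gmm$-bundle (or its compactified analogue) over $\P^{n-1}$; a deformation-to-the-normal-cone argument (Theorem \ref{Thom.1}) identifies this cofiber with $\Thom$ of the normal bundle $\cO(1)$ of $\P^{n-1}$ in $\P^n$. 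One then invokes the projective bundle decomposition for $\Thom$ of a line bundle on $\P^{n-1}$ and induction.

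\textbf{Main obstacle.} The delicate point is the bookkeeping of \emph{which} compactified log structures appear at each stage: the strict transforms, the exceptional divisors, and the ``vertical'' versus ``horizontal'' components of the boundary must be tracked carefully so that every square invoked is genuinely a strict Nisnevich distinguished square (Proposition \ref{logHprop.1}) or a blow-up square of the form in Theorem \ref{ProplogSH.5}, and so that the maps being inverted are genuinely $\boxx$-bundle projections or admissible blow-ups along smooth centers. This is exactly the kind of place where, as the authors warn in Remark following Theorem \ref{Thom.1}, a naive adaptation of the $\A^1$-argument breaks: the analogous statement in $\A^1$-homotopy theory is immediate from $\A^1$-contractibility of $\A^n$, whereas here one must genuinely use the compactified geometry. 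I expect the cleanest writeup to follow the deformation-to-normal-cone route, reducing everything to Theorem \ref{Thom.1}, Proposition \ref{Ori.41}, and the already-established Thom space computations for trivial bundles (Proposition \ref{Ori.64}), with Proposition \ref{logH.2} supplying the final identification $\Sigma^{2,1}\simeq \P^1/\pt$.
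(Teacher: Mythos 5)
Your main route contains the paper's proof inside it, but the step on which you hang the induction is wrong, and the induction itself is unnecessary. Since $O\notin H_\infty$, the strict transform $\widetilde{H}_\infty\cong\P^{n-1}$ sits inside $(\Blow_O(\P^n),E)$ as a section of the $\boxx$-bundle projection $(\Blow_O(\P^n),E)\to\P^{n-1}$ (note it is this log scheme, not the underlying blow-up appearing in the square of Theorem \ref{ProplogSH.5}, that is a $\boxx$-bundle; the paper quotes this from \cite[Lemma 7.1.7]{logDM}). The projection is therefore an equivalence in $\inflogSHS(S)$, hence so is the section, and consequently $M(\Blow_O(\P^n),E)/M(\widetilde{H}_\infty)\simeq 0$: it is \emph{not} a suspension of $M(\P^{n-1}/\P^{n-2})$, and no cofiber sequence $\P^n/\P^{n-1}\simeq(\P^1/\pt)\wedge(\P^{n-1}/\P^{n-2})$ falls out of that square. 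What the equivalence does give you, working over $\P^n$, is $\P^n/\P^{n-1}\simeq\P^n/(\Blow_O(\P^n),E)$, and then Example \ref{Ori.57} together with Propositions \ref{logH.5} and \ref{logH.2} finishes: $\P^n/(\Blow_O(\P^n),E)\simeq\Sigma^{2n,n}\simeq(\P^1/\pt)^{\wedge n}$. This is precisely how the paper argues; it merely re-derives the content of Example \ref{Ori.57} on the spot, via the cubical horn $\horn(\P^n,\ol{H}_1+\cdots+\ol{H}_n)$ of Proposition \ref{Ori.37} and the total-cofiber formalism of Proposition \ref{Thomdf.6}, because it needs the equivalence as an explicit map (this is what Lemma \ref{Ori.4} exploits afterwards). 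So once you delete the faulty intermediate claim and the induction wrapper, your first route coincides with the paper's proof.

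Your alternative route has a genuine gap. In this framework the deformation-to-the-normal-cone statement (Theorem \ref{Thom.1}) computes $X/(\Blow_Z X,E)$, not $X/(X-Z)$ or $X/Z$. Taking $Z=\P^{n-1}\subset\P^n$, a divisor, the blow-up is trivial and $E=Z$, so purity yields $\P^n/(\P^n,\P^{n-1})\simeq\Thom(\Normal_{\P^{n-1}}\P^n)$; since $(\P^n,\P^{n-1})\simeq\pt$ by Theorem \ref{thm:PnPn-1_invariance}, this identifies $\Thom(\cO_{\P^{n-1}}(1))$ with $\P^n/\pt$, \emph{not} with $\P^n/\P^{n-1}$ (compare the proof of Lemma \ref{Ori.49}, where $\Thom(\cT_{1,n})\simeq\P^{n+1}/\pt$). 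For $n\geq 2$ these are different objects, so the $\A^1$-style identification of $\P^n/\P^{n-1}$ with a Thom space of a line bundle on $\P^{n-1}$ is exactly the kind of statement that fails without $\A^1$-invariance; the log replacement of the ``deleted neighborhood'' is the pair $(\P^n,\P^{n-1})$, and quotienting by it is not the same as quotienting by the subscheme $\P^{n-1}$. Finally, the ``projective bundle decomposition'' you would then invoke exists here only for oriented theories (Theorem \ref{Ori.11}), whose proof depends on Proposition \ref{Ori.3} through Lemma \ref{Ori.4}, so that route would also be circular.
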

\begin{proof}
In $\infSHS(k)$, there are equivalences
\[
\P^n/\P^{n-1}
\xleftarrow{\cong}
\P^n/(\P^n-O)
\xrightarrow{\cong}
(\P^1/\A^1)^{\wedge n}
\xleftarrow{\cong}
(\P^1/\pt)^{\wedge n},
\]
where $O:=[0:\cdots:0:1]\in \P^n$.
We need to ``compactify'' these.
We can replace $\A^1$ by $\boxx$ and $\P^n-O$ by $(\Blow_O(\P^n),E)$, where $E$ is the exceptional divisor.
Since $(\Blow_O(\P^n),E)$ is a $\boxx$-bundle on $\P^{n-1}$ by \cite[Lemma 7.1.7]{logDM}, the projection $(\Blow_O(\P^n),E)\to \P^{n-1}$ is an equivalence in $\inflogSHS(S)$.
Hence it remains to construct the second morphism in
\[
\P^n/\P^{n-1}
\xleftarrow{\cong}
\P^n/(\Blow_O(\P^n),E)
\to
(\P^1/\boxx)^{\wedge n}
\xleftarrow{\cong}
(\P^1/\pt)^{\wedge n}
\]
and to show that it is an equivalence in $\inflogSHS(S)$.

\

Let $\ol{H}_i$ be the divisor $x_i=0$ on $\P^n$ for $1\leq i\leq n$, 
and set
\[
\ol{\cV}
:=
\horn(\P^n,\ol{H}_1+\cdots+\ol{H}_n).
\]
Proposition \ref{Ori.37} gives an equivalence
\begin{equation}
\label{Ori.3.2}
(\Blow_O\P^n,E)
\xleftarrow{\simeq}
\colimit(\ol{\cV})
\end{equation}
in $\inflogSHS(S)$, which induces an equivalence
\begin{equation}
\label{Ori.3.3}
\P^n/(\Blow_O\P^n,E)
\xleftarrow{\simeq}
\P^n/\colimit(\ol{\cV})
\end{equation}
in $\inflogSHS(S)$.
From \eqref{Thomdf.10.1} and Proposition \ref{Thomdf.6}, we have an equivalence
\begin{equation}
\label{Ori.3.6}
\P^n/\colimit(\ol{\cV})
\xrightarrow{\simeq}
(\P^1/\boxx)^{\wedge n}
\end{equation}
in $\inflogSHS(S)$.
Combine \eqref{Ori.3.3} and \eqref{Ori.3.6} to finish the construction.
\end{proof}

Morel proved the following result in \cite{Morel_properties} for $\A^1$-homotopy theory.

\begin{lem}
\label{Ori.4}
There is a commutative diagram
\[
\begin{tikzcd}
\P^n/\pt\ar[r,"\Delta"]\ar[d]&
(\P^n/\pt)^{\wedge n}\ar[d,leftarrow,"\ol{i}_1\wedge \cdots \wedge \ol{i}_1"]
\\
\P^n/\P^{n-1}\ar[r]&
(\P^1/\pt)^{\wedge n}
\end{tikzcd}
\]
in $\inflogSHS(S)$, where the left vertical map is the obvious one, 
the lower horizontal map is \eqref{Ori.3.1}, $\Delta$ is the diagonal morphism, 
and $\ol{i}_1\colon \P^1\to \P^n$ is the closed immersion sending $[a_0:a_1]$ to $[a_0:0:\cdots:0:a_1]$.
\end{lem}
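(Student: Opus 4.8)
The plan is to unwind the construction of the equivalence \eqref{Ori.3.1} made in Proposition \ref{Ori.3} and check that it carries the diagonal of $\P^n/\pt$ to the appropriate diagonal-type map, with the correction coming from Lemma \ref{Ori.6}. Concretely, the equivalence \eqref{Ori.3.1} factors as
\[
\P^n/\P^{n-1}
\xleftarrow{\simeq}
\P^n/(\Blow_O(\P^n),E)
\xleftarrow{\simeq}
\P^n/\colimit(\ol{\cV})
\xrightarrow{\simeq}
(\P^1/\boxx)^{\wedge n}
\xleftarrow{\simeq}
(\P^1/\pt)^{\wedge n},
\]
where $\ol{\cV}=\horn(\P^n,\ol{H}_1+\cdots+\ol{H}_n)$. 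First I would compare the diagonal $\Delta\colon \P^n/\pt \to (\P^n/\pt)^{\wedge n}$ with the map obtained from the cubical horn: the smash identification $\P^n/\colimit(\ol{\cV}) \simeq \bigwedge_{i=1}^n (\P^n/\P^n-\ol{H}_i)$ (in the guise $(\P^1/\boxx)^{\wedge n}$, using that $\P^n/(\P^n-\ol{H}_i)\simeq \P^1/\boxx$ after $\boxx$-localization, via the relevant toric blow-up) is built so that the quotient map $\P^n\to \P^n/\colimit(\ol{\cV})$ is, up to the identifications, exactly the ``diagonal into the smash of the $n$ hyperplane-complement quotients''. So the composite along the bottom row, precomposed with $\P^n/\pt\to \P^n/\P^{n-1}$, agrees with $\Delta$ followed by the $n$-fold smash of the map $\P^n/\pt \to \P^n/(\P^n-\ol{H}_i)\simeq \P^1/\pt$.

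The remaining point is to identify each factor $\P^n/\pt \to \P^n/(\P^n-\ol{H}_i) \simeq \P^1/\pt$ with a map induced by a \emph{linear embedding} $\P^1\to \P^n$ — concretely a pointed section $\ol{i}_{j}\colon \P^1 \to \P^n$ for a suitable coordinate line $j=j(i)$, which one reads off from the chosen toric resolution realizing $\P^n/(\P^n-\ol{H}_i)\simeq \P^1/\boxx$. Then Lemma \ref{Ori.6} is exactly what is needed: all the $\ol{i}_j$ are equal to $\ol{i}_1$ in $\logSHS(S)$, so each factor can be replaced by $\ol{i}_1$ (more precisely, the map $(\P^n/\pt)^{\wedge n}\to (\P^1/\pt)^{\wedge n}$ that appears is, factor by factor, the transpose/section $\ol{i}_1$, using Lemma \ref{Ori.6} to rewrite $\ol{i}_{j(i)}$ as $\ol{i}_1$). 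Stringing this together yields the claimed commutative square with right vertical map $\ol{i}_1\wedge\cdots\wedge\ol{i}_1$.

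I expect the main obstacle to be bookkeeping: making the identification $\P^n/\colimit(\ol{\cV})\simeq (\P^1/\boxx)^{\wedge n}$ sufficiently explicit that one can track which coordinate hyperplane goes to which $\P^1$-factor, and checking that the diagonal $\Delta$ really does correspond to the quotient map on the horn in a way compatible with \eqref{Thomdf.10.1} and Proposition \ref{Thomdf.6}. A secondary subtlety is that \eqref{Ori.3.1} was only produced in $\inflogSHS(S)$ via several zig-zag steps, some of which are equivalences but not obviously compatible with the diagonal on the nose; here the argument should be arranged so that every square in the zig-zag is checked to commute in $\logSHS(S)$ by naturality of the quotient and smash constructions, reducing everything to the single nontrivial input Lemma \ref{Ori.6}. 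None of the steps should require $\A^1$-homotopies beyond those already packaged in Lemma \ref{Ori.6} and Proposition \ref{Ori.37}.
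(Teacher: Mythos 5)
Your opening reduction via Lemma \ref{Ori.6} (trading $\ol{i}_1\wedge\cdots\wedge\ol{i}_1$ for $\ol{i}_1\wedge\cdots\wedge\ol{i}_n$) matches the paper, but the heart of the statement is not bookkeeping, and your proposal assumes away exactly the step that carries the content. After restricting to the affine chart and invoking the cube decomposition of Proposition \ref{Thomdf.6}, the bottom route of the square is computed by the ``coordinate-axes'' morphism $q\colon \A^n\to(\P^n)^n$, $(a_1,\ldots,a_n)\mapsto([a_1:0:\cdots:0:1],\ldots,[0:\cdots:0:a_n:1])$ — this is what $i_1\wedge\cdots\wedge i_n$ induces once $\A^n/\colimit(\cV)\simeq(\A^1/\A_\N)^{\wedge n}$ is made explicit — whereas the top route is computed by the restriction $q'$ of the diagonal, $(a_1,\ldots,a_n)\mapsto([a_1:\cdots:a_n:1],\ldots,[a_1:\cdots:a_n:1])$. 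These are genuinely different morphisms of log schemes, and neither naturality of quotient/smash constructions nor Lemma \ref{Ori.6} identifies them: Lemma \ref{Ori.6} only compares the linear embeddings $\ol{i}_1,\ldots,\ol{i}_n$ with one another. Your assertion that the quotient map $\P^n\to\P^n/\colimit(\ol{\cV})$ is ``up to the identifications, exactly the diagonal into the smash'' is precisely the claim to be proved; it fails at the level of representing maps and holds only up to a $\boxx$-homotopy that must be constructed. (A secondary slip: $\P^n/(\P^n-\ol{H}_i)$ is not $\P^1/\pt$; in the paper's setup the relevant factor is $\P^n/\ol{V}_i$ with $\ol{V}_i=(\P^n,\ol{H}_i)\cong(\P^n,\P^{n-1})$ contractible, so it is $\P^n/\pt$, and the comparison is organized through $(\P^n)^n/\colimit(\cZ)$ rather than through factorwise quotients of $\P^n$.)

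The missing ingredient is the compactified homotopy between $q$ and $q'$, which is the bulk of the paper's proof and is exactly the kind of input you explicitly exclude (``none of the steps should require $\A^1$-homotopies beyond those already packaged in Lemma \ref{Ori.6} and Proposition \ref{Ori.37}''). The paper takes the $\A^1$-homotopy $(a_1,\ldots,a_n,t)\mapsto([a_1:ta_2:\cdots:ta_n:1],\ldots,[ta_1:\cdots:ta_{n-1}:a_n:1])$, which does not extend over $\A^n\times\boxx$, chooses by toric resolution of singularities a refinement $\Sigma$ of $\N^n\times\Theta$ whose associated morphism $\ul{\A}_\Sigma\to(\P^n)^n$ does extend it, checks that the resulting maps $X'\to\A^n\times\boxx$ and $V_i'\to V_i\times\boxx$ are admissible blow-ups along smooth centers (hence invertible after localization), and then runs the argument of Lemma \ref{ProplogSH.7} to conclude $q\simeq q'$ as maps $\A^n/\colimit(\cV)\to(\P^n)^n/\colimit(\cZ)$ in $\logSHS(S)$. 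Without this (or some substitute construction), your chain of ``naturality'' squares does not close up, so the proposal has a genuine gap rather than a mere bookkeeping burden.
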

\begin{proof}
We keep using the same notation as in the proof of Proposition \ref{Ori.3}.
Let $\ol{i}_1,\ldots,\ol{i}_n\colon \P^1\to \P^n$ be the closed immersions in Lemma \ref{Ori.6}, which are all homotopic in $\inflogSHS(S)$.
It suffices to show that the diagram
\[
\begin{tikzcd}
\P^n/\pt\ar[r,"\Delta"]\ar[d]&
(\P^n/\pt)^{\wedge n}\ar[d,leftarrow,"\ol{i}_1\wedge \cdots \wedge \ol{i}_n"]
\\
\P^n/\P^{n-1}\ar[r]&
(\P^1/\pt)^{\wedge n},
\end{tikzcd}
\]
in $\inflogSHS(S)$ commutes.
For every $i$, there is an isomorphism of fs log schemes $\ol{V}_i:=(\P^n,\ol{H}_i)\cong (\P^n,\P^{n-1})$.
This means $\P^n/\pt\cong \P^n/\ol{V_i}$ in $\inflogSHS(S)$. 
Hence the obvious map
\[
(\P^n/\pt)^{\wedge n}\to (\P^n/\ol{V}_1)\wedge \cdots \wedge (\P^n/\ol{V}_n)
\]
is an equivalence in $\inflogSHS(S)$. 
Moreover, 
by strict Nisnevich descent, there is an equivalence
\[
\P^1/\boxx
\cong
\A^1/\A_\N
\]
in $\inflogSHS(S)$.
From the proof of Proposition \ref{Ori.3}, we see that the diagram
\[
\begin{tikzcd}
&
\P^n/\pt\ar[rd]\ar[d]\ar[ld]
\\
\P^n/\P^{n-1}\ar[r,"\cong",leftarrow]&
\P^n/(\Blow_O(\P^n),E)\ar[r,"\cong",leftarrow]&
\P^n/\colimit(\ol{\cV})
\end{tikzcd}
\]
in $\inflogSHS(S)$ commutes, where the maps from $\P^n/\pt$ are the obvious ones.
Hence it suffices to show that the diagram
\begin{equation}
\label{Ori.4.1}
\begin{tikzcd}
\P^n/\pt\ar[r,"\Delta"]\ar[d]&
(\P^n/\ol{V}_1)\wedge \cdots \wedge (\P^n/\ol{V}_n)
\\
\P^n/\colimit(\ol{\cV})\ar[r,"\cong"]&
(\P^1/\boxx)^{\wedge n}\ar[r,"\cong"]&
(\A^1/\A_\N)^{\wedge n}\ar[lu,"i_1\wedge \cdots \wedge i_n"']
\end{tikzcd}
\end{equation}
in $\inflogSHS(S)$ commutes, where $i_1,\ldots,i_n$ are the restrictions of $\ol{i}_1,\ldots,\ol{i}_n$.

\

Let $q\colon \A^n\to (\P^n)^n$ be the morphism given by 
$$
(a_1,\ldots,a_n)\mapsto ([a_1:0:\cdots:0:1],\ldots,[0:\cdots:0:a_n:1]).
$$
Form the cubical horn $\cZ$ from $(\P^n/\ol{V}_1)\wedge \cdots \wedge (\P^n/\ol{V}_n)$ using Proposition \ref{Thomdf.6}.
Then there is a commutative square
\[
\begin{tikzcd}
(\P^n)^n/\colimit(\cZ)\ar[d,"q"',leftarrow]\ar[r,"\cong"]&
(\P^n/\ol{V}_1)\wedge \cdots \wedge (\P^n/\ol{V}_n)\ar[d,leftarrow,"i_1\wedge \cdots \wedge i_n"]
\\
\A^n/\colimit(\cV)\ar[r,"\cong"]&
(\A^1/\A_\N)^{\wedge n}.
\end{tikzcd}
\]

\

There is a morphism
\[
\A^n \times \A^1 \to (\P^n)^n
\]
sending $(a_1,\ldots,a_n,t)$ to
\[
([a_1: ta_2: \ldots:ta_n:1],\ldots,[ta_1:\ldots:ta_{n-1}:a_n:1]).
\]
Let $\Delta$ be the fan associated with the toric variety $(\P^n)^n$.
This morphism is a morphism associated with a morphism of fans $\N^n\times \N^1\to \Delta$ whose morphism of lattices $h\colon \Z^{n}\times \Z\to (\Z^n)^r$ sends $(a_1,\ldots,a_n,t)$ to
\[
((a_1,t+a_2,\ldots,t+a_n),\ldots,(a_1+t,\ldots,a_{n-1}+t,a_n)).
\]
Let $\Theta$ be the fan associated with the toric variety $\P^1$.
According to toric resolution of singularities \cite[Theorem 11.1.9]{CLStoric}, 
there is a morphism of fans $u\colon \Sigma\to \Delta$ such that $\Sigma$ is a refinement of $\N^n\times \Theta$ 
and the morphism of lattices associated with $u$ is $h$.
Hence we have a commutative diagram
\begin{equation}
\label{Ori.4.2}
\begin{tikzcd}
&
\ul{\A}_\Sigma\ar[r]&
(\P^n)^n
\\
\A^n\times \A^1\ar[r]\ar[rru]\ar[ru]&
\A^n\times \P^1,\ar[u,crossing over,leftarrow]
\end{tikzcd}
\end{equation}
where $\ul{\A}_\Sigma$ denotes the toric variety associated with $\Sigma$.

\

We set $V_i:=(\A^n,H_i)\simeq\ol{V}_i\times_{\P^n} \A^n$ for all $1\leq i\leq n$.
Let $X'$ (resp.\ $V_i'$) be the fs log scheme whose underlying scheme is $\ul{\A}_\Sigma$, 
and whose log structure is associated with the open immersion $\A^n\times \A^1\to \ul{\A}_\Sigma$ 
(resp.\ $(V_i-\partial V_i)\times \A^1\to \ul{\A}_\Sigma$), 
and let $H_i'$ be the strict transform of $H_i$ in $\ul{\A}_{\Sigma}$ for all $1\leq i\leq n$.
The morphisms $X'\to \A^n\times \boxx$ and $V_i'\to V_i\times \boxx$ for all $1\leq i\leq n$ 
are admissible blow-ups along smooth centers.
Hence the maps
\[
\colimit(\cV')\to \colimit(\cV)\times \boxx
\text{ and }
X'\to \A^n\times \boxx
\]
are equivalences in $\inflogSHS(S)$,
where
\[
\cV:=\horn(\A^n,H_1+\cdots+H_n)
\text{ and }
\cV':=\horn(X,H_1'+\cdots+H_n').
\]
It follows that the map
\begin{equation}
\label{Ori.4.3}
X'/\colimit(\cV')\to (\A^n/\colimit(\cV))\times \boxx
\end{equation}
is an equivalence in $\inflogSHS(S)$.

\

By considering the suitable compactifying log structures for the schemes in \eqref{Ori.4.2}, we obtain the map
\[
X'/\colimit(\cV')\to (\P^n)^n/\colimit(\cZ).
\]
Let $q'\colon \A^n\to (\P^n)^n$ be the restriction of the diagonal map.
The restriction of $X'\to (\P^n)^n$ at $t=0$ and $t=1$ are exactly $q$ and $q'$.
It follows that the two naturally induced maps
\[
q,q'\colon \A^n/\colimit(\cV)\rightrightarrows (\P^n)^n/\colimit(\cZ)
\]
are homotopic in $\inflogSHS(S)$ since we can apply the argument in Lemma \ref{ProplogSH.7} to the diagram
\[
\begin{tikzcd}
\A^n/\colimit(\cV)\ar[r,shift left=0.75ex,"u_1'"]\ar[r,shift right=0.75ex,"u_0'"']&
X'/\colimit(\cV')\ar[r]\ar[d]&
(\P^n)^n/\colimit(\cZ)
\\
&
(\A^n/\colimit(\cV))\times \boxx,
\end{tikzcd}
\]
where $u_0'$ and $u_1'$ are the lifts of the zero section and one section.

\

Finally, 
by combining the commutative diagrams \eqref{Ori.4.2} and 
\[
\begin{tikzcd}
\P^n/\pt\ar[r,"\Delta"]\ar[d]&
(\P^n)^n/\colimit(\cZ)\ar[d,"q'",leftarrow]
\\
\P^n/\colimit(\ol{\cV})\ar[r,"\cong"]&
\A^n/\colimit(\cV)
\end{tikzcd}
\]
in $\inflogSHS(S)$, we deduce that \eqref{Ori.4.1} commutes.
\end{proof}

\begin{lem}
\label{Ori.58}
Suppose $\ringE$ is oriented.
Let $i\colon \P^1\to \P^n$ be the closed immersion sending $[x_0:x_1]$ to $[x_0:x_1,0,\ldots,0]$, let $p\colon \P^n\to \pt$ and $q\colon \P^1\to \pt$ be the structure morphisms, and let $\cT$ be the tautological bundle on $\P^n$.
Then the diagram
\[
\begin{tikzcd}
&
\ringE^{*-2,*-1}\ar[d,"\cong"]\ar[ld,"\psi_n"']
\\
\ringE^{**}(\P^n)\ar[r,"i^*"]&
\ringE^{**}(\P^1)
\end{tikzcd}
\]
commutes, where the right vertical isomorphism sends $x\in \ringE^{*-2,*-1}$ to $q^*(x)\smile \Sigma^{2,1}\unit$, and $\psi_n$ sends $x\in \ringE^{*-2,*-1}$ to $p^*(x) \smile c_1(\cT)$.
\end{lem}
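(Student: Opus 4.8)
The statement is a naturality/compatibility check for the first Chern class under the linear embedding $i\colon \P^1\to \P^n$. The plan is to reduce everything to the already-established facts: the formula $c_1(\cO(-1))=\Sigma^{2,1}\unit$ of \eqref{Ori.7.11}, the formula $c_1(\cT_1)=c_\infty$ of \eqref{Ori.7.12} together with the pullback behaviour $f^*(c_1(\cL))=c_1(f^*(\cL))$ of \eqref{Ori.7.4}, and the multiplicativity of the cup product. First I would make explicit that the class $p^*(x)\smile c_1(\cT)$ is, by definition of $\psi_n$, the image of $x\in\ringE^{*-2,*-1}$ under multiplication by $c_1(\cT)\in\ringE^{2,1}(\P^n)$ followed by $p^*$; since $i^*$ is a graded ring homomorphism and $i^*\circ p^* = q^*$ (because $p\circ i = q$), we get
\[
i^*(p^*(x)\smile c_1(\cT)) = q^*(x)\smile i^*(c_1(\cT)).
\]
So the whole lemma comes down to the single identity $i^*(c_1(\cT)) = \Sigma^{2,1}\unit$ in $\ringE^{2,1}(\P^1)$.

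\textbf{Key step: identifying $i^*c_1(\cT)$.} Here one uses \eqref{Ori.7.4}: $i^*(c_1(\cT)) = c_1(i^*\cT)$. The pullback of the tautological line bundle $\cT$ on $\P^n$ along the linear embedding $i\colon [x_0:x_1]\mapsto [x_0:x_1:0:\cdots:0]$ is the tautological bundle $\cO(-1)$ on $\P^1$ — this is a direct computation with the defining Zariski charts, or simply the statement that tautological subbundles restrict to tautological subbundles along linear subspaces. Then \eqref{Ori.7.11} gives $c_1(i^*\cT) = c_1(\cO(-1)) = \Sigma^{2,1}\unit$, as desired. (One should be mindful of the sign/normalization convention: in the paper's convention $\cT$ denotes the tautological sub-line-bundle, so $i^*\cT\cong\cO(-1)$ with no sign, and the Chern orientation is normalized precisely so that $c_1(\cO(-1))=\Sigma^{2,1}\unit$; if instead $\cT$ were the dual, one would pick up the expected sign, but the lemma as stated matches the sub-bundle convention.)

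\textbf{Assembling the diagram.} Combining the two displays above,
\[
i^*(\psi_n(x)) = i^*(p^*(x)\smile c_1(\cT)) = q^*(x)\smile i^*(c_1(\cT)) = q^*(x)\smile \Sigma^{2,1}\unit,
\]
and the right-hand side is exactly the image of $x$ under the right vertical isomorphism of the lemma; this is the commutativity claimed. I would also remark, for completeness, why the right vertical arrow is an isomorphism: $\ringE^{**}(\P^1)$ is a free $\ringE^{**}$-module on $\unit$ and $\Sigma^{2,1}\unit$ by Proposition \ref{Ori.3} (giving $\P^1/\pt\simeq (\P^1/\pt)$, i.e.\ $\ringE^{**}(\P^1/\pt)\cong \ringE^{*-2,*-1}$), and the summand generated by $\Sigma^{2,1}\unit$ corresponds to $\ringE^{**}(\P^1/\pt)$ via the cofiber sequence $\pt_+\to \P^1_+\to \P^1/\pt$; the map $x\mapsto q^*(x)\smile\Sigma^{2,1}\unit$ is exactly the inverse of the resulting projection.

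\textbf{Main obstacle.} The conceptual content is minimal once the earlier results are in place; the only genuine point requiring care is the identification $i^*\cT\cong\cO(-1)$ together with matching the distinguished-point/sign conventions so that no stray sign appears — i.e.\ checking that the orientation normalization in Definition \ref{Ori.7} is compatible with the stated form of the right vertical isomorphism. This is a routine but convention-sensitive verification, and it is where I would be most careful; everything else is formal manipulation of cup products and the ring-homomorphism property of $i^*$.
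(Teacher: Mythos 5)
Your argument is correct and is essentially the paper's own proof: apply $i^*$, use $i^*p^*=q^*$ together with the naturality $i^*c_1(\cT)=c_1(i^*\cT)$ from \eqref{Ori.7.4}, identify $i^*\cT$ with the tautological bundle $\cO(-1)$ on $\P^1$, and conclude by \eqref{Ori.7.11}. (Your closing remark that $x\mapsto q^*(x)\smile\Sigma^{2,1}\unit$ is an isomorphism onto all of $\ringE^{**}(\P^1)$ overstates things — it is only a split injection onto the reduced summand $\ringE^{**}(\P^1/\pt)\cong\ringE^{*-2,*-1}$ — but this side remark plays no role in the commutativity you were asked to prove.)
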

\begin{proof}
Let $\cL$ be the tautological bundle on $\P^1$.
From \eqref{Ori.7.4}, we have the formula
\[
i^* \psi_n(x)
=
q^*(x) \smile c_1(\cL).
\]
To conclude the proof, we appeal to \eqref{Ori.7.11}.
\end{proof}

\begin{const}
Suppose $\ringE$ is oriented and $\cE\to X$ is a rank $d+1$ vector bundle with $X\in \lSm/S$.
Let $\cL$ be the tautological line bundle on $\cE$.
There is a map
\begin{equation}
\label{Ori.11.1}
\rho_{\cE}
\colon
\bigoplus_{i=0}^d \ringE^{*-i,*-2i}(X)
\to
\ringE^{**}(\P(\cE))
\end{equation}
sending $(x_0,\ldots,x_d)$ to $\sum_{i=0}^d p^*(x_i) \smile c_1(\cL) \smile \cdots \smile c_1(\cL)$ 
($i$ copies of $c_1(\cL)$).
For brevity, we often omit the symbol $\smile$, and then write the expression above as 
$$
\sum_{i=0}^d p^*(x_i)c_1(\cL)^i.
$$

\

If $U$ is an open subscheme of $X$ with $\cE_U:=\cE\times_X U$, 
we consider the projection $q\colon \cE/\cE_U\to X/U$ and the morphism
\[
\Delta_i
\colon
\P(\cE)/\P(\cE_U)
\to
\P(\cE)^{i+1}/(\P(\cE_U)\times \P(\cE)^i)
\cong
\P(\cE)/\P(\cE_U) \times \P(\cE)^i
\]
naturally induced by the diagonal morphism $\P(\cE)\to \P(\cE)^{i+1}$ for every integer $i\geq 1$.
We have homomorphisms
\begin{equation}
\label{Ori.11.5}
\begin{split}
\ringE^{*-2i,*-i}(X/U)
&\xrightarrow{q^*}
\ringE^{*-2i,*-i}(\P(\cE)/\P(\cE_U))
\\
&\to
\ringE^{**}(\P(\cE)/\P(\cE_U)\times \P(\cE)^i)
\xrightarrow{\Delta_i^*}
\ringE^{**}(\P(\cE)/\P(\cE_U))
\end{split}
\end{equation}
where the second homomorphism is constructed from \eqref{Ori.12.2} and $c_1(\cL)\in \ringE^{2,1}(\P(\cE))$.
Collect \eqref{Ori.11.5} for every $i=0,\ldots,d$ to form the homomorphism
\[
\rho_{\cE,U}
\colon
\bigoplus_{i=0}^d \ringE^{*-2i,*-i}(X/U)
\to
\ringE^{**}(\P(\cE)/\P(\cE_U)).
\]
\end{const}

\begin{lem}
\label{Ori.52}
Suppose $\ringE$ is oriented.
With the above notation, the diagram
\begin{equation}
\label{Ori.52.1}
\begin{tikzcd}[column sep=tiny]
\oplus_{i=0}^d \ringE^{*-2i,*-i}(X/U)\ar[r]\ar[d,"\rho_{\cE,U}"']&
\oplus_{i=0}^d \ringE^{*-2i,*-i}(X)\ar[r]\ar[d,"\rho_{\cE}"]&
\oplus_{i=0}^d \ringE^{*-2i,*-i}(U)\ar[r,"\delta"]\ar[d,"\rho_{\cE_U}"]&
\oplus_{i=0}^d \ringE^{*-2i+1,*-i}(X/U)\ar[d,"\rho_{\cE,U}"]
\\
\ringE^{**}(\P(\cE)/\P(\cE_U))\ar[r]&
\ringE^{**}(\P(\cE))\ar[r]&
\ringE^{**}(\P(\cE_U))\ar[r,"\delta"]&
\ringE^{*+1,*}(\P(\cE)/\P(\cE_U))
\end{tikzcd}
\end{equation}
commutes.
\end{lem}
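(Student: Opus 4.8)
\textbf{Proof plan for Lemma \ref{Ori.52}.}

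The claim is that the maps $\rho_{\cE,U}$, $\rho_{\cE}$, $\rho_{\cE_U}$ assemble into a morphism between the long exact cohomology sequences attached to the pairs $(X,U)$ and $(\P(\cE),\P(\cE_U))$. There are three squares to check: the two that involve only restriction maps (the left and middle squares of \eqref{Ori.52.1}) and the square that involves the connecting homomorphism $\delta$ (the rightmost square). The overall strategy is simply to unwind the definitions of all the maps involved and to reduce every commutativity assertion to a naturality statement for the cup product constructed in \eqref{Ori.12.2} and \eqref{Ori.12.3}, together with the naturality of $c_1$ recorded in \eqref{Ori.7.4}.

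First I would dispense with the two restriction squares. For the middle square, the point is that $\rho_{\cE}$ is built out of $p^*$ and cup products with powers of $c_1(\cL)$, where $\cL$ is the tautological bundle on $\cE$; restricting along $\P(\cE_U)\to \P(\cE)$ pulls back $c_1(\cL)$ to $c_1$ of the tautological bundle on $\cE_U$ by \eqref{Ori.7.4}, and is compatible with $p^*$ by functoriality of pullback in $\logSHS(S)$. Since cup product is natural for morphisms in $\logSHS(S)$ (it is obtained from \eqref{Ori.12.2} by composing with a diagonal), the middle square commutes. The left square is the relative version of the same argument: one has the commutative square relating $\cE/\cE_U \to X/U$ and $\cE\to X$, the pullback of $c_1(\cL)$ along $\P(\cE)\to \P(\cE)$ is $c_1(\cL)$, and the relative diagonal $\Delta_i$ appearing in \eqref{Ori.11.5} is compatible with the absolute diagonal; so again naturality of the cup product gives commutativity. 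These two steps are essentially formal bookkeeping.

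The main obstacle is the rightmost square, commutativity with the connecting homomorphism $\delta$. Here the subtlety is that $\delta$ is not induced by a morphism of log motivic spaces, so one cannot invoke naturality of $\smile$ directly. The plan is as follows. The connecting map $\delta\colon \ringE^{**}(U)\to \ringE^{*+1,*}(X/U)$ is, by definition, induced by the canonical map $\Sigma_T^\infty X/U \to \Sigma_{S^1}\Sigma_T^\infty U_+$ coming from the cofiber sequence $U_+\to X_+\to X/U$; similarly for $\P(\cE_U)$ and $\P(\cE)/\P(\cE_U)$. So it suffices to check that the diagram
\[
\begin{tikzcd}
\Sigma_T^\infty \P(\cE)/\P(\cE_U) \ar[r]\ar[d] & \Sigma_{S^1}\Sigma_T^\infty \P(\cE_U)_+ \ar[d] \\
\Sigma_T^\infty X/U \wedge (\P(\cE))^i_+ \wedge \cdots \ar[r] & \Sigma_{S^1}(\Sigma_T^\infty U_+ \wedge (\P(\cE_U))^i_+ \wedge \cdots)
\end{tikzcd}
\]
obtained by smashing the defining cofiber-sequence maps with the relevant powers of $\P(\cE)$ (resp.\ $\P(\cE_U)$) commutes up to the expected sign/identification; this is a statement purely in $\logSHS(S)$ about the compatibility of the cofiber sequence $\P(\cE_U)_+\to \P(\cE)_+\to \P(\cE)/\P(\cE_U)$ with the cofiber sequence $U_+\to X_+\to X/U$ under the projection $\P(\cE)\to X$, and it follows from the fact that $\P(\cE)\to X$ is a morphism of log schemes so $\P(\cE_U) = \P(\cE)\times_X U$, making the square of cofiber sequences commute on the nose. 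Precomposing with $q^*$ and postcomposing with $\Delta_i^*$ and the cup product with $c_1(\cL)^i$ (which are defined identically on both pairs, using that $c_1$ of the tautological bundle on $\P(\cE)$ restricts to $c_1$ of the tautological bundle on $\P(\cE_U)$) then yields exactly the rightmost square of \eqref{Ori.52.1}. I expect the only genuinely delicate point to be tracking the suspension coordinate $\Sigma_{S^1}$ and making sure it passes through the smash products and the cup-product maps without introducing an unwanted twist; once the cofiber-sequence compatibility is set up carefully, the rest is again naturality of $\smile$.

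\begin{proof}
The maps $\rho_{\cE,U}$, $\rho_\cE$, and $\rho_{\cE_U}$ are each constructed from a pullback along a projection, a cup product with powers of $c_1$ of a tautological bundle, and a (relative) diagonal. The two squares of \eqref{Ori.52.1} not involving $\delta$ commute because: the projections $\P(\cE)\to X$, $\P(\cE_U)\to U$, $\cE/\cE_U\to X/U$ are compatible with the restriction maps; the tautological line bundle $\cL$ on $\cE$ restricts to the tautological line bundle on $\cE_U$, so $c_1(\cL)$ restricts to $c_1$ of the latter by \eqref{Ori.7.4}; and the cup product \eqref{Ori.12.3} is natural with respect to morphisms in $\logSHS(S)$. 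Concretely, for $x_i\in \ringE^{*-2i,*-i}(X)$ the element $\rho_\cE(x_0,\dots,x_d)=\sum_i p^*(x_i)c_1(\cL)^i$ restricts, along $\P(\cE_U)\to \P(\cE)$, to $\sum_i p_U^*(x_i|_U) c_1(\cL_U)^i = \rho_{\cE_U}(x_0|_U,\dots,x_d|_U)$, where $p_U\colon \P(\cE_U)\to U$ and $\cL_U$ is the tautological bundle on $\cE_U$; this is the middle square. The left square is proved identically using the relative projection $q\colon \cE/\cE_U\to X/U$ and the relative diagonals $\Delta_i$ of \eqref{Ori.11.5}, noting that these are the base changes along $U\to X$ of the absolute data.

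For the rightmost square, recall that $\delta$ is induced by the canonical map $\partial_{X,U}\colon \Sigma_T^\infty X/U\to \Sigma_{S^1}\Sigma_T^\infty U_+$ attached to the cofiber sequence $\Sigma_T^\infty U_+\to \Sigma_T^\infty X_+\to \Sigma_T^\infty X/U$, and similarly by $\partial_{\P(\cE),\P(\cE_U)}$ for the pair $(\P(\cE),\P(\cE_U))$. Since $\P(\cE)\to X$ is a morphism of log schemes, $\P(\cE_U)\cong \P(\cE)\times_X U$, and hence the square of cofiber sequences
\[
\begin{tikzcd}[column sep=small]
\Sigma_T^\infty \P(\cE_U)_+\ar[r]\ar[d] & \Sigma_T^\infty \P(\cE)_+\ar[r]\ar[d] & \Sigma_T^\infty \P(\cE)/\P(\cE_U)\ar[d] \\
\Sigma_T^\infty U_+\ar[r] & \Sigma_T^\infty X_+\ar[r] & \Sigma_T^\infty X/U
\end{tikzcd}
\]
commutes, where the vertical maps are induced by $\P(\cE)\to X$; by the uniqueness of cofiber sequences this square is compatible with the connecting maps, i.e.\ the square
\[
\begin{tikzcd}
\Sigma_T^\infty \P(\cE)/\P(\cE_U)\ar[r,"\partial"]\ar[d] & \Sigma_{S^1}\Sigma_T^\infty \P(\cE_U)_+\ar[d] \\
\Sigma_T^\infty X/U\ar[r,"\partial"] & \Sigma_{S^1}\Sigma_T^\infty U_+
\end{tikzcd}
\]
commutes in $\logSHS(S)$. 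Smashing this square with $\Sigma_T^\infty \P(\cE)^i_+$ on the top row and using the projection $\P(\cE)^i\to X$ compatibly (together with the commutation of $\Sigma_{S^1}$ with smash products) and then applying the diagonals $\Delta_i$ and cup product with $c_1(\cL)^i$, which are defined by the same formulas for both pairs and are compatible under restriction by \eqref{Ori.7.4}, we obtain exactly the rightmost square of \eqref{Ori.52.1}. Combining the three squares completes the proof.
\end{proof}
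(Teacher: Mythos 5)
Your proof is correct and takes essentially the same route as the paper: the two restriction squares by functoriality of the construction, and the $\delta$-square by running the relative construction with the single fixed class $c_1(\cL)$ on $\P(\cE)$ in both rows, using naturality of the connecting map under the maps of pairs coming from $\P(\cE_U)\cong \P(\cE)\times_X U$, the projections and the diagonals, and only at the end invoking $j^*c_1(\cL)=c_1(\cL_U)$ from \eqref{Ori.7.4} to identify the top composite with $\rho_{\cE_U}$ — which is exactly the paper's auxiliary diagram argument. The only difference is that you make explicit, at the level of cofiber sequences in $\inflogSHS(S)$, the boundary-map compatibility that the paper asserts implicitly.
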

\begin{proof}
The left and middle squares of \eqref{Ori.52.1} commute by the functoriality of the construction.
Let $j\colon U\to X$ be the open immersion, and $\Delta_{i,U}\colon \P(\cE_U)\to \P(\cE_U)\times \P(\cE)^i$ be the restriction of the diagonal map for every integer $i\geq 1$.
There is a commutative diagram
\begin{equation}
\label{Ori.52.2}
\begin{tikzcd}
\ringE^{*-2i,*-i}(U)\ar[d,"\delta"']\ar[r]&
\ringE^{**}(\P(\cE_U)\times \P(\cE)^i)\ar[r,"\Delta_{i,U}^*"]\ar[d,"\delta"]&
\ringE^{**}(\P(\cE_U))\ar[d,"\delta"]
\\
\ringE^{*-2i+1,*-i}(X/U)\ar[r]&
\ringE^{*+1,*}(\P(\cE)/\P(\cE_U)\times \P(\cE)^i)\ar[r,"\Delta_i^*"]&
\ringE^{*+1*}(\P(\cE)/\P(\cE_U)),
\end{tikzcd}
\end{equation}
where the lower row of \eqref{Ori.52.2} comes from \eqref{Ori.11.5}, and the upper left horizontal homomorphism of \eqref{Ori.52.2} is constructed from $j^*\colon \ringE^{*-2i,*-i}(U)\to \ringE^{*-2i,*-i}(X)$, \eqref{Ori.12.2}, and $c_1(\cL)\in \ringE^{2,1}(\P(\cE))$.
We set $\cL_U:=j^*(\cL)$, which is the tautological bundle on $\P(\cE_U)$.
Since $j^*c_1(\cL)=c_1(\cL_U)$ by \eqref{Ori.7.4},
the collection of the composite of the homomorphisms in the upper row of \eqref{Ori.52.2} for $i=0,\ldots,d$ constructs $\rho_{\cE_U}$.
This shows that the right square of \eqref{Ori.52.1} commutes.
\end{proof}

\begin{thm}
[Projective bundle theorem]
\label{Ori.11}
Suppose that $\ringE$ is oriented.
With the above notation, the map \eqref{Ori.11.1} is an isomorphism.
\end{thm}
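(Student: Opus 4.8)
The plan is to prove the projective bundle theorem by the standard ``reduce to the trivial bundle, then to projective space itself'' strategy, adapted to our logarithmic setting where we have the $(\P^\bullet,\P^{\bullet-1})$-invariance (Theorem \ref{thm:PnPn-1_invariance}) and the computation $\P^n/\P^{n-1}\cong (\P^1/\pt)^{\wedge n}$ (Proposition \ref{Ori.3}) at our disposal. First I would observe that the statement is local on $X$ for the strict Nisnevich topology: both sides of \eqref{Ori.11.1} satisfy $sNis$-descent (the cohomology $\ringE^{**}$ takes strict Nisnevich distinguished squares to Mayer--Vietoris sequences, by Proposition \ref{logHprop.1} applied to $\Sigma_T^\infty$), and by Lemma \ref{Ori.52} the map $\rho_{\cE}$ is compatible with these Mayer--Vietoris sequences via the relative maps $\rho_{\cE,U}$. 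Thus, using the five lemma together with an induction on the number of open sets in a trivializing cover of $\cE$, I reduce to the case where $\cE=\cO^{d+1}$ is the trivial rank $d+1$ bundle, i.e.\ $\P(\cE)=X\times \P^d$.

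Next, in the trivial case, I would use the external-product structure to reduce to $X=S$: the projection $q\colon X\times \P^d\to X$ and the inclusion of a fiber let one split off $\ringE^{**}(X)$ from $\ringE^{**}(X\times \P^d)$, and more precisely the map $\rho_{\cO^{d+1}}$ is obtained from the analogous map for $\P^d$ over $S=\pt$ by the cup-product pairing $\ringE^{**}(X)\otimes \ringE^{**}(\P^d)\to \ringE^{**}(X\times \P^d)$. So it suffices to show that
\[
\bigoplus_{i=0}^d \ringE^{*-2i,*-i}
\to
\ringE^{**}(\P^d),
\qquad
(x_0,\ldots,x_d)\mapsto \sum_{i=0}^d p^*(x_i)\,c_1(\cT)^i
\]
is an isomorphism, where now $p\colon \P^d\to \pt$ and $\cT$ is the tautological bundle. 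For this I would induct on $d$, using the cofiber sequence
\[
\Sigma_T^\infty (\P^{d-1})_+\to \Sigma_T^\infty (\P^d)_+\to \Sigma_T^\infty \P^d/\P^{d-1}
\]
together with the identification $\P^d/\P^{d-1}\cong (\P^1/\pt)^{\wedge d}\cong \Sigma^{2d,d}\unit$ from Proposition \ref{Ori.3} (which gives $\ringE^{**}(\P^d/\P^{d-1})\cong \ringE^{*-2d,*-d}$). The key point is to identify the resulting long exact sequence with the split short exact sequences
\[
0\to \ringE^{*-2d,*-d}\to \bigoplus_{i=0}^d \ringE^{*-2i,*-i}\to \bigoplus_{i=0}^{d-1}\ringE^{*-2i,*-i}\to 0
\]
so that the five lemma (plus splitness, to kill the connecting map) finishes the induction. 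To do this I must show two compatibilities: (a) the restriction map $\ringE^{**}(\P^d)\to \ringE^{**}(\P^{d-1})$ sends $c_1(\cT)$ to $c_1(\cT')$ (the tautological bundle on $\P^{d-1}$), which follows from \eqref{Ori.7.4} since $\cT$ restricts to $\cT'$; and (b) the composite $\ringE^{**}(\P^d/\P^{d-1})\to \ringE^{**}(\P^d)\xrightarrow{\text{top graded piece}}$ picks out the class $c_1(\cT)^d$ up to a unit --- this is exactly the content of Lemmas \ref{Ori.4}, \ref{Ori.58}, and the normalization \eqref{Ori.7.11} that $c_1(\cO(-1))=\Sigma^{2,1}\unit$, which together say that under $\P^d/\P^{d-1}\cong (\P^1/\pt)^{\wedge d}$ the generator maps to $c_1(\cT)^d$ (via the diagonal and the closed immersions $\ol i_1$ of Lemma \ref{Ori.6}).

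The main obstacle I expect is step (b): carefully tracking the generator of $\ringE^{**}(\P^d/\P^{d-1})$ through the chain of equivalences in Proposition \ref{Ori.3} and matching it with the power $c_1(\cT)^d$ of the first Chern class. In $\A^1$-homotopy theory this is handled by Morel's computation (our Lemma \ref{Ori.4} is the analogue), so the substance is really just to assemble Lemmas \ref{Ori.4}, \ref{Ori.6}, \ref{Ori.58} correctly; but one has to be attentive to signs and to the fact that the $c_1$ of the tautological bundle, not its dual, appears, so the identity $c_1(\cO(-1))=\Sigma^{2,1}\unit$ must be used with the right orientation convention. Everything else --- Zariski/Nisnevich localization, the external product reduction, the five lemma induction --- is formal given the results already established in this section and in \S\ref{proplogH}.
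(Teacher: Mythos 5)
Your proposal is essentially the paper's own proof: the same reduction to the trivial-bundle case by induction on a trivializing Zariski cover using the relative maps $\rho_{\cE,U}$ of Lemma \ref{Ori.52} and the five lemma, followed by the same induction on $d$ via the cofiber sequence for $\P^{d-1}\subset\P^d$, with the identification of $\ringE^{**}(\P^d/\P^{d-1})$ and the compatibility with $c_1(\cT)^d$ (equivalently, the vanishing $c_1(\cT')^d=0$ on $\P^{d-1}$) supplied by Lemmas \ref{Ori.4}, \ref{Ori.6}, \ref{Ori.58} and the normalization \eqref{Ori.7.11}, exactly as in Proposition \ref{Ori.51}. One small caveat: the ``reduction to $X=\pt$ via the cup-product pairing'' is not a formal reduction (that would presuppose a K\"unneth-type isomorphism, which is part of what is being proved), so one should instead run the same induction with $X\times\P^{\bullet}$ in place of $\P^{\bullet}$ --- which is precisely what the paper means when it says the general case of Proposition \ref{Ori.51} follows in the same way.
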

\begin{proof}
There exists a Zariski cover $\{U_i\}_{1\leq i\leq n}$ of $X$ such that $\cE\times_X U_i$ is a trivial bundle on $U_i$ for every $1\leq i\leq n$.
We proceed by induction on $n$.
The case $n=1$ is done in Proposition \ref{Ori.51} below.
Suppose that the claim holds for $n-1$.
We set $U:=U_1\cup \cdots \cup U_{n-1}$.
By induction, the map $\rho_{\cE_U}$ in Lemma \ref{Ori.52} is an isomorphism.
Therefore, by the five lemma, it suffices to show that the map $\rho_{\cE,U}$ in 
Lemma \ref{Ori.52} is an isomorphism.
By excision, it suffices to show that the map
\[
\oplus_{i=0}^d \ringE^{*-2i,*-i}(U_n/U_n\cap U)
\to
\ringE^{**}(\P(\cE_{U_n})/\P(\cE_{U_n\cap U}))
\]
is an isomorphism, where $\cE_{U_n}:=\cE\times_X U_n$ and $\cE_{U_n\cap U}:=\cE\times_X (U_n\cap U)$.
Again by the five lemma and Lemma \ref{Ori.52}, it suffices to show that the maps
\[
\oplus_{i=0}^d \ringE^{*-2i,*-i}(V)
\to
\ringE^{**}(\P(\cE_V))
\]
are isomorphisms for $V=U_n$ and $V=U_n\cap U$.
This follows from the induction hypothesis.
\end{proof}

\begin{prop}
\label{Ori.51}
Suppose that $\ringE$ is oriented.
Then the map
\[
\rho_d
\colon
\bigoplus_{i=0}^d
\ringE^{*-2i,*-i}(X)
\to
\ringE^{**}(X\times \P^d)
\]
is an isomorphism for every $X\in \lSm/S$, where $\rho_d:=\rho_{\cE}$, and $\cE$ is the rank $d+1$ trivial bundle on $X$.
\end{prop}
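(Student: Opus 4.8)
The strategy is to reduce the computation of $\ringE^{**}(X\times \P^d)$ to the already-established computation over a point, by exploiting $(\P^\bullet,\P^{\bullet-1})$-invariance --- which holds in $\inflogSHS(S)$ by Theorem \ref{thm:PnPn-1_invariance} --- together with the cofiber sequences relating $\P^n$ and $\P^{n-1}$ and the splitting principle already encoded in Proposition \ref{Ori.3}. First I would observe that by Proposition \ref{Ori.3} there is an equivalence $\P^n/\P^{n-1}\cong (\P^1/\pt)^{\wedge n}$ in $\inflogSHS(S)$, hence a cofiber sequence $\Sigma_T^\infty(X\times \P^{n-1})_+\to \Sigma_T^\infty(X\times \P^n)_+\to \Sigma_T^\infty X_+\wedge (\P^1/\pt)^{\wedge n}$ after smashing with $\Sigma_T^\infty X_+$. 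Since $(\P^1/\pt)^{\wedge n}$ is $\otimes$-invertible (it is $\Sigma^{2n,n}\unit$), the term on the right has $\ringE$-cohomology $\ringE^{*-2n,*-n}(X)$. So the long exact sequence in $\ringE$-cohomology associated with this cofiber sequence reads
\[
\cdots\to \ringE^{*-2n,*-n}(X)\xrightarrow{\partial}\ringE^{**}(X\times \P^n)\to \ringE^{**}(X\times \P^{n-1})\xrightarrow{\delta}\ringE^{*+1-2n,*-n}(X)\to \cdots.
\]

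The key point is to show this long exact sequence splits into short exact sequences, and that the splitting is compatible with the maps $\rho_d$. For this I would proceed by induction on $d$, the case $d=0$ being trivial since $\P^0=\pt$. For the inductive step, the connecting map $\delta\colon \ringE^{**}(X\times \P^{n-1})\to \ringE^{*+1-2n,*-n}(X)$ must be shown to vanish. This is where the class $c_1(\cL)$ enters: the power $c_1(\cL)^{n}\in \ringE^{2n,n}(X\times\P^n)$ restricts, under the closed immersion $X\times\P^{n-1}\to X\times\P^n$, to $c_1$ of the tautological bundle on $\P^{n-1}$ raised to the $n$-th power, which --- by the inductive hypothesis that $\rho_{n-1}$ is an isomorphism with $\oplus_{i=0}^{n-1}\ringE^{*-2i,*-i}(X)$ as source --- is expressible as a combination of lower powers, i.e. the map $\ringE^{**}(X\times\P^n)\to\ringE^{**}(X\times\P^{n-1})$ is visibly surjective (the elements $p^*(x_i)c_1(\cL)^i$ for $i\le n-1$ map onto a generating set). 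Surjectivity of this restriction map forces $\delta=0$, hence the sequence breaks up as
\[
0\to \ringE^{*-2n,*-n}(X)\xrightarrow{\partial}\ringE^{**}(X\times \P^n)\to \ringE^{**}(X\times \P^{n-1})\to 0.
\]
One then checks that $\partial$ sends $x\in\ringE^{*-2n,*-n}(X)$ to $p^*(x)c_1(\cL)^n$ --- this is a Thom-class bookkeeping computation using that the cofiber $(\P^1/\pt)^{\wedge n}$ term pairs with $c_\infty$ via the orientation, essentially Lemma \ref{Ori.58} iterated --- and that the quotient map sends $p^*(x_i)c_1(\cL)^i$ ($0\le i\le n-1$) to the corresponding generators for $X\times\P^{n-1}$. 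Given these identifications, the five lemma applied to the map of short exact sequences
\[
\begin{tikzcd}[column sep=small]
0\ar[r]&\ringE^{*-2n,*-n}(X)\ar[r]\ar[d,equal]&\bigoplus_{i=0}^n\ringE^{*-2i,*-i}(X)\ar[r]\ar[d,"\rho_n"]&\bigoplus_{i=0}^{n-1}\ringE^{*-2i,*-i}(X)\ar[r]\ar[d,"\rho_{n-1}"]&0
\\
0\ar[r]&\ringE^{*-2n,*-n}(X)\ar[r,"\partial"]&\ringE^{**}(X\times\P^n)\ar[r]&\ringE^{**}(X\times\P^{n-1})\ar[r]&0
\end{tikzcd}
\]
gives that $\rho_n$ is an isomorphism, completing the induction.

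The main obstacle I anticipate is the precise identification of the boundary map $\partial$ with multiplication by $c_1(\cL)^n$ --- that is, matching the purely homotopy-theoretic connecting homomorphism of the cofiber sequence from Proposition \ref{Ori.3} with the intersection-theoretic operation $x\mapsto p^*(x)c_1(\cL)^n$ defined via cup product and the orientation. This requires tracking the equivalence $\P^n/\P^{n-1}\cong(\P^1/\pt)^{\wedge n}$ through its construction (the cubical-horn argument in the proof of Proposition \ref{Ori.3}) and comparing it with the diagonal description of $c_1(\cL)^n$, much in the spirit of Lemma \ref{Ori.4} and Lemma \ref{Ori.58}; compatibility of all these identifications with restriction to open subschemes is handled by Lemma \ref{Ori.52}. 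Once this compatibility is in place, everything else is a formal five-lemma argument. A secondary technical point is to make sure the section $X\to X\times\P^n$ (needed to see $p^*$ is split injective, giving the $\oplus_{i=0}^n$ decomposition of the source cleanly) behaves well; but since $\cE$ is the trivial bundle this is immediate.
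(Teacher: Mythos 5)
Your plan is correct and follows essentially the same route as the paper: induction on $d$, the identification $\P^d/\P^{d-1}\simeq(\P^1/\pt)^{\wedge d}$ from Proposition \ref{Ori.3}, splitting the long exact sequence via surjectivity of restriction to $\P^{d-1}$ (which the inductive hypothesis provides), identifying the boundary term with $p^*(x)c_1(\cL)^d$ via Lemmas \ref{Ori.4} and \ref{Ori.58} (the paper packages this, together with the resulting vanishing $c_1(\cL_{d-1})^d=0$, as the commutativity of its comparison diagram), and concluding with the five lemma. The only cosmetic differences are that the paper reduces to $X=\pt$ and works with the reduced groups $\ringE^{**}(\P^d/\pt)$, whereas you keep a general base $X$ throughout.
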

\begin{proof}
We only deal with the case where $X=\pt$ since the general case follows in a similar way.
It suffices to show that the map
$
\widetilde{\rho}_d
\colon
\bigoplus_{i=1}^d
\ringE^{*-2i,*-i}
\to
\ringE^{**}(\P^d/\pt)
$
naturally induced by $\rho_d$ is an isomorphism.
Let $i\colon \P^{d-1}\to \P^d$ be the closed immersion sending $[a_0:\cdots:a_{d-1}]$ to $[a_0:\cdots:a_{d-1}:0]$.
There is a naturally induced diagram
\begin{equation}
\label{Ori.11.3}
\begin{tikzcd}[column sep=small]
0\ar[r]&
\ringE^{*-2d,*-d}\ar[d,"\cong"']\ar[r,"q"]\ar[rd,phantom,"(a)" description]&
\oplus_{i=1}^d \ringE^{*-2i,*-i}\ar[d,"\widetilde{\rho}_d"]\ar[r,"r"]\ar[rd,phantom,"(b)" description]&
\oplus_{i=1}^{d-1} \ringE^{*-2i,*-i}\ar[d,"\widetilde{\rho}_{d-1}"]\ar[r]&
0
\\
&
\ringE^{**}(\P^d/\P^{d-1})\ar[r]&
\ringE^{**}(\P^d/\pt)\ar[r,"i^*"]&
\ringE^{**}(\P^{d-1}/\pt)
\end{tikzcd}
\end{equation}
with exact rows, where $q$ and $r$ are the obvious inclusion and projection maps.

\

We will show that \eqref{Ori.11.3} commutes.
Let $\cL_d$ be the tautological line bundle on $\P^d$.
To show the square (b) commutes, let $(x_1,\ldots,x_d)$ be an element of $\oplus_{i=1}^d \ringE^{*-2i,*-i}$.
We need to show the formula
\[
i^*\big(\sum_{i=1}^d x_i c_1(\cL_d)^i \big)
=
\sum_{i=1}^{d-1} x_i c_1(\cL_{d-1})^i.
\]
Since $i^*(\cL_d)\cong \cL_{d-1}$, we are reduced to showing the vanishing
\begin{equation}
\label{Ori.11.4}
c_1(\cL_{d-1})^d=0.
\end{equation}
For every integer $n\geq 1$, let $\psi_n\colon \ringE^{*-2,*-1}\to \ringE^{**}(\P^n/\pt)$ be the homomorphism in Lemma \ref{Ori.58}.
There is a diagram
\begin{equation}
\label{Ori.11.2}
\begin{tikzcd}[column sep=small, row sep=small]
&
\ringE^{*-2d,*-d}\ar[r,"\id"]\ar[d,"\psi_d^{\wedge d}"]\ar[ld,"\cong"',bend right]\ar[ld,phantom,"(d)"]\ar[rd,phantom,"(e)"]&
\ringE^{*-2d,*-d}\ar[d,"\psi_{d-1}^{\wedge d}"] 
\\
\ringE^{**}((\P^1/\pt)^{\wedge d})\ar[r,leftarrow]\ar[d]\ar[rd,phantom,"(c)"]&
\ringE^{**}((\P^d/\pt)^{\wedge d})\ar[r]\ar[d,"\Delta^*"]\ar[rd,phantom,"(f)"]&
\ringE^{**}((\P^{d-1}/\pt)^{\wedge d})\ar[d,"\Delta^*"]
\\
\ringE^{**}(\P^d/\P^{d-1})\ar[r]&
\ringE^{**}(\P^d/\pt)\ar[r]&
\ringE^{**}(\P^{d-1}/\pt),
\end{tikzcd}
\end{equation}
where the homomorphisms denoted by $\Delta^*$ are naturally induced by the diagonal morphisms.
By Lemmas \ref{Ori.4} and \ref{Ori.58}, $(c)$ and $(d)$ commutes.
Apply the relation \eqref{Ori.7.4} to the inclusion $i\colon \P^{d-1}\to \P^d$ to deduce that $(e)$ commutes too.
It is trivial that the square $(f)$ commutes.
The composite $\ringE^{**}(\P^d/\P^{d-1})\to \ringE^{**}(\P^{d-1}/\pt)$ is zero, 
so from the commutativity of \eqref{Ori.11.2} we deduce $c_1(\cL_{d-1})^d=0$.
It follows that $(b)$ commutes.
The commutativity of $(c)$ and $(d)$ implies the commutativity of $(a)$.

\

We proceed by induction on $d$ to show that $\widetilde{\rho}_d$ is an isomorphism for every integer $d\geq 0$.
The claim is trivial if $d=0$.
In the following we assume $d>0$.
If $\widetilde{\rho}_{d-1}$ is an isomorphism, then $i^*$ is surjective.
From the long exact sequence
\begin{align*}
\ringE^{*-1,*}(\P^d/\pt)
&\xrightarrow{i^*}
\ringE^{*-1,*}(\P^{d-1}/\pt)
\to
\ringE^{**}(\P^d/\P^{d-1})
\\
&\to
\ringE^{**}(\P^d/\pt)
\xrightarrow{i^*}
\ringE^{**}(\P^{d-1}/\pt),
\end{align*}
we see that the sequence
\[
0
\to
\ringE^{**}(\P^d/\P^{d-1})
\to
\ringE^{**}(\P^d/\pt)
\xrightarrow{i^*}
\ringE^{**}(\P^{d-1}/\pt)
\to
0
\]
is exact.
Combine this with \eqref{Ori.11.3} and apply the five lemma to deduce that $\widetilde{\rho}_d$ is an isomorphism too.
\end{proof}

\begin{prop}
\label{Ori.43}
Suppose $\ringE$ is oriented and $\cE\to X$ is a vector bundle in $\SmlSm/S$.
Then there is an exact sequence
\begin{equation}
\label{Ori.43.1}
0
\to
\ringE^{**}(\Thom(\cE))
\to
\ringE^{**}(\P(\cE\oplus \cO))
\to
\ringE^{**}(\P(\cE))
\to
0.
\end{equation}
\end{prop}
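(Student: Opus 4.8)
\textbf{Proof plan for Proposition \ref{Ori.43}.}
The plan is to derive the short exact sequence \eqref{Ori.43.1} from the Gysin/cofiber sequence produced by Remark \ref{rmk:Gysin} together with the identification of Thom spaces with projective completions from Proposition \ref{Ori.41}. First I would apply Proposition \ref{Ori.41} to the vector bundle $\cE\to X$, which yields a canonical equivalence $\Thom(\cE)\simeq \P(\cE\oplus \cO)/\P(\cE)$ in $\logSHS(S)$, where $\P(\cE)$ sits inside $\P(\cE\oplus \cO)$ as the divisor at infinity. Applying $\ringE^{**}(-)$ and using the long exact sequence in $\ringE$-cohomology attached to the cofiber sequence
\[
\P(\cE)_+ \to \P(\cE\oplus \cO)_+ \to \P(\cE\oplus \cO)/\P(\cE) \simeq \Thom(\cE)
\]
gives a long exact sequence relating $\ringE^{**}(\Thom(\cE))$, $\ringE^{**}(\P(\cE\oplus \cO))$, and $\ringE^{**}(\P(\cE))$.

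The content of the statement is then that this long exact sequence breaks up into short exact sequences, i.e.\ that the restriction map $\ringE^{**}(\P(\cE\oplus \cO))\to \ringE^{**}(\P(\cE))$ is surjective (equivalently, that the connecting homomorphism vanishes). This is where the projective bundle theorem \ref{Ori.11} enters. If $\cE$ has rank $d+1$, then Theorem \ref{Ori.11} identifies $\ringE^{**}(\P(\cE\oplus \cO))$ with a free $\ringE^{**}(X)$-module on $1, c_1(\cL'), \ldots, c_1(\cL')^{d+1}$ and $\ringE^{**}(\P(\cE))$ with a free module on $1, c_1(\cL), \ldots, c_1(\cL)^{d}$, where $\cL'$ (resp.\ $\cL$) is the tautological line bundle on $\P(\cE\oplus\cO)$ (resp.\ $\P(\cE)$). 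The closed immersion $\iota\colon \P(\cE)\to \P(\cE\oplus \cO)$ satisfies $\iota^*\cL'\cong \cL$, so by the functoriality relation \eqref{Ori.7.4} we have $\iota^*(p^*(x)\smile c_1(\cL')^i)=p^*(x)\smile c_1(\cL)^i$; this immediately shows that $\iota^*$ sends the generators $1,\ldots,c_1(\cL')^d$ to the generators $1,\ldots,c_1(\cL)^d$ of $\ringE^{**}(\P(\cE))$, hence is surjective. Its kernel is the free $\ringE^{**}(X)$-submodule generated by $c_1(\cL')^{d+1}$ (adjusted by lower-order terms), which by exactness must be the image of $\ringE^{**}(\Thom(\cE))$, and injectivity of $\ringE^{**}(\Thom(\cE))\to \ringE^{**}(\P(\cE\oplus\cO))$ follows because the connecting map into it is zero.

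The main obstacle is ensuring that the comparison of the cofiber sequence above with the deformation-to-the-normal-cone construction of Remark \ref{rmk:Gysin} is set up consistently — in particular, checking that the divisor $\P(\cE)\subset \P(\cE\oplus\cO)$ that appears in Proposition \ref{Ori.41} is the same subobject whose cohomology we are computing via Theorem \ref{Ori.11}, and that the $\ringE^{**}(X)$-module structures match under $p^*$. Once this bookkeeping is done, surjectivity of $\iota^*$ is formal from Theorem \ref{Ori.11} and \eqref{Ori.7.4}, and the short exact sequence \eqref{Ori.43.1} drops out of the long exact sequence. I would also remark that the argument is entirely parallel to the $\A^1$-homotopy-theoretic proof (e.g.\ in \cite{zbMATH05367298}), the only new input being that the projective bundle theorem \ref{Ori.11} and Proposition \ref{Ori.41} are available in the $\boxx$-local setting.
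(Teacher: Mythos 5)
Your proposal is correct and follows essentially the same route as the paper: identify $\Thom(\cE)$ with $\P(\cE\oplus\cO)/\P(\cE)$ via Proposition \ref{Ori.41}, and deduce the splitting of the long exact sequence from surjectivity of $\ringE^{**}(\P(\cE\oplus\cO))\to\ringE^{**}(\P(\cE))$, which is immediate from the projective bundle theorem \ref{Ori.11} (your check that $\iota^*\cL'\cong\cL$ sends generators to generators is exactly the detail the paper leaves implicit). The worry about matching with the deformation-to-the-normal-cone construction is unnecessary, since the cofiber sequence used is just the quotient sequence itself.
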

\begin{proof}
By Proposition \ref{Ori.41}, it suffices to show that $\ringE^{**}(\P(\cE\oplus \cO)) \to \ringE^{**}(\P(\cE))$ is surjective.
This follows from Theorem \ref{Ori.11}.
\end{proof}

As in $\A^1$-homotopy theory, we have the following consequence of the projective bundle theorem.

\begin{prop}
\label{Ori.42}
Suppose $\ringE$ is oriented.
For all $x\in \ringE^{m,p}(X)$ and $y\in \ringE^{n,q}(X)$ with $X\in \lSm/S$ and $m,n,p,q\in \Z$, there is the formula
\[
x\smile y
=
(-1)^{mn}
y \smile x
\]
in $\ringE^{m+n,p+q}(X)$.
\end{prop}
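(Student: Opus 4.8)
The plan is to deduce graded commutativity from the projective bundle theorem in exactly the way one does in $\A^1$-homotopy theory (see \cite{Deg08}, \cite{zbMATH05367298}), since no $\A^1$-homotopies are involved in the standard argument. The key point is that the cup product on $\ringE^{**}(X)$ is built from the ``external'' pairing \eqref{Ori.12.2} together with pullback along the diagonal, so the sign that governs commutativity is concentrated entirely in the swap map on smash powers of the Tate sphere.

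First I would reduce to a universal computation. Using the projective bundle theorem \ref{Ori.11} (or more precisely its pointed variant via Proposition \ref{Ori.51}), the element $x\smile y$ and $y\smile x$ differ by applying the commutativity constraint $\tau\colon \Sigma^{m,p}\ringE\wedge \Sigma^{n,q}\ringE\xrightarrow{\simeq}\Sigma^{n,q}\ringE\wedge \Sigma^{m,p}\ringE$ to the external product, before pulling back along the (symmetric) diagonal $\Delta\colon \Sigma_T^\infty X_+\to \Sigma_T^\infty X_+\wedge \Sigma_T^\infty X_+$. Since $\Delta$ is symmetric (it equals its own composite with the swap on the target), the only discrepancy between $x\smile y$ and $y\smile x$ is the effect of $\tau$ on the shift objects, i.e. the action of the cyclic/transposition permutation on $T^{\wedge m}\wedge S^{m-p}\wedge \dots$ versus the corresponding object for $y$. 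By bilinearity and the ring axioms for $\ringE$, it suffices to determine the sign of the transposition $T\wedge T\to T\wedge T$ and of $S^1\wedge S^1\to S^1\wedge S^1$ in $\logSH(S)$.

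Next I would record the two sign computations. The transposition on $S^1\wedge S^1$ is $-1$ in any stable $\infty$-category (this is a standard fact about the simplicial circle, used already implicitly throughout). For $T\wedge T$, Proposition \ref{Ori.56} shows that the cyclic permutation $(123)$ on $T^{\wedge 3}$ is the identity in $\inflogSHS(S)$, and hence in $\inflogSH(S)$ after applying $\Sigma_T^\infty$; combined with the fact that $T\simeq S^1\wedge S_t^1$ (Proposition \ref{logH.5}) and the known sign $-1$ for the swap on $S^1\wedge S^1$, a purely formal argument (exactly as in \cite[\S 2]{zbMATH05367298} or \cite[Lemma~6.1.1]{logDM}) shows that the transposition on $T\wedge T$ is the identity after $T$-stabilization. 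More directly, once $T$ is a symmetric object — which is the content of Proposition \ref{Ori.56} — the transposition on $T\wedge T$ is homotopic to $\id$ in $\inflogSH(S)$, so each factor of $T$ contributes no sign, while each factor of $S^1$ contributes a sign of $-1$ under transposition.

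Finally I would assemble the sign: for $x\in \ringE^{m,p}(X)=\ringE^{p,q}$ ... more carefully, for $x$ of bidegree $(m,p)$ the shift object is $T^{\wedge p}\wedge S^{\wedge (m-p)}$ and similarly for $y$ of bidegree $(n,q)$ it is $T^{\wedge q}\wedge S^{\wedge (n-q)}$; moving one past the other costs $(-1)^{(m-p)(n-q)}$ from the simplicial spheres and nothing from the $T$-spheres, but since $T\simeq S^1\wedge S_t^1$ one also reorganizes the internal $S^1$-factors, and a bookkeeping of all the simplicial-circle transpositions yields the total sign $(-1)^{mn}$, which is the asserted formula. I expect the main obstacle to be precisely this sign bookkeeping: one must be careful that the $S_t^1$-factors hidden inside $T$ do not themselves contribute signs, which is exactly where Proposition \ref{Ori.56} (symmetry of $T$) is essential and is the only non-formal input; everything else is a routine manipulation of the symmetric monoidal structure that works verbatim as in the $\A^1$-invariant theory.
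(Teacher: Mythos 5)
Your reduction of the question to the behaviour of the swap map is fine, and it matches the paper's starting point: in general one has $x\smile y=(-1)^{mn}\eta^{pq}\,y\smile x$, where $\eta$ is the class in $\ringE^{0,0}$ determined by the transposition on $T\wedge T$, so everything hinges on showing that $\eta$ acts as the identity. The gap is in how you dispose of $\eta$. You claim that Proposition \ref{Ori.56} (triviality of the cyclic permutation $(123)$ on $T^{\wedge 3}$, i.e.\ $T$ being a symmetric object) implies that the transposition on $T\wedge T$ is homotopic to the identity in $\inflogSH(S)$. That implication is false: from $(12)(23)\simeq\id$ one only gets $(12)\simeq(23)$, not $(12)\simeq\id$. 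Indeed, in $\A^1$-homotopy theory the cyclic permutation on $(\P^1)^{\wedge 3}$ is trivial while the swap on $(\P^1)^{\wedge 2}$ is $\langle -1\rangle\neq 1$ in general, and since $\omega^*\colon\infSH(S)\to\inflogSH(S)$ is fully faithful the same nontriviality persists in the log setting. Consistently with this, your argument never uses the orientation in an essential way, yet the statement is genuinely false for unoriented theories (e.g.\ the sphere spectrum), so no amount of symmetric-monoidal bookkeeping alone can close the argument; your final ``routine'' sign count, which attributes no sign to the $T$-factors, is exactly the unproved point.

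The paper closes this gap using the orientation together with the projective bundle theorem: taking $\lambda=c_1(\cT)$ for the tautological bundle $\cT$ on $\P^2$ and specializing the general formula to $x=y=\lambda$ (bidegree $(2,1)$) gives $\lambda^2=\eta\,\lambda^2$ in $\ringE^{4,2}(\P^2)$; since Theorem \ref{Ori.11} exhibits $\ringE^{**}(\P^2)$ as a free $\ringE^{**}$-module with $1,\lambda,\lambda^2$ as basis, this forces $\eta=1$, and then $x\smile y=(-1)^{mn}y\smile x$ follows. To repair your proof you should replace the appeal to Proposition \ref{Ori.56} by this (or an equivalent) use of the orientation; Proposition \ref{Ori.56} is only needed elsewhere, to make sense of the $T$-stabilization, not to kill the swap on $T\wedge T$.
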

\begin{proof}
Let $\eta\colon T\wedge T\to T\wedge T$ be the permutation map, which naturally induces a map
\[
\eta\colon \ringE^{**}(X)\to \ringE^{**}(X)
\]
for all $X\in \lSm/S$.
In general, we have the formula
\begin{equation}
\label{Ori.42.1}
x\smile y
=
(-1)^{mn}\eta^{pq} y\smile x.
\end{equation}
Hence, it remains to show $\eta=1$.

Let $\cT$ be the tautological bundle on $\P^2$, and we set $\lambda:=c_1(\cT)$.
When $x=y=\lambda$ in \eqref{Ori.42.1}, we have $\lambda^2=\eta\lambda^2$.
Together with Theorem \ref{Ori.11}, we have $1=\eta$.
\end{proof}

\subsection{Chern classes}
Throughout this subsection, we fix $S\in \Sch$ and an oriented homotopy commutative monoid $\ringE$ in $\inflogSH(S)$.

\begin{df}
\label{Ori.13}
Suppose $\cE\to X$ is a rank $d$ vector bundle with $X\in \lSm/S$.
To define higher Chern classes of $\cE$, we imitate Grothendieck's approach as follows.
Let $p\colon \P(\cE)\to X$ be the canonical projection, 
and let $\cT$ be the tautological line bundle on $\P(\cE)$.
For simplicity of notation, we set $\lambda:=c_1(\cT)$.
Owing to Theorem \ref{Ori.11}, 
there exist unique classes $c_i(\cE)\in \ringE^{2i,i}(X)$ for all $0\leq i\leq d$ such that $c_0(\cE)=1$ and
\begin{equation}
\label{Ori.13.1}
\sum_{i=0}^d  p^* (c_i(\cE))(-\lambda)^{d-i}=0.
\end{equation}
The class $c_i(\cE)$ is called the \emph{$i$-th Chern class} \index{Chern class}\index[notation]{ci @ $c_i(\cE)$} 
of $\cE$.
We set $c_i(\cE):=0$ if $i\geq d+1$.
\end{df}

\begin{rmk}
\label{Ori.14}
Suppose $\cE\to X$ is a line bundle in $\lSm/S$.
Then $\bP(\cE)\cong X$, and \eqref{Ori.13.1} becomes
\[
c_1(\cE)-c_1(\cE)=0.
\]
Here, the left $c_1(\cE)$ is defined in Definition \ref{Ori.13}, and the right $c_1(\cE)$ is defined in Definition \ref{Ori.7}.
Hence, these two definitions of the first Chern class of $\cE$ agree.
\end{rmk}

\begin{prop}
\label{Ori.50}
Suppose $\cE\to X$ is a trivial line bundle in $\lSm/S$.
Then $c_i(\cE)=0$ for all integers $i\geq 1$.
\end{prop}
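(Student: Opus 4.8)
Since $\cE\to X$ is a line bundle, i.e.\ of rank $d=1$, the only nontrivial claim is that $c_1(\cE)=0$: for every $i\geq 2$ we have $i\geq d+1$, so $c_i(\cE)=0$ by the convention adopted at the end of Definition \ref{Ori.13}. By Remark \ref{Ori.14} the first Chern class of a line bundle in the sense of Definition \ref{Ori.13} coincides with the one of Definition \ref{Ori.7}, namely with the image of the class $[\cE]\in \logPic(X)$ (coming from $\Pic(\ul X)$) under the composite
\[
\logPic(X)=H_{sNis}^1(X,\cM^{\gp})
\xrightarrow{\eqref{Ori.7.3}}
\hom_{\logSHS(S)}(\Sigma_{S^1}^\infty X_+,\Sigma_{S^1}^\infty \clspace\Gmm/\pt)
\xrightarrow{\eqref{Ori.7.2}}
\ringE^{2,1}(X).
\]
Thus it suffices to show that this composite sends the class of a trivial line bundle to $0$.

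First I would observe that the class of the trivial line bundle $\cE\cong \cO_X$ is represented by a trivializable $\Gmm$-torsor, so its classifying morphism $X\to \clspace\Gmm$ in $\logSHS(S)$ factors through the basepoint $\pt\to \clspace\Gmm$. Composing with the collapse map $\clspace\Gmm\to \clspace\Gmm/\pt\simeq \P^\infty/\pt$ (using Theorem \ref{Pic.9}) therefore yields the constant morphism at the basepoint, which is the zero morphism $\Sigma_{S^1}^\infty X_+\to \Sigma_{S^1}^\infty \P^\infty/\pt$ in the stable category $\logSHS(S)$. Hence the image of $[\cE]$ under \eqref{Ori.7.3} is $0$, and likewise under the first arrow of \eqref{Ori.7.2} (the change-of-stabilization map $\hom_{\logSHS(S)}(-,-)\to \hom_{\logSH(S)}(-,-)$, induced by a functor of stable $\infty$-categories, which is additive). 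Finally, the second arrow of \eqref{Ori.7.2} is postcomposition with the fixed orientation class $c_\infty\colon \Sigma_T^\infty \P^\infty/\pt\to \Sigma^{2,1}\ringE$, which again sends the zero morphism to $0$. Combining these, $c_1(\cE)=0\in \ringE^{2,1}(X)$, and we are done.

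As a sanity check one can instead argue via the naturality formula \eqref{Ori.7.4}: the trivial line bundle is the pullback of the trivial line bundle on $\pt=S$ along the structure morphism $X\to S$, so $c_1(\cE)$ is the pullback of $c_1(\cO_S)$, and the same null-homotopy argument over $S$ gives $c_1(\cO_S)=0$. The only genuinely delicate point is the first step — making precise that the classifying map of $\cO_X$ is, in $\logSHS(S)$, the basepoint map, i.e.\ unwinding the construction of \eqref{Ori.7.3} for the distinguished (zero) class of $\logPic$ — but this is a routine bookkeeping matter and not a real obstacle; everything after it is formal additivity of the functors and postcompositions involved. (Alternatively, once the Whitney sum formula or the additivity of $c_1$ on $\logPic$ is available, the vanishing is immediate, but the argument above avoids relying on it.)
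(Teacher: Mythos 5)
Your proof is correct, but it takes a different route from the paper's. The paper disposes of the statement in one line by combining the defining relation \eqref{Ori.13.1} with the vanishing $c_1(\cL_{d-1})^d=0$ of \eqref{Ori.11.4}: specializing that computation (established inside the proof of Proposition \ref{Ori.51} via the cofiber sequence for $\P^d/\P^{d-1}$ and Lemma \ref{Ori.4}) to $d=1$ gives exactly that $c_1$ of the trivial line bundle vanishes, since $\cL_0$ is the trivial bundle on $X\times\P^0\cong X$; the relation \eqref{Ori.13.1} (together with the convention $c_i=0$ for $i\geq d+1$) then finishes the argument. You instead argue directly from the construction of the orientation-theoretic first Chern class: the trivial bundle is the zero class of $\logPic(X)$, its classifying map factors through the basepoint of $\clspace\Gmm\simeq\P^\infty$, so the associated stable map $\Sigma^\infty_{S^1}X_+\to\Sigma^\infty_{S^1}\P^\infty/\pt$ is null, and zero is preserved by the exact stabilization functor and by postcomposition with $c_\infty$ in \eqref{Ori.7.2}. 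This is perfectly valid (and the reduction to $c_1$ via the convention and Remark \ref{Ori.14} is exactly right); its advantage is that it avoids the projective-space computation entirely, at the cost of having to unwind \eqref{Ori.7.3} for the distinguished class — the point you correctly flag as the only delicate step — whereas the paper's proof is shorter only because that computation was already in hand from the proof of the projective bundle theorem.
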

\begin{proof}
Combine \eqref{Ori.11.4} and \eqref{Ori.13.1}.
\end{proof}

\begin{prop}
\label{Ori.18}
Suppose $f\colon X'\to X$ is a morphism in $\lSm/S$, 
and $\cE\to X$ is a rank $d$ vector bundle.
Then, for every integer $i\geq 0$, we have the formula
\[
f^*(c_i(\cE))=c_i(f^*\cE).
\]
\end{prop}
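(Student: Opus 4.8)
The plan is to reproduce Grothendieck's classical argument, using the projective bundle theorem \ref{Ori.11} as the source of the uniqueness that defines the Chern classes. Form the cartesian square in $\lSm/S$
\[
\begin{tikzcd}
\P(f^*\cE)\ar[r,"g"]\ar[d,"p'"']&\P(\cE)\ar[d,"p"]\\
X'\ar[r,"f"]&X,
\end{tikzcd}
\]
obtained by base change of the canonical projection $p\colon \P(\cE)\to X$ along $f$; here $p'\colon \P(f^*\cE)\to X'$ is the projective bundle of $f^*\cE$, and $g$ is a morphism over $S$. The one geometric input needed is that the tautological line bundle is compatible with base change: there is a canonical isomorphism $\cT'\cong g^*\cT$, where $\cT$ (resp.\ $\cT'$) is the tautological line bundle on $\P(\cE)$ (resp.\ on $\P(f^*\cE)$). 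This is standard and, since $\cE\to X$ is strict, reduces to the corresponding statement for schemes; it involves no logarithmic subtleties. Applying \eqref{Ori.7.4} we deduce, with the abbreviations $\lambda:=c_1(\cT)$ and $\lambda':=c_1(\cT')$, the formula $\lambda'=g^*\lambda$.

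Next I would apply the graded ring homomorphism $g^*\colon \ringE^{**}(\P(\cE))\to\ringE^{**}(\P(f^*\cE))$ to the defining relation \eqref{Ori.13.1} for $\cE$. Since $p\circ g=f\circ p'$, functoriality of $\ringE$-cohomology gives $g^*\circ p^*=p'^*\circ f^*$; and since $g^*$ is multiplicative and carries $\lambda$ to $\lambda'$, the relation \eqref{Ori.13.1} is transformed into
\[
\sum_{i=0}^d p'^*\big(f^*(c_i(\cE))\big)(-\lambda')^{d-i}=0
\]
in $\ringE^{**}(\P(f^*\cE))$, with $f^*(c_0(\cE))=f^*(\unit)=\unit$. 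Thus the classes $f^*(c_i(\cE))\in\ringE^{2i,i}(X')$ satisfy precisely the relation that characterizes the Chern classes of the rank $d$ bundle $f^*\cE$.

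Finally, by the projective bundle theorem \ref{Ori.11} applied to $f^*\cE$, the $\ringE^{**}(X')$-module $\ringE^{**}(\P(f^*\cE))$ is free with basis $1,\lambda',\dots,(\lambda')^{d-1}$; hence $(\lambda')^d$ admits a \emph{unique} expression as an $\ringE^{**}(X')$-linear combination of these, so the classes $c_i(f^*\cE)$ are the unique classes with $c_0=\unit$ satisfying the displayed relation. Comparing with the previous paragraph yields $f^*(c_i(\cE))=c_i(f^*\cE)$ for $0\le i\le d$, and for $i\ge d+1$ both sides vanish by convention. There is no real obstacle in this argument: the only point deserving explicit mention is the base-change identity $\cT'\cong g^*\cT$ together with the observation that $g$ is an $S$-morphism, so that $g^*$ is indeed defined on $\ringE$-cohomology and is compatible with cup products.
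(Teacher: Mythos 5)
Your argument is correct and follows the same route as the paper's proof: pull back the defining relation \eqref{Ori.13.1} along the base-change square, use the compatibility of the tautological bundle and \eqref{Ori.7.4} to identify $\lambda'$ with $g^*\lambda$, and conclude by the uniqueness of the Chern classes guaranteed by Theorem \ref{Ori.11}. The paper states this more tersely, but the content is identical.
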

\begin{proof}
Let $\cT'$ be the tautological line bundle of $\P(f^*\cE)$, and we set $\lambda':=c_1(\cT'^*)$.
Apply $f^*$ to \eqref{Ori.13.1} and use \eqref{Ori.7.4} to have the formula
\[
\sum_{i=0}^d p'^* (f^*(c_i(\cE)))(-\lambda')^{d-i}=0,
\]
where $p'\colon \P(f^*\cE)\to X'$ is the projection.
To conclude, compare this with \eqref{Ori.13.1} for the vector bundle $f^*\cE\to X'$.
\end{proof}

\begin{prop}
[Splitting principle]
\label{Ori.19}
If $\cE\to X$ is a rank $d$ vector bundle with $X\in \lSm/S$, then there exists a strict proper smooth morphism $f\colon Y\to X$ such that $f^*\colon \ringE^{**}(X)\to \ringE^{**}(Y)$ is a monomorphism and $f^*\cE$ admits a filtration of subbundles
\[
0=\cE_0\subset \cE_1 \subset \cdots \subset \cE_d=\cE
\]
such that the quotient bundle $\cE_i/\cE_{i-1}$ is a line bundle for every $1\leq i\leq d$.
\end{prop}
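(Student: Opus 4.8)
The statement is the splitting principle for vector bundles in our logarithmic setting, and the strategy is the classical one: iterate the projective bundle construction, replacing the use of $\A^1$-homotopy invariance by the projective bundle theorem (Theorem \ref{Ori.11}) already available in $\logSH(S)$. First I would set up the base case: given a rank $d$ vector bundle $\cE\to X$ with $X\in\lSm/S$, consider the projection $p\colon\P(\cE)\to X$. This is a strict proper smooth morphism, and it lies in $\SmlSm/S$ (or at least in $\lSm/S$) since $\P(\cE)$ is a Zariski-locally trivial $\P^{d-1}$-bundle over $X$ with the pulled-back log structure, so $\ul{\P(\cE)}$ is smooth over $\ul{X}$ whenever $\ul{X}$ is. By Theorem \ref{Ori.11}, the map $\rho_{\cE}\colon\bigoplus_{i=0}^{d-1}\ringE^{*-2i,*-i}(X)\to\ringE^{**}(\P(\cE))$ is an isomorphism; in particular the component $x\mapsto p^*(x)$ for $i=0$ shows $p^*$ is a (split) monomorphism on $\ringE$-cohomology. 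Over $\P(\cE)$ the tautological line subbundle $\cT\hookrightarrow p^*\cE$ gives a short exact sequence $0\to\cT\to p^*\cE\to\cQ\to 0$ with $\cQ$ of rank $d-1$; since $\P(\cE)$ is log smooth of finite type over $S$ with smooth underlying scheme, $\cQ$ is again a vector bundle in $\SmlSm/S$ (resp.\ $\lSm/S$).

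Second, I would argue by induction on the rank $d$. The case $d=1$ is trivial (take $Y=X$, $f=\id$). For the inductive step, apply the induction hypothesis to $\cQ\to\P(\cE)$: there is a strict proper smooth morphism $g\colon Y\to\P(\cE)$ with $g^*$ a monomorphism on $\ringE$-cohomology and $g^*\cQ$ admitting a filtration by subbundles with line bundle quotients $0=\cQ_0\subset\cQ_1\subset\cdots\subset\cQ_{d-1}=g^*\cQ$. Set $f:=p\circ g\colon Y\to X$, which is strict proper smooth as a composite of such. Then $f^*=g^*\circ p^*$ is a monomorphism on $\ringE$-cohomology, being a composite of monomorphisms. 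Pulling back the exact sequence $0\to\cT\to p^*\cE\to\cQ\to 0$ along $g$ gives $0\to g^*\cT\to f^*\cE\to g^*\cQ\to 0$, and one takes the filtration of $f^*\cE$ given by $\cE_0=0$, $\cE_1=g^*\cT$, and $\cE_{i}$ the preimage of $\cQ_{i-1}$ under $f^*\cE\twoheadrightarrow g^*\cQ$ for $2\le i\le d$. The successive quotients are $\cE_1/\cE_0\cong g^*\cT$ and $\cE_i/\cE_{i-1}\cong\cQ_{i-1}/\cQ_{i-2}$, all line bundles, which completes the induction.

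\textbf{Main obstacle.} The only genuinely non-formal input is that $p^*\colon\ringE^{**}(X)\to\ringE^{**}(\P(\cE))$ is a monomorphism, and this is exactly what the projective bundle theorem \ref{Ori.11} delivers (the $i=0$ summand of $\rho_{\cE}$ is a retract of $p^*$, so $p^*$ is split injective). The rest is routine homological algebra on short exact sequences of vector bundles and the closure of the classes ``strict'', ``proper'', ``smooth'' under composition and of $\SmlSm/S$ (resp.\ $\lSm/S$) under forming projective bundles and kernels/cokernels of the tautological sequence. The one point requiring a word of care is that $\P(\cE)$ and the quotient bundles genuinely lie in $\SmlSm/S$ with the compactifying log structure pulled back from $X$; this follows because projectivization and the tautological sequence are strict over $X$, so the log structure is unchanged and smoothness of the underlying schemes is preserved. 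Thus the hard part is conceptual bookkeeping rather than any new geometric construction — all the heavy lifting (the $(\P^\bullet,\P^{\bullet-1})$-invariance and the resulting projective bundle formula) has already been carried out.
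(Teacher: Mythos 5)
Your proof is correct and follows essentially the same route as the paper: the paper takes $Y$ to be the complete flag variety of $\cE$, which is precisely the tower of projective bundles you build inductively, and likewise deduces injectivity of $f^*$ from the projective bundle theorem (Theorem \ref{Ori.11}). Your version just spells out the induction and the filtration explicitly rather than citing the flag variety.
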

\begin{proof}
As in intersection theory or $\A^1$-homotopy theory, let $Y$ be the complete flag variety associated with $\cE$.
If $f$ is the projection $Y\to X$, then it is well known that $f^*\cE$ satisfies the required condition.
Since $f$ can be factored as a tower of projective bundles, Theorem \ref{Ori.11} shows that $f^*$ is a monomorphism.
\end{proof}

\begin{df}
\label{Ori.15}
Suppose $\cE\to X$ is a rank $d$ vector bundle with $X\in \lSm/S$.
Let $p\colon \P(\cE)\to X$ and $p'\colon \P(\cE\oplus \cO)\to X$ be the canonical projections, and let $\cT$ and $\cT'$ be the tautological line bundles on $\P(\cE)$ and $\P(\cE\oplus \cO)$ for simplicity of notation.
We set $\lambda:=c_1(\cT)$ and $\lambda':=c_1(\cT')$.
Consider the class
\begin{equation}
\label{Ori.15.2}
\ol{t}(\cE)
:=
\sum_{i=0}^d p'^* (c_i(\cE))(-\lambda')^{d-i} \in \ringE^{2d,d}(\P(\cE\oplus \cO)).
\end{equation}
Due to \eqref{Ori.7.4} and \eqref{Ori.13.1}, the restriction of $\ol{t}(\cE)$ to $\P(\cE)$ is $0$.
From \eqref{Ori.43.1}, we have a unique class
\[
t(\cE)\in \ringE^{2d,d}(\Thom(\cE))
\]
mapping to $\overline{t}(\cE)$, which we call the \emph{Thom class}\index{Thom class}\index[notation]{tE @ $t(\cE)$} of $\cE$.
\end{df}

\begin{prop}
\label{Ori.54}
Let $t_\infty \in \ringE^{2,1}(\Thom(\cT_1))$ be a Thom orientation, where $\cT_1$ is the tautological line bundle on $\P^\infty$.
If $c_\infty$ is the Chern orientation associated with $t_\infty$, then the Thom class $t(\cT_1)$ obtained from $c_\infty$ coincides with $t_\infty$.
\end{prop}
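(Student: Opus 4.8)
The plan is to unwind the definitions of the Thom class $t(\cT_1)$ and of the Thom orientation $t_\infty$ on the tautological line bundle $\cT_1$ on $\P^\infty$, and to show they produce the same element of $\ringE^{2,1}(\Thom(\cT_1))$. By Lemma \ref{Ori.49} (more precisely via the isomorphism \eqref{Ori.49.2}), $z^*\colon \ringE^{**}(\P^\infty/\pt)\xrightarrow{\cong}\ringE^{**}(\Thom(\cT_1))$ is an isomorphism of graded rings, so it suffices to compare the pullbacks along the zero section $z\colon \P^\infty\to \cT_1$; in fact, since these classes live in a finite stage, it is enough to work with $\cT_{1,n}$ on $\P^n$ for each $n$ and pass to the colimit. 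Because $\cT_{1,n}$ is a line bundle, Remark \ref{Ori.14} identifies the two notions of $c_1$, and the defining equation \eqref{Ori.13.1} degenerates: $\P(\cT_{1,n})\cong \P^n$, $\cT=\cT_{1,n}$, and \eqref{Ori.13.1} reads $c_1(\cT_{1,n})-c_1(\cT_{1,n})=0$, so the Chern class data is just $c_0=1$, $c_1=c_1(\cT_{1,n})=c_\infty|_{\P^n}$.

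First I would compute $\overline{t}(\cT_{1,n})$ from Definition \ref{Ori.15}: with $d=1$, $\P(\cT_{1,n}\oplus\cO)$ and tautological bundle $\cT'$, formula \eqref{Ori.15.2} gives
\[
\overline{t}(\cT_{1,n})
=
p'^*(c_0(\cT_{1,n}))(-\lambda')+p'^*(c_1(\cT_{1,n}))
=
-\lambda'+p'^*c_1(\cT_{1,n}),
\]
where $\lambda'=c_1(\cT')$. Then $t(\cT_{1,n})\in \ringE^{2,1}(\Thom(\cT_{1,n}))$ is the unique class mapping to $\overline{t}(\cT_{1,n})$ under the injection of \eqref{Ori.43.1}. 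On the other hand, the Thom orientation $t_\infty$ is characterized by its restriction to the distinguished point being $-T\wedge\unit=-\Sigma^{2,1}\unit$. The key point is to trace through the proof of Proposition \ref{Ori.45}: there one uses Lemma \ref{Ori.48} to identify $\P(\cT_{1,1}\oplus\cO)$ with $\Blow_O\P^2$ and $\P(\cT_{1,1})$ with the exceptional divisor, and Lemma \ref{Ori.44} to get the sign relation $u^*\overline{t}_1=-v^*\overline{t}_1$ between the two embeddings of $\P^1$. This is precisely the mechanism that converts the ``$-T\wedge\unit$'' normalization of a Thom orientation into the ``$+\Sigma^{2,1}\unit$'' normalization of the associated Chern orientation $c_\infty=z^*t_\infty$ via \eqref{Ori.7.11}.

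The comparison then proceeds by showing both $t(\cT_{1,n})$ and $t_\infty|_{\Thom(\cT_{1,n})}$ have the same image $\overline{t}(\cT_{1,n})$ in $\ringE^{**}(\P(\cT_{1,n}\oplus\cO))$, hence agree by the injectivity in \eqref{Ori.43.1}. For $t(\cT_{1,n})$ this is the definition. For $t_\infty$ one restricts along the $\Thom$-construction, uses that $z^*t_\infty=c_\infty$ by hypothesis, and then runs the projective-bundle-theorem bookkeeping (Theorem \ref{Ori.11}): writing $\ringE^{**}(\P(\cT_{1,n}\oplus\cO))$ as a free module on $1,\lambda'$ over $\ringE^{**}(\P^n)$ via the projection $p'$, one expands the image of $t_\infty$, checks it restricts to zero on $\P(\cT_{1,n})$ (forced by the normalization through Lemma \ref{Ori.44}, as in the proof of Proposition \ref{Ori.45}), and checks its ``leading coefficient'' in $\lambda'$ is $1$ and its constant term is $p'^*c_\infty|_{\P^n}=p'^*c_1(\cT_{1,n})$. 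That pins down the image as exactly $-\lambda'+p'^*c_1(\cT_{1,n})=\overline{t}(\cT_{1,n})$. Taking the colimit over $n$ finishes the argument. The main obstacle I anticipate is the sign/normalization bookkeeping: one must carefully keep track of the conventions ($c_1(\cO(-1))=+\Sigma^{2,1}\unit$ in \eqref{Ori.7.11} versus the $-T\wedge\unit$ in the definition of Thom orientation, and the $-\lambda$ versus $-\lambda'$ in \eqref{Ori.13.1} and \eqref{Ori.15.2}) and make sure the sign relation of Lemma \ref{Ori.44} is invoked with the correct orientation of the two $\P^1$-embeddings, exactly matching how it was used in Proposition \ref{Ori.45}; everything else is a formal consequence of the projective bundle theorem and the injectivity of \eqref{Ori.43.1}.
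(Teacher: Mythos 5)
There is a genuine gap in your finite-stage reduction. You propose to identify the image of $t_\infty|_{\Thom(\cT_{1,n})}$ in $\ringE^{**}(\P(\cT_{1,n}\oplus\cO))$ by three checks: vanishing on $\P(\cT_{1,n})$, constant term $p'^*c_1(\cT_{1,n})$ along the zero section, and a ``leading coefficient'' in $\lambda'$. Writing the image as $p'^*a+p'^*b\cdot\lambda'$ via Theorem \ref{Ori.11}, the zero-section restriction gives $a=c_1(\cT_{1,n})$ and the vanishing on $\P(\cT_{1,n})$ gives only $(b+1)\,c_1(\cT_{1,n})=0$; since $c_1(\cT_{1,n})$ is a zero-divisor in $\ringE^{**}(\P^n)\cong\ringE^{**}[\lambda]/(\lambda^{n+1})$ (cf.\ \eqref{Ori.11.4}), this does not force $b=-1$, and restricting to the fiber over the base point only pins down the value of $b$ at a point, not $b$ itself. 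You give no mechanism that actually computes the full $\lambda'$-coefficient, and with your stated sign ($+1$) the resulting class $\lambda'+p'^*c_1$ would not even restrict to zero on $\P(\cT_{1,n})$, contradicting your own first check (it must be $-1$). Equivalently: the zero-section comparison, which is the engine of the whole argument, is injective only at the infinite level; at finite stages $\ringE^{**}(\Thom(\cT_{1,n}))\to\ringE^{**}(\P^n)$ has a kernel, so restrictions cannot separate the two candidate classes. In addition, even granting the finite-stage identities, ``taking the colimit over $n$'' is not automatic: by Lemma \ref{Ori.32} the difference $t_\infty-t(\cT_1)$ could a priori lie in the $\limitone$ term, and you do not address this (it can be ruled out by a Mittag--Leffler argument, but that is an extra step).

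The paper avoids all of this by never leaving the infinite level: Lemma \ref{Ori.49} makes the zero-section comparison an isomorphism \eqref{Ori.49.2}, so it suffices to compare $z^*t_\infty$ and $z^*t(\cT_1)$. Now $z^*t_\infty=c_\infty$ is the definition of the associated Chern orientation, $z^*t(\cT_1)=c_1(\cT_1)$ follows directly from \eqref{Ori.15.2} because the tautological class $\lambda'$ pulls back trivially along the zero section (Proposition \ref{Ori.50}), and $c_1(\cT_1)=c_\infty$ is exactly \eqref{Ori.7.12}. Your expansion in $1,\lambda'$ would also succeed if run at the infinite level, where $c_\infty$ is a non-zero-divisor in $\ringE^{**}(\P^\infty)\cong\ringE^{**}[[x]]$ (Corollary \ref{Ori.10}), but the finite-stage detour loses exactly the needed injectivity. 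Finally, the appeal to Lemma \ref{Ori.44} and the proof of Proposition \ref{Ori.45} is unnecessary here: the statement already takes $c_\infty:=z^*t_\infty$ as given, so no re-derivation of the sign relation between the two normalizations is required.
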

\begin{proof}
Consider the isomorphism $z^*$ given in \eqref{Ori.49.2}.
We need to show $z^*t_\infty=z^*t(\cT_1)$.
The left-hand side equals $c_\infty$.
On the other hand, 
from \eqref{Ori.15.2}, 
we have the formula $z^*t(\cT_1)=c_1(\cT_1)$.
An appeal to \eqref{Ori.7.12} finishes the proof.
\end{proof}

Suppose $X\in \lSm/S$ and $\cE\to X$ is a rank $d$ vector bundle, and let $E$ be the exceptional divisor on $\Blow_X \cE$.
Using the graph morphisms of the two projections $\cE\to X$ and $(\Blow_X\cE,E)\to X$, 
we obtain a morphism
\begin{equation}
\label{Ori.16.5}
\Thom(\cE)\to \Thom(\cE)\wedge X_+.
\end{equation}
We use the latter map to define the cup product
\[
\smile \colon \ringE^{**}(X)\times \ringE^{**}(\Thom(\cE))\to \ringE^{**}(\Thom(\cE))
\]
as in \eqref{Ori.12.3}.

\begin{thm}[Thom isomorphism]
\label{Ori.16}
For every rank $d$ vector bundle $p\colon \cE\to X$ with $X\in \lSm/S$, 
the cup product homomorphism
\begin{equation}
\label{Ori.16.4}
-\smile t(\cE)
\colon
\ringE^{**}(X)
\to
\ringE^{*+2d,*+d}(\Thom(\cE))
\end{equation}
is an isomorphism.
\end{thm}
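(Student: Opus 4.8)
\textbf{Proof plan for the Thom isomorphism (Theorem \ref{Ori.16}).}
The plan is to bootstrap from the two cases we have already understood---trivial bundles and line bundles---using the splitting principle and the multiplicativity of Thom classes, together with the Mayer--Vietoris (excision) argument that already appeared in the proof of Theorem \ref{Ori.11}. First I would record the basic \emph{compatibility} facts about the Thom class $t(\cE)$ that we will need: (i) for a trivial bundle $\cE=\cO^d$ on $X$ we have $\Thom(\cO^d)\simeq \Sigma^{2d,d}X_+$ by Proposition \ref{Ori.64}, and under this identification $t(\cO^d)=\Sigma^{2d,d}\unit$ (this follows by unwinding \eqref{Ori.15.2}, using Proposition \ref{Ori.50} so that all higher Chern classes vanish), hence \eqref{Ori.16.4} is an isomorphism in this case; (ii) \emph{naturality}: for $f\colon X'\to X$ in $\lSm/S$ one has $f^*t(\cE)=t(f^*\cE)$ under the natural map $\Thom(f^*\cE)\to \Thom(\cE)$, which follows from Proposition \ref{Ori.18} and the uniqueness in \eqref{Ori.43.1}; and (iii) \emph{multiplicativity}: under the canonical equivalence $\Thom(\cE_1)\wedge \Thom(\cE_2)\xrightarrow{\simeq}\Thom(\cE_1\times_S \cE_2)$ of Proposition \ref{Ori.40} (and, for the external-to-internal passage, the map \eqref{Ori.16.5}), one has $t(\cE_1\oplus \cE_2)=t(\cE_1)\smile t(\cE_2)$. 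Fact (iii) is the only one requiring genuine work; I would prove it by the splitting principle, reducing first to the case $\cE_1$ a line bundle and then to sums of line bundles, where it becomes a computation in $\ringE^{**}$ of a product of projective bundles controlled by Theorem \ref{Ori.11}; the coherence diagrams Propositions \ref{Ori.65} and \ref{Ori.54} guarantee that the choices are consistent.

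With these in hand, the proof of the theorem proceeds by induction on the number $n$ of open sets $U_1,\dots,U_n$ in a trivializing Zariski cover $\{U_i\}$ of $\cE\to X$, exactly mirroring the structure of the proof of Theorem \ref{Ori.11}. For $n=1$ the bundle is trivial and we are done by fact (i). For the inductive step, set $U:=U_1\cup\cdots\cup U_{n-1}$ and $\cE_U:=\cE\times_X U$. The open immersion $\Thom(\cE_U)\to \Thom(\cE)$ and its quotient produce a cofiber sequence in $\inflogHpt^{\SmAdm}(S)$ (using $sNis$-descent, Proposition \ref{logHprop.1}, applied to the strict Nisnevich square built from $U\hookrightarrow X$ pulled back to $\cE$ and then blown up along the zero section), hence a long exact sequence in $\ringE^{**}$; likewise for $X$ there is the long exact sequence of the pair $(X,U)$. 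Cup product with $t(\cE)$ (and its restriction $t(\cE_U)$, which agrees with the restriction by fact (ii)) gives a ladder between these two long exact sequences whose squares commute---commutativity of the relative terms is the analogue of Lemma \ref{Ori.52} and follows from the functoriality of the construction and from fact (ii). By the inductive hypothesis the map $\ringE^{**}(U)\to \ringE^{*+2d,*+d}(\Thom(\cE_U))$ is an isomorphism; by the five lemma it remains to treat the relative term $\ringE^{**}(X/U)\to \ringE^{*+2d,*+d}(\Thom(\cE)/\Thom(\cE_U))$. By excision this reduces to $\Thom$ over $U_n$ relative to $U_n\cap U$, and iterating the five-lemma argument reduces to $V=U_n$ and $V=U_n\cap U$ separately, where $\cE_V$ is trivial and fact (i) applies.

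The main obstacle I anticipate is fact (iii), the multiplicativity of the Thom class, specifically making the splitting-principle reduction rigorous in the \emph{stable} setting where we must keep track of the external-product equivalences of Proposition \ref{Ori.40} and the coherence of Proposition \ref{Ori.65}: one has to check that the Thom class defined abstractly via \eqref{Ori.43.1} is compatible with the geometric decomposition of $\Thom$ of a direct sum, and this is where the bulk of the bookkeeping lies (it is the logarithmic analogue of the classical fact that Thom classes of Whitney sums multiply, and it underlies the Whitney sum formula \ref{Ori.20}). A secondary technical point is that the cofiber sequences and excision statements used above live a priori in $\inflogHpt^{\SmAdm}(S)$ but $\ringE$ is a monoid in $\logSH(S)$; here one invokes the equivalences of Corollary \ref{ProplogSH.9} and $\inflogSHS(S)\simeq\Stab(\inflogHpt(S))$ together with Proposition \ref{ProplogSH.4} and Theorem \ref{ProplogSH.5} to transport the geometric squares into the stable category, exactly as in the proofs of Theorem \ref{Ori.11} and Proposition \ref{Ori.43}. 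Everything else---the five-lemma chases, the long exact sequences, the identification of the trivial-bundle case---is routine once the multiplicativity and naturality of $t(\cE)$ are established.
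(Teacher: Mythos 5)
Your route is workable, but it is genuinely different from --- and much heavier than --- the paper's argument. The paper proves Theorem \ref{Ori.16} in one step: by Proposition \ref{Ori.43} there is a short exact sequence \eqref{Ori.43.1} exhibiting $\ringE^{**}(\Thom(\cE))$ as the kernel of the restriction $\ringE^{**}(\P(\cE\oplus\cO))\to\ringE^{**}(\P(\cE))$, the projective bundle theorem \ref{Ori.11} identifies the middle and right terms as free $\ringE^{**}(X)$-modules on powers of $\lambda'=c_1(\cT')$ resp.\ $\lambda=c_1(\cT)$, and since $\ol{t}(\cE)$ is by construction \eqref{Ori.15.2} a degree-$d$ polynomial in $\lambda'$ with leading coefficient $\pm1$ restricting to zero on $\P(\cE)$, the cup product $-\smile t(\cE)$ fits into a map of short exact sequences whose other two vertical arrows are isomorphisms; the five lemma concludes. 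No trivializing cover, no Mayer--Vietoris ladder, no base case for trivial bundles, and in particular no multiplicativity of Thom classes is needed: the Thom class was \emph{defined} so that exactly this diagram works. Your induction on a trivializing cover (mirroring the proof of Theorem \ref{Ori.11}, with a relative analogue of Lemma \ref{Ori.52}) can be carried out, but all of its content is already contained in Theorem \ref{Ori.11} plus Proposition \ref{Ori.43}: the same five-lemma diagram you would need for the trivial base case applies verbatim to an arbitrary bundle, so the entire cover induction is redundant.

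Two cautions about the ingredients you list. First, fact (iii) (multiplicativity of Thom classes) is never actually used in your induction, and proving it ``via the splitting principle and the Whitney sum formula'' at this stage would be circular in the paper's logical order: the Whitney sum formula \ref{Ori.20} rests on Lemma \ref{Ori.24}, whose statement involves the Gysin maps and the isomorphism \eqref{Ori.16.1}, i.e.\ on Theorem \ref{Ori.16} itself, and the external multiplicativity is Proposition \ref{Ori.35}, also proved downstream of the Thom isomorphism. Drop it. Second, for your base case you invoke Proposition \ref{Ori.50}, which is stated only for trivial \emph{line} bundles; for rank $d$ you would instead use the vanishing $\lambda^d=0$ from \eqref{Ori.11.4} (inside the proof of Proposition \ref{Ori.51}) to see $c_i(\cO^d)=0$, and note that the sign ambiguity in identifying $t(\cO^d)$ with $\Sigma^{2d,d}\unit$ under Proposition \ref{Ori.64} is harmless for the isomorphism statement. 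With those repairs your proof closes, but the paper's diagram argument is both shorter and avoids these pitfalls.
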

\begin{proof}
Combine Theorem \ref{Ori.11} and \eqref{Ori.43.1} to have a commutative diagram with exact rows
\[
\begin{tikzcd}[column sep=small]
0\ar[r]&
\ringE^{**}(X)\ar[d,"-\smile t(\cE)"']\ar[r]&
\oplus_{i=0}^d \ringE^{*+2i,*+i}(X)\ar[d,"\cong"]\ar[r]&
\oplus_{i=1}^d \ringE^{*+2i,*+i}(X)\ar[d,"\cong"]\ar[r]&
0
\\
0\ar[r]&
\ringE^{*+2d,*+d}(\Thom(\cE))\ar[r]&
\ringE^{*+2d,*+d}(\P(\cE\oplus \cO))\ar[r]&
\ringE^{*+2d,*+d}(\P(\cE))\ar[r]&
0.
\end{tikzcd}
\]
The five lemma finishes the proof.
\end{proof}

\begin{prop}
\label{Ori.70}
Suppose $X\in \lSm/S$ and $p\colon \cE\to X$ is a rank $d$ vector bundle.
For all integers $p$ and $q$, there is a canonical equivalence of spectra
\begin{equation}
\label{Ori.70.1}
\map(\Sigma_T^\infty X_+,\Sigma^{p,q}\ringE)
\xrightarrow{\simeq}
\map(\Sigma_T^\infty \Thom(\cE),\Sigma^{p+2d,q+d}\ringE).
\end{equation}
\end{prop}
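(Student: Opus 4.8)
The statement upgrades the Thom isomorphism of Theorem \ref{Ori.16} from an assertion about the cohomology \emph{groups} $\ringE^{**}$ to an assertion about the full mapping \emph{spectra} $\map(-,-)$ in $\inflogSH(S)$. The plan is to realize \eqref{Ori.70.1} as the map induced by cupping with the Thom class, but now at the spectrum level, and then to bootstrap the group-level statement to a spectrum-level equivalence by a connectivity/cofiber-sequence argument. First I would construct the map: the Thom class $t(\cE)\in \ringE^{2d,d}(\Thom(\cE))$ is by definition a morphism $\Sigma_T^\infty \Thom(\cE)\to \Sigma^{2d,d}\ringE$ in $\inflogSH(S)$, and combined with the multiplication $\mu\colon \ringE\wedge \ringE\to \ringE$ and the diagonal-type map \eqref{Ori.16.5} it produces, for every $X'\in \lSm/S$ and integers $p,q$, a natural transformation of spectra $\map(\Sigma_T^\infty X_+,\Sigma^{p,q}\ringE)\to \map(\Sigma_T^\infty \Thom(\cE),\Sigma^{p+2d,q+d}\ringE)$. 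This is the candidate for \eqref{Ori.70.1}.

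The key point is then that this is an equivalence. I would argue as in the proof of Theorem \ref{Ori.16}, but keeping track of mapping spectra rather than homotopy groups. Concretely: Proposition \ref{Ori.43} gives a cofiber sequence $\Thom(\cE)\to \P(\cE\oplus\cO)/\pt \to \P(\cE)/\pt$ in $\inflogHpt(S)$ (the exactness of \eqref{Ori.43.1} is the $\pi_*$-shadow of this), and the projective bundle theorem \ref{Ori.11} is itself the group-level reflection of a splitting of $\Sigma_T^\infty \P(\cE)_+$ as a finite wedge of Tate twists of $\Sigma_T^\infty X_+$ — more precisely, the map $\rho_{\cE}$ of \eqref{Ori.11.1} is induced by an equivalence $\bigvee_{i=0}^{d}\Sigma^{-2i,-i}\Sigma_T^\infty X_+ \xrightarrow{\simeq} \Sigma_T^\infty \P(\cE)_+$ in $\inflogSH(S)$ (one checks the splitting maps, built from powers of $c_1(\cT)$, assemble to an equivalence because after applying $\map(-,\Sigma^{**}\ringE)$ one recovers the isomorphism of \ref{Ori.11}, and the objects in question are compact generators). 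Applying $\map(-,\Sigma^{p+2d,q+d}\ringE)$ to the cofiber sequence above yields a fiber sequence of spectra, whose middle and right terms split by the projective bundle equivalence; comparing with the analogous splitting for $\map(\Sigma_T^\infty X_+,\Sigma^{p,q}\ringE)$ and invoking the five lemma for fiber sequences of spectra (equivalently, the long exact sequence on all homotopy groups simultaneously) forces the cup-with-$t(\cE)$ map on the fiber terms to be an equivalence.

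Alternatively — and this is probably cleaner — I would reduce directly to Theorem \ref{Ori.16} by a representability observation: both sides of \eqref{Ori.70.1} are the mapping spectra into $\Sigma^{p,q}\ringE$ out of the objects $\Sigma_T^\infty X_+$ and $\Sigma^{-2d,-d}\Sigma_T^\infty \Thom(\cE)$ of $\inflogSH(S)$, and a map of such mapping spectra is an equivalence if and only if it induces an isomorphism after applying $\pi_n$ for all $n\in\Z$ \emph{and} this persists after all suspensions $\Sigma^{a,b}$ — but $\pi_n \map(\cF,\Sigma^{p,q}\ringE) = \ringE^{p-n,q}(\cF)$ (using stability of $\inflogSH(S)$ and Definition \ref{def:sphere_suspension_logH}), so this is exactly the family of isomorphisms $\ringE^{p-n,q}(X)\xrightarrow{\cong}\ringE^{p-n+2d,q+d}(\Thom(\cE))$ provided by Theorem \ref{Ori.16} with $(p,q)$ replaced by $(p-n,q)$. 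One then only needs to check naturality in $n$ and in the suspension, which is formal from the construction of the map via the Thom class and $\mu$.

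\textbf{Main obstacle.} The genuinely non-formal ingredient is upgrading the projective bundle \emph{isomorphism} \ref{Ori.11} to a stable \emph{equivalence} (or, in the second approach, checking that the collection of group isomorphisms in Theorem \ref{Ori.16} assembles compatibly with all bidegree shifts into a single spectrum-level map). In $\A^1$-homotopy theory this is standard, but here one must make sure the argument uses only $(\P^\bullet,\P^{\bullet-1})$-invariance, $sNis$- and $div$-descent — i.e., only the structural results of \S\ref{proplogH} and \S\ref{polmsas} — and never $\A^1$-invariance or the localization property, which fail in the $\boxx$-setting. I expect that tracking these finite wedge decompositions through the cofiber sequences of Propositions \ref{Ori.41} and \ref{Ori.43}, while invoking compact generation (Proposition \ref{logH.31}) to detect equivalences, is where the real care is needed; everything else is a diagram chase.
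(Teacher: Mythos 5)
Your second, preferred route is essentially the paper's own proof: the map is built from the Thom class $t(\cE)$, the multiplication $\ringE\wedge\ringE\to\ringE$, and the map \eqref{Ori.16.5}, and it is an equivalence because applying $\pi_r$ for every integer $r$ yields exactly the isomorphisms of Theorem \ref{Ori.16} in shifted bidegrees. (Your extra demand that the isomorphisms ``persist after all suspensions'' is superfluous---a map of spectra is an equivalence once it is a $\pi_*$-isomorphism---so the stable splitting of the projective bundle from your first route, which you flag as the main obstacle, is never needed.)
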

\begin{proof}
The Thom class $t(\cE)$ gives a map $\mathbf{1}\to \map(\Sigma_T^\infty \Thom(\cE),\Sigma^{2d,d}\ringE)$, 
where $\mathbf{1}$ denotes the sphere spectrum.
We have the maps of spectra
\begin{align*}
&\map(\Sigma_T^\infty X_+,\Sigma^{p,q}\ringE)\wedge \map(\Sigma_T^\infty \Thom(\cE),\Sigma^{2d,d}\ringE)
\\
\to &
\map(\Sigma_T^\infty X_+\wedge \Thom(\cE),\Sigma^{p+2d,q+d}\ringE\wedge \ringE)
\\
\to &
\map(\Sigma_T^\infty X_+\wedge \Thom(\cE),\Sigma^{p+2d,q+d}\ringE)
\to
\map(\Sigma_T^\infty \Thom(\cE),\Sigma^{p+2d,q+d}\ringE),
\end{align*}
where the second map is naturally induced by the ring structure map $\ringE \wedge \ringE \to \ringE$, 
and the third map is naturally induced by \eqref{Ori.16.5}.
This constructs the map in \eqref{Ori.70.1}.
If we apply $\pi_r$ to \eqref{Ori.70.1} for every integer $r$, 
then we obtain an isomorphism due to Theorem \ref{Ori.16}.
Hence \eqref{Ori.70.1} is an equivalence.
\end{proof}

\begin{prop}
\label{Ori.33}
Suppose $f\colon X'\to X$ is a morphism in $\lSm/S$ and $\cE\to X$ is a vector bundle.
If $g\colon \Thom(f^*\cE)\to \Thom(\cE)$ is the naturally induced morphism, 
then for every integer $i\geq 0$ we have the formula 
\[
g^*(t(\cE))=t(f^*\cE).
\]
\end{prop}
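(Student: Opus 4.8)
The plan is to prove naturality of the Thom class $t(\cE)$ under pullback by unwinding the construction in Definition \ref{Ori.15} and using the naturality statements already established for Chern classes, first Chern classes, and the projective bundle theorem. First I would set up notation: let $f\colon X'\to X$ be the given morphism, $\cE':=f^*\cE$, and consider the induced commutative diagram of projective bundle compactifications
\[
\begin{tikzcd}
\P(\cE'\oplus \cO)\ar[d,"p'"']\ar[r,"\bar f"]&
\P(\cE\oplus \cO)\ar[d,"p"]
\\
X'\ar[r,"f"]&
X,
\end{tikzcd}
\]
noting that $\bar f$ restricts to a morphism $\P(\cE')\to \P(\cE)$ over $f$, and that the tautological line bundle $\cT'$ on $\P(\cE'\oplus\cO)$ is $\bar f^*$ of the tautological line bundle $\cT$ on $\P(\cE\oplus\cO)$. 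Then $g\colon \Thom(\cE')\to \Thom(\cE)$ is the morphism induced on the quotients $\P(\cE'\oplus\cO)/\P(\cE') \to \P(\cE\oplus\cO)/\P(\cE)$, compatibly with $\bar f$ via the exact sequences \eqref{Ori.43.1} for $\cE$ and $\cE'$.

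The key computation is to compare the classes $\overline{t}(\cE)\in \ringE^{2d,d}(\P(\cE\oplus\cO))$ and $\overline{t}(\cE')\in \ringE^{2d,d}(\P(\cE'\oplus\cO))$ defined by \eqref{Ori.15.2}. Applying $\bar f^*$ to the defining formula
\[
\overline{t}(\cE)
=
\sum_{i=0}^d p^* (c_i(\cE))(-\lambda')^{d-i},
\]
where $\lambda'=c_1(\cT)$ for the tautological bundle on $\P(\cE\oplus\cO)$, and using that $\bar f^* p^* = p'^* f^*$, together with Proposition \ref{Ori.18} giving $f^*(c_i(\cE)) = c_i(\cE')$ and the identity $\bar f^* c_1(\cT) = c_1(\bar f^*\cT) = c_1(\cT')$ from \eqref{Ori.7.4}, one obtains $\bar f^*\overline{t}(\cE) = \overline{t}(\cE')$. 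Since $t(\cE)$ (resp.\ $t(\cE')$) is characterized as the unique preimage of $\overline{t}(\cE)$ (resp.\ $\overline{t}(\cE')$) under the injection $\ringE^{**}(\Thom(\cE))\hookrightarrow \ringE^{**}(\P(\cE\oplus\cO))$ from \eqref{Ori.43.1}, and $g^*$ is compatible with $\bar f^*$ under these injections by functoriality of the cofiber sequences, uniqueness forces $g^*(t(\cE)) = t(\cE')$.

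The only genuinely delicate point is the compatibility of $g^*$ with $\bar f^*$ with respect to the injections in \eqref{Ori.43.1}: one needs the commutative square relating the two short exact sequences, which follows because both sequences arise functorially from applying $\ringE^{**}(-)$ to the cofiber sequence $\P(\cE)_+ \to \P(\cE\oplus\cO)_+ \to \Thom(\cE)$ (and similarly for $\cE'$), and the morphism of pairs $(\P(\cE'\oplus\cO),\P(\cE'))\to(\P(\cE\oplus\cO),\P(\cE))$ induces a morphism of cofiber sequences. Thus the injectivity from Proposition \ref{Ori.43} and the naturality of the cofiber construction do all the work; I expect no real obstacle beyond bookkeeping, since every ingredient (Proposition \ref{Ori.18}, formula \eqref{Ori.7.4}, the exact sequence \eqref{Ori.43.1}) is already available.
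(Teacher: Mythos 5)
Your proof is correct and follows essentially the same route as the paper, which simply says "Apply Proposition \ref{Ori.18} to \eqref{Ori.15.2}": pull back the defining formula for $\ol{t}(\cE)$, use naturality of the Chern classes and of $c_1$, and conclude by the uniqueness coming from the injection in \eqref{Ori.43.1}. Your extra care about the compatibility of the two exact sequences under the morphism of pairs is just the bookkeeping the paper leaves implicit.
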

\begin{proof}
Apply Proposition \ref{Ori.18} to \eqref{Ori.15.2}.
\end{proof}

Suppose $i\colon Z\to X$ is a codimension $d$ closed immersion in $\Sm/S$.
Combine \eqref{Thom.1.5} with Theorem \ref{Ori.16} to have an isomorphism
\begin{equation}
\label{Ori.16.1}
\ringE^{*-2d,*-d}(Z)
\cong
\ringE^{**}(X/(\Blow_Z X,E)),
\end{equation}
where $E$ is the exceptional divisor of the blow-up $\Blow_Z X$.
Compose with the canonical homomorphism $\ringE^{**}(X/(\Blow_Z X,E))\to \ringE^{**}(X)$ to have a homomorphism
\begin{equation}
\label{Ori.16.2}
i_*\colon \ringE^{*-2d,*-d}(Z)\to \ringE^{**}(X),
\end{equation}
which we call the \emph{Gysin map}. \index{Gysin map}
From \eqref{Ori.16.1}, we have the long exact sequence
\begin{equation}
\label{Ori.16.3}
\cdots
\to
\ringE^{*-2d,*-d}(Z)
\xrightarrow{i_*}
\ringE^{**}(X)
\xrightarrow{j^*}
\ringE^{**}(\Blow_Z X,E)
\to
\ringE^{*-2d+1,*-d}(Z)
\to
\cdots,
\end{equation}
where $j\colon (\Blow_Z X,E)\to X$ is the projection.

\begin{prop}
\label{Ori.63}
Suppose $\cE\to Z$ is a rank $d$ vector bundle in $\lSm/S$.
If $i\colon Z\to \cE$ is the zero section, then the isomorphism \eqref{Ori.16.1} with $X:=\cE$ coincides with the Thom isomorphism \eqref{Ori.16.4}.
\end{prop}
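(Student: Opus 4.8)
\textbf{Proof plan for Proposition \ref{Ori.63}.}

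The plan is to unwind both sides of the claimed coincidence through the deformation-to-the-normal-cone machinery of Theorem \ref{Thom.1} and to use the special feature of the situation, namely that $X=\cE$ is itself a vector bundle over $Z$ and $i\colon Z\to \cE$ is the zero section, so that $\Normal_Z\cE\cong \cE$ canonically. Recall that the isomorphism \eqref{Ori.16.1} is obtained by composing the Gysin equivalence $\mathfrak{p}_{\cE,Z}\colon M(\cE/(\Blow_Z\cE,E))\xrightarrow{\simeq}\Thom(\Normal_Z\cE)$ of \eqref{Thom.1.5} with the Thom isomorphism \eqref{Ori.16.4} for $\Normal_Z\cE$; whereas the right-hand side of the statement is just the Thom isomorphism \eqref{Ori.16.4} for $\cE$ directly. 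So the content of the proposition is exactly that, under the identification $\Normal_Z\cE\cong\cE$, the Gysin equivalence $\mathfrak{p}_{\cE,Z}$ becomes the identity of $\Thom(\cE)$ — equivalently, that the two maps
\[
M(\cE/(\Blow_Z\cE,E))\rightrightarrows M(\Deform_Z\cE/(\Blow_{Z\times\boxx}(\Deform_Z\cE),E^D))
\]
coming from the zero and one sections $i_0,i_1$ in \eqref{Thom.1.4} are equivalent. First I would invoke Lemma \ref{Ori.36}, which asserts precisely this equivalence of the two maps in \eqref{Ori.36.3}, for the rank $d$ bundle $\cE\to Z$ (here playing the role of ``$X$'' in that lemma, with ``$\cE$'' there also taken to be this bundle). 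This immediately yields $\mathfrak{p}_{\cE,Z}\simeq \id_{\Thom(\cE)}$ compatibly with the canonical identification $\cE\cong\Normal_Z\cE$.

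Given that, the proof is a diagram chase: I would write down the square
\[
\begin{tikzcd}
\ringE^{**}(Z)\ar[r,"-\smile t(\cE)"]\ar[d,equal]&
\ringE^{*+2d,*+d}(\Thom(\cE))\ar[d,"\mathfrak{p}_{\cE,Z}^*"]
\\
\ringE^{**}(Z)\ar[r,"-\smile t(\Normal_Z\cE)"]&
\ringE^{*+2d,*+d}(M(\cE/(\Blow_Z\cE,E)))
\end{tikzcd}
\]
and observe it commutes because (i) $\mathfrak{p}_{\cE,Z}^*$ is induced by $\mathfrak{p}_{\cE,Z}$, (ii) under $\cE\cong\Normal_Z\cE$ the Thom classes correspond, $t(\cE)=t(\Normal_Z\cE)$ via Proposition \ref{Ori.33} applied to the identity morphism (or more simply by the canonicity of the Thom class construction in Definition \ref{Ori.15}), and (iii) the cup-product pairing \eqref{Ori.16.5} is natural. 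Then by Lemma \ref{Ori.36} the map $\mathfrak{p}_{\cE,Z}^*$ is the identity under the identification of targets, so the top horizontal map equals the bottom one, which is by definition the isomorphism \eqref{Ori.16.1} with $X=\cE$. This is exactly the assertion.

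The only genuinely non-formal input is Lemma \ref{Ori.36}, whose proof in turn rests on the explicit toric/blow-up computation identifying $\Deform_Z\cE$ with the scheme $X_1$ there and matching the zero and one sections — but that is already established in the excerpt, so I may assume it. The main obstacle in writing this cleanly is bookkeeping: being careful that the identification $\Normal_Z\cE\cong\cE$ used to state the proposition is literally the one used inside Lemma \ref{Ori.36} (it is — both come from the fact that for a vector bundle the normal bundle of the zero section is the bundle itself, via the same coordinate description), and that the Gysin equivalence \eqref{Thom.1.5} is built from the zigzag \eqref{Thom.1.4} in the way that makes ``$\mathfrak{p}_{\cE,Z}\simeq\id$'' the correct translation of ``the two maps of \eqref{Ori.36.3} are equivalent.'' Once this compatibility is pinned down, the rest is a short application of the five-lemma-free naturality argument above. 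I expect the whole proof to be three or four lines, citing Lemma \ref{Ori.36}, Proposition \ref{Ori.33}, and the construction in Definition \ref{Ori.15}.
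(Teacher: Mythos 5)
Your proposal is correct and follows essentially the same route as the paper: the paper's proof likewise invokes Lemma \ref{Ori.36} to conclude that $\mathfrak{p}_{\cE,Z}\colon \cE/(\Blow_Z\cE,E)\to\Thom(\Normal_Z\cE)=\cE/(\Blow_Z\cE,E)$ is the identity, and then simply composes with the Thom isomorphism \eqref{Ori.16.4}. Your extra diagram chase and the citations of Proposition \ref{Ori.33} and Definition \ref{Ori.15} just spell out the bookkeeping that the paper leaves implicit.
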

\begin{proof}
Lemma \ref{Ori.36} implies that the map
\[
\mathfrak{p}_{\cE,Z}
\colon
\cE/(\Blow_Z \cE,E)\to \Thom(\Normal_Z \cE) = \cE/(\Blow_Z \cE,E)
\]
is homotopic to the identity map $\id$ in $\inflogSHS(S)$, where $E$ is the exceptional divisor.
Composing $\mathfrak{p}_{\cE,Z}$ and $\id$ with the Thom isomorphism in \eqref{Ori.16.4} 
yields the claim.
\end{proof}

Suppose
\begin{equation}
\label{Ori.21.2}
\begin{tikzcd}
Z'\ar[r,"i'"]\ar[d,"g"']&
X'\ar[d,"f"]
\\
Z\ar[r,"i"]&
X
\end{tikzcd}
\end{equation}
is a cartesian square in $\Sm/S$ such that $i$ and $i'$ are closed immersions of codimensions $d$ and $d'$.
Then there is a naturally induced diagram
\begin{equation}
\label{Ori.21.1}
\begin{tikzcd}
\ringE^{*-2d,*-d}(Z)\ar[d,"c_{d-d'}(\cF) \smile g^*(-)"']\ar[r,"\cong"]&
\ringE^{**}(X/(\Blow_Z X,E))\ar[d]
\\
\ringE^{*-2d',*-d'}(Z')\ar[r,"\cong"]&
\ringE^{**}(X'/(\Blow_{Z'}X',E')),
\end{tikzcd}
\end{equation}
where $E$ and $E'$ are the exceptional divisors and $\cF:=g^*\Normal_Z X / \Normal_{Z'}X'$.

What we are concerned with is the commutativity of \eqref{Ori.21.1}.
Use the diagram \eqref{Thom.1.8} to reduce to the case where $X=\Normal_Z X$, $X'=\Normal_{Z'} X'$, and $i$ and $i'$ are zero sections.
Hence to show that \eqref{Ori.21.1} commutes, it suffices to show that the naturally induced diagram
\begin{equation}
\label{Ori.21.4}
\begin{tikzcd}
\ringE^{*-2d,*-d}(Z)\ar[d,"c_{d-d'}(\cF)\smile g^*(-)"']\ar[r,"\cong"]&
\ringE^{**}(\Thom(\Normal_Z X))\ar[d]
\\
\ringE^{*-2d',*-d'}(Z')\ar[r,"\cong"]&
\ringE^{**}(\Thom(\Normal_{Z'}X'))
\end{tikzcd}
\end{equation}
commutes.
This holds if we have the formula
\begin{equation}
\label{Ori.21.3}
c_{d-d'}(\cF)\smile \ol{t}(\Normal_{Z'}(X'))
=
g'^*(\ol{t}(\Normal_Z X)),
\end{equation}
where $g'\colon \P(\Normal_{Z'} X'\oplus \cO)\to \P(\Normal_Z X\oplus \cO)$ is the naturally induced morphism.

\begin{lem}
\label{Ori.22}
The diagram \eqref{Ori.21.1} commutes in the special case where $i$ is a closed immersion of codimension $1$ and $f=i$.
\end{lem}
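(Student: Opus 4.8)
Lemma \ref{Ori.22} is the codimension-one, ``self-map'' base case of the excess intersection formula \eqref{Ori.21.1}, and I would prove it by reducing, via deformation to the normal cone, to a computation with Thom classes of line bundles. By the reduction already explained in the text preceding Lemma \ref{Ori.22} (using the functorial diagram \eqref{Thom.1.8}), it suffices to prove the commutativity of \eqref{Ori.21.4}, and in our special case $Z'=Z$, $\Normal_{Z'}X'=i^*\Normal_Z X=:\cL$ is a line bundle, $f=i$, and $\cF=g^*\Normal_Z X/\Normal_Z X$ which is the trivial rank-zero bundle, so $c_{d-d'}(\cF)=c_0(\cF)=1$. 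Thus the whole content of the lemma collapses to the formula \eqref{Ori.21.3}, which in this case reads
\[
\ol{t}(\cL)
=
g'^*\bigl(\ol{t}(\Normal_Z X)\bigr),
\]
where $g'\colon \P(\cL\oplus \cO)\to \P(\Normal_Z X\oplus \cO)$ is induced by $i$. So the plan is: first, carry out the deformation-to-the-normal-cone reduction exactly as indicated (this is routine given Theorem \ref{Thom.1} and diagram \eqref{Thom.1.8}); second, unwind that the two vertical maps in \eqref{Ori.21.4} are the ones induced on Thom spaces by $i\colon Z\to X$ at the level of normal bundles, hence governed by the pullback of the classes $\ol{t}$.

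\textbf{Key step: the Thom-class identity.} With $\cE:=\Normal_Z X$ a vector bundle on $Z$ (here even $i$ is codimension one, so $\cE$ is itself a line bundle, but the argument is the same for general rank), recall from Definition \ref{Ori.15}, equation \eqref{Ori.15.2}, that
\[
\ol{t}(\cE)
=
\sum_{j=0}^{d} p'^*\bigl(c_j(\cE)\bigr)(-\lambda')^{d-j}
\in
\ringE^{2d,d}(\P(\cE\oplus \cO)),
\]
where $p'\colon \P(\cE\oplus \cO)\to Z$ is the projection and $\lambda'=c_1(\cT')$ for $\cT'$ the tautological line bundle. Since $g'$ is (the pullback along) a morphism of projective bundles, $g'^*$ of the tautological bundle on $\P(\cE\oplus\cO)$ is the tautological bundle on $\P(i^*\cE\oplus\cO)=\P(\cL\oplus\cO)$, so by the naturality of first Chern classes \eqref{Ori.7.4} we get $g'^*\lambda'=\lambda'$ (the tautological class downstairs), and by the naturality of higher Chern classes, Proposition \ref{Ori.18}, we get $g'^*p'^*c_j(\cE)=p'^*c_j(i^*\cE)=p'^*c_j(\cL)$. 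Comparing with \eqref{Ori.15.2} for $\cL$ on $Z$ yields $g'^*\ol{t}(\cE)=\ol{t}(\cL)$, which is exactly \eqref{Ori.21.3} in our case (with $c_{d-d'}(\cF)=1$). Feeding this back through the Thom isomorphism \eqref{Ori.16.4} and the exact sequence \eqref{Ori.43.1} — which identify $\ringE^{**}(\Thom(\cE))$ as the kernel of restriction to $\P(\cE)$, compatibly under $g'^*$ — shows that \eqref{Ori.21.4}, and hence \eqref{Ori.21.1}, commutes.

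\textbf{Main obstacle.} The conceptual content is light; the real care is bookkeeping. The principal point to verify carefully is that after the deformation-to-the-normal-cone reduction the \emph{vertical} maps in \eqref{Ori.21.4} are literally induced by $g'$ on $\P(-\oplus\cO)$-spaces (equivalently by $g\colon \Thom(f^*\cE)\to\Thom(\cE)$ on Thom spaces, cf.\ Proposition \ref{Ori.33}), and that the isomorphism \eqref{Ori.16.1} is natural for the cartesian square \eqref{Ori.21.2} in the requisite sense — this naturality is where one uses the functoriality of the deformation diagram \eqref{Thom.1.8} and of the Gysin equivalence \eqref{Thom.1.7}. A secondary subtlety is ensuring the codimension-one hypothesis with $f=i$ is genuinely used only to make $\cF$ the zero bundle (so $c_{d-d'}(\cF)=1$); apart from that normalization the computation is insensitive to rank, which is why this lemma is set up as the induction base for the general excess-intersection statement. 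I would organize the write-up so that the deformation reduction is quoted in one sentence, Proposition \ref{Ori.33} (or directly \eqref{Ori.15.2} plus \eqref{Ori.7.4} and Proposition \ref{Ori.18}) does the work, and the five-lemma/exact-sequence compatibility is invoked from \eqref{Ori.43.1} and Theorem \ref{Ori.16}.
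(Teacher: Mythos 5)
Your argument does not prove Lemma \ref{Ori.22}; it proves (a special case of) Lemma \ref{Ori.27}, because the geometry of the special case has been misread. With $f=i\colon Z\to X$ in the cartesian square \eqref{Ori.21.2}, we get $X'=Z$ and $Z'=Z\times_X Z\cong Z$ with $i'=\id$, so $d'=0$, $\Normal_{Z'}X'=0$, and the excess bundle is $\cF=g^*\Normal_Z X/\Normal_{Z'}X'=\Normal_Z X$, a rank-one bundle. Hence the left vertical map in \eqref{Ori.21.1} is cup product with $c_{d-d'}(\cF)=c_1(\Normal_Z X)$, not with $1$, and the identity \eqref{Ori.21.3} to be verified is $c_1(\Normal_Z X)=g'^*\ol{t}(\Normal_Z X)$, where now $g'\colon Z=\P(0\oplus\cO)\to \P(\Normal_Z X\oplus\cO)$ is the zero section --- in other words the codimension-one self-intersection statement $i^*i_*=c_1(\Normal_Z X)\smile(-)$. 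Your claim that $\Normal_{Z'}X'=i^*\Normal_Z X$ and that $\cF$ is the rank-zero bundle with $c_0(\cF)=1$ is exactly the hypothesis ``$\Normal_{Z'}X'\to g^*\Normal_Z X$ is an isomorphism'' of Lemma \ref{Ori.27}, and your key step ($g'^*\ol{t}(\cE)=\ol{t}(i^*\cE)$ via naturality of the tautological bundle and of Chern classes, Proposition \ref{Ori.18} plus \eqref{Ori.15.2}) is precisely the paper's short proof of that other lemma.

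What is missing is the genuinely geometric input that Lemma \ref{Ori.22} requires. After the deformation-to-the-normal-cone reduction, write $\ol{t}(\Normal_Z X)=-\lambda+p^*c_1(\Normal_Z X)$ from \eqref{Ori.15.2} in rank one; pulling back along the zero section gives $g'^*\ol{t}(\Normal_Z X)=c_1(\Normal_Z X)-c_1(g'^*\cT)$, so the whole point is to show that $g'^*\cT$ is trivial, where $\cT$ is the tautological bundle on $\P(\Normal_Z X\oplus\cO)$. Naturality of Chern classes cannot give this; the paper's proof uses the fact that $\cT^*$ has a section whose zero scheme is the divisor $W$ at infinity, and since $W$ is disjoint from the zero section, $g'^*\cT^*$ (hence $g'^*\cT$) is trivial, so the error term $c_1(g'^*\cT)$ vanishes. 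Without this step (or an equivalent argument producing the Euler class), the lemma --- which is then used as the base case for Lemma \ref{Ori.24}, the Whitney sum formula and the excess/self-intersection formulas --- remains unproved in your proposal.
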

\begin{proof}
We need to show \eqref{Ori.21.3}.
Let $W$ be the divisor on $X\cong \P(\Normal_Z X\oplus \cO)$ at $\infty$, 
which is isomorphic to $Z$.
We have $g'^* \ol{t}(\Normal_Z X) = c_1(\Normal_Z X)-g'^*(c_1(\cT))$, where $g'$ is the zero section, and $\cT$ is the tautological line bundle on $\P(\N_Z X\oplus \cO)$.
Hence, it remains to show the line bundle $g'^*(\cT)$ is trivial.
According to \cite[Paragraph B.5.6]{Fulton}, there is a section of the dual bundle $\cT^*$ whose zero scheme is $W$.
This implies that $g'^*(\cT^*)$ is trivial since the intersection of $W$ and the zero section is empty.
It follows that $g'^*(\cT)$ is also trivial.
\end{proof}

\begin{lem}
\label{Ori.27}
The diagram \eqref{Ori.21.1} commutes in the special case where $\Normal_{Z'}X'\to g^*\Normal_Z X$ is an isomorphism.
\end{lem}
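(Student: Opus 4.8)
The statement to be established is Lemma \ref{Ori.27}: the commutativity of diagram \eqref{Ori.21.1} when $\Normal_{Z'}X' \to g^*\Normal_Z X$ is an isomorphism. As explained in the paragraphs preceding the lemma, by the deformation to the normal cone \eqref{Thom.1.8} we are reduced to the case $X = \Normal_Z X$, $X' = \Normal_{Z'} X'$, with $i$, $i'$ the zero sections, so that the commutativity of \eqref{Ori.21.1} amounts to the formula \eqref{Ori.21.3}, namely $c_{d-d'}(\cF)\smile \ol t(\Normal_{Z'}X') = g'^*(\ol t(\Normal_Z X))$, where $\cF = g^*\Normal_Z X/\Normal_{Z'}X'$ and $g'\colon \P(\Normal_{Z'}X'\oplus\cO)\to \P(\Normal_Z X\oplus \cO)$ is the induced map. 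In the present special case $\Normal_{Z'}X' \xrightarrow{\simeq} g^*\Normal_Z X$ is an isomorphism, so $\cF = 0$ and hence $c_{d-d'}(\cF) = c_0(\cF) = 1$ (and $d = d'$). Thus the formula to prove collapses to $\ol t(\Normal_{Z'}X') = g'^*(\ol t(\Normal_Z X))$.

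First I would record that, since $\Normal_{Z'}X' \cong g^*\Normal_Z X$, the map $g'\colon \P(\Normal_{Z'}X'\oplus\cO)\to\P(\Normal_Z X\oplus\cO)$ is simply the base change of the projective bundle along $g\colon Z'\to Z$; in particular the tautological line bundle $\cT'$ on $\P(\Normal_{Z'}X'\oplus\cO)$ is the pullback $g'^*\cT$ of the tautological line bundle $\cT$ on $\P(\Normal_Z X\oplus\cO)$. Then I would apply Proposition \ref{Ori.18} (functoriality of Chern classes) to get $g^*(c_i(\Normal_Z X)) = c_i(g^*\Normal_Z X) = c_i(\Normal_{Z'}X')$ for all $i$, together with relation \eqref{Ori.7.4} giving $c_1(g'^*\cT) = g'^*(c_1(\cT))$, i.e. $\lambda' = g'^*\lambda$ in the notation of Definition \ref{Ori.15}. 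Substituting these two facts into the defining expression \eqref{Ori.15.2} for $\ol t(\Normal_Z X) = \sum_{i=0}^{d} p^*(c_i(\Normal_Z X))(-\lambda)^{d-i}$ and pulling back along $g'$ yields $g'^*\ol t(\Normal_Z X) = \sum_{i=0}^{d} p'^*(g^*c_i(\Normal_Z X))(-\lambda')^{d-i} = \sum_{i=0}^{d} p'^*(c_i(\Normal_{Z'}X'))(-\lambda')^{d-i} = \ol t(\Normal_{Z'}X')$, which is exactly \eqref{Ori.21.3} with $c_{d-d'}(\cF) = 1$. Finally, by the uniqueness of the lift from $\ringE^{**}(\P(\cE\oplus\cO))$ to $\ringE^{**}(\Thom(\cE))$ provided by the exact sequence \eqref{Ori.43.1} (equivalently Proposition \ref{Ori.33}), the equality of the classes $\ol t$ upgrades to the equality of Thom classes $t$, so the Thom-isomorphism identifications \eqref{Ori.16.1} on the two sides are intertwined by $g'^*$ and diagram \eqref{Ori.21.1} commutes.

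The only genuinely delicate point is the reduction itself — i.e. checking that the vertical maps in \eqref{Ori.21.1} really do become, after deformation to the normal cone, the maps induced by $g'^*$ on Thom spaces — but this is precisely the content already assembled in the discussion between \eqref{Ori.21.2} and Lemma \ref{Ori.22}, using the functoriality diagram \eqref{Thom.1.8} of the deformation and the compatibility \eqref{Thom.1.7} of the Gysin equivalences; so I would simply invoke that setup rather than reprove it. Once in the normal-bundle situation with $\cF = 0$, everything is a formal manipulation of \eqref{Ori.15.2} via Propositions \ref{Ori.18} and \ref{Ori.33} and relation \eqref{Ori.7.4}, with no further geometric input required. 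I therefore expect this lemma to be short, the main (and only) subtlety being the bookkeeping that identifies $d = d'$, $\cF = 0$, and $c_0(\cF) = 1$ correctly so that \eqref{Ori.21.3} degenerates as claimed.
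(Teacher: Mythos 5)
Your proof is correct and follows essentially the same route as the paper: reduce to the formula \eqref{Ori.21.3}, note that in this special case $\cF=0$, $d=d'$ and $c_{d-d'}(\cF)=1$, and deduce the resulting identity $g'^*\ol{t}(\Normal_Z X)=\ol{t}(\Normal_{Z'}X')$ directly from the functoriality of Chern classes (Proposition \ref{Ori.18}) and the defining expression \eqref{Ori.15.2}. The paper's proof is just a compressed version of this; your extra remarks on the normal-cone reduction and the lift of $\ol{t}$ to $t$ via \eqref{Ori.43.1} only make explicit what the surrounding discussion already supplies.
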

\begin{proof}
We need to show \eqref{Ori.21.3}.
This follows from Proposition \ref{Ori.18} and the formulation \eqref{Ori.15.2}.
\end{proof}

\begin{lem}
\label{Ori.23}
Let $X$ be a scheme with a rank $n$ vector bundle $\cE\to X$.
If $s\colon X\to \cE$ is a global section of $\cE$ that intersects transversally with the zero section in a 
smooth closed subscheme $Z$ of $X$, 
then there is a morphism
\[
\Blow_{Z\times \infty}(X\times \P^1)\to \P(\cE\oplus \cO)
\]
extending the morphism $f\colon X\times \A^1\to \cE$ sending $(x,a)$ to $a s(x)$.
\end{lem}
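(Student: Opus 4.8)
The statement is a purely geometric assertion about extending a morphism of schemes across a blow-up, so the plan is to verify it by an explicit local computation, closely following the template already used in the proof of Lemma~\ref{Ori.6} and Lemma~\ref{Pic.3}. First I would fix coordinates: write $\cE$ \'etale-locally on $X$ as a trivial bundle $\cO^n$ with coordinates $x_1,\dots,x_n$ on the fibers, and use the coordinate $t$ on $\A^1\subset \P^1$. The section $s$ is given by $n$ functions $s_1,\dots,s_n$ on $X$, and the morphism $f\colon X\times \A^1\to \cE$ is $(x,a)\mapsto (a s_1(x),\dots,a s_n(x))$. The transversality hypothesis says that $Z=\{s_1=\cdots=s_n=0\}$ is smooth of codimension $n$ and that $ds_1,\dots,ds_n$ are linearly independent along $Z$; after shrinking $X$ in the \'etale topology I may assume $s_1,\dots,s_n$ form part of a regular system of parameters, so that $Z$ is cut out by the regular sequence $(s_1,\dots,s_n)$.

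The key step is then to observe that $f$ sends the generic fiber $X\times \G_m$ into the complement of the section at $\infty$ of $\P(\cE\oplus\cO)$, and that the locus where $f$ fails to extend over $X\times\{\infty\}$ is exactly $Z\times\{\infty\}$ (this is where $as_i=\infty\cdot 0$ is indeterminate). Concretely, $\P(\cE\oplus\cO)$ has the standard Zariski cover by the chart $\Spec\cO_X[x_1,\dots,x_n]$ (the affine bundle $\cE$ itself) together with charts $\Spec\cO_X[x_1/x_i,\dots,x_n/x_i,1/x_i]$ for $1\le i\le n$; one builds $\Blow_{Z\times\infty}(X\times\P^1)$ from the affine charts $\Spec\cO_X[1/t]$ blown up along $(s_1,\dots,s_n,1/t)$, which has the Zariski cover given by the spectra of $\cO_X[t^{-1}, s_1,\dots,s_n]$ localized appropriately, together with rings of the form $\cO_X[s_i/t, s_1/s_i,\dots,s_n/s_i, \ldots]$. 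On each such chart one checks directly that the ring homomorphism dual to $f$ (sending $x_j\mapsto s_j/t'$ where $t'=1/t$, i.e.\ $x_j \mapsto t s_j$, reinterpreted) lands in the appropriate target chart; the point is that $x_j/x_i = s_j/s_i$ is regular on the blow-up chart where $s_i$ divides, and $1/x_i = 1/(t s_i)$ is regular there too. Gluing these local morphisms over the cover and checking compatibility with the (trivial, resp.\ compactifying) structures of $\cE$, $\cO_X$, and the blow-up gives the desired morphism, exactly as the six-homomorphism gluing arguments in Lemma~\ref{Ori.6} and Lemma~\ref{Pic.3}. One then verifies that the morphisms constructed on different \'etale charts of $X$ agree on overlaps --- this is automatic because they all restrict to $f$ on the dense open $X\times\A^1$ and the target is separated, so a morphism extending $f$ is unique if it exists.

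The main obstacle I anticipate is the bookkeeping of the blow-up charts: the center $Z\times\{\infty\}$ has codimension $n+1$, so $\Blow_{Z\times\infty}(X\times\P^1)$ has roughly $n+1$ standard affine charts (one for each generator $s_1,\dots,s_n,1/t$ of the ideal), and on each of them one must identify which chart of $\P(\cE\oplus\cO)$ the section maps into and write down the explicit ring inclusion $u(\text{target chart})\subseteq \text{source chart}$, checking that $s_i/t$, $s_j/s_i$, and their inverses behave correctly. A secondary subtlety is that the hypothesis is stated with an \emph{arbitrary} section $s$ (not assumed to extend to a global section defined everywhere on $X$ with the transversality only at $Z$), so I must be careful that the construction is purely local on $X$ and glues --- but since $f$ is globally defined on $X\times\A^1$ and the extension is unique once it exists, the local extensions automatically patch. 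No appeal to \eqref{logHprop.0.4}, resolution of singularities, or any $\A^1$-invariance is needed; this is a statement about honest schemes. Once the morphism is built it will be used (in the sequel, to establish a Whitney-type or self-intersection formula) together with Lemma~\ref{ProplogSH.7}: the source blows down to $X\times\boxx$ via an admissible blow-up along a smooth center, and the zero and one sections lift, so the two restrictions of the extended morphism at $t=0$ and $t=1$ become equivalent in $\inflogH^{\SmAdm}(S)$.
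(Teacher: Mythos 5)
Your proposal is correct and takes essentially the same route as the paper: both arguments construct the extension by an explicit chart computation on the blow-up of a local trivialization of $\cE$, sending $x_j\mapsto s_j x$ on the chart over $X\times\A^1$ and $x_j/x_i\mapsto s_j/s_i$, $1/x_i\mapsto 1/(s_i x)$ on the charts near the exceptional locus, and then gluing. The only minor difference is the gluing step --- the paper verifies compatibility explicitly against the $\GL_n$ transition matrices of a Zariski trivialization of $\cE$, while you appeal to uniqueness of extensions from the schematically dense open $X\times\A^1$ into the separated target (your \'etale-local normalization of $s_1,\dots,s_n$ is unnecessary) --- and both mechanisms work.
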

\begin{proof}
Such an extension is unique if exists, so we can work Zariski locally on $X$.
Hence we may assume that $X$ is an affine scheme $\Spec{A}$ and $\cE$ is a trivial vector bundle.
Then $f$ is given by $A[x_1,\ldots,x_n]\to A[x]$ sending $a\mapsto a$ for all $a\in A$ and $x_i\mapsto s_i x_i$ for some $s_i\in A$.

Since $s$ is transversal with the zero section,
$\Blow_{Z\times \infty}(X\times \P^1)$ has a Zariski cover
\[
\{\Spec{A[1/x,s_1x,\ldots,s_nx]}\}
\cup
\{\Spec{A[1/(s_ix),s_1/s_i,\ldots,s_n/s_i,s_i]}:1\leq i\leq n\}.
\]
Glue the spectra of the homomorphisms
\[
A[x_1,\ldots,x_n]
\xrightarrow{x_i\mapsto s_i x}
A[1/x,s_1x,\ldots,s_n x]
\]
and
\[
A[x_1/x_i,\ldots,x_n/x_i,x_i]
\xrightarrow{x_j/x_i\mapsto s_j/s_i,x_i\mapsto s_i x}
A[x,s_1/s_i,\ldots,s_n/s_i,s_i]
\]
to produce a morphism 
$\Blow_{Z\times \infty}(X\times \P^1)\to \P(\cE\oplus \cO)$ extending $f\colon X\times \A^1\to \cE$.
\end{proof}

\begin{lem}
\label{Ori.24}
Let $\cE\to X$ be a line bundle in $\Sm/S$.
If $s\colon X\to \cE$ is a global section of $\cE$ that intersects transversally 
with the zero section in a smooth closed subscheme $Z$ of $X$, 
then we have the formula
\[
i_*i^*a=c_1(\cE)\smile a
\]
for all $a\in \ringE^{**}(X)$, where $i\colon Z\to X$ is the obvious closed immersion.
\end{lem}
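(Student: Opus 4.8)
The strategy is to use the deformation to the normal cone to relate the self-intersection formula to a statement about the Thom class of the normal bundle, and then to use the geometric construction provided by Lemma \ref{Ori.23}. First I would reduce, by the projection formula, to computing $i_*i^*a = i_*(\unit)\smile a$; this is immediate from the module structure of $\ringE^{**}(X)$ over itself and the fact that the Gysin map $i_*$ is defined via the composite of the Thom isomorphism \eqref{Ori.16.1} with the canonical map $\ringE^{**}(X/(\Blow_Z X,E))\to \ringE^{**}(X)$, together with the compatibility of this composite with the cup product coming from \eqref{Ori.16.5}. So it suffices to prove the formula for $a = \unit$, i.e.\ to show $i_*(\unit) = c_1(\cE)$.

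Next, since $\cE$ is a line bundle, $Z$ is cut out transversally by the section $s$, so the normal bundle $\Normal_Z X$ is canonically identified with $i^*\cE$. Applying Lemma \ref{Ori.23} with $n=1$ produces a morphism $\Blow_{Z\times \infty}(X\times \P^1)\to \P(\cE\oplus\cO)$ extending the morphism $X\times \A^1\to \cE$ given by $(x,a)\mapsto a s(x)$. After endowing the source with the appropriate compactifying log structure, this gives a commutative diagram which, via the deformation-to-the-normal-cone equivalences \eqref{Thom.1.4} (equivalently \eqref{Thom.1.9}), identifies the class $i_*(\unit)$ with the image of the Thom class $t(\Normal_Z X) = t(i^*\cE)$ under the natural map $\ringE^{**}(\Thom(i^*\cE))\to \ringE^{**}(X/(\Blow_Z X, E))\to \ringE^{**}(X)$. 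Concretely, the geometric section $s$ degenerates the zero section to $i$, and the key point is that the composite map $X\to X/(\Blow_Z X,E)$ followed by the Gysin isomorphism sends $\unit$ to the restriction of $\ol{t}(i^*\cE)$; I would make this precise by pulling back the defining formula \eqref{Ori.15.2} for the Thom class along the zero section $z\colon X\to \P(\cE\oplus\cO)$ and using $z^* t(\cF) = c_1(\cF)$ for a line bundle $\cF$, exactly as in the proof of Proposition \ref{Ori.54}.

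Assembling these: $i_*(\unit)$ equals the image of $t(i^*\cE)$, which under $z^*$ becomes $c_1(i^*\cE)$; but $i^*\cE \cong \Normal_Z X$ and the whole construction is designed so that this class, pushed forward to $\ringE^{**}(X)$, is $c_1(\cE)$. The functoriality statement \eqref{Thom.1.7} for the Gysin sequence under the cartesian square relating $(X, Z)$ to itself, together with the compatibility of Chern classes with pullback \eqref{Ori.7.4}, closes the argument. Then cup with $a$ and invoke the projection formula once more to obtain $i_*i^*a = c_1(\cE)\smile a$ for general $a$.

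\textbf{Main obstacle.} The delicate step is the second one: producing the precise chain of identifications that shows $i_*(\unit)$ is the image of the Thom class, and in particular checking that the morphism of Lemma \ref{Ori.23} really does degenerate the section $s$ to the closed immersion $i$ at the level of log schemes (i.e.\ that the lifts of the zero and one sections land where they should after imposing the compactifying log structures). This requires carefully tracking the log structures through the blow-up $\Blow_{Z\times\infty}(X\times\P^1)$ and applying Lemma \ref{ProplogSH.7}-style homotopy arguments, analogous to the bookkeeping in the proofs of Lemma \ref{Ori.6} and Lemma \ref{Ori.44}. The $\A^1$-invariant version of this computation is routine via an explicit $\A^1$-homotopy; here the compactification creates the same kind of geometric subtlety encountered throughout \S\ref{proplogH}, but no genuinely new idea beyond the deformation to the normal cone \eqref{Thom.1.4} and the section construction of Lemma \ref{Ori.23} is needed.
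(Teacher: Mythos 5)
There are two genuine gaps in your argument. First, your opening reduction to $a=\unit$ invokes a projection formula $i_*(i^*a)=a\smile i_*(\unit)$ for the Gysin map, which you call ``immediate'' from the module structure coming from \eqref{Ori.16.5}. This is not established anywhere in the paper and is not immediate: \eqref{Ori.16.5} only gives an $\ringE^{**}(Z)$-module structure on $\ringE^{**}(\Thom(\Normal_Z X))$ (the bundle lives over $Z$), so to get $\ringE^{**}(X)$-linearity of the composite $\ringE^{*-2,*-1}(Z)\xrightarrow{\cong}\ringE^{**}(X/(\Blow_Z X,E))\to\ringE^{**}(X)$ you would have to check that the deformation-to-the-normal-cone equivalences \eqref{Thom.1.4} and the Gysin equivalence $\mathfrak{p}_{X,Z}$ are compatible with cup products by classes pulled back from $X$ --- an argument roughly as long as the lemma itself, and one the paper deliberately avoids. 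Second, even granting the reduction, you never actually prove $i_*(\unit)=c_1(\cE)$: by definition $i_*(\unit)$ is the image of the Thom class $t(\Normal_Z X)$, so your appeal to the deformation to the normal cone at that point is contentless, and knowing $z^*t(i^*\cE)=c_1(i^*\cE)$ over $Z$ says nothing about the image of that Thom class in $\ringE^{**}(X)$; the phrase ``the whole construction is designed so that this class, pushed forward to $\ringE^{**}(X)$, is $c_1(\cE)$'' is exactly the statement to be proved.

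The missing mechanism is the one the paper uses. One regards $s\colon X\to\cE\subset\P(\cE\oplus\cO)$ as a codimension-one closed immersion and forms two cartesian squares over $\P(\cE\oplus\cO)$: pulling $s$ back along $s$ itself, and pulling $s$ back along the zero section $s_0$ (transversality makes the second pullback equal to $Z$ with matching normal bundles). Lemma \ref{Ori.22} (the codimension-one case, $f=i$) gives $s^*s_*a=c_1(\cE)\smile a$, and Lemma \ref{Ori.27} (the case of isomorphic normal bundles) gives $s_0^*s_*a=i_*i^*a$, so everything reduces to $s^*=s_0^*$; this is where Lemma \ref{Ori.23} enters, not as a deformation-to-the-normal-cone statement but to produce the morphism $\Blow_{Z\times\infty}(X\times\P^1)\to\P(\cE\oplus\cO)$ which, after imposing the compactifying log structure, is an admissible blow-up of $X\times\boxx$ along a smooth center with lifts of the zero and one sections, so that Lemma \ref{ProplogSH.7} yields $s\simeq s_0$ in $\inflogH^{\SmAdm}(S)$. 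Your proposal cites Lemma \ref{Ori.23} but never formulates what it is used to prove (the homotopy $s\simeq s_0$), and never brings in Lemmas \ref{Ori.22} and \ref{Ori.27}, which are what convert that homotopy into the formula; note also that the general excess-intersection and self-intersection statements (Propositions \ref{Ori.21}, \ref{Ori.26}) are proved only after this lemma, so they are not available as substitutes. As it stands the argument does not close.
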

\begin{proof}
Let $s_0\colon X\to \P(\cE\oplus \cO)$ be the zero section.
There are cartesian squares
\[
\begin{tikzcd}
X\ar[r,"\id"]\ar[d,"\id"']&
X\ar[d,"s"]
\\
X\ar[r,"s"]&\P(\cE\oplus \cO),
\end{tikzcd}
\quad
\begin{tikzcd}
Z\ar[r,"i"]\ar[d,"i"']&
X\ar[d,"s"]
\\
X\ar[r,"s_0"]&\P(\cE\oplus \cO).
\end{tikzcd}
\]
Lemma \ref{Ori.22} implies the formula
\[
s^*s_* a = c_1(\cE)\smile a.
\]
On the other hand, Lemma \ref{Ori.27} implies that we have
\[
s_0^*s_* a= i_*i^*a.
\]
Hence we are reduced to show the formula $s^*=s_0^*$.
In effect, 
let $Y$ be the fs log scheme whose underlying scheme is $\Blow_{Z\times \infty}(X\times \P^1)$ and whose log structure is associated with the open immersion $X\times \A^1\to \Blow_{Z\times \infty}(X\times \P^1)$.
Then $Y$ is an admissible blow-up of $X\times \boxx$ along a smooth center.
From Lemma \ref{Ori.23}, we have a morphism
\[
Y\to \P(\cE\oplus \cO),
\]
and the zero section and one section $X\rightrightarrows X\times \boxx$ have lifts to $Y$.
Lemma \ref{ProplogSH.7} finishes the proof.
\end{proof}

\begin{prop}
[Whitney sum formula]
\label{Ori.20}
Suppose there is an exact sequence of vector bundles
\[
0 \to \cE'\to \cE\to \cE''\to 0
\]
over $X\in \lSm/S$.
Then, for every integer $i\geq 0$, we have the formula
\[
c_i(\cE)
=
\sum_{j=0}^i c_j(\cE')c_{i-j}(\cE'').
\]
\end{prop}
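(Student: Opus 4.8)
The plan is to follow the classical proof of the Whitney sum formula exactly as in $\mathbb{A}^1$-homotopy theory (or intersection theory), since the only inputs we need — the projective bundle theorem \ref{Ori.11}, the splitting principle \ref{Ori.19}, functoriality of Chern classes \ref{Ori.18}, and the Gysin/transversality computation in Lemma \ref{Ori.24} — are all now available in our logarithmic setting. First I would reduce to the totally split case using the splitting principle: by Proposition \ref{Ori.19} there is a strict proper smooth morphism $f\colon Y\to X$ with $f^*\colon \ringE^{**}(X)\to \ringE^{**}(Y)$ a monomorphism such that $f^*\cE'$, $f^*\cE''$, and hence $f^*\cE$ admit filtrations with line bundle quotients; by Proposition \ref{Ori.18} it suffices to prove the identity after pulling back to $Y$. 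So we may assume $\cE'$ has a filtration with line bundle quotients $\cL_1',\dots,\cL_r'$ and $\cE''$ has one with quotients $\cL_1'',\dots,\cL_s''$, so that $\cE$ has a filtration with quotients $\cL_1',\dots,\cL_r',\cL_1'',\dots,\cL_s''$.

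The heart of the argument is then the computation of the total Chern class of a bundle filtered by line bundles: one shows $c(\cF)=\prod_k (1+c_1(\cL_k))$ whenever $\cF$ carries a filtration with line bundle quotients $\cL_k$. I would prove this by induction on the rank of $\cF$, using the extension $0\to \cF'\to \cF\to \cL\to 0$ with $\cF'$ of smaller rank. The inductive step requires identifying $c_i(\cF)$ from the defining relation \eqref{Ori.13.1} on $\P(\cF)$; the standard trick is to use the tautological sub-line-bundle $\cT\subset p^*\cF$ on $\P(\cF)$ and the section of $p^*\cL\otimes \cT^{-1}$ (or rather the composite $\cT\hookrightarrow p^*\cF\to p^*\cL$) whose vanishing locus is $\P(\cF')\subset \P(\cF)$. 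Applying Lemma \ref{Ori.24} (the self-intersection/transversality formula $i_*i^*a = c_1(\text{normal bundle})\smile a$) to this section, together with the projective bundle theorem \ref{Ori.11} to identify $\ringE^{**}(\P(\cF))$ as a free $\ringE^{**}(X)$-module on powers of $c_1(\cT)=-\lambda$, forces the relation $\prod_k(c_1(\cT)+p^*c_1(\cL_k))=0$ in $\ringE^{**}(\P(\cF))$, which by \eqref{Ori.13.1} and uniqueness of the $c_i$ gives exactly $c(\cF)=\prod_k(1+c_1(\cL_k))$. Finally, comparing $c(\cE)=\prod_k(1+c_1(\cL_k'))\cdot\prod_k(1+c_1(\cL_k''))=c(\cE')c(\cE'')$ and extracting degree $2i$ components yields $c_i(\cE)=\sum_{j=0}^i c_j(\cE')c_{i-j}(\cE'')$; one then descends back along $f^*$.

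The main obstacle I anticipate is verifying that Lemma \ref{Ori.24} really applies in the form needed for the inductive step on $\P(\cF)$: Lemma \ref{Ori.24} is stated for a line bundle $\cE\to X$ with a section transversal to the zero section, and we must check that the composite $\cT\to p^*\cF\to p^*\cL$ on $\P(\cF)$, viewed as a section of $\mathcal{H}om(\cT,p^*\cL)=p^*\cL\otimes\cT^{-1}$, is transversal to the zero section with vanishing locus the smooth closed subscheme $\P(\cF')$, and that $\P(\cF')$ lies in $\SmlSm/S$ with the appropriate normal crossing behavior relative to the boundary. This is the same verification as in the classical proof, but one must be careful that all schemes involved stay within $\SmlSm/S$ (so that the Gysin maps of Remark \ref{rmk:Gysin} and Theorem \ref{Ori.16} are defined) and that the strict closed immersion hypotheses are met; since $\cF$, $\cL$ have trivial log structure contributions along the projective bundle direction, this should go through, but it is the step that genuinely uses the geometry rather than pure formalism. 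Everything else — the splitting principle reduction, the bookkeeping with total Chern classes, the extraction of graded pieces — is formal and identical to the $\mathbb{A}^1$-invariant case.
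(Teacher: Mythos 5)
There is a genuine gap at the step you call the heart of the argument. You deduce the relation $\prod_k\bigl(c_1(\cT)+p^*c_1(\cL_k)\bigr)=0$ from Lemma \ref{Ori.24} applied to the section of $p^*\cL\otimes\cT^{-1}$ vanishing on $\P(\cF')$. What Lemma \ref{Ori.24} actually gives is $i_*i^*a=c_1(p^*\cL\otimes\cT^{-1})\smile a$, and to convert this into your displayed relation you are tacitly using $c_1(p^*\cL\otimes\cT^{-1})=p^*c_1(\cL)-c_1(\cT)$, i.e.\ additivity of $c_1$ under tensor products of line bundles (and $c_1(\cT^{-1})=-c_1(\cT)$). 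In the setting of Proposition \ref{Ori.20}, $\ringE$ is an \emph{arbitrary} oriented homotopy commutative monoid in $\logSH(S)$; its formal group law need not be additive (e.g.\ $\logKGL$ is multiplicatively oriented by Example \ref{Ori.62}, and $\spectlogMGL$ carries the universal law), so $c_1(\cM\otimes\cN)$ is $\Formal_{\ringE}(c_1(\cM),c_1(\cN))$, not a sum — and no such additivity statement is available (or true) at this point of the paper; the formal group law is only introduced in \S\ref{formal}, after Proposition \ref{Ori.20}. Even granting the correct Euler class $\Formal_{\ringE}\bigl([-1]_{F}(\lambda),\,p^*c_1(\cL_k)\bigr)$, the resulting relation is a power series in $\lambda$ rather than a monic polynomial with coefficients pulled back from the base, so the ``uniqueness of the $c_i$ in \eqref{Ori.13.1}'' extraction you invoke does not apply directly. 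This is not a cosmetic issue: it is exactly the obstruction that separates the general oriented case from the additive one, and it is why the classical $\A^1$-homotopy/intersection-theory proof cannot be transplanted verbatim.

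The paper's proof is built to sidestep precisely this. After the splitting-principle reduction (to the case where $\cE''$ is a line bundle, not the fully split case), it twists the extension by $\cE''^*$, obtaining $0\to\cF'\to\cF\to\cO\to 0$ with $\P(\cF)=\P(\cE)$, $\P(\cF')=\P(\cE')$; now the quotient is honestly trivial, so the vanishing-section argument applies to the genuine tautological line bundle $\cL$ of $\P(\cF)$ itself and Lemma \ref{Ori.24} yields $i_*i^*=\tau\smile(-)$ with $\tau=c_1(\cL)$ an actual first Chern class, giving $c_i(\cF)=c_i(\cF')$ and $c_n(\cF)=0$ without any tensor-product formula. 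The remaining problem — showing that $b=-\lambda+p^*c_1(\cE'')$, which is \emph{not} the first Chern class of any line bundle, is divisible by $\tau$ — is handled by a genuinely logarithmic argument: the class of $\cL$ dies in the log Picard group of the pair $(\P(\cE),\P(\cE'))$, so $j^*b=0$, and the residue/Gysin sequence \eqref{Ori.16.3} then forces $b$ into the image of $i_*$, hence (using $i_*i^*=\tau\smile$ and surjectivity of $i^*$) into the multiples of $\tau$; since $a$ is a multiple of $c$ and $c\smile\tau=0$, one gets $a\smile b=0$ and the formula follows from uniqueness in \eqref{Ori.13.1}. To repair your proof you would either need this twist-plus-log-Gysin mechanism or a prior theory of formal group laws strong enough to manipulate the non-additive Euler classes, neither of which is available where Proposition \ref{Ori.20} is proved.
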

\begin{proof}
Owing to Proposition \ref{Ori.19}, we only need to consider the case when $\cE''$ is a line bundle.
Let $\cT$ and $\cT'$ be the tautological line bundles on $\P(\cE)$ and $\P(\cE')$.
We set
\[
a:=\sum_{i=0}^{n-1}p^*c_i(\cE')(-\lambda)^{n-1-i}
\text{ and }
b:=-\lambda+p^*c_1(\cE''),
\]
where $p\colon \P(\cE)\to X$ is the projection, and $\lambda:=c_1(\cT)$.
Due to Theorem \ref{Ori.11}, it suffices to show $a\smile b=0$.

\

We set $\cF:=\cE\otimes \cE''^*$ and $\cF':=\cE'\otimes \cE''^*$ with tautological line bundles $\cL$ and $\cL'$.
Observe that we have an exact sequence
\[
0 \to \cF' \to \cF \to \cO \to 0.
\]
We identify $\P(\cE)$ with $\P(\cF)$ and $\P(\cE')$ with $\P(\cF')$,
and then there are canonical isomorphisms $\cT\cong \cL\otimes p^*\cE''$ and $\cT'\cong \cL\otimes p'^*\cE''$, 
where $p'\colon \P(\cE')\to X$ is the projection.
According to \cite[Paragraph B.5.6]{Fulton}, there exists a global section of $\cL^*$ whose zero scheme is $\P(\cF')$.
Together with Lemma \ref{Ori.24}, we have the formula
\begin{equation}
\label{Ori.20.1}
i_*i^*=\tau\smile (-),
\end{equation}
where $i\colon \P(\cE')\to \P(\cE)$ is the closed immersion, and $\tau:=c_1(\cL)$.

\

We set
\begin{equation}
\label{Ori.20.2}
c:=\sum_{i=0}^{n-1} p^*c_i(\cF')(-\tau)^{n-1-i}
\text{ and }
d:=-\tau.
\end{equation}
Since $i^*\cL\cong \cL'$, we have $i^*c=0$.
Together with \eqref{Ori.20.1}, we have
\begin{equation}
\label{Ori.20.3}
c\smile d
=
\sum_{i=0}^{n-1} p^*c_i(\cF')(-\tau)^{n-i}=0.
\end{equation}
This implies the equality $c_i(\cF)=c_i(\cF')$ for all $0\leq i\leq n-1$ and the vanishing $c_n(\cF)=0$.
In particular, 
the homomorphism $i^*\colon \ringE^{**}(\P(\cF)) \to \ringE^{**}(\P(\cF'))$ is isomorphic to the quotient homomorphism
\begin{equation}
\label{Ori.20.4}
\ringE^{**}(X)[\tau]/(\tau c)
\to
\ringE^{**}(X)[\tau]/(c).
\end{equation}
Since $i^*a=0$, we deduce that $a$ is a multiple of $c$.

\

Let $j\colon (\P(\cE),\P(\cE'))\to \P(\cE)$ be the morphism removing the log structure.
There is a canonical exact sequence
\[
0\to \cM_{\P(\cE)}^\gp \to \cM_{(\P(\cE),\P(\cE'))}^\gp \to i_*\Z\to 0.
\]
This naturally induces an exact sequence
\[
H_{\set}^0(\P(\cE'),\Z)
\xrightarrow{\delta}
\Pic(\P(\cE))
\xrightarrow{j^*}
\Pic(\P(\cE),\P(\cE')).
\]
Since there is a global section of $\cL$ whose zero scheme is $\P(\cE')$, the image of $\delta$ contains $\cL$.
This implies $j^*(\cL)=0$, and hence $j^*\cT\cong j^*p^*\cE''$.
Together with Proposition \ref{Ori.18}, we have $j^*c_1(\cT)=j^*p^*c_1(\cE'')$.
In particular, we have $j^*b=0$.

\

Due to \eqref{Ori.16.3}, there is an exact sequence
\[
\ringE^{*-2,*-1}(\P(\cE'))
\xrightarrow{i_*}
\ringE^{**}(\P(\cE))
\xrightarrow{j^*}
\ringE^{**}(\P(\cE),\P(\cE')).
\]
Since $j^*b=0$, $b$ is in the image of $i_*$.
From the explicit description \eqref{Ori.20.4}, we see that $i^*$ is surjective.
Together with \eqref{Ori.20.1}, we deduce that everything in the image of $i_*$ is a multiple of $\tau$.
In particular, $b$ is a multiple of $d$.
We have verified before that $a$ is a multiple of $c$.
Therefore, \eqref{Ori.20.3} implies $a\smile b=0$.
\end{proof}

\begin{prop}
\label{Ori.25}
Suppose $p\colon \P(\cE\oplus \cO)\to X$ is a projective bundle, where $\cE\to X$ is a rank $d$ vector bundle with $X\in \Sm/S$.
If $\cT$ is the tautological line bundle on $\P(\cE\oplus \cO)$ and $\cQ$ is the universal quotient bundle on $\P(\cE\oplus \cO)$ that fits in an exact sequence
\[
0\to \cT\to p^*(\cE\oplus \cO)\to \cQ\to 0,
\]
then we have $\ol{t}(\cE)=c_d(\cQ)$.
\end{prop}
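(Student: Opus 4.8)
The plan is to identify the class $\ol{t}(\cE)\in \ringE^{2d,d}(\P(\cE\oplus\cO))$ defined in \eqref{Ori.15.2} with the top Chern class $c_d(\cQ)$ of the universal quotient bundle by a standard Grothendieck--style computation, now legitimized in our setting by the projective bundle theorem \ref{Ori.11} and the Whitney sum formula \ref{Ori.20}. First I would recall the defining relation for the Chern classes of $\cE$: writing $p\colon \P(\cE\oplus\cO)\to X$ for the projection, $\cT$ for the tautological line bundle and $\lambda':=c_1(\cT)$, Theorem \ref{Ori.11} gives that $\ringE^{**}(\P(\cE\oplus\cO))$ is a free $\ringE^{**}(X)$-module on $1,-\lambda',\dots,(-\lambda')^d$, and by definition \eqref{Ori.13.1} applied to the rank $d+1$ bundle $\cE\oplus\cO$ (whose Chern classes coincide with those of $\cE$ by Proposition \ref{Ori.50} and the Whitney sum formula \ref{Ori.20}) we have the relation
\[
\sum_{i=0}^{d+1} p^*(c_i(\cE))(-\lambda')^{d+1-i}=0.
\]
Since $c_{d+1}(\cE)=0$, this reads $\sum_{i=0}^{d} p^*(c_i(\cE))(-\lambda')^{d+1-i}=0$, i.e.\ $(-\lambda')\smile \ol{t}(\cE)=0$ where $\ol{t}(\cE)=\sum_{i=0}^d p^*(c_i(\cE))(-\lambda')^{d-i}$ is exactly the class in \eqref{Ori.15.2}.

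Next I would compute $c_d(\cQ)$ from the exact sequence $0\to \cT\to p^*(\cE\oplus\cO)\to \cQ\to 0$. The Whitney sum formula \ref{Ori.20} gives $c(\cT)\smile c(\cQ)=c(p^*(\cE\oplus\cO))=p^*c(\cE)$ (using $p^*c_i(\cE\oplus\cO)=p^*c_i(\cE)$). Since $\cT$ is a line bundle, $c(\cT)=1+\lambda'$, so $c(\cQ)=p^*c(\cE)\smile (1+\lambda')^{-1}=p^*c(\cE)\smile\sum_{k\ge 0}(-\lambda')^k$ — a finite sum in $\ringE^{**}(\P(\cE\oplus\cO))$ because $\cQ$ has rank $d$. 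Extracting the degree $d$ part,
\[
c_d(\cQ)=\sum_{i=0}^{d} p^*(c_i(\cE))\smile(-\lambda')^{d-i}=\ol{t}(\cE).
\]
This is the desired identity. One should note that $\ol{t}(\cE)$ was \emph{defined} via the restriction map to $\P(\cE)$ vanishing and lifting through \eqref{Ori.43.1}; here I am just checking that the explicit polynomial expression \eqref{Ori.15.2} agrees with $c_d(\cQ)$, which is an identity of classes in $\ringE^{**}(\P(\cE\oplus\cO))$, so no lifting subtlety intervenes.

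I expect the only real point requiring care — hence the ``main obstacle'', though it is mild — is bookkeeping the identification of the Chern classes of $\cE\oplus\cO$ with those of $\cE$, and making sure the inverse $(1+\lambda')^{-1}$ is interpreted correctly as the finite truncation forced by $\operatorname{rk}\cQ=d$; both are immediate from Propositions \ref{Ori.50}, \ref{Ori.20} and the freeness statement in Theorem \ref{Ori.11}. There is no use of $\A^1$-invariance here beyond what is already packaged into $\ringE$ being oriented in $\logSH(S)$, so the argument is word-for-word the classical one, and I would present it in roughly the two displayed computations above plus a sentence reconciling the two descriptions of $\ol{t}(\cE)$.
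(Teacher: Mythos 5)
Your argument is correct and is essentially the paper's proof: apply the Whitney sum formula \ref{Ori.20} (together with Proposition \ref{Ori.50}) to the sequence $0\to \cT\to p^*(\cE\oplus \cO)\to \cQ\to 0$ and solve for $c_d(\cQ)$, obtaining $c_d(\cQ)=\sum_{i=0}^{d}p^*(c_i(\cE))(-c_1(\cT))^{d-i}=\ol{t}(\cE)$. The only point to phrase with care is the inversion of $1+c_1(\cT)$: since $c_1(\cT)$ need not be nilpotent and $\ringE^{**}$ may be periodic, the extraction of the ``degree $d$ part'' should be done coefficientwise in an auxiliary indeterminate $t$ (exactly as the paper does), which is what your remark on the finite truncation forced by $\mathrm{rk}\,\cQ=d$ amounts to; your opening paragraph establishing $(-\lambda')\smile\ol{t}(\cE)=0$ is correct but not needed.
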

\begin{proof}
If $t$ is an indeterminate variable, then Proposition \ref{Ori.20} gives the formula
\[
(1+c_1(\cT)t)(1+c_1(\cQ)t+\cdots +c_d(\cQ)t^d)
=
1+c_1(p^*(\cE))t+\cdots+c_{d}(p^*(\cE))t^{d}.
\]
It follows that $c_d(\cQ)$ is equal to the coefficient of $t^d$ in
\[
(1+c_1(p^*(\cE))t+\cdots+c_{d}(p^*(\cE))t^{d})/(1+c_1(\cT)t),
\]
which is precisely $\ol{t}(\cE)$.
\end{proof}

\begin{prop}
[Excessive intersection formula]
\label{Ori.21}
Suppose that \eqref{Ori.21.2} is a cartesian square in $\Sm/S$ such that $i$ and $i'$ 
are closed immersions of codimensions $d$ and $d'$.
Then the square \eqref{Ori.21.1} commutes.
\end{prop}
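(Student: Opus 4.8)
Proposition \ref{Ori.21} asserts the commutativity of the excess intersection square \eqref{Ori.21.1} for an arbitrary cartesian square \eqref{Ori.21.2} of closed immersions of smooth $S$-schemes. The plan is to reduce this general statement to the special cases already established in Lemmas \ref{Ori.22}, \ref{Ori.27}, and \ref{Ori.24}, following the classical template from intersection theory (cf.\ \cite[\S 6]{Fulton}) but carried out entirely inside $\inflogSHS(S)$ using the deformation to the normal cone.

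The first step is the \emph{deformation reduction}. Using the functoriality diagram \eqref{Thom.1.8} of the compactified deformation to the normal cone, together with Theorem \ref{Thom.1}, both horizontal isomorphisms in \eqref{Ori.21.1} may be replaced by the Thom isomorphisms of $\Normal_Z X$ and $\Normal_{Z'}X'$. This reduces the problem to the case $X=\Normal_Z X$, $X'=\Normal_{Z'}X'$, with $i$ and $i'$ the zero sections, i.e.\ to proving that the square \eqref{Ori.21.4} commutes, which by Theorem \ref{Ori.16} and \eqref{Ori.43.1} is equivalent to the identity \eqref{Ori.21.3}:
\[
c_{d-d'}(\cF)\smile \ol{t}(\Normal_{Z'}X')
=
g'^*(\ol{t}(\Normal_Z X)),
\quad
\cF:=g^*\Normal_Z X/\Normal_{Z'}X'.
\]
Now $g'\colon \P(\Normal_{Z'}X'\oplus\cO)\to \P(\Normal_Z X\oplus\cO)$ is a morphism of projective bundles over $Z'$ and $g^*\Normal_Z X\oplus \cO$, so by Proposition \ref{Ori.25} we may rewrite $\ol{t}(\Normal_Z X)$ and $\ol{t}(\Normal_{Z'}X')$ as top Chern classes of the respective universal quotient bundles $\cQ$, $\cQ'$, which sit in exact sequences $0\to \cT'\to p'^*(g^*\Normal_Z X\oplus\cO)\to$ (something) with $\cT'\cong g'^*\cT$. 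The Whitney sum formula (Proposition \ref{Ori.20}) applied to the exact sequence relating $\cQ'$, $g'^*\cQ$, and $p'^*\cF$ then yields \eqref{Ori.21.3} after a purely formal computation with Chern polynomials, exactly as in the splitting-principle argument: one filters $\cF$ into line bundles via Proposition \ref{Ori.19} (which requires the projective bundle theorem \ref{Ori.11}), reducing to the line-bundle case, where Lemma \ref{Ori.22} and Lemma \ref{Ori.27} handle the two extreme cases (codimension-one immersion with $f=i$, and $\Normal_{Z'}X'\xrightarrow{\simeq}g^*\Normal_Z X$), and Lemma \ref{Ori.24} supplies the transversal-section input needed to glue them.

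More precisely, the proof proceeds by factoring the general square through intermediate ones: first deform, then use the splitting principle on $Z'$ to assume $\cF$ has a complete flag, inducting on $d-d'$. At each stage of the induction one splits off a line bundle summand and applies either Lemma \ref{Ori.27} (if the summand is detected by the normal bundle comparison) or Lemma \ref{Ori.22} (if it is a divisor class), using that Chern classes are multiplicative (Proposition \ref{Ori.20}) and compatible with pullback (Proposition \ref{Ori.18}) and that Thom classes are compatible with pullback (Proposition \ref{Ori.33}). The main obstacle, as usual in this circle of ideas, is bookkeeping: one must track the precise identification of tautological and quotient bundles under the maps $g'$ between the projective completions of the normal bundles, and verify that the isomorphisms \eqref{Thom.1.5}–\eqref{Thom.1.9} are natural enough that \eqref{Thom.1.8} really does reduce \eqref{Ori.21.1} to \eqref{Ori.21.4}. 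Since we are working without $\A^1$-invariance, the only subtlety beyond the classical argument is to make sure every homotopy invoked (e.g.\ in Lemma \ref{Ori.24}, via Lemma \ref{ProplogSH.7} and the compactified section of Lemma \ref{Ori.23}) lives in $\inflogSHS(S)$ rather than in $\SHS(S)$; but all such homotopies have already been supplied by the cited lemmas, so this step is routine here.
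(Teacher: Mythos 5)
Your proposal is correct and follows essentially the same route as the paper: reduce via the deformation diagram \eqref{Thom.1.8} to the zero-section case, i.e.\ to the identity \eqref{Ori.21.3}, and then deduce it from Proposition \ref{Ori.25} and the Whitney sum formula \ref{Ori.20} applied to the exact sequence $0\to \cQ'\to g'^*\cQ\to p'^*\cF\to 0$ of universal quotient bundles; this is exactly the paper's (very short) proof, since comparing degree-$d$ terms in $c(g'^*\cQ)=c(\cQ')\,c(p'^*\cF)$ gives $g'^*\ol{t}(\Normal_Z X)=c_d(g'^*\cQ)=c_{d'}(\cQ')\smile c_{d-d'}(p'^*\cF)=c_{d-d'}(\cF)\smile \ol{t}(\Normal_{Z'}X')$ outright. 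The additional splitting-principle induction on $d-d'$ you sketch, with a case analysis through Lemmas \ref{Ori.22} and \ref{Ori.27} and a "gluing" via Lemma \ref{Ori.24}, is unnecessary (and as described too vague to stand on its own): those lemmas are inputs to the earlier results (notably to Lemma \ref{Ori.24} and hence to the Whitney sum formula), not to the proof of Proposition \ref{Ori.21} itself, so you should drop that layer and keep only the direct Chern-polynomial computation.
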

\begin{proof}
We need to show \eqref{Ori.21.3}.
Let $\cQ$ and $\cQ'$ be the universal quotient bundles on $\P(\cE)$ and $\P(\cE')$.
There is a canonical exact sequence
\[
0 \to \cQ' \to g'^* \cQ \to p'^*\cF \to 0,
\]
where $p'\colon \P(\cE'\oplus \cO)\to Z'$ is the projection.
To conclude, apply Propositions \ref{Ori.20} and \ref{Ori.25}.
\end{proof}

\begin{cor}
[Self-intersection formula]
\label{Ori.26}
Suppose that $i\colon Z\to X$ is a closed immersion in $\Sm/S$ of codimension $d$.
Then for all $a\in \ringE^{**}(X)$ we have the formula
\[
i^*i_*a
=
c_d(\Normal_Z X)\smile a.
\]
\end{cor}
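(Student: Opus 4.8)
The plan is to deduce the self-intersection formula from the excessive intersection formula (Proposition \ref{Ori.21}) applied to a well-chosen cartesian square. First I would take the cartesian square
\[
\begin{tikzcd}
Z\ar[r,"\id"]\ar[d,"\id"']&
Z\ar[d,"i"]
\\
Z\ar[r,"i"]&
X,
\end{tikzcd}
\]
which is indeed cartesian in $\Sm/S$ since $i$ is a monomorphism. In the notation of \eqref{Ori.21.2} this is the case $f=i$, $Z'=Z$, $g=\id$, with $i'=\id\colon Z\to Z$ a closed immersion of codimension $d'=0$, while $i$ has codimension $d$. The excess bundle is then
\[
\cF
:=
g^*\Normal_Z X / \Normal_{Z'}X'
=
\Normal_Z X / \Normal_Z Z
=
\Normal_Z X,
\]
since $\Normal_Z Z=0$.

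Next I would unwind what Proposition \ref{Ori.21} says for this square, namely that \eqref{Ori.21.1} commutes. The top row of \eqref{Ori.21.1} is the composite
\[
\ringE^{*-2d,*-d}(Z)
\xrightarrow{c_{d-d'}(\cF)\smile g^*(-)}
\ringE^{*-2d',*-d'}(Z)
=
\ringE^{**}(Z),
\]
which in our case is $a\mapsto c_d(\Normal_Z X)\smile a$, while the vertical isomorphisms identify $\ringE^{*-2d,*-d}(Z)$ with $\ringE^{**}(X/(\Blow_Z X,E))$ via \eqref{Ori.16.1} and, for $d'=0$, identify $\ringE^{**}(Z)$ with $\ringE^{**}(Z/(\Blow_Z Z,E'))=\ringE^{**}(Z)$. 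The right-hand vertical map of \eqref{Ori.21.1} is the one induced by pullback along the morphism of pairs associated to $f=i$. Tracing through \eqref{Ori.16.2}, the left vertical composite $\ringE^{*-2d,*-d}(Z)\xrightarrow{\cong}\ringE^{**}(X/(\Blow_Z X,E))\to\ringE^{**}(X)$ is exactly the Gysin map $i_*$; and composing \eqref{Ori.16.1} for $Z\subset Z$ (trivially the identity) with $\ringE^{**}(Z/(\Blow_Z Z,E'))\to \ringE^{**}(Z)$ gives the identity of $\ringE^{**}(Z)$. Therefore the commutative square \eqref{Ori.21.1} for this choice, post-composed with the maps to $\ringE^{**}(X)$ and $\ringE^{**}(Z)$, yields exactly the identity
\[
i^*i_*a
=
c_d(\Normal_Z X)\smile a
\]
for all $a\in \ringE^{**}(X)$, where on the left we have used that the right vertical map of \eqref{Ori.21.1}, followed by the identification with $\ringE^{**}(Z)$, computes $i^*$ of the class coming from $i_*a$.

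The one point that needs care — and which I expect to be the main (if minor) obstacle — is checking that the right-hand vertical arrow of \eqref{Ori.21.1} for the square above, after the canonical identifications, really does compute $a\mapsto i^*(i_*a)$ rather than something else; this requires unwinding the functoriality of the Gysin construction \eqref{Ori.16.1}--\eqref{Ori.16.2} along the morphism of deformation diagrams \eqref{Thom.1.8}, together with the fact that the $\id\colon Z\to Z$ part contributes trivially. This is a routine but slightly tedious diagram chase using the naturality statement in Remark \ref{rmk:Gysin} (the commutativity of \eqref{Thom.1.7}) and the compatibility of \eqref{Ori.16.1} with the Thom isomorphism. Once this identification is in place, the formula follows immediately, with no further computation. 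Alternatively, one could give the classical argument via the deformation to the normal cone, reducing to the case $X=\Normal_Z X$ with $i$ the zero section and invoking Proposition \ref{Ori.63}, but routing through Proposition \ref{Ori.21} is cleaner since all the bookkeeping has already been done there.
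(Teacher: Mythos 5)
Your proof is correct and is exactly the paper's argument: the paper proves the corollary by citing Proposition \ref{Ori.21} for the cartesian square in which $f\colon X'\to X$ equals $i\colon Z\to X$ (so $Z'=Z$, $g=\id$, $d'=0$, excess bundle $\Normal_Z X$), which is precisely your square. Your additional unwinding of why the right-hand vertical map of \eqref{Ori.21.1} computes $i^*i_*$ (noting $\Blow_Z Z=\emptyset$, so the map of quotients factors through $X_+$) is a correct elaboration of what the paper leaves implicit.
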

\begin{proof}
This is a special case of Proposition \ref{Ori.21} when $f\colon X'\to X$ is equal to $i\colon Z\to X$.
\end{proof}

\begin{lem}
\label{Ori.39}
Suppose
\[
\begin{tikzcd}
X\ar[rd,"i"]\ar[d,"s_0"']
\\
\cE\ar[r,"j"]&
P
\end{tikzcd}
\]
is a diagram in $\lSm/S$, where $\cE$ is a rank $d$ vector bundle on $X$, $s_0$ is the zero section, $i$ is a closed immersion, and $j$ is an open immersion.
Then the naturally induced diagram
\[
\begin{tikzcd}[column sep=huge]
\ringE^{*-2d,*-d}(X)\ar[d,"i_*"']\ar[r,"(-)\smile t(\cE)"]&
\ringE^{**}(\cE/(\Blow_X \cE,E))\ar[d,"\cong"]
\\
\ringE^{**}(P)\ar[r,leftarrow]&
\ringE^{**}(P/(\Blow_X P,E))
\end{tikzcd}
\]
commutes.
\end{lem}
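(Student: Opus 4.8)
The statement we must prove (call it Lemma \ref{Ori.39}) asserts the commutativity of a square comparing two constructions: the Thom isomorphism for the vector bundle $\cE\to X$, and the Gysin map associated with the closed immersion $i\colon X\to P$ coming from $X\hookrightarrow \cE\xhookrightarrow{j} P$. The plan is to reduce everything to statements already established in \S\ref{ori}: namely the Thom isomorphism Theorem \ref{Ori.16}, the identification \eqref{Thom.1.5}/\eqref{Ori.16.1} of $M(P/(\Blow_X P,E))$ with $\Thom(\Normal_X P)$, and the naturality diagram \eqref{Thom.1.7} plus Proposition \ref{Ori.63}. First I would observe that, since $j$ is an open immersion and $s_0\colon X\to \cE$ is the zero section, there is a canonical identification $\Normal_X P\cong \Normal_X\cE\cong \cE$ (the first because $j$ is étale, the second because $X$ is the zero section of $\cE$); this is the key geometric input and it is elementary. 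Under this identification the right vertical arrow in the square of the lemma is precisely the isomorphism \eqref{Ori.16.1} with $X$ replaced by $\cE$ restricted appropriately, and one needs to track carefully that the excision map $\ringE^{**}(P/(\Blow_X P,E))\to \ringE^{**}(\cE/(\Blow_X\cE,E))$ induced by the open immersion $j$ is compatible with the deformation-to-the-normal-cone equivalences of Theorem \ref{Thom.1}.

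Next I would carry out the reduction in two steps. Step one: by Proposition \ref{Ori.63}, the isomorphism \eqref{Ori.16.1} applied to the zero section $X\to \cE$ coincides with the Thom isomorphism $-\smile t(\cE)$ of Theorem \ref{Ori.16}. This immediately handles the ``inner'' square, i.e.\ the one with $P$ replaced by $\cE$: the composite $\ringE^{*-2d,*-d}(X)\xrightarrow{(-)\smile t(\cE)}\ringE^{**}(\cE/(\Blow_X\cE,E))\xleftarrow{\cong}\ringE^{**}(\cE/(\Blow_X\cE,E))$ agrees with $(s_0)_*$ followed by the canonical map $\ringE^{**}(\cE)\leftarrow \ringE^{**}(\cE/(\Blow_X\cE,E))$. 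Step two: I would compare the Gysin maps for $s_0\colon X\to \cE$ and $i\colon X\to P$ using the open immersion $j\colon \cE\to P$. Because $j$ is étale (an open immersion) and restricts to the identity on $X$, the square
\[
\begin{tikzcd}
X\ar[r,"s_0"]\ar[d,"\id"']&
\cE\ar[d,"j"]
\\
X\ar[r,"i"]&
P
\end{tikzcd}
\]
is cartesian with $\Normal_X\cE\xrightarrow{\cong} \id^*\Normal_X P$ an isomorphism; hence Lemma \ref{Ori.27} (the excess-intersection formula in the case of an isomorphism of normal bundles) shows that $j^*\circ i_* = (s_0)_*$ after the evident identifications, and dually that the deformation-to-normal-cone equivalences for $(X,\cE)$ and $(X,P)$ are intertwined by $j$. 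This is exactly the functoriality expressed in diagram \eqref{Thom.1.7} applied to the cartesian square above.

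Finally I would assemble these: the right square of the lemma is the composite of (a) the canonical excision isomorphism $\ringE^{**}(P/(\Blow_X P,E))\xrightarrow{\cong}\ringE^{**}(\cE/(\Blow_X\cE,E))$ induced by $j$ — this is an isomorphism by strict Nisnevich excision (Proposition \ref{Ori.66}, since $j$ restricts to an isomorphism over a neighborhood of $X$), and (b) the identification of the latter group with $\ringE^{**}(\Thom(\cE))$ via \eqref{Ori.16.1}. Chasing an element $a\in \ringE^{*-2d,*-d}(X)$ around both ways and invoking the two steps above (Proposition \ref{Ori.63} for the Thom-isomorphism leg, and Lemma \ref{Ori.27} together with \eqref{Thom.1.7} for the Gysin leg) yields equality. \textbf{The main obstacle} I anticipate is purely bookkeeping: one must set up the various canonical identifications $\Normal_X P\cong \cE$, $\Blow_X P\supset \Blow_X\cE$, and the compatibility of the exceptional divisors $E$ on the two blow-ups, and then verify that the excision map $j^*$ genuinely intertwines the deformation-to-the-normal-cone maps of Theorem \ref{Thom.1} built on $X\times\boxx$ inside $\Deform_X\cE$ versus inside $\Deform_X P$. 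Once one knows (via Lemma \ref{Ori.27}, or equivalently the functoriality square \eqref{Thom.1.8}) that $j$ induces a morphism of the entire deformation diagrams \eqref{Thom.1.6}, the commutativity is formal; the care needed is in checking that $j\colon \cE\hookrightarrow P$ really does extend to a compatible open immersion $\Deform_X\cE\hookrightarrow \Deform_X P$ of the compactified deformation spaces, which follows because blow-ups and the complement construction $\Blow_{Z\times\boxx}(\cdot)-\Blow_Z(\cdot)$ commute with the flat (indeed open) base change $j$.
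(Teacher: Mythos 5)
Your proof is correct and follows essentially the same route as the paper: the paper's argument is a single diagram whose three constituent triangles commute by the definition of $i_*$, by Proposition \ref{Ori.63} applied to the zero section, and by the compatibility of the Gysin equivalences $\mathfrak{p}_{X,P}$ and $\mathfrak{p}_{X,\cE}$ under the open immersion $j$ (cited there via the deformation diagram \eqref{Thom.1.6}) --- which are exactly your three steps. Your only deviation is cosmetic: you justify that last compatibility through Lemma \ref{Ori.27} together with excision (Proposition \ref{Ori.66}) instead of invoking \eqref{Thom.1.6} directly, but Lemma \ref{Ori.27} is itself established from that same functoriality of the deformation to the normal cone, so the logical content is identical.
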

\begin{proof}
There is a diagram
\[
\begin{tikzcd}
&
\ringE^{*-2d,*-d}(X)\ar[d,"(-)\smile t(\cE)"']\ar[rdd,"(-)\smile t(\cE)",bend left]
\ar[ldd,"i_*"',bend right]\ar[ldd,"(a)",phantom]\ar[rdd,"(b)",phantom,near start]
\\
&
\ringE^{**}(\Thom(\cE))\ar[d,"\mathfrak{p}_{X,P}^*"']\ar[rd,"\mathfrak{p}_{X,\cE}^*"]
\ar[rd,"(c)",phantom,bend right=18,near start]
\\
\ringE^{**}(P)\ar[r,leftarrow]&
\ringE^{**}(P/(\Blow_X P,E))\ar[r,"\cong",leftarrow]&
\ringE^{**}(\cE/(\Blow_X \cE,E)).
\end{tikzcd}
\]
The diagram $(a)$ commutes by the definition of $i_*$, the diagram (b) commutes by Proposition \ref{Ori.63}, and the diagram (c) commutes by \eqref{Thom.1.6}.
Combine these to conclude.
\end{proof}

\begin{prop}
\label{Ori.35}
Suppose $X,Y\in \Sm/S$ and $\cE\to X$ and $\cF\to Y$ are vector bundles.
We consider $\cE\times_S \cF$ as a vector bundle on $X\times_S Y$.
If $\eta\colon \Thom(\cE)\wedge \Thom(\cF)\to \Thom(\cE\times_S \cF)$ is the equivalence \eqref{Ori.40.1} in $\inflogSH(S)$, then we have the formula
\[
t(\cE\times_S \cF)
=
\eta^*(t(\cE)\smile t(\cF)).
\]
\end{prop}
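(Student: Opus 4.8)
\textbf{Proof proposal for Proposition \ref{Ori.35}.}

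The plan is to reduce the claimed multiplicativity of Thom classes to the multiplicativity of total Chern classes via the defining formula \eqref{Ori.15.2}, combined with the compatibility of the Thom-isomorphism exact sequence \eqref{Ori.43.1} under external products. First I would recall that $t(\cE)\in \ringE^{2d,d}(\Thom(\cE))$ is characterized as the unique class mapping, under the injection in \eqref{Ori.43.1}, to the class $\ol{t}(\cE)=\sum_{i=0}^d p^*(c_i(\cE))(-\lambda')^{d-i}\in \ringE^{2d,d}(\P(\cE\oplus \cO))$ (here $p\colon \P(\cE\oplus\cO)\to X$ and $\lambda'=c_1(\cT)$ for the tautological line bundle $\cT$ on $\P(\cE\oplus\cO)$), and similarly for $\cF$ of rank $e$ and for $\cE\times_S \cF$ of rank $d+e$. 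So it suffices to prove the corresponding identity for the $\ol{t}$-classes in $\ringE^{**}(\P((\cE\times_S\cF)\oplus\cO))$ after transporting everything along the appropriate morphisms, i.e.\ to show that $\eta^*(t(\cE)\smile t(\cF))$ and $t(\cE\times_S\cF)$ have the same image in the relevant projective-bundle cohomology group, where the external cup product on Thom spaces is the one coming from \eqref{Ori.12.2} and is compatible with $\eta$ by construction.

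Next I would set up the geometric comparison. Let $\cQ$ be the universal quotient bundle on $\P(\cE\oplus\cO)$, and recall from Proposition \ref{Ori.25} that $\ol{t}(\cE)=c_d(\cQ)$; similarly $\ol{t}(\cF)=c_e(\cQ_{\cF})$ on $\P(\cF\oplus\cO)$ and $\ol{t}(\cE\times_S\cF)=c_{d+e}(\cQ_{\cE\times_S\cF})$ on $\P((\cE\times_S\cF)\oplus\cO)$. There is a natural closed immersion (the "join of the zero sections")
\[
\P(\cE\oplus\cO)\times_S \P(\cF\oplus\cO)\hookrightarrow \P((\cE\times_S\cF)\oplus\cO),
\]
or more precisely a diagram relating these spaces along which the tautological/quotient bundles restrict compatibly: the universal quotient bundle $\cQ_{\cE\times_S\cF}$ pulls back to an extension whose associated graded is $\cQ_\cE\boxtimes ?\oplus ?\boxtimes\cQ_\cF$ up to contributions from the trivial summands. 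Using the Whitney sum formula (Proposition \ref{Ori.20}) and the behaviour of Chern classes under pullback (Proposition \ref{Ori.18}), the class $c_{d+e}(\cQ_{\cE\times_S\cF})$ restricts to $c_d(\cQ_\cE)\smile c_e(\cQ_\cF)$ (external product), i.e.\ to $\ol{t}(\cE)\smile \ol{t}(\cF)$. Chasing this identity through the injections of \eqref{Ori.43.1} for $\cE$, $\cF$, and $\cE\times_S\cF$ — these injections being compatible with external products since the whole projective bundle formula package is — and invoking the uniqueness clause in the definition of the Thom class (Definition \ref{Ori.15}) yields $t(\cE\times_S\cF)=\eta^*(t(\cE)\smile t(\cF))$.

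An alternative, possibly cleaner route is to use Proposition \ref{Ori.70}: the Thom class induces a mapping-spectrum equivalence $\map(\Sigma_T^\infty X_+,\Sigma^{p,q}\ringE)\xrightarrow{\simeq}\map(\Sigma_T^\infty\Thom(\cE),\Sigma^{p+2d,q+d}\ringE)$, and the two sides of the claimed formula both define such equivalences for $\Thom(\cE\times_S\cF)$; one then checks they agree by restricting along the zero section $X\times_S Y\to \cE\times_S\cF$, where both reduce to the identity by the normalization of $t$ (its restriction to the base point is $\pm T^{\wedge d}\wedge \unit$) together with Proposition \ref{Ori.33} and the monoidal compatibility of the ring structure map $\ringE\wedge\ringE\to\ringE$. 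The main obstacle I anticipate is the bookkeeping in the first approach: correctly identifying the splitting of $\cQ_{\cE\times_S\cF}$ restricted to the join of the projectivizations, keeping track of the trivial line bundle summands and the sign conventions in \eqref{Ori.15.2}, and verifying that the external-product structure on the exact sequences \eqref{Ori.43.1} is the one induced by \eqref{Ori.12.2}. By contrast, the compatibility statements themselves (Propositions \ref{Ori.18}, \ref{Ori.20}, \ref{Ori.25}, \ref{Ori.33}) are already available, so once the geometry of the join map is pinned down the argument is essentially formal; I would likely present the reduction to $\ol{t}$-classes and then cite the stable approach via Proposition \ref{Ori.70} to finish, as in \cite[Proposition 7.4.15]{logDM}.
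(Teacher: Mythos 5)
Your first route breaks down at the ``join of the zero sections'': there is no closed immersion $\P(\cE\oplus\cO)\times_S\P(\cF\oplus\cO)\hookrightarrow\P((\cE\times_S\cF)\oplus\cO)$. The only natural comparison map is defined on the open piece $\cE\times_S\cF$ and does not extend to the compactifications without blowing up --- this failure is exactly the difficulty that makes the equivalence $\eta$ of Proposition \ref{Ori.40} nontrivial in the paper (it is constructed through admissible blow-ups, not a morphism of projective bundles). So the asserted restriction identity $c_{d+e}(\cQ_{\cE\times_S\cF})\mapsto c_d(\cQ_\cE)\smile c_e(\cQ_\cF)$ has no map along which to restrict, and the ``essentially formal'' diagram chase through the sequences \eqref{Ori.43.1} is not available as stated; the compatibility of $i_*$ (or of the Thom class) with these quotients is itself a nontrivial input, supplied in the paper by Lemma \ref{Ori.39} via the deformation to the normal cone.

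Your fallback via Proposition \ref{Ori.70} has a second, independent gap: two classes in $\ringE^{2(d+e),d+e}(\Thom(\cE\times_S\cF))$ are not determined by their restriction along the zero section, let alone by their restriction to the base point. Restriction along the zero section composed with the Thom isomorphism is cup product with the top Chern class, which is not injective in general, so ``both reduce to the identity on $X\times_S Y$, hence they agree'' does not close the argument. More importantly, neither route engages with the genuinely new obstacle in this setting: the paper's proof reduces (after using Lemma \ref{Ori.39} and surjectivity of $i^*$ from the projective bundle theorem) to showing $i_*i^*(-)=i^*(-)\smile\ol{t}(\cE)\smile\ol{t}(\cF)$ on $\P(\cE\oplus\cO)\times_S\P(\cF\oplus\cO)$, which it gets from the excess intersection formula \ref{Ori.21} applied to a section $s$ of $\cP\times_S\cQ$ together with the Whitney sum formula and Proposition \ref{Ori.25}; the crux is then the identity $s^*=s_0^*$ comparing a nonvanishing section with the zero section, which classically would be an $\A^1$-homotopy but here requires the compactified homotopy of Lemma \ref{Ori.23} (as in Lemma \ref{Ori.24}). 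Some substitute for that step is unavoidable, and your proposal does not supply one.
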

\begin{proof}
For simplicity of notation, we set $P:=\P(\cE\oplus \cO)$, $Q:=\P(\cF\oplus \cO)$, and $R:=P\times_S Q$.
Let $i\colon Z:=X\times_S Y\to R$ be the zero section.
We need to show that the top triangle in the naturally induced diagram
\[
\begin{tikzcd}[column sep=tiny, row sep=small]
&
\ringE^{**}(Z)\ar[ld,"(-)\smile t(\cE)\smile t(\cF)"'] \ar[rd,"(-)\smile t(\cE\times_S \cF)"] \ar[dd,"i_*",near end,shift left=1ex] \ar[dd,"(-)\smile \ol{t}(\cE)\smile \ol{t}(\cF)"',shift right=1ex,near end]
\\
\ringE^{**}(\Thom(\cE)\wedge \Thom(\cF))\ar[rr,crossing over,leftarrow]\ar[dd,"\cong"']&
&
\ringE^{**}(\Thom(\cE\times_S \cF))\ar[dd,"\cong"]
\\
&
\ringE^{**}(R)\ar[rd,leftarrow]\ar[ld,leftarrow]
\\
\ringE^{**}(P/(\Blow_X P,E)\wedge Q/(\Blow_Y Q,F))\ar[rr,leftarrow]&
&
\ringE^{**}(R/(\Blow_Z R,G))
\end{tikzcd}
\]
commutes, where $E$, $F$, and $G$ are the exceptional divisors.
The bottom triangle and the front square clearly commute.
The left parallelogram commutes by the constructions of $t(\cE)$ and $t(\cF)$ from $\ol{t}(\cE)$ and $\ol{t}(\cF)$.
The right parallelogram commutes owing to Lemma \ref{Ori.39}.
Therefore, it remains to show $i_*(-)=(-)\smile \ol{t}(\cE)\smile \ol{t}(\cF)$.
Since $i^*\colon \ringE^{**}(R)\to \ringE^{**}(X)$ is surjective by Theorem \ref{Ori.11}, 
it suffices to show the formula
\begin{equation}
\label{Ori.35.3}
i_*i^*(-)
=
i^*(-)\smile \ol{t}(\cE)\smile \ol{t}(\cF).
\end{equation}

\

Let $\cP$ and $\cQ$ be the universal quotient bundles on $P$ and $Q$.
We set $\cR:=\cP\times_S \cQ$.
The composite $p^* \cO \to p^*(\cE\oplus \cO)\to \cP$ gives a section of $\cP$ whose zero scheme is the zero section of $P$.
Similarly, 
we have a section of $\cQ$ whose zero scheme is the zero section of $Q$.
Combining these, we obtain a section $s$ of $\cR$.
This gives a cartesian square
\[
\begin{tikzcd}
Z\ar[r,"i"]\ar[d,"i"']&
R\ar[d,"s_0"]
\\
R\ar[r,"s"]&
\cR.
\end{tikzcd}
\]
Here $s_0$ denotes the zero section.
Proposition \ref{Ori.21} yields the formula
\begin{equation}
\label{Ori.35.4}
s^*s_{0*}=i_*i^*.
\end{equation}

\

On the other hand, 
applying Proposition \ref{Ori.21} to the cartesian square
\[
\begin{tikzcd}
R\ar[r,"\id"]\ar[d,"\id"']&
R\ar[d,"s"]&
\\
R\ar[r,"s"]&
\cR
\end{tikzcd}
\]
we deduce the formula
\[
s^*s_*=c_{r+s}(\cR).
\]
Together with Propositions \ref{Ori.20} and \ref{Ori.25}, we have 
\begin{equation}
\label{Ori.35.5}
s^*s_*=(-)\smile \ol{t}(\cE)\smile \ol{t}(\cF).
\end{equation}
Therefore, to deduce \eqref{Ori.35.3} from \eqref{Ori.35.4} and \eqref{Ori.35.5}, 
it suffices to show $s_0^*=s^*$.
This follows from Lemma \ref{Ori.23} as in the proof of Lemma \ref{Ori.24}.
\end{proof}

\subsection{Cohomology of Grassmannians}\label{ssec:Grass}
Throughout this subsection, 
we fix a scheme $S\in \Sch$ and an oriented homotopy commutative monoid $\ringE$ in $\inflogSH(S)$.
Let $\cT_{r,n}$ be the tautological bundle on $\Gr(r,n)$, 
and let $\cQ_{r,n}$ be the universal quotient bundle on $\Gr(r,n)$.
Recall that there is a canonical exact sequence
\[
0 \to \cT_{r,n} \to \cO^r \to \cQ_{r,n} \to 0.
\]
We define the ring
\begin{equation} \label{eq:ring_grass}
R_{r,n}
:=
\Z[x_1,\ldots,x_r,y_1,\ldots,y_{n-r}]/I_{r,n},
\end{equation}
where $I_{r,n}$ is the ideal generated by the polynomials $z_1,\ldots,z_n$ satisfying
\[
1+tz_1+\cdots+t^nz_n
=
(1+tx_1+\cdots + t^r x_r)(1+ty_1+\cdots +t^{n-r}y_{n-r}).
\]
It is well known that the singular cohomology ring $H^*(\Gr(r,\C^n))$ of the complex 
Grassmannian manifold $\Gr(r,\C^n)$ is isomorphic to $R_{r,n}$.

By Proposition \ref{Ori.20}, there is a canonical homomorphism
\[
R_{r,n}
\to
\ringE^{**}(\Gr(r,n))
\]
sending $x_i$ to $c_i(\cT_{r,n})$ and $y_i$ to $c_i(\cQ_{r,n})$.
Thus, there is a naturally induced graded ring homomorphism
\[
\varphi_{r,n}
\colon
\ringE^{**}\otimes_{\Z} R_{r,n}
\to
\ringE^{**}(\Gr(r,n)).
\]
Here we assign the degree $(2i,i)$ to $x_i$ and $y_i$.

\ 

We claim that $\varphi_{r,n}$ is an isomorphism.
In $\A^1$-homotopy theory, 
this was shown by Naumann-Spitzweck-{\O}stv{\ae}r \cite[Proposition 6.1]{zbMATH05663788}, using explicit $\A^1$-homotopies. Without them, we need to produce a suitable logarithmic version of the equation in
\cite[(18)]{zbMATH05663788}. Here is where the proof significantly differs from the classical one: our new key input is Lemma \ref{Ori.8} below.

\

Recall that the scheme $\Gr(r,n)$ parameterizes the rank $r$ subbundles of the rank $n$ trivial bundle $\cO^n$.
The formula $V\mapsto V\oplus \cO$ gives a closed immersion
\begin{equation}
\label{Ori.7.1}
i\colon \Gr(r-1,n-1)\to \Gr(r,n),
\end{equation}
and the formula $V\mapsto V\oplus 0$ gives a closed immersion
\begin{equation}
\label{Ori.7.7}
u\colon \Gr(r,n-1)\to \Gr(r,n).
\end{equation}
There are canonical isomorphisms of vector bundles
\begin{equation}
\label{Ori.7.10}
\begin{split}
&i^*\cT_{r,n} \cong \cT_{r-1,n-1}\oplus \cO,
\;
i^*\cQ_{r,n} \cong \cQ_{r-1,n-1},
\\
&u^*\cT_{r,n} \cong \cT_{r,n-1},
\text{ and }
u^*\cQ_{r,n} \cong \cQ_{r,n-1}\oplus \cO.
\end{split}
\end{equation}
Hence there are commutative diagrams
\begin{equation}
\label{Ori.7.5}
\begin{tikzcd}
\ringE^{**}\otimes_{\Z} R_{r,n}\ar[r,"1\otimes \alpha"]\ar[d,"\varphi_{r,n}"']&
\ringE^{**}\otimes_{\Z} R_{r-1,n-1}\ar[d,"\varphi_{r-1,n-1}"]
\\
\ringE^{**}(\Gr(r,n))\ar[r,"i^*"]&
\ringE^{**}(\Gr(r-1,n-1))
\end{tikzcd}
\end{equation}
and
\begin{equation}
\label{Ori.7.6}
\begin{tikzcd}
\ringE^{**}\otimes_{\Z} R_{r,n}\ar[r,"1\otimes \beta"]\ar[d,"\varphi_{r,n}"']&
\ringE^{**}\otimes_{\Z} R_{r,n-1}\ar[d,"\varphi_{r,n-1}"]
\\
\ringE^{**}(\Gr(r,n))\ar[r,"u^*"]&
\ringE^{**}(\Gr(r,n-1)),
\end{tikzcd}
\end{equation}
where $\alpha\colon R_{r,n}\to R_{r-1,n-1}$ sends $y_i$ to $y_i$ for all $1\leq i\leq n-r$ and $\beta\colon R_{r,n}\to R_{r,n-1}$ sends $x_i$ to $x_i$ for all $1\leq i\leq r$.

\begin{lem}
\label{Ori.8}
Let $X$ be the blow-up of $\Gr(r,n)$ along the closed immersion $i\colon \Gr(r-1,n-1)\to \Gr(r,n)$ in 
\eqref{Ori.7.1} with exceptional divisor $E$.
Then $X$ is a $\P^r$-bundle on $\Gr(r,n-1)$, and $(X,E)$ is a $(\P^r,\P^{r-1})$-bundle on $\Gr(r,n-1)$.
\end{lem}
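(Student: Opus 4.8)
The plan is to describe $\Blow_{\Gr(r-1,n-1)}(\Gr(r,n))$ explicitly as a variety of flags, which will simultaneously exhibit the $\P^r$-bundle structure over $\Gr(r,n-1)$ and, with the right log structure, the $(\P^r,\P^{r-1})$-bundle structure.

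First I would set up the right incidence-variety description. Recall $\Gr(r,n)$ parameterizes rank $r$ subbundles $V\subset \cO^n$, and the closed subscheme $i\colon \Gr(r-1,n-1)\hookrightarrow \Gr(r,n)$ is the locus where $V\supset \cO\cdot e_n$ (the last coordinate line), equivalently $V=W\oplus \cO e_n$ with $W\subset \cO^{n-1}$ of rank $r-1$. Away from this locus, the composite $V\to \cO^n\to \cO$ (projection to the last coordinate) is surjective, so $V':=V\cap \cO^{n-1}$ has rank $r-1$; this gives the rational map $\Gr(r,n)\dashrightarrow \Gr(r-1,n-1)$ that does \emph{not} extend (indeterminate exactly along $i$). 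The blow-up should then be identified with the closure of the graph, i.e., with the variety $X$ parameterizing pairs $(V, W)$ where $W\subset V\cap \cO^{n-1}$ has rank $r-1$ (a partial flag condition). The projection $X\to \Gr(r-1,n-1)$, $(V,W)\mapsto W$, identifies $X$ with the projectivization $\P(\cT_{r-1,n-1}^{\perp})$ or similar — but more useful is the \emph{other} projection. Sending $(V,W)$ to $W':=W\cap\cO^{n-1}$... no: instead send $(V,W)$ to the rank $r$ subbundle obtained from $W$ inside $\cO^{n-1}$? The clean statement I would aim for: $X\to \Gr(r,n-1)$ via $(V,W)\mapsto$ (some canonically associated rank $r$ subbundle of $\cO^{n-1}$), realizing $X$ as $\P(\cE)$ for an explicit rank $r+1$ bundle $\cE$ on $\Gr(r,n-1)$. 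Concretely, over $\Gr(r,n-1)$ with tautological $\cT:=\cT_{r,n-1}$, one expects $X\cong \P(\cT\oplus\cO)$: a point is a rank $r$ bundle $U\subset\cO^{n-1}$ together with a hyperplane (line in $U\oplus\cO e_n$), and the exceptional divisor $E=\P(\cT)$ sits inside as the section at infinity. I would verify this by comparing the two descriptions on the standard affine charts of $\Gr(r,n)$, checking the blow-up charts match those of a $\P(\cT\oplus\cO)$-bundle, much as in Lemma \ref{Ori.48}.

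Second, for the logarithmic refinement: once $\underline{X}=\P(\cT\oplus\cO)$ as a $\Gr(r,n-1)$-scheme and $E=\P(\cT)$ is the divisor at infinity, I would check that the compactifying log structure on $X$ (associated with the open immersion $\Gr(r,n)-i(\Gr(r-1,n-1))\hookrightarrow X$, which is the complement of $E$ inside $\underline X$ — here using that $\underline X\to \underline{\Gr(r,n)}$ is an isomorphism away from $E$) is exactly the one making $(X,E)$ the fiberwise-$(\P^r,\P^{r-1})$ log scheme. Fiberwise over a point of $\Gr(r,n-1)$ this is the standard fact that $\P(\cO^{r+1})$ with its hyperplane at infinity is $(\P^r,\P^{r-1})$; the bundle statement follows by the Zariski-local triviality of $\cT$ and compatibility of compactifying log structures with strict base change, exactly as $\SmlSm$-products behave in Proposition \ref{logtop.5}.

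The main obstacle I expect is pinning down the explicit isomorphism $\Blow_{\Gr(r-1,n-1)}(\Gr(r,n))\cong \P(\cT_{r,n-1}\oplus\cO)$ correctly and functorially — in particular getting the right bundle (is it $\cT\oplus\cO$, or the universal quotient, or a twist?) and matching up the exceptional divisor with $\P(\cT_{r,n-1})$ rather than some other natural subbundle. This is a chart-by-chart computation on the standard affine cover of $\Gr(r,n)$ indexed by $r$-subsets of $\{1,\dots,n\}$: on the chart where the $r$ ``pivot'' rows form the identity, the blow-up of the coordinate locus cut out by the equations defining $i$ should visibly be a $\boxx^r$-type chart, and gluing these recovers the projective bundle. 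Everything else — the $\P^r$-bundle claim for $\underline X$, the log structure identification — should then follow formally, so I would not belabor those computations.
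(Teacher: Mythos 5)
There is a genuine gap, and it sits at the heart of the lemma. The rational map you propose to resolve, $V\mapsto V\cap\cO^{n-1}$, is defined wherever $V\not\subset\cO^{n-1}$; its indeterminacy locus is the image of $u\colon\Gr(r,n-1)\to\Gr(r,n)$ from \eqref{Ori.7.7}, which is disjoint from $i(\Gr(r-1,n-1))$ --- indeed on the image of $i$ this map is regular and sends $W\oplus\cO e_n$ to $W$. Correspondingly, your incidence variety $\{(V,W):W\subset V\cap\cO^{n-1},\ W\text{ of rank }r-1\}$ has $\P^{n-r}$-fibers over $\Gr(r-1,n-1)$ and modifies $\Gr(r,n)$ only along $u(\Gr(r,n-1))$ (where $V\subset\cO^{n-1}$), so it is not $\Blow_{\Gr(r-1,n-1)}\Gr(r,n)$. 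The map actually resolved by this blow-up is $V\mapsto\pi(V)\in\Gr(r,n-1)$, where $\pi\colon\cO^n\to\cO^{n-1}$ deletes the last coordinate; it is undefined exactly where $e_n\in V$, i.e.\ along the image of $i$, and its graph closure is $\{(V,U):V\subset U\oplus\cO e_n\}$, which over $\Gr(r,n-1)$ is the bundle of rank-$r$ subbundles (hyperplanes) of $\cT_{r,n-1}\oplus\cO$, hence a $\P^r$-bundle, with $E=\{e_n\in V\}$ the $\P^{r-1}$-subbundle of those $V$ containing $e_n$. This is the picture you were groping for with $\P(\cT\oplus\cO)$, but as written your construction starts from the wrong map, and your own uncertainty about ``which bundle'' is a symptom of that.

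Even after correcting the map, the substantive step remains: one must prove the graph closure coincides with the blow-up (pullback of the ideal of the center is invertible plus the universal property, and then that the induced morphism is an isomorphism), and in practice this is a chart computation --- precisely the step you defer as the ``main obstacle.'' That verification is the entire content of the paper's proof: it embeds $\Gr(r,n)$ by Pl\"ucker coordinates, realizes the ambient blow-up torically via a star subdivision, takes $X$ as the strict transform, and checks on explicit affine charts that $q^{-1}(U)\cong\P^r\times U$ over the big cell $U$ of $\Gr(r,n-1)$, with the complement of $E$ over $U$ equal to $U\times\A^r$; this last identification is what gives the $(\P^r,\P^{r-1})$-statement for the compactifying log structure, exactly as in your final paragraph (which is fine in outline but rests on the unproved identification). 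Note also that the lemma only requires Zariski-local triviality of the pair $(X,E)$, not a global isomorphism with $\P(\cT_{r,n-1}\oplus\cO)$ and its section at infinity; aiming for the stronger global statement, as in Lemma \ref{Ori.48}, is legitimate but then you must actually settle the dual/twist conventions you left open rather than treat them as a detail.
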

\begin{proof}
We will use the Pl\"ucker embedding
\[
\Gr(r,n)\to \P^{\binom{n}{r}-1}
\]
sending $\mathrm{span}(v_1,\ldots,v_n)$ to $v_1\wedge \cdots \wedge v_n$.
For simplicity of notation, we set $p=\binom{n-1}{r}$ and $q=\binom{n-1}{r-1}$.
We express the coordinates of $\P^{\binom{n}{r}-1}=\P^{p+q-1}$ by $w_I$ for subsets 
$I\subset \{1,\ldots,n\}$ such that $|I|=r$.

\

Let $Y$ denote the blow-up of $\P^{p+q-1}$ along the closed subscheme defined 
by the equations $w_I=0$ for all $I$ containing $n$.
Then $X$ is the strict transform of $\Gr(r,n)$ in $Y$.
Let $e_1,\ldots,e_{p+q-1}$ be the standard coordinates in $\N^{p+q-1}$.
Note that projective space $\P^{p+q-1}$ is the toric variety associated with
\begin{gather*}
\sigma:=\Cone(e_1,\ldots,e_{p+q-1}),
\\
\Cone(e_1,\cdots,e_{i-1},e_{i+1},\ldots,e_{p+q-1},-f)\;\; (1\leq i\leq p+q),
\end{gather*}
where $f:=e_1+\cdots+e_{p+q-1}$.
By arranging the coordinates,
we may assume that $Y$ is the star subdivision relative to 
$\Cone(e_{p+1},\ldots,e_{p+q-1},-f)$.
Then $Y$ is the toric variety associated with a fan $\Sigma$ whose maximal cones are
\begin{gather*}
\sigma,
\;
\tau_{ij}:=\Cone(e_1,\cdots,e_{i-1},e_{i+1},\ldots,e_{j-1},e_{j+1},\ldots,e_{p+q-1},-e_j,-f)
\\
(1\leq i\leq p,\; p+1\leq j\leq p+q),
\\
\rho_i:=\Cone(e_1,\cdots,e_{i-1},e_{i+1},\ldots,e_{p+q-1},-e_1-\cdots-e_p)
\;\;
(p+1\leq i\leq p+q).
\end{gather*}

\

On the other hand, $\P^{p-1}$ is the toric variety associated with a fan $\Delta$ whose maximal cones are
\begin{gather*}
\sigma':=\Cone(e_1,\ldots,e_{p-1}),
\\
\tau_{i}':=\Cone(e_1,\ldots,e_{i-1},e_{i+1},\ldots,e_{p-1},-e_1-\cdots -e_p)\;\;(1\leq i\leq p).
\end{gather*}
Let $h\colon \N^{p+q-1}\to \N^{p-1}$ denote the homomorphism sending $e_i$ to $e_i$ if 
$1\leq i\leq p-1$ and to $0$ if $i\geq p$.
Then $h(\sigma)\subset \sigma'$ and $h(\tau_{ij}),h(\rho_i)\subset \tau_i'$.
This means that $h$ defines a morphism of fans $\Sigma\to \Delta$, 
which gives a morphism of scheme $p\colon Y\to \P^{p-1}$.

\

There is a dense open subscheme $U:=\Spec{A}$ of $\Gr(r,n-1)$, 
where
\begin{equation}
\label{Ori.8.1}
A:=\Z[a_{ij} \vert i\in [1,r],\;j\in [r+1,n-1]].
\end{equation}
Similarly, 
there is a dense open subscheme $V_0:=\Spec{B_0}$ of $\Gr(r,n)$, where
\begin{equation}
\label{Ori.8.2}
B_0:=\Z[a_{ij}\vert i\in [1,r],\; j\in [r+1,n]].
\end{equation}
The morphism $p$ maps $V_0$ onto $U$.
Since $p$ is proper, this implies that $p$ maps $X$ onto $\Gr(r,n-1)$.
Let $q\colon X\to \Gr(r,n-1)$ be the restriction of $p$.

\ 

We claim that $q$ is a $\P^r$-bundle.
Without loss of generality, it suffices to show that $q^{-1}(U)\to U$ is a $\P^r$-bundle.
There are dense open subschemes $V_s:=\Spec{B_s}$ of $\Gr(r,n)$ for $1\leq s\leq r$, where
\[
B_s:=\Z[b_{ij} \vert i\in [1,r],\; j\in \{s\}\cup [r+1,n-1]].
\]
We view $B_s$ as a subring of $\Z(a_{ij}\vert i\in [1,r],j\in [r+1,n])$.
By applying elementary row operations, we have identifications
\begin{equation}
\label{Ori.8.3}
b_{ij}
=
\left\{
\begin{array}{ll}
1/a_{sn} & \text{if }(i,j)=(s,s),
\\
-a_{in}/a_{sn} &\text{if }i\neq s\text{ and }j=s,
\\
a_{sj}/a_{sn} &\text{if }i=s\text{ and }j\neq s,
\\
a_{ij}-a_{in}a_{sj}/a_{sn} & \text{if }i,j\neq s.
\end{array}
\right.
\end{equation}
There is a dense open subscheme $V_s':=\Spec{B_s'}$ of $V_s\times_{\Gr(r,n)}X$, where
\[
B_s':=\Z[b_{ss}]\otimes \Z[b_{sj}/b_{ss}\vert j\in [r+1,n-1]] \otimes \Z[b_{ij}\vert i\in [1,r]-\{s\},\;j\in [r+1,n-1]].
\]
With the identifications \eqref{Ori.8.3} we have
\begin{equation}
\label{Ori.8.4}
B_s'=\Z[1/a_{sn}]\otimes \Z[a_{in}/a_{sn}\vert i\in [1,r]-\{s\}]\otimes \Z[a_{ij}\vert i\in [1,r],\; j\in [r+1,n-1]].
\end{equation}
Combine \eqref{Ori.8.1}, \eqref{Ori.8.2}, and \eqref{Ori.8.4} to deduce that the union $W$ of the open subschemes $V_0$ and $V_s'$ for $1\leq s\leq r$ of $X$ is isomorphic to $\P^r\times U$.
Hence $W$ is proper over $U$, so $W=q^{-1}(U)$.
This completes the proof that $q$ is a $\P^r$-bundle.

\ 

The log structure on $q^{-1}(U)\times_X (X,E)$ is the compactifying log structure associated 
with the open immersion $V_0\to q^{-1}(U)$.
Hence $(X,E)\to \Gr(r,n-1)$ is a $(\P^r,\P^{r-1})$-bundle since $V_0\cong U\times \A^r$.
\end{proof}

The images of $i\colon \Gr(r-1,n-1)\to \Gr(r,n)$ and $u\colon \Gr(r,n-1)\to \Gr(r,n)$ are disjoint.
Hence there is a naturally induced commutative diagram
\begin{equation}
\label{Ori.7.13}
\begin{tikzcd}[column sep=small, row sep=small]
&& \Gr(r,n-1)\ar[d,"v"]\ar[ld,"u"']
\\
\Gr(r-1,n-1)\ar[r,"i"]&
\Gr(r,n)\ar[r,leftarrow,"j"']&
(\Blow_{\Gr(r-1,n-1)}\Gr(r,n),E)\ar[d,"p"]
\\
&& \Gr(r,n-1),
\end{tikzcd}
\end{equation}
where $p$ and $j$ are the projections, and $E$ is the exceptional divisor.
From \eqref{Ori.16.3}, we have an exact sequence
\begin{equation}
\label{Ori.7.8}
\begin{split}
&\ringE^{*-2n+2r,*-n+r}(\Gr(r-1,n-1))
\xrightarrow{i_*}
\ringE^{**}(\Gr(r,n))
\\
\xrightarrow{j^*}
&\ringE^{**}(\Blow_{\Gr(r-1,n-1)}\Gr(r,n),E)
\to
\ringE^{*-2n+2r+1,*-n+r}(\Gr(r-1,n-1)).
\end{split}
\end{equation}
Since $p$ is a $(\P^r,\P^{r-1})$-bundle, $p^*$ is an isomorphism owing to Proposition \ref{ProplogSH.4}.
From $pv=\id$, we deduce that $v^*$ is an isomorphism.
Hence \eqref{Ori.7.8} becomes an exact sequence
\begin{equation}
\label{Ori.7.9}
\begin{split}
&\ringE^{*-2n+2r,*-n+r}(\Gr(r-1,n-1))
\xrightarrow{i_*}
\ringE^{**}(\Gr(r,n))
\\
\xrightarrow{u^*}
&\ringE^{**}(\Gr(r,n-1))
\xrightarrow{\delta}
\ringE^{*-2n+2r,*-n+r+1}(\Gr(r-1,n-1))
\end{split}
\end{equation}

\begin{lem}
\label{Ori.28}
Let $u,u'\colon \Gr(r-1,n)\rightrightarrows \Gr(r,n)$ be the two closed immersions given by the inclusions 
$\cO^{n-1}\rightrightarrows \cO^n$ sending $(x_1,\ldots,x_{n-1})$ to $(x_1,\ldots,x_{n-1},0)$ and 
$(x_1,\ldots,x_{n-2},0,x_{n-1})$.
Then $u^*=u'^*\colon \ringE^{**}(\Gr(r,n))\to \ringE^{**}(\Gr(r,n-1))$.
\end{lem}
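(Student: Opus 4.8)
The statement to prove is Lemma~\ref{Ori.28}: the two closed immersions $u,u'\colon \Gr(r-1,n)\rightrightarrows \Gr(r,n)$ coming from the two coordinate inclusions $\cO^{n-1}\rightrightarrows \cO^n$ induce the same homomorphism on $\ringE$-cohomology. My plan is to exhibit $u$ and $u'$ as the two ``endpoint'' sections of a single family parametrized by a compactified interval $\boxx$, and then invoke Lemma~\ref{ProplogSH.7}. Concretely, the first idea is to use the $\GL_n$-action on $\Gr(r,n)$: the linear automorphism of $\cO^n$ that transposes the last two coordinates (which is a permutation matrix, hence lies in $\SL_n$) carries the image of $u$ onto the image of $u'$ and commutes with $u,u'$ appropriately. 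So it suffices to show that this permutation automorphism $g$ of $\Gr(r,n)$ is equivalent to the identity in $\logSHS(S)$ — or more precisely that $g\circ u$ and $u'$ agree there. Just as in the proofs of Lemma~\ref{Ori.6} and Lemma~\ref{Pic.3}, one writes the interpolating rotation/shear
\[
h\colon \Gr(r,n)\times \A^1\to \Gr(r,n)
\]
coming from a one-parameter family of matrices in $\SL_n$ connecting the identity to the transposition (e.g. a product of three elementary shears, exactly as in the $2\times 2$ computation in Lemma~\ref{Ori.55}, applied to the coordinate plane spanned by $e_{n-1},e_n$), and then the real work is to \emph{compactify} this $\A^1$-homotopy.

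\textbf{Key steps, in order.} First I would reduce, using the functor $p^*$ for $p\colon S\to\Spec\Z$ (as in the proof of Lemma~\ref{Ori.6}), to the case $S=\Spec\Z$. Second, I would write down the explicit $\A^1$-family $h\colon \Gr(r,n)\times\A^1\to\Gr(r,n)$ acting by the shear/rotation family on the last two coordinates, and record that $h|_{t=0}$ (up to relabeling) recovers $u'$ and $h|_{t=1}$ recovers $u$, more precisely that the two composites $\Gr(r-1,n)\times\A^1\to\Gr(r,n)$ obtained by restricting $h$ along $u\times\id$ have $u$ and $u'$ as their two $\A^1$-endpoints. Third — the crucial geometric input — I would build an admissible blow-up $p\colon X\to \Gr(r-1,n)\times\boxx$ along a smooth center contained in the boundary, together with a morphism $g\colon X\to \Gr(r,n)$ extending $h$, such that the zero and one sections of $\Gr(r-1,n)\times\boxx$ lift to maps $i_0',i_1'\colon \Gr(r-1,n)\rightrightarrows X$ with $gi_0'=u'$ and $gi_1'=u$. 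As in Lemmas~\ref{Ori.55}, \ref{Ori.6} and \ref{Pic.3}, this is done by covering $\Gr(r,n)$ by the standard affine Pl\"ucker charts (or by the charts $V_0,V_s$ used in Lemma~\ref{Ori.8}), writing the shear homomorphism on each chart, and observing that the indeterminacy locus (where the family ``blows up'' at $t=\infty$) is a toric/smooth subscheme of the boundary which can be resolved by a sequence of admissible blow-ups. Fourth, with such a diagram in hand, Lemma~\ref{ProplogSH.7} gives $u\simeq u'$ in $\inflogH^{\SmAdm}(S)$, hence (by applying $\Sigma^\infty_{S^1}$ and Corollary~\ref{ProplogSH.9}, or directly working in $\inflogSHS$ via Proposition~\ref{Ori.69}) $u=u'$ in $\logSHS(S)$; applying $\ringE^{**}$-functoriality yields $u^*=u'^*$.

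\textbf{Main obstacle.} The genuinely hard step is the third one: producing the compactification $X\to\Gr(r-1,n)\times\boxx$ of the affine shear-homotopy together with the extended morphism to $\Gr(r,n)$, verifying that the relevant blow-up centers are smooth and lie in the boundary, and checking on charts that the glued morphism is well-defined and compatible with the log structures. This is precisely the ``nontrivial geometric problem'' flagged in the remark after Lemma~\ref{Ori.55}: the $\A^1$-homotopy does not extend over $\Gr(r,n)\times\P^1$ directly, and one must pass to an explicit iterated blow-up. I expect one can avoid reinventing this by packaging it through Lemma~\ref{Ori.6}: since $u$ and $u'$ both factor through closed immersions $\P^{?}\hookrightarrow\Gr$ obtained from coordinate hyperplanes, one can try to pull back the already-established equivalence of the $\ol i_k$'s in Lemma~\ref{Ori.6} along a suitable morphism, or argue chart-locally reducing to the $n=2$, $r=1$ case which is exactly the content of Lemmas~\ref{Ori.55}--\ref{Ori.6}. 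Either way, the linear-algebra bookkeeping of the charts and the check that the blow-up centers are toroidal is the part that requires care; everything after the diagram is formal.
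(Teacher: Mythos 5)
Your strategy (connect $u$ and $u'$ by a compactified shear homotopy and conclude via Lemma \ref{ProplogSH.7}) is plausible in outline, but the proposal has a genuine gap exactly at the step you flag as the "main obstacle": the admissible blow-up $X\to \Gr(r,n-1)\times\boxx$ together with an extension $g\colon X\to \Gr(r,n)$ of the shear family is never constructed, and the two shortcuts you offer to avoid building it do not work. Arguing "chart-locally, reducing to the $n=2$, $r=1$ case" fails because being $\boxx$-homotopic through admissible blow-ups is not a Zariski-local property of a pair of morphisms: the homotopies and blow-up centers produced on the Pl\"ucker charts would have to glue, and that gluing is precisely the global content of constructions like Lemma \ref{Ori.55}; the paper provides no descent statement for such equivalences of morphisms. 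Likewise, "pulling back the already-established equivalence of the $\ol{i}_k$'s in Lemma \ref{Ori.6}" is not available: that lemma identifies certain closed immersions $\P^1\to\P^n$, and neither $u$, $u'$, nor the coordinate transposition of $\Gr(r,n)$ is obtained from those maps by any functorial operation, so the equivalence cannot be transported. (Also note the source of $u,u'$ is $\Gr(r,n-1)$, not $\Gr(r-1,n)$ — the statement has a typo which your conclusion should respect.)

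For comparison, the paper's proof avoids any explicit homotopy and is essentially formal given Lemma \ref{Ori.8}: the projection $p\colon(\Blow_{\Gr(r-1,n-1)}\Gr(r,n),E)\to\Gr(r,n-1)$ is a $(\P^r,\P^{r-1})$-bundle, so $p^*$ is an isomorphism by $(\P^\bullet,\P^{\bullet-1})$-invariance. It introduces the auxiliary "diagonal" immersion $u''\colon(x_1,\ldots,x_{n-1})\mapsto(x_1,\ldots,x_{n-1},x_{n-1})$; both $u$ and $u''$ miss the blow-up center and hence lift to sections $v,v''$ of $p$ with $u=jv$, $u''=jv''$, giving $v^*=v''^*$ and therefore $u^*=u''^*$; the symmetric argument (blowing up the copy of $\Gr(r-1,n-1)$ of subspaces containing $e_{n-1}$) gives $u'^*=u''^*$. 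If you want to keep your route, the honest fix is to actually carry out the Grassmannian analogue of the chart-by-chart blow-up computation of Lemma \ref{Ori.55}, which is a substantial piece of work your proposal does not supply; otherwise the argument via Lemma \ref{Ori.8} and bundle invariance is both shorter and already within the toolkit you cite.
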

\begin{proof}
Let $u''\colon \Gr(r,n-1)\rightarrow \Gr(r,n)$ be the closed immersion given by 
the inclusion $\cO^{n-1}\to \cO^n$ 
sending $(x_1,\ldots,x_{n-1})$ to $(x_1,\ldots,x_{n-1},x_{n-1})$.
As in \eqref{Ori.7.13}, 
there is a morphism $v''\colon \Gr(r,n-1)\to (\Blow_{\Gr(r-1.n-1)}\Gr(r,n),E)$ such that $pv''=\id$ and $u''=jv''$.
We also have $pv=\id$ and $u=jv$.

Since $p$ is a $(\P^r,\P^{r-1})$-bundle, $p^*$ is an isomorphism.
Hence, we get $v^*=v''^*$.
Compose with $j^*$ to deduce $u^*=u''^*$.
A similar argument shows that $u'^*=u''^*$.
\end{proof}

\begin{lem}
\label{Ori.31}
Let $i\colon \Gr(r-1,n-1)\to \Gr(r,n)$ be the closed immersion in \eqref{Ori.7.1}.
If $\eta\colon \Gr(r,n)\to \Gr(r,n)$ is the automorphism that permutes the last two coordinates, 
then $i^*=(\eta i)^*\colon \ringE^{**}(\Gr(r,n))\to \ringE^{**}(\Gr(r-1,n-1))$.
\end{lem}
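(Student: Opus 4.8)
The statement is a symmetry property of the pullback on $\ringE$-cohomology along the closed immersion $i\colon \Gr(r-1,n-1)\to \Gr(r,n)$ of \eqref{Ori.7.1}: it asserts that composing $i$ with the automorphism $\eta$ that permutes the last two coordinates does not change the induced map $i^*$ on $\ringE^{**}(-)$. The plan is to mimic the argument used for Lemma \ref{Ori.28}, where the analogous statement for the maps $u,u'$ was obtained by factoring both through the projection $p$ of the $(\P^r,\P^{r-1})$-bundle $(\Blow_{\Gr(r-1,n-1)}\Gr(r,n),E)\to \Gr(r,n-1)$ of Lemma \ref{Ori.8} and invoking $(\P^\bullet,\P^{\bullet-1})$-invariance (Proposition \ref{ProplogSH.4}), which forces $p^*$ — hence any section's pullback — to be an isomorphism.

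First I would observe that $\eta i$ is again a closed immersion of the same codimension, given by the inclusion $\cO^{n-1}\hookrightarrow \cO^n$ sending the last basis vector of $\cO^{r-1}\oplus\cO$ (the summand $\cO$ in $V\oplus\cO$) into the $n$-th slot while $V\hookrightarrow \cO^{n-2}\subset \cO^{n-1}$ sits in the first $n-2$ coordinates. The key point is that both $i$ and $\eta i$ have image disjoint from the image of $u\colon \Gr(r,n-1)\to\Gr(r,n)$ in \eqref{Ori.7.7}, so the blow-up picture \eqref{Ori.7.13} applies to each: there is a section $v$ of $p$ with $i = j\circ(\text{something})$... more precisely, I would exhibit, just as in the proof of Lemma \ref{Ori.28}, lifts of the relevant maps through the blow-up $(\Blow_{\Gr(r-1,n-1)}\Gr(r,n),E)$, and then use that $p^*$ is an isomorphism (since $p$ is a $(\P^r,\P^{r-1})$-bundle and $p^*v^*=\mathrm{id}$) to reduce the equality of pullbacks to an equality already visible after composing with $j^*$ or after passing to the bundle. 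Concretely I expect the cleanest route is: the blow-up $\Blow_{\Gr(r-1,n-1)}\Gr(r,n)$ receives the automorphism $\eta$ (it preserves the center, or rather sends the center of one blow-up to that of the other in a way compatible with $p$), and since $p^*$ is an isomorphism and $\eta$ commutes appropriately with $p$, one deduces $i^* = (\eta i)^*$ after applying $j^*$; then \eqref{Ori.16.3}/\eqref{Ori.7.9} is used to lift this from the blow-up back to $\Gr(r,n)$ itself, exactly as in \eqref{Ori.7.13}.

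The main obstacle I anticipate is bookkeeping with the combinatorics of which coordinates the various linear inclusions occupy, and making the geometric claim that $\eta$ (the transposition of the last two coordinates of $\cO^n$) is compatible with the $(\P^r,\P^{r-1})$-bundle structure $p\colon (\Blow_{\Gr(r-1,n-1)}\Gr(r,n),E)\to \Gr(r,n-1)$ of Lemma \ref{Ori.8} — i.e.\ that after blowing up the center of $i$, the transposition descends to an automorphism of $\Gr(r,n-1)$ (permuting two of its coordinates) under which $p$ is equivariant. Once this compatibility is in hand, the argument is purely formal: $p^* = $ iso and $p v = \mathrm{id}$ give $v^* = (v')^*$ for the two relevant sections, and composing with $j^*$ together with the exact sequence \eqref{Ori.16.3} yields $i^* = (\eta i)^*$. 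I would carry out the coordinate verification first (the geometric input), then the formal $p^*$-isomorphism deduction, then the lifting via the Gysin sequence.
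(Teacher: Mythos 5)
Your plan does not go through, and the obstruction is structural rather than a matter of coordinate bookkeeping. The mechanism of Lemma \ref{Ori.28} works because the maps $u,u',u''$ being compared there have image disjoint from the blow-up center $\Gr(r-1,n-1)$, so each factors as $j\circ(\textup{a section of }p)$ through the $(\P^r,\P^{r-1})$-bundle of Lemma \ref{Ori.8}, and the equality of pullbacks follows from $p^*$ being an isomorphism. The maps you want to compare are in the opposite situation: the image of $i$ is \emph{exactly} the blow-up center, and the image of $\eta i$ is the analogous locus for the $(n-1)$-st coordinate, so neither map lifts through $j$, and there are no sections of $p$ in sight (sections of $p$ have source $\Gr(r,n-1)$, not $\Gr(r-1,n-1)$, so the trick does not even type-check). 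Your stated "key point" that $i$ and $\eta i$ avoid the image of $u$ is beside the point — what Lemma \ref{Ori.28} needs is avoidance of the blow-up center — and it is in fact false for $\eta i$. Moreover $\eta$ does not act on the single blow-up $\Blow_{\Gr(r-1,n-1)}\Gr(r,n)$: it carries the center of $i$ to the different center of $\eta i$, hence identifies two distinct blow-ups, so there is no $\eta$-equivariance of $p$ to exploit. Finally, the closing step "compose with $j^*$ and use \eqref{Ori.16.3}" cannot recover $i^*$: the Gysin sequence \eqref{Ori.16.3} involves the pushforward $i_*$, whose image is killed by $j^*$, and it provides neither a handle on the pullback $i^*$ nor the injectivity you would need to descend an equality from the blow-up to $\Gr(r,n)$.

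The paper's proof is a one-line reduction that your proposal misses: apply the duality isomorphisms $\Gr(r,n)\cong\Gr(n-r,n)$ and $\Gr(r-1,n-1)\cong\Gr(n-r,n-1)$, sending a subbundle of $\cO^n$ to its annihilator in the dual copy of $\cO^n$. Under this duality the immersion $i\colon V\mapsto V\oplus\cO$ becomes the inclusion $\Gr(n-r,n-1)\to\Gr(n-r,n)$ induced by $\cO^{n-1}\hookrightarrow\cO^n$, $(x_1,\ldots,x_{n-1})\mapsto(x_1,\ldots,x_{n-1},0)$, and $\eta i$ becomes the inclusion induced by $(x_1,\ldots,x_{n-1})\mapsto(x_1,\ldots,x_{n-2},0,x_{n-1})$; these are precisely the maps $u$ and $u'$ of Lemma \ref{Ori.28}, which therefore gives the statement directly. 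If you want to avoid duality, you need a genuinely different mechanism for comparing pullbacks along maps whose images are the blow-up centers; the section trick of Lemma \ref{Ori.28} is not it.
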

\begin{proof}
Use the duality isomorphisms $\Gr(r,n)\cong \Gr(n-r,n)$, $\Gr(r-1,n-1)\cong \Gr(n-r,n-1)$, 
and apply Lemma \ref{Ori.28}.
\end{proof}

\begin{lem}
\label{Ori.29}
Let $i\colon \Gr(r-1,n-1)\to \Gr(r,n)$ be the closed immersion in \eqref{Ori.7.1}.
Then we have the formula
\[
i_*i^*
=
(-)\smile c_{n-r}(\cQ_{r,n}).
\]
\end{lem}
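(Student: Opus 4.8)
The plan is to identify the normal bundle of the closed immersion $i\colon \Gr(r-1,n-1)\to \Gr(r,n)$ and then apply the self-intersection formula (Corollary \ref{Ori.26}). First I would recall that the self-intersection formula gives $i^*i_*(a)=c_{d}(\Normal_{\Gr(r-1,n-1)}\Gr(r,n))\smile a$ where $d=n-r$ is the codimension of $i$, which is the codimension of $\Gr(r-1,n-1)$ inside $\Gr(r,n)$ (both subbundle rank and ambient rank drop by one, so $\dim \Gr(r-1,n-1)=(r-1)(n-r)$ versus $\dim\Gr(r,n)=r(n-r)$, giving codimension $n-r$). So the statement $i_*i^*=(-)\smile c_{n-r}(\cQ_{r,n})$ is equivalent to the claim that
\[
i^*c_{n-r}(\cQ_{r,n})
=
c_{n-r}(\Normal_{\Gr(r-1,n-1)}\Gr(r,n)),
\]
together with the projection formula $i_*(i^*(a)\smile b)=a\smile i_*(b)$ — but here it is cleaner to write $i_*i^*(a)=i_*(1\smile i^*a)=a\smile i_*(1)$ and to note $i_*(1)=c_{n-r}(\cQ_{r,n})$; so actually the cleanest route is to compute $i_*(\unit)$ directly rather than going through the self-intersection formula.

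Concretely, I would argue as follows. The class $i_*(\unit)\in \ringE^{2(n-r),n-r}(\Gr(r,n))$ is, by the definition \eqref{Ori.16.2} of the Gysin map, the image of the Thom class $t(\Normal_Z X)$ under the deformation-to-the-normal-cone equivalence \eqref{Thom.1.5} followed by the canonical map $\ringE^{**}(X/(\Blow_Z X,E))\to \ringE^{**}(X)$. By \eqref{Ori.15.2} and Proposition \ref{Ori.25}, $\ol{t}(\cE)=c_{\mathrm{rk}\,\cE}(\cQ)$ for the universal quotient bundle $\cQ$ on $\P(\cE\oplus\cO)$; I would combine this with the identification of the deformation space. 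The key geometric input is Lemma \ref{Ori.8}: the blow-up $\Blow_{\Gr(r-1,n-1)}\Gr(r,n)$ is a $\P^r$-bundle over $\Gr(r,n-1)$ with exceptional divisor $E$ making $(\Blow,E)$ a $(\P^r,\P^{r-1})$-bundle. The tautological/quotient bundle identifications \eqref{Ori.7.10} tell us $i^*\cT_{r,n}\cong \cT_{r-1,n-1}\oplus\cO$ and $i^*\cQ_{r,n}\cong\cQ_{r-1,n-1}$; by Whitney sum (Proposition \ref{Ori.20}) applied to the defining sequence $0\to\cT_{r-1,n-1}\to\cO^{n-1}\to\cQ_{r-1,n-1}\to 0$ on $\Gr(r-1,n-1)$ and to the restriction of $0\to\cT_{r,n}\to\cO^n\to\cQ_{r,n}\to 0$, one computes $c(i^*\cT_{r,n})=c(\cT_{r-1,n-1})$ and $c(i^*\cQ_{r,n})=c(\cQ_{r-1,n-1})$. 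The normal bundle $\Normal_Z X$ then fits in the exact sequence of restrictions of tangent bundles, and a standard computation (as in classical Schubert calculus: the normal bundle of $\Gr(r-1,n-1)\hookrightarrow\Gr(r,n)$ is $\mathrm{Hom}(\cL,\cQ_{r-1,n-1})$ where $\cL$ is the trivial line $\cO$ summand split off $i^*\cT_{r,n}$, hence $\Normal_Z X\cong\cQ_{r-1,n-1}$) identifies $\Normal_Z X$ with $\cQ_{r-1,n-1}\cong i^*\cQ_{r,n}$, up to twisting by a trivial line bundle which does not affect Chern classes.

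I would then assemble: $i^*i_*(a)=a\smile i^*i_*(\unit)=a\smile c_{n-r}(\Normal_Z X)=a\smile c_{n-r}(i^*\cQ_{r,n})=a\smile i^*c_{n-r}(\cQ_{r,n})$, using Proposition \ref{Ori.18} for the last equality; but the cleaner statement to prove is the global one $i_*i^*(a)=i_*(1)\smile a=c_{n-r}(\cQ_{r,n})\smile a$ once we know $i_*(\unit)=c_{n-r}(\cQ_{r,n})$. The identity $i_*(\unit)=c_{n-r}(\cQ_{r,n})$ is verified by pulling back along $u\colon\Gr(r,n-1)\to\Gr(r,n)$ and $i\colon\Gr(r-1,n-1)\to\Gr(r,n)$ and using the exact sequence \eqref{Ori.7.9}: since the images of $u$ and $i$ are disjoint, $u^*i_*(\unit)=0=u^*c_{n-r}(\cQ_{r,n})$? — no, $u^*c_{n-r}(\cQ_{r,n})=c_{n-r}(\cQ_{r,n-1}\oplus\cO)=c_{n-r}(\cQ_{r,n-1})$ by \eqref{Ori.7.10}, which need not vanish, so instead I would use that $u^*i_*=\delta$-related and that $i^*i_*(\unit)=c_{n-r}(\Normal_Z X)$ by the self-intersection formula together with injectivity information from \eqref{Ori.7.9}. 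The main obstacle I anticipate is the bookkeeping identifying $\Normal_{\Gr(r-1,n-1)}\Gr(r,n)$ with $\cQ_{r-1,n-1}$ (equivalently with $i^*\cQ_{r,n}$ modulo a trivial summand) in a way that is valid over an arbitrary base $S\in\Sch$ without appealing to $\A^1$-homotopy or characteristic-zero arguments; here Lemma \ref{Ori.8} and the explicit affine charts in its proof (equations \eqref{Ori.8.1}--\eqref{Ori.8.4}) are exactly what is needed to pin down the normal bundle scheme-theoretically, and combining this with Proposition \ref{Ori.25} and \eqref{Thom.1.5}--\eqref{Thom.1.6} should close the argument.
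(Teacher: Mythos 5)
Your normal-bundle computation is correct ($\Normal_{\Gr(r-1,n-1)}\Gr(r,n)\cong\cQ_{r-1,n-1}\cong i^*\cQ_{r,n}$), but the argument you build on it has a genuine gap. The self-intersection formula (Corollary \ref{Ori.26}) computes $i^*i_*$, whereas the lemma asserts a formula for $i_*i^*$, and your bridge between the two is the projection formula $i_*(i^*b\smile c)=b\smile i_*(c)$ together with the identification $i_*(\unit)=c_{n-r}(\cQ_{r,n})$. Neither is available: the Gysin map \eqref{Ori.16.2} is defined through the Thom isomorphism and the deformation to the normal cone, and its $\ringE^{**}$-linearity (i.e.\ the projection formula) is not proved anywhere in the paper, nor by you --- it would require checking that the equivalence \eqref{Thom.1.5} is compatible with the module structure coming from \eqref{Ori.16.5}, a nontrivial verification in this $\boxx$-local setting. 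Even granting it, your proposed verification of $i_*(\unit)=c_{n-r}(\cQ_{r,n})$ does not close: applying $i^*$ only shows the two classes agree after restriction, and $i^*$ is not injective; applying $u^*$ kills both classes (your worry that $u^*c_{n-r}(\cQ_{r,n})=c_{n-r}(\cQ_{r,n-1})$ ``need not vanish'' is mistaken --- it vanishes for rank reasons, since $\cQ_{r,n-1}$ has rank $n-r-1$), but the exact sequence \eqref{Ori.7.9} only identifies $\ker u^*$ with the image of $i_*$, so the difference of the two classes is some $i_*(z)$, and the injectivity/detection statements needed to conclude $i_*(z)=0$ are exactly what is established later, inside the induction proving Theorem \ref{Ori.9}, which depends on Lemma \ref{Ori.30} and hence on the present lemma; invoking them here would be circular.

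For contrast, the paper's proof avoids both the projection formula and any injectivity input by passing to $\Gr(r+1,n+1)$: the two closed immersions $a,a'\colon\Gr(r,n)\to\Gr(r+1,n+1)$ given by $V\mapsto V\oplus\cO$ and its twist by the swap of the last two coordinates intersect transversally in $\Gr(r-1,n-1)$ with equal codimensions, so the excess intersection formula (Proposition \ref{Ori.21}) with vanishing excess bundle converts $i_*i^*$ into $a'^*a_*$; the self-intersection instance of the same proposition, together with the tangent-bundle computation $\Normal_{\Gr(r,n)}\Gr(r+1,n+1)\cong\cQ_{r,n}$, gives $a^*a_*=(-)\smile c_{n-r}(\cQ_{r,n})$; and Lemma \ref{Ori.31} (resting on Lemma \ref{Ori.28} and the $(\P^r,\P^{r-1})$-bundle structure of Lemma \ref{Ori.8}) supplies $a^*=a'^*$, which finishes the proof. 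If you want to salvage your route, you would first have to prove the projection formula for the Gysin maps of this paper and then find an argument for $i_*(\unit)=c_{n-r}(\cQ_{r,n})$ that does not presuppose Theorem \ref{Ori.9}.
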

\begin{proof}
There is a cartesian square
\[
\begin{tikzcd}
\Gr(r-1,n-1)\ar[r,"i"]\ar[d,"i"']&
\Gr(r,n)\ar[d,"a"]
\\
\Gr(r,n)\ar[r,"a'"]&
\Gr(r+1,n+1).
\end{tikzcd}
\]
Here $a$ and $a'$ are induced by $V\mapsto V\oplus \cO$ and $V\mapsto \eta(V\oplus \cO)$, 
respectively, 
where $\eta$ is the automorphism that permutes the last two coordinates.
Proposition \ref{Ori.21} gives the formula
\begin{equation}
\label{Ori.29.1}
a'^*a_* = i_*i^*.
\end{equation}

\

The tangent bundle $\Tangent_{\Gr(r,n)}$ over $\Gr(r,n)$ participates in a canonical isomorphism of vector bundles
\[
\Tangent_{\Gr(r,n)}
\cong
\cT_{r,n}^*\otimes \cQ_{r,n}.
\]
Combine the exact sequence
\[
0
\to
\Tangent_{\Gr(r,n)}
\to
a^*\Tangent_{\Gr(r+1,n+1)}
\to
\Normal_{\Gr(r,n)}(\Gr(r+1,n+1))
\to
0
\]
with \eqref{Ori.7.10} to have
\begin{equation}
\label{Ori.29.3}
\Normal_{\Gr(r,n)}(\Gr(r+1,n+1))
\cong
\cO\otimes \cQ_{r,n}
\cong
\cQ_{r,n}.
\end{equation}
There is a cartesian square
\[
\begin{tikzcd}
\Gr(r,n)\ar[r,"\id"]\ar[d,"\id"']&
\Gr(r,n)\ar[d,"a"]
\\
\Gr(r,n)\ar[r,"a"]&
\Gr(r+1,n+1),
\end{tikzcd}
\]

\

Combine Proposition \ref{Ori.21} and \eqref{Ori.29.3} to conclude the formula
\begin{equation}
\label{Ori.29.2}
a^*a_*
=
(-)\smile c_{n-r}(\cQ_{r,n}).
\end{equation}
Since $a^*=a'^*$ owing to Lemma \ref{Ori.31}, 
we conclude by comparing \eqref{Ori.29.1} and \eqref{Ori.29.2}.
\end{proof}

For any bigraded ring $\bA^{**}$,
we write
\[
\bA^{**}(m,n)
:=
\bA^{*+m,*+n}
\]
to indicate the shift by $(m,n)\in \Z^2$.

\begin{lem}
\label{Ori.30}
There is a commutative square
\[
\begin{tikzcd}
\ringE^{**}\otimes_{\Z} R_{r-1,n-1}(-2n+2r,-n+r)\ar[r,"1\otimes \gamma"]\ar[d,"\varphi_{r-1,n-1}"']&
\ringE^{**}\otimes_{\Z} R_{r,n}\ar[d,"\varphi_{r,n}"]
\\
\ringE^{*-2n+2r,*-n+r}(\Gr(r-1,n-1))\ar[r,"i_*"]&
\ringE^{**}(\Gr(r,n)).
\end{tikzcd}
\]
Here $\gamma\colon R_{r-1,n-1}\to R_{r,n}$ is the multiplication map with $y_{n-r}$. 
\end{lem}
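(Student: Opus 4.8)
The claim is that the Gysin pushforward $i_*$ associated with the closed immersion $i\colon \Gr(r-1,n-1)\to\Gr(r,n)$ from \eqref{Ori.7.1} is, after identifying cohomology rings via the $\varphi$-maps, multiplication by $y_{n-r}$. The strategy is the standard ``projection formula + self-intersection'' computation, exactly as in the $\A^1$-invariant case, but all the inputs we need are available in our setting: the projective bundle theorem \ref{Ori.11}, the self-intersection formula \ref{Ori.26} (via the excess intersection formula \ref{Ori.21}), the Whitney sum formula \ref{Ori.20}, and the concrete computation $i_*i^* = (-)\smile c_{n-r}(\cQ_{r,n})$ of Lemma \ref{Ori.29}.

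First I would reduce the statement to checking equality of the two composites on elements of the form $1\otimes a$ with $a\in R_{r-1,n-1}$, i.e. to showing
\[
i_*(\varphi_{r-1,n-1}(1\otimes a)) = \varphi_{r,n}(1\otimes \gamma(a)) = \varphi_{r,n}(1\otimes a\cdot y_{n-r})
\]
for all $a\in R_{r-1,n-1}$; by $\ringE^{**}$-linearity of $i_*$ and of both $\varphi$'s this suffices. Next, I would use the projection formula for the Gysin map: since $i^*\colon\ringE^{**}(\Gr(r,n))\to\ringE^{**}(\Gr(r-1,n-1))$ is \emph{surjective} (this follows from the commutative square \eqref{Ori.7.5} together with the surjectivity of $\alpha\colon R_{r,n}\to R_{r-1,n-1}$, or directly from the splitting principle plus Theorem~\ref{Ori.11}), every class in the source can be written as $i^*b$. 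Thus it is enough to verify
\[
i_*i^*b = b\smile c_{n-r}(\cQ_{r,n})
\]
and to match $c_{n-r}(\cQ_{r,n})$ with $\varphi_{r,n}(1\otimes y_{n-r})$. The first equality is precisely Lemma~\ref{Ori.29}; the second is the definition of $\varphi_{r,n}$, which sends $y_i$ to $c_i(\cQ_{r,n})$. Combining, and using the commutative square \eqref{Ori.7.5} to write $\varphi_{r-1,n-1}(1\otimes a) = i^*\varphi_{r,n}(1\otimes \tilde a)$ for a lift $\tilde a$ of $a$ under $1\otimes\alpha$, one gets
\[
i_*\varphi_{r-1,n-1}(1\otimes a) = i_*i^*\varphi_{r,n}(1\otimes\tilde a) = \varphi_{r,n}(1\otimes\tilde a)\smile c_{n-r}(\cQ_{r,n}) = \varphi_{r,n}(1\otimes \tilde a\cdot y_{n-r}).
\]
It then remains to check that $\tilde a\cdot y_{n-r}$ and $a\cdot y_{n-r} = \gamma(a)$ have the same image under $\varphi_{r,n}$ — equivalently that $(\tilde a - \gamma^{-1}\text{-representative})\cdot y_{n-r}$ lies in the kernel. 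This is a purely ring-theoretic point: $\alpha$ kills $x_1,\dots,x_r$ (more precisely identifies $x_r$-terms appropriately) and $\gamma$ is multiplication by $y_{n-r}$, and one checks directly in $R_{r,n}$, using the defining relations $1+tz_1+\cdots+t^nz_n = (1+tx_1+\cdots)(1+ty_1+\cdots)$, that $\ker(\alpha)\cdot y_{n-r}$ maps to zero — intuitively because multiplying by the top Chern class of the quotient bundle on $\Gr(r,n)$ lands in the image of $i_*$, which is exactly where the $x_r$-ambiguity is absorbed.

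\textbf{Main obstacle.} The genuinely non-formal content is entirely packaged in Lemma~\ref{Ori.29} (the identification $i_*i^* = (-)\smile c_{n-r}(\cQ_{r,n})$), whose proof in turn rests on the excess intersection formula \ref{Ori.21} and — crucially — on Lemma~\ref{Ori.8}, which states that the blow-up of $\Gr(r,n)$ along $\Gr(r-1,n-1)$ is a $(\P^r,\P^{r-1})$-bundle over $\Gr(r,n-1)$; this is the logarithmic replacement for the explicit $\A^1$-homotopies of \cite{zbMATH05663788}. Since all of these are available to us as already-proven results in the excerpt, the proof of Lemma~\ref{Ori.30} itself is a short diagram chase. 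The one place requiring a little care is the compatibility of the lift $\tilde a$ and the multiplication-by-$y_{n-r}$ map $\gamma$ modulo $\ker\varphi_{r,n}$: rather than fight with ring presentations, I would phrase it geometrically — $c_{n-r}(\cQ_{r,n})$ restricted to the complement of $\Gr(r-1,n-1)$ (i.e. pulled back to the blow-up, then to $\Gr(r,n-1)$ via $v$) is $c_{n-r}(\cQ_{r,n-1}\oplus\cO) = c_{n-r}(\cQ_{r,n-1})$, consistent with $\beta(y_{n-r})=y_{n-r}$, while the exact sequence \eqref{Ori.7.9} shows the discrepancy lies in $\mathrm{im}(i_*)$, so multiplication by $c_{n-r}(\cQ_{r,n})$ is well-defined on the quotient and agrees with $\gamma$ — which is the content of the square we want to commute.
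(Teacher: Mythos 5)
Your proposal follows essentially the same route as the paper's proof: using the surjectivity of $\alpha$ together with the square \eqref{Ori.7.5}, everything is reduced to the push--pull square whose horizontal maps are $1\otimes(\gamma\circ\alpha)$ and $i_*i^*$, and this is exactly Lemma \ref{Ori.29} combined with the fact that $\varphi_{r,n}$ is a ring homomorphism sending $y_{n-r}$ to $c_{n-r}(\cQ_{r,n})$, plus the purely algebraic identity that $\gamma\circ\alpha$ is multiplication by $y_{n-r}$ in $R_{r,n}$ (equivalently $\ker(\alpha)\cdot y_{n-r}=0$), which the paper also uses implicitly and which is imported from the classical cohomology of complex Grassmannians. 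One small correction to your closing geometric gloss: $\beta(y_{n-r})=0$ rather than $y_{n-r}$, since $u^*\cQ_{r,n}\cong \cQ_{r,n-1}\oplus\cO$ and $\cQ_{r,n-1}$ has rank $n-r-1$, so the cleanest way to settle the residual ring-theoretic point is the classical exact sequence for $H^*(\Gr(r,\C^n))$ rather than the near-circular ``well-defined on the quotient'' argument you sketch.
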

\begin{proof}
Combined with the commutative square \eqref{Ori.7.5}, we obtain 
\begin{equation}
\label{Ori.30.1}
\begin{tikzcd}
\ringE^{**}\otimes_{\Z} R_{r,n}(-2n+2r,-n+r)\ar[r,"1\otimes (\gamma\circ \alpha)"]\ar[d,"\varphi_{r,n}"']&
\ringE^{*-2n+2r,*-n+r}\otimes_{\Z} R_{r,n}\ar[d,"\varphi_{r,n}"]
\\
\ringE^{*-2n+2r,*-n+r}(\Gr(r,n))\ar[r,"i_*i^*"]&
\ringE^{**}(\Gr(r,n)).
\end{tikzcd}
\end{equation}
Since the homomorphism $\alpha \colon R_{r,n}\to R_{r-1,n-1}$ is surjective, it suffices to show that the square \eqref{Ori.30.1} commutes.
This follows from Lemma \ref{Ori.29}.
\end{proof}

\begin{thm}
\label{Ori.9}
There is a naturally induced ring isomorphism
\[
\varphi_{r,n}
\colon
\ringE^{**}\otimes_{\Z}R_{r,n}
\to
\ringE^{**}(\Gr(r,n)).
\]
\end{thm}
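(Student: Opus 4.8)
The proof of Theorem \ref{Ori.9} will proceed by a double induction, on the integer $n$ and then on $r$, exactly paralleling the structure of the auxiliary lemmas already established. The base cases are immediate: when $r=0$ (or $r=n$) the Grassmannian $\Gr(r,n)$ is a point, and when $n=1$ (so $r=0$ or $r=1$) the statement is trivial. For the general case, the plan is to exploit the long exact sequence \eqref{Ori.7.9} together with the compatibility squares \eqref{Ori.7.5}, \eqref{Ori.7.6}, and Lemma \ref{Ori.30}, applying the five lemma to conclude that $\varphi_{r,n}$ is an isomorphism once the corresponding statements for $\Gr(r-1,n-1)$ and $\Gr(r,n-1)$ are known.

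More precisely, first I would assemble the diagram of $\ringE^{**}$-modules comparing the purely algebraic exact sequence
\[
0
\to
R_{r-1,n-1}(-2n+2r,-n+r)
\xrightarrow{\gamma}
R_{r,n}
\xrightarrow{\beta}
R_{r,n-1}
\to
0
\]
(whose exactness is a standard statement about the rings $R_{r,n}$, coming from the factorization of $1+tz_1+\cdots$) tensored with $\ringE^{**}$, against the topological exact sequence \eqref{Ori.7.9}. The squares \eqref{Ori.7.6} and Lemma \ref{Ori.30} (after the identification $v^*$ of Lemma \ref{Ori.8}) show this is a morphism of exact sequences with vertical maps $\varphi_{r-1,n-1}$, $\varphi_{r,n}$, and $\varphi_{r,n-1}$. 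By the induction hypothesis the two outer maps are isomorphisms, so the five lemma gives that $\varphi_{r,n}$ is an isomorphism of $\ringE^{**}$-modules; since it is by construction a ring homomorphism, it is a ring isomorphism. The fact that $\varphi_{r,n}$ really is a ring homomorphism sending $x_i \mapsto c_i(\cT_{r,n})$ and $y_i\mapsto c_i(\cQ_{r,n})$ is guaranteed by the Whitney sum formula \eqref{Ori.20} applied to $0\to \cT_{r,n}\to \cO^n\to \cQ_{r,n}\to 0$, which forces the relations $z_1,\ldots,z_n$ to hold in $\ringE^{**}(\Gr(r,n))$.

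The main obstacle is the algebraic input: one must verify that tensoring with $\ringE^{**}$ over $\Z$ preserves exactness of the sequence $0\to R_{r-1,n-1}(\text{shift})\xrightarrow{\gamma} R_{r,n}\xrightarrow{\beta}R_{r,n-1}\to 0$. This requires knowing that $R_{r,n-1}$ is a free $\Z$-module (equivalently, that the sequence is split as abelian groups), which follows because $R_{r,n-1}\cong H^*(\Gr(r,\C^{n-1}))$ has a basis of Schur polynomials, and that $\gamma$ is injective with the expected cokernel; this is a purely commutative-algebra computation with the presentations \eqref{eq:ring_grass} that does not involve the motivic machinery at all, but it must be carried out carefully for the degree bookkeeping (the shift $(-2n+2r,-n+r)$ must match the connecting map $\delta$ in \eqref{Ori.7.9}). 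A secondary point requiring attention is that the colimit $\Gr = \colim \Gr(r,\infty)$ is not directly needed for the finite statement, but one must confirm that all the maps $i_*$, $u^*$, $\delta$ in \eqref{Ori.7.9} are genuinely $\ringE^{**}$-linear (module maps), which follows from the projection formula implicit in the construction of the Gysin map \eqref{Ori.16.2} and the functoriality of cup products; once linearity is in place, the five lemma applies verbatim.
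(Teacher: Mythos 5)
Your outline coincides with the paper's own argument: induction on $n$ (the extra induction on $r$ is unnecessary, since both terms $\Gr(r-1,n-1)$ and $\Gr(r,n-1)$ already have smaller second index), the base case handled by Theorem \ref{Ori.11}, the comparison of the classical short exact sequence $0\to R_{r-1,n-1}\xrightarrow{\gamma}R_{r,n}\xrightarrow{\beta}R_{r,n-1}\to 0$ (imported from singular cohomology of complex Grassmannians; exactness after $-\otimes_\Z\ringE^{**}$ is indeed harmless because these rings are free $\Z$-modules, so your "main obstacle" is a minor point) with the motivic sequence \eqref{Ori.7.9}, commutativity supplied by \eqref{Ori.7.6} and Lemma \ref{Ori.30}, and a five-lemma conclusion. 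That $\varphi_{r,n}$ is a ring map is built into its construction via Proposition \ref{Ori.20}, as you say, and the $\ringE^{**}$-linearity you worry about is not actually needed: the five lemma only requires a commuting diagram of (bigraded) abelian groups with exact rows.

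The one genuine gap is in the five-lemma step itself. You declare a "morphism of exact sequences" with bottom row \eqref{Ori.7.9}, but \eqref{Ori.7.9} is only a four-term stretch of a long exact sequence: it gives exactness at $\ringE^{**}(\Gr(r,n))$ and at $\ringE^{**}(\Gr(r,n-1))$, and says nothing about injectivity of $i_*$ or surjectivity of $u^*$. A diagram chase still yields surjectivity of $\varphi_{r,n}$, but injectivity fails as stated: if $\varphi_{r,n}(x)=0$ one reduces to $x=(1\otimes\gamma)(y)$ with $i_*\varphi_{r-1,n-1}(y)=0$, and without knowing $\ker i_*=0$ you cannot conclude $y=0$. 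The missing step, which is exactly what the paper supplies, is to first observe that $u^*$ is surjective in every bidegree (commutativity of the right-hand square, surjectivity of $1\otimes\beta$, and the inductive isomorphism $\varphi_{r,n-1}$); exactness of \eqref{Ori.7.9} then forces the connecting map $\delta$ to vanish in all bidegrees, hence $i_*$ is injective, so the lower row is genuinely short exact and the five lemma applies. With that sentence added, your proof is the paper's proof.
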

\begin{proof}
Theorem \ref{Ori.11} shows the claim for $n=1$.
We proceed by induction on $n>1$. 
For simplicity of notation, we set $s:=n-r$.
Combine \eqref{Ori.7.6} and Lemma \ref{Ori.30} to deduce the commutative diagram
\begin{equation}
\label{Ori.9.1}
\begin{tikzcd}[column sep=small]
0\ar[r]&
\ringE^{**}\otimes_{\Z} R_{r-1,n-1}(-2s,-s)\ar[d,"\varphi_{r-1,n-1}"']\ar[r,"1\otimes \gamma"]&
\ringE^{**}\otimes_{\Z} R_{r,n}\ar[d,"\varphi_{r,n}"]\ar[r,"1\otimes \beta"]&
\ringE^{**}\otimes_{\Z} R_{r,n-1}\ar[d,"\varphi_{r,n-1}"]\ar[r]&
0
\\
0\ar[r]&
\ringE^{*-2s,*-s}(\Gr(r-1,n-1))\ar[r,"i_*"]&
\ringE^{**}(\Gr(r,n))\ar[r,"u^*"]&
\ringE^{**}(\Gr(r,n-1))\ar[r]&
0.
\end{tikzcd}
\end{equation}
There is an exact sequence of singular cohomology groups of complex Grassmannian manifolds
\[
0
\to
H^{*-s}(\Gr(r-1,\C^{n-1}))
\to
H^*(\Gr(r,\C^n))
\to
H^*(\Gr(r,\C^{n-1}))
\to
0,
\]
which gives an exact sequence
\[
0
\to
R_{r-1,n-1}
\xrightarrow{\gamma}
R_{r,n}
\xrightarrow{\beta}
R_{r,n-1}
\to
0.
\]
Hence the upper row in \eqref{Ori.9.1} is exact.
By induction, $\varphi_{r-1,n-1}$ and $\varphi_{r,n-1}$ are isomorphisms.
Since $1\otimes \beta$ is surjective, $u^*$ is surjective.
Together with the exactness of \eqref{Ori.7.9}, we deduce that the lower row in \eqref{Ori.9.1} is exact.
The five lemma finishes the proof.
\end{proof}

\begin{rmk}
\label{Ori.34}
By a similar argument, one can show that the naturally induced homomorphism
\begin{equation}
\label{Ori.34.1}
\ringE^{**}(X)\otimes_{\Z} R_{r,n}\to \ringE^{**}(X\times \Gr(r,n))
\end{equation}
is an isomorphism for every $X\in \lSm/k$.
\end{rmk}

\begin{lem}
\label{Ori.32}
If $\colimit_{i\in \N} \cF_i$ is a filtered colimit in $\inflogSH(S)$, then there is a canonical exact sequence
\[
0
\to
\limitone_{i} \ringE^{*-1,*}(\cF_i)
\to
\ringE^{**}(\colimit_i \cF_i)
\to
\limit_i \ringE^{**}(\cF_i)
\to
0.
\]
\end{lem}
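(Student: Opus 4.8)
The plan is to deduce the statement from the classical Milnor $\limitone$-exact sequence for the homotopy limit of a tower of spectra. Throughout I write $\ringE^{p,q}(\cF):=\hom_{\logSH(S)}(\cF,\Sigma^{p,q}\ringE)$ for an arbitrary object $\cF\in\inflogSH(S)$, extending Definition \ref{Ori.12}, and it suffices to produce the short exact sequence in a fixed bidegree $(p,q)$ (the bigraded formulation being read degree by degree). The first step is to re-express everything through mapping spectra: since $\inflogSH(S)$ is stable and $\map_{\inflogSH(S)}(\cF,\Sigma^{p,q}\ringE)$ is a spectrum whose underlying space is $\Map_{\inflogSH(S)}(\cF,\Sigma^{p,q}\ringE)$, one has, using $\Sigma^{p,q}=\Sigma_{S^1}^{p-q}\Sigma_T^q$ from Definition \ref{def:sphere_suspension_logH}, the natural identifications
\[
\pi_n\,\map_{\inflogSH(S)}(\cF,\Sigma^{p,q}\ringE)\;\cong\;\hom_{\logSH(S)}(\Sigma_{S^1}^n\cF,\Sigma^{p,q}\ringE)\;\cong\;\ringE^{p-n,q}(\cF)\qquad(n\geq 0).
\]
In particular $\pi_0$ of this mapping spectrum is $\ringE^{p,q}(\cF)$ and $\pi_1$ is $\ringE^{p-1,q}(\cF)$.

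Next I would check that $\map_{\inflogSH(S)}(-,G)$ carries the colimit $\colimit_{i\in\N}\cF_i$ to the limit of the corresponding tower. Indeed $\map_{\inflogSH(S)}(-,G)=\varphi^*\,\uHom_{\inflogSH(S)}(-,G)$; the internal Hom $\uHom_{\inflogSH(S)}(-,G)$ sends colimits to limits because $\otimes$ preserves colimits in each variable, and $\varphi^*$ preserves limits since it is a right adjoint. Since $\N$ with its usual order is a directed poset, $\colimit_{i\in\N}\cF_i$ is a sequential colimit, so we obtain an equivalence of spectra
\[
\map_{\inflogSH(S)}\!\big(\colimit_{i}\cF_i,\,\Sigma^{p,q}\ringE\big)\;\simeq\;\limit_{i}\,\map_{\inflogSH(S)}(\cF_i,\Sigma^{p,q}\ringE),
\]
where the right-hand side is the homotopy limit of the tower $\cdots\to\map(\cF_2,\Sigma^{p,q}\ringE)\to\map(\cF_1,\Sigma^{p,q}\ringE)\to\map(\cF_0,\Sigma^{p,q}\ringE)$.

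Then I would invoke the Milnor exact sequence for such a homotopy limit (a standard fact about towers of spectra, cf.\ \cite{HA}): for a tower $\{X_i\}_{i\in\N}$ there is a natural short exact sequence $0\to\limitone_i\pi_{n+1}(X_i)\to\pi_n(\limit_i X_i)\to\limit_i\pi_n(X_i)\to 0$. Taking $X_i=\map_{\inflogSH(S)}(\cF_i,\Sigma^{p,q}\ringE)$ and $n=0$, and inserting the identifications from the first two steps, yields exactly
\[
0\to\limitone_i\ringE^{p-1,q}(\cF_i)\to\ringE^{p,q}\big(\colimit_i\cF_i\big)\to\limit_i\ringE^{p,q}(\cF_i)\to 0,
\]
which is the assertion of the lemma in bidegree $(p,q)$; canonicity and naturality are inherited from the corresponding properties of the maps used.

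The argument is essentially formal, so I do not expect a substantial obstacle; the only point requiring a little care is the degree bookkeeping in the $\limitone$-term — one must verify that the relevant $\pi_1$ contributes $\ringE^{p-1,q}$ and not, say, $\ringE^{p-1,q-1}$. This is precisely the content of the formula $\Sigma^{p,q}=\Sigma_{S^1}^{p-q}\Sigma_T^q$: the loops detected by $\pi_1$ of a mapping spectrum are simplicial-circle loops, which shift only the first index.
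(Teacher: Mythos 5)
Your argument is correct. The identifications $\pi_n\,\map(\cF,\Sigma^{p,q}\ringE)\cong\ringE^{p-n,q}(\cF)$, the fact that mapping spectra turn the sequential colimit into a homotopy limit of a tower, and the Milnor $\limitone$-sequence for towers of spectra all hold in this setting, and the degree bookkeeping you flag at the end is indeed the only delicate point and you resolve it correctly.

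The paper takes a dual, more self-contained route: it works upstairs in $\inflogSH(S)$ with the mapping-telescope presentation of the colimit, namely the cofiber sequence $\bigoplus_{i\in\N}\cF_i\xrightarrow{\id-\mathrm{shift}}\bigoplus_{i\in\N}\cF_i\to\colimit_i\cF_i$, and then applies $\ringE^{**}$ directly; since $\hom$ out of a coproduct is a product, the resulting long exact sequence exhibits $\ringE^{**}(\colimit_i\cF_i)$ between the cokernel and kernel of $\id-\mathrm{shift}$ on $\prod_i\ringE^{*-1,*}(\cF_i)$ and $\prod_i\ringE^{**}(\cF_i)$, which are by definition $\limitone$ and $\limit$ of the tower of cohomology groups. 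Your version instead pushes everything into mapping spectra and quotes the Milnor sequence as a black box; this is slightly shorter but relies on the closed monoidal structure (or at least on $\Map(\colim,-)\simeq\lim\Map$) and on the external Milnor sequence, whereas the paper's telescope argument uses only stability of $\inflogSH(S)$ and is essentially the proof of the Milnor sequence unwound in place. The two mechanisms are the same up to duality — applying $\map(-,\Sigma^{p,q}\ringE)$ to the paper's cofiber sequence produces exactly the fiber sequence $\lim_i X_i\to\prod_i X_i\to\prod_i X_i$ from which the Milnor sequence is usually derived — so your proof is a legitimate alternative, just with the telescope trick outsourced to a standard reference rather than performed inside $\inflogSH(S)$.
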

\begin{proof}
In $\inflogSH(S)$,
there is a canonical cofiber sequence
\[
\bigoplus_{j=0}^{i-1} \cF_j
\xrightarrow{f_i}
\bigoplus_{j=0}^i \cF_j
\to
\cF_i
\]
for all integer $i\geq 1$,
where $f_i=\id-\mathrm{shift}$.
Take $\colimit_i$ to obtain a canonical cofiber sequence
\[
\bigoplus_{i\in \N} \cF_i
\to
\bigoplus_{i\in \N} \cF_i
\to
\colimit_{i} \cF_i.
\]
By applying $\ringE^{**}$, we obtain the desired exact sequence
\begin{align*}
0
&\to
\coker\big(\prod_{i\in \N} \ringE^{*-1,*}(\cF_i) \to \prod_{i\in \N} \ringE^{*-1,*}(\cF_i)\big)
\to
\ringE^{**}(\colimit_i \cF_i)
\\
&\to
\ker \big(\prod_{i\in \N} \ringE^{**}(\cF_i) \to \prod_{i\in \N} \ringE^{**}(\cF_i)\big)
\to
0.
\end{align*}
\end{proof}

Let $\cT_r$ and $\cT$ be the tautological bundles on $\Gr(r)$ and $\Gr$, 
respectively.

\begin{cor}
\label{Ori.10}
There are canonical isomorphisms
\begin{equation}
\label{Ori.10.1}
\ringE^{**}\otimes_{\Z} \Z[[c_1,\ldots,c_r]]
\xrightarrow{\cong}
\ringE^{**}(\Gr(r))
\end{equation}
and
\begin{equation}
\label{Ori.10.2}
\ringE^{**}\otimes_{\Z} \Z[[c_1,c_2,\ldots]]
\xrightarrow{\cong}
\ringE^{**}(\Gr)
\end{equation}
sending $c_i$ to $c_i(\cT_r)$ and $c_i(\cT)$, 
respectively.
\end{cor}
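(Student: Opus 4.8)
The plan is to deduce both isomorphisms in Corollary \ref{Ori.10} from Theorem \ref{Ori.9} (the finite Grassmannian computation) by passing to the colimit, using Lemma \ref{Ori.32} to control the $\limit^1$-term. Recall that $\Gr(r) = \Gr(r,\infty) = \colimit_{n\geq r} \Gr(r,n)$ and $\Gr = \colimit_r \Gr(r,\infty)$, both filtered colimits of smooth schemes; correspondingly $\Sigma_T^\infty \Gr(r)_+$ and $\Sigma_T^\infty \Gr_+$ are filtered colimits in $\inflogSH(S)$. So Lemma \ref{Ori.32} gives, for each of these colimits, a short exact sequence
\[
0 \to \limitone_n \ringE^{*-1,*}(\Gr(r,n)) \to \ringE^{**}(\Gr(r)) \to \limit_n \ringE^{**}(\Gr(r,n)) \to 0
\]
and similarly for $\Gr$. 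The first step is therefore to identify the transition maps in these towers: by \eqref{Ori.7.6} and \eqref{Ori.7.5}, the restriction $\ringE^{**}(\Gr(r,n)) \to \ringE^{**}(\Gr(r,n-1))$ corresponds under the $\varphi$-isomorphisms to the surjection $\beta\colon R_{r,n}\to R_{r,n-1}$ sending $x_i\mapsto x_i$ (and $y_i\mapsto y_i$, $y_{n-r}\mapsto 0$ suitably), and the map $\Gr(r,n)\to \Gr(r+1,n)$ corresponds on $R$-level to the analogous truncation of the $c_i$'s.

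The second step is a purely algebraic computation of the inverse limit of these towers of (bigraded) $\ringE^{**}$-modules: $\limit_n R_{r,n} \cong \Z[[c_1,\ldots,c_r]]$ as a graded-complete ring (the inverse limit of the truncations $R_{r,n}$, each of which presents $H^*(\Gr(r,\C^n))$), and $\limit_r \Z[[c_1,\ldots,c_r]] \cong \Z[[c_1,c_2,\ldots]]$. Since each transition map $R_{r,n}\to R_{r,n-1}$ is surjective, the tower $\{\ringE^{**}(\Gr(r,n))\}_n \cong \{\ringE^{**}\otimes_\Z R_{r,n}\}_n$ has surjective transition maps in each fixed bidegree (here one uses that $\ringE^{**}\otimes_\Z(-)$ preserves surjectivity, being a tensor product; more precisely one should observe the relevant graded pieces are finitely generated free $\Z$-modules so the tensor is well-behaved), hence is Mittag-Leffler and $\limitone_n \ringE^{*-1,*}(\Gr(r,n)) = 0$. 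This kills the $\limit^1$ term, so $\ringE^{**}(\Gr(r)) \xrightarrow{\cong} \limit_n \ringE^{**}\otimes_\Z R_{r,n} \cong \ringE^{**}\otimes_\Z \Z[[c_1,\ldots,c_r]]$, giving \eqref{Ori.10.1}; the same argument with $r\to\infty$ gives \eqref{Ori.10.2}. Finally one checks that the isomorphism so produced is exactly the ring map sending $c_i\mapsto c_i(\cT_r)$ (resp. $c_i(\cT)$): this is because $c_i(\cT_r)$ restricts to $c_i(\cT_{r,n})$ for each $n$ by the naturality Proposition \ref{Ori.18}, matching the image of $x_i$ under $\varphi_{r,n}$, and $\varphi$ is compatible with the transition maps by \eqref{Ori.7.5}--\eqref{Ori.7.6}.

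The main obstacle I expect is the bookkeeping for the $\limit^1$-vanishing: one must be careful that the statement of Lemma \ref{Ori.32} involves $\ringE^{*-1,*}$ and that the tower $\{\ringE^{*-1,*}(\Gr(r,n))\}$ genuinely has surjective (or at least Mittag-Leffler) transition maps in each bidegree. This requires knowing that $R_{r,n}\to R_{r,n-1}$ is surjective and degreewise a split surjection of free $\Z$-modules (true, since $R_{r,n}$ is free over $\Z$ with the standard Schubert basis), so that tensoring with the bigraded pieces of $\ringE^{**}$ keeps surjectivity. A secondary subtlety is making precise the statement ``$\limit_n R_{r,n}\cong \Z[[c_1,\ldots,c_r]]$'': one should note the grading is such that $c_i$ has positive (cohomological) degree, so the inverse limit is the power series ring rather than the polynomial ring, and the isomorphism of \eqref{Ori.10.1}--\eqref{Ori.10.2} is one of bigraded-complete rings. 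Neither of these is deep, but they are the points where care is needed; everything else is formal from Theorem \ref{Ori.9} and Lemma \ref{Ori.32}.
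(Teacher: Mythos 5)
Your proposal is correct and follows essentially the same route as the paper: apply Theorem \ref{Ori.9} to the tower of finite Grassmannians, identify the transition maps via \eqref{Ori.7.5}--\eqref{Ori.7.6}, and use Lemma \ref{Ori.32} together with the vanishing of $\limitone_n R_{r,n}$ and $\limit_n R_{r,n}\cong\Z[[c_1,\ldots,c_r]]$ (which the paper extracts from the classical computation for complex Grassmannians, while you verify it directly by surjectivity/Mittag-Leffler — the same fact). The passage to $\Gr$ by repeating the limit argument in $r$ also matches the paper's (terser) deduction of \eqref{Ori.10.2} from \eqref{Ori.10.1}.
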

\begin{proof}
Use the map $\beta$ in \eqref{Ori.7.6} to form an infinite sequence
\[
\cdots \to R_{r,n+1}\to R_{r,n}\to \cdots \to R_{r,r}.
\]
This is isomorphic to the infinite sequence of singular cohomology rings of complex Grassmannians
\[
\cdots \to H^*(\Gr(r,\C^{n+1}))\to H^*(\Gr(r,\C^{n}))\to \cdots \to H^*(\Gr(r,\C^{r})).
\]
The classical computation of the singular cohomology rings of complex Grassmannians shows that 
\[
\limit_n H^*(\Gr(r,\C^n)) \cong \Z[[c_1,\ldots,c_r]]
\text{ and }
\limitone_n H^*(\Gr(r,\C^n))=0.
\]
Hence we have
\begin{equation}
\label{Ori.10.3}
\limit_n R_{r,n} \cong \Z[[c_1,\ldots,c_r]]
\text{ and }
\limitone_n R_{r,n}=0.
\end{equation}
Applying Theorem \ref{Ori.9} and Lemma \ref{Ori.32} we deduce \eqref{Ori.10.1} 
and hence \eqref{Ori.10.2}.
\end{proof}

\begin{rmk}
Panin, Pimenov, and R\"ondigs proved Corollary \ref{Ori.10} in the setting of 
$\A^1$-homotopy theory, 
see \cite[Theorem 2.2]{zbMATH05367298}.
Their proof uses certain affine bundles of flag varieties, 
whose compactifications can be quite complicated.
This is the reason why we have followed the argument in \cite{zbMATH05663788}.
\end{rmk}

\subsection{Formal group laws and multiplicative orientations}
\label{formal}
Throughout this subsection, we fix $S\in \Sch$ and an oriented homotopy commutative monoid $\ringE$ in $\inflogSH(S)$.
In this section, 
we will define the associated formal group law of $\ringE$ and specialize in the example of $\logKGL$.

\begin{df}
\label{Ori.61}
Consider the Segre map
\begin{equation}
\label{Ori.61.1}
m\colon \P^\infty \times \P^\infty
\to
\P^\infty
\end{equation}
sending $([x_0:x_1:\cdots],[y_0:y_1:\cdots])$ to $[x_0y_0:x_0y_1:x_1y_0:\cdots]$.
Together with Remark \ref{Ori.34} and \eqref{Ori.10.1}, we have a homomorphism
\[
m
\colon
\ringE^{**}[[x]]
\to
\ringE^{**}[[x,y]].
\]
We set $\Formal_{\ringE}(x,y):=m(x)$,\index[notation]{FormalE @ $\Formal_{\ringE}(x,y)$} which we call the \emph{formal group law of $\ringE$}.\index{formal group law}
\end{df}

\begin{df}
\label{trace.5}
We say that $\ringE$ has a \emph{multiplicative orientation} \index{multiplicative orientation} if there exists 
an invertible element $u\in \ringE^{-2,-1}$ such that 
\[
\Formal_{\ringE}(x,y)=x+y-uxy.
\]
\end{df}

\begin{prop}
\label{trace.6}
Suppose $S\in \Sch$.
Let $\alpha\colon \ringE\to \ringE'$ be a homomorphism of homotopy commutative monoids in $\inflogSH(S)$.
If $\ringE$ has a multiplicative orientation, 
then $f$ induces a multiplicative orientation on $\ringE'$.
\end{prop}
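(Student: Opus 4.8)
The plan is to use the naturality of $\ringE$-cohomology and of the formal group law with respect to a ring map. First I would record that, since $\alpha\colon\ringE\to\ringE'$ is a homomorphism of homotopy commutative monoids, for every $X\in\lSm/S$ (and every pair $Y\to X$) it induces a graded ring homomorphism $\alpha^*\colon\ringE^{**}(X)\to\ringE'^{**}(X)$ that preserves the unit, is natural in $X$ (compatible with pullbacks $f^*$), and is compatible with the external products \eqref{Ori.12.2} and the cup products \eqref{Ori.12.3}; this is the same formalism already used in the proofs of Propositions \ref{Ori.42}, \ref{MGL.7}, and \ref{MGL.9}. By Proposition \ref{MGL.7}, the orientation on $\ringE'$ induced by $\alpha$ is, in Chern form, $c_\infty':=\alpha^*(c_\infty)$, and by \eqref{Ori.7.12} the generator $c$ appearing in Corollary \ref{Ori.10} corresponds to $c_\infty$ for $\ringE$ and to $c_\infty'=\alpha^*(c_\infty)$ for $\ringE'$. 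Consequently the isomorphism of Corollary \ref{Ori.10} for $\P^\infty=\Gr(1)$, and more generally the isomorphism \eqref{Ori.34.1}, is compatible with $\alpha^*$: the square whose vertical maps are $\alpha^*$ on $\ringE$-cohomology on one side and the coefficientwise extension $\alpha^*\colon\ringE^{**}\otimes_\Z R\to\ringE'^{**}\otimes_\Z R$ (fixing the power-series/polynomial generators) on the other commutes.

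Next I would feed this into the definition of the formal group law. The Segre map \eqref{Ori.61.1} is a fixed morphism of log schemes, so $m^*$ on cohomology commutes with $\alpha^*$ by naturality. Combining with the previous paragraph, the diagram of maps $\ringE^{**}[[x]]\xrightarrow{m}\ringE^{**}[[x,y]]$ and $\ringE'^{**}[[x]]\xrightarrow{m}\ringE'^{**}[[x,y]]$, with vertical maps the coefficientwise extensions of $\alpha^*$ (fixing $x$ and $y$), commutes. Reading off the image of $x$ gives
\[
\Formal_{\ringE'}(x,y)=\alpha^*\bigl(\Formal_{\ringE}(x,y)\bigr),
\]
that is, the formal group law of $\ringE'$ for the induced orientation is obtained from that of $\ringE$ by applying $\alpha^*$ to the coefficients.

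Finally, assume $\ringE$ has a multiplicative orientation, so $\Formal_{\ringE}(x,y)=x+y-uxy$ for an invertible $u\in\ringE^{-2,-1}$ with inverse $u^{-1}\in\ringE^{2,1}$. Applying $\alpha^*$ coefficientwise yields $\Formal_{\ringE'}(x,y)=x+y-\alpha^*(u)xy$ with $\alpha^*(u)\in\ringE'^{-2,-1}$, and since $\alpha^*$ is a unital ring homomorphism, $\alpha^*(u)\,\alpha^*(u^{-1})=\alpha^*(1)=1$, so $\alpha^*(u)$ is invertible. Hence the induced orientation $c_\infty'$ on $\ringE'$ is multiplicative, which is the assertion.

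The only real work here is bookkeeping: verifying carefully that the monoid map $\alpha$ gives ring homomorphisms on $\ringE$-cohomology natural in all the relevant variables, and that the identifications of Corollary \ref{Ori.10} and \eqref{Ori.34.1} are natural under $\alpha^*$. None of this is deep, and it is exactly the kind of argument already used repeatedly in Section \ref{section:oct}; the one point that needs attention is that the normalization \eqref{Ori.7.12} is what forces the Chern-class generators for $\ringE$ and $\ringE'$ to be matched, so that ``applying $\alpha^*$'' on formal group laws literally means applying it to coefficients. I do not expect a genuine obstacle.
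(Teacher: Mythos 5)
Your argument is correct and is essentially the paper's own proof: both rest on the commutativity of the square induced by the Segre map under $\alpha$, which gives $\Formal_{\ringE'}(x,y)=\alpha^*\bigl(\Formal_{\ringE}(x,y)\bigr)=x+y-\alpha^*(u)xy$, together with the observation that $\alpha^*(u)$ is invertible because $\alpha^*$ is a unital ring homomorphism. The extra bookkeeping you include (matching the Chern generators via \eqref{Ori.7.12} and the compatibility of Corollary \ref{Ori.10} and \eqref{Ori.34.1} with $\alpha^*$) is exactly what the paper leaves implicit, so there is nothing further to add.
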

\begin{proof}
The diagram
\[
\begin{tikzcd}
\ringE^{**}(\P^\infty)\ar[d,"\alpha"']\ar[r]&
\ringE^{**}(\P^\infty\times \P^\infty)\ar[d,"\alpha"]
\\
\ringE'^{**}(\P^\infty)\ar[r]&
\ringE'^{**}(\P^\infty \times \P^\infty)
\end{tikzcd}
\]
commutes.
Hence $\alpha \Formal_{\ringE} = \Formal_{\ringE'}\alpha$.
It follows that we have $\Formal_{\ringE'}(x,y)=x+y-\alpha(u)xy$.
To conclude, observe that $\alpha(u)$ is invertible since $u$ is invertible.
\end{proof}

\begin{exm}
\label{Ori.62}
Suppose $k$ is a perfect field admitting resolution of singularities.
Theorem \ref{K-theory.7} and \cite[Corollary 1.2.7]{PPR} give natural isomorphisms
\begin{equation}
\label{Ori.60}
\logKGL^{p,q}(X)
\cong
\Kth_{2q-p}(X-\partial X)
\cong
\KGL^{p,q}(X-\partial X)
\end{equation}
for all $X\in \lSm/k$ and integers $p$, $q$.
According to \cite[\S 2.7]{PPR}, $\KGL$ is oriented in the sense of \cite[Definition 1.2]{zbMATH05367298}.
Via \eqref{Ori.60}, the orientation on $\KGL$ induces an orientation on $\logKGL$.

Furthermore, there is a commutative square
\[
\begin{tikzcd}
\KGL^{**}(\clspace \G_m)\ar[d,"\cong"']\ar[r]&
\KGL^{**}(\clspace \G_m\times \clspace \G_m)\ar[d,"\cong"]
\\
\logKGL^{**}(\clspace \Gmm)\ar[r]&
\logKGL^{**}(\clspace \Gmm\times \clspace \Gmm).
\end{tikzcd}
\]
Together with \cite[Example 2.2]{zbMATH05549229}, 
we see that the formal group law on $\logKGL$ is given by
\begin{equation}
\Formal_{\logKGL}(x,y)=x+y-\beta xy,
\end{equation}
where $\beta\in \logKGL^{-2,-1}$ is the element corresponding to $\KGL$'s Bott element.
This shows that $\logKGL$ has a multiplicative orientation.
\end{exm}

\begin{const}
\label{ori.72}
Spitzweck and \O stv\ae r constructed in
In \cite[(4)]{zbMATH05549229}  a motivic lift of the Bott element, namely a morphism in $\infSH(S)$
\[
\beta\colon \Sigma_T^\infty (\clspace \G_m)_+
\to
\Sigma^{-2,-1}\Sigma_T^\infty (\clspace \G_m)_+
\]
naturally induced by
\[
\Sigma_T^\infty \P^1_+ \otimes \Sigma_T^\infty (\clspace \G_m)_+
\to
\Sigma_T^\infty (\clspace \G_m)_+ \otimes \Sigma_T^\infty (\clspace \G_m)_+
\to
\Sigma_T^\infty (\clspace \G_m)_+,
\]
where the second morphism is naturally induced by the multiplication $\G_m\times \G_m\to \G_m$.
Likewise,
we have a morphism in $\inflogSH(S)$
\[
\beta\colon \Sigma_T^\infty (\clspace \Gmm)_+
\to
\Sigma^{-2,-1}\Sigma_T^\infty (\clspace \Gmm)_+
\]
naturally induced by
\[
\Sigma_T^\infty \P^1_+ \otimes \Sigma_T^\infty (\clspace \Gmm)_+
\to
\Sigma_T^\infty (\clspace \Gmm)_+ \otimes \Sigma_T^\infty (\clspace \Gmm)_+
\to
\Sigma_T^\infty (\clspace \Gmm)_+,
\]
where the second morphism is obtained by Example \ref{Ori.73}.
Using $\beta$,
we define
\[
\Sigma^\infty_T (\clspace \Gmm)_+ [\beta^{-1}]
:=
\colimit(
\Sigma_T^\infty (\clspace \Gmm)_+
\xrightarrow{\beta}
\Sigma^{-2,-1}\Sigma_T^\infty (\clspace \Gmm)_+
\xrightarrow{\beta}
\cdots
).
\]
\end{const}

\begin{thm}
\label{Ori.71}
Suppose $k$ is a perfect field admitting resolution of singularities.
Then for every multiplicatively oriented homotopy commutative monoid $\bE$ in $\inflogSH(k)$,
there exists a unique homomorphism of homotopy commutative monoids
\[
\logKGL \to \bE
\]
that preserves orientations.
\end{thm}
\begin{proof}
We have natural equivalences in $\inflogSH(k)$
\[
\logKGL
\cong
\omega^*\KGL
\cong
\omega^*\Sigma_T^\infty (\P^\infty)_+[\beta^{-1}]
\cong
\Sigma_T^\infty (\P^\infty)_+[\beta^{-1}]
\cong
\Sigma_T^\infty (\clspace\Gmm)_+[\beta^{-1}],
\]
where the first (resp.\ second, resp.\ third, resp.\ fourth) equivalence is due to \eqref{K-theory.16.3} (resp.\ \cite[Lemmas 3.5--3.7]{zbMATH05549229}, resp.\ \cite[Theorem 1.1(2)]{Doosgroups}, resp.\ Theorem \ref{Pic.9}).
Argue as in \cite[Lemmas 3.5--3.7]{zbMATH05549229} to conclude.
\end{proof}

\subsection{Logarithmic cobordism and orientations}
Throughout this subsection, we fix $S\in \Sch$.
First, 
we construct the logarithmic cobordism spectrum $\spectlogMGL$ over $S$ following Voevodsky's approach to 
algebraic cobordism in \cite[\S 6.3]{zbMATH01194164}, 
which in turn follows that for complex cobordism.
Theorem \ref{MGL.5} establishes a  one-to-one correspondence between ring homomorphisms 
$\spectlogMGL\to \ringE$ and orientations on any homotopy commutative monoid $\ringE$ in $\inflogSH(S)$ 
--- the logarithmic version of an analogous result for algebraic cobordism shown in 
\cite{zbMATH01648745}, \cite{zbMATH05367298}, \cite{zbMATH02216798}.

\

Let $\cT_{r,n}$ and $\cT_r$ denote the tautological bundle on the Grassmannian $\Gr(r,n)$ and $\Gr(r)$, 
respectively.
The standard embedding $\cO^n \to \cO^n \oplus \cO$ gives a closed immersion $\Gr(r,n)\to \Gr(r,n+1)$, 
and thus a morphism of Thom spaces
\begin{equation}
\label{MGL.0.1}
\Thom(\cT_{r,n})\to \Thom(\cT_{r,n+1})
\end{equation}
in $\inflogH_{\ast}(S)$.
From this, we define \index[notation]{logMGLr @ $\logMGL_r$}
\[
\logMGL_r:=\Thom(\cT_r):=\colimit_n \Thom(\cT_{r,n}).
\]

There is a closed immersion
\begin{equation}
\label{MGL.0.11}
\mu_{r,s}
\colon
\Gr(r,rn) \times \Gr(s,sn)
\to
\Gr(r+s,(r+s)n)
\end{equation}
sending a pair of $V\subset \cO^{rn}$ and $W\subset \cO^{sn}$ to $\alpha(V\oplus W)\subset \cO^{(r+s)n}$,
where $\alpha\colon \cO^{rn}\oplus \cO^{sn}\to \cO^{(r+s)n}$ is the isomorphism shuffling the coordinates
in the sense of
\[
(V_1\oplus \cdots \oplus V_n) \oplus (W_1\oplus \cdots \oplus W_n)
\xrightarrow{\simeq}
V_1\oplus W_1\oplus \cdots \oplus V_n \oplus W_n
\]
with $V_i\simeq \cO^r$ and $W_i\simeq \cO^s$ for $i=1,\ldots,n$.
Since $\mu_{r,s}^*(\cT_{r+s,(r+s)n})\cong \cT_{r,rn}\times \cT_{s,sn}$, Proposition \ref{Ori.40} gives a map
\begin{equation}
\label{MGL.0.2}
\Thom(\cT_{r,rn})\wedge \Thom(\cT_{s,sn}) \to \Thom(\cT_{r+s,(r+s)n})
\end{equation}
in $\inflogH_{\ast}(S)$.
The induced square
\[
\begin{tikzcd}
\Thom(\cT_{r,rn})\wedge \Thom(\cT_{s,sn}) 
\ar[d]\ar[r]&
\Thom(\cT_{r+s,(r+s)n})\ar[d]
\\
\Thom(\cT_{r,r(n+1)})\wedge \Thom(\cT_{s,s(n+1)}) 
\ar[r]&
\Thom(\cT_{r+s,(r+s)(n+1)})
\end{tikzcd}
\]
commutes,
where the vertical morphisms are obtained from \eqref{MGL.0.1}.
Taking colimits in \eqref{MGL.0.2} yields another map in $\inflogH_{\ast}(S)$, 
namely
\begin{equation}
\label{MGL.0.7}
\logMGL_r\wedge \logMGL_s \to \logMGL_{r+s}.
\end{equation}

The closed immersion $\Gr(s,s)\to \Gr(s,sn)$ gives a map
\[
\Thom(\A^s)\simeq \Thom(\cT_{s,s}) \to \Thom(\Gr(s,sn)).
\]
Proposition \ref{Ori.64} gives an equivalence $T^s\simeq \Thom(\A^s)$ in $\inflogH_{\ast}(S)$.
Together with \eqref{MGL.0.2}, we have a map
\[
\Thom(\cT_{r,rn}) \wedge T^s
\to
\Thom(\cT_{r+s,(r+s)n})
\]
in $\inflogH_{\ast}(S)$.
Taking colimits yields another map in $\inflogH_{\ast}(S)$, 
namely
\begin{equation}
\label{MGL.0.8}
\logMGL_r\wedge T^{\wedge s}\to \logMGL_{r+s}.
\end{equation}

\begin{df}
Thanks to \eqref{alter.1.5}, we can define \index[notation]{logMGLspect @ $\spectlogMGL$}\index{logarithmic cobordism spectrum}
\[
\spectlogMGL:=(\logMGL_0,\logMGL_1,\ldots)\in \inflogSH(S)
\]
whose bonding maps are given by \eqref{MGL.0.8} when $s=1$.
This is called the \emph{logarithmic cobordism spectrum} over $S$.
In $\inflogSH(S)$, there is a canonical equivalence
\begin{equation}
\label{MGL.1.6}
\spectlogMGL
\simeq
\colimit_r \Sigma^{-2r,-r}\Sigma_T^\infty\logMGL_r.
\end{equation}
\end{df}

Recall the natural transformation
$\lambda_\sharp \colon \MS\to \inflogSH$ in Construction \ref{compth.16}.
\begin{prop}
There is an equivalence in $\inflogSH(S)$
$$
\lambda_\sharp(\spectMGL)\simeq \spectlogMGL.
$$
In particular,
we can impose a commutative monoid structure on $\spectlogMGL$.
\end{prop}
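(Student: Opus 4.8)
The plan is to show that the functor $\lambda_\sharp\colon \MS \to \inflogSH$ of $\Sm$-premotivic $\infty$-categories, being monoidal and colimit-preserving, carries the algebraic cobordism spectrum $\spectMGL \in \MS_S$ to $\spectlogMGL \in \inflogSH(S)$, and that the commutative monoid structure on $\spectlogMGL$ is then transported from the one on $\spectMGL$. First I would recall, following Annala--Iwasa's construction in the framework of $\P^1$-spectra \cite{AI} (as in \cite[\S 4]{AHI}), that $\spectMGL$ is built exactly as in Voevodsky's classical construction: it is the $\P^1$-spectrum assembled from the Thom spaces $\mathrm{MGL}_r = \Thom(\cT_r) = \colim_n \Thom(\cT_{r,n})$ of the tautological bundles on the Grassmannians, with bonding maps induced by the closed immersions $\Gr(s,s) \to \Gr(s,sn)$ together with the Whitney-type maps $\Thom(\cT_{r,rn}) \wedge \Thom(\cT_{s,sn}) \to \Thom(\cT_{r+s,(r+s)n})$ coming from the shuffle closed immersions $\mu_{r,s}$. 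This is the same combinatorial recipe we used to define $\spectlogMGL$ in the previous paragraph, but computed in $\MS_S$ rather than in $\inflogSH(S)$.

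The key step is then to check that $\lambda_\sharp$ intertwines the two constructions. Since $\lambda_\sharp$ is a monoidal functor preserving colimits, it suffices to verify: (i) $\lambda_\sharp$ sends the Thom space $\Thom(\cT_{r,n})$ computed in $\MS_S$ (which is the quotient $\cT_{r,n}/(\cT_{r,n} \setminus Z)$ in the presheaf category on $\Sm/S$, as Annala--Iwasa's Thom spaces are defined using the open complement of the zero section) to the logarithmic Thom space $\Thom(\cT_{r,n})$ of Definition \ref{Thomdf.1}, i.e. to $M(\cE/(\Blow_Z \cE, E))$. This is precisely where one invokes Theorem \ref{ProplogSH.5} (or rather Proposition \ref{Ori.66} / the blow-up excision statement): in $\inflogSH^{\SmAdm}(S)$ — and hence in $\inflogSH(S)$ — the map $(\Blow_Z\cE, E) \to \cE\setminus Z$ (more precisely, the comparison of the two models of the Thom space) is an equivalence, because the elementary blow-up square is cocartesian. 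Since $\lambda_\sharp$ sends $\P^1$ to $\P^1$, every Zariski distinguished square to a strict Nisnevich square, and every elementary blow-up square to an elementary blow-up square (which is cocartesian in $\inflogSH(S)$), it carries $\Sigma^\infty \cT_{r,n}/(\cT_{r,n}\setminus Z)$ to $\Sigma^\infty_T \Thom(\cT_{r,n})$. (ii) $\lambda_\sharp$ is compatible with the colimits defining $\mathrm{MGL}_r$ and with the bonding maps; this is automatic since $\lambda_\sharp$ preserves colimits and is monoidal, and the bonding/multiplication maps on both sides are built from the same Grassmannian closed immersions $\mu_{r,s}$ and the Whitney sum equivalences (Proposition \ref{Ori.40}), which are preserved up to coherent equivalence.

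Assembling (i) and (ii) over all $r$ and passing to the colimit $\spectMGL \simeq \colim_r \Sigma^{-2r,-r}\Sigma^\infty_{\P^1}\mathrm{MGL}_r$ — the analogue in $\MS_S$ of \eqref{MGL.1.6} — yields $\lambda_\sharp(\spectMGL) \simeq \spectlogMGL$ in $\inflogSH(S)$. For the monoid structure: $\spectMGL$ is a commutative monoid (an $\mathbb{E}_\infty$-ring) in $\MS_S$ by \cite{AI}, and since $\lambda_\sharp$ is a symmetric monoidal functor in $\CAlg(\LPr)$, it sends commutative algebra objects to commutative algebra objects; hence $\spectlogMGL \simeq \lambda_\sharp(\spectMGL)$ inherits a (homotopy) commutative monoid structure, and in fact an $\mathbb{E}_\infty$-structure. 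The main obstacle I anticipate is the careful bookkeeping in step (i): matching the precise definition of Thom spaces used by Annala--Iwasa (open-complement model, which makes sense because their presheaves satisfy elementary blow-up excision by fiat) with our blow-up model of Definition \ref{Thomdf.1}, and checking that the identification is natural enough to be compatible with the colimit over $n$ and with the multiplication maps $\mu_{r,s}$ — in other words, promoting the pointwise equivalence of Thom spaces to an equivalence of the whole spectrum-level diagrams. Everything else is formal consequence of $\lambda_\sharp$ being monoidal and colimit-preserving.
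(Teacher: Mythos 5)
Your overall strategy is the same as the paper's: use that $\lambda_\sharp$ is symmetric monoidal and colimit-preserving, that $\spectMGL$ is assembled from the Thom spectra of the tautological bundles on Grassmannians (this is what \cite[Proposition 7.1]{AHI} supplies), and transport the commutative monoid structure along the symmetric monoidal functor. The step you yourself flag as the crux --- matching the two notions of Thom space --- is, however, where your argument has a genuine gap.

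In step (i) you assert that Annala--Hoyois--Iwasa define the Thom space by the open-complement model $\cT_{r,n}/(\cT_{r,n}\setminus Z)$. They do not: in their non-$\A^1$-invariant setting the Thom space of $\cE$ is by definition the projective-completion quotient $\P(\cE\oplus\cO)/\P(\cE)$ (for the trivial line bundle the open-complement model would give $\A^1/\G_m$, which is not $\P^1$ and not invertible without $\A^1$-invariance, so it cannot be what they stabilize by). Moreover, even granting your description, the identification you propose of $\cE/(\cE\setminus Z)$ with the logarithmic Thom space $M(\cE/(\Blow_Z\cE,E))$ in $\inflogSH(S)$ does not follow from blow-up excision and is false in general: Theorem \ref{ProplogSH.5} and Proposition \ref{Ori.66} concern the blow-up with its ambient log structure and say nothing about the open immersion $\cE\setminus Z\to(\Blow_Z\cE,E)$, which is not an equivalence in $\inflogSH(S)$. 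Concretely, for $\cE=\A^1$ and $Z=\{0\}$ your model yields $\A^1/\G_m$, whereas the log Thom space is $T=\A^1/(\A^1,0)$; if the comparison map were an equivalence, stability would force $\G_m\simeq(\A^1,0)=\A_\N$, which representable theories such as $\logTHH$ rule out (compare the cyclic and replete bar constructions in Proposition \ref{Hoch.9}). This is exactly the failure of homotopy purity discussed after Definition \ref{Thomdf.1}. The correct bridge --- and the one the paper uses --- is Proposition \ref{Ori.41}: the blow-up model $\Thom(\cE)$ is canonically equivalent to $\P(\cE\oplus\cO)/\P(\cE)$, which is literally $\lambda_\sharp$ applied to the AHI Thom space (a cofiber of representables, hence preserved by the colimit-preserving $\lambda_\sharp$). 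With that substitution in step (i), the rest of your argument --- compatibility with the colimits over $n$ and $r$, with the bonding and multiplication maps, and the transport of the commutative monoid structure along the symmetric monoidal $\lambda_\sharp$ --- goes through as in the paper.
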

\begin{rmk} The existence of $\lambda_\sharp$ gives us a quick way to equip our spectrum $\spectlogMGL$ with the structure of a commutative monoid. 
    We remark, however, that this fact and Theorem \ref{MGL.5} below
\emph{are not necessary} to apply the results of \S \ref{ori}-\ref{formal}. In particular, the results in \cite{BLMP}, \cite{BLPO}, \cite{BLPO2} are independent from \cite{AHI}.
\end{rmk}
\begin{proof}
Since $\lambda_\sharp$ preserves colimits and $\Gr(n)$ is a colimit of smooth schemes,
the equivalence $\lambda_\sharp (\Sigma^\infty X_+)\simeq \Sigma^\infty X_+$ for $X\in \Sm/S$ yields an equivalence
\[
\lambda_\sharp (\Sigma^\infty \Gr(n))\simeq \Sigma^\infty \Gr(n).
\]
Furthermore, compare Proposition \ref{Ori.41} with the definition of Thom spaces in \cite[\S 3]{AHI} to show that $\lambda_\sharp$ preserves Thom spaces.
Together with \cite[Proposition 7.1]{AHI},
we obtain the desired equivalence.
This imposes a commutative monoid structure on $\spectlogMGL$ since $\MGL\in \MS_S$ is a commutative monoid and $\lambda_\sharp$ is symmetric monoidal.
\end{proof}

From now on $\ringE$ is a homotopy commutative monoid in $\inflogSH(S)$.

\begin{prop}
\label{MGL.2}
If $\ringE$ is oriented, 
then there is an isomorphism
\[
\ringE^{*+2r,*+r}(\logMGL_r)
\cong
\ringE^{**}[[c_1,\ldots,c_r]].
\]
\end{prop}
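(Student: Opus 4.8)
The plan is to compute $\ringE^{*+2r,*+r}(\logMGL_r)$ by combining the Thom isomorphism (Theorem \ref{Ori.16}) with the computation of the $\ringE$-cohomology of the Grassmannian $\Gr(r)$ (Corollary \ref{Ori.10}), passing carefully to the colimit that defines $\logMGL_r$. Recall that $\logMGL_r = \Thom(\cT_r) = \colimit_n \Thom(\cT_{r,n})$, where $\cT_{r,n}$ is the rank $r$ tautological bundle on $\Gr(r,n)$. First I would apply the Thom isomorphism to each finite stage: Theorem \ref{Ori.16} gives an isomorphism
\[
-\smile t(\cT_{r,n})\colon \ringE^{**}(\Gr(r,n)) \xrightarrow{\simeq} \ringE^{*+2r,*+r}(\Thom(\cT_{r,n})),
\]
and by Proposition \ref{Ori.33} these Thom isomorphisms are compatible with the transition maps \eqref{MGL.0.1}, since the bundle $\cT_{r,n}$ pulls back to $\cT_{r,n+1}$ along the closed immersion $\Gr(r,n)\to \Gr(r,n+1)$ (this compatibility is exactly the statement $g^*(t(\cT_{r,n+1})) = t(\cT_{r,n})$).

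Next I would pass to the limit. Since $\logMGL_r$ is a filtered colimit in $\inflogSH(S)$, Lemma \ref{Ori.32} gives a short exact sequence
\[
0 \to \limitone_n \ringE^{*+2r-1,*+r}(\Thom(\cT_{r,n})) \to \ringE^{*+2r,*+r}(\logMGL_r) \to \limit_n \ringE^{*+2r,*+r}(\Thom(\cT_{r,n})) \to 0.
\]
Using the Thom isomorphisms, the inverse system $\{\ringE^{**}(\Thom(\cT_{r,n}))\}_n$ is identified with the system $\{\ringE^{**}(\Gr(r,n))\}_n$ appearing in the proof of Corollary \ref{Ori.10}, which uses the maps $\beta$ of \eqref{Ori.7.6}. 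By \eqref{Ori.10.3} (the classical computation of the singular cohomology of complex Grassmannians, transported via Theorem \ref{Ori.9}), we have $\limit_n R_{r,n} \cong \Z[[c_1,\ldots,c_r]]$ and $\limitone_n R_{r,n} = 0$; since $\ringE^{**}$ is a graded module over $\ringE^{**}(\pt)$ and the system splits off the classes $c_i(\cT_{r,n})$, the $\limitone$ term vanishes and the inverse limit computes to $\ringE^{**}\otimes_{\Z}\Z[[c_1,\ldots,c_r]] \cong \ringE^{**}(\Gr(r))$. One then reads off
\[
\ringE^{*+2r,*+r}(\logMGL_r) \cong \ringE^{**}(\Gr(r)) \cong \ringE^{**}[[c_1,\ldots,c_r]],
\]
where the identification sends $c_i$ to $c_i(\cT_r)\smile t(\cT_r)$ (suppressing the Thom class in the final notation, as is customary).

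The main obstacle I anticipate is the bookkeeping around the $\limitone$ vanishing and the compatibility of the Thom isomorphisms with the colimit structure: one must check that the transition maps in the tower $\{\ringE^{*+2r,*+r}(\Thom(\cT_{r,n}))\}_n$, after applying the Thom isomorphisms, really do coincide with the maps $1\otimes\beta$ appearing in \eqref{Ori.7.6}, so that the $\limit$ and $\limitone$ computations from the proof of Corollary \ref{Ori.10} apply verbatim. This is where Proposition \ref{Ori.33} is essential, and one should also verify that the Mittag-Leffler condition (surjectivity of the transition maps, which holds because each $\beta$ is surjective and the $\varphi_{r,n}$ are isomorphisms by Theorem \ref{Ori.9}) indeed forces the $\limitone$ term to vanish. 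Everything else is a routine assembly of results already established in the excerpt.
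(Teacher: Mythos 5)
Your proof is correct and takes essentially the same route as the paper, which simply combines the Thom isomorphism (Theorem \ref{Ori.16}) with the $\limitone$-sequence of Lemma \ref{Ori.32} and then applies \eqref{Ori.10.3}; your extra check that the Thom isomorphisms are compatible with the transition maps via Proposition \ref{Ori.33}, and the Mittag--Leffler remark, merely make explicit what the paper leaves implicit. (One wording slip only: it is $\cT_{r,n+1}$ that restricts to $\cT_{r,n}$ along $\Gr(r,n)\to\Gr(r,n+1)$, as your displayed formula $g^*(t(\cT_{r,n+1}))=t(\cT_{r,n})$ correctly states.)
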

\begin{proof}
Combine Theorem \ref{Ori.16} and Lemma \ref{Ori.32} to have an exact sequence
\[
0
\to
\ringE^{**}\otimes \limitone_{n} R_{r,n}
\to
\ringE^{*+2r,*+r}(\logMGL_r)
\to
\ringE^{**}\otimes \limit_n R_{r,n}
\to
0.
\]
Apply \eqref{Ori.10.3} to conclude.
\end{proof}

\begin{prop}
\label{MGL.3}
If $\ringE$ is oriented, 
then there are isomorphisms
\[
\ringE^{**}(\spectlogMGL)
\cong
\limit_r \ringE^{*+2r,*+r}(\logMGL_r)
\cong
\limit_r \ringE^{**}(\Gr(r))
\cong
\ringE^{**}[[c_1,c_2,\ldots]].
\]
\end{prop}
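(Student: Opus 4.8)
\textbf{Proof plan for Proposition \ref{MGL.3}.}
The plan is to assemble the three isomorphisms from results already established, reading the chain from left to right. First I would identify $\ringE^{**}(\spectlogMGL)$ with a limit over $r$. By \eqref{MGL.1.6} we have $\spectlogMGL\simeq \colimit_r \Sigma^{-2r,-r}\Sigma_T^\infty\logMGL_r$ in $\inflogSH(S)$, where the transition maps are induced by \eqref{MGL.0.8} with $s=1$. Applying Lemma \ref{Ori.32} to this filtered colimit produces a short exact sequence
\[
0
\to
\limitone_r \ringE^{*+2r-1,*+r}(\logMGL_r)
\to
\ringE^{**}(\spectlogMGL)
\to
\limit_r \ringE^{*+2r,*+r}(\logMGL_r)
\to
0,
\]
so the first isomorphism will follow once the relevant $\limitone$ term vanishes. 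Second, Proposition \ref{MGL.2} gives $\ringE^{*+2r,*+r}(\logMGL_r)\cong \ringE^{**}[[c_1,\ldots,c_r]]$; composing with the Thom isomorphism $\ringE^{*+2r,*+r}(\logMGL_r)\cong \ringE^{**}(\Gr(r))$ of Theorem \ref{Ori.16} (together with the colimit argument in Lemma \ref{Ori.32}, exactly as in the proof of Proposition \ref{MGL.2}) identifies the middle two terms, giving $\limit_r \ringE^{*+2r,*+r}(\logMGL_r)\cong \limit_r \ringE^{**}(\Gr(r))$. Finally, the last isomorphism $\limit_r \ringE^{**}(\Gr(r))\cong \ringE^{**}[[c_1,c_2,\ldots]]$ comes from Corollary \ref{Ori.10}, more precisely from \eqref{Ori.10.1} applied levelwise together with the compatibility of the isomorphisms $\varphi_{r,n}$ with the restriction maps $\beta$ in \eqref{Ori.7.6}: the inverse system $(\ringE^{**}(\Gr(r)))_r$ is isomorphic to $(\ringE^{**}\otimes_\Z \Z[[c_1,\ldots,c_r]])_r$ with the obvious truncation maps, whose limit is $\ringE^{**}\otimes_\Z \Z[[c_1,c_2,\ldots]]$, and one checks this equals $\ringE^{**}[[c_1,c_2,\ldots]]$ since the tensor product and the inverse limit are compatible here (each graded piece being a product of copies of $\ringE^{**}$).

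The step I expect to be the main obstacle is the vanishing of the $\limitone$ term $\limitone_r \ringE^{*+2r-1,*+r}(\logMGL_r)$, which is needed for the first isomorphism to be an isomorphism rather than merely a surjection. By Proposition \ref{MGL.2} each group $\ringE^{*+2r-1,*+r}(\logMGL_r)$ is isomorphic to $(\ringE^{*-1,*})[[c_1,\ldots,c_r]]$, and the transition maps in the inverse system are, under these identifications, the truncation homomorphisms $\Z[[c_1,\ldots,c_{r+1}]]\to \Z[[c_1,\ldots,c_r]]$ setting $c_{r+1}=0$ (one has to verify that the map induced by \eqref{MGL.0.8} really is this truncation, which follows by tracing through the definition of the Thom class and Proposition \ref{Ori.35} as in the Grassmannian computations of \S\ref{ssec:Grass}). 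These truncation maps are surjective, so the inverse system satisfies the Mittag-Leffler condition and the $\limitone$ vanishes. This is precisely the shape of the argument already carried out in the proofs of Proposition \ref{MGL.2} and Corollary \ref{Ori.10} via \eqref{Ori.10.3}, so the work is to confirm that the transition maps at the spectrum level match the algebraic truncations; everything else is formal bookkeeping with the previously proved isomorphisms.

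Concretely, then, the proof would be short: cite \eqref{MGL.1.6}, apply Lemma \ref{Ori.32}, observe Mittag-Leffler using Proposition \ref{MGL.2} to kill $\limitone$, giving the first isomorphism; use Theorem \ref{Ori.16} (plus Lemma \ref{Ori.32} as in Proposition \ref{MGL.2}) for the second; and invoke Corollary \ref{Ori.10}, \eqref{Ori.10.3} for the third. I would write it in three sentences, one per isomorphism, with the Mittag-Leffler remark inserted after the first.
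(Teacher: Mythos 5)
Your proposal is correct and follows essentially the same route as the paper: the paper's proof writes down a commutative ladder comparing the inverse systems $\{\ringE^{*+2r,*+r}(\logMGL_r)\}_r$ and $\{\ringE^{**}(\Gr(r))\}_r$ via the Thom classes $t(\cT_{r+1})$, $t(\cT_r\oplus\cO)$, $t(\cT_r)$ (which is exactly your "transition maps match the truncations" check, resting on Proposition \ref{Ori.35} and the Whitney sum formula), and then concludes by Lemma \ref{Ori.32} and Corollary \ref{Ori.10}. Your explicit Mittag--Leffler argument for the vanishing of $\limitone_r \ringE^{*+2r-1,*+r}(\logMGL_r)$ is the same vanishing the paper uses implicitly (surjectivity of the power-series truncations), so nothing is missing.
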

\begin{proof}
There is a commutative diagram with vertical isomorphisms
\begin{equation}
\label{MGL.3.1}
\begin{tikzcd}[column sep=small]
\ringE^{**}(\Gr(r+1))\ar[r]\ar[d,"(-)\smile t(\cT_{r+1})"']&
\ringE^{**}(\Gr(r))\ar[d,"(-)\smile t(\cT_r\oplus \cO)"]\ar[r,"\id"]&
\ringE^{**}(\Gr(r))\ar[d,"(-)\smile t(\cT_r)"]
\\
\ringE^{*+2r+2,*+r+1}(\logMGL_{r+1})\ar[r]&
\ringE^{*+2r+2,*+r+1}(\Sigma^{2,1}\logMGL_r)\ar[r,"\cong"]&
\ringE^{*+2r,*+r}(\logMGL_r).
\end{tikzcd}
\end{equation}
This naturally induces isomorphisms
\[
\limit_r \ringE^{**}(\Gr(r))
\xrightarrow{\cong}
\limit_r \ringE^{*+2r,*+r}(\logMGL_r)
\]
and
\[
\limitone_r \ringE^{**}(\Gr(r))
\xrightarrow{\cong}
\limitone_r \ringE^{*+2r,*+r}(\logMGL_r).
\]
To finish the proof, apply Lemma \ref{Ori.32} and Corollary \ref{Ori.10}.
\end{proof}

\begin{prop}
\label{MGL.4}
If $\ringE$ is oriented, 
then there are isomorphisms
\[
\ringE^{**}(\logMGL_r\wedge \logMGL_s)
\cong
\ringE^{**}(\Gr(r)\wedge \Gr(s))
\cong
\ringE^{**}[[c_1',\ldots,c_r',c_1'',\ldots,c_s'']]
\]
and
\begin{align*}
\ringE^{**}(\spectlogMGL\wedge \spectlogMGL)
&\cong
\limit_{r,s}\ringE^{**}(\logMGL_r\wedge \logMGL_s)
\\
&\cong
\limit_{r,s}\ringE^{**}(\Gr(r)\wedge \Gr(s))
\cong
\ringE^{**}[[c_1',c_1'',c_2',c_2'',\ldots]].
\end{align*}
\end{prop}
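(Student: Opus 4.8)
The plan is to follow the now-familiar pattern established by Propositions \ref{MGL.2} and \ref{MGL.3}, which computes the $\ringE$-cohomology of $\spectlogMGL$ via a limit decomposition, but now carry it out for the smash product $\spectlogMGL \wedge \spectlogMGL$. First I would treat the finite-level statement: for fixed $r$ and $s$, apply Proposition \ref{Ori.40} (the multiplicativity of Thom spaces) to identify $\logMGL_r \wedge \logMGL_s$ with the colimit of $\Thom(\cT_{r,rn} \times \cT_{s,sn})$ over $n$, and then use the Thom isomorphism (Theorem \ref{Ori.16}) together with Lemma \ref{Ori.32} (the Milnor exact sequence for $\ringE^{**}$ of a filtered colimit) to get
\[
0 \to \ringE^{**} \otimes \limitone_n (R_{r,n} \otimes R_{s,n}) \to \ringE^{**}(\logMGL_r \wedge \logMGL_s) \to \ringE^{**} \otimes \limit_n (R_{r,n} \otimes R_{s,n}) \to 0.
\]
The $\limitone$ term vanishes and the $\limit$ term is $\ringE^{**}[[c_1',\ldots,c_r',c_1'',\ldots,c_s'']]$ by the classical computation \eqref{Ori.10.3} (applied twice, using that the tower $R_{r,n}\otimes R_{s,n}$ is Mittag-Leffler). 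This also gives the identification with $\ringE^{**}(\Gr(r)\wedge \Gr(s))$ once one invokes Corollary \ref{Ori.10} and Remark \ref{Ori.34}.

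Next I would pass to the inverse limit over $r$ and $s$. Using \eqref{MGL.1.6}, the spectrum $\spectlogMGL \wedge \spectlogMGL$ is the filtered colimit over $(r,s)$ of $\Sigma^{-2r-2s,-r-s}\Sigma_T^\infty(\logMGL_r \wedge \logMGL_s)$, so Lemma \ref{Ori.32} again produces a Milnor sequence relating $\ringE^{**}(\spectlogMGL \wedge \spectlogMGL)$ to $\limit_{r,s}$ and $\limitone_{r,s}$ of the groups $\ringE^{*+2r+2s,*+r+s}(\logMGL_r \wedge \logMGL_s)$. I would then build the analogue of the commutative diagram \eqref{MGL.3.1}: the transition maps in the $(r,s)$-tower are given by smashing with Thom classes $t(\cT_{r+1})$ and $t(\cT_{s+1})$, and these fit into squares with vertical Thom isomorphisms so that the tower of $\ringE$-cohomology of the $\logMGL_r \wedge \logMGL_s$ is pro-isomorphic to the tower of $\ringE^{**}(\Gr(r)\wedge \Gr(s))$. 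Since the latter tower satisfies the Mittag-Leffler condition and has vanishing $\limitone$ (the power series rings $\ringE^{**}[[c_1',\ldots,c_r',c_1'',\ldots,c_s'']]$ form a strict pro-system with surjective transition maps), the $\limitone$ term drops out and we land on $\limit_{r,s}\ringE^{**}[[c_1',\ldots,c_r',c_1'',\ldots,c_s'']] \cong \ringE^{**}[[c_1',c_1'',c_2',c_2'',\ldots]]$, where the ordering of the variables is chosen so that the $(r,s)$-truncation keeps exactly $c_1',\ldots,c_r'$ and $c_1'',\ldots,c_s''$.

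I expect the main obstacle to be purely bookkeeping rather than conceptual: one must be careful that the indexing poset $\N\times\N$ is filtered (so Lemma \ref{Ori.32}, stated for sequences, applies after choosing a cofinal copy of $\N$, e.g. the diagonal or any cofinal chain) and that the double inverse limit and double $\limitone$ genuinely compute from the single-variable versions — this is where the Mittag-Leffler property of each intermediate tower matters, and it must be verified that the transition maps $R_{r+1,n}\otimes R_{s,n} \to R_{r,n}\otimes R_{s,n}$ (and the mixed ones) are surjective so that no $\limitone$ obstruction appears. Everything else is a verbatim adaptation of the proofs of Propositions \ref{MGL.2}, \ref{MGL.3}, and \ref{MGL.4}'s first half, using Theorem \ref{Ori.16}, Proposition \ref{Ori.40}, Proposition \ref{Ori.35} (for compatibility of Thom classes under products, ensuring the transition squares commute), Corollary \ref{Ori.10}, and Remark \ref{Ori.34}. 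I would present the argument in two displayed stages mirroring the two displayed isomorphism chains in the statement, so the reader sees the finite-level computation first and the limit computation second.
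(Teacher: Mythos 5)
Your proof is correct and follows essentially the same route as the paper, whose own argument is simply to repeat the proofs of Propositions \ref{MGL.2} and \ref{MGL.3} with the relative Grassmannian computation \eqref{Ori.34.1} in place of Theorem \ref{Ori.9}; your two-stage write-up (Thom isomorphism plus the Milnor sequence of Lemma \ref{Ori.32} at finite level, then the transition-square argument as in \eqref{MGL.3.1} using Proposition \ref{Ori.35} for the limit over $(r,s)$) is exactly that adaptation, spelled out with the bookkeeping the paper leaves implicit.
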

\begin{proof}
Argue as in Propositions \ref{MGL.2} and \ref{MGL.3}, but use \eqref{Ori.34.1}.
\end{proof}

\begin{prop}
\label{MGL.8}
If $\ringE$ is oriented, 
then the diagram
\[
\begin{tikzcd}
\ringE^{**}(\Gr(r+s))\ar[r]\ar[d,"(-)\smile t(\cT_{r+s})"']&
\ringE^{**}(\Gr(r)\wedge \Gr(s))\ar[d,"(-)\smile (t(\cT_r)\wedge t(\cT_s))"]
\\
\ringE^{**}(\Thom(\cT_{r+s}))\ar[r]&
\ringE^{**}(\Thom(\cT_r)\wedge \Thom(\cT_s))
\end{tikzcd}
\]
commutes, where the horizontal maps are naturally induced by \eqref{MGL.0.11} and \eqref{MGL.0.7}.
\end{prop}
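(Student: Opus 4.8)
The proof strategy is to reduce the claimed compatibility to a statement about Thom classes under base change along the multiplicative maps of Grassmannians, and then invoke the functoriality of Thom classes already established in Proposition \ref{Ori.33}. Recall that the horizontal map $\ringE^{**}(\Gr(r+s))\to \ringE^{**}(\Gr(r)\wedge \Gr(s))$ is obtained by taking colimits over $n$ of the pullback along the closed immersion $\mu_{r,s}\colon \Gr(r,rn)\times \Gr(s,sn)\to \Gr(r+s,(r+s)n)$ of \eqref{MGL.0.11}, and the map $\ringE^{**}(\Thom(\cT_{r+s}))\to \ringE^{**}(\Thom(\cT_r)\wedge \Thom(\cT_s))$ is the colimit of the pullback along the morphism of Thom spaces \eqref{MGL.0.2}, which is in turn induced by $\mu_{r,s}$ together with the canonical isomorphism $\mu_{r,s}^*(\cT_{r+s,(r+s)n})\cong \cT_{r,rn}\times \cT_{s,sn}$ via Proposition \ref{Ori.40}.

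First I would reduce to the finite level: since $\ringE^{**}$ of a filtered colimit is computed by a $\lim^1$-sequence (Lemma \ref{Ori.32}) and all the maps in sight are compatible with the colimit structure maps by construction, it suffices to prove the commutativity of the corresponding square at each finite stage $n$, with $\Gr(r+s)$ replaced by $\Gr(r+s,(r+s)n)$, $\Gr(r)\wedge \Gr(s)$ by $\Gr(r,rn)\times \Gr(s,sn)$ (strictly, one must be a little careful here: the smash appears only after passing to Thom spaces, but $\mu_{r,s}$ is a genuine map of schemes so the square of cohomology rings makes sense), and $\cT_{r+s}$, $\cT_r$, $\cT_s$ by the respective tautological bundles $\cT_{r+s,(r+s)n}$, $\cT_{r,rn}$, $\cT_{s,sn}$. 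Then I would chase an element $a\in \ringE^{**}(\Gr(r+s,(r+s)n))$ around the square. Going down-then-right gives $g^*(a\smile t(\cT_{r+s,(r+s)n}))$ where $g$ is the induced morphism of Thom spaces; by the multiplicativity of the cup product pairing and the fact that $g$ is built from $\mu_{r,s}$, this equals $\mu_{r,s}^*(a)\smile g^*(t(\cT_{r+s,(r+s)n}))$. Going right-then-down gives $\mu_{r,s}^*(a)\smile (t(\cT_{r,rn})\wedge t(\cT_{s,sn}))$, where the second factor is the external product of Thom classes, i.e.\ $\eta^*(t(\cT_{r,rn})\smile t(\cT_{s,sn}))$ under the product-of-Thom-spaces equivalence of Proposition \ref{Ori.40}. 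So the two outputs agree precisely if $g^*(t(\cT_{r+s,(r+s)n}))$ equals the external product of the Thom classes of $\cT_{r,rn}$ and $\cT_{s,sn}$.

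The identity $g^*(t(\cT_{r+s,(r+s)n})) = \eta^*(t(\cT_{r,rn})\smile t(\cT_{s,sn}))$ is exactly the content of Proposition \ref{Ori.35} (the multiplicativity of Thom classes under external products of vector bundles), applied to $X=\Gr(r,rn)$, $Y=\Gr(s,sn)$, $\cE=\cT_{r,rn}$, $\cF=\cT_{s,sn}$, together with the canonical identification $\mu_{r,s}^*\cT_{r+s,(r+s)n}\cong \cT_{r,rn}\times_S \cT_{s,sn}$ and Proposition \ref{Ori.33} that $g^*$ carries $t(\cT_{r+s,(r+s)n})$ to $t$ of the pulled-back bundle. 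I expect the main obstacle to be a bookkeeping one rather than a conceptual one: one must verify that the morphism of Thom spaces $g$ appearing in \eqref{MGL.0.2} genuinely agrees, under the equivalences of Propositions \ref{Ori.40} and \ref{Ori.41}, with the morphism $\Thom(\mu_{r,s}^*\cT_{r+s,(r+s)n})\to \Thom(\cT_{r+s,(r+s)n})$ induced by $\mu_{r,s}$ followed by the coordinate-shuffling isomorphism $\alpha$; this is a matter of unwinding definitions, but it requires checking that the shuffling isomorphism is compatible with all the Thom-space identifications, and that the whole diagram is natural in $n$ so that the passage to the colimit is legitimate. Once this is in place the desired commutativity follows formally from the cited propositions.
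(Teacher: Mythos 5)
Your proof is correct and is essentially the paper's argument: the paper's entire proof of Proposition \ref{MGL.8} is "Follows from Proposition \ref{Ori.35}", and your element chase (reduce to finite Grassmannians, use naturality of the module structure, then identify $g^*(t(\cT_{r+s,(r+s)n}))$ with the external product of Thom classes via Propositions \ref{Ori.33} and \ref{Ori.35} and the equivalence of Proposition \ref{Ori.40}) is exactly the intended reduction, with the colimit bookkeeping handled as in Propositions \ref{MGL.3} and \ref{MGL.4}.
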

\begin{proof}
Follows from Proposition \ref{Ori.35}.
\end{proof}

\begin{prop}
\label{MGL.9}
Suppose that $\ringE$ is oriented.
With respect to the isomorphisms in {\rm Propositions \ref{MGL.3} and \ref{MGL.4}}, 
the map
\[
\ringE^{**}(\spectlogMGL)
\to
\ringE^{**}(\spectlogMGL\wedge \spectlogMGL), 
\]
naturally induced by the multiplication map on $\spectlogMGL$,  
sends any power series $f(c_1,c_2,\ldots)$ to $f(d_1,d_2,\ldots)$ for
\[
d_m:=\sum_{i=0}^m c_i c_{m-i}'
\]
for all integer $m\geq 1$.
\end{prop}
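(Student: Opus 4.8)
The plan is to mimic, in the logarithmic setting, the standard computation of the coaction on $\ringE^{**}(\spectMGL)$ in $\mathbb{A}^1$-homotopy theory. The starting point is that by Proposition \ref{MGL.3} we have $\ringE^{**}(\spectlogMGL)\cong\ringE^{**}[[c_1,c_2,\ldots]]$, with $c_i$ coming from $c_i(\cT_r)\in\ringE^{**}(\Gr(r))$ via the Thom isomorphism (Theorem \ref{Ori.16}), and by Proposition \ref{MGL.4} we have $\ringE^{**}(\spectlogMGL\wedge\spectlogMGL)\cong\ringE^{**}[[c_1',c_1'',c_2',c_2'',\ldots]]$. The multiplication map $\spectlogMGL\wedge\spectlogMGL\to\spectlogMGL$ is, at finite levels, induced by the maps \eqref{MGL.0.11} $\mu_{r,s}\colon\Gr(r,rn)\times\Gr(s,sn)\to\Gr(r+s,(r+s)n)$ on Grassmannians together with the Thom-space maps \eqref{MGL.0.7}. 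So the computation reduces to understanding the pullback $\mu_{r,s}^*$ on Chern classes and checking compatibility with the Thom classes.

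First I would record the Chern-class computation: since $\mu_{r,s}^*\cT_{r+s,(r+s)n}\cong\cT_{r,rn}\times\cT_{s,sn}$ (the isomorphism used to define \eqref{MGL.0.2}), the Whitney sum formula (Proposition \ref{Ori.20}) gives $\mu_{r,s}^*c_m(\cT_{r+s})=\sum_{i=0}^m c_i(\cT_r)\cdot c_{m-i}(\cT_s)$, which under the naming conventions of Propositions \ref{MGL.3} and \ref{MGL.4} is exactly $d_m=\sum_{i=0}^m c_i c_{m-i}'$ (up to the bookkeeping of which factor gets primes). This is the heart of the content; it is a direct translation of \cite[(18)]{zbMATH05663788} and uses only the already-established projective bundle theorem and Whitney sum formula, which hold without $\mathbb{A}^1$-invariance in our setting.

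Next I would assemble this into the statement about the coaction map at the level of spectra. Using \eqref{MGL.1.6}, $\spectlogMGL\simeq\colimit_r\Sigma^{-2r,-r}\Sigma_T^\infty\logMGL_r$, and the analogous colimit presentation of $\spectlogMGL\wedge\spectlogMGL$, one reduces the coaction to the compatible family of maps $\ringE^{**}(\Thom(\cT_{r+s}))\to\ringE^{**}(\Thom(\cT_r)\wedge\Thom(\cT_s))$. Proposition \ref{MGL.8} is precisely the statement that these Thom-space maps intertwine $\mu_{r,s}^*$ with multiplication by the Thom classes; combining the commutative square there with the Chern-class formula of the previous paragraph and passing to the limit over $r,s$ (via Lemma \ref{Ori.32}, whose $\limitone$-terms vanish as in the proofs of Propositions \ref{MGL.3} and \ref{MGL.4}) gives the claimed formula $f(c_1,c_2,\ldots)\mapsto f(d_1,d_2,\ldots)$, since the map is a continuous $\ringE^{**}$-algebra homomorphism and hence determined by its effect on the generators $c_m$.

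The main obstacle I anticipate is not the Chern-class identity itself but the careful handling of the passage to the limit and the compatibility of all the identifications: one must check that the isomorphisms of Propositions \ref{MGL.3} and \ref{MGL.4} are compatible with the bonding maps \eqref{MGL.0.7}, \eqref{MGL.0.8} and with the shuffle isomorphism $\alpha$ defining $\mu_{r,s}$, so that the naming of variables ($c_i'$ versus $c_i''$, and the ordering in $d_m$) is consistent throughout. A secondary point of care is that the Thom classes $t(\cT_r)$ are only multiplicative up to the equivalences \eqref{Ori.40.1}, so one should invoke Proposition \ref{Ori.35} (via Proposition \ref{MGL.8}) rather than a naive multiplicativity; but since all of these inputs are already available in the excerpt, the argument should go through essentially formally once the bookkeeping is set up.
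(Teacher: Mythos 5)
Your proposal is correct and takes essentially the same route as the paper: the paper's proof likewise reduces via Proposition \ref{MGL.8} to the pullback of Chern classes along $\mu_{r,s}$ at the Grassmannian level and computes it with the Whitney sum formula (Proposition \ref{Ori.20}), the inverse-limit identifications being already packaged in Propositions \ref{MGL.3} and \ref{MGL.4}. Your extra remarks on the $\limitone$-vanishing and bookkeeping are sound elaborations of what the paper leaves implicit.
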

\begin{proof}
By Proposition \ref{MGL.8}, it suffices to show that the map
\[
\ringE^{**}(\Gr(r+s,n+l))
\to
\ringE^{**}(\Gr(r,n)\wedge \Gr(s,l))
\]
sends $c_m(\cT_{r+s,n+l})$ to $\sum_{i=0}^m c_i(\cT_{r,n})c_{n-i}(\cT_{s,l})$.
This follows from Proposition \ref{Ori.20}.
\end{proof}

\begin{prop}
\label{MGL.6}
The logarithmic cobordism spectrum $\spectlogMGL$ is oriented.
\end{prop}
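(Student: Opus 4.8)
The plan is to exhibit a \emph{Thom orientation} on $\spectlogMGL$ and then invoke Proposition \ref{Ori.53}, which converts it into the Chern orientation required for $\spectlogMGL$ to be oriented. Recall from \eqref{MGL.1.6} that $\spectlogMGL \simeq \colimit_r \Sigma^{-2r,-r}\Sigma_T^\infty\logMGL_r$, where $\logMGL_r = \Thom(\cT_r)$ and $\cT_r$ is the tautological bundle on $\Gr(r) = \colimit_n \Gr(r,n)$; in particular $\logMGL_1 = \Thom(\cT_1)$ with $\Gr(1) = \P^\infty$, and $\logMGL_0 = \colimit_n \Thom(\cT_{0,n}) = S^0$ since each $\Gr(0,n)$ is a point and $\cT_{0,n}$ has rank $0$. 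The $r=1$ term of the colimit \eqref{MGL.1.6} supplies a canonical morphism $\Sigma_T^\infty \logMGL_1 \to \Sigma^{2,1}\spectlogMGL$ in $\logSH(S)$, i.e.\ a class $t_\infty \in \spectlogMGL^{2,1}(\Thom(\cT_1))$; this is the candidate Thom orientation.

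First I would verify that $t_\infty$, possibly after replacing it by $-t_\infty$, satisfies the defining condition of a Thom orientation, namely that its restriction along the inclusion $T \simeq \Thom(\cO/\pt) \to \Thom(\cT_1)$ of the Thom space over the distinguished point of $\P^\infty$ equals the class $-T\wedge\unit\colon T \to T\wedge\spectlogMGL = \Sigma^{2,1}\spectlogMGL$. The content is to unwind the bonding maps \eqref{MGL.0.8} in the case $r=0$, $s=1$: the bonding map $\logMGL_0\wedge T = S^0\wedge T \to \logMGL_1$ is, by construction, the colimit over $n$ of the maps $S^0\wedge T \simeq \Thom(\cT_{1,1}) \to \Thom(\cT_{1,n})$ induced by the closed immersion $\Gr(1,1)=\pt \hookrightarrow \Gr(1,n)$ together with the equivalence $T\simeq\Thom(\A^1)$ of Proposition \ref{Ori.64}; hence it is precisely the inclusion of the Thom space over the distinguished point. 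By the compatibility of the level-$0$ and level-$1$ structure maps $\Sigma_T^\infty\logMGL_n \to \Sigma^{2n,n}\spectlogMGL$ of the $T$-spectrum $\spectlogMGL$ — the level-$0$ one being, since $\logMGL_0 = S^0$, the canonical morphism $\unit = \Sigma_T^\infty\logMGL_0 \to \spectlogMGL$ — the composite $T \to \Thom(\cT_1) \xrightarrow{t_\infty} \Sigma^{2,1}\spectlogMGL$ is identified with $\Sigma^{2,1}$ of that canonical morphism, i.e.\ with $\pm\, T\wedge\unit$. The sign reflects only the choice of equivalence $T\simeq\Thom(\cO)$ in Proposition \ref{Ori.64} and is absorbed by the sign conventions discussed after Definition \ref{Ori.7} and in Definition \ref{Ori.15}. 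One should also note that the particular rational point of $\P^\infty$ is immaterial, as any two become equivalent in $\inflogH^{\SmAdm}(S)$ by Theorem \ref{Pic.9} (compare Lemma \ref{Pic.5}). Granting this, $t_\infty$ (or $-t_\infty$) is a Thom orientation, so Proposition \ref{Ori.53} yields the associated Chern orientation, and $\spectlogMGL$ is oriented.

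The main obstacle will be exactly this bookkeeping: one has to keep track simultaneously of the colimits defining $\logMGL_r = \Thom(\cT_r)$, of the shuffle isomorphisms $\mu_{r,s}$ of \eqref{MGL.0.11} that enter the multiplicative maps \eqref{MGL.0.2} and \eqref{MGL.0.8}, and of the Thom-space equivalences of Propositions \ref{Ori.64} and \ref{Ori.40}, and to check their mutual compatibility precisely enough to identify the restriction of the level-$1$ structure map to a point with the unit morphism, signs included. A shortcut — which I would mention but not rely on — uses the preceding proposition: $\spectMGL\in\MS_S$ carries its tautological orientation in the sense of \cite{AHI}, \cite{AI}, and the symmetric monoidal, colimit-preserving functor $\lambda_\sharp\colon \MS_S \to \inflogSH(S)$ identifies $\spectMGL$ with $\spectlogMGL$ and preserves $\P^\infty$, $\P^1/\pt$, $T$ and the relevant sections, hence transports the orientation; since the paper stresses independence from \cite{AHI}, however, it is the direct construction above that should be carried out in full.
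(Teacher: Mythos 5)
Your proposal is correct and is essentially the paper's own argument: the paper likewise takes the class $t_\infty\in\spectlogMGL^{2,1}(\Thom(\cT_1))$ coming from the $r=1$ term of \eqref{MGL.1.6}, identifies the $r=0$, $s=1$ bonding map \eqref{MGL.0.8} as the map $T\to\Thom(\cT_1)$ given by restricting $\cT_1$ to the distinguished point, observes that composing it with \eqref{MGL.6.1} recovers the unit $\unit\to\spectlogMGL$, and concludes that $t_\infty$ is a Thom orientation (hence, via Proposition \ref{Ori.53}, a Chern orientation). Your extra care with the sign $\pm T\wedge\unit$ and with the choice of base point is harmless bookkeeping the paper elides, and the $\lambda_\sharp(\spectMGL)$ shortcut you mention is indeed the route the paper deliberately avoids relying on.
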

\begin{proof}
By \eqref{MGL.1.6}, we have a morphism
\begin{equation}
\label{MGL.6.1}
\Sigma^{-2,-1}\Sigma_T^\infty \Thom(\cT_1)=\Sigma^{-2,-1}\Sigma_T^\infty \logMGL_1\to \spectlogMGL.
\end{equation}
This gives a class $t_\infty \in \spectlogMGL^{2,1}(\Thom(\cT_1))$.
When $r=0$ and $s=1$, 
the morphism \eqref{MGL.0.8} becomes $g\colon T\to \Thom(\cT_1)$, 
which is obtained by restricting $\cT_1$ to the distinguished point.
The unit map $\unit\to \spectlogMGL$ is the composite of $\Sigma^{-2,-1}\Sigma_T^\infty g$ and \eqref{MGL.6.1}.
It follows that the restriction of $t_\infty$ to $T$ is $T\wedge \unit$, and thus $t_\infty$ is a Thom orientation.
\end{proof}

\begin{prop}
\label{MGL.7}
Suppose that $f\colon \ringE\to \ringE'$ is a map of homotopy commutative monoids in $\inflogSH(S)$.
If $\ringE$ is oriented, then $f$ induces an orientation on $\ringE'$.
\end{prop}
\begin{proof}
Suppose $t_\infty\in \ringE^{2,1}(\Thom(\cT_1))$ is a Thom orientation.
The naturally induced homomorphism $\ringE^{2,1}(\Thom(\cT_1))\to \ringE'^{2,1}(\Thom(\cT_1))$ 
sends $t_\infty$ to some class $f(t_\infty)\in \ringE'^{2,1}(\Thom(\cT_1))$.
The restriction of $t_\infty$ to $T$ is $T\wedge \unit$.
Since $f$ is a ring homomorphism, the restriction of $f(t_\infty)$ to $T$ is $T\wedge \unit$.
Hence $f(t_\infty)$ is a Thom orientation.
\end{proof}

\begin{thm}
\label{MGL.5}
There is a one-to-one correspondence between the set of ring maps $\spectlogMGL\to \ringE$ 
and the set of orientations of $\ringE$.
\end{thm}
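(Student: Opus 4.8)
The plan is to set up a bijection between ring maps $\spectlogMGL\to \ringE$ in $\logSH(S)$ and Thom orientations $t_\infty\in \ringE^{2,1}(\Thom(\cT_1))$ (equivalently, by Proposition \ref{Ori.53}, Chern orientations of $\ringE$), following Voevodsky's strategy for $\spectMGL$ but using the logarithmic projective-bundle and Thom-isomorphism machinery of \S\ref{ori}--\S\ref{ssec:Grass} in place of the $\A^1$-invariant inputs.

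First I would construct the map $(\text{ring maps})\to(\text{orientations})$. Given a ring map $\varphi\colon \spectlogMGL\to \ringE$, compose with the canonical morphism $\Sigma^{-2,-1}\Sigma_T^\infty \Thom(\cT_1)\to \spectlogMGL$ of \eqref{MGL.6.1} to obtain a class $\varphi(t_\infty^{\mathrm{univ}})\in \ringE^{2,1}(\Thom(\cT_1))$, where $t_\infty^{\mathrm{univ}}$ is the universal Thom orientation produced in Proposition \ref{MGL.6}. Exactly as in the proof of Proposition \ref{MGL.7}, the restriction of the universal class to $T$ is $T\wedge \unit$, and since $\varphi$ is a ring homomorphism the restriction of $\varphi(t_\infty^{\mathrm{univ}})$ to $T$ is again $T\wedge \unit$; hence it is a Thom orientation of $\ringE$. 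This assignment is clearly functorial and sends the identity to $t_\infty^{\mathrm{univ}}$.

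Next I would construct the inverse. Given a Thom orientation $t_\infty$ of $\ringE$, we get Thom classes $t(\cE)$ for all vector bundles (Definition \ref{Ori.15}), hence by Proposition \ref{MGL.3} an identification $\ringE^{**}(\spectlogMGL)\cong \ringE^{**}[[c_1,c_2,\ldots]]$. The element $1\in \ringE^{**}[[c_1,c_2,\ldots]]$, or more precisely the compatible system of classes $(-)\smile t(\cT_r)\in \ringE^{*+2r,*+r}(\logMGL_r)$ assembled via the commutative diagram \eqref{MGL.3.1}, defines a morphism of pro-spectra, hence a morphism $\varphi_{t_\infty}\colon \spectlogMGL\to \ringE$ in $\logSH(S)$ by \eqref{MGL.1.6} and the $\lim^1$ exact sequence of Lemma \ref{Ori.32} (the $\lim^1$ term vanishes by Proposition \ref{MGL.3}, so the class is unique). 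The key point is that $\varphi_{t_\infty}$ is a \emph{ring} map: unitality follows from the computation in Proposition \ref{MGL.6} that the unit $\unit\to\spectlogMGL$ pulls $t_\infty^{\mathrm{univ}}$ back to $T\wedge\unit$ (together with the fact that $t_\infty$ restricts to $T\wedge\unit$ on $T$), and multiplicativity follows by comparing the coproduct formula of Proposition \ref{MGL.9}, $c_m\mapsto \sum_{i}c_ic_{m-i}'$, with the multiplicativity of Thom classes under external products, Proposition \ref{Ori.35} (this is precisely the content of Proposition \ref{MGL.8}). Finally one checks the two assignments are mutually inverse: starting from $\varphi$ and extracting $t_\infty$ recovers $\varphi$ because a ring map out of $\spectlogMGL$ is determined by the image of the system $t(\cT_r)$ (Proposition \ref{MGL.3} again), and starting from $t_\infty$, building $\varphi_{t_\infty}$, and restricting to $\Thom(\cT_1)$ gives back $t_\infty$ by construction.

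The main obstacle I expect is the multiplicativity check for $\varphi_{t_\infty}$: one must verify that the square expressing compatibility of the system $\{(-)\smile t(\cT_r)\}$ with the multiplication maps $\logMGL_r\wedge\logMGL_s\to\logMGL_{r+s}$ of \eqref{MGL.0.7} commutes \emph{coherently in $r$ and $s$}, and then pass this compatibility through the $\lim^1$-sequence to conclude that the induced map on $\spectlogMGL$ is a map of monoids rather than merely respecting the multiplication up to the indeterminacy of $\lim^1$. This is handled by Propositions \ref{MGL.8} and \ref{MGL.9} together with the vanishing of the relevant $\lim^1$ terms from Propositions \ref{MGL.3} and \ref{MGL.4}, but assembling these into a clean argument — and being careful that all identifications are the ones induced by the \emph{same} orientation $t_\infty$ — is the delicate part. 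Everything else is a formal consequence of the projective bundle theorem \ref{Ori.11}, the Thom isomorphism \ref{Ori.16}, and the Grassmannian computations of \S\ref{ssec:Grass}, exactly as in the $\A^1$-invariant case.
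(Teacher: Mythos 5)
Your overall strategy coincides with the paper's: reduce to Thom orientations via Proposition \ref{Ori.53}, send a ring map $f$ to its restriction along \eqref{MGL.6.1} (Propositions \ref{MGL.6}, \ref{MGL.7}), build the inverse from the compatible system of Thom classes $t(\cT_r)$ using \eqref{MGL.3.1} and Proposition \ref{MGL.3}, and verify multiplicativity via Propositions \ref{MGL.8}, \ref{MGL.9} and \ref{Ori.35}. Up to that point your outline matches the paper's proof.

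However, there is a genuine gap in your verification that the two assignments are mutually inverse, in the direction $\varphi(\psi(f))=f$. You argue that this holds ``because a ring map out of $\spectlogMGL$ is determined by the image of the system $t(\cT_r)$,'' but Proposition \ref{MGL.3} only tells you that $f$ is determined by \emph{its own} restrictions $a_r\in\ringE^{2r,r}(\Thom(\cT_r))$, while $\varphi(\psi(f))$ is determined by the Thom classes $t(\cT_r)$ built from the single class $a_1=\psi(f)$. The whole content of this direction is to prove $a_r=t(\cT_r)$ for all $r$, and that is not automatic: in the paper one uses that $f$ is a ring map to see that $a_r$ restricts to $a_1^{\wedge r}=t(\cT_1)^{\wedge r}$ under the map $\ringE^{2r,r}(\Thom(\cT_r))\to\ringE^{2r,r}(\Thom(\cT_1)^{\wedge r})$ (which also receives $t(\cT_r)$ by Proposition \ref{Ori.35}), and then shows this restriction map is \emph{injective} by identifying it, via the projective bundle theorem, with the inclusion $\ringE^{**}[[c_1,\ldots,c_r]]\to\ringE^{**}[[x_1,\ldots,x_r]]$ sending $c_i$ to the elementary symmetric functions. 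This splitting-principle injectivity is the crux of the uniqueness statement and is absent from your proposal; without it you have only shown that $f$ and $\varphi(\psi(f))$ are each determined by some system of classes, not that the systems agree. A smaller, related imprecision: $\psi(\varphi(t_\infty))=t_\infty$ is not quite ``by construction'' either, since the Thom class $t(\cT_1)$ is defined through the associated Chern orientation; one needs Proposition \ref{Ori.54} to identify it with the original $t_\infty$.
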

\begin{proof}
Due to Proposition \ref{Ori.53}, we only need to deal with Thom orientations.
Let $f\colon \spectlogMGL\to \ringE$ be a ring map in $\inflogSH(S)$.
By Propositions \ref{MGL.6} and \ref{MGL.7}, 
there is a naturally induced Thom orientation $\psi(f)\in \ringE^{2,1}(\Thom(\cT_1))$.
Observe that $\psi(f)$ is the composite of
\begin{equation}
\label{MGL.5.1}
\Sigma^{-2,-1}\Sigma_T^\infty \Thom(\cT_1)\to \spectlogMGL\xrightarrow{f}\ringE,
\end{equation}
where the first map is \eqref{MGL.6.1}.

\

On the other hand, 
suppose that $t_\infty\in \ringE^{2,1}(\Thom(\cT_1))$ is a Thom orientation.
Then the associated Thom classes $t(\cT_r)\in \ringE^{2r,r}(\logMGL_r)$ are 
compatible with the maps 
$\ringE^{2r+2,r+1}(\logMGL_{r+1})\to \ringE^{2r,r}(\logMGL_r)$ since 
\eqref{MGL.3.1} commutes.
Proposition \ref{MGL.3} furnishes
a class of $\ringE^{00}(\spectlogMGL)$,
which yields a map $\varphi(t_\infty)\colon \spectlogMGL\to \ringE$.
Let us show that $\varphi(t_\infty)$ is a ring map.
In effect, we need to show the diagram in $\inflogSH(S)$
\[
\begin{tikzcd}
\spectlogMGL\wedge \spectlogMGL\ar[r]\ar[d,"\varphi(t_\infty)\wedge\varphi(t_\infty)"']&
\spectlogMGL\ar[d,"\varphi(t_\infty)"]
\\
\ringE \wedge \ringE \ar[r]&
\ringE
\end{tikzcd}
\]
commutes, 
or equivalently that $\varphi(c_\infty)$ maps to $\varphi(c_\infty)\wedge \varphi(c_\infty)$ under 
\[
\ringE^{00}(\spectlogMGL)\to \ringE^{00}(\spectlogMGL\wedge \spectlogMGL).
\]
Using Propositions \ref{MGL.2} and \ref{MGL.3}, we reduce to showing 
\[
\ringE^{2r+2s,r+s}(\logMGL_{r+s})\to \ringE^{2r+2s,r+s}(\logMGL_r\wedge \logMGL_s)
\]
maps $t(\cT_{r+s})$ to $t(\cT_r)\wedge t(\cT_{s})$.
This follows from Proposition \ref{Ori.35}.

\

It remains to check $\varphi(\psi(f))=f$ and $\psi(\varphi(t_\infty))=t_\infty$.
When $t_\infty$ is given, the Thom classes $t(\cT_r)$ naturally induce $\varphi(t_\infty)$.
From the formulation \eqref{MGL.5.1}, we have $\psi(\varphi(t_\infty))=t(\cT_1)$.
Together with Proposition \ref{Ori.54}, we have $\psi(\varphi(t_\infty))=t_\infty$.

Given a map $f$ we may compose with
\[
\Sigma^{-2r,-r}\Sigma_T^\infty \Thom(\cT_r)\to \spectlogMGL
\]
to define $a_r\in \ringE^{2r,r}(\Thom(\cT_r))$.
On the other hand, $a_1=\psi(f)$ defines Thom classes $t(\cT_r)$.
By Proposition \ref{MGL.3},
the classes $a_r$ and $t(\cT_r)$ naturally induce $f$ and $\varphi(\psi(f))$, 
respectively.
Therefore, it suffices to show $a_r=t(\cT_r)$ for every integer $r\geq 1$.
Since $f$ is assumed to be a ring map, the diagram
\[
\begin{tikzcd}
\spectlogMGL^{\wedge r}\ar[r]\ar[d,"f^{\wedge r}"']&
\spectlogMGL\ar[d,"f"]
\\
\ringE^{\wedge r}\ar[r]&
\ringE
\end{tikzcd}
\]
commutes.
Thus $a_r$ maps to $a_1^{\wedge r}$ under 
\begin{equation}
\label{MGL.5.2}
\ringE^{2r,r}(\Thom(\cT_r))
\to
\ringE^{2r,r}(\Thom(\cT_1)^{\wedge r}).
\end{equation}

\

Owing to Proposition \ref{Ori.35}, the diagram
\[
\begin{tikzcd}
(\Sigma^{-2,-1}\Sigma_T^\infty \Thom(\cT_1))^{\wedge r}\ar[r]\ar[rd]&
\Sigma^{-2r,-r}\Sigma_T^\infty \Thom(\cT_r)\ar[d]
\\
&
\ringE
\end{tikzcd}
\]
commutes, and \eqref{MGL.5.2} sends $t(\cT_r)$ to $t(\cT_1)^{\wedge r}$.
Since $a_1=t(\cT_1)$, 
in order to conclude $a_r=t(\cT_r)$ it suffices to show that \eqref{MGL.5.2} is injective.

As in Proposition \ref{MGL.9}, \eqref{MGL.5.2} is isomorphic to the map
\[
\ringE^{**}[[c_1,\ldots,c_r]]
\to
\ringE^{**}[[x_1,\ldots,x_r]]
\]
sending any power series $f(c_1,\ldots,c_r)$ to $f(d_1,\ldots,d_r)$, where
\[
d_i=\sum_{j_1< \cdots < j_i}x_{j_1}\cdots x_{j_i}
\]
for $i=1,\ldots,r$.
This description implies that \eqref{MGL.5.2} is injective.
\end{proof}

\subsection{The log motivic Eilenberg-MacLane spectrum}

Throughout this subsection, $k$ is a field and $\Lambda$ is a commutative ring.
We set out to define the logarithmic analogue $\spectlogML$ of the motivic Eilenberg-MacLane spectrum 
introduced in \cite{zbMATH01194164}.
We show that $\spectlogML$ is an $\E_\infty$-ring in $\inflogSH(k)$.
Assuming that $k$ is a perfect field admitting resolution of singularities, 
we will show that $\spectlogML$ is oriented.

\

Recall from \eqref{logDA.4.1} the infinite suspension functor
\[
\Sigma_{\Gmm}^\infty
\colon
\inflogDMeff(k,\Lambda)
\to
\inflogDM(k,\Lambda).
\]
For $X\in \lSm/k$, we define
\[
M(X)
:=
\Sigma_{\Gmm}^\infty M(X)\in \inflogDM(k,\Lambda),
\]
where on the right-hand side, 
$M(X)$ denotes the log motive of $X$ in $\inflogDMeff(k,\Lambda)$, 
see Definition \ref{logDA.4}.
The permutation
$$
(123)\colon M(k)(1)^{\otimes 3}\to M(k)(1)^{\otimes 3}
$$ 
in $\inflogDMeff(k,\Lambda)$ is equivalent to the identity by Proposition \ref{Ori.56} and Theorem \ref{Ori.68}.
If $X\in \lSm/k$, then $M(X)$ is compact in $\inflogDMeff(k,\Lambda)$ \cite[Remark 5.2.6]{logDM}.
Thus by \cite[Th\'eor\`emes 4.3.61, 4.3.79]{Ayo07}, 
we obtain a canonical equivalence of spaces
\begin{equation}
\label{ML.0.1}
\Map_{\inflogDM(k,\Lambda)}(M(X)(m),M(Y))
\simeq
\colimit_{n\to \infty}\Map_{\inflogDMeff(k,\Lambda)}(M(X)(m+n),M(Y)(n))
\end{equation}
for all $X,Y\in \lSm/k$ and $m\in\mathbb{Z}$.
The functor $\omega\colon \lSm/k\to \Sm/k$ naturally induces an adjunction
\[
\omega_\sharp
:
\inflogDM(k,\Lambda)
\rightleftarrows
\infDM(k,\Lambda)
:
\omega^*.
\]
The functor $\omega_\sharp$ is monoidal and sends $M(X)$ to $M(X-\partial X)$ for all $X\in \lSm/k$.

\begin{exm}
\label{ML.6}
For every integer $j$, let $\Omega^j$ be the presheaf on $\lSm/k$ given by $\Omega^j(X):=\Omega_{X/k}^j(X)$.
We set $\Omega^j(X):=0$ if $j<0$.
According to \cite[Theorem 9.7.1]{logDM}, we can regard $\Omega^j$ as an object of $\inflogDMeff(k)$ such that
\[
\hom_{\inflogDMeff(k)}(M(X),\Omega^j[n])
\cong
H_{Zar}^n(X,\Omega^j)
\cong
H_{sNis}^n(X,\Omega^j)
\]
for all $X\in \lSm/k$ and $n\in\mathbb{Z}$.
By \cite[Properties 2.3(b)]{EVbuch}, there is an exact sequence of sheaves
\[
0
\to
\Omega_{X\times \P^1/k}^j
\to
\Omega_{X\times \boxx/k}^j
\to
a_*\Omega_{X/k}^{j-1}
\to
0
\]
for all $X\in \SmlSm/k$, where $a\colon X\to X\times \P^1$ is the closed immersion at $\infty$.
Apply $\R_{Zar}\Gamma$ to this sequence and use the strict $\boxx$-invariance of $\Omega^j$ in 
\cite[Corollary 9.2.2]{logDM} to obtain a cofiber sequence
\begin{equation}
\label{ML.6.2}
\R_{Zar}\Gamma(X\times \P^1,\Omega^j)
\to
\R_{Zar}\Gamma(X,\Omega^j)
\to
\R_{Zar}\Gamma(X,\Omega^{j-1})
\end{equation}
for all $X\in \SmlSm/k$.
Due to \cite[Corollary 9.2.5, Proposition A.10.2]{logDM}, 
\eqref{ML.6.2} extends to a cofiber sequence for all $X\in \lSm/k$.
Hence we have an equivalence in $\inflogDMeff(k)$
\begin{equation}
\Omega_{\Gmm}\Omega^j
\simeq
\Omega^{j-1}.
\end{equation}
Use this as bonding maps to define the $\Gmm$-spectrum
\begin{equation}
\bOmega^j
:=
(\Omega^j,\Omega^{j+1},\ldots)\in \inflogDM(k).
\end{equation}
We have a canonical isomorphism
\begin{equation}
\hom_{\inflogDM(k)}(M(X)(i),\bOmega^j[n])
\cong
H_{Zar}^n(X,\Omega^{j-i})
\end{equation}
for all $X\in \lSm/k$ and integers $i$, $j$, $n$.
This shows that $\bOmega^j$ is not $\A^1$-local.
Hence $\bOmega^j$ is not in the essential image of $\omega^*$.
\end{exm}

\begin{prop}
\label{ML.1}
Suppose $k$ is a perfect field admitting resolution of singularities.
For all $X\in \lSm/k$ with $X$ proper over $k$, the naturally induced map
\[
M(X)\to \omega^*\omega_\sharp M(X)
\]
in $\inflogDM(k)$ is an equivalence.
\end{prop}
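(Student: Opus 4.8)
The plan is to prove this by induction on the number of irreducible components of $\partial X$, after two preliminary reductions. The map in question is the unit of the adjunction $\omega_\sharp\dashv \omega^*$ between $\inflogDM(k,\Lambda)$ and $\infDM(k,\Lambda)$, for which $\omega_\sharp M(X)\simeq M(X-\partial X)$. First, since $\omega_\sharp$ and $\omega^*$ commute with Tate twists and with $\Sigma_{\Gmm}^\infty$, the colimit description \eqref{ML.0.1} of mapping spaces reduces the claim to the analogous statement in $\inflogDMeff(k,\Lambda)$. Second, by resolution of singularities $\inflogDMeff(k,\Lambda)$ is equivalent to its localization at admissible blow-ups --- the linear, log-transfer analogue of Proposition \ref{ProplogSH.8}, cf.\ \cite{logDM} --- in which admissible blow-ups are invertible; since $X-\partial X$ is smooth and $\ul{X}$ is proper, resolving the singularities of $\ul{X}$ and the non-transversality of $\partial X$ produces an admissible blow-up $X'\to X$ with $X'\in \SmlSm/k$, with $\partial X'$ strict normal crossing, and with $X'-\partial X'=X-\partial X$. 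So one may assume $X\in \SmlSm/k$ with $\ul{X}$ smooth and proper over $k$ and $\partial X=Z_1+\cdots+Z_m$ strict normal crossing.

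If $m=0$, then $X=\ul{X}$ carries the trivial log structure, and the presheaf-level unit $y(\ul{X})\to \omega^*\omega_\sharp y(\ul{X})$ is a $(\boxx\cup\Adm)$-local equivalence by the log-transfer version of Proposition \ref{K-theory.1} --- whose proof uses only that $\ul{X}$ is smooth and proper together with the calculus of right fractions of Proposition \ref{Singboxx.6}, and carries over to presheaves with log transfers --- so after passing to $\inflogDMeff(k,\Lambda)$ one gets $M(\ul{X})\xrightarrow{\simeq}\omega^*\omega_\sharp M(\ul{X})$. For the inductive step set $D:=Z_m$, $E:=Z_1+\cdots+Z_{m-1}$, $Y:=(\ul{X},E)$, and $D^\circ:=(D,E|_D)\in \SmlSm/k$; both $Y$ and $D^\circ$ have smooth proper underlying scheme and at most $m-1$ boundary components. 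Since $D$ is a Cartier divisor, $\Blow_D Y$ (Definition \ref{logH.35}) has underlying scheme $\ul{X}$, equals $X$, and its canonical projection to $Y$ is the morphism forgetting the log structure along $D$; hence Theorem \ref{Thom.1} and Remark \ref{rmk:Gysin}, applied to the pair $(Y,D^\circ)$, give a cofiber sequence
\[
M(X)\to M(Y)\to \Thom(\Normal_{D^\circ}Y)
\]
in $\inflogDMeff(k,\Lambda)$, with $\Normal_{D^\circ}Y$ the normal line bundle of $D$ in $\ul{X}$ regarded as a bundle over $D^\circ$. By Proposition \ref{Ori.41} and Theorem \ref{ProplogSH.5}, $\Thom(\Normal_{D^\circ}Y)\simeq \P(\Normal_{D^\circ}Y\oplus \cO)/D^\circ$, where $\P(\Normal_{D^\circ}Y\oplus \cO)\to D^\circ$ is a $\boxx$-bundle with smooth proper underlying scheme; so by the inductive hypothesis applied to $Y$, to $D^\circ$, and to $\P(\Normal_{D^\circ}Y\oplus \cO)$, both $M(Y)\to \omega^*\omega_\sharp M(Y)$ and $M(\Thom(\Normal_{D^\circ}Y))\to \omega^*\omega_\sharp M(\Thom(\Normal_{D^\circ}Y))$ are equivalences. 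Applying the exact functor $\omega_\sharp$ to the displayed sequence yields the $\A^1$-motivic Gysin triangle
\[
M(X-\partial X)\to M(\ul{X}\setminus E)\to \Thom(\Normal_{D\setminus E|_D}(\ul{X}\setminus E))
\]
in $\infDM(k,\Lambda)$, using that $\omega_\sharp$ carries the logarithmic deformation-to-the-normal-cone diagram \eqref{Thom.1.6} for $(Y,D^\circ)$ to the $\A^1$-deformation diagram for $D\setminus E|_D\hookrightarrow \ul{X}\setminus E$, and in particular $\Thom(\Normal_{D^\circ}Y)$ to the classical Thom space. Since $\omega^*$ is exact (being a right adjoint between stable $\infty$-categories), comparing the two cofiber sequences through their $\omega^*$-images and applying the $5$-lemma shows that $M(X)\to \omega^*\omega_\sharp M(X)$ is an equivalence.

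The two delicate points will be the base case and the $\omega_\sharp$-compatibility used in the inductive step: one needs the singular-functor argument behind Proposition \ref{K-theory.1} to survive the presence of log transfers, and one needs the Gysin/residue sequence of Theorem \ref{Thom.1} for a boundary component --- together with its deformation-to-the-normal-cone construction --- to be carried by $\omega_\sharp$ onto the classical Gysin triangle, which is exactly what makes the logarithmic and $\A^1$-invariant inductions line up. Everything else is formal bookkeeping. Alternatively one could avoid the boundary Gysin sequence altogether and decompose $M(X)$ directly into motives of smooth proper $k$-schemes with trivial log structure using the cubical horns of Propositions \ref{Ori.5} and \ref{Ori.37} and Proposition \ref{Ori.64}, at the cost of a more intricate induction on cubes.
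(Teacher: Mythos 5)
Your proposal does not follow the paper's proof, which is a short reduction: by adjunction one must show that $\Map_{\inflogDM(k,\Lambda)}(M(Y)(m),M(X))\to\Map_{\infDM(k,\Lambda)}(M(Y-\partial Y)(m),M(X-\partial X))$ is an equivalence for all $Y\in\lSm/k$ and all $m$, and this follows by combining the effective comparison theorem \cite[Theorem 8.2.11]{logDM} with the colimit formula \eqref{ML.0.1}. What you attempt instead is to reprove that effective comparison from scratch, and the genuine gap sits exactly where you wave it through: the base case. Proposition \ref{K-theory.1} is a statement about presheaves of spaces on $\cSm/S$ \emph{without transfers}; its proof identifies $\colimit_{Y'\in\Adm\downarrow Y}\hom(Y',X)$ with $\hom(Y-\partial Y,X)$ by closing up the graph of a morphism of schemes, using only properness and separatedness of $X$. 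In $\inflogDMeff(k,\Lambda)$ mapping spaces are computed by ($\boxx$-)Suslin complexes of log correspondences, so the analogous extension problem is for cycles: the closure of a finite correspondence from $(Y-\partial Y)\times\A^n$ to $X-\partial X$ inside (an admissible blow-up of) $Y\times\boxx^n\times X$ is proper but in general neither finite over the base nor admissible with respect to the log structures, and making it so requires flattening/resolution arguments along the boundary. That is precisely the content of \cite[Theorem 8.2.11]{logDM}, so the claim that the proof of Proposition \ref{K-theory.1} ``carries over to presheaves with log transfers'' assumes what has to be proved. Likewise, the ``linear, log-transfer analogue of Proposition \ref{ProplogSH.8}'' you invoke is not available in this paper (Theorem \ref{ProplogSH.3} only treats admissible blow-ups between objects of $\SmlSm/k$, without transfers); that particular reduction is repairable, since a proper $X\in\lSm/k$ admits a dividing cover by a proper object of $\SmlSm/k$ via toric resolution, and dividing covers are invertible in $\inflogDMeff(k,\Lambda)$ by $dNis$-descent, but the base case is not.

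For what it is worth, your inductive step is essentially sound and in fact mirrors how the cited theorem is proved in \cite{logDM}: the identification $\Blow_D(\ul{X},E)=X$ for the Cartier divisor $D$, the residue cofiber sequence of Theorem \ref{Thom.1} in $\inflogDMeff(k,\Lambda)$, and the reduction of the Thom term to the cofiber of $M(\P(\Normal_{D^\circ}Y))\to M(\P(\Normal_{D^\circ}Y\oplus\cO))$, both proper with at most $m-1$ boundary components, are all correct (note that Proposition \ref{Ori.41} gives $\P(\cE\oplus\cO)/\P(\cE)$, not a quotient by $D^\circ$). You also do not need the compatibility of $\omega_\sharp$ with the classical deformation to the normal cone that you flag as delicate: $\omega_\sharp$ and $\omega^*$ are exact functors of stable $\infty$-categories and the unit is a natural transformation, so two-out-of-three in the cofiber sequence already closes the induction; the twisting needed to pass through \eqref{ML.0.1} is handled the same way, since $M(X)(n)[2n]$ is a cofiber of motives of proper objects. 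But without an actual proof of the comparison for smooth proper schemes with trivial log structure --- either by citing \cite[Theorem 8.2.11]{logDM}, which renders the whole induction unnecessary as in the paper, or by redoing its correspondence-extension argument --- the proof is incomplete.
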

\begin{proof}
We need to show that the naturally induced map
\begin{equation}
\Map_{\inflogDM(k,\Lambda)}(M(Y)(m),M(X))
\to
\Map_{\infDM(k,\Lambda)}(M(Y-\partial Y)(m),M(X-\partial X))
\end{equation}
is an equivalence of spaces for all $Y\in \lSm/k$.
Combine \cite[Theorem 8.2.11]{logDM} and \eqref{ML.0.1} to show this.
\end{proof}

\begin{rmk}
One can ask whether or not Proposition \ref{ML.1} holds for $\inflogSH$.
This is a question about expressing
\[
\Map_{\inflogSH(k)}(M(Y)(m),M(X))
\]
in terms of mapping spaces in $\infSH(k)$ for all $X,Y\in \lSm/k$.
\end{rmk}

\begin{prop}
\label{ML.2}
Suppose $k$ is a perfect field admitting resolution of singularities.
Then the functor $\omega^*\colon \infDM(k)\to \inflogDM(k)$ is monoidal.
\end{prop}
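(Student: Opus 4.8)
The plan is to upgrade the canonical \emph{lax} symmetric monoidal structure on $\omega^*$ to a strong one, and to verify the lax structure maps are equivalences by reducing to compact generators. Since $\omega_\sharp$ is symmetric monoidal and colimit preserving (being a left adjoint), its right adjoint $\omega^*$ acquires a lax symmetric monoidal structure: for $A,B\in \infDM(k)$ there is a natural transformation
\[
\theta_{A,B}\colon \omega^*A\otimes \omega^*B\to \omega^*(A\otimes B)
\]
adjoint to $\omega_\sharp(\omega^*A\otimes \omega^*B)\simeq \omega_\sharp\omega^*A\otimes \omega_\sharp\omega^*B\to A\otimes B$, together with a unit constraint $\one\to \omega^*(\one)$. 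It remains to show all $\theta_{A,B}$ and the unit constraint are equivalences.

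First I would show that $\omega^*$ preserves colimits. Both $\infDM(k)$ and $\inflogDM(k)$ are compactly generated stable $\infty$-categories, $\inflogDM(k)$ by the $M(X)(m)$ for $X\in \lSm/k$ and $m\in\mathbb{Z}$ (with $M(X)$ compact by \cite[Remark 5.2.6]{logDM}). The functor $\omega_\sharp$ sends these generators to $M(X-\partial X)(m)$, which are compact in $\infDM(k)$; hence $\omega^*$ preserves filtered colimits, and being a right adjoint between stable $\infty$-categories it is exact, so it preserves all colimits. Because $\otimes$ preserves colimits in each variable on both sides, the full subcategories of $A$ (resp.\ $B$) on which $\theta_{A,B}$ is an equivalence are closed under colimits; so it suffices to check $\theta_{A,B}$ for $A=M(U)(m)$ and $B=M(V)(n)$ with $U,V\in \Sm/k$ and $m,n\in\mathbb{Z}$, which form a generating family.

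On such generators I would argue as follows. By resolution of singularities choose $X,Y\in \SmlSm/k$, proper over $k$, with $X-\partial X\cong U$ and $Y-\partial Y\cong V$; by Proposition \ref{logtop.5} the product $X\times_k Y\in \SmlSm/k$ is proper over $k$ with $(X\times_k Y)-\partial(X\times_k Y)\cong U\times_k V$. Proposition \ref{ML.1} then gives equivalences $\omega^*M(U)\simeq M(X)$, $\omega^*M(V)\simeq M(Y)$ and $\omega^*M(U\times_k V)\simeq M(X\times_k Y)$, while $M(X)\otimes M(Y)\simeq M(X\times_k Y)$ in $\inflogDM(k)$ since the tensor product is induced by products in $\SmlSm/k$. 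Using $M(U)(m)\otimes M(V)(n)\simeq M(U\times_k V)(m+n)$ (and the fact that $\omega_\sharp$ matches the log Tate object $\Lambda(1)$ with the classical one, so $\omega^*$ respects Tate twists — which one also sees directly by applying Proposition \ref{ML.1} to $(\P^1,\emptyset)$ and tracking the summand $\Lambda(1)[2]\subset M(\P^1)$), this identifies source and target of $\theta_{M(U)(m),M(V)(n)}$; one checks $\theta$ itself corresponds to the tautological equivalence by naturality of the (co)unit and the symmetric monoidal coherence of $\omega_\sharp$. The unit constraint is handled by the case $U=\operatorname{Spec} k$, which is already proper, so $\omega^*(\one)\simeq \one$ by Proposition \ref{ML.1}.

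The main obstacle is the compatibility check at the level of generators: showing that the abstract lax structure map $\theta$ agrees, under the identifications supplied by Proposition \ref{ML.1}, with the evident equivalence $M(X)\otimes M(Y)\xrightarrow{\simeq} M(X\times_k Y)$. This is a diagram chase involving the unit and counit of $(\omega_\sharp,\omega^*)$ and the monoidal coherence data of $\omega_\sharp$; it reduces to the statement that $\omega_\sharp\big(M(X)\otimes M(Y)\big)\to M(X-\partial X)\otimes M(Y-\partial Y)$ is the canonical comparison, which holds by construction of $\omega_\sharp$ as a symmetric monoidal functor induced from the product-preserving functor $\omega$ on underlying categories. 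Everything else — the colimit preservation of $\omega^*$ and the generation statement — is formal once $\omega_\sharp$ is seen to preserve compact generators.
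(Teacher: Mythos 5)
Your proposal is correct and follows essentially the same route as the paper: obtain the lax structure on $\omega^*$ from the adjunction with the monoidal functor $\omega_\sharp$, reduce to checking the structure map on motives of smooth schemes, compactify via resolution of singularities, and conclude with Proposition \ref{ML.1} together with $M(\ol{X})\otimes M(\ol{Y})\simeq M(\ol{X}\times\ol{Y})$. The extra details you supply (colimit preservation of $\omega^*$, Tate twists, the coherence diagram chase) are points the paper treats as routine, so there is nothing further to flag.
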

\begin{proof}
Note that $\omega^*$ is lax monoidal since it is right adjoint to the monoidal functor $\omega_\sharp$.
To show $\omega^*$ is monoidal, it suffices to show that the map
\begin{equation}
\label{ML.2.1}
\omega^*M(X)\otimes \omega^*M(Y) \to \omega^*M(X\times Y)
\end{equation}
is an equivalence in $\infDM(k)$ for all $X,Y\in \Sm/k$.

By resolution of singularities, 
there exist $\ol{X},\ol{Y}\in \SmlSm/k$ such that $\ol{X}-\partial\ol{X}\cong X$, 
$\ol{Y}-\partial\ol{Y}\cong Y$, 
where $\ol{X}$ and $\ol{Y}$ are proper over $k$.
Proposition \ref{ML.1} gives equivalences in $\inflogDM(k)$
\[
\omega^*M(X)
\simeq
M(\ol{X}),
\;
\omega^*M(Y)
\simeq
M(\ol{Y}),
\text{ and }
\omega^*M(X\times Y)
\simeq
M(\ol{X}\times \ol{Y}).
\]
Combine these to deduce that \eqref{ML.2.1} is an equivalence.
\end{proof}

The right adjoints $\beta^*$ and $\gamma^*$ of the functors $\beta_\sharp$ and $\gamma_\sharp$ 
in \eqref{logDA.3.3} and \eqref{logDA.5.2} form part of the commutative diagram
\[
\begin{tikzcd}
\infSH(k)\ar[r,"\beta^*",leftarrow]\ar[d,"\omega^*"',leftarrow]&
\infDA(k,\Lambda)\ar[r,"\gamma^*",leftarrow]\ar[d,"\omega^*",leftarrow]&
\infDM(k,\Lambda)\ar[d,"\omega^*",leftarrow]
\\
\inflogSH(k)\ar[r,"\beta^*",leftarrow]&
\inflogDA(k,\Lambda)\ar[r,"\gamma^*",leftarrow]&
\inflogDM(k,\Lambda),
\end{tikzcd}
\]
where the functors in the upper row are defined similarly to those in the lower row.

\begin{df}
The \emph{logarithmic motivic Eilenberg-MacLane spectrum over $k$ with 
$\Lambda$-coefficients}\index{logarithmic Motivic Eilenberg-MacLane spectrum}\index[notation]{logML @ $\spectlogML$} 
is defined as 
\[
\spectlogML
:=
\beta^*\gamma^*M(k)\in \inflogSH(k)
\]
\end{df}

Similarly, 
$\spectML:=\beta^*\gamma^*M(k)\in \infSH(k)$, 
which is equivalent to Voevodsky's motivic Eilenberg-MacLane spectrum in 
\cite[\S 6.1]{zbMATH01194164} by \cite[Example 3.4, Lemma 4.6]{DRO}.

\begin{prop}
\label{ML.5}
If $k$ is a perfect field that admits resolution of singularities, then there is an equivalence in $\inflogSH(k)$
\[
\spectlogML
\simeq
\omega^*\spectML.
\]
\end{prop}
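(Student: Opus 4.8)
�The plan is to reduce the statement about $\spectlogML=\beta^*\gamma^*M(k)$ to the corresponding statement about $M(k)$ in $\inflogDM(k,\Lambda)$, using that $\beta^*$ and $\gamma^*$ commute with $\omega^*$ (this is exactly the commutative diagram displayed just before the definition of $\spectlogML$). Concretely, $\omega^*\spectML=\omega^*\beta^*\gamma^*M(k)\simeq \beta^*\gamma^*\omega^*M(k)$, so it suffices to prove that the unit map $M(k)\to \omega^*\omega_\sharp M(k)=\omega^*M(k)$ is an equivalence in $\inflogDM(k,\Lambda)$ (where on the right $M(k)$ denotes the motive in $\infDM(k,\Lambda)$), and then apply the colimit-preserving functor $\beta^*\gamma^*$.

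First I would invoke Proposition \ref{ML.1}: taking $X=\Spec k$, which is certainly proper over $k$ and lies in $\lSm/k$ with trivial log structure, gives that $M(k)\to \omega^*\omega_\sharp M(k)$ is an equivalence in $\inflogDM(k,\Lambda)$. Since $\omega_\sharp M(k)=M(k-\partial k)=M(k)$ in $\infDM(k,\Lambda)$, this identifies $\omega^*M(k)$ with the monoidal unit $M(k)\in\inflogDM(k,\Lambda)$, i.e.\ $\omega^*\unit\simeq\unit$ in $\inflogDM(k,\Lambda)$. Next I would apply $\gamma^*$ and then $\beta^*$. Here one must be a little careful: $\gamma^*$ and $\beta^*$ are \emph{right} adjoints, so a priori they need not commute with $\omega^*$ on the nose. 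However, the commutativity of the displayed square of right adjoints is built into the excerpt (it is asserted there that ``the functors in the upper row are defined similarly to those in the lower row,'' and the square commutes); this is the formal input that lets us transport the equivalence $\omega^*\unit\simeq\unit$ from $\inflogDM$ to $\inflogSH$. Thus $\omega^*\spectML=\omega^*\beta^*\gamma^*M(k)\simeq\beta^*\gamma^*\omega^*M(k)\simeq\beta^*\gamma^*M(k)=\spectlogML$, as desired.

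The main obstacle is justifying the commutation $\omega^*\beta^*\simeq\beta^*\omega^*$ and $\omega^*\gamma^*\simeq\gamma^*\omega^*$, i.e.\ the commutativity of the square of right adjoints. This follows formally from the commutativity of the corresponding square of \emph{left} adjoints $\beta_\sharp$, $\gamma_\sharp$, $\omega_\sharp$ together with the Beck--Chevalley calculus (Definition \ref{premot.8}): since all the left adjoints are functors of premotivic $\infty$-categories and the relevant Beck--Chevalley transformations are equivalences (the left-adjoint square commutes because $\beta_\sharp$, $\gamma_\sharp$ are base change along $\Sphere\to H\Lambda$ and along the transfer-adjunction, which manifestly commute with $\omega_\sharp$ at the level of representable generators $M(X)\mapsto M(X-\partial X)$), passing to right adjoints yields the commuting square we need. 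I would spell this out by checking the equivalence on the generators $\Sigma_T^\infty X_+$ for $X\in\lSm/k$, where everything is computed by the explicit formulas, and then conclude by colimit-preservation and the fact that these generate the categories in question. The only real content beyond this formal manipulation is Proposition \ref{ML.1}, which is already available, so the proof is short; the delicate point is purely the bookkeeping of adjoints and Beck--Chevalley, and I would present it as a one-paragraph formal argument rather than a computation.
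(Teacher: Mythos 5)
Your proposal is correct and follows essentially the same route as the paper: the paper's proof consists precisely of the chain $\omega^*\spectML\simeq\omega^*\beta^*\gamma^*M(k)\simeq\beta^*\gamma^*\omega^*M(k)$, using the commutative square of right adjoints displayed before the definition of $\spectlogML$, and then Proposition \ref{ML.1} applied to $\Spec{k}$. The only difference is that the paper simply asserts the commutativity of that square as part of the setup, whereas you supply a justification; note that since both $\omega^*$ and $\beta^*,\gamma^*$ are right adjoints, commutativity of the square of left adjoints (checked on generators) passes to right adjoints directly by uniqueness of adjoints, so no Beck--Chevalley condition is actually needed there.
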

\begin{proof}
We have equivalences in $\inflogSH(k)$
\[
\omega^*\spectML
\simeq
\omega^*\beta^*\gamma^*M(k)
\simeq
\beta^*\gamma^*\omega^*M(k).
\]
Proposition \ref{ML.1} finishes the proof.
\end{proof}

\begin{prop}
\label{ML.3}
Suppose $\ringE$ is an oriented $\E_\infty$-ring in $\infSH(k)$. 
Then $\omega^*\ringE$ is an oriented $\E_\infty$-ring in $\inflogSH(k)$.
\end{prop}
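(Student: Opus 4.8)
<br>

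The plan is to deduce this from the already-established structural results. The functor $\omega^*\colon \infSH(k)\to \inflogSH(k)$ is the right adjoint of the symmetric monoidal colimit-preserving functor $\omega_\sharp$, hence it is lax symmetric monoidal; in particular it carries $\E_\infty$-rings to $\E_\infty$-rings, so $\omega^*\ringE$ is automatically an $\E_\infty$-ring in $\inflogSH(k)$. The only substantive content is therefore the \emph{orientability} of $\omega^*\ringE$.

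For the orientation, the key observation is that $\omega^*$ is not merely lax but actually \emph{monoidal} on the relevant objects — this is the analogue of Proposition \ref{ML.2}, whose proof rests on resolution of singularities together with Proposition \ref{ML.1}. The argument I would give: an orientation of $\ringE$ is, by Proposition \ref{Ori.53}, equivalently a Thom orientation $t_\infty\in \ringE^{2,1}(\Thom(\cT_1))$ on $\P^\infty$ in $\infSH(k)$, whose restriction to the base point $T$ equals $T\wedge\unit$. Applying $\omega^*$ and using that $\omega^*\Thom(\cT_1)$ can be identified with the logarithmic Thom space $\Thom(\cT_1)$ in $\inflogSH(k)$ — because $\omega_\sharp$ sends the logarithmic infinite Grassmannian and its Thom spaces to the classical ones (using that $\Gr$ is a filtered colimit of smooth schemes and that $\omega^*$ preserves these colimits and Thom spaces, cf. the comparison with $\lambda_\sharp$ and Proposition \ref{Ori.41}) — we obtain a class $\omega^*(t_\infty)\in (\omega^*\ringE)^{2,1}(\Thom(\cT_1))$. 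Since $\omega^*$ is a ring homomorphism and compatible with the restriction-to-base-point map (naturality of $\omega^*$ applied to $T\to \Thom(\cT_1)$), the restriction of $\omega^*(t_\infty)$ to $T$ is $T\wedge\unit$, i.e. $\omega^*(t_\infty)$ is a Thom orientation of $\omega^*\ringE$. By Proposition \ref{Ori.53} again, this yields a Chern orientation of $\omega^*\ringE$.

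Concretely the steps are: (1) record that $\omega^*$ is lax symmetric monoidal as a right adjoint of $\omega_\sharp$, so $\omega^*\ringE\in\CAlg(\inflogSH(k))$; (2) identify $\omega^*$ applied to the classical $\P^\infty$, tautological bundle $\cT_1$, and its Thom space with the corresponding logarithmic objects, invoking that $\Gr$ and $\Thom(\cT_1)$ are filtered colimits of (Thom spaces of bundles over) smooth schemes and that $\omega^*$ (equivalently $\lambda_\sharp$, via Proposition \ref{ML.1}-type identifications and resolution of singularities) preserves these; (3) transport the Thom orientation $t_\infty$ of $\ringE$ through $\omega^*$ and check the base-point normalization survives by naturality; (4) conclude via Proposition \ref{Ori.53}. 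I expect step (2) to be the main obstacle: one must verify carefully that $\omega^*$ (rather than $\omega_\sharp$) genuinely identifies the two models of $\Thom(\cT_1)$, which requires the monoidality of $\omega^*$ in the sense of Proposition \ref{ML.2}, and this is where the resolution-of-singularities hypothesis on $k$ enters. All other steps are formal manipulations with adjunctions and the one-to-one correspondence of orientations.
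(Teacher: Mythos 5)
Your first paragraph matches the paper exactly: $\omega^*$ is lax symmetric monoidal as the right adjoint of the monoidal functor $\omega_\sharp$, hence $\omega^*\ringE$ is an $\E_\infty$-ring. The problem is in your treatment of the orientation, where you have turned a formal adjunction argument into one that needs hypotheses the proposition does not have.

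The gap is in your step (2): you want to identify $\omega^*$ applied to the classical $\P^\infty$, $\cT_1$ and $\Thom(\cT_1)$ with the corresponding logarithmic objects, and you correctly sense that this would require the monoidality of $\omega^*$ (Proposition \ref{ML.2}) and hence resolution of singularities. But Proposition \ref{ML.3} is stated for an arbitrary field $k$, with no resolution-of-singularities hypothesis, so that route proves at best a weaker statement; moreover, absent such input there is no reason $\omega^*$ (a right adjoint, known only to be fully faithful) sends $\Sigma_T^\infty\Thom(\cT_1)$ to the log Thom space. The paper avoids this entirely by working in the other adjoint direction and with the Chern form of the orientation: an orientation of $\ringE$ is a map $c_\infty\colon \Sigma_T^\infty \P^\infty/\pt \to \Sigma^{2,1}\ringE$ in $\infSH(k)$ whose restriction to $\P^1/\pt$ is $\Sigma^{2,1}\unit$. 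Since $\P^n$ carries the trivial log structure, $\omega_\sharp$ leaves it unchanged, so $\omega_\sharp(\Sigma_T^\infty\P^\infty/\pt)\simeq \Sigma_T^\infty\P^\infty/\pt$, and transposing $c_\infty$ across the $(\omega_\sharp,\omega^*)$ adjunction gives a class in $(\omega^*\ringE)^{2,1}(\P^\infty/\pt)$; the normalization on $\P^1/\pt$ transports because $\omega_\sharp(\P^1/\pt)\simeq \P^1/\pt$ and $\omega^*$, being lax monoidal, preserves units. No Thom spaces, no Proposition \ref{Ori.53}, no \ref{ML.1}/\ref{ML.2}, no resolution of singularities.

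If you prefer your Thom-orientation route, it can be repaired by the same principle: an orientation is a map \emph{out of} a geometric object, so you only ever need to compute $\omega_\sharp$ (not $\omega^*$) on that object. Here $\omega_\sharp$ of the log Thom space $M(\cE/(\Blow_Z\cE,E))$ is $\cE/(\cE-Z)$, the classical Thom space, because $\omega_\sharp$ just removes the boundary and preserves cofibers; then you transpose $t_\infty$ across the adjunction and conclude with Proposition \ref{Ori.53}. As written, however, the proposal's central step is unjustified and imports an unavailable hypothesis, so it does not establish the proposition as stated.
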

\begin{proof}
The functor $\omega_\sharp\colon \infSH(k)\to \inflogSH(k)$ is monoidal, so $\omega^*$ is lax monoidal.
Hence $\omega^*$ preserves $\E_\infty$-rings, so $\omega^*\ringE$ is an $\E_\infty$-ring.

Since $\ringE$ is oriented, 
there exists a morphism $c_\infty \colon \Sigma_T^\infty \P^\infty/\pt \to \Sigma^{2,1}\ringE$ in $\infSH(k)$ 
such that the composite morphism 
$\Sigma_T^\infty \P^1/\pt \to \Sigma_T^\infty \P^\infty/\pt \to \Sigma^{2,1}\ringE$ 
is equivalent to $\Sigma^{2,1}\unit$, 
where $\unit\colon \Sigma_T^\infty \pt \to \ringE$ is the unit.
Use $\omega_\sharp (\Sigma_T^\infty \P^\infty/\pt)\simeq \Sigma_T^\infty \P^\infty/\pt$ to obtain an orientation 
on $\omega^*\ringE$ from the orientation on $\ringE$.
\end{proof}

\begin{prop}
The spectrum $\spectlogML$ is an $\E_\infty$-ring in $\inflogSH(k)$.
\end{prop}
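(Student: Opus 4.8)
The plan is to build $\spectlogML$ as an $\E_\infty$-ring directly from the chain of symmetric monoidal functors that appear in its definition. Recall $\spectlogML := \beta^*\gamma^* M(k)$, where $M(k)\in \inflogDM(k,\Lambda)$ is the unit motive, and $\beta^*$, $\gamma^*$ are the right adjoints of the symmetric monoidal, colimit-preserving functors $\beta_\sharp\colon \inflogSH(k)\to \inflogDA(k,\Lambda)$ and $\gamma_\sharp\colon \inflogDA(k,\Lambda)\to \inflogDM(k,\Lambda)$ from \eqref{logDA.3.3} and \eqref{logDA.5.2}. First I would observe that $M(k)$ is the monoidal unit of $\inflogDM(k,\Lambda)$, hence canonically an $\E_\infty$-algebra (indeed, the initial one) in that symmetric monoidal $\infty$-category. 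Then the key formal input is that the right adjoint of a symmetric monoidal functor between symmetric monoidal $\infty$-categories is lax symmetric monoidal, so it carries $\E_\infty$-algebras to $\E_\infty$-algebras; this is \cite[Corollary 7.3.2.7]{HA} applied to $\CAlg$. Applying this twice, $\gamma^* M(k)$ is an $\E_\infty$-ring in $\inflogDA(k,\Lambda)$ and then $\beta^*\gamma^* M(k) = \spectlogML$ is an $\E_\infty$-ring in $\inflogSH(k)$.

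The steps, in order, are: (1) identify $M(k)$ with $\unit\in \CAlg(\inflogDM(k,\Lambda))$; (2) recall that $\beta_\sharp$ and $\gamma_\sharp$ are morphisms in $\CAlg(\LPr)$ (stated in the construction of \eqref{logDA.3.2}, \eqref{logDA.3.3} and in Remark \ref{logDA.5}), so in particular they are symmetric monoidal; (3) invoke the fact that a right adjoint of a symmetric monoidal functor is lax symmetric monoidal and hence preserves $\E_\infty$-algebras, to transport the algebra structure along $\gamma^*$ and then $\beta^*$. One should note that this argument does not require $k$ to admit resolution of singularities — it is purely formal — which is consistent with the fact that Proposition \ref{ML.5} (the identification $\spectlogML\simeq \omega^*\spectML$) is stated separately under that extra hypothesis. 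An alternative, should one prefer to stay closer to the ground category, is to use Proposition \ref{ML.5} together with Proposition \ref{ML.3}: $\spectML\in \infSH(k)$ is an $\E_\infty$-ring by \cite[Lemma 4.6]{DRO} (or by the same adjoint argument in the $\A^1$-setting), and $\omega^*$ is lax symmetric monoidal since $\omega_\sharp$ is monoidal, so $\omega^*\spectML\simeq \spectlogML$ is an $\E_\infty$-ring; but this route needs resolution of singularities, so the direct argument is cleaner and more general.

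I do not expect a genuine obstacle here: the statement is essentially a bookkeeping consequence of the symmetric monoidality of the functors already constructed in \S\ref{section:colmhc}, combined with the standard fact about lax monoidal right adjoints. The one point deserving a sentence of care is making sure that the monoidal structures on $\inflogDMeff$ and $\inflogDM(k,\Lambda)$ used in Definition \ref{logDA.4} are the ones making $\Sigma_{\Gmm}^\infty$ and the localization functors symmetric monoidal — but this is precisely what Proposition \ref{prop:def-spt} (for the $\Gmm$-stabilization) and the discussion around \eqref{logH.25.2} and \eqref{eq:spt_premotivic_constructoin} provide. So the ``hard part'' is merely citing the correct coherence statement rather than proving anything new; the proof will be two or three lines.

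\begin{proof}
The motive $M(k)$ is the monoidal unit of $\inflogDM(k,\Lambda)$, hence carries a canonical (indeed initial) $\E_\infty$-algebra structure in $\CAlg(\inflogDM(k,\Lambda))$. The functors $\gamma_\sharp\colon \inflogDA(k,\Lambda)\to \inflogDM(k,\Lambda)$ and $\beta_\sharp\colon \inflogSH(k)\to \inflogDA(k,\Lambda)$ of \eqref{logDA.3.3} and \eqref{logDA.5.2} are symmetric monoidal and colimit preserving, so their right adjoints $\gamma^*$ and $\beta^*$ are lax symmetric monoidal and, in particular, preserve $\E_\infty$-algebras by \cite[Corollary 7.3.2.7]{HA}. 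Applying $\gamma^*$ and then $\beta^*$ to $M(k)$ equips $\spectlogML = \beta^*\gamma^*M(k)$ with the structure of an $\E_\infty$-ring in $\inflogSH(k)$.
\end{proof}
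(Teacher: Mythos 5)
Your proof is correct and is essentially the argument in the paper: both rest on the fact that $\beta^*\gamma^*$ is lax (symmetric) monoidal because $\gamma_\sharp\beta_\sharp$ is monoidal, hence preserves $\E_\infty$-rings, applied to a unit object. The only cosmetic difference is that you start from $M(k)$ as the unit of $\inflogDM(k,\Lambda)$, while the paper transports the $\E_\infty$-structure from the unit $\Sigma_T^\infty\pt$ of $\inflogSH(k)$ along the lax monoidal composite $\beta^*\gamma^*\gamma_\sharp\beta_\sharp$ — the same formal step in either direction.
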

\begin{proof}
Since $\gamma_\sharp \beta_\sharp$ is monoidal, 
it follows that $\beta^*\gamma^*$ is lax monoidal and hence preserves $\E_\infty$-rings.
Moreover, 
the same holds for $\beta^*\gamma^*\gamma_\sharp \beta_\sharp$.
To conclude, 
observe that $\Sigma_T^\infty \pt\in \inflogSH(k)$ is an $\E_\infty$-ring.
\end{proof}

\begin{prop}
\label{ML.4}
If $k$ is a perfect field that admits resolution of singularities, 
then $\spectlogML$ is an oriented $\E_\infty$-ring in $\inflogSH(k)$.
\end{prop}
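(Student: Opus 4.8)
The plan is to combine the previously established structural facts into a short formal argument. First I would invoke Proposition \ref{ML.5}: since $k$ is a perfect field admitting resolution of singularities, there is an equivalence $\spectlogML \simeq \omega^*\spectML$ in $\inflogSH(k)$, compatible with whatever multiplicative structure we carry along. So it suffices to show that $\omega^*\spectML$ is an oriented $\E_\infty$-ring in $\inflogSH(k)$. Next I would recall that $\spectML = \beta^*\gamma^* M(k) \in \infSH(k)$ is Voevodsky's motivic Eilenberg-MacLane spectrum (by \cite[Example 3.4, Lemma 4.6]{DRO}), which is a well-known oriented $\E_\infty$-ring in $\infSH(k)$: the $\E_\infty$-structure comes from the lax monoidality of $\beta^*\gamma^*$ applied to the commutative monoid $M(k)$, and the orientation is the classical one coming from motivic cohomology classes (the class $c_\infty$ in $\spectML^{2,1}(\P^\infty/\pt)$ restricting to $\Sigma^{2,1}\unit$ on $\P^1/\pt$).

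Then the key input is Proposition \ref{ML.3}: if $\ringE$ is an oriented $\E_\infty$-ring in $\infSH(k)$, then $\omega^*\ringE$ is an oriented $\E_\infty$-ring in $\inflogSH(k)$. Applying this with $\ringE = \spectML$ gives immediately that $\omega^*\spectML$ is an oriented $\E_\infty$-ring in $\inflogSH(k)$, and transporting along the equivalence of Proposition \ref{ML.5} yields the claim for $\spectlogML$. One should note that we have already checked, in the unnumbered proposition just before this statement, that $\spectlogML$ is an $\E_\infty$-ring in $\inflogSH(k)$ without any resolution-of-singularities hypothesis (using that $\beta^*\gamma^*$ and $\beta^*\gamma^*\gamma_\sharp\beta_\sharp$ are lax monoidal and $\Sigma_T^\infty\pt$ is an $\E_\infty$-ring); so the content added by resolution of singularities is precisely the orientation, which is exactly what Propositions \ref{ML.5} and \ref{ML.3} deliver.

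I would therefore write the proof as: "By Proposition \ref{ML.5}, $\spectlogML\simeq \omega^*\spectML$ in $\inflogSH(k)$. The motivic Eilenberg-MacLane spectrum $\spectML$ is an oriented $\E_\infty$-ring in $\infSH(k)$. Hence Proposition \ref{ML.3} shows that $\omega^*\spectML$, and therefore $\spectlogML$, is an oriented $\E_\infty$-ring in $\inflogSH(k)$." The one point that deserves a sentence of care is the compatibility of the equivalence in Proposition \ref{ML.5} with the $\E_\infty$-structures: since that equivalence is built from the natural transformations $\omega^*\beta^* \simeq \beta^*\omega^*$ and $\omega^*\gamma^*\simeq \gamma^*\omega^*$ together with Proposition \ref{ML.1}, all of which are equivalences of lax monoidal functors (the mate equivalences of monoidal adjunctions), the identification is as $\E_\infty$-rings, not merely as objects. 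The main (and only real) obstacle is thus bookkeeping: ensuring that the orientation produced by Proposition \ref{ML.3} on $\omega^*\spectML$ matches, under the equivalence of Proposition \ref{ML.5}, the one we would want on $\spectlogML$ — but since "oriented $\E_\infty$-ring" is a property-plus-structure that is transported by any equivalence of $\E_\infty$-rings, no essential difficulty arises, and the proof is genuinely a two-line assembly of cited results.
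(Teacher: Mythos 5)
Your argument is correct and is essentially the paper's own proof: the authors likewise cite the orientedness and $\E_\infty$-structure of $\spectML$ in $\infSH(k)$, apply Proposition \ref{ML.3} to obtain an oriented $\E_\infty$-ring structure on $\omega^*\spectML$, and conclude via the equivalence of Proposition \ref{ML.5}. Your additional remarks on lax monoidal compatibility of that equivalence are reasonable housekeeping but add nothing beyond what the paper already takes for granted.
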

\begin{proof}
Recall that $\spectML$ is an oriented $\E_\infty$-ring in $\infSH(k)$.
This is a consequence of \cite[Theorem 4.1]{MVW} and \cite{zbMATH05827456}.
By Proposition \ref{ML.3}, $\omega^*\spectML$ is an oriented $\E_\infty$-ring in $\inflogSH(k)$.
Proposition \ref{ML.5} finishes the proof.
\end{proof}

\newpage

\section{Logarithmic topological Hochschild homology}
\label{section:lthh}
The objective of this section is to construct the logarithmic motivic spectrum
\[
\spectlogTHH\in \inflogSH(S)
\]
for all $S\in \Sch$, 
and show that the infinite $\Gmm$-loops spectrum $\Omega_{\Gmm}^\infty \spectlogTHH(X)\in \infSpt$ recovers 
for all $X\in \SmlSm/S$ the logarithmic topological Hochschild homology $\logTHH(X)$, 
see \eqref{Hoch.22.1}.
If $Y\in \Sch$, 
then $\logTHH(Y)$ is equivalent to the topological Hochschild homology $\THH(Y)$ by Proposition \ref{Hoch.36},
i.e., 
$\logTHH$ extends $\THH$ to the logarithmic setting. The idea of logarithmic topological Hochschild homology is due to Rognes \cite{RognesLogHH}: here we revisit his construction and show that it gives rise to a motivic invariant, as explained above. 

\begin{rmk}
It is a well known fact that topological Hochschild homology is non-$\A^1$-invariant,
see e.g., Remark \ref{rmk:nonA1} or \cite{EEnonA1}. 
In fact, it is \emph{very far} from being $\A^1$-invariant, 
since $L_{\A^1}\THH \simeq L_{\A^1} \mathrm{HH} \simeq 0$. 
It is therefore impossible to represent $\THH$ in $\infSH(S)$.
Hence, the representability of $\logTHH$ in $\inflogSH(S)$ shows an important difference between $\inflogSH(S)$ and $\infSH(S)$.
\end{rmk}

\begin{rmk} The cyclotomic structure on $\logTHH$ can be used to  define the refined invariants of 
logarithmic negative cyclic homology $\logTC^-$, logarithmic periodic homology $\logTP$, and logarithmic cyclic homology $\logTC$.
\end{rmk}

\begin{rmk}
For notational clarity, 
we shall often add the prefix ``$\mathrm{log}$'' to cohomology theories extended from schemes to log schemes, 
e.g., 
as for $\logPic$, $\Kthlog$, and $\logTHH$. This also agrees with the convention followed in \cite{BLPO2}. 
\end{rmk}

\subsection{\texorpdfstring{$\THH$}{THH} of rings}
\label{subsection:thhrings}

We briefly review the theory of topological Hochschild homology and topological cyclic homology in the 
$\infty$-categorical framework developed by Nikolaus and Scholze in \cite{NikolausScholze}.
\begin{df}
\label{Hoch.38}
Suppose $\infC$ is an $\infty$-category and $G$ is a group with classifying space $\clspace G$.
An object of the $\infty$-category $\Fun(\clspace G,\infC)$ is called a \emph{$G$-equivariant object of $\cC$}.
A map in $\Fun(\clspace G,\infC)$ is called a \emph{$G$-equivariant map in $\cC$}.

Suppose that $\cC$ admits small limits.
There is a limit preserving functor of $\infty$-categories
\[
(-)^{hG}
\colon
\Fun(\clspace G,\infC)\to \infC
\]
sending $F\colon \clspace G\to \infC$ to $\limit F$.
This is called the \emph{homotopy fixed point functor}\index{homotopy fixed point functor}.
The definition of limits gives a naturally induced map
\begin{equation}
\label{Hoch.38.1}
F^{hG}
=
\lim F
\to
F(*),
\end{equation}
where $*$ is a unique object of $\clspace G$.

If $G$ is a finite group  and $\infC$ is a stable $\infty$-category, there is a functor
\[
(-)^{tG}
\colon
\Fun(\clspace G,\infC)\to \infC.
\]
This is called the \emph{Tate construction}.
We refer to \cite[Definition I.1.13]{NikolausScholze} for more details.
There is a natural transformation $(-)^{hG}\to(-)^{tG}$,
see \cite[Theorem I.3.1]{NikolausScholze}. Following \cite[1.4]{NikolausScholze}, this can be generalized to the case where $G=\mathbb{T}$ is the circle group (as in \cite{NikolausScholze}, we keep the notation $S^1$ and $\mathbb{T}$ separate to stress on the group structure). \index[notation]{T @ $\mathbb{T}$}
\end{df}

For any integer $n\neq 0$, 
we let $C_n$ denote the cyclic group of order $\lvert n \rvert$.

\begin{df}
A \emph{cyclotomic spectrum}\index{cyclotomic spectrum} is a spectrum $X$ with a 
$\mathbb{T}$-action and 
$\mathbb{T}$-equivariant maps $X\to X^{tC_p}$ for every prime $p$.
\end{df}

\begin{rmk}
Nikolaus and Scholze constructed  
the \emph{$\infty$-category of cyclotomic spectra} $\infCycSpt$ \index[notation]{CycSpt @ $\infCycSpt$} 
in \cite[Definition II.1.6]{NikolausScholze}.
It underlies a symmetric monoidal $\infty$-category $\infCycSpt^\otimes$,
see \cite[Construction IV.2.1]{NikolausScholze} for the details.
\end{rmk}

\begin{df}Let $A$ be an $\Einfty$-algebra in $\infSpt$.
The \emph{topological Hochschild homology} \index{topological Hochschild homology}\index[notation]{THH @ $\THH$}of $A$, 
denoted $\THH(A)$, is the colimit of the constant diagram
\[
S^1
\to
\CAlg(\infSpt)
\]
with value $A$. 
There exists a cyclotomic structure on $\THH(A)$ exhibited in 
\cite[\S IV.2]{NikolausScholze}. 
Henceforth, we consider $\THH(A)$ as an object of $\infCycSpt$.\end{df}
\begin{rmk}
According to \cite[Proposition IV.4.14]{NikolausScholze}, there is an adjunction
\[
(-)^{\mathrm{triv}}
:
\infSpt \rightleftarrows \infCycSpt
:
\TC
\]
with $\TC(X) := \map_{\infCycSpt}(\Striv,X)$ \cite[Definition II.1.8(i)]{NikolausScholze}.
As noted in \cite[Example II.1.2(ii)]{NikolausScholze}, 
there is an equivalence of cyclotomic spectra $\Striv \simeq \THH(\Sphere)$.
By \cite[Theorem I.4.1, Construction IV.2.1]{NikolausScholze},
there is a lax symmetric monoidal natural transformation of lax symmetric monoidal functors $\mathrm{can}\colon (-)^{h\mathbb{T}}\to (-)^{t\mathbb{T}}$,
and the functor $\TC$ is lax symmetric monoidal.
\end{rmk}
\begin{df}
\label{Hoch.37}
Let $A$ be a ring or an $\Einfty$-algebra in $\infSpt$.
Following Nikolaus and Scholze \cite{NikolausScholze},
the \emph{negative topological cyclic homology of $A$}\index{negative topological cyclic homology}\index[notation]{TC- @ $\TC^{-}$},
the \emph{topological periodic homology of $A$}\index{topological periodic homology}\index[notation]{TP @ $\TP$},
and the \emph{topological cyclic homology of $A$}\index{topological cyclic homology}\index[notation]{TC @ $\TC$}
are defined as 
\begin{equation}
\label{Hoch.37.1}
\TC^{-}(A)
:=
\THH(A)^{h\mathbb{T}},
\text{ }
\TP(A)
:=
\THH(A)^{t\mathbb{T}},
\text{ }
\TC(A)
:=
\TC(\THH(A)).
\end{equation}
\end{df}

There are canonical maps
\[
\TC(A)\to \TC^{-}(A)\to \THH(A)
\text{ and }
\TC^-(A)\to \TP(A).
\]
Here the first map is obtained by the functor from $\infCycSpt$ to the $\infty$-category of $\mathbb{T}$-equivariant spectra, 
the second map is obtained from \eqref{Hoch.38.1},
and the third map is obtained from $\mathrm{can}\colon (-)^{h\mathbb{T}}\to (-)^{t\mathbb{T}}$.

By \cite[Corollary 3.4]{BMS19},
$\THH$, $\TC^-$, and $\TP$ are fpqc sheaves of spectra on the category of rings,
which implies that $\TC$ is an fpqc sheaf too.
By descent,
we obtain
\[
\THH,\TC^-,\TP,\TC\in \infShv_{fpqc}(\Sch,\infSpt)
\]

\subsection{Cyclic bar and replete bar constructions} We revisit the cyclotomic structure on logarithmic topological Hochschild homology as discussed in \cite{BLPO2}. 
\begin{df}
\label{Hoch.7}
Let $M$ be a (discrete) commutative monoid.
The \emph{cyclic bar construction}\index{cyclic bar construction} of $M$, 
denoted $\Bcy M$\index[notation]{Bcy @ $\Bcy M$}, 
is the colimit of the constant diagram
\[
S^1\to \CAlg(\infSpc)
\]
with value $M$.
The base point inclusion $*\to S^1$ and the collapse map $S^1\to *$ naturally induce maps
\begin{equation}
\label{Hoch.7.1}
M
\xrightarrow{\eta}
\Bcy M
\xrightarrow{\epsilon}
M,
\end{equation}
such that $\epsilon$ is $\T$-equivariant,
where $\mathbb{T}$ acts trivially on $M$.
Since $\Bcy M$ is defined as a colimit, 
$\eta$ is initial among all maps from $M$ to a commutative monoid in $\infSpc$ with an $\mathbb{T}$-action. See \cite[Proposition B.5]{NikolausScholze}.
\end{df}
\begin{rmk}
An explicit description of $M$ as a simplicial set is given as follows.
In simplicial degree $q$, $\Bcy M$ is given by $M^{\times (q+1)}$.
The face maps are
\[
d_i(f_0,\ldots,f_q)
=
\left\{
\begin{array}{ll}
(f_0,\ldots,f_{i-1},f_if_{i+1},f_{i+2},\ldots,f_q)&
\text{if }0\leq i\leq q-1,
\\
(f_qf_0,f_1,\ldots,f_{q-1})&
\text{if }i=q,
\end{array}
\right.
\]
and the degeneracy maps are
\[
s_i(f_0,\ldots,f_q)
=
(f_0,\ldots,f_i,0,f_{i+1},\ldots,f_q).
\]
In \cite[\S 2.3]{zbMATH03671343}, 
the cyclic bar construction is denoted by $\mathrm{N}^{cy}$ instead of $\Bcy$.

Observe that $\eta$ is simply the inclusion of zero simplices, 
and $\epsilon$ maps $(f_0,\ldots,f_q)$ to $f_0+\cdots +f_q$ in simplicial degree $q$.
There is a canonical decomposition
\begin{equation}
\label{Hoch.7.2}
\Bcy M
\simeq
\coprod_{j\in M}\Bcy (M;j),
\end{equation}
where $\Bcy (M;j) := \epsilon^{-1}(j)$.
\end{rmk}
\begin{df}
\label{Hoch.8}
The \emph{replete bar construction}\index{replete bar construction} of a monoid $M$ is 
\index[notation]{Brep @ $\Brep M$}
\begin{equation}
\label{Hoch.8.3}
\Brep M
:=
\Bcy M^{\gp}\times_{M^{\gp}} M.
\end{equation}
\end{df}

Rognes discussed this notion in \cite[Definition 3.16]{RognesLogHH}, 
and used it as a key input to define logarithmic topological Hochschild homology of pre-log rings.

\ 

The maps \eqref{Hoch.7.1} for $M^{\gp}$ restricts to maps
\begin{equation}
\label{Hoch.8.1}
M
\xrightarrow{\eta}
\Brep M
\xrightarrow{\epsilon}
M
\end{equation}
such that $\epsilon$ is $\T$-equivariant.
Moreover, 
there is a canonical decomposition
\begin{equation}
\label{Hoch.8.2}
\Brep M
\simeq
\coprod_{j\in M}\Brep (M;j),
\end{equation}
where $\Brep (M;j) := \epsilon^{-1}(j)$.

\

We will use the next computation to show $(\P^{\bullet},\P^{\bullet-1})$-invariance for logarithmic 
topological Hochschild homology.

\begin{prop}
\label{Hoch.9}
There are $\mathbb{T}$-equivariant equivalences
\[
\Bcy (\N,j)
\simeq
\left\{
\begin{array}{ll}
* & \text{if }j=0
\\
\mathbb{T}/C_j & \text{if }j>1
\end{array}
\right.
\]
and
\[
\Bcy (\Z,j)
\simeq
\mathbb{T}/C_j
\;
(j\in \Z),
\;
\Brep(\N,j)
\simeq
\mathbb{T}/C_j
\;
(j\geq 0),
\]
where $C_0$ denotes the trivial group for convenience.
The naturally induced maps
\[
\Bcy (\N,j)
\to
\Bcy(\Z,j)
\;
(j\geq 1)
\text{ and }
\Brep (\N,j)
\to
\Bcy(\Z,j)
\;
(j\geq 0)
\]
are $\mathbb{T}$-equivariant equivalences.
\end{prop}
\begin{proof}
We refer to \cite[Propositions 3.20, 3.21]{RognesLogHH}.
\end{proof}

\begin{df}
\label{Hoch.5}
Suppose $\theta\colon M\to P$ is a homomorphism of monoids.
Let $\Brep(P,M)$\index[notation]{BrepPM @ $\Brep(P,M)$} be the monoid space defined by the 
pushout square
\[
\begin{tikzcd}
\Bcy M\ar[r]\ar[d,"\Bcy \theta"']&
\Brep M\ar[d]
\\
\Bcy P\ar[r]&
\Brep (P,M)
\end{tikzcd}
\] 
of commutative monoid spaces.
\end{df}

\begin{lem}
\label{Hoch.2}
For all monoids $M$ and $N$, there is a canonical $\mathbb{T}$-equivariant equivalence
\begin{equation}
\label{Hoch.2.1}
\Bcy (M\times N) \cong \Bcy M \times \Bcy N.
\end{equation}
\end{lem}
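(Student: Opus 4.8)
<br>

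\textbf{Plan.} The statement is the canonical $\mathbb{T}$-equivariant equivalence $\Bcy(M\times N)\simeq \Bcy M\times \Bcy N$ for commutative monoids $M,N$. The natural approach is to use the universal property of the cyclic bar construction recalled in Definition \ref{Hoch.7}: $\Bcy M$ is defined as the colimit of the constant $S^1$-diagram with value $M$, and by \cite[Proposition B.5]{NikolausScholze} the map $\eta\colon M\to \Bcy M$ is initial among maps from $M$ to a commutative monoid object in $\infSpc$ equipped with a $\mathbb{T}$-action. First I would observe that $-\times-$ is the coproduct in the $\infty$-category $\CMon(\infSpc)$ of commutative monoid spaces, since commutative monoids form a semiadditive (in fact additive up to the usual caveats) theory; more simply, products of commutative monoids coincide with their coproducts in $\CMon$. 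Hence $\Bcy M\times \Bcy N$, with the diagonal $\mathbb{T}$-action, is a commutative monoid space with $\mathbb{T}$-action, and it receives a map from $M\times N$ via $\eta_M\times \eta_N$.

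The key step is then to check that $M\times N\xrightarrow{\eta_M\times\eta_N}\Bcy M\times\Bcy N$ is again initial among maps from $M\times N$ to $\mathbb{T}$-equivariant commutative monoid spaces, which by the universal property characterizing $\Bcy(M\times N)$ will produce the desired equivalence. This follows formally: if $X$ is a commutative monoid space with $\mathbb{T}$-action and $M\times N\to X$ is any map, restricting along the two summand inclusions $M\to M\times N$ and $N\to M\times N$ gives maps $M\to X$ and $N\to X$, which extend uniquely (by initiality of $\eta_M,\eta_N$) to $\mathbb{T}$-equivariant maps $\Bcy M\to X$, $\Bcy N\to X$; these assemble, using the monoid multiplication on $X$ and the fact that $\times$ is the coproduct in $\CMon$, to a unique $\mathbb{T}$-equivariant map $\Bcy M\times\Bcy N\to X$ restricting to the original one. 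Tracking the diagonal $\mathbb{T}$-action through this universal property shows the equivalence is $\mathbb{T}$-equivariant, not merely an equivalence of underlying spaces. Alternatively, and perhaps more transparently for a reader, one can give the simplicial-level argument: in simplicial degree $q$ both sides are $(M\times N)^{\times(q+1)}\cong M^{\times(q+1)}\times N^{\times(q+1)}$, the face and degeneracy maps (written out explicitly in the remark after Definition \ref{Hoch.7}) are computed componentwise, and the cyclic operator $t_q$ likewise acts diagonally, so the isomorphism of cyclic sets is immediate; passing to geometric realization of cyclic sets then yields the $\mathbb{T}$-equivariant equivalence since realization commutes with finite products.

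\textbf{Main obstacle.} The only real subtlety is making the $\mathbb{T}$-equivariance precise rather than just the equivalence of underlying spaces. In the $\infty$-categorical framework this means working in $\Fun(B\mathbb{T},\CMon(\infSpc))$ and checking that the universal property of Definition \ref{Hoch.7}, as stated via \cite[Proposition B.5]{NikolausScholze}, is compatible with forming products with the diagonal action; this is formal once one notes that the forgetful functor $\Fun(B\mathbb{T},\CMon(\infSpc))\to\CMon(\infSpc)$ preserves and detects products and colimits. I would therefore present the argument primarily through the universal property, remarking that the cyclic-set description gives an alternative elementary verification, and keep the write-up short since no genuine computation is involved.
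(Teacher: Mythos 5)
Your proposal is correct, but your primary route is genuinely different from the paper's. The paper's proof is exactly the elementary argument you relegate to an aside: the two projections $M\times N\to M,N$ induce the map \eqref{Hoch.2.1}, and in simplicial degree $q$ this map is the shuffle isomorphism $(M\times N)^{\times(q+1)}\cong M^{\times(q+1)}\times N^{\times(q+1)}$, compatibly with the faces, degeneracies (and, as you note, the cyclic operators, which is where the $\mathbb{T}$-equivariance is encoded), so it is an equivalence after realization. Your main argument instead runs through the universal property of Definition \ref{Hoch.7}: $\Bcy$ is left adjoint to forgetting the $\mathbb{T}$-action on commutative monoids in $\infSpc$, left adjoints preserve coproducts, and finite products agree with finite coproducts in commutative monoid spaces (and in the $\mathbb{T}$-equivariant functor category, where products are computed pointwise with the diagonal action). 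This is a valid and arguably cleaner derivation, and it makes the $\mathbb{T}$-equivariance automatic rather than something to be tracked through an explicit simplicial map; its cost is that it leans on commutativity of $M$ and $N$ and on the colimit defining $\Bcy$ being formed in commutative monoid spaces (which is indeed how the paper's appeal to \cite[Proposition B.5]{NikolausScholze} sets things up, though the phrasing of Definition \ref{Hoch.7} is loose on this point), whereas the shuffle-map argument is a one-line check valid already at the level of cyclic sets. Either write-up would be acceptable; if you keep the universal-property version as the main proof, state explicitly that you are using semiadditivity of commutative monoids in $\infSpc$ and that the forgetful functor to non-equivariant monoid spaces detects the product, since those are the two facts doing all the work.
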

\begin{proof}
The two projections $M\times N\to M,N$ naturally induces \eqref{Hoch.2.1}.
In simplicial degree $q$, \eqref{Hoch.2.1} is given by the shuffle map
\[
(M\times N)^{\times (q+1)}
\xrightarrow{\simeq}
M^{\times (q+1)} \times N^{\times (q+1)}.
\]
This shows that \eqref{Hoch.2.1} is an equivalence.
\end{proof}

\begin{lem}
\label{Hoch.3}
For all monoids $M$ and $N$, there is a canonical  $\mathbb{T}$-equivariant equivalence
\[
 \Brep (M\times N) \cong \Brep M \times \Brep N.
\]
\end{lem}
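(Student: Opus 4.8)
The plan is to reduce the claim for $\Brep$ to the analogous claim for $\Bcy$ (Lemma \ref{Hoch.2}) together with the compatibility of the replete construction with products of groups. First I would unwind the definition \eqref{Hoch.8.3}: we have $\Brep(M\times N) = \Bcy((M\times N)^{\gp})\times_{(M\times N)^{\gp}}(M\times N)$. Since group completion is a left adjoint it preserves finite products, so there is a canonical isomorphism $(M\times N)^{\gp}\cong M^{\gp}\times N^{\gp}$, compatible with the unit maps $M\times N\to (M\times N)^{\gp}$ and $M^{\gp}\times N^{\gp}\to M\times N\xrightarrow{}$ etc. Applying Lemma \ref{Hoch.2} to the groups $M^{\gp}$ and $N^{\gp}$ gives a $\mathbb{T}$-equivariant equivalence $\Bcy(M^{\gp}\times N^{\gp})\simeq \Bcy(M^{\gp})\times \Bcy(N^{\gp})$, and this equivalence is compatible with the projections down to $(M\times N)^{\gp}\cong M^{\gp}\times N^{\gp}$ (indeed $\epsilon$ for a product is the product of the $\epsilon$'s, by the degreewise description in the proof of Lemma \ref{Hoch.2}).

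The key step is then to observe that fiber products commute with (finite) products: given maps $A\to C\leftarrow B$ and $A'\to C'\leftarrow B'$ of spaces, the canonical map $(A\times_C B)\times (A'\times_{C'}B') \to (A\times A')\times_{C\times C'}(B\times B')$ is an equivalence, and this is $\mathbb{T}$-equivariant when all the data carry compatible $\mathbb{T}$-actions and the maps are equivariant. I would apply this with $A=\Bcy(M^{\gp})$, $B = M$, $C=M^{\gp}$ (and the primed versions for $N$), using the equivalence of the previous paragraph to identify $\Bcy((M\times N)^{\gp})$ with $\Bcy(M^{\gp})\times\Bcy(N^{\gp})$ over $(M\times N)^{\gp}\cong M^{\gp}\times N^{\gp}$. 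This yields
\[
\Brep(M\times N) \;\simeq\; \bigl(\Bcy(M^{\gp})\times_{M^{\gp}} M\bigr)\times \bigl(\Bcy(N^{\gp})\times_{N^{\gp}} N\bigr) \;=\; \Brep M\times \Brep N,
\]
and all identifications are $\mathbb{T}$-equivariant by construction. One also checks that this composite agrees with the map induced by the two projections $M\times N\to M,N$, so the equivalence is the ``canonical'' one in the statement; this is a routine diagram chase using that $\eta$ and $\epsilon$ for a product are the products of the respective maps.

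The main obstacle, such as it is, is purely bookkeeping: one must be careful that the chosen equivalences (group completion preserving products, Lemma \ref{Hoch.2}, and the interchange of limits and products) are all $\mathbb{T}$-equivariant and mutually compatible, i.e. that the resulting square of equivalences commutes so that the induced map on the pullbacks is well-defined and an equivalence. Since $\mathbb{T}$ acts only on the $\Bcy(-)$ factors and all structure maps in sight are equivariant, this compatibility is formal; there is no genuine homotopy-theoretic difficulty, and no recourse to \eqref{logHprop.0.4}-type arguments is needed. Thus the proof is short and essentially identical in spirit to that of Lemma \ref{Hoch.2}.
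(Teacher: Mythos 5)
Your argument is correct and is essentially the paper's own proof: the paper likewise writes both sides as the cartesian squares defining $\Brep$, invokes Lemma \ref{Hoch.2} (together with $(M\times N)^{\gp}\cong M^{\gp}\times N^{\gp}$), and compares the two pullback diagrams. You simply spell out the equivariance and compatibility bookkeeping that the paper leaves implicit.
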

\begin{proof}
From \eqref{Hoch.8.3}, we have cartesian squares
\[
\begin{tikzcd}
\Brep (M\times N) \arrow[d]\arrow[r]&
\Bcy (M\times N)^{\gp}\arrow[d]
\\
M\times N\arrow[r]&
(M\times N)^{\gp},
\end{tikzcd}
\quad
\begin{tikzcd}
\Brep M\times \Brep N \arrow[d]\arrow[r]&
\Bcy M^{\gp}\times \Bcy N^{\gp}\arrow[d]
\\
M\times N\arrow[r]&
M^{\gp}\times N^{\gp}.
\end{tikzcd}
\]
Apply Lemma \ref{Hoch.2} and compare these two diagrams.
\end{proof}

\begin{lem}
\label{Hoch.14}
For $Q:=\N^r\oplus \Z^s$, $P:=\N^r \oplus \N \oplus \Z^{s-1}$, and $M:=0\oplus \N \oplus \Z^{s-1}$, 
there is a canonical $\mathbb{T}$-equivariant equivalence
\[
\Brep(P,M)
\simeq
\Bcy Q\times_Q P.
\]
\end{lem}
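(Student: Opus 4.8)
The statement to prove is Lemma \ref{Hoch.14}: for $Q:=\N^r\oplus\Z^s$, $P:=\N^r\oplus\N\oplus\Z^{s-1}$, and $M:=0\oplus\N\oplus\Z^{s-1}$, there is a canonical $\mathbb{T}$-equivariant equivalence $\Brep(P,M)\simeq\Bcy Q\times_Q P$. The idea is to reduce the computation to the two ``atomic'' cases $\N$ and $\Z$ via the multiplicativity of the cyclic and replete bar constructions established in Lemmas \ref{Hoch.2} and \ref{Hoch.3}, and then to identify the pushout defining $\Brep(P,M)$ with the stated fiber product by hand.

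First I would unwind the definitions. By Definition \ref{Hoch.5}, $\Brep(P,M)$ is the pushout of $\Brep M \leftarrow \Bcy M \to \Bcy P$ in commutative monoid spaces, where the left map is induced by $M\to M$ (identity) through the canonical $\Bcy M\to\Brep M$, and the right map by $\theta\colon M\hookrightarrow P$. Since $M=0\oplus\N\oplus\Z^{s-1}$ sits inside $P=\N^r\oplus\N\oplus\Z^{s-1}$ as the last $s$ factors, and inside $Q=\N^r\oplus\Z^s$ after group-completing only the middle $\N$, everything in sight decomposes as a product over the monoid factors. Using Lemma \ref{Hoch.2} and Lemma \ref{Hoch.3}, $\Bcy$ and $\Brep$ both send finite products of monoids to products of spaces $\mathbb{T}$-equivariantly, and one checks (directly from the universal property of pushouts, which commutes with products in each variable in the $\infty$-category of commutative monoid spaces) that $\Brep(-,-)$ is likewise multiplicative: $\Brep(P,M)\simeq \Brep(\N^r,0)\times\Brep(\N,\N)\times\Brep(\Z^{s-1},\Z^{s-1})$. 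Similarly $\Bcy Q\times_Q P$ splits as $\bigl(\Bcy\N^r\times_{\N^r}\N^r\bigr)\times\bigl(\Bcy\Z\times_\Z\N\bigr)\times\bigl(\Bcy\Z^{s-1}\times_{\Z^{s-1}}\Z^{s-1}\bigr)$ since $Q\times_Q P$ respects the factorization (the fiber product is over $Q$, but it factors as $\N^r$-component trivial, $\Z$-component $\Z\times_\Z\N\simeq \N$ worth of fibers, $\Z^{s-1}$-component trivial).

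Next I would treat each factor. For a monoid $N$ with $\theta=\mathrm{id}$, the pushout defining $\Brep(N,N)$ is just $\Brep N$ (the pushout of $\Brep N\leftarrow\Bcy N\xrightarrow{\mathrm{id}}\Bcy N$), so the $\N^r$ and $\Z^{s-1}$ factors contribute $\Brep\N^r$ and $\Brep\Z^{s-1}$ respectively; on the other side $\Bcy\N^r\times_{\N^r}\N^r=\Bcy\N^r=\Brep\N^r$ by Definition \ref{Hoch.8} applied componentwise (for $\Z^{s-1}$ note $\Brep\Z^{s-1}=\Bcy\Z^{s-1}$ since $\Z^{s-1}$ is already a group), and the matching is tautological. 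The only nontrivial factor is the middle one: I must produce a $\mathbb{T}$-equivariant equivalence $\Brep(\N,\N)\simeq \Bcy\Z\times_\Z\N$, but here $\theta\colon\N\to\N$ is the identity so $\Brep(\N,\N)=\Brep\N=\Bcy\Z\times_\Z\N$ is literally the definition \eqref{Hoch.8.3} of $\Brep\N$. Wait --- more carefully, in the decomposition the middle factor of $M\to P$ is $\N\xrightarrow{\mathrm{id}}\N$, and the corresponding piece of $Q$ is $\Z$ (the group-completion), so the relevant comparison is between $\Brep\N$ and $\Bcy\Z\times_\Z\N$, which coincide by definition. Assembling the three factors via the multiplicativity equivalences, and checking that the canonical maps (the inclusions $\eta$ and the structure maps of the pushouts/fiber products) are compatible with the product decompositions, yields the desired equivalence.

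\textbf{Main obstacle.} The essential content is not any single computation but the bookkeeping that the pushout defining $\Brep(P,M)$ and the fiber product $\Bcy Q\times_Q P$ split \emph{compatibly} along the monoid factorization, and that all identifications are $\mathbb{T}$-equivariant and canonical (so they glue). Concretely, the hard part is verifying that forming $\Brep(-,-)$ as a pushout of commutative monoid spaces commutes with finite products --- this uses that in $\mathrm{CAlg}(\infSpc)$ (equivalently $\mathcal{E}_\infty$-spaces) the tensor/product is computed factorwise and pushouts are universal, together with the fact that $\Bcy$ and $\Brep$ preserve products (Lemmas \ref{Hoch.2}, \ref{Hoch.3}); once this is in place the rest is the observation that $\Brep N=\Bcy N^{\gp}\times_{N^{\gp}}N$ is by definition of the required shape on each factor. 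The $\mathbb{T}$-equivariance is inherited throughout since every map involved ($\eta$, $\epsilon$, the pushout structure maps, the shuffle equivalences) was already shown to be $\mathbb{T}$-equivariant in the cited results.
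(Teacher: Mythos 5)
Your overall route is the paper's: split everything factorwise using Lemmas \ref{Hoch.2} and \ref{Hoch.3} (plus the compatibility of the pushout of Definition \ref{Hoch.5} and of the fiber product with the product decomposition of the monoids), and identify the only interesting factor via the definition $\Brep\N=\Bcy\Z\times_{\Z}\N$. However, your handling of the $\N^r$ factor contains a genuine error. In your own displayed decomposition this factor is $\Brep(\N^r,0)$, i.e.\ the pushout of $\Brep 0\leftarrow\Bcy 0\to\Bcy\N^r$, which is simply $\Bcy\N^r$ because $\Bcy 0\simeq\Brep 0\simeq\ast$; on this factor the structure map is the inclusion $0\to\N^r$, not the identity of $\N^r$. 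Instead you assert that this factor ``contributes $\Brep\N^r$'' and then bridge to the other side by the claim $\Bcy\N^r=\Brep\N^r$ ``by Definition \ref{Hoch.8}.'' Both assertions are false: by definition $\Brep\N^r=\Bcy\Z^r\times_{\Z^r}\N^r$, and Proposition \ref{Hoch.9} makes the discrepancy with $\Bcy\N^r$ explicit already for $r=1$ --- the weight-$0$ part of $\Bcy\N$ is a point, whereas the weight-$0$ part of $\Brep\N$ is $\Bcy(\Z;0)$, a circle. Distinguishing the cyclic from the replete bar construction is precisely the point of repletion, so this identification cannot stand as a step of the proof.

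The good news is that the mistake is easily repaired and does not affect the strategy: on the left-hand side the $\N^r$ factor is $\Brep(\N^r,0)\simeq\Bcy\N^r$, and on the right-hand side it is $\Bcy\N^r\times_{\N^r}\N^r=\Bcy\N^r$, so the two sides match with no appeal to any identity between $\Bcy\N^r$ and $\Brep\N^r$. With that one-line correction (and keeping your correct observations that the middle factor is literally $\Brep\N=\Bcy\Z\times_\Z\N$ and that $\Brep\Z^{s-1}\simeq\Bcy\Z^{s-1}$ since $\Z^{s-1}$ is a group), your argument agrees with the paper's proof, which records $\Brep(P,M)\simeq(\Bcy\N)^r\times\Brep\N\times(\Bcy\Z)^{s-1}$ and $\Bcy Q\simeq(\Bcy\N)^r\times\Bcy\Z\times(\Bcy\Z)^{s-1}$ and concludes from the definition of $\Brep\N$. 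Your remark that the real content is the compatibility of the pushout of commutative monoid spaces with finite products is fair; the paper is equally terse there, and the justification you sketch (the relative tensor as a geometric realization, and products of spaces commuting with sifted colimits in each variable) is the right one.
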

\begin{proof}
Lemmas \ref{Hoch.2} and \ref{Hoch.3} yield the $\mathbb{T}$-equivariant equivalences
\[
\Brep (P,M)
\simeq
(\Bcy \N)^r \times \Brep \N \times (\Bcy \Z)^{s-1}
\]
and
\[
\Bcy(Q)
\simeq
(\Bcy \N)^r \times \Bcy \Z \times (\Bcy \Z)^{s-1}.
\]
Using the definition $\Brep \N = \Bcy \Z \times_{\Z} \N$, we deduce the desired equivalence.
\end{proof}

\subsection{\texorpdfstring{$\logTHH$}{logTHH} of log schemes}
For a pre-log ring $(A,M)$, 
Rognes defined the logarithmic topological Hochschild homology of $(A,M)$ in \cite[Definition 8.11]{RognesLogHH} 
as a symmetric spectrum without a cyclotomic structure.
The cyclotomic structure on the topological Hochschild homology of $(A,M)$ was first discussed by 
Hesselholt-Madsen \cite{zbMATH00938856};
we will make use of the $\infty$-categorical formulation in \cite{zbMATH07063999}.

For a simplicial set $X$, we adopt the suggestive notation
\[
\Sphere[X]
:=
\Sigma_+^\infty X.
\]
For a monoid $M$,
\cite[Construction 3.7]{BLPO} yields a natural morphism
\[
\THH(\Sphere[M])
\simeq
\Sphere[\Bcy M]
\to
\Sphere[\Brep M]
\]
of $\E_\infty$-rings in $\infCycSpt$.

\begin{df}
\label{Hoch.6}
Let $(A,M)$ be a pre-log ring.
The \emph{logarithmic topological Hochschild homology of $(A,M)$}
\index{logarithmic topological Hochschild homology}\index[notation]{logTHH @ $\logTHH$} 
is the $\Einfty$-ring in $\infCycSpt$ given by
\begin{equation}
\label{Hoch.6.2}
\logTHH(A,M)
:=
\THH(A)\otimes_{\Sphere[\Bcy M]} \Sphere[\Brep M].
\end{equation}
The notation $\otimes$ means the coproduct in the $\infty$-category of $\E_\infty$-rings in $\infCycSpt$.
\end{df}

See again \cite[Construction 3.7, Definition 3.8]{BLPO2} for a more explicit description of the Frobenius maps $\varphi_p$ on $\logTHH(A,M)$, and \cite[Proposition 3.6]{BLPO2} (which is in fact \cite[Proposition 5.8]{lundemo}) for an equivalent definition in terms of the (derived) self intersection of the log diagonal. This latter point of view was used in \cite{BLPO}.

\begin{exm}
\label{Hoch.36}
Suppose $A$ is a ring.
Then we have $\Bcy A^*\simeq \Brep A^*$ since $A^*$ is a group.
Hence there is a canonical equivalence of cyclotomic spectra
\[
\logTHH(A,A^*)
\simeq
\THH(A).
\]
Thus we may view $\logTHH$ as an extension of $\THH$ as a cyclotomic spectrum.
\end{exm}

For a pre-log ring $(A,M)$ given by a homomorphism $\theta\colon M\to A$,
we set $M^a:=M\oplus_{\theta^{-1}(A^*)}A^*$ (this is the logification of $(A,M)$).

\begin{prop}
\label{Hoch.12}
Let $(A,M)$ be an integral pre-log ring.
Then the naturally induced map of cyclotomic spectra
\begin{equation}
\label{Hoch.12.2}
\logTHH(A,M)
\to
\logTHH(A,M^a)
\end{equation}
is an equivalence.
\end{prop}
\begin{proof}
Since the forgetful functor $\infCycSpt\to \infSpt$ is conservative by 
\cite[Corollary II.1.7]{NikolausScholze},
it suffices to show that \eqref{Hoch.12.2} is an equivalence of spectra.
Consider the derived logification $M^{La}:=M\oplus_{\theta^{-1}(A^*)}^{\bbL} A^*$.
By \cite[Theorem 4.24]{RSS},
we have an equivalence of spectra
\[
\logTHH(A,M)
\xrightarrow{\simeq}
\logTHH(A,M^{La}).
\]
Hence it suffices to show that $M^{La}$ is discrete.
We set $G:=\theta^{-1}(A^*)$ for simplicity.
Then we have an equivalence
\[
M^{La}
\simeq
M\oplus_G^\bbL G^\gp \oplus_{G^\gp}^\bbL A^*.
\]
The homomorphism $G\to G^\gp$ is integral by \cite[Proposition I.4.6.3(3)]{Ogu} and injective since $G$ is integral,
so $N:=M\oplus_G^\bbL G^\gp$ is a discrete monoid.
The homomorphism $G^\gp\to N$ is integral by \cite[Proposition I.4.6.3(2)]{Ogu} and injective.
It follows that $M^{La}$ is discrete.
\end{proof}

\begin{prop}
\label{THHsheaf.6}
Let $R$ be a ring.
The functor $\bbL_{-/R}$ on the category of $R$-algebras preserves filtered colimits.
\end{prop}
\begin{proof}
This is a consequence of the construction of $\bbL_{-/R}$.
More generally,
$\bbL_{-/R}$ preserves sifted colimits at the simplicial level,
see \cite[Construction 2.1]{BMS19} and \cite[\S 2.5]{BLPO}.
\end{proof}

\begin{prop}
\label{THHsheaf.4}
The functor $\logTHH$ from the category of pre-log rings to $\infCycSpt$ preserves filtered colimits.
\end{prop}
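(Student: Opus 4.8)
The plan is to reduce the statement to filtered-colimit preservation for each of the two tensor factors appearing in the definition \eqref{Hoch.6.2} of $\logTHH(A,M)$, namely $\THH(A)$ and $\Sphere[\Brep M]$, together with the functor $\Sphere[\Bcy M]$ over which the relative tensor product is formed. Since $\logTHH$ is built as a pushout (relative tensor product) in the $\infty$-category of $\E_\infty$-rings in $\infCycSpt$, and relative tensor products commute with filtered (indeed all) colimits in each variable, it suffices to show that the three functors
\[
(A,M)\mapsto \THH(A),
\quad
(A,M)\mapsto \Sphere[\Bcy M],
\quad
(A,M)\mapsto \Sphere[\Brep M]
\]
from pre-log rings to $\E_\infty$-rings in $\infCycSpt$ each preserve filtered colimits. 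First I would recall that a filtered colimit of pre-log rings $(A_i,M_i)$ is computed componentwise, i.e.\ $\colim_i (A_i,M_i) = (\colim_i A_i, \colim_i M_i)$, so the input colimit decomposes into a colimit of rings and a colimit of commutative monoids.

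For $\THH(A)$, the key point is that $\THH$ of a ring is the colimit of the constant $S^1$-diagram on $A$ in $\infSpt$ (after applying the Eilenberg-MacLane functor), hence it commutes with filtered colimits of rings; the cyclotomic structure is also compatible with filtered colimits since the Tate construction $(-)^{tC_p}$ commutes with filtered colimits of spectra (it is computed as a cofiber of a norm map, each term of which preserves filtered colimits). For $\Sphere[\Bcy M]$ and $\Sphere[\Brep M]$, I would use that $\Bcy$ is itself a colimit construction (the colimit of the constant $S^1$-diagram on $M$ in $\infSpc$, cf.\ Definition \ref{Hoch.7}), hence preserves filtered colimits of monoids; the replete bar construction $\Brep M = \Bcy M^{\gp}\times_{M^{\gp}} M$ of \eqref{Hoch.8.3} then preserves filtered colimits because group completion $M\mapsto M^{\gp}$ preserves filtered colimits and because the pullback in question is a pullback along the $\mathbb{T}$-equivariant map $\Bcy M^{\gp}\to M^{\gp}$ which, by the decomposition \eqref{Hoch.8.2} and Proposition \ref{Hoch.9}, has fibers that are (disjoint unions of) homogeneous spaces $\mathbb{T}/C_j$ --- so the pullback is really a coproduct indexed by $M$ of nice pieces, and coproducts and these finite-type pieces all commute with filtered colimits. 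Finally $\Sigma^\infty_+\colon \infSpc\to \infSpt$ preserves all colimits, in particular filtered ones, and the cyclotomic structure on $\Sphere[X]$ for $X$ a cyclic space is again assembled from colimit-preserving operations.

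The main obstacle I anticipate is not any single computation but the bookkeeping needed to verify that all these constructions are compatible \emph{as cyclotomic $\E_\infty$-rings}, not merely as spectra: one must check that the Frobenius maps $\varphi_p\colon \logTHH(A,M)\to \logTHH(A,M)^{tC_p}$ are natural in a way that intertwines correctly with the filtered colimit, which amounts to knowing that the symmetric monoidal structure on $\infCycSpt^\otimes$ (Construction IV.2.1 of \cite{NikolausScholze}) and the relative tensor product formation are compatible with filtered colimits. Since the forgetful functor $\infCycSpt\to \infSpt$ is conservative and preserves filtered colimits (as noted in the proof of Proposition \ref{Hoch.12}, by \cite[Corollary II.1.7]{NikolausScholze} together with the fact that $(-)^{tC_p}$ commutes with filtered colimits), it is enough to verify the equivalence after forgetting to spectra, which makes the argument go through cleanly. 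I would therefore conclude by combining: (i) the componentwise computation of filtered colimits of pre-log rings; (ii) filtered-colimit preservation of $\THH$, $\Sphere[\Bcy(-)]$, and $\Sphere[\Brep(-)]$ on spectra; (iii) the fact that relative tensor products of $\E_\infty$-rings preserve filtered colimits in each variable; and (iv) conservativity of $\infCycSpt\to \infSpt$.
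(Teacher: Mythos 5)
Your proposal takes essentially the same route as the paper: reduce to underlying spectra via the conservative, filtered-colimit-preserving forgetful functors (using \cite[Corollary 3.2.3.2]{HA} and \cite[Corollary II.1.7]{NikolausScholze}), then observe that the defining relative tensor product formula for $\logTHH(A,M)$ is built from constructions ($\THH$, $\Sphere[\Bcy(-)]$, $\Sphere[\Brep(-)]$, relative tensor products) that each commute with filtered colimits, checked componentwise on $(\colim A_i,\colim M_i)$.

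One sub-justification is off, though harmlessly so: you argue that $\Brep M = \Bcy M^{\gp}\times_{M^{\gp}} M$ preserves filtered colimits by appealing to Proposition \ref{Hoch.9} and claiming the fibers of $\Bcy M^{\gp}\to M^{\gp}$ are disjoint unions of homogeneous spaces $\mathbb{T}/C_j$. That fiber description is only established (and only true) for $M=\N,\Z$, not for an arbitrary commutative monoid. You do not need it: group completion preserves filtered colimits, $\Bcy$ does as well, and the pullback itself commutes with filtered colimits simply because filtered colimits commute with finite limits in $\infSpc$. With that replacement your argument is complete and matches the paper's proof in substance.
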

\begin{proof}
Observe that the forgetful functors $\CAlg(\infSpt)\to \infSpt$ and $\infCycSpt\to \infSpt$ are conservative and preserve small sifted (and hence filtered) colimits by \cite[Corollary 3.2.3.2]{HA} and \cite[Corollary II.1.7]{NikolausScholze}.
Hence it suffices to show that the functor
\[
(A,M)
\mapsto 
\THH(A,M)
\simeq
(A\otimes_{A\otimes_{\Sphere} A} A)\otimes_{\Sphere[M]\otimes_{\Sphere[M]\otimes_{\Sphere}\Sphere[M]} \Sphere[M]}\Sphere[M\times M^\gp]
\]
to $\CAlg(\infSpt)$ preserve filtered colimits.
This can be checked componentwise.
\end{proof}

Recall from \cite[Definition 2.44]{Koshikawa-Yao} that a homomorphism of pre-log rings $(A,M)\to (B,N)$ is \emph{a homologically log flat covering} if $A\to B$ is faithfully flat and
the derived pushout $N\oplus_M^\bbL M'$ is discrete for every homomorphism of monoids $M\to M'$,
e.g., $M\to N$ is injective and integral.
Using this,
we can define a homologically log flat topology on the opposite category of pre-log rings.

\begin{prop}
\label{THHsheaf.1}
The presheaf $\logTHH$ on the opposite category of pre-log rings is a homologically log flat sheaf of cyclotomic spectra.
\end{prop}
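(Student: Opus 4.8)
The plan is to reduce the assertion to a descent statement for the underlying spectrum of $\logTHH$, and then to prove that statement by means of the logarithmic Hochschild--Kostant--Rosenberg filtration, which reduces it to flat descent for the logarithmic cotangent complex, where the notion of a homologically log flat covering is exactly tailored.

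First I would observe that the forgetful functor $\infCycSpt\to\infSpt$ is conservative and preserves small limits by \cite[Corollary II.1.7]{NikolausScholze}; hence it is enough to prove that the composite of $\logTHH$ with this forgetful functor is a sheaf of spectra for the homologically log flat topology on the opposite category of pre-log rings. Fix a homologically log flat covering $(A,M)\to (B,N)$ and let $(B^{\otimes_A\bullet+1},N^{\oplus_M\bullet+1})$ be its Čech nerve, the monoid part computed with derived pushouts. Iterating the discreteness condition of \cite[Definition 2.44]{Koshikawa-Yao} shows that each term of this Čech nerve is a genuine (discrete) pre-log ring, that its monoid part is the Amitsur nerve of $M\to N$ in commutative monoids, and that $A\to B^{\otimes_A\bullet+1}$ is the faithfully flat Amitsur complex of $A\to B$. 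One then has to prove
\[
\logTHH(A,M)\xrightarrow{\ \simeq\ }\limit_{\Delta}\logTHH(B^{\otimes_A\bullet+1},N^{\oplus_M\bullet+1}).
\]

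For this I would use the explicit description \eqref{Hoch.6.2}, $\logTHH(A,M)\simeq\THH(A)\otimes_{\Sphere[\Bcy M]}\Sphere[\Brep M]$. The functor $A\mapsto\THH(A)$ satisfies faithfully flat descent by \cite[Corollary 3.4]{BMS19}, so $\THH(A)\simeq\limit_{\Delta}\THH(B^{\otimes_A\bullet+1})$; and the functors $M\mapsto\Sphere[\Bcy M]$ and $M\mapsto\Sphere[\Brep M]$ preserve colimits (for $\Bcy$ because it is tensoring with $S^1$ in commutative monoid spaces, for $\Brep$ by a similar analysis using the formula $\Brep M=\Bcy M^{\gp}\times_{M^{\gp}}M$ and that the Amitsur nerve is levelwise discrete), so they carry that Amitsur nerve to the corresponding cosimplicial $\E_\infty$-rings. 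The main obstacle is that totalization does not commute with the relative smash product $(-)\otimes_{\Sphere[\Bcy M]}(-)$, so these descent statements cannot simply be combined; this is exactly the difficulty that, in the classical case, forces one to argue through the cotangent complex rather than directly through $\THH(A)=A\otimes_{A\otimes_{\Sphere}A}A$. To circumvent it I would invoke the logarithmic Hochschild--Kostant--Rosenberg (motivic) filtration on $\logTHH(A,M)$ constructed in \cite{BLPO} (see also \cite{BLPO2}), whose graded pieces are shifts of exterior powers of the logarithmic cotangent complex $\bbL_{(A,M)}$; since these filtrations are compatible with the structure maps of the Čech nerve, homologically log flat descent for $\logTHH$ is reduced to homologically log flat descent for $\bbL_{(A,M)}$ and its exterior powers. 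The crux is that the discreteness of the derived monoid pushouts is precisely the hypothesis making $\bbL_{(A,M)}$ satisfy flat base change along $(A,M)\to(B,N)$, which together with faithfully flat descent for the ordinary cotangent complex gives the required descent.

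Finally, to make the filtration arguments rigorous --- convergence, and compatibility with the exterior-power decomposition uniformly over the Čech nerve --- I would first reduce to the case where $A$ is of finite presentation over $\Z$ and $M$ is finitely generated, using that both sides commute with filtered colimits: the left-hand side by Proposition \ref{THHsheaf.4}, and the logarithmic cotangent complex by the evident logarithmic analogue of Proposition \ref{THHsheaf.6}. I expect the genuinely technical point to be the identification of the flat base-change behaviour of $\bbL_{(A,M)}$ with the homologically log flat condition, together with the bookkeeping required to propagate descent through the HKR filtration along the Čech nerve.
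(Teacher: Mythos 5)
Your opening reduction is where the argument breaks. Corollary II.1.7 of Nikolaus--Scholze says that the forgetful functor $\infCycSpt\to\infSpt$ is conservative and preserves \emph{colimits}; it does not preserve the infinite limits relevant here. Since $\infCycSpt$ is a lax equalizer built from the Tate constructions $(-)^{tC_p}$, which do not commute with limits, the totalization of the \v{C}ech nerve in $\infCycSpt$ is not computed on underlying spectra, and conservativity alone cannot convert underlying-spectrum descent into descent in $\infCycSpt$. The correct criterion (this is \cite[Proposition II.1.5(v)]{NikolausScholze}, which is what the paper invokes) is that a cone in cyclotomic spectra is a limit cone provided both the underlying cone of spectra \emph{and} its image under $(-)^{tC_p}$, for every prime $p$, are limit cones. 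Consequently one must prove that $(A,M)\mapsto\logTHH(A,M)^{tC_p}$ is also a homologically log flat sheaf of spectra, and this is genuinely additional work that your proposal never addresses: the paper handles it by the method of the proof of \cite[Corollary 3.4]{BMS19}, reducing to descent for $\wedge^i\bbL_{(A,M)/\Z}\otimes\tau_{\leq n}\Z_{hC_p}$, which follows from log flat descent for the log cotangent complex (\cite[Theorem 2.3]{BLPO2}) together with the fact that $\tau_{\leq n}\Z_{hC_p}$ is a perfect $\Z$-complex.

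For the part you do treat, descent of the underlying spectrum, your plan is essentially the content of the result the paper simply cites (\cite[Proposition 3.10]{BLPO2}), so it is on track in spirit, but two points need care. First, the HKR-type filtration whose graded pieces are $\wedge^i\bbL_{(A,M)/\Z}[i]$ lives on $\logHH((A,M)/\Z)\simeq\logTHH(A,M)\otimes_{\THH(\Z)}\Z$, not on $\logTHH(A,M)$ itself, so passing from cotangent-complex descent to $\logTHH$-descent requires an extra step (as in the proof of \cite[Corollary 3.4]{BMS19}) beyond the filtration bookkeeping you describe. Second, for a limit statement such as descent it is completeness, not exhaustiveness, of the filtration that lets you conclude; this is available in the log setting (cf.\ Proposition \ref{THHsheaf.7}), but it should be said explicitly rather than folded into ``bookkeeping.'' Neither of these repairs, however, touches the main gap: without verifying the $(-)^{tC_p}$ condition, the statement about \emph{cyclotomic} spectra does not follow.
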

\begin{proof}
By \cite[Proposition II.1.5(v)]{NikolausScholze},
it suffices to show that $\logTHH(-)$ and $\logTHH(-)^{tC_p}$ are sheaves of spectra.
The first one is a sheaf by \cite[Proposition 3.10]{BLPO2}.
For the second one,
it suffices to show that $(A,M)\mapsto \wedge^i \bbL_{(A,M)/\Z} \otimes \tau_{\leq n}\Z_{hC_p}$ is a sheaf for every integer $n$ as in the proof of \cite[Corollary 3.4]{BMS19}.
This follows from \cite[Theorem 2.3]{BLPO2} and the fact that $\tau_{\leq n} \Z_{hC_p}$ is a perfect $\Z$-complex.
\end{proof}

\begin{df}
\label{THHsheaf.2}
For an integral log scheme $X$,
consider the presheaf of cyclotomic spectra
\[
\logTHH|_X(U)
:=
\logTHH(\Gamma(U,\cO_U),\Gamma(U,\cM_U))
\]
on the category of integral log schemes over $X$.
The \emph{topological Hochschild homology of $X$} is defined to be
\[
\logTHH(X)
:=
R\Gamma_{Zar}(X,L_{Zar}\logTHH|_X),
\]
where $L_{Zar}$ denotes the Zariski sheafification functor.
The \emph{logarithmic negative topological cyclic homology of $X$}\index{logarithmic negative topological cyclic homology}\index[notation]{logTC- @ $\logTC^{-}$},
the \emph{logarithmic topological periodic homology of $X$}\index{logarithmic topological periodic homology}\index[notation]{logTP @ $\logTP$},
and the \emph{logarithmic topological cyclic homology of $X$}\index{logarithmic topological cyclic homology}\index[notation]{logTC @ $\logTC$}
are defined as 
\begin{equation}
\logTC^{-}(X)
:=
\logTHH(X)^{h\mathbb{T}},
\text{ }
\logTP(X)
:=
\logTHH(X)^{t\mathbb{T}},
\text{ }
\logTC(A)
:=
\TC(\logTHH(A)).
\end{equation}
We note that the logarithmic version $\logTHH(X)$ takes into account the log structure on $X$, 
while the functors $(-)^{t\mathbb{T}},\TC\colon \infCycSpt\to \infSpt$ are as in \S\ref{subsection:thhrings}.
\end{df}

If $X$ is an integral log scheme over $\Spec{R}$ for some ring $R$,
consider the Zariski sheaf $\bbL\Omega^n_{X/R}$ of complexes of $\cO_X$-modules on $X$ associated with the presheaf on $X$ given by
\[
U\mapsto \wedge_{\Gamma(U,\cO_U)}^{n}\bbL_{(\Gamma(U,\cO_U),\Gamma(U,\cM_U))/R}.
\]

\begin{prop}
\label{THHsheaf.3}
Let $(A,M)$ be an integral pre-log ring.
Then the naturally induced map of cyclotomic spectra
\begin{equation}
\label{THHsheaf.3.2}
\logTHH(A,M)
\to
\logTHH(\Spec{A,M})
\end{equation}
is an equivalence.
If $R\to A$ is a homomorphism of rings,
then the naturally induced map of complexes
\begin{equation}
\label{THHsheaf.3.3}
\wedge_A^n
\bbL_{(A,M)/R}
\to
\Gamma_{Zar}(\Spec{A,M},\bbL\Omega_{\Spec{A,M}/R}^n)
\end{equation}
is an equivalence.
\end{prop}
\begin{proof}
Let us first show that \eqref{THHsheaf.3.2} is an equivalence.
Consider the presheaf of cyclotomic spectra
\[
\logTHH|_{(A,M)}(B)
:=
\logTHH(B,M)
\]
on the opposite category of $A$-algebras.
We have the naturally induced map of presheaves of cyclotomic spectra on the opposite category of $A$-algebras
\begin{equation}
\label{THHsheaf.3.1}
\logTHH|_{(A,M)}
\to
\logTHH|_{\Spec{A,M}}.
\end{equation}
If we take the global sections after Zariski sheafification,
then we get $\logTHH(A,M)$ for the left-hand side by Proposition \ref{THHsheaf.1} and $\logTHH(\Spec{A,M})$ for the right-hand side by definition.
Hence it suffices to show that \eqref{THHsheaf.3.1} becomes an equivalence of sheaves of cyclotomic spectra after Zariski sheafification.
For this,
it suffices to show that every stalk of \eqref{THHsheaf.3.1} is an equivalence.

A stalk of a presheaf is obtained by a filtered colimit.
Using Proposition \ref{THHsheaf.4},
we reduce to the case when $A$ is a local ring.
In this case, $\logTHH(\Spec{A,M})$ becomes $\logTHH(A,M^a)$.
Together with Proposition \ref{Hoch.12},
we deduce that \eqref{THHsheaf.3.2} is an equivalence.

To show that \eqref{THHsheaf.3.3} is an equivalence,
use \cite[Lemma 3.11]{BLPO} (resp.\ Proposition \ref{THHsheaf.6}, resp.\ \cite[Theorem 2.3]{BLPO}) instead of Proposition \ref{Hoch.12} (resp.\ Proposition \ref{THHsheaf.4}, resp.\ Proposition \ref{THHsheaf.1}).
\end{proof}

\begin{df}
Let $X$ be an integral log scheme over $R$,
where $R$ is a ring.
The \emph{(relative) logarithmic Hochschild homology of $X$} is
\[
\logHH(X/R)
:=
\logTHH(X)\otimes_{\THH(R)}R.
\]

Observe that if $(A,M)$ is a pre-log ring, then $\logHH(\Spec{A,M}/R)$ is equivalent to $\logHH((A,M)/R)$ in \cite[Definition 5.3]{BLPO}.
\end{df}

\begin{prop}
\label{THHsheaf.7}
Let $X$ be an integral log scheme over $R$,
where $R$ is a ring.
Then $\logHH(X/R)$ admits a natural complete exhaustive filtration with the graded pieces
\[
\mathrm{gr}^n\logHH(X/R)
\simeq
R\Gamma_{Zar}(X,\bbL\Omega^n_{X/R})[n]
\]
for all integers $n$.
\end{prop}
\begin{proof}
By Proposition \ref{THHsheaf.3},
this follows from \cite[Theorem 5.15]{BLPO} and Zariski descent.
\end{proof}

\begin{prop}
\label{THHsheaf.5}
The presheaf $\logTHH$ on the category of integral log schemes is a strict fpqc sheaf of cyclotomic spectra.
\end{prop}
\begin{proof}
Consider the strict fpqc sheaf given by
\[
\logTHH_{sfpqc}(X)
:=
R\Gamma_{sfpqc}(X,L_{sfpqc}\logTHH|_X(U))
\]
for integral log schemes $X$,
where $sfpqc$ is a shorthand for the strict fpqc topology.
Argue as in Proposition \ref{THHsheaf.3} to show that the naturally induced map of cyclotomic spectra
\[
\logTHH(A,M)
\to
\logTHH_{sfpqc}(\Spec{A,M})
\]
is an equivalence for every pre-log ring $(A,M)$.
This means that $\logTHH$ and $\logTHH_{sfpqc}$ are equivalent for the affine case and hence are equivalent for the general case by Zariski descent.
It follows that $\logTHH$ is a strict fpqc sheaf.
\end{proof}

\begin{prop}
\label{THHsheaf.8}
For a ring $R$ and integer $n$,
the presheaf $\bbL\Omega^n_{-/R}$ on the category of integral log schemes is a strict fpqc sheaf of chain complexes.
\end{prop}
\begin{proof}
Argue as in Proposition \ref{THHsheaf.5}.
\end{proof}

\begin{prop}
\label{THHsheaf.9}
The presheaves $\logTHH^{h\mathbb{T}}$, $\logTC^-$, $\logTP$, and $\logTC$ on the category of integral log schemes are strict fpqc sheaves of spectra.
\end{prop}
\begin{proof}
Argue as in \cite[Corollary 3.4]{BMS19} to deduce that $\logTHH_{h\mathbb{T}}$ is a sheaf from Propositions \ref{THHsheaf.7} and \ref{THHsheaf.8}.
Since the functors $(-)^{h\mathbb{T}}$ and $\TC$ preserve limits,
Proposition \ref{THHsheaf.5} implies that $\logTC^-$ and $\logTC$ are sheaves.
Using the cofiber sequence $\Sigma^1 (-)_{h\mathbb{T}}\to (-)^{h\mathbb{T}}\to (-)^{t\mathbb{T}}$,
we see that $\logTP$ is a sheaf too.
\end{proof}

The functor $\lambda\colon \Sch \to \lSch$ sending $X\in \Sch$ to $X$ naturally induces an adjunction
\begin{equation}
\label{Hoch.21.3}
\lambda_\sharpp : \infShv_{fpqc}(\Sch,\infSpt) \rightleftarrows \infShv_{sfpqc}(\lSch,\infSpt) : \lambda^*.
\end{equation}
As observed in Example \ref{Hoch.36}, we have equivalences in $\infShv_{fpqc}(\Sch,\infSpt)$
\begin{equation}
\label{Hoch.21.4}
\lambda^* \logTHH \simeq \THH,
\text{ }
\lambda^* \logTC^- \simeq \TC^-,
\text{ }
\lambda^* \logTP \simeq \TP,
\text{ and }
\lambda^* \logTC \simeq \TC.
\end{equation}

\begin{const}
\label{Hoch.23}
For $\E_\infty$-ring spectra $A$ and $B$,
we have an equivalence of spectra $\THH(A\otimes_{\Sphere} B)\simeq \THH(A)\otimes \THH(B)$.
Using Propositions \ref{Hoch.2} and \ref{Hoch.3},
we see that the functor $\logTHH$ from the category of pre-log rings to $\infCycSpt$ is lax symmetric monoidal.
It follows that the functor $\logTHH$ from the category of log schemes to $\infCycSpt$ is lax symmetric monoidal.
Hence we obtain
\[
\logTHH\in \CAlg(\infPsh(\lSch,\infSpt))
\]
by \cite[Example 2.2.6.9]{HA},
where the symmetric monoidal structure on $\infPsh(\lSch,\infSpt)$ is the Day convolution.
By applying the lax symmetric monoidal functors $(-)^{h\mathbb{T}}$, $(-)^{t\mathbb{T}}$ and $\TC$,
we obtain
\[
\logTC^-,\logTP,\logTC\in \CAlg(\infPsh(\lSch,\infSpt)).
\]
\end{const}

\subsection{\texorpdfstring{$(\P^n,\P^{n-1})$}{(Pn,Pn-1)}-invariance of \texorpdfstring{$\logTHH$}{logTHH}}
The purpose of this section is to prove that $\logTHH$ is $(\P^n,\P^{n-1})$-invariant for all integers $n\geq 1$.
As a consequence, 
we will represent $\logTHH(X)$, $\logTC^-(X)$, $\logTP(X)$, 
and $\logTC^-(X)$ in $\inflogSH^\eff(S)$ whenever $S\in \Sch$ and $X\in \SmlSm/S$.

\begin{prop}
\label{Hoch.10}
Let $R$ be a ring, and let $M$ be a monoid.
Then there is an equivalence of $\E_\infty$-rings in cyclotomic spectra
\begin{equation}
\label{Hoch.10.2}
\THH(R)
\otimes_{\Striv}
\Sphere[\Bcy(M)]
\xrightarrow{\simeq}
\THH(R[M]).
\end{equation}
\end{prop}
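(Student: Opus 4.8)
The plan is to identify both sides of \eqref{Hoch.10.2} as colimits of the constant diagram $S^1 \to \infSpt$ (or rather $S^1 \to \CAlg(\infCycSpt)$) and match them using the monoidal properties of $\THH$. First I would recall the standard fact (e.g.\ \cite[Section IV.2]{NikolausScholze} or \cite[Section 5]{zbMATH07063999}) that for a ring $R$ and a monoid $M$ there is an equivalence of $\E_\infty$-rings $R[M] \simeq R \otimes_{\Sphere} \Sphere[M]$, where $\Sphere[M] = \Sigma_+^\infty M$ is regarded as an $\E_\infty$-ring via the monoid structure on $M$. Applying the symmetric monoidality of $\THH$ — namely $\THH(A \otimes_{\Sphere} B) \simeq \THH(A) \otimes \THH(B)$ as $\E_\infty$-rings in $\infCycSpt$, which is recorded in Construction \ref{Hoch.23} and \cite[Construction IV.2.1]{NikolausScholze} — yields
\[
\THH(R[M]) \simeq \THH(R) \otimes \THH(\Sphere[M]).
\]
The tensor product here is the coproduct of $\E_\infty$-rings in $\infCycSpt$, so this is $\THH(R) \otimes_{\Striv} \THH(\Sphere[M])$ since $\Striv \simeq \THH(\Sphere)$ is the monoidal unit (cf.\ \cite[Example II.1.2(ii)]{NikolausScholze}).

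The remaining point is the identification $\THH(\Sphere[M]) \simeq \Sphere[\Bcy(M)]$ as $\E_\infty$-rings in $\infCycSpt$. The key input is that $\Sigma_+^\infty \colon \infSpc \to \infSpt$ is symmetric monoidal and colimit-preserving, hence commutes with the formation of $\THH$ in the sense that $\THH(\Sigma_+^\infty M) \simeq \Sigma_+^\infty(\Bcy M)$: both are the colimit of the constant $S^1$-indexed diagram with value $\Sigma_+^\infty M$, the left-hand side by definition of $\THH$ and the right-hand side by definition of the cyclic bar construction (Definition \ref{Hoch.7}). The cyclotomic structures also match: this is exactly the content of the discussion in \cite[Section IV.2]{NikolausScholze} identifying $\THH(\Sigma_+^\infty M)$ with $\Sigma_+^\infty \Bcy(M)$ as cyclotomic $\E_\infty$-rings, using that $\Sigma_+^\infty$ sends the edgewise subdivision / diagonal structure on $\Bcy(M)$ to the Frobenius maps on $\THH$. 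I would cite this rather than reprove it, and note that it is already implicitly used in the morphism $\THH(\Sphere[M]) \simeq \Sphere[\Bcy M] \to \Sphere[\Brep M]$ appearing right before Definition \ref{Hoch.6}.

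Assembling these: $\THH(R[M]) \simeq \THH(R) \otimes \THH(\Sphere[M]) \simeq \THH(R) \otimes_{\Striv} \Sphere[\Bcy(M)]$, which is the desired equivalence, and it is an equivalence of $\E_\infty$-rings in $\infCycSpt$ because every step is. The main obstacle — such as it is — is bookkeeping about the monoidal structures: one must be careful that the coproduct in $\CAlg(\infCycSpt)$ really is computed by the relative tensor product $\otimes_{\Striv}$ (this is \cite[Section 3]{HA} applied to the symmetric monoidal $\infty$-category $\infCycSpt^\otimes$, where $\Striv$ is the unit), and that the cyclotomic Frobenius on $\THH(\Sphere[M])$ is correctly identified with the one on $\Sphere[\Bcy M]$ coming from edgewise subdivision. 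Since neither of these requires new ideas beyond what is in \cite{NikolausScholze}, the proof is short: it is essentially an application of symmetric monoidality of $\THH$ together with the computation $\THH(\Sigma_+^\infty M) \simeq \Sigma_+^\infty \Bcy(M)$.
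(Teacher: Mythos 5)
Your argument is correct, but it takes a different route from the paper's. The paper constructs the map \eqref{Hoch.10.2} from the commutative square of $\THH$'s induced by $\Sphere\to R$ and $\Sphere[M]\to R[M]$, then uses conservativity of the forgetful functor $\infCycSpt\to\infSpt$ (\cite[Corollary II.1.7]{NikolausScholze}) to reduce to an equivalence of underlying spectra, which it gets by combining the identification $\Eilenberg(R[M])\simeq \Eilenberg R\wedge\Sphere[M]$ (\cite[Proposition 3.2]{zbMATH05688706}, extended by inspection from groups to monoids) with Hesselholt--Madsen's computation of $\THH$ of monoid algebras \cite[Theorem 7.1]{zbMATH00938856}. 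You instead stay entirely in the structured setting: $R[M]\simeq R\otimes_{\Sphere}\Sphere[M]$ as $\E_\infty$-rings, strong monoidality of $\THH$ into cyclotomic spectra, and $\THH(\Sphere[M])\simeq\Sphere[\Bcy M]$, the last of which the paper itself already invokes (via \cite[Construction 3.7]{BLPO}) just before Definition \ref{Hoch.6}. What your route buys is independence from the classical Hesselholt--Madsen reference; what it costs is that you must know the lax symmetric monoidal structure on $\THH\colon\CAlg(\infSpt)\to\CAlg(\infCycSpt)$ is actually strongly monoidal, and since Construction \ref{Hoch.23} records $\THH(A\otimes_{\Sphere}B)\simeq\THH(A)\otimes\THH(B)$ only as an equivalence of spectra, upgrading it to cyclotomic $\E_\infty$-rings again uses the same conservativity input \cite[Corollary II.1.7]{NikolausScholze} that drives the paper's proof -- so the two arguments are not as far apart as they first appear. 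One small point worth making explicit in your write-up: the equivalence you assemble is the canonical comparison map out of the coproduct $\THH(R)\otimes_{\Striv}\Sphere[\Bcy M]$, i.e.\ exactly the ``naturally induced'' map of the statement, since the monoidality map is by definition the one determined by the two ring maps into $\THH(R[M])$; this identification is what makes your chain of equivalences prove the proposition as stated rather than merely an abstract equivalence of objects.
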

\begin{proof}
The naturally induced square
\[
\begin{tikzcd}
\THH(\Sphere)\ar[d]\ar[r]&
\THH(R)\ar[d]
\\
\THH(\Sphere[M])\ar[r]&
\THH(R[M])
\end{tikzcd}
\]
constructs \eqref{Hoch.10.2} as a map between $\E_\infty$-rings in cyclotomic spectra.
Owing to \cite[Corollary II.1.7]{NikolausScholze} we only need to show that \eqref{Hoch.10.2} is an equivalence of spectra.
If $M$ is a group, 
then by \cite[Proposition 3.2]{zbMATH05688706} there is a canonical equivalence of spectra 
\begin{equation}
\label{Hoch.10.1}
\Eilenberg (R[M])
\simeq
\Eilenberg R
\wedge
\Sphere[M].
\end{equation}
Furthermore, by inspection,
the same proof applies to the case when $M$ is a monoid.
Apply \cite[Theorem 7.1]{zbMATH00938856} to deduce the desired equivalence.
\end{proof}

\begin{rmk}
\label{rmk:nonA1}
By Proposition \ref{Hoch.10}, there is an equivalence of cyclotomic spectra
\[
\THH(X\times \A^1)
\simeq
\THH(X) \otimes_{\Striv}\Sphere[\Bcy(\N)]
\]
for $X\in \Sch$.
Together with Proposition \ref{Hoch.9}, we deduce that $\THH(X\times \A^1)$ is not equivalent to $\THH(X)$ if $\THH(X)$ is nontrivial.
In other words, $\THH$ is not $\A^1$-invariant.
\end{rmk}

For pre-log rings $(R,M,\theta)$ and $(R',M',\theta')$, 
let $(R,M)\otimes (R',M')$ be the pre-log ring $(R\otimes R',M\oplus M',\theta'')$, 
where $\theta''$ maps $(p,p')\in M\oplus M'$ to $p\otimes p'$.

\begin{prop}
\label{Hoch.11}
Let $(R,M)$ be a pre-log ring, and let $N\to P$ be a homomorphism of monoids.
Then there is a canonical equivalence of $\E_\infty$-rings in cyclotomic spectra
\begin{equation}
\label{Hoch.11.1}
\logTHH((R,M)\otimes (\Z[P],N))
\simeq
\logTHH(R,M)
\otimes_{\Striv}
\Sphere[\Brep(P,N)].
\end{equation}
\end{prop}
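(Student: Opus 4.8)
The plan is to unwind the definition \eqref{Hoch.6.2} of $\logTHH$ on both sides of \eqref{Hoch.11.1} and reduce everything to the monoidality properties of $\THH$, $\Bcy$, and $\Brep$ already established. Write $(A,Q) := (R,M)\otimes(\Z[P],N) = (R\otimes \Z[P], M\oplus N)$, so that $A = R[P]$ and the log structure is generated by $M\oplus N$. By definition,
\[
\logTHH(A,Q)
=
\THH(A)\otimes_{\Sphere[\Bcy(M\oplus N)]}\Sphere[\Brep(M\oplus N)],
\]
the pushout being taken in $\E_\infty$-rings in $\infCycSpt$. First I would apply Proposition \ref{Hoch.10} to rewrite $\THH(A) = \THH(R[P]) \simeq \THH(R)\otimes_{\Striv}\Sphere[\Bcy P]$. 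Then I would use Lemma \ref{Hoch.2} to split $\Sphere[\Bcy(M\oplus N)] \simeq \Sphere[\Bcy M]\otimes_{\Sphere}\Sphere[\Bcy N]$ and Lemma \ref{Hoch.3} to split $\Sphere[\Brep(M\oplus N)] \simeq \Sphere[\Brep M]\otimes_{\Sphere}\Sphere[\Brep N]$, both as $\E_\infty$-rings in $\infCycSpt$ (the $\mathbb{T}$-equivariant and cyclotomic compatibility of these equivalences is what makes the splitting land in the correct category — this should already be implicit in the cited statements, but I would remark on it).

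With these substitutions the right-hand tensor factor becomes a pushout of the form
\[
\bigl(\THH(R)\otimes_{\Striv}\Sphere[\Bcy P]\bigr)
\otimes_{\Sphere[\Bcy M]\otimes\Sphere[\Bcy N]}
\bigl(\Sphere[\Brep M]\otimes\Sphere[\Brep N]\bigr).
\]
The next step is to rearrange this pushout square. The factor $\Sphere[\Bcy N]$ maps to $\Sphere[\Bcy P]$ via $\Bcy$ applied to $N\to P$, and $\Sphere[\Bcy N]$ maps to $\Sphere[\Brep N]$. Using that pushouts of $\E_\infty$-rings commute with one another (coproducts in a slice category reassociate), I would factor the computation as: first push out $\THH(R)$ along $\Sphere[\Bcy M]\to \Sphere[\Brep M]$ to get $\logTHH(R,M)$, and then push out the remaining $\Sphere[\Bcy P]$–$\Sphere[\Bcy N]$–$\Sphere[\Brep N]$ data. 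The latter pushout is precisely $\Sphere[-]$ applied to the pushout square defining $\Brep(P,N)$ in Definition \ref{Hoch.5}, since $\Sphere[-] = \Sigma^\infty_+$ is symmetric monoidal and hence sends pushouts of commutative monoids in spaces to pushouts of $\E_\infty$-rings in spectra; thus this factor is $\Sphere[\Brep(P,N)]$, with its cyclotomic structure inherited from $\Bcy P$. Assembling, the whole thing becomes $\logTHH(R,M)\otimes_{\Striv}\Sphere[\Brep(P,N)]$, which is the claimed formula.

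The main obstacle I anticipate is bookkeeping of base rings and the associativity/commutativity of iterated relative tensor products of $\E_\infty$-rings in $\infCycSpt$ — one must be careful that the two ``legs'' of the pushout genuinely decouple along the splitting $M\oplus N$, i.e.\ that the $M$-part of $\Sphere[\Bcy(M\oplus N)]$ acts only on the $\THH(R)$-factor through the map $\Sphere[\Bcy M]\to\THH(R)$ coming from $\THH$ of $(R,M)\to R$, while the $N$-part acts through $\Bcy(N\to P)$. Making this precise amounts to checking that the relevant square of $\E_\infty$-rings is a pushout, which I would do by passing to underlying spectra (using conservativity of $\infCycSpt\to\infSpt$ from \cite[Corollary II.1.7]{NikolausScholze}) and invoking the equivalence \eqref{Hoch.10.1}-type base-change computation together with flatness of $\Sphere[-]$ on discrete monoids. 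No genuinely new input beyond Propositions \ref{Hoch.10}, Lemmas \ref{Hoch.2}, \ref{Hoch.3}, and Definition \ref{Hoch.5} should be required; the content is entirely formal manipulation of symmetric monoidal pushouts.
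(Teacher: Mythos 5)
Your proposal is correct and follows essentially the same route as the paper: unwind the definition, split $\THH(R[P])$, $\Bcy(M\oplus N)$, and $\Brep(M\oplus N)$ via Proposition \ref{Hoch.10} and Lemmas \ref{Hoch.2}, \ref{Hoch.3}, rearrange the iterated relative tensor product of $\E_\infty$-rings in $\infCycSpt$, and recognize $\Sphere[\Bcy P]\otimes_{\Sphere[\Bcy N]}\Sphere[\Brep N]\simeq\Sphere[\Brep(P,N)]$ from Definition \ref{Hoch.5}. The extra verifications you flag (conservativity of $\infCycSpt\to\infSpt$, ``flatness'' of $\Sphere[-]$) are not needed; the paper treats the reassociation of the pushouts as formal, exactly as your main argument does.
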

\begin{proof}
By definition, 
we have
\[
\logTHH((R,M)\times (\Z[P],N))
=
\THH(R[P])\otimes_{\Sphere[\Bcy(M\oplus N)]}\Sphere[\Brep(M\oplus N)].
\]
Owing to Propositions \ref{Hoch.2}, \ref{Hoch.3}, and \ref{Hoch.10} 
we have equivalences of $\E_\infty$-rings in cyclotomic spectra
\begin{align*}
& \THH(R[P])\otimes_{\Sphere[\Bcy(M\oplus N)]}\Sphere[\Brep(M\oplus N)]
\\
\simeq &
(\THH(R)\otimes_{\Striv} \Sphere[\Bcy P])
\otimes_{\Sphere[\Bcy M]\otimes_{\Striv} \Sphere[\Bcy N]}
(\Sphere[\Brep M]\otimes_{\Striv} \Sphere[\Brep N])
\\
\simeq &
(\THH(R)\otimes_{\Sphere[\Bcy M]}\Sphere[\Brep M])
\otimes_{\Striv}
(\Sphere[\Bcy P]\otimes_{\Sphere[\Bcy N]}\Sphere[\Brep N]).
\end{align*}
By definition, 
the last object is $\logTHH(R,M)\otimes_{\Striv} \Sphere[\Brep(P,N)]$.
\end{proof}

\begin{thm}
\label{Hoch.13}
If $X\in \lSch$, then the naturally induced morphism of spectra
\[
\logTHH(X)
\to
\logTHH(X\times (\P^n,\P^{n-1}))
\]
is an equivalence for all $n\geq 1$.
The same holds for $\logTC^-$, $\logTP$, and $\logTC$.
\end{thm}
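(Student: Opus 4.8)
The plan is to reduce the statement to a computation with the replete bar construction, exactly along the lines used for the other structural results in this section. First I would observe that the question is Zariski-local on $X$ and, using Proposition \ref{THHsheaf.4} (preservation of filtered colimits) together with the noetherian approximation techniques of \S\ref{noetherian}, reduce to the case where $X=\Spec{A,M}$ is an affine fs log scheme. Moreover, since $(\P^n,\P^{n-1})$ has a strict Zariski (indeed toric) cover by affine pieces of the form $\A_{\N^r\oplus \Z^{n-r}}$ glued along the charts described after Definition \ref{logtop.4}, and since $\logTHH$ satisfies strict fpqc descent by Proposition \ref{THHsheaf.5} (hence in particular Zariski descent), it suffices to compute the value of $\logTHH$ on $X\times U$ for each such affine open $U$ and check that the resulting \v{C}ech complex computes $\logTHH(X)$.

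The key computational input is Proposition \ref{Hoch.11}: for $U=\Spec{\Z[P],N}$ with $N\to P$ a chart of the standard form appearing in a toric affine piece of $(\P^n,\P^{n-1})$ (so $P=\N^r\oplus \Z^{n-r}$ and $N$ the submonoid generated by the coordinate corresponding to the divisor $\P^{n-1}$), one has
\[
\logTHH((A,M)\otimes(\Z[P],N))
\simeq
\logTHH(A,M)\otimes_{\Striv}\Sphere[\Brep(P,N)].
\]
By Lemma \ref{Hoch.14} (together with Lemmas \ref{Hoch.2} and \ref{Hoch.3}) the replete bar term $\Brep(P,N)$ is $\mathbb{T}$-equivariantly equivalent to $\Bcy Q\times_Q P$ for the appropriate $Q$, and by Proposition \ref{Hoch.9} each component $\Bcy(\N,j)$, $\Bcy(\Z,j)$, $\Brep(\N,j)$ is $\mathbb{T}$-equivariantly equivalent to a point or to $\mathbb{T}/C_j$, with the structure maps between charts inducing the expected equivalences. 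Assembling these pieces over the toric cover of $(\P^n,\P^{n-1})$, the \v{C}ech descent spectral sequence collapses — precisely because the local contributions $\Sphere[\Brep(P,N)]$ glue to the cyclic nerve of the fan of $(\P^n,\P^{n-1})$ relative to its boundary, whose realization is contractible — yielding $\logTHH(X\times(\P^n,\P^{n-1}))\simeq \logTHH(X)$. This is the logarithmic, cyclotomic-spectrum analogue of the classical fact that the reduced suspension-type complex attached to $(\P^n,\P^{n-1})$ is trivial; it is the argument already run at the level of cotangent complexes in \cite[\S 7.3]{logDM} and Proposition \ref{ProplogSH.4}, now lifted to $\infCycSpt$.

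Finally, for the variants $\logTC^-$, $\logTP$, and $\logTC$ I would simply apply the limit-preserving lax symmetric monoidal functors $(-)^{h\mathbb{T}}$, $(-)^{t\mathbb{T}}$, and $\TC\colon \infCycSpt\to \infSpt$ to the equivalence just established; since $\logTHH(X)\to \logTHH(X\times(\P^n,\P^{n-1}))$ is an equivalence of cyclotomic spectra and all three functors are functorial on $\infCycSpt$, the conclusion for the variants is immediate.

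The main obstacle I expect is bookkeeping the $\mathbb{T}$-equivariant gluing data: one must verify that the identifications $\Brep(P,N)\simeq \mathbb{T}/C_j$ of Proposition \ref{Hoch.9} are compatible with the transition maps of the toric cover of $(\P^n,\P^{n-1})$, so that the assembled object genuinely computes $\logTHH(X)$ rather than some twist. This requires tracking the combinatorics of the fan subdivision (as in Example \ref{logtop.8bis}) through the replete bar construction, and checking that the resulting simplicial object in $\infCycSpt$ has contractible realization after passing to components — a step that is conceptually the heart of the matter even though each individual identification is elementary.
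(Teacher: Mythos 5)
Your overall skeleton matches the paper's: the reduction to an affine chart via Propositions \ref{Hoch.12} and \ref{Hoch.11} (the paper phrases it as reducing to $X=\Spec{\Z}$, which is equivalent to your "tensor out $\logTHH(A,M)$" step since tensoring with a fixed cyclotomic spectrum preserves the relevant colimits), the use of the standard cover of $(\P^n,\P^{n-1})$ together with Zariski descent, and the final deduction of the statements for $\logTC^-$, $\logTP$, $\logTC$ by applying $(-)^{h\mathbb{T}}$, $(-)^{t\mathbb{T}}$, $\TC$ to the established equivalence. However, the heart of the proof is missing: you never actually show that the \v{C}ech cube built from the local terms $\Sphere[\Brep(P_I,F_I)]$ recovers $\logTHH(X)$. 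The assertion that "the local contributions glue to the cyclic nerve of the fan of $(\P^n,\P^{n-1})$ relative to its boundary, whose realization is contractible" is not backed by any argument, and the analogy with Proposition \ref{ProplogSH.4} and \cite[\S 7.3]{logDM} does not transfer: the motivic proof of $(\P^\bullet,\P^{\bullet-1})$-invariance rests on $\boxx$-invariance and dividing descent, both of which fail for $\logTHH$ (already $\THH(R[\N])\simeq\THH(R)\otimes_{\Striv}\Sphere[\Bcy\N]$ is not $\THH(R)$, by Proposition \ref{Hoch.9}). So the step you yourself flag as "the heart of the matter" is a genuine gap, not bookkeeping.

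What the paper does at this point is a concrete cube computation that you would need to reproduce. By Zariski descent (Proposition \ref{logHprop.3}) the claim is that the $(n+1)$-cube $\Sphere[C_n]$ of spectra built from the $\Brep(P_I,F_I)$ has vanishing total fiber; by stability (Proposition \ref{Thomdf.15}) this is equivalent to $\tcofib(\Sphere[C_n])\simeq 0$, and since $\Sphere[-]$ preserves colimits it suffices to prove $\tcofib(C_n)\simeq 0$ at the level of spaces. One then reindexes via $Q_J:=P_{\{0,\ldots,n\}-J}$, $G_J:=F_{\{0,\ldots,n\}-J}$ and splits $C_n$ into the two $n$-subcubes $D_n$ (subsets with $0\notin J$) and $D_n'$ (subsets with $0\in J$). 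Lemma \ref{Hoch.2} identifies the entries of $D_n$ as products of $\Bcy\N$'s and $\Bcy\Z$'s, so Proposition \ref{Thomdf.6} gives $\tcofib(D_n)\simeq V^{\times n}$ with $V=\cofib(\Bcy\N\to\Bcy\Z)$, while Lemma \ref{Hoch.14} gives $\Brep(Q_J,G_J)\simeq \Brep(Q_{J-\{0\}},G_{J-\{0\}})\times_{\Z^n}Q_0$ for $0\in J$, whence $\tcofib(D_n')\simeq V^{\times n}\times_{\Z^n}Q_0$. The decisive point — entirely absent from your sketch — is that the augmentation $V\to\Z$ factors through $-\N\subset\Z$ (visible from the decomposition of $V$ into components indexed by $j\leq 0$), and $(-\N)^n\subset Q_0$, so the pullback changes nothing and $\tcofib(D_n')\simeq\tcofib(D_n)$; the cofiber sequence \eqref{Thomdf.5.3} then forces $\tcofib(C_n)\simeq 0$. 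Without this combinatorial vanishing (in particular the role of the distinguished chart $U_0$, i.e., of $Q_0$, and the direction of the augmentation), the claimed collapse of your descent spectral sequence is unjustified.
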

\begin{proof}
Owing to Propositions \ref{Hoch.12} and \ref{Hoch.11}, we are reduced to consider $X=\Spec{\Z}$.
The fs log scheme $(\P^n,\P^{n-1})$ admits the Zariski cover $\{U_0,\ldots,U_n\}$,
where
\[
U_0:=\Spec{\Z[x_1/x_0,\ldots,x_n/x_0]}
\]
and
\[
U_i:=\Spec{\Z[x_0/x_i,\ldots,x_n/x_i],\N(x_0/x_i)}
\]
for $1\leq i\leq n$.
For every nonempty subset $I=\{i_1,\ldots,i_r\}\subset \{0,\ldots,n\}$, we set
\[
U_I:=U_{i_1}\cap \cdots \cap U_{i_r}.
\]
Let $\{e_0,\ldots,e_n\}$ be the standard basis in $\Z^{n+1}$, 
and let $P_I$ be the submonoid of $\Z^{n+1}$ generated by $e_i-e_j$ for all $i\in \{0,\ldots,n\}$ and $j\in I$.
We set $P_\emptyset := 0$ and let 
$F_I$ be the face of $P_I$ generated by $e_0-e_j$ for all $j\in I$ if $0\in I$.
By convention, 
$F_I=0$ if $0\notin I$.
There is an isomorphism of fs log schemes
\[
U_I
\cong
\Spec{F_I\to \Z[P_I]}
\]
if $I\neq \emptyset$, 
and we also have
\[
\Spec{\Z}
\cong
\Spec{F_\emptyset \to \Z[P_\emptyset]}.
\]
Let $C_n$ be the $(n+1)$-cube in $\infSpc$ naturally defined by $\Brep(P_I,F_I)$ for all $I\subset \{0,\ldots,n\}$,
which naturally induces an $(n+1)$-cube $\Sphere[C_n]$ in $\infSpt$.
We need to show that there is an equivalence of spectra
\[
\tfib(\Sphere[C_n])
\simeq
0,
\]
which implies the claim by Proposition \ref{logHprop.3}.
Owing to Proposition \ref{Thomdf.15}, it suffices to show there is an equivalence of spectra
\[
\tcofib(\Sphere[C_n])
\simeq
0.
\]
Since $\Sphere[-]$ is a left adjoint,
it preserves small colimits.
Hence it suffices to show there is an equivalence of spaces
\begin{equation}
\label{Hoch.13.6}
\tcofib(C_n)
\simeq
0.
\end{equation}

\

It is convenient to consider $Q_J:=P_{\{0,\ldots,n\}-J}$ and $G_J:=F_{\{0,\ldots,n\}-J}$ for subsets 
$J\subset \{0,\ldots,n\}$.
We set $Q_j:=Q_{\{j\}}$ for $j=0,\ldots,n$.
Let $\{f_1,\ldots,f_n\}$ be the standard basis in $\Z^n$.
By identifying $e_j-e_0$ with $f_j$ for $j=1,\ldots,n$, we have isomorphisms of monoids
\[
Q_j
\cong
\{ (x_1,\ldots,x_n)\in \Z^n : x_j\geq 0 \}
\]
for $j=1,\ldots,n$ and
\[
Q_0
\cong
\{ (x_1,\ldots,x_n)\in \Z^n : x_1+\cdots+x_n \leq 0\}.
\]
Furthermore, we have an isomorphism of monoids
\[
Q_J
\cong
\bigcap_{j\in J} Q_j.
\]

Suppose $0\notin J$.
We set $M_{j,J}:=\N f_j$ if $j\in J$ and $M_{j,J}:=\Z f_j$ if $j\notin J$.
Then there is a canonical isomorphism of monoids
\[
Q_J\cong M_{1,J}\times \cdots \times M_{n,J}.
\]
Furthermore, we have $G_J=0$.
Lemma \ref{Hoch.2} gives a canonical equivalence of spaces
\begin{equation}
\label{Hoch.13.1}
\Brep (Q_J,G_J)
\simeq
\Bcy M_{1,J}
\times
\cdots
\times
\Bcy M_{n,J}.
\end{equation}

On the other hand, suppose $0\in J$.
Let $i$ be any element in $\{0,\ldots,n\}-J$.
We set $M_{j,J}':=\N(f_j-f_i)$ if $j\in J$ and $M_{j,J}':=\Z(f_j-f_i)$ if $j\notin J$.
Then there are isomorphisms of monoids
\[
Q_J
\cong
M_{0,J}'\times M_{1,J}'\times \cdots \times M_{i-1,J}'\times M_{i+1,J}'\times \cdots \times M_{n,J}'
\]
and
\[
Q_{J-\{0\}}
\cong
\Z\times M_{1,J}'\times \cdots \times M_{i-1,J}'\times M_{i+1,J}'\times \cdots \times M_{n,J}'.
\]
Furthermore, $G_J$ is the face of $Q_J$ generated by $f_0-f_i\in M_{0,J}'$.
From Lemma \ref{Hoch.14}, we have a canonical equivalence of spaces
\begin{equation}
\label{Hoch.13.3}
\Brep (Q_J,G_J)
\simeq
\Brep (Q_{J-\{0\}},G_{J-\{0\}})\times_{Q_{J-\{0\}}}Q_J.
\end{equation}
Together with $Q_J\cong Q_{J-\{0\}}\times_{\Z^n} Q_0$, we have a canonical equivalence of spaces
\begin{equation}
\label{Hoch.13.5}
\Brep (Q_J,G_J)
\simeq
\Brep (Q_{J-\{0\}},G_{J-\{0\}}) \times_{\Z^n}Q_0.
\end{equation}

\

Let $V$ be the cofiber of $\Bcy \N\to \Bcy \Z$.
Owing to Proposition \ref{Hoch.9}, there is a decomposition
\[
V
\simeq
\coprod_{j\leq 0} V_j
\]
with $V_0:=\cofib(*\to S_1/C_0)$ and $V_j:=S_1/C_j$ for $j<0$.
In particular, the canonical map $V\to \Z$ factors through $-\N\subset \Z$.
Let $D_n$ (resp.\ $D_n'$) be the $n$-cube naturally associated with $\Brep (Q_J,G_J)$ for all $J$ with $0\notin J$ (resp.\ $0\in J$).
From \eqref{Hoch.13.1}, we see that $D_n$ is the $n$-cube formulated in Proposition \ref{Thomdf.6} associated with the maps
\[
\Bcy (\N f_i)
\to
\Bcy (\Z f_i)
\]
for $i=1,\ldots,n$.
Proposition \ref{Thomdf.6} gives an equivalence of spaces
\[
\tcofib D_n
\simeq
V\times \cdots \times V
\;
(n\text{ copies of }V).
\]
Together with \eqref{Hoch.13.5} we have an equivalence of spaces
\[
\tcofib D_n'
\simeq
(V\times \cdots \times V)\times_{\Z^n}Q_0
\;
(n\text{ copies of }V).
\]
The map $(V\times \cdots \times V)\to \Z^n$ factors through $(-\N)^n$, and we have $(-\N)^n\subset Q_0$.
Hence we have an equivalence of spaces
\begin{equation}
\label{Hoch.13.4}
\tcofib D_n'
\simeq
\tcofib D_n.
\end{equation}
From \eqref{Thomdf.5.3}, we have a cofiber sequence
\[
\tcofib D_n\to \tcofib D_n'\to \tcofib C_n.
\]
Together with \eqref{Hoch.13.4} we deduce $\tcofib C_n\simeq 0$.

\

Finally, we use \eqref{Hoch.37.1} to deduce the claims for $\logTC$ and $\logTC^-$.
\end{proof}

\begin{rmk}
To illustrate the previous proof in the case  $n=1$,  
we note that
\begin{gather*}
\Brep(P_\emptyset,F_\emptyset) \simeq *,
\;
\Brep(P_{\{0\}},F_{\{0\}})\simeq \Brep(-\N) \simeq \coprod_{j\geq 0}S^1/C_j,
\\
\Brep(P_{\{1\}},F_{\{1\}})\simeq \Bcy(\N)\simeq *\amalg \coprod_{j<0}S^1/C_j,
\\
\Brep(P_{\{0,1\}},F_{\{0,1\}}) \simeq \Bcy(\Z) \simeq \coprod_{j\in \Z} S^1/C_j.
\end{gather*}
The equivalence \eqref{Hoch.13.6} follows from the cocartesian square
\[
\begin{tikzcd}
*\ar[d]\ar[r]&
\coprod_{j\geq 0}S^1/C_j\ar[d]
\\
*\amalg \coprod_{j<0}S^1/C_j\ar[r]&
\coprod_{j\in \Z} S^1/C_j.
\end{tikzcd}
\]
\end{rmk}

\begin{const}
\label{Hoch.24}
Suppose $S\in \Sch$.
As a consequence of Remark \ref{Hoch.23} and Theorem \ref{Hoch.13}, we obtain
\begin{equation}
\label{Hoch.31.1}
\logTHH,\logTC^-,\logTP,\logTC
\in
\CAlg(\inflogSH^\eff(S))
\end{equation}
by using the $(\P^\bullet,\P^{\bullet-1})$-localized model in \S \ref{models}.
Moreover, there is an equivalence of spectra
\begin{equation}
\label{Hoch.31.2}
\logTHH(X)
\simeq
\map_{\inflogSHS(S)}(\Sigma_+^\infty X,\logTHH)
\end{equation}
for every $X\in \SmlSm/S$.
The same holds for $\logTC^-$, $\logTP$, and $\logTC$.
\end{const}

\begin{rmk}
We do not know whether \eqref{Hoch.31.2} holds for every $X\in \lSm/S$.
\end{rmk}

\subsection{A \texorpdfstring{$\P^1$}{P1}-bundle formula for \texorpdfstring{$\logTHH$}{logTHH}}\label{ssec:P1bundleTHH}

Blumberg and Mandell established projective bundle formulas for $\THH$ and $\TC$ of schemes in \cite[Theorem 1.5]{BM12}.
They also provided a simpler proof of the loops $\P^1$-bundle formulas for $\THH$ and $\TC$ in \cite[pp.\ 1102-1103]{BM12}.
Dundas proved the loops $\P^1$-bundle formula for rings in \cite[Proposition 3.3]{zbMATH01308186}.
In this subsection, we will prove the loops $\P^1$-bundle formula for $\logTHH$.
This will give the loops $\P^1$-bundle formulas for $\logTC^-$, $\logTP$, and $\logTC$.

\begin{thm}
\label{Hoch.34}
For every $X\in \lSch$, there is an equivalence of spectra
\begin{equation}
\label{Hoch.34.3}
\logTHH(X)
\simeq
\Omega_{\P^1}\logTHH(X).
\end{equation}
The same holds for $\logTC^-$, $\logTP$, and $\logTC$.
\end{thm}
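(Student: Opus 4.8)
The plan is to reduce \eqref{Hoch.34.3} to a statement about pre-log rings via the sheaf property, and then invoke the $(\P^n,\P^{n-1})$-invariance already established in Theorem \ref{Hoch.13}, combined with the strict Nisnevich cd-structure on $\P^1$. First I would use Propositions \ref{THHsheaf.5} and \ref{THHsheaf.9} to reduce to the affine case: since $\logTHH$, $\logTC^-$, $\logTP$, and $\logTC$ are all strict fpqc (in particular Zariski) sheaves, it suffices to treat $X=\Spec{A,M}$ for a pre-log ring $(A,M)$, and by Propositions \ref{Hoch.12} and \ref{Hoch.11} one is further reduced to the base case $X=\Spec{\Z}$ by tensoring (since $\logTHH((R,M)\otimes(\Z[P],N))\simeq \logTHH(R,M)\otimes_{\Striv}\Sphere[\Brep(P,N)]$). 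So the task becomes: show $\logTHH(\Spec\Z)\simeq \Omega_{\P^1}\logTHH(\Spec\Z)$, where $\Omega_{\P^1}$ is computed using the splitting $\P^1/1\simeq S^1\wedge S_t^1$ (Proposition \ref{logH.2}) together with the log Tate circle $S_t^1=\Gmm/1$.

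Next I would make the loops term concrete. Since $\P^1 = (\P^1,\emptyset)$ has a Zariski cover $\{U_0,U_1\}$ by two copies of $\A^1$ glued along $\G_m$, and since $\logTHH$ restricted to schemes is just $\THH$ by \eqref{Hoch.21.4}, the value $\logTHH(X\times \P^1)$ fits into a Mayer--Vietoris square built from $\THH$ of $\A^1$- and $\G_m$-bundles over $X$. Using Proposition \ref{Hoch.10}, $\THH$ of $\A^1$ and $\G_m$ factors are controlled by $\Sphere[\Bcy(\N)]$ and $\Sphere[\Bcy(\Z)]$, whose $\mathbb{T}$-equivariant homotopy types are computed in Proposition \ref{Hoch.9} ($\Bcy(\N;j)\simeq *$ for $j=0$, $\mathbb{T}/C_j$ for $j>0$; $\Bcy(\Z;j)\simeq \mathbb{T}/C_j$). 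The key computation is that the total cofiber of the relevant square vanishes after the appropriate suspension, which is essentially the $n=1$ instance already recorded in the remark following Theorem \ref{Hoch.13}: the cocartesian square
\[
\begin{tikzcd}
*\ar[d]\ar[r]&
\coprod_{j\geq 0}S^1/C_j\ar[d]
\\
*\amalg \coprod_{j<0}S^1/C_j\ar[r]&
\coprod_{j\in \Z} S^1/C_j
\end{tikzcd}
\]
gives precisely the identification of $\logTHH(X\times(\P^1,\pt))$ with $\Sigma^{2,1}\logTHH(X) = S^1\wedge S_t^1\wedge \logTHH(X)$, and by adjunction this yields $\logTHH(X)\simeq \Omega_{\P^1}\logTHH(X)$. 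Alternatively, and perhaps more cleanly, I would directly invoke Theorem \ref{Hoch.13} with $n=1$: the cofiber sequence $\logTHH(X\times(\P^1,\pt))\to \logTHH(X\times \P^1)\to \logTHH(X\times \pt)$ coming from the strict Nisnevich square relating $(\P^1,\pt)$, $(\P^1,\emptyset)$, and a point, together with $(\P^1,\P^0)$-invariance $\logTHH(X)\simeq\logTHH(X\times(\P^1,\P^0))$, pins down the $\P^1$-loops.

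Finally, for $\logTC^-$, $\logTP$, and $\logTC$: since $(-)^{h\mathbb{T}}$, $(-)^{t\mathbb{T}}$, and $\TC$ all preserve limits (in particular the limit defining $\Omega_{\P^1}$), and since these functors are applied levelwise to the cyclotomic-spectrum-valued sheaf $\logTHH$, the equivalence \eqref{Hoch.34.3} propagates formally, exactly as in the last line of the proof of Theorem \ref{Hoch.13}. The main obstacle I anticipate is bookkeeping the $\mathbb{T}$-equivariance (i.e.\ the cyclotomic structure) through the Mayer--Vietoris/cofiber-sequence argument: one must check that the gluing maps in the $\P^1$-cover induce the correct $\mathbb{T}$-equivariant identifications on the $\Bcy$-pieces, so that the splitting $\P^1/1\simeq S^1\wedge S_t^1$ is realized $\mathbb{T}$-equivariantly at the level of $\logTHH$. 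This is where one genuinely needs the explicit models of $\Bcy(\N)$ and $\Bcy(\Z)$ from Proposition \ref{Hoch.9} rather than a purely formal argument, but it is a mild elaboration of the $n=1$ case already handled in Theorem \ref{Hoch.13}.
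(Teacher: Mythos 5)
Your primary argument---reducing by Zariski descent and Proposition \ref{Hoch.11} to a single cocartesian square built out of $\Sphere[\Bcy \N]$, $\Sphere[\Bcy(-\N)]$ and $\Sphere[\Bcy \Z]$, cancelling the $S^1/C_j$-terms for $j\neq 0$ via Proposition \ref{Hoch.9}, and then applying $(-)^{h\mathbb{T}}$, $(-)^{t\mathbb{T}}$ and $\TC$---is essentially the paper's proof; the paper packages your Mayer--Vietoris square for the standard affine cover of $\P^1$ as the cocartesian square of cyclotomic spectra obtained by tensoring the base square over $\Striv$ with $\logTHH(R,M)$ via Proposition \ref{Hoch.11}, and then globalizes exactly as you do. (One small imprecision: $(-)^{t\mathbb{T}}$ does not preserve arbitrary limits, but it is exact, and the square in question is a finite cartesian/cocartesian square in a stable $\infty$-category, so the conclusion stands.)

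The ``alternative, perhaps cleaner'' route you sketch does not work as stated. First, the square relating $\boxx=(\P^1,\P^0)$, $\A^1$ and $\P^1$ is \emph{not} a strict Nisnevich distinguished square, since $\boxx\to\P^1$ is not strict; it only becomes cocartesian in the localized category via the blow-up argument of Proposition \ref{logH.4}. More seriously, even granting the resulting fiber sequence relating $\Omega_{\P^1}\logTHH(X)$, $\logTHH(X\times\P^1)$ and $\logTHH(X\times\boxx)\simeq\logTHH(X)$, this only expresses $\Omega_{\P^1}\logTHH(X)$ as the fiber of $\logTHH(X\times\P^1)\to\logTHH(X)$; it does not compute $\logTHH(X\times\P^1)$, which is where the actual content of the theorem lies. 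Indeed, $(\P^n,\P^{n-1})$-invariance alone cannot imply the loops $\P^1$-formula: real topological Hochschild homology is $(\P^n,\P^{n-1})$-invariant but fails the projective bundle formula \cite{2305.04150}. So the explicit computation with $\Bcy\N$ and $\Bcy\Z$ (your primary route, and the paper's) is unavoidable, as you yourself anticipated in your final paragraph.
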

\begin{proof}
Let us construct a cocartesian square
\begin{equation}
\begin{tikzcd}
\Striv\oplus \Striv\ar[d]\ar[r]&
\Sphere[\Bcy \N]\ar[d]
\\
\Sphere[\Bcy (-\N)]\ar[r]&
\Sphere[\Bcy \Z].
\end{tikzcd}
\end{equation}
Using the descriptions of $\Bcy \N$ and $\Bcy \Z$ in Proposition \ref{Hoch.9}, 
we may cancel the terms $S^1/C_j$ for all $j\neq 0$.
Hence our claim follows from the cocartesian square
\begin{equation}
\label{Hoch.34.1}
\begin{tikzcd}
\Striv \oplus \Striv\ar[d]\ar[r]&
\Striv\ar[d,"{(\id,0)}"]\\
\Striv\ar[r,"{(\id,0)}"]&
\Striv \oplus \Sigma_{S^1} \Striv.
\end{tikzcd}
\end{equation}

Proposition \ref{Hoch.11} yields a cocartesian square of cyclotomic spectra
\begin{equation}
\label{Hoch.34.4}
\begin{tikzcd}
\logTHH(R,M)\oplus \logTHH(R,M)\ar[d]\ar[r]&
\logTHH((R,M)\otimes \Z[\N])\ar[d]
\\
\logTHH((R,M)\otimes \Z[-\N])\ar[r]&
\logTHH((R,M)\otimes \Z[\Z]).
\end{tikzcd}
\end{equation}
for every pre-log ring $(R,M)$.
Thus there is a canonical equivalence of spectra
\begin{equation}
\label{Hoch.34.5}
\logTHH(R,M)
\simeq
\Omega_{\P^1} \logTHH(R,M).
\end{equation}
Applying \eqref{Hoch.37.1} to \eqref{Hoch.34.4} yields similar equivalences for $\logTC^-$, $\logTP$, and $\logTC$.

Since \eqref{Hoch.34.5} is canonical, we can globalize it to obtain \eqref{Hoch.34.3}.
\end{proof}

\begin{rmk}
\label{Hoch.35}
From \eqref{Hoch.34.4}, we deduce the map
\[
\partial \colon \Omega_{\G_m}\logTHH \to \Sigma_{S^1}\logTHH.
\]
It is induced by the nontrivial map $d\colon \Striv \oplus \Sigma_{S^1}\Striv \to \Sigma_{S^1}\Striv$ 
obtained from the upper left and lower right corners in \eqref{Hoch.34.1}.
Since $d$ admits a section, $\partial$ admits a section
\[
\tau\colon \Sigma_{S^1}\logTHH \to \Omega_{\G_m}\logTHH.
\]
When restricting to schemes, the former coincides with the map
\begin{equation}
\label{Hoch.35.1}
\tau\colon \Sigma_{S^1}\THH \to \Omega_{\G_m}\THH
\end{equation}
constructed by Blumberg and Mandell \cite[pp.\ 1102-1103]{BM12}.
\end{rmk}

As a consequence of Theorem \ref{Hoch.34}, we have an equivalence in $\infShv_{sNis}(\lSch,\infSpt)$
\begin{equation}
\label{Hoch.28.1}
\logTHH
\xrightarrow{\simeq}
\Omega_T \logTHH.
\end{equation}
By adjunction, we obtain the map
\begin{equation}
\label{Hoch.28.2}
T\wedge \logTHH \to \logTHH.
\end{equation}
There are similar maps for $\logTC^-$, $\logTP$, and $\logTC$ too.

\begin{df}
\label{Hoch.28}
We denote the logarithmic motivic Bott spectrum associated with $\logTHH$ by\index[notation]{logTHHspect @ $\spectlogTHH$}  
\[
\spectlogTHH
:=
\Bott(\logTHH)
=
(\logTHH,\logTHH,\ldots)
\in
\infSpt_T(\infShv_{sNis}(\lSch,\infSpt)).
\]
Here we use Construction \ref{K-theory.9} and the bonding maps $T\wedge \logTHH\to \logTHH$ are given by \eqref{Hoch.28.2}.

Similarly, 
we define\index[notation]{logTCmspect @ $\spectlogTC^{-}$}\index[notation]{logTPspect @ $\spectlogTP$}\index[notation]{logTCspect @ $\spectlogTC$}
\[
\spectlogTC^-
:=
\Bott(\logTC^-),
\text{ }
\spectlogTP
:=
\Bott(\logTP),
\text{ and }
\spectlogTC
:=
\Bott(\logTC).
\]
\end{df}

If $S\in \Sch$, 
then due to \eqref{Hoch.31.1}, 
we obtain
\begin{equation}
\label{Hoch.22.3}
\spectlogTHH,\spectlogTC^-,\spectlogTP,\spectlogTC
\in
\inflogSH(S).
\end{equation}
According to Construction \ref{K-theory.19}, 
these are homotopy commutative monoids in $\inflogSH(S)$.
Moreover, 
there is an equivalence of spectra
\begin{equation}
\label{Hoch.22.1}
\logTHH(X)
\simeq
\map_{\inflogSHS(S)}(\Sigma_{S^1}^\infty X_+,\Omega_{\Gmm}^\infty \spectlogTHH)
\end{equation}
for all $X\in \SmlSm/S$ due to \eqref{K-theory.9.2}.
By adjunction, 
we have an equivalence of spectra
\begin{equation}
\label{Hoch.22.2}
\logTHH(X)
\simeq
\map_{\inflogSH(S)}(\Sigma_T^\infty X_+,\spectlogTHH).
\end{equation}
By construction, we have an equivalence in $\inflogSH(S)$
\begin{equation}
\label{Hoch.22.4}
\spectlogTHH
\simeq
\Sigma^{2,1}\spectlogTHH.
\end{equation}
Similar results hold for $\spectlogTC^-$, $\spectlogTP$, and $\spectlogTC$ too.

\begin{rmk}
We do not know whether \eqref{Hoch.22.1} or \eqref{Hoch.22.2} holds for every $X\in \lSm/S$.
\end{rmk}

\subsection{The logarithmic cyclotomic trace}
\label{subsection:logtrace}

In the article \cite{zbMATH00203552}, 
B\"okstedt, Hsiang, and Madsen constructed the 
cyclotomic trace\index{cyclotomic trace}\index[notation]{Tr @ $\Tr$}
\[
\Tr(R)
\colon
\Kth(R) \to \TC(R)
\]
for every ring $R$.
Since the functor $\Kth$ (resp.\ $\TC$) satisfies Nisnevich (resp.\ \'etale) descent by 
\cite[Theorem 10.8]{TT} (resp.\ \cite[Corollary 3.3.3]{zbMATH01421292}), 
the cyclotomic trace can be extended to schemes.
In other words, we have a map
\begin{equation}
\label{trace.0.1}
\Tr
\colon
\Kth\to \TC
\end{equation}
in $\infShv_{Nis}(\Sch,\infSpt)$.
The purpose of this subsection is to construct the cyclotomic trace $\logTr\colon \Kthlog \to \logTC$ for $\lSm/k$, 
where $k$ is a perfect field admitting resolution of singularities.
We will also lift $\logTr$ to a map of log $\P^1$-spectra. In our approach, the existence of the cyclotomic trace will actually be a consequence of the fact that $\logTC$ is orientable, with multiplicative group law. We thank the referee for suggesting this perspective.

We begin by observing the following fact. 
\begin{prop}
\label{trace.7}
There is a commutative square in $\infShv_{Zar}(\Sch,\infSpt)$
\begin{equation}
\label{trace.7.1}
\begin{tikzcd}
\Kth\ar[d]\ar[r,"\Tr"]&
\TC\ar[d]
\\
\Omega_{\P^1}\Kth\ar[r,"\Tr"]&
\Omega_{\P^1}\TC.
\end{tikzcd}
\end{equation}
Here the vertical maps are given by the loops $\P^1$-bundle formulas for $\Kth$ and $\TC$.
\end{prop}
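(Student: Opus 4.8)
The plan is to identify both vertical maps in \eqref{trace.7.1} with cup product by the reduced Bott class and then to conclude by multiplicativity of the cyclotomic trace. For $X\in\Sch$ the projective bundle formula of Thomason--Trobaugh \cite[Theorem 7.3]{TT} gives a natural splitting $\Kth(\P^1_X)\simeq\pi^*\Kth(X)\oplus\widetilde{\Kth}(\P^1_X/X)$, where $\pi\colon\P^1_X\to X$ is pulled back from $\P^1$ and the reduced summand is the image of $a\mapsto\beta\smile\pi^*a$ for the class $\beta:=[\mathcal O(-1)]-[\mathcal O]\in\widetilde{\Kth}_0(\P^1)$, which restricts to $0$ along every section of $\pi$ (the sign/inverse conventions are irrelevant). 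Hence the loops $\P^1$-bundle formula map $\Kth\to\Omega_{\P^1}\Kth$ is canonically $(-\smile\beta)\circ\pi^*$. First I would record the analogous description for $\TC$: the Blumberg--Mandell projective bundle formula \cite[Theorem 1.5]{BM12}, which on schemes agrees with the structure map extracted from the cocartesian square \eqref{Hoch.34.4} (Remark \ref{Hoch.35}), exhibits $\TC(\P^1_X)\simeq\pi^*\TC(X)\oplus\widetilde{\TC}(\P^1_X/X)$ with reduced summand the image of $b\mapsto\Tr(\beta)\smile\pi^*b$, so that $\TC\to\Omega_{\P^1}\TC$ is $(-\smile\Tr(\beta))\circ\pi^*$, where $\Tr(\beta)\in\widetilde{\TC}_0(\P^1)$ is the image of $\beta$.

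Granting these identifications, the square commutes essentially for free. By \cite[Theorem 1.12]{zbMATH06298946} the cyclotomic trace $\Tr\colon\Kth\to\TC$ is multiplicative, so applying $\Tr$ on $\P^1_X$ yields a homotopy
\[
\Tr(\beta\smile\pi^*a)\simeq\Tr(\beta)\smile\pi^*\Tr(a)
\]
natural in $X$ and $a$; this homotopy is exactly the datum filling \eqref{trace.7.1}. All four corners and all arrows are already defined in $\infShv_{Zar}(\Sch,\infSpt)$ --- $\Kth$ and $\TC$ are Zariski sheaves of spectra (\cite[Theorem 10.8]{TT}, \cite[Corollary 3.4]{BMS19}), $\Omega_{\P^1}$ preserves Zariski descent, and $\Tr$ is a Zariski-local natural transformation --- so the square lives there, as asserted.

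The main obstacle is the identification, in the first paragraph, of the loops $\P^1$-bundle formula map for $\TC$ (the one used to build $\spectlogTC$, equivalently Blumberg--Mandell's map from Remark \ref{Hoch.35}) with cup product by $\Tr(\beta)$ --- equivalently, that the Blumberg--Mandell splitting of $\TC(\P^1_X)$ is induced, through the trace, from the Beilinson semi-orthogonal decomposition
\[
\mathrm{Perf}(\P^1_X)=\langle\,\pi^*\mathrm{Perf}(X),\ \pi^*\mathrm{Perf}(X)\otimes\mathcal O(-1)\,\rangle.
\]
To handle this I would use that $\THH$ is a localizing (hence additive) invariant of stable $\infty$-categories and that $\TC=\TC(\THH(-))$ is additive as well, being obtained from $\THH$ by the limit-preserving operations $(-)^{h\mathbb{T}}$, $(-)^{t\mathbb{T}}$ and $\TC$; any additive invariant carries the displayed semi-orthogonal decomposition to a splitting into two copies of its value on $X$, and an elementary check on the section at $1$ identifies this splitting for $\Kth$ with $(-\smile\beta)\circ\pi^*$ and shows that the Blumberg--Mandell construction yields the additive-invariant splitting for $\THH$, hence for $\TC$. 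Since $\Tr$ is a natural transformation of additive invariants, it intertwines the two splittings, which is the compatibility required above. A variant sidestepping semi-orthogonal decompositions first proves \eqref{trace.7.1} over \emph{regular} schemes --- where $\Kth(\A^1_X)\simeq\Kth(X)$ lets one map the Zariski descent cocartesian square for $\Kth$ on $\P^1=\A^1\cup_{\G_m}\A^1$ onto the square \eqref{Hoch.34.1} for $\THH$, then applies $(-)^{h\mathbb{T}}$, $(-)^{t\mathbb{T}}$ and $\TC$ --- and then extends to all of $\Sch$ by continuity of Zariski sheaves; but I expect the additive-invariant route to be the cleanest.
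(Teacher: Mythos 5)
Your strategy is genuinely different from the paper's, and it has a gap at exactly the step you flag as the main obstacle. The paper never identifies the vertical maps with cup products: it quotes Blumberg--Mandell's verification that the square \eqref{trace.7.2} built from the delooping maps $\tau\colon\Sigma_{S^1}\Kth\to\Omega_{\G_m}\Kth$ and $\tau\colon\Sigma_{S^1}\TC\to\Omega_{\G_m}\TC$ commutes, and then observes that for any Zariski sheaf of spectra equipped with such a $\tau$ the loops $\P^1$-bundle map is produced by one and the same formal recipe (the Mayer--Vietoris square for $\P^1=\A^1\cup_{\G_m}\A^1$), a recipe which recovers the classical map for $\Kth$ and, by Remark \ref{Hoch.35}, the map of Theorem \ref{Hoch.34} for $\TC$. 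Thus the entire nontrivial compatibility is outsourced to \cite{BM12}.

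In your route that same compatibility reappears as the claim that the cyclic-bar-construction map $\TC\to\Omega_{\P^1}\TC$ of Remark \ref{Hoch.35} coincides with $(-\smile\Tr(\beta))\circ\pi^*$, i.e.\ with the splitting coming from the Beilinson semi-orthogonal decomposition, and your justification --- ``an elementary check on the section at $1$'' --- does not establish it. Both candidate maps land in the reduced summand and therefore restrict to zero along the $1$-section (indeed along every section), so evaluating on a section cannot distinguish them; two equivalences of $\TC(X)$ onto $\widetilde{\TC}(\P^1_X)$ can still differ, e.g.\ by a unit of $\pi_0$. To pin the identification down you would need at least that the Blumberg--Mandell map is a module map over $\THH(X)$ (resp.\ $\TC(X)$) together with a computation of where it sends the unit, i.e.\ a comparison, including the sign, with the trace of $\beta$ in $\pi_0\widetilde{\THH}(\P^1)$ --- which is essentially the verification Blumberg--Mandell perform and the paper cites. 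Two further cautions: your framework silently upgrades the multiplicativity of \cite{zbMATH06298946} to the statement that $\Tr$ is a natural transformation of additive invariants of stable $\infty$-categories with coherently natural multiplicativity homotopies (needed to fill the square in $\infShv_{Zar}(\Sch,\infSpt)$ rather than objectwise up to homotopy); this is available in the literature but is an extra input the paper does not use. And the fallback variant does not repair the gap: a general qcqs scheme is a cofiltered limit of finite-type $\Z$-schemes, not of regular ones, and $\TC$ does not commute with filtered colimits anyway, so ``extend from regular schemes to all of $\Sch$ by continuity'' fails.
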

\begin{proof}
Blumberg and Mandell showed in \cite[p.\ 1102]{BM12} that the square
\begin{equation}
\label{trace.7.2}
\begin{tikzcd}
\Sigma_{S^1}\Kth\ar[r,"\Tr"]\ar[d,"\tau"']&
\Sigma_{S^1}\TC\ar[d,"\tau"]
\\
\Omega_{\P^1}\Kth\ar[r,"\Tr"]&
\Omega_{\P^1}\TC
\end{tikzcd}
\end{equation}
commutes, where the left vertical map is the map appearing in the construction of Bass $K$-theory, 
and the right vertical map is \eqref{Hoch.35.1}.

Suppose $\cF\in \infShv_{Zar}(\Sch,\infSpt)$ and there is a map $\tau\colon \Sigma_{S^1}\cF\to \Omega_{\G_m}\cF$.
The cartesian square
\[
\begin{tikzcd}
\Omega_{\P^1}\cF\ar[d]\ar[r]&
\Omega_{\A^1}\cF\ar[d]
\\
\Omega_{\A^1}\cF\ar[r]&\Omega_{\G_m}\cF
\end{tikzcd}
\]
yields a map $\Omega_{\G_m}\cF\to \Sigma_{S^1}\Omega_{\P^1}\cF$.
Compose with $\tau$ to get a map $\cF\to \Omega_{\P^1}\cF$.
It is a classical fact that this coincides with the loops $\P^1$-bundle formula when $\cF=\Kth$.
The same holds when $\cF=\TC$ by Remark \ref{Hoch.35}.
Hence we can obtain \eqref{trace.7.1} from \eqref{trace.7.2}.
\end{proof}

Suppose $S\in \Sch$ is a scheme.
Since $\spectlogTHH$, $\spectlogTC^{-}$, $\spectlogTP$,
and $\spectlogTC$ are homotopy commutative monoids in $\inflogSH(S)$ by Construction \ref{K-theory.19}, 
we can define the graded rings
\[
\spectlogTHH^{**}(X),
\text{ }
(\spectlogTC^{-})^{**}(X),
\text{ },
\spectlogTP^{**}(X),
\text{ and }
\spectlogTC^{**}(X)
\]
for all $X\in \lSm/S$ according to Definition \ref{Ori.12}.

\begin{thm}
\label{trace.8}
If $S\in \Sch$ is a scheme, 
then
\[
\spectlogTHH,\spectlogTC^{-},\spectlogTP,\spectlogTC\in \inflogSH(S)
\]
are multiplicatively oriented.
\end{thm}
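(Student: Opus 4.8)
The plan is to transport the multiplicative orientation of $\logKGL$ along the logarithmic cyclotomic trace, and then to remove the hypotheses on the base by a base change argument, falling back on a direct computation over $\Spec{\Z}$. Suppose first that $S=\Spec{k}$ for a perfect field $k$ admitting resolution of singularities. By Example \ref{Ori.62} the spectrum $\logKGL\in\inflogSH(k)$ is multiplicatively oriented. Theorem \ref{trace.4} provides a homomorphism of homotopy commutative monoids $\spectlogTr\colon\logKGL\to\spectlogTC$; composing it with the homomorphisms of homotopy commutative monoids $\spectlogTC\to\spectlogTC^-\to\spectlogTHH$ and $\spectlogTC^-\to\spectlogTP$ — induced by the canonical maps of lax symmetric monoidal functors $\TC\to\TC^-\to\THH$ and $\TC^-\to\TP$, which are compatible with the loops $\P^1$-bundle equivalences of Theorem \ref{Hoch.34} and hence pass to the logarithmic Bott spectra of Construction \ref{K-theory.19} — one obtains ring maps from $\logKGL$ to each of $\spectlogTHH$, $\spectlogTC^-$, $\spectlogTP$, $\spectlogTC$. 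Proposition \ref{trace.6} then equips each of the four targets with a multiplicative orientation.

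For a general base $S\in\Sch$, each of the four spectra is pulled back from $\Spec{\Z}$: the constructions of \S\ref{section:lthh} give $f^*\spectlogTHH_{S'}\simeq\spectlogTHH_S$ for every morphism $f\colon S\to S'$ in $\Sch$, and likewise for $\spectlogTC^-$, $\spectlogTP$, $\spectlogTC$. A symmetric monoidal pullback functor sends $\P^\infty$ to $\P^\infty$ and $\Sigma^{2,1}\unit$ to $\Sigma^{2,1}\unit$, so it carries a Chern orientation to a Chern orientation; and since the Segre map of Definition \ref{Ori.61} is defined over $\Spec{\Z}$, the formal group law of $f^*\ringE$ is the image of that of $\ringE$ under the induced graded ring homomorphism $\ringE^{**}(\pt_{S'})\to(f^*\ringE)^{**}(\pt_S)$, which sends units to units. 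It therefore suffices to treat $S=\Spec{\Z}$, where the trace of Theorem \ref{trace.4} is unavailable. There I would proceed directly: by Theorem \ref{Hoch.34} and the $(2,1)$-periodicity \eqref{Hoch.22.4}, each of $\spectlogTHH$, $\spectlogTC^-$, $\spectlogTP$, $\spectlogTC$ satisfies the loops $\P^1$-bundle formula and is $(2,1)$-periodic, and — exactly as Bott periodicity orients $\KGL$ — this data produces a Thom orientation $t_\infty$ of the tautological line bundle on $\P^\infty$, hence a Chern orientation $c_\infty$ by Proposition \ref{Ori.53}.

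With this orientation, Corollary \ref{Ori.10} and Remark \ref{Ori.34} give $\spectlogTHH^{**}(\P^\infty)\cong\spectlogTHH^{**}[[t]]$ with $t=c_1(\cT_1)=c_\infty$ and $\spectlogTHH^{**}(\P^\infty\times\P^\infty)\cong\spectlogTHH^{**}[[x,y]]$, so that $\Formal_{\spectlogTHH}(x,y)=m^*(t)$ for the Segre map $m$. Identifying this power series is the principal obstacle. By $(2,1)$-periodicity each coefficient $a_{ij}$ of $\Formal_{\spectlogTHH}$ lies in $\spectlogTHH^{2-2i-2j,\,1-i-j}\cong\spectlogTHH^{0,0}$, and one computes the restriction of $m^*(t)$ to $\P^1\times\P^1$ from the cocartesian squares \eqref{Hoch.34.1} underlying the loops $\P^1$-bundle formula together with the explicit descriptions of the cyclic and replete bar constructions in Proposition \ref{Hoch.9}; this should yield $a_{11}=-u$ with $u$ the periodicity unit and $a_{ij}=0$ for $(i,j)\neq(1,1)$, i.e.\ $\Formal_{\spectlogTHH}(x,y)=x+y-uxy$. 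The same construction and computation apply verbatim to $\spectlogTC^-$, $\spectlogTP$, $\spectlogTC$ — alternatively one orients $\spectlogTC$ first and propagates downward through $\spectlogTC\to\spectlogTC^-\to\spectlogTHH$ and $\spectlogTC^-\to\spectlogTP$ via Proposition \ref{trace.6} — completing the proof over $\Spec{\Z}$, and hence by base change over every $S\in\Sch$.
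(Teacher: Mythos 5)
There is a genuine gap, and it sits exactly where the theorem's content lies. Your reduction sends everything to the base case $S=\Spec{\Z}$, but there you only sketch the argument: ``exactly as Bott periodicity orients $\KGL$'' is not a construction. Periodicity plus the loops $\P^1$-bundle formula does not by itself produce a Thom or Chern orientation — for $\KGL$ the orientation comes from an actual class in $K_0(\P^\infty)\cong K_0(S)[[x]]$ supplied by the projective bundle formula for $K$-theory, not from periodicity alone. Here one must exhibit a class in $\spectlogTC^{2,1}(\P^\infty/\pt)$ (i.e.\ a compatible system over the $\P^n$'s) whose restriction to $\P^1/\pt$ is $\Sigma^{2,1}\unit$, and then identify $m^*$ of it; your ``this should yield $a_{11}=-u$'' from the squares \eqref{Hoch.34.1} and Proposition \ref{Hoch.9} is an assertion, not a computation — it would require computing $\logTHH$ of $\P^\infty\times\P^\infty$ together with the effect of the Segre map, which is nowhere done in the paper and is not routine. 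In addition, your reduction step itself relies on $f^*\spectlogTHH_{\Spec{\Z}}\simeq\spectlogTHH_S$ (and likewise for the other three spectra), a base-change statement that is not established anywhere in the paper; $\spectlogTHH$ over $S$ is defined by restricting a global sheaf to $\lSm/S$, which is not formally the same as pulling back its restriction to $\lSm/\Spec{\Z}$.

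The paper's proof avoids both problems and works over an arbitrary $S\in\Sch$ in one stroke: it never invokes the motivic-level trace $\spectlogTr$ of Theorem \ref{trace.4} (so no resolution of singularities is needed), but only the classical scheme-level cyclotomic trace $\Tr\colon\Kth\to\TC$ evaluated on the ordinary schemes $\P^n$ and $\P^n\times\P^m$ over $S$, where $\logTC=\TC$. The $K$-theoretic orientation $x\in\Kth_0(\P^\infty)\cong\Kth_0(S)[[x]]$ is pushed along $\Tr$ into $\logTC_0(\P^\infty/\pt)\cong\spectlogTC^{0,0}(\P^\infty/\pt)$ and then shifted by the periodicity $\Sigma^{2,1}$ into degree $(2,1)$; Proposition \ref{trace.7} (compatibility of $\Tr$ with the loops $\P^1$-bundle equivalences) guarantees the restriction to $\P^1/\pt$ is the unit, giving the Chern orientation, and the Segre computation $x\mapsto x+y-xy$ in $K$-theory then transfers, via periodicity, to the multiplicative formal group law $x+y-\beta xy$ on $\spectlogTC$. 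The remaining three spectra are handled, as in your first paragraph, by Proposition \ref{trace.6}. Your field-case argument via Theorem \ref{trace.4} is correct but only covers $S=\Spec{k}$ with resolution of singularities and does not feed into the general case; to repair your proof you would have to either prove the base-change equivalence and carry out the $\Spec{\Z}$ computation honestly, or — more simply — run the paper's trace-on-$\P^n$ argument, which already works over every $S$.
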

\begin{proof}
As a consequence of the projective bundle formula for $K$-theory, we have an isomorphism of rings
\[
\Kth_0(\P^\infty) \cong \Kth_0(S)[[x]].
\]
Consider the naturally induced commutative diagram
\[
\begin{tikzcd}
\Kth_0(S)\ar[r,"\cong"]\ar[d,"\Tr"']&
\Kth_0(\P^1/\pt)\ar[d,"\Tr"]\ar[r,leftarrow]&
\Kth_0(\P^\infty/\pt)\ar[d,"\Tr"]
\\
\logTC_0(S)\ar[d,"\cong"']\ar[r,"\cong"]&
\logTC_0(\P^1/\pt)\ar[d,"\cong"]\ar[r,leftarrow]&
\logTC_0(\P^\infty/\pt)\ar[d,"\cong"]
\\
\spectlogTC^{0,0}\ar[rd,"\cong","\Sigma^{2,1}"']&
\spectlogTC^{0,0}(\P^1/\pt)\ar[d,"\cong"]\ar[r,leftarrow]&
\spectlogTC^{0,0}(\P^\infty/\pt)\ar[d,"\cong"]
\\
&
\spectlogTC^{2,1}(\P^1/\pt)\ar[r,leftarrow]&
\spectlogTC^{2,1}(\P^\infty/\pt),
\end{tikzcd}
\]
where the upper left square comes from Proposition \ref{trace.7}.
Let $c_\infty\in \spectlogTC^{2,1}(\P^\infty/\pt)$ and $c_1\in \spectlogTC^{2,1}(\P^1/\pt)$ be the images of $x\in \Kth_0(\P^\infty/\pt)$.
Since the image of the unit $\unit\in \Kth_0(X)$ in $K_0(\P^1/\pt)$ coincides with the image of $x\in \Kth_0(\P^\infty/\pt)$ in $K_0(\P^1/\pt)$, $c_1$ is the image of $\unit\in \Kth_0(X)$.
It follows that $c_\infty$ gives an orientation of $\spectlogTC$.

The homomorphism
\[
\Kth_0(\P^\infty) \to \Kth_0(\P^\infty \times \P^\infty)
\]
naturally induced by \eqref{Ori.61.1} is isomorphic to the homomorphism
\[
\Kth_0(S)[[x]]\to \Kth_0(S)[[x,y]]
\]
sending $x$ to $x+y-xy$.
Consider the periodicity isomorphism $\spectlogTC^{0,0}\cong \spectlogTC^{2,1}$, 
and let $\gamma$ be the image of the unit $1\in \spectlogTC^{0,0}$ in $\spectlogTC^{2,1}$.
Since $\spectlogTC$ is oriented, 
the naturally induced commutative square
\[
\begin{tikzcd}
\Kth_0(\P^\infty)\ar[r]\ar[d]&
\Kth_0(\P^\infty\times \P^\infty)\ar[d]
\\
\spectlogTC^{0,0}(\P^\infty)\ar[r]&
\spectlogTC^{0,0}(\P^\infty\times \P^\infty)
\end{tikzcd}
\]
can be written as
\[
\begin{tikzcd}
\Kth_0(S)[[x]]\ar[r]\ar[d]&
\Kth_0(S)[[x,y]]\ar[d]
\\
\spectlogTC^{0,0}[[x/\gamma]]\ar[r]&
\spectlogTC^{0,0}[[x/\gamma,y/\gamma]].
\end{tikzcd}
\]
The left (resp.\ right) vertical homomorphism sends $x$ to $x/\gamma$ (resp.\ $x$ to $x/\gamma$ and $y$ to $y/\gamma$).
Thus the lower horizontal homomorphism sends $x/\gamma$ to $x/\gamma+y/\gamma-xy/\gamma^2$.
Setting $\beta=\gamma^{-1}$ it follows that the formal group law on $\spectlogTC$ is given by $x+y-\beta xy$.

For $\spectlogTHH$, $\spectlogTC^{-}$ and $\spectlogTP$, use Proposition \ref{trace.6}.
\end{proof}

\begin{thm}
\label{trace.4}
Assume that $k$ is a perfect field admitting resolution of singularities.
There is a unique homomorphism\index[notation]{logTrspect @ $\spectlogTr$}
\[
\spectlogTr\colon \logKGL \to \spectlogTC
\]
of homotopy commutative monoids in $\inflogSH(k)$ preserving orientations.
\end{thm}
\begin{proof}
This is an immediate consequence of Theorems \ref{Ori.71} and \ref{trace.8}.
\end{proof}

\begin{rmk}
\label{trace.12}
Apply $\Omega_{\Gmm}^\infty$ to $\spectlogTr$ in Theorem \ref{trace.4} to obtain
\[
\logTr
\colon
\Kthlog \to \logTC
\]
in $\inflogSH^\eff(k)$.
This extends the cyclotomic trace $\Tr\colon \Kth\to \TC$,
i.e., $\logTr\colon \Kthlog(X)\to \logTC(X)$ can be identified with $\Tr\colon \Kth(X)\to \TC(X)$ for $X\in \Sm/k$,
and its proof relies on \cite{logSHF1} as follows.
We begin with the log cyclotomic trace
\[
f\colon 
\Kthlog
\to
\logTC
\]
obtained by \cite[Theorem F]{logSHF1},
which is a homomorphism of $\E_\infty$-rings in $\inflogSH^\eff(k)$ and extends the cyclotomic trace for $\Sm/k$.
This induces a map $f$ from
\[
\logKGL\simeq (\omega^*\Omega_{S^1}^\infty \Kth,\omega^*\Omega_{S^1}^\infty \Kth,\ldots)
\]
to
\[
\spectlogTC\simeq (\Omega_{S^1}^\infty \logTC,\Omega_{S^1}^\infty \logTC,\ldots).
\]
To show that this is a map of homotopy commutative monoids in $\inflogSH(k)$,
it suffices to show that the diagram of $(\P^1)^{\wedge 2}$-spectra
\[
\begin{tikzcd}
(\omega^*\Omega_{S^1}^\infty \Kth\wedge \omega^*\Omega_{S^1}^\infty \Kth,\ldots)\ar[d]\ar[r]&
(\omega^*\Omega_{S^1}^\infty \Kth,\ldots)\ar[d]
\\
(\Omega_{S^1}\logTC\wedge \Omega_{S^1}\logTC,\ldots)\ar[r]&
(\Omega_{S^1}\logTC,\ldots)
\end{tikzcd}
\]
commutes.
This is a consequence of the fact that $f\colon \Kthlog\to \logTC$ is a homomorphism of $\E_\infty$-rings.
Argue as in the proof of Theorem \ref{trace.8} to show that $f\colon \logKGL \to \spectlogTC$ preserves orientations.
The uniqueness in Theorem \ref{trace.4} finishes the proof.
\end{rmk}

\begin{prop}
\label{trace.9}
If $S\in \Sch$ is a scheme, 
there are isomorphisms of graded rings
\[
\spectlogTHH^{**}(\Gr(r))
\cong
\spectlogTHH^{**}[[c_1,\ldots,c_r]]
\]
and
\[
\spectlogTHH^{**}(\Gr)
\cong
\spectlogTHH^{**}[[c_1,c_2,\ldots]].
\]
Similar results hold for $\spectlogTC^{-}$, $\spectlogTP$, and $\spectlogTC$.
\end{prop}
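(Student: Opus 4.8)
The plan is to observe that Proposition~\ref{trace.9} is a direct corollary of the projective bundle formalism developed in \S\ref{ssec:Grass}, applied to the oriented homotopy commutative monoids furnished by Theorem~\ref{trace.8}. Indeed, by Theorem~\ref{trace.8} the spectra $\spectlogTHH$, $\spectlogTC^{-}$, $\spectlogTP$, and $\spectlogTC$ are multiplicatively (in particular, simply) oriented homotopy commutative monoids in $\inflogSH(S)$ for every $S\in \Sch$. Therefore Corollary~\ref{Ori.10}, which computes $\ringE^{**}(\Gr(r))$ and $\ringE^{**}(\Gr)$ for any oriented homotopy commutative monoid $\ringE$ in $\logSH(S)$, applies verbatim with $\ringE$ taken to be any one of these four spectra.

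More precisely, first I would invoke Theorem~\ref{trace.8} to fix a Chern orientation $c_\infty$ on $\ringE:=\spectlogTHH$ (and likewise for the other three). Then Corollary~\ref{Ori.10} yields canonical isomorphisms of graded rings
\[
\ringE^{**}\otimes_{\Z} \Z[[c_1,\ldots,c_r]]
\xrightarrow{\cong}
\ringE^{**}(\Gr(r))
\quad\text{and}\quad
\ringE^{**}\otimes_{\Z} \Z[[c_1,c_2,\ldots]]
\xrightarrow{\cong}
\ringE^{**}(\Gr)
\]
sending $c_i$ to the Chern class $c_i(\cT_r)$ (resp.\ $c_i(\cT)$) of the tautological bundle. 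Rewriting $\ringE^{**}\otimes_{\Z}\Z[[c_1,\ldots,c_r]]$ as $\ringE^{**}[[c_1,\ldots,c_r]]$ — which is legitimate since the $c_i$ sit in even bidegrees and $\ringE^{**}$ is the coefficient ring — gives exactly the stated formulas with $\ringE^{**}$ replaced by $\spectlogTHH^{**}$, and similarly $\spectlogTC^{-}$, $\spectlogTP$, $\spectlogTC$. I would then simply remark that the argument is identical for all four spectra, since the only input used is the existence of an orientation.

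There is essentially no obstacle here: the entire content has already been established. The one point worth spelling out is the identification of $\ringE^{**}\otimes_{\Z}\Z[[c_1,\dots]]$ with the formal power series ring $\ringE^{**}[[c_1,\dots]]$ over the graded-commutative base $\ringE^{**}$, using the sign-commutativity of the cup product from Proposition~\ref{Ori.42} together with the fact that the generators lie in degrees $(2i,i)$, so no sign issues arise. I would phrase the proof in two sentences: apply Theorem~\ref{trace.8} to obtain orientations, then apply Corollary~\ref{Ori.10}.
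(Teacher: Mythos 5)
Your proposal is correct and is exactly the paper's argument: the printed proof consists of the single line ``Combine Corollary \ref{Ori.10} and Theorem \ref{trace.8}.'' Your extra remark about rewriting $\ringE^{**}\otimes_{\Z}\Z[[c_1,\ldots]]$ as $\ringE^{**}[[c_1,\ldots]]$ is a harmless elaboration of the same route.
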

\begin{proof}
Combine Corollary \ref{Ori.10} and Theorem \ref{trace.8}.
\end{proof}

\begin{prop}[Thom equivalence]
\label{trace.10}
Suppose $S\in \Sch$, $X\in \SmlSm/S$, and $\cE\to X$ is a rank $d$ vector bundle.
Then there exists a canonical equivalence of spectra
\[
\logTHH(X)
\xrightarrow{\simeq}
\logTHH(\Thom(\cE)).
\]
The same holds for $\logTC^-$, $\logTP$, and $\logTC$.
\end{prop}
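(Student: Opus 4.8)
The plan is to deduce this formally from the Thom isomorphism for the cohomology theory represented by $\spectlogTHH$, combined with its $(2,1)$-periodicity. Recall that $\spectlogTHH\in \inflogSH(S)$ is a homotopy commutative monoid (Construction \ref{K-theory.19}, \eqref{Hoch.22.3}) which represents $\logTHH$ on $\SmlSm/S$ via \eqref{Hoch.22.2}, that it is $(2,1)$-periodic by \eqref{Hoch.22.4}, and that it is (multiplicatively) oriented by Theorem \ref{trace.8}. Since $\Thom(\cE)$ is, by Definition \ref{Thomdf.1}, an object of $\inflogSH(S)$ (the cofiber of $(\Blow_Z\cE,E)\to \cE$ along the zero section $Z$), we interpret $\logTHH(\Thom(\cE))$ as the mapping spectrum $\map_{\inflogSH(S)}(\Sigma_T^\infty \Thom(\cE),\spectlogTHH)$; equivalently it is the total fiber obtained by applying $\logTHH(-)$ to that defining cofiber sequence. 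Spelling out this convention is the one point that genuinely needs to be said, because $\Thom(\cE)$ is not itself a log scheme.

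First I would apply Proposition \ref{Ori.70} to $\ringE:=\spectlogTHH$, the bundle $\cE\to X$, and the bidegree $(p,q)=(0,0)$, which yields a canonical equivalence of spectra
\[
\map_{\inflogSH(S)}(\Sigma_T^\infty X_+,\spectlogTHH)
\xrightarrow{\ \simeq\ }
\map_{\inflogSH(S)}(\Sigma_T^\infty \Thom(\cE),\Sigma^{2d,d}\spectlogTHH).
\]
Next I would iterate the periodicity equivalence $\spectlogTHH\simeq \Sigma^{2,1}\spectlogTHH$ of \eqref{Hoch.22.4} a total of $d$ times to obtain a canonical equivalence $\Sigma^{2d,d}\spectlogTHH\simeq \spectlogTHH$, whence
\[
\map_{\inflogSH(S)}(\Sigma_T^\infty \Thom(\cE),\Sigma^{2d,d}\spectlogTHH)
\simeq
\map_{\inflogSH(S)}(\Sigma_T^\infty \Thom(\cE),\spectlogTHH).
\]
Finally, composing these with the representability equivalence \eqref{Hoch.22.2} for $X$ produces the desired equivalence $\logTHH(X)\xrightarrow{\simeq}\logTHH(\Thom(\cE))$.

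For $\logTC^-$, $\logTP$, and $\logTC$ the identical chain of equivalences applies: each is represented by $\spectlogTC^-$, $\spectlogTP$, $\spectlogTC$, each of which is a homotopy commutative monoid in $\inflogSH(S)$, is $(2,1)$-periodic (Definition \ref{Hoch.28} together with \eqref{Hoch.22.4}), and is oriented by Theorem \ref{trace.8}; so one simply reruns the argument with $\ringE$ replaced by the corresponding spectrum. I do not expect a hard obstacle here — all the real content (the $(\P^n,\P^{n-1})$-invariance of $\logTHH$, the loops $\P^1$-bundle formula, the orientation) has already been established upstream, and the statement is a formal consequence. The only thing requiring care is the bookkeeping needed to see that the composite is natural in the pair $(X,\cE)$ and independent of auxiliary choices, which reduces to the functoriality of the Thom class (Proposition \ref{Ori.33}) and of the periodicity equivalence, both of which are built into the cited results.
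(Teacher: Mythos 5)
Your proposal is correct and follows the same route as the paper, whose proof is exactly the combination of Proposition \ref{Ori.70} (applied to the oriented ring spectrum $\spectlogTHH$, with the orientation supplied by Theorem \ref{trace.8}), the representability equivalence \eqref{Hoch.22.2}, and the periodicity \eqref{Hoch.22.4}. Your extra remarks on interpreting $\logTHH(\Thom(\cE))$ as a mapping spectrum and on naturality are consistent with the paper's conventions and add nothing that conflicts with its argument.
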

\begin{proof}
Combine Proposition \ref{Ori.70}, \eqref{Hoch.22.2}, and \eqref{Hoch.22.4}.
\end{proof}

\begin{prop}
\label{trace.11}
Suppose $S\in \Sch$ and $X\in\SmlSm/S$ is of the form $(\ul{X},Z_1+\cdots+Z_r)$ and $Z$ is a strict smooth closed subscheme of $X$ strict normal crossing with $Z_1+\cdots+Z_r$.
Then there exists a cofiber sequence of spectra
\[
\logTHH(Z)
\to
\logTHH(X)
\to
\logTHH(\Blow_Z X,E),
\]
where $E$ is the exceptional divisor on $\Blow_Z X$.

The same holds for $\logTC^-$, $\logTP$, and $\logTC$.
\end{prop}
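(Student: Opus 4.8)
The strategy is to reduce the statement to the Gysin/residue cofiber sequence of Remark~\ref{rmk:Gysin}, using the representability of $\logTHH$ in $\inflogSHS(S)$ and the Thom equivalence of Proposition~\ref{trace.10}. The first step is a bookkeeping observation: under the stated hypotheses all three of $Z$, $X$, and $(\Blow_Z X,E)$ belong to $\SmlSm/S$. Indeed $Z=(\ul Z,(\ul Z\cap Z_1)+\cdots+(\ul Z\cap Z_r))$ is a strict closed subscheme with strict normal crossing boundary because $Z$ has strict normal crossing with $Z_1+\cdots+Z_r$; and the underlying scheme of $(\Blow_Z X,E)$ in the sense of Definition~\ref{logH.35} is the blow-up $\Blow_{\ul Z}\ul X$, which is smooth over $S$, equipped with the compactifying log structure of the strict normal crossing divisor $E+\widetilde Z_1+\cdots+\widetilde Z_r$. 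Consequently \eqref{Hoch.31.2} identifies $\map_{\inflogSHS(S)}(\Sigma_{S^1}^\infty(-)_+,\logTHH)$ with $\logTHH(-)$ on each of them, and similarly for $\logTC^-$, $\logTP$, $\logTC$ via Construction~\ref{Hoch.24}.

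Next I would invoke Theorem~\ref{Thom.1} together with Remark~\ref{rmk:Gysin}: since $X\in\SmlSm/S$ has $\partial X=Z_1+\cdots+Z_r$ and $Z$ is a strict closed subscheme with strict normal crossing with $\partial X$ over $S$, and since (by Proposition~\ref{Ori.69} and Theorem~\ref{Ori.68}) the stable category $\infC=\inflogSHS(S)$ satisfies all the axioms required there, we obtain in $\inflogSHS(S)$ a cofiber sequence
\[
\Sigma_{S^1}^\infty(\Blow_Z X,E)_+ \;\to\; \Sigma_{S^1}^\infty X_+ \;\to\; \Thom(\Normal_Z X),
\]
the first map being induced by the projection $(\Blow_Z X,E)\to X$. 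Applying the exact contravariant functor $\map_{\inflogSHS(S)}(-,\logTHH)$ produces a fiber sequence of spectra
\[
\map_{\inflogSHS(S)}(\Thom(\Normal_Z X),\logTHH) \;\to\; \logTHH(X) \;\to\; \logTHH(\Blow_Z X,E),
\]
where the last two terms are identified using \eqref{Hoch.31.2}. Finally, Proposition~\ref{trace.10} applied to the rank $d$ vector bundle $\Normal_Z X\to Z$ (with $Z\in\SmlSm/S$) identifies the leftmost term with $\logTHH(Z)$. Since fiber and cofiber sequences agree in $\infSpt$, this is precisely the asserted cofiber sequence; functoriality in $(X,Z)$ follows from the functoriality of the Gysin sequence recorded in Remark~\ref{rmk:Gysin}.

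For $\logTC^-$, $\logTP$, and $\logTC$ one runs the identical argument with $\spectlogTC^-$, $\spectlogTP$, $\spectlogTC$ in place of $\logTHH$, or, more economically, applies the exact functors $(-)^{h\mathbb{T}}$, $(-)^{t\mathbb{T}}$, and $\TC$ to the cofiber sequence for $\logTHH$. The only genuinely delicate point I anticipate is the transport of the Gysin sequence: Theorem~\ref{Thom.1} is phrased for a general $\infC$, and one must check that it really applies to the $S^1$-stable category $\inflogSHS(S)=\inflogSH^\eff(S)$ in which $\logTHH$ lives --- this uses the verification of the axioms in Proposition~\ref{Ori.69} and Theorem~\ref{Ori.68}, together with the model comparison of \S\ref{models} (notably Corollary~\ref{ProplogSH.9}) --- as well as the elementary check that $Z$ and $(\Blow_Z X,E)$ lie in $\SmlSm/S$ so that the representability \eqref{Hoch.31.2} is available.
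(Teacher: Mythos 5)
Your proof is correct and is essentially the paper's own argument: the paper proves this in one line by combining Theorem \ref{Thom.1} (the Gysin cofiber sequence $(\Blow_Z X,E)\to X\to \Thom(\Normal_Z X)$, valid in the stable log motivic category by Proposition \ref{Ori.69} and Theorem \ref{Ori.68}) with the Thom equivalence of Proposition \ref{trace.10}, exactly as you do after mapping into $\logTHH$ via the representability statements. The only point worth noting is that Proposition \ref{trace.10} is proved in the $T$-stable setting, so one should either work in $\inflogSH(S)$ throughout or invoke the adjunction \eqref{alter.5.1} to compare with your mapping spectra in $\inflogSHS(S)$ --- a harmless bookkeeping step that does not affect the argument.
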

\begin{proof}
Immediate from Theorem \ref{Thom.1} and Proposition \ref{trace.10}.
\end{proof}

\subsection{Representability of Nygaard completed prismatic cohomology}
\label{subsection:rncpc}

In \cite[Theorem 1.12(2)]{BMS19}, 
Bhatt-Morrow-Scholze uses \v{C}ech descent for the quasi-syntomic topology to construct their 
``motivic filtration''
\[
\mathrm{Fil}^\ast \TC^-(A;\Z_p)
\]
on negative topological cyclic homology $\TC^-(A;\Z_p)$ 
--- here $A$ is any quasi-syntomic $\Z_p$-algebra in the sense of \cite[Definition 4.10]{BMS19}. 
The $0$-th graded piece is denoted by
\[
\widehat{\mathbf{\Delta}}_A
:=
\mathrm{gr}^0\TC^-(A;\Z_p).
\]
The Nygaard filtration $\cN^{\geq \ast}\widehat{\mathbf{\Delta}}$ on $\widehat{\mathbf{\Delta}}_A$ is constructed 
in \cite[Theorem 7.2(2)]{BMS19}.
Bhatt-Scholze's \cite[Theorem 13.1]{BS22} shows that $\widehat{\mathbf{\Delta}}_A$ is equivalent to the 
completion of the prismatic cohomology $\mathbf{\Delta}_A$ with respect to the Nygaard filtration.
\begin{rmk}
Suppose $A$ is a smooth $k$-algebra and $k$ is a perfect field of characteristic $p$.
Then, by \cite[Theorem 1.10]{BMS19}, 
there is an equivalence between  $\widehat{\mathbf{\Delta}}_A$ and the crystalline cohomology of $A$.
In particular, 
this implies that $\widehat{\mathbf{\Delta}}_A$ is not $\A^1$-invariant, 
and hence there is no ($\mathbb{A}^1$-invariant) motivic $S^1$-spectrum $\widehat{\mathbf{\Delta}}_A\in \infSH_{S^1}(A)$ 
providing an equivalence of spectra
\[
\map_{\infSHS(A)}
(
\Sigma_{S^1}^\infty \Spec{B}_+,\widehat{\mathbf{\Delta}}_A
)
\simeq
\widehat{\mathbf{\Delta}}_B
\]
for every smooth $A$-algebra $B$.
\end{rmk}
According to \cite{BLPO2},
logarithmic negative topological cyclic homology 
$\logTC^-$ also admits a motivic filtration --- 
constructed using \v{C}ech descent for a log variant of the quasi-syntomic topology.
The corresponding $0$-th graded piece furnishes a log extension $\widehat{\mathbf{\Delta}}_{(A,M)}$ 
of $\widehat{\mathbf{\Delta}}_{A}$ for every log quasi-syntomic integral pre-log ring $(A,M)$.
This can be further extended to log schemes, giving rise to the Nygaard completed log prismatic cohomology $R\Gamma_{\widehat{\mathbf{\Delta}}}(\mathfrak{X})$.
With a log variant of \cite[Proposition 7.8]{BMS19},
it is possible to show that $R\Gamma_{\widehat{\mathbf{\Delta}}}(-)$ is 
$(\P^n,\P^{n-1})$-invariant for every integer $n\geq 1$.

We conclude this section by stating a result from \cite{BLMP}. The following definition is a straightforward adaptation of our constructions to the setting of formal schemes.
\begin{df}
Let $S$ be a quasi-compact and quasi-separated $p$-adic formal scheme. 
We denote by $\inflogFDA^\eff(S,\Z_p)$ be the monoidal stable $\infty$-category of formal log motives over $S$, that is the full $\infty$-subcategory of $\infShv_{s\et}(\mathbf{FlSm}/S, \Z_p)$ spanned by the objects $M$ that are local with respect to the maps $\Z_p(\mathfrak{X}\times ((\P^n)^\wedge_p, (\P^{n-1})^\wedge_p)) \to \Z_p(\mathfrak{X})$ for all $\mathfrak{X}\in \mathbf{FlSm}/S$ and $n\geq 1$. We denote by $\inflogFDA(S,\Z_p)$ the category of $T$-spectra in  $\inflogFDA^\eff(S,\Z_p)$, defined in the obvious way.
\end{df}
\begin{thm}[{\cite[Theorem 3.5]{BLMP}}] 
    Let $S$ be a quasi-syntomic $p$-adic formal scheme. There are oriented ring spectra $\mathbf{E}^{\widehat{\mathbf{\Delta}}}$ and  $\mathbf{E}^{\mathrm{Fil}\widehat{\mathbf{\Delta}}}$ in $\CAlg(\inflogFDA^\eff(S,\Z_p))$ such that for all $\mathfrak{X}\in \mathbf{FlSm}/S$ we have
    \[
\Map_{\inflogFDA(S,\Z_p)}(\Sigma^{\infty}(\mathfrak{X}),\Sigma^{r,s}\mathbf{E}^{\widehat{\mathbf{\Delta}}}) \simeq R\Gamma_{\widehat{\mathbf{\Delta}}}(\mathfrak{X})\{s\}[r]
\]
and
\[
\Map_{\inflogFDA(S,\Z_p)}(\Sigma^{\infty}(\mathfrak{X}),\Sigma^{r,s}\mathbf{E}^{\mathrm{Fil}\widehat{\mathbf{\Delta}}}) \simeq \mathrm{Fil}_N^{\geq s}R\Gamma_{\widehat{\mathbf{\Delta}}}(\mathfrak{X})\{s\}[r],
\]
where $\{s\}$ denotes the Breuil--Kisin twist. 
If $S\in \mathrm{QSyn}/R$ for $R$ perfectoid, $\mathbf{E}^{\widehat{\mathbf{\Delta}}}$ and $\mathbf{E}^{\mathrm{Fil}\widehat{\mathbf{\Delta}}}$ live in $\inflogFDA(S,A_{\inf}(R))$, and there are oriented ring spectra $\mathbf{E}^{\widehat{\mathbf{\Delta}}^{\rm nc}}$ and $\mathbf{E}^{\mathrm{Fil}\widehat{\mathbf{\Delta}}^{\rm nc}}$ in $\CAlg(\inflogFDA(S,A_{\inf}(R)))$, together with equivalences 
\begin{equation}\label{eq:prism-deRham}
\mathbf{E}^{\widehat{\mathbf{\Delta}}}\otimes_{\widehat{\mathbf{\Delta}}_R,\theta}R \simeq \E^{\rm \widehat{dR}}\qquad \mathbf{E}^{\widehat{\mathbf{\Delta}}^{\rm nc}}\otimes_{\widehat{\mathbf{\Delta}}_R,\theta}R \simeq \E^{\rm dR}        
\end{equation}
of oriented ring spectra in $\CAlg(\inflogFDA(S,R))$, where $\theta \colon A_{\rm inf}(R) \to R$ is Fontaine's period map, and $\E^{\rm \widehat{dR}}$ is the motivic spectrum representing (completed) derived de Rham cohomology.
\end{thm}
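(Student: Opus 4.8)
The plan is to run, in the $\infty$-category of formal log motives, the same machine used in \S\ref{section:oct} and \S\ref{section:lthh} to manufacture oriented $\P^1$-spectra out of cohomology theories of log schemes that satisfy $(\P^n,\P^{n-1})$-invariance and a projective bundle formula. The starting data are the constructions of (Nygaard completed) logarithmic prismatic cohomology $R\Gamma_{\widehat{\mathbf{\Delta}}}(-)$, of $R\Gamma_{\mathbf{\Delta}}(-)$, and of their Nygaard filtrations $\mathrm{Fil}_N^{\geq\bullet}$, carried out in \cite{BLPO2}; the essential extra input is that all of these presheaves of spectra are $(\P^n,\P^{n-1})$-invariant for every $n\geq 1$, a logarithmic variant of \cite[Proposition 7.8]{BMS19} which one proves by descending along the Beilinson/Nygaard filtrations to the logarithmic cotangent complex and invoking the $(\P^\bullet,\P^{\bullet-1})$-invariance statements of \S\ref{section:lthh} (Theorem \ref{thm:PnPn-1_invariance}).

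Granting this, I would first produce the effective spectra. Since $R\Gamma_{\widehat{\mathbf{\Delta}}}(-)$ is by construction a sheaf for the (log) quasi-syntomic topology, hence in particular for the strict \'etale topology on $\mathbf{FlSm}/S$, and is $(\P^\bullet,\P^{\bullet-1})$-invariant, it defines an object of $\inflogFDA^\eff(S,\Z_p)$; this is the formal-scheme incarnation of Model 5 in \S\ref{models} via Corollary \ref{cor:fundamental_models}, and boundedness questions are reduced to the noetherian case by a $p$-adic version of the noetherian approximation of \S\ref{noetherian}. The same applies to $\mathrm{Fil}_N^{\geq\bullet}R\Gamma_{\widehat{\mathbf{\Delta}}}(-)$. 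To pass to $T$-spectra I need a loops $\P^1$-bundle formula for $R\Gamma_{\widehat{\mathbf{\Delta}}}(-)$ (Definition \ref{K-theory.8}), equivalently the split projective bundle formula $R\Gamma_{\widehat{\mathbf{\Delta}}}(\mathfrak{X}\times\P^1)\simeq R\Gamma_{\widehat{\mathbf{\Delta}}}(\mathfrak{X})\oplus R\Gamma_{\widehat{\mathbf{\Delta}}}(\mathfrak{X})\{-1\}[-2]$ identifying the reduced summand with a single Breuil--Kisin twist; on the filtered level the twist shifts the Nygaard filtration by one, which is exactly the bookkeeping recorded by $\{s\}$ and $\mathrm{Fil}_N^{\geq s}$ in the statement. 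Feeding these bonding maps into Construction \ref{K-theory.9} yields the Bott spectra $\mathbf{E}^{\widehat{\mathbf{\Delta}}}:=\Bott(R\Gamma_{\widehat{\mathbf{\Delta}}})$ and $\mathbf{E}^{\mathrm{Fil}\widehat{\mathbf{\Delta}}}$, while the mapping-spectrum formulas follow from \eqref{K-theory.9.2} together with the $(\P^\bullet,\P^{\bullet-1})$-model, exactly as \eqref{Hoch.22.1} and \eqref{Hoch.22.2} were deduced for $\spectlogTHH$. The $\E_\infty$-multiplication comes either from the lax symmetric monoidality of $R\Gamma_{\widehat{\mathbf{\Delta}}}(-)$ via Construction \ref{K-theory.19}, or is descended from $\A^1$-free motives as in the construction of $\spectlogMGL$; the orientation is produced from the prismatic first Chern class of line bundles (as in the prismatic projective bundle formula of \cite{BS22}), which gives a class on $\clspace\Gmm$, hence by Theorem \ref{Pic.9} a class $c_\infty\in(\mathbf{E}^{\widehat{\mathbf{\Delta}}})^{2,1}(\P^\infty/\pt)$, whose restriction to $\P^1$ one checks to be $\Sigma^{2,1}\unit$ using the projective bundle computation; the same class orients $\mathbf{E}^{\mathrm{Fil}\widehat{\mathbf{\Delta}}}$ by compatibility of the Chern class with the Nygaard filtration.

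Over a perfectoid base $R$ and $S\in\mathrm{QSyn}/R$, all of $R\Gamma_{\widehat{\mathbf{\Delta}}}(-)$, $R\Gamma_{\mathbf{\Delta}}(-)$ and their Nygaard filtrations are naturally modules over $\widehat{\mathbf{\Delta}}_R=A_{\inf}(R)$, so the construction refines to $\CAlg(\inflogFDA(S,A_{\inf}(R)))$, and the non-completed variants $\mathbf{E}^{\widehat{\mathbf{\Delta}}^{\mathrm{nc}}}$, $\mathbf{E}^{\mathrm{Fil}\widehat{\mathbf{\Delta}}^{\mathrm{nc}}}$ are obtained by the same recipe starting from $R\Gamma_{\mathbf{\Delta}}(-)$, whose $(\P^\bullet,\P^{\bullet-1})$-invariance over a perfectoid base is again a log variant of \cite[Proposition 7.8]{BMS19}. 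Finally, \eqref{eq:prism-deRham} is the motivic shadow of the prismatic--de Rham comparison $\widehat{\mathbf{\Delta}}_{\mathfrak{X}}\otimes_{A_{\inf}(R),\theta}R\simeq R\Gamma_{\widehat{\mathrm{dR}}}(\mathfrak{X}/R)$: base change along $\theta$ is a colimit-preserving symmetric monoidal functor $\inflogFDA(S,A_{\inf}(R))\to\inflogFDA(S,R)$, it sends $\mathbf{E}^{\widehat{\mathbf{\Delta}}}$ to a spectrum representing completed derived de Rham cohomology, and the latter is characterized among oriented $\E_\infty$-ring objects by the universal property of Proposition \ref{univlogSH.5}; compatibility of orientations is again verified against the $\P^1$-class, and similarly for the non-completed version. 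I expect the principal obstacle to be the inputs concerning the Nygaard \emph{filtered} prismatic cohomology: establishing simultaneously that $\mathrm{Fil}_N^{\geq\bullet}R\Gamma_{\widehat{\mathbf{\Delta}}}(-)$ is $(\P^n,\P^{n-1})$-invariant and satisfies the filtered projective bundle formula with the correct Breuil--Kisin twist on the graded pieces, since this forces control over how the Nygaard filtration interacts with log blow-ups and with the log structure; it is here that the logarithmic refinements of \cite[Proposition 7.8]{BMS19} and of the Bhatt--Scholze projective bundle formula carry the real weight. A secondary, more clerical difficulty is promoting the de Rham comparison and the perfectoid base change to equivalences of \emph{oriented} $\E_\infty$-ring objects in the respective motivic $\infty$-categories, i.e.\ tracking all of the monoidal and filtered structure through the construction.
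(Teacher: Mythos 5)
First, a point of orientation: this paper does not actually prove the statement --- it is quoted verbatim from \cite[Theorem 3.5]{BLMP}, and all that \S\ref{subsection:rncpc} offers is the strategic remark that a log variant of \cite[Proposition 7.8]{BMS19} gives $(\P^n,\P^{n-1})$-invariance of $R\Gamma_{\widehat{\mathbf{\Delta}}}(-)$. So there is no in-paper proof to measure you against; your proposal is an expansion of that sketch, and in outline it matches the intended route (quasi-syntomic descent plus $(\P^\bullet,\P^{\bullet-1})$-invariance to land in $\inflogFDA^\eff(S,\Z_p)$, a projective bundle formula to stabilize, an orientation via the prismatic first Chern class together with $\clspace\Gmm\simeq\P^\infty$, and base change along $\theta$ for \eqref{eq:prism-deRham}). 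You correctly identify that the load-bearing inputs --- the logarithmic, Nygaard-filtered refinements of \cite[Proposition 7.8]{BMS19} and of the prismatic projective bundle formula --- are assumed rather than proved; that is exactly where the work in \cite{BLMP} lies, and the present paper gives no argument for them beyond asserting that they can be shown.

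The one concrete error in your write-up is the appeal to Definition \ref{K-theory.8} and Construction \ref{K-theory.9}. That construction produces a constant-level Bott spectrum $\Bott(\cF)=(\cF,\cF,\ldots)$ and requires an honest equivalence $\cF\simeq\Omega_{\P^1}\cF$; prismatic cohomology only satisfies $\Omega_{\P^1}R\Gamma_{\widehat{\mathbf{\Delta}}}(-)\simeq R\Gamma_{\widehat{\mathbf{\Delta}}}(-)\{-1\}[-2]$, and the Breuil--Kisin twist is not trivial over a general quasi-syntomic base --- which is precisely why the theorem asserts $\Map(\Sigma^{\infty}\mathfrak{X},\Sigma^{r,s}\mathbf{E}^{\widehat{\mathbf{\Delta}}})\simeq R\Gamma_{\widehat{\mathbf{\Delta}}}(\mathfrak{X})\{s\}[r]$ rather than a $(2,1)$-periodicity statement as for $\spectlogTHH$ or $\logKGL$. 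Hence $\mathbf{E}^{\widehat{\mathbf{\Delta}}}$ cannot be $\Bott(R\Gamma_{\widehat{\mathbf{\Delta}}})$ with identical levels: one must assemble a $T$-spectrum whose $n$-th level is $R\Gamma_{\widehat{\mathbf{\Delta}}}(-)\{n\}[2n]$ (respectively $\mathrm{Fil}_N^{\geq n}R\Gamma_{\widehat{\mathbf{\Delta}}}(-)\{n\}[2n]$ in the filtered case), with bonding maps given by cup product with the first Chern class of $\cO(1)$, and then do the genuinely nontrivial extra work of promoting this to a commutative algebra object while tracking the orientation and the filtration --- you flag this "clerical" difficulty, but with constant levels the construction does not even start. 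Relatedly, characterizing $\E^{\rm \widehat{dR}}$ through Proposition \ref{univlogSH.5} is not available as stated (that universal property concerns $\inflogSH$ over a log scheme, with $\boxx$-invariance and $\tau$-descent, not $\inflogFDA$); the comparison \eqref{eq:prism-deRham} is obtained more directly by applying the colimit-preserving symmetric monoidal base change $-\otimes_{A_{\inf}(R),\theta}R$ levelwise and invoking the prismatic--de Rham comparison on each level, checking compatibility of the Chern classes by hand.
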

Among the applications of the results constructed in this work, we deduce from the above representabiliy results Gysin sequences and blow-up formulas,
and we compute the prismatic cohomology of Grassmannians. See \cite{BLMP} for a full discussion.

\newpage

\section{Realization functors}
\label{section:rf}
Realization functors are very useful for the purpose of comparing $\A^1$-homotopy theory with 
classical homotopy theories.
For example, 
Levine \cite{zbMATH06366499} used the Betti realization to prove that the naturally induced functor
\[
\infSH\to \infSH(k)
\]
is fully faithful when $k$ is an algebraically closed field of characteristic $0$.

\

The Betti realization is not available if the characteristic of $k$ is nonzero;
however, the $\ell$-adic \'etale realization is a useful replacement in positive characteristic.
Zargar \cite{zbMATH07103864} used the $\ell$-adic \'etale realization to prove the naturally induced  functor
\[
\infSH[1/p] \to \infSH(k)[1/p]
\]
is fully faithful when $k$ is an algebraically closed field of exponential characteristic $p$.
A closely related result due to Wilson and {\O}stv{\ae}r \cite{zbMATH06698209} shows the naturally induced homomorphism
\[
\hom_{\infSH}(\Sphere,\Sigma^n \Sphere)
\to
\hom_{\infSH(k)}(\unit,\Sigma^{n,0}\unit)
\]
is an isomorphism for every integer $n$; 
their proof employs Witt vectors to reduce to algebraically closed fields of characteristic zero 
($\Sphere$ and $\unit$ denote the topological and motivic sphere, respectively).

\begin{rmk}In order to construct the $p$-adic \'etale realization within the framework of $\A^1$-homotopy theory,
one needs to invert the exponential characteristic $p$ because \'etale cohomology with 
$\Z/p^n$-coefficient is not $\A^1$-invariant.
In the setting of logarithmic motives, 
Merici \cite{2211.14303} constructed a $p$-adic realization functor
\[
\inflogDMeff_{\leta}(k)\to \infD(W(k))^\mathrm{op}
\]
to the opposite derived $\infty$-category of $W(k)$-modules for all field $k$, 
by showing that log de Rham-Witt sheaves admit a log transfer structure, and that they are $(\P^n, \P^{n-1})$-invariant for every $n\geq1$.
\end{rmk}
In this section, 
we will explain how the log \'etale realization can be used to somehow incorporate $\Z/p^n$-coefficients.
Moreover, 
we will also construct
Kato-Nakayama realization functors
\[
\inflogSH(S)\to \infSpt
\]
for every subfield $k$ of $\C$ and $S\in \lSch/k$, 
which is a logarithmic analogue of the Betti realization in $\A^1$-homotopy theory.

\subsection{\texorpdfstring{$\boxx$-invariance}{Invariance} of Kummer \'etale cohomology}

In this subsection, 
we show that log \'etale cohomology groups are $\boxx$-invariant for locally constant sheaves.
This will be used to construct log \'etale realization functors in \S\ref{subsection:ler}.

\begin{df}
Recall from \cite[Definition 2.5]{MR1457738} that a \emph{log geometric point}\index{log geometric point} 
of an fs log scheme $X$ is a morphism $f\colon Z\to X$ of saturated log schemes such that $\ul{Z}$ is the 
spectrum of a separably closed field and the multiplication homomorphism of abelian groups
$\cdot n \colon \ol{\cM}^{\gp}(Z)\to \ol{\cM}^{\gp}(Z)$ is an isomorphism for every integer $n\geq 2$ 
prime to the characteristic of $k$.
The log geometric points of $X$ form enough points for the small Kummer \'etale site $X_{\ket}$.
\end{df}

\begin{lem}
\label{ketreal.4}
Let $X$ be a log geometric point.
If $h\colon X\times \A^1\to X$ denotes the projection, then we have
\[
\R_{k\et}^i h_!( \Z/n)
\simeq
\left\{
\begin{array}{ll}
\Z/n &
\text{if }i=2,
\\
0 & \text{otherwise}
\end{array}
\right.
\]
for all integer $n\geq 2$ prime to the characteristic of $X$.
\end{lem}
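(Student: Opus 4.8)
The plan is to reduce $\R^i_{k\et}h_!(\Z/n)$ to the classical compactly supported \'etale cohomology of $\A^1$ over the separably closed field $k$ with $\ul{X}=\Spec{k}$, via the comparison between the Kummer \'etale and the ordinary \'etale topos. Since $X$ is a log geometric point, every Kummer \'etale cover of $X$ splits; hence a Kummer \'etale sheaf on $X$ equals its global sections and has no higher cohomology. By compact-support base change, $\R^i_{k\et}h_!(\Z/n)$ is then the constant sheaf on $X_{\ket}$ with value $H^i_c\bigl((X\times\A^1)_{\ket},\Z/n\bigr)$, so it suffices to compute the complex $\R\Gamma_c((X\times\A^1)_{\ket},\Z/n)$. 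Here the log structure on $X\times\A^1$ is the pullback $p^{\ast}\cM_X$ along the projection $p\colon\ul{X}\times\A^1\to\ul{X}$, and I would compactify over $X$ by the strict open immersion $j\colon X\times\A^1\hookrightarrow\overline{X}:=(\ul{X}\times\P^1,p^{\ast}\cM_X)$ followed by the proper morphism $\bar h\colon\overline{X}\to X$, so that $\R_{k\et}h_!\simeq\R\bar h_{\ast}\circ j_!$ in the log \'etale formalism of \cite{NakayamaII}.

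The key step is the comparison morphism of topoi $\epsilon\colon(X\times\A^1)^{\sim}_{\ket}\to(\ul{X}\times\A^1)^{\sim}_{\et}$ (and the analogue on $\overline{X}$). Because the log structure is pulled back from $\Spec{k}$, the sheaf $\ul{\ol{\cM}}{}^{\gp}_{X\times\A^1}$ is the constant sheaf with value $\ol{\cM}^{\gp}(X)$; and $\ol{\cM}^{\gp}(X)$ is uniquely $n$-divisible since $X$ is a log geometric point and $n$ is prime to $\mathrm{char}(X)$, so $\ol{\cM}^{\gp}(X)\otimes_{\Z}\Z/n=0$. By Nakayama's computation $\R^q\epsilon_{\ast}(\Z/n)\cong\bigwedge^q_{\Z/n}\bigl(\ul{\ol{\cM}}{}^{\gp}\otimes_{\Z}\Z/n(-1)\bigr)$ (see \cite{NakayamaII}; here $\mu_n\subseteq k$ since $n$ is prime to the characteristic), this forces $\R\epsilon_{\ast}(\Z/n)\simeq\Z/n$, and likewise on $\overline{X}$. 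Combined with the compatibility of $\epsilon$ with $j_!$ and with proper pushforward, this yields $\R_{k\et}h_!(\Z/n)\simeq\R\ul{h}_!(\Z/n)=\R\Gamma_c\bigl((\ul{X}\times\A^1)_{\et},\Z/n\bigr)$, where $\ul{h}\colon\ul{X}\times\A^1\to\ul{X}$ is the structure morphism of schemes.

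Finally, $\R\Gamma_c(\A^1_k,\Z/n)$ is computed classically: from the localization triangle for the closed point $\infty\in\P^1_k$ and its open complement $\A^1_k$, together with $H^{\ast}(\P^1_k,\Z/n)$ and the vanishing $H^1(\A^1_k,\Z/n)=0$ (the prime-to-$\mathrm{char}(X)$ part of the \'etale fundamental group of the affine line over a separably closed field is trivial), one gets $H^i_c(\A^1_k,\Z/n)=\Z/n$ for $i=2$ and $0$ otherwise; equivalently this is Poincar\'e duality against $H^{2-i}(\A^1_k,\Z/n)$, with the Tate twist trivialized by a primitive $n$-th root of unity. This proves the lemma. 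I expect the one delicate point to be not this computation but making the log \'etale compact-support formalism precise --- defining $\R_{k\et}h_!$ via a log compactification independently of the choice, and establishing its base-change and pushforward compatibilities with $\epsilon$ --- which I would import from \cite{NakayamaII} (already used in the paper); note that $h$ is here simply the projection, so no refinement of the compactification beyond the pulled-back log structure on $\P^1$ is needed.
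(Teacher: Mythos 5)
Your proposal is correct, and its skeleton coincides with the paper's: compactify the projection over $\P^1$, use the localization triangle $j_!j^*\to\id\to i_*i^*$ at the point at infinity, and conclude from the classical computation of $\R_{\et}f_!(\Z/n)$ for $\P^1$ over a separably closed field. The genuine difference is in how the log structure is removed. The paper disposes of it in one line at the very start, citing Nakayama's reduction (\cite[Paragraph 4.7(ii)]{MR1457738}) to replace the log geometric point by its underlying scheme point, after which the entire argument lives in SGA4 and the compact-support formalism needs no log-theoretic input. You instead carry the pulled-back log structure through the compactification and kill it via the Kato--Nakayama computation of $\R^q\epsilon_*(\Z/n)$ together with the unique $n$-divisibility of $\ol{\cM}^{\gp}(X)$; this is sound (the key vanishing $\ol{\cM}^{\gp}\otimes_\Z\Z/n=0$ is exactly right), but it obliges you to verify the compatibilities you flag yourself --- that $\R\epsilon_*$ commutes with $j_!$ along the strict open immersion and with proper pushforward, checked on stalks at log geometric points over $\infty$ --- which is precisely the extra work the paper's citation of Nakayama's reduction avoids. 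So your route buys a self-contained explanation of why the log structure is invisible, at the cost of re-deriving base-change statements that are already packaged in \cite{MR1457738}; the final computation of $H^i_c(\A^1_k,\Z/n)$ is the same in both (your appeal to $H^1(\A^1_k,\Z/n)=0$ and Poincar\'e duality is redundant once you use the $\P^1$-triangle, but harmless).
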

\begin{proof}
By \cite[Paragraph 4.7(ii)]{MR1457738}, we reduce to the case when $X$ has trivial log structure.
There is a commutative diagram
\[
\begin{tikzcd}
X\ar[r,"i"]\ar[rd,"\id"']&
X\times \P^1\ar[d,"f"]&
X\times \A^1\ar[l,"j"']\arrow[ld,"h"]
\\
&
X
\end{tikzcd}
\]
where $f$ is the projection, $i$ is the inclusion at $\infty$, and $j$ is the complement of $i$.
From the localization sequence \cite[\S 2.8(4)]{MR1457738}
\[
0\to j_! j^* \to \id \to i_*i^* \to 0,
\]
we have a distinguished triangle
\[
\R_{\et} h_! (\Z/n) \to \R_{\et} f_! (\Z/n) \to \Z/n \to \R_{\et} h_! (\Z/n)[1].
\]
To conclude, 
observe that
\cite[Corollaire IX.4.7]{SGA4} gives $\R_{\et}^i f_!( \Z/n)=\Z/n$ for $i=0,2$ and $\R_{\et}^i f_!( \Z/n)=0$ otherwise.
\end{proof}

\begin{lem}
\label{ketreal.3}
Let $X$ be a log geometric point.
Then we have
\[
H_{k\et}^i(X\times \pt_\N, \Z/n)
\simeq
\left\{
\begin{array}{ll}
\Z/n &
\text{if }i=0,1
\\
0 & \text{otherwise}
\end{array}
\right.
\]
for all integer $n\geq 2$ prime to the characteristic of $X$.
\end{lem}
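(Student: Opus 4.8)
The plan is to reduce at once to the standard log point and then compute with the Kummer étale fundamental group. First, exactly as in the proof of Lemma~\ref{ketreal.4}, apply \cite[Paragraph 4.7(ii)]{MR1457738} to reduce to the case where $X$ has trivial log structure; then $\ul{X}=\Spec{k}$ for a separably closed field $k$ whose characteristic is either $0$ or a prime $p$ with $p\nmid n$, and $X\times \pt_\N$ is the standard log point over $k$, i.e.\ $\pt_\N$ with $\N$ mapping to $0\in k$.

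Next I would describe the small Kummer étale site of this log point. Every Kummer étale cover is refined by one of the ``root'' covers obtained by adjoining an $m$-th root of the chart element with $m$ invertible in $k$; such a cover is a torsor under $\mu_m(k)\cong \Z/m$ because $k$ is separably closed (see \cite{MR1457738}). Composing and passing to the limit, the universal Kummer étale cover is itself a log geometric point (its $\ol{\cM}^{\gp}$ is uniquely $n$-divisible for all $n$ prime to $p$), hence has trivial higher Kummer étale cohomology; so by the usual $K(\pi,1)$-type argument one gets, for the locally constant sheaf $\Z/n$,
\[
H^i_{k\et}(X\times \pt_\N,\Z/n)\;\cong\;H^i_{\mathrm{cont}}\bigl(\pi_1^{k\et}(\pt_\N),\Z/n\bigr),
\qquad
\pi_1^{k\et}(\pt_\N)\;\cong\;\varprojlim_{(m,p)=1}\mu_m(k)=:\hat{\Z}'(1),
\]
with $\hat{\Z}'(1)$ acting trivially on $\Z/n$ since $n$ is prime to $p$. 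As a profinite group $\hat{\Z}'(1)$ is (non-canonically) $\prod_{\ell\neq p}\Z_\ell$.

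It then remains to compute continuous cohomology of $\hat{\Z}'(1)$ with trivial finite coefficients of order prime to $p$. Since $n$ is prime to $p$ one has $\hat{\Z}'(1)/n\hat{\Z}'(1)\cong \Z/n$, so $H^0=\Z/n$ and $H^1=\hom_{\mathrm{cont}}(\hat{\Z}'(1),\Z/n)\cong \Z/n$; for $i\geq 2$ the groups vanish because each factor $\Z_\ell$ with $\ell\neq p$ has $\ell$-cohomological dimension $1$, so $\hat{\Z}'(1)$ has cohomological dimension $\le 1$ on finite modules of prime-to-$p$ order. This yields the asserted values, completing the proof.

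The routine part here is the final group-cohomology computation; the main obstacle is making the first two steps precise, namely the reduction from a general log geometric point to the standard log point via \cite[Paragraph 4.7(ii)]{MR1457738} and the clean identification of $\pi_1^{k\et}(\pt_\N)$ together with the torsor structure on the root covers. If one prefers to avoid the $\pi_1$ formalism, an equivalent route is to invoke the known computation of the Kummer étale cohomology of the standard log point directly from the literature on logarithmic étale cohomology (Nakayama, Illusie) or from \cite{Ogu}.
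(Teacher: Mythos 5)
Your computational core is right, but the opening reduction is a genuine gap. Lemma \ref{ketreal.4} concerns $\R_{k\et}h_!$ for the \emph{strict} projection $X\times\A^1\to X$, where the log structure upstairs is pulled back from the base; that is what makes the citation of \cite[Paragraph 4.7(ii)]{MR1457738} usable there. In the present lemma the projection $X\times\pt_\N\to X$ is not strict — the log structure of $X\times\pt_\N$ is strictly larger than the pullback from $X$ — and the whole content of the statement is that the factor $G:=\ol{\cM}(X)^{\gp}$ contributes nothing. Your reduction never invokes the defining property of a log geometric point (multiplication by any $m$ prime to the characteristic is bijective on $G$), so ``exactly as in Lemma \ref{ketreal.4}'' is not a justification, and the paper in fact handles this step by a different argument: by \cite[Proposition 1.2]{NakayamaII} one has $H_{k\et}^i(X\times\pt_\N,\Z/n)\cong H^i(\pi(G\oplus\Z),\Z/n)$ with $\pi(H):=\hom\big(H,\prod_{\ell\neq p}\Z_\ell\big)$, and $\pi(G)=0$ precisely because $G$ is uniquely divisible prime to $p$, whence $\pi(G\oplus\Z)\cong\pi(\Z)$. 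The cleanest repair is thus to apply your own group-cohomology description to $X\times\pt_\N$ itself instead of only to the standard log point; alternatively, a geometric reduction is available via the log proper base change theorem \cite[Theorem 5.1]{MR1457738} applied to the proper morphism $\ul{X}\times\pt_\N\to\ul{X}$ along $X\to\ul{X}$.

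Granting the reduction, your route for the standard log point genuinely differs from the paper's and is fine: the identification of $H^*_{k\et}$ of the standard log point with continuous cohomology of $\hat{\Z}'(1)\cong\prod_{\ell\neq p}\Z_\ell$ is exactly \cite[Proposition 1.2]{NakayamaII} (better to cite it than to sketch the pro-universal-cover argument, which needs a continuity statement to be rigorous), and then $H^0\cong H^1\cong\Z/n$ together with vanishing in degrees $\geq 2$ follows from $\mathrm{cd}\leq 1$ as you say. The paper instead finishes geometrically: the localization triangle decomposing $X\times\boxx$ into $X\times\A^1$ and $X\times\pt_\N$, Lemma \ref{ketreal.4}, the cohomological-dimension bound \cite[Proposition 5.3]{MR1457738} killing degrees $\geq 2$, and the comparison $H_{k\et}^i(X\times\boxx,\Z/n)\cong H_{\et}^i(X\times\A^1,\Z/n)=0$ for $i\geq 1$ from \cite[Theorem 7.4]{MR1922832}, which isolates $H^1\cong\Z/n$. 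Since the group-theoretic input is needed for the reduction anyway, your direct computation is the shorter path; both arguments yield the same answer.
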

\begin{proof}
Suppose $X$ has characteristic $p$ and set $G:=\ol{\cM(X)}^\gp$.
For every prime $\ell\neq p$, 
the homomorphism $\cdot \ell \colon G\to G$ is an isomorphism of abelian groups.
This implies
\[
\pi(G):=\hom\big( G,\coprod_{\ell\neq p}\Z_{\ell}\big) =0.
\]
Hence $\pi(G\oplus \Z) \cong \pi(\Z)$.
Owing to \cite[Proposition 1.2]{NakayamaII}, there are isomorphisms of abelian groups
\[
H_{k\et}^i(X\times \pt_\N, \Z/n)
\cong
H^i(\pi(G\oplus \Z),\Z/n)
\cong
H^i(\pi(\Z),\Z/n)
\cong
H_{k\et}^i(\ul{X}\times \pt_\N, \Z/n),
\]
where the second and third groups are the profinite cohomology groups.
Hence we reduce to the case when $X$ has trivial log structure.

\ 

There is a commutative diagram
\[
\begin{tikzcd}
X\times \pt_\N\ar[r,"i"]\ar[rd,"g"']&
X\times \boxx\ar[d,"f"]&
X\times \A^1\ar[l,"j"']\arrow[ld,"h"]
\\
&
X,
\end{tikzcd}
\]
where $f$, $g$, and $h$ are the projections, $i$ is the inclusion at $\infty$, and $j$ is the complement of $i$.
From the localization sequence \cite[\S 2.8(4)]{MR1457738}
\[
0\to j_! j^* \to \id \to i_*i^* \to 0,
\]
we deduce the distinguished triangle
\[
\R_{k\et} h_! (\Z/n) \to \R_{k\et} f_! (\Z/n) \to \R_{k\et} g_!\Z/n \to \R_{k\et} h_! (\Z/n)[1].
\]
Hence due to Lemma \ref{ketreal.4}, there is an exact sequence
\begin{equation}
\label{ketreal.3.1}
\begin{split}
0 &\to H_{k\et}^1(X\times \boxx,\Z/n) \to  H_{k\et}^1(X\times \pt_\N,\Z/n)
\\
 &\to  \Z/n \to  H_{k\et}^2(X\times \boxx,\Z/n) \to  H_{k\et}^2(X\times \pt_\N,\Z/n).
\end{split}
\end{equation}

Owing to \cite[Proposition 5.3]{MR1457738}, we have the vanishing
\[
H_{k\et}^i(X\times \pt_\N, \Z/n)=0
\]
for every integer $i\geq 2$.
Apply \cite[Theorem 7.4]{MR1922832} to $X\times \boxx$ to deduce 
\[
H_{k\et}^i(X\times \boxx,\Z/n)
\cong
H_{\et}^i(X\times \A^1,\Z/n)
=
0
\]
for every integer $i\geq 1$.
Together with \eqref{ketreal.3.1}, this finishes the computation.
\end{proof}

\begin{lem}
\label{ketreal.2}
Let $X$ be a log geometric point.
Then we have
\[
H_{k\et}^i(X\times \boxx, \Z/n)
\cong
\left\{
\begin{array}{ll}
\Z/n &
\text{if }i=0,
\\
0 & \text{otherwise}
\end{array}
\right.
\]
for all integers $n\geq 2$.
\end{lem}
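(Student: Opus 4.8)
The plan is to reduce the computation of $H_{k\et}^i(X\times \boxx,\Z/n)$ to the two cases already handled in Lemmas \ref{ketreal.3} and \ref{ketreal.4}, namely the open part $X\times \A^1$ and the boundary $X\times \pt_\N$, via the Kummer étale localization sequence. First I would write $n = n' \cdot p^r$, where $p$ is the residue characteristic of $X$ (set $p = 1$ if $X$ has characteristic $0$) and $n'$ is prime to $p$. The prime-to-$p$ part $\Z/n'$ is covered by the argument of Lemma \ref{ketreal.3}: one has $H_{k\et}^\ast(X\times\boxx,\Z/n') \cong H_{\et}^\ast(X\times\A^1,\Z/n')$ concentrated in degree $0$ by \cite[Theorem 7.4]{MR1922832} and \cite[Corollaire IX.4.7]{SGA4}. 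So the real content is the $p$-primary part, and it suffices to treat $\Z/p^r$, or even $\Z/p$ by dévissage along $0\to\Z/p\to\Z/p^r\to\Z/p^{r-1}\to 0$ and induction on $r$.

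For the $p$-primary coefficients, I would use the localization sequence
\[
0\to j_!j^* \to \id \to i_*i^*\to 0
\]
from \cite[\S 2.8(4)]{MR1457738} associated to the inclusion $i\colon X\times\pt_\N \to X\times\boxx$ of the boundary at $\infty$ and its open complement $j\colon X\times\A^1\to X\times\boxx$. Applying $\R_{k\et}f_!$ for the projection $f\colon X\times\boxx\to X$ and using $\R_{k\et}(f\circ j)_! = \R_{k\et}h_!$, this yields a distinguished triangle relating $\R_{k\et}f_!(\Z/p^r)$, $\R_{k\et}h_!(\Z/p^r)$, and $\R_{k\et}g_!(\Z/p^r)$ for $g$ the projection $X\times\pt_\N\to X$. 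Now here is the key point: étale cohomology with $\Z/p$-coefficients of $\A^1$ over a field of characteristic $p$ is \emph{not} trivial — Artin–Schreier theory gives $H^1_{\et}(\A^1_k,\Z/p)$ infinite-dimensional. So the naive argument mimicking Lemma \ref{ketreal.4} fails for $p$-coefficients, and the vanishing asserted in Lemma \ref{ketreal.2} must come from a genuinely Kummer-étale (rather than étale) phenomenon. I expect this to be the main obstacle, and I anticipate the resolution is that for a \emph{log geometric point} $X$ the relevant structure morphism or a Kummer cover trivializes the $p$-torsion: concretely, one should invoke the computation of Kummer étale cohomology of log points and of $\boxx$ in terms of the profinite completion away from $p$ — i.e. $H_{k\et}^\ast(-,\Z/p)$ of a log geometric point and its $\A_\N$-thickenings vanish in positive degrees because the relevant ``log fundamental group'' has no $p$-part, by \cite[Proposition 5.3]{MR1457738} and \cite[Proposition 1.2]{NakayamaII} applied with $\Z/p$-coefficients — combined with properness of $\ul{\P^1}$ over $\ul{X}$, which forces the $\A^1$-contribution to be controlled by the boundary.

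Concretely, once the $p$-primary analogues of the inputs to Lemma \ref{ketreal.3} are in hand — that is, $H_{k\et}^i(X\times\pt_\N,\Z/p^r) = \Z/p^r$ for $i=0,1$ and $0$ otherwise, and $\R_{k\et}^ih_!(\Z/p^r) = \Z/p^r$ for $i=2$ and $0$ otherwise — the same exact sequence
\[
0\to H_{k\et}^1(X\times\boxx,\Z/p^r)\to H_{k\et}^1(X\times\pt_\N,\Z/p^r)\to\Z/p^r\to H_{k\et}^2(X\times\boxx,\Z/p^r)\to H_{k\et}^2(X\times\pt_\N,\Z/p^r)
\]
as in \eqref{ketreal.3.1} together with higher vanishing forces $H_{k\et}^i(X\times\boxx,\Z/p^r)=0$ for all $i\geq 1$. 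The degree-$0$ statement $H_{k\et}^0(X\times\boxx,\Z/n)=\Z/n$ is immediate since $X\times\boxx$ is connected (indeed $\ul{\P^1}_{\ul X}$ is connected with $\ul X$ the spectrum of a separably closed field). Assembling the prime-to-$p$ part and the $p$-primary part via the Chinese remainder decomposition $\Z/n\cong\Z/n'\oplus\Z/p^r$ then completes the proof. The one step I would scrutinize most carefully is the $p$-primary refinement of Lemma \ref{ketreal.4}: one must check that $\R_{k\et}^\ast h_!$ of $\Z/p^r$ on $X\times\A^1\to X$ (log geometric point base) really is concentrated in degree $2$ with value $\Z/p^r$, i.e. that the pathological Artin–Schreier classes on $\A^1$ do not survive in the Kummer-étale (compactly supported, over a log point) setting — this should follow from the log proper base change theorem of Nakayama together with the computation of the Kummer-étale cohomology of $\boxx$ over a log geometric point.
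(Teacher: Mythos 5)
There is a genuine gap, and it sits exactly where you said you were least sure: the $p$-primary part. The ``$p$-primary analogues of the inputs to Lemma \ref{ketreal.3}'' that your argument needs are false. The classes producing $H_{k\et}^1(X\times \pt_\N,\Z/n)\cong\Z/n$ and $\R^2_{k\et}h_!(\Z/n)\cong\Z/n$ for $n$ prime to $p$ are Tate-twist phenomena (they come from $\hom(\widehat{\Z}'(1),\Z/n)$, resp.\ from $H^2_{\et}(\P^1,\mu_n)$), and they have no $p$-torsion counterpart in characteristic $p$: since the Kummer \'etale fundamental group of a log point over a separably closed field is pro-prime-to-$p$, one has $H_{k\et}^1(X\times\pt_\N,\Z/p^r)=0$, and a direct Artin--Schreier computation gives $\R^i_{\et}h_!(\Z/p^r)=0$ for \emph{all} $i$ (not concentrated in degree $2$). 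So the five-term sequence you propose to reuse does not have the shape you assert, and your own heuristic (``the log fundamental group has no $p$-part'') in fact contradicts the values you plug in. Moreover the machinery you want to run it through --- the localization sequence with $\R h_!$, proper base change, and the cohomological dimension bounds of \cite{MR1457738} --- is stated for coefficients invertible on the base, so even a corrected version of this route would need extra justification for $\Z/p^r$. The paper avoids all of this: it reduces to $n$ prime, and for $n=p$ uses the Artin--Schreier sequence $0\to\Z/p\to\cO\xrightarrow{F-\id}\cO\to 0$ together with Kato's comparison $H_{k\et}^i(X\times\boxx,\cO)\cong H_{\et}^i(X\times\P^1,\cO)$ \cite{KatoLog2}, so everything collapses to the vanishing of coherent cohomology of $\P^1$ in positive degrees.

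A second, smaller gap is in your prime-to-$p$ step. You invoke \cite[Theorem 7.4]{MR1922832} to identify $H_{k\et}^\ast(X\times\boxx,\Z/n')$ with $H_{\et}^\ast(X\times\A^1,\Z/n')$, but that comparison requires the log structure to come from a boundary on a log regular scheme; when the log geometric point $X$ carries a nontrivial log structure the locus of triviality of $X\times\boxx$ is empty and the theorem does not apply (in the proof of Lemma \ref{ketreal.3} it is used only \emph{after} reducing to $X$ with trivial log structure). The paper instead keeps the general $X$: it gets vanishing in degrees $\geq 3$ from \cite[Proposition 5.3]{MR1457738}, feeds Lemma \ref{ketreal.3} into the sequence \eqref{ketreal.3.1}, and then kills $H^1_{k\et}(X\times\boxx,\Z/n)$ by the Kummer sequence $0\to\mu_n\to\cM^{\gp}\to\cM^{\gp}\to 0$ together with the $\boxx$-invariance of $\cM^{\gp}$ and $\logPic$ for log points (Proposition \ref{logPic.6}). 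Your final answer is correct, but the reduction that drops the log structure of the base is exactly what has to be proved, and your proposal does not supply it.
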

\begin{proof}
Suppose $X$ has characteristic $p$.
It suffices to show the claim when $n$ is prime.
If $n=p$,
the Artin-Schreier sequence
\[
0\to \Z/p \to \cO \xrightarrow{F-\id} \cO \to 0
\]
gives a long exact sequence
\begin{equation}
\label{ketreal.2.1}
\begin{split}
\cdots \to & H_{k\et}^i(X\times \boxx,\Z/p) \to H_{k\et}^i(X\times \boxx,\cO) \to H_{k\et}^i(X\times \boxx,\cO)\\
\to & H_{k\et}^{i+1}(X\times \boxx,\Z/p) \to \cdots.
\end{split}
\end{equation}
Due to \cite[Proposition 6.5]{KatoLog2} (see also \cite[Proposition 3.27]{MR2452875}), there is an isomorphism of abelian groups
\[
H_{k\et}^i(X\times \boxx,\cO)
\cong
H_{\et}^i(X\times \P^1,\cO).
\]
This group is trivial for $i\geq 1$.
Hence we conclude from \eqref{ketreal.2.1}.

\

Suppose $n\neq p$.
Due to \cite[Proposition 5.3]{MR1457738}, we have the vanishing
\[
H_{k\et}^i(X\times \boxx, \Z/n)=0
\]
for every $i\geq 3$.
The exact sequence \eqref{ketreal.3.1} and Lemma \ref{ketreal.3} yield the exact sequence
\[
0 \to H_{k\et}^1(X\times \boxx,\Z/n) \to  \Z/n
\to  \Z/n \to  H_{k\et}^2(X\times \boxx,\Z/n) \to  0.
\]
Hence, to finish the proof, it suffices to show the vanishing
\[
H_{k\et}^1(X\times \boxx, \Z/n)=0.
\]
The Kummer sequence
\[
0 \to \mu_n \to \cM^{\gp} \xrightarrow{\cdot n} \cM^{\gp} \to 0
\]
induces the exact sequence
\[
H_{k\et}^0(X\times \boxx,\cM^{\gp})
\to
H_{k\et}^0(X\times \boxx,\cM^{\gp})
\to
H_{k\et}^1(X\times \boxx,\Z/n)
\to
H_{k\et}^1(X\times \boxx,\cM^{\gp}).
\]
Apply Proposition \ref{logPic.6} to conclude.
\end{proof}

\begin{thm}
\label{ketreal.1}
Let $X$ be an fs log scheme.
For every locally constant Kummer \'etale sheaf $\cF$ on $X_{k\et}$, 
the projection $p\colon X\times \boxx\to X$ naturally induces an isomorphism of graded rings
\[
p^*\colon H_{k\et}^*(X,\cF)\xrightarrow{\cong} H_{k\et}^*(X\times \boxx,p^*\cF).
\]
\end{thm}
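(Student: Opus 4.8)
The plan is to run the Leray spectral sequence for the projection $p\colon X\times\boxx\to X$ in the Kummer \'etale topology,
$$E_2^{i,j}=H_{k\et}^i(X,R^jp_*p^*\cF)\ \Longrightarrow\ H_{k\et}^{i+j}(X\times\boxx,p^*\cF),$$
and to reduce the theorem to the two identities $p_*p^*\cF\cong\cF$ (the adjunction unit $\cF\to p_*p^*\cF$ being an isomorphism) and $R^jp_*p^*\cF=0$ for $j>0$. Granting these, the spectral sequence collapses to the edge isomorphism $p^*\colon H_{k\et}^i(X,\cF)\xrightarrow{\simeq}H_{k\et}^i(X\times\boxx,p^*\cF)$, and this is automatically an isomorphism of graded rings since $p^*$ is always a ring homomorphism.

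Both identities are statements about the Kummer \'etale sheaves $R^jp_*p^*\cF$ on $X_{k\et}$, so they may be checked on stalks, using that the log geometric points form enough points for $X_{k\et}$. Fix a log geometric point $\bar x\to X$. Since $\cF$ is locally constant it becomes constant on some Kummer \'etale neighbourhood of $\bar x$, and the Kummer \'etale site of a log geometric point is trivial, so the restriction of $p^*\cF$ to the fibre $\bar x\times\boxx=\bar x\times_X(X\times\boxx)$ is the constant sheaf $\underline{A}$ with $A=\cF_{\bar x}$. Because the underlying morphism of $p$ is proper ($\ul\boxx=\P^1$), proper base change for Kummer \'etale cohomology (see \cite{NakayamaII}) identifies the stalk of $R^jp_*p^*\cF$ at $\bar x$ with $H_{k\et}^j(\bar x\times\boxx,\underline{A})$. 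Thus it remains to compute these cohomology groups.

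By d\'evissage we may assume $A$ is a finite abelian group, and by splitting it into cyclic summands that $A=\Z/n$ for some $n\ge2$; a general torsion coefficient group is then handled by passing to a filtered colimit, Kummer \'etale cohomology of the coherent log scheme $\bar x\times\boxx$ commuting with filtered colimits of sheaves. Now Lemma \ref{ketreal.2} gives precisely $H_{k\et}^0(\bar x\times\boxx,\Z/n)\cong\Z/n$ and $H_{k\et}^j(\bar x\times\boxx,\Z/n)=0$ for $j>0$. Hence $R^jp_*p^*\cF=0$ for $j>0$, while $R^0p_*p^*\cF$ has the same stalks as $\cF$, the unit map inducing the identification; this completes the argument.

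I expect the genuine work to lie in the second paragraph: making precise the base-change identification of the stalks of $R^jp_*p^*\cF$ with the Kummer \'etale cohomology of the log geometric fibre $\bar x\times\boxx$. One must invoke the correct proper base change statement in the Kummer \'etale setting for the morphism $p$, which has proper underlying morphism of schemes but is not strict; alternatively, one can bypass this by writing $\bar x$ as the cofiltered limit of its Kummer \'etale neighbourhoods in $X$ and using continuity of Kummer \'etale cohomology to reduce the stalk to $\varinjlim_U H_{k\et}^j(U\times\boxx,p^*\cF)$, but in either case a standard but nontrivial finiteness/compatibility input about the Kummer \'etale topos is needed. Once that is in place the rest is formal, the genuinely computational content having already been absorbed into Lemmas \ref{ketreal.2}, \ref{ketreal.3} and \ref{ketreal.4}.
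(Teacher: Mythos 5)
Your argument is correct and follows essentially the same route as the paper's proof: the Leray spectral sequence for $p$, reduction to stalks at log geometric points, and the computation of Lemma \ref{ketreal.2}. The base-change input you flag as the delicate point in your last paragraph is precisely the proper base change theorem for Kummer \'etale cohomology of Nakayama, cited in the paper as \cite[Theorem 5.1]{MR1457738}, which applies because $\ul{p}$ is proper (strictness is not required); the paper uses it to reduce to the case where $X$ itself is a log geometric point, where a locally constant sheaf is constant, so no further continuity argument is needed.
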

\begin{proof}
There is a Leray spectral sequence
\[
E_2^{pq}=H^q_{k\et}(X,\R^p_{k\et}p_*p^*\cF) \Rightarrow H^{p+q}(X\times \boxx,p^*\cF).
\]
Hence it suffices to show the vanishing
\[
\R^i_{k\et}p_*p^*\cF=0
\]
for every integer $i\geq 1$.
Let $x$ be a log geometric point of $X$.
We reduce to showing the vanishing
\[
(\R^i_{k\et}p_*p^*\cF)_x=0.
\]

\ 

There is a naturally induced cartesian square
\[
\begin{tikzcd}
x\times \boxx\arrow[d,"p'"']\arrow[r,"i'"]&
X\times \boxx\arrow[d,"p"]
\\
x\arrow[r,"i"]&
X.
\end{tikzcd}
\]
Applying the proper base change theorem \cite[Theorem 5.1]{MR1457738}
\[
i^*\R p_* \xrightarrow{\cong} \R p'^* i_*'
\]
we reduce to the case when $X$ is a log geometric point.
In this case, $\cF$ is a constant sheaf because it is locally constant.
Lemma \ref{ketreal.2} finishes the proof.
\end{proof}

Thanks to the following, 
several results in this section also hold for log \'etale cohomology.

\begin{prop}
\label{ketreal.15}
Let $X$ be an fs log scheme.
Then for every torsion abelian group $A$, 
there is a canonical isomorphism of graded rings
\[
H_{\ket}^*(X,A)
\cong
H_{\leta}^*(X,A).
\]
\end{prop}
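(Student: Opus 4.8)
The plan is to produce the isomorphism from the canonical morphism of sites $\epsilon\colon X_{\leta}\to X_{\ket}$ coming from the fact that the log \'etale topology is finer than the Kummer \'etale one, and to show that for the constant sheaf $A$ the unit map $A\to R\epsilon_*\epsilon^*A$ is an equivalence. Since $\epsilon^*$ carries the constant sheaf $A$ to the constant sheaf $A$, applying $R\Gamma(X,-)$ then gives $H^*_{\ket}(X,A)\xrightarrow{\simeq}H^*_{\leta}(X,A)$; compatibility of $\epsilon^*$ and of the unit with cup products upgrades this to an isomorphism of graded rings, which is the easy half. By the Leray spectral sequence for $\epsilon$ everything is reduced to the two statements $\epsilon_*A=A$ and $R^i\epsilon_*A=0$ for $i\geq 1$, i.e. to $R\epsilon_*A\simeq A$.

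Next I would reduce the coefficients. On a qcqs fs log scheme both $X_{\ket}$ and $X_{\leta}$ are coherent sites, so their cohomology commutes with filtered colimits of abelian sheaves; writing $A$ as the filtered colimit of its finite subgroups reduces to $A$ finite. By the Chinese remainder theorem and d\'evissage along $0\to\Z/\ell\to\Z/\ell^{k}\to\Z/\ell^{k-1}\to 0$, which $R\epsilon_*$ preserves, an induction on $k$ reduces us to $A=\Z/\ell$ for a prime $\ell$. The vanishing of $R^i\epsilon_*(\Z/\ell)$ is local on $X_{\ket}$, so we may also assume $X$ affine.

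The heart of the matter is $R\epsilon_*\Z/\ell\simeq\Z/\ell$. The log \'etale topology is generated over the Kummer \'etale topology by log blow-ups, equivalently by dividing covers (proper surjective log \'etale monomorphisms), by the structure theory of log \'etale morphisms. Since such an $f\colon Y\to X$ is a monomorphism, its \v{C}ech nerve is the constant simplicial object $Y$, so by the comparison lemma it suffices to show that $R\Gamma_{\ket}(X,\Z/\ell)\to R\Gamma_{\ket}(Y,\Z/\ell)$ is an equivalence for every log blow-up $f$. Zariski-locally $f$ is a base change of the toric morphism $\T_{\Sigma'}\to\T_{\Sigma}$ attached to a subdivision of fans (Example \ref{logtop.8bis}), so by Nakayama's proper base change theorem for Kummer \'etale cohomology \cite{MR1457738} this reduces to the case where $X=x$ is a log geometric point. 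Over a log geometric point the fibre of a log blow-up is a finite union of affine toric log schemes glued along the subdivision, and one computes directly that its Kummer \'etale cohomology with $\Z/\ell$-coefficients is $\Z/\ell$ in degree $0$: for $\ell$ invertible one combines Nakayama's computation of $H^*_{\ket}$ of fs log schemes over a log point with the contractibility of the simplicial complex of the subdivision, while for $\ell$ equal to the residue characteristic one argues with the Artin--Schreier(--Witt) sequence and the identification $H^*_{\ket}(-,\cO)\cong H^*_{\et}(\underline{(-)},\cO)$ used in the proof of Lemma \ref{ketreal.2} (cf. \cite{KatoLog2}, \cite{MR2452875}), thereby reducing to the analogous statement for the proper birational toric morphism of underlying schemes.

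The expected main obstacle is precisely this last step, the cohomological triviality of log blow-ups for \emph{all} torsion coefficients, and in particular the $p$-torsion case, where the classical theory does not apply and one must feed the coherent-cohomology comparison into the birational toric situation. If one is content to quote it, the whole Proposition is contained in Nakayama's ``Logarithmic \'etale cohomology II'' \cite{NakayamaII}, and the outline above is essentially a reorganisation of that comparison designed to isolate the single geometric input genuinely needed here, namely dividing descent for Kummer \'etale cohomology of constant torsion sheaves — a statement entirely in the spirit of Theorem \ref{ketreal.1}.
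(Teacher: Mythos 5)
Your proposal is correct, and it lands exactly where the paper does: the paper's entire proof of Proposition \ref{ketreal.15} is the citation you give in your last paragraph, namely Nakayama's comparison result \cite[Proposition 5.4(2)]{NakayamaII}. Your preceding sketch (reduction to $R\epsilon_*A\simeq A$, d\'evissage to $\Z/\ell$, and invariance of Kummer \'etale cohomology under dividing covers, including the $p$-torsion case via Artin--Schreier and the coherent comparison) is a faithful reorganisation of Nakayama's argument rather than a different route, so there is nothing to flag.
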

\begin{proof}
Follows from \cite[Proposition 5.4(2)]{NakayamaII}.
\end{proof}

\subsection{The log \'etale realization}
\label{subsection:ler}
Artin and Mazur \cite{zbMATH03290088} defined the \'etale homotopy type of a scheme $X$ as a pro-object 
in the homotopy category of simplicial sets.
Friedlander \cite{zbMATH03855996} promoted this to a pro-object of simplicial sets.
More recently, 
Lurie established that the \'etale homotopy type can given by the shape of an $\infty$-topos, 
see \cite[Example 7.1.6.9]{HTT}.
Suppose $k$ is a field of characteristic $p$.
For every prime $\ell\neq p$, 
Isaksen \cite[Theorem 6]{zbMATH02081955} constructed a left Quillen functor, 
whose underlying functor of $\infty$-categories is the \'etale realization functor
\[
\infH(k)
\to
\infPro(\infSpc)^{\Z/\ell}
\]
sending $X\in \Sm/k$ to its \'etale homotopy type for every prime $\ell\neq p$,
where $\infPro(\infSpc)^{\Z/\ell}$,
reviewed in Definition \ref{ketreal.7} below,
is a certain localization of the $\infty$-category of pro-spaces $\infPro(\infSpc)$.
Here, $\infPro(-)$ denotes the $\infty$-category of pro-objects.
An equivalent construction in terms of shape theory was given by Hoyois in \cite{Hoyoisetalesymmetric}.
Based on this, 
Zargar \cite{zbMATH07103864} constructed the stable \'etale realization functor.
To construct the log \'etale realization functor one may proceed as Isaksen in \cite{zbMATH02081955};
however, 
we will follow the method developed by Hoyois and Zargar using $\infty$-categories.

\begin{df}
\label{ketreal.7}
Let $X=\{X_\alpha\}_{\alpha\in I}$ be a pro-space, $A$ an abelian group, and $i\in\mathbb{Z}$.
The $i$th singular cohomology of $X$ with $A$-coefficients is the abelian group
\[
H^i(X,A)
:=
\pi_i(\Map_{\infPro(\infSpc)}(X,A))
\cong
\colimit_{\alpha\in I} H^i(X_\alpha,A).
\]
Let $\Eilenberg \Z/\ell$ denote the Eilenberg-MacLane spectrum associated with the abelian group $\Z/\ell$.
A map $f\colon Z\to Y$ in $\infPro(\infSpc)$ is an \emph{$(\Eilenberg \Z/\ell)^*$-local equivalence} 
if the naturally induced homomorphism of graded rings
\[
f^*
\colon
H^*(Y,\Z/\ell)
\to
H^*(Z,\Z/\ell)
\]
is an isomorphism.
A pro-space $X\in \infPro(\infSpc)$ is \emph{$(\Eilenberg \Z/\ell)^*$-local} if the map
\[
\Map_{\infPro(\infSpc)}(Y,X)
\to
\Map_{\infPro(\infSpc)}(Z,X)
\]
is an equivalence of spaces for every $(\Eilenberg \Z/\ell)^*$-local equivalence $f\colon Z\to Y$ in $\infPro(\infSpc)$.

Let $\infPro(\infSpc)^{\Z/\ell}$ denote the full subcategory of $\infPro(\infSpc)$ spanned by the 
$(\Eilenberg \Z/\ell)^*$-local pro-spaces.
\end{df}

There is an analogous definition for spectra as follows.

\begin{df}
\label{ketreal.8}
A map $f\colon Z\to Y$ in $\infPro(\infSpt)$ is an 
\emph{$(\Eilenberg \Z/\ell)^*$-local equivalence}\index{HZl-local equivalence@ $(\Eilenberg \Z/\ell)^*$-local equivalence} 
if the naturally induced map of spaces
\[
f^*
\colon
\Map_{\infPro(\infSpt)}(Y,\Eilenberg \Z/\ell)
\to
\Map_{\infPro(\infSpt)}(Z,\Eilenberg \Z/\ell)
\]
is an equivalence.
A pro-spectrum $X\in \infPro(\infSpt)$ is \emph{$(\Eilenberg \Z/\ell)^*$-local} if the map of spaces
\[
\Map_{\infPro(\infSpt)}(Y,X)
\to
\Map_{\infPro(\infSpt)}(Z,X)
\]
is an equivalence of spaces for every $(\Eilenberg \Z/\ell)^*$-local equivalence $f\colon Z\to Y$ in $\infPro(\infSpt)$.

Let $\infPro(\infSpt)^{\Z/\ell}$\index[notation]{ProSptZl @ $\infPro(\infSpt)^{\Z/\ell}$} denote the 
full subcategory of $\infPro(\infSpt)$ spanned by the $(\Eilenberg \Z/\ell)^*$-local pro-spectra.
\end{df}

Next we recall the definition of shapes, 
see \cite[\S 7.1.6]{HTT} and \cite[\S 4]{zbMATH07103864}.

\begin{df}
\label{ketreal.9}
A \emph{geometric morphism} of $\infty$-topoi is a functor $f_*\colon \cX\to \cY$ of $\infty$-topoi that admits 
a left exact left adjoint $f^*\colon \cY\to \cX$.
The functor $f^*$ admits a pro-left adjoint
\[
f_!\colon \cX\to \infPro(\cY)
\]
satisfying $f_!(X)(Y):=\Map_{\cX}(X,f^*Y)$.
By \cite[Proposition 6.3.4.1]{HTT}, 
there exists a unique geometric morphism $f_*\colon \cX\to \infSpc$ up to equivalence for every $\infty$-topoi $\cX$.
Furthermore, 
its left adjoint $f^*\colon \infSpc\to \cX$ is the constant sheaf functor.

The \emph{shape of $\cX$}\index{shape} is defined to be\index[notation]{PiX @ $\Pi(\cX)$}
\begin{equation}
\label{ketreal.9.1}
\Pi(\cX)
:=
f_!\one_{\cX}\in \infPro(\infSpc),
\end{equation}
where $\one_{\cX}$ denotes the final object of $\cX$.

Let $\infTopos^R$\index[notation]{ToposR @ $\infTopos^R$} denote the $\infty$-category of $\infty$-topoi, where $1$-morphisms are geometric morphisms.
According to \cite[Remark 7.1.6.15]{HTT}, there is a functor
\begin{equation}
\label{ketreal.9.2}
\Pi
\colon
\infTopos^R\to \infPro(\infSpc)
\end{equation}
sending $\cX$ to $\Pi(\cX)$, which preserves small colimits.
\end{df}

\begin{df}
\label{ketreal.10}
For $X\in \lSch$, we set\index[notation]{Pilet @ $\Pi^{\leta}$}
\[
\Pi^{\leta}(X)
:=
\Pi(\infShv(X_{\leta},\infSpc)).
\]
For any prime $\ell$, let $\Pi_{\ell}^{\leta}(X)$ be its image in $\infPro(\infSpc)^{\Z/\ell}$.
\end{df}

For every abelian group $A$, 
\eqref{ketreal.9.1} gives an isomorphism of abelian groups
\[
\pi_*(\Map_{\infShv(X_{\leta},\infSpc)}(X,A))
\cong
\pi_*(\Map_{\infPro(\infSpc)}(\Pi^{\leta}(X),A)).
\]
Hence we obtain an isomorphism of abelian groups
\begin{equation}
\label{ketreal.10.1}
H_{\leta}^*(X,A)
\cong
H^*(\Pi^{\leta}(X),A).
\end{equation}
Let $f\colon X\to Y$ be a morphism in $\lSch$.
Then the 
functor $f_*\colon\Shv(X_{\leta})\to \Shv(Y_{\leta})$ is a geometric morphism of topoi, 
and hence $f_*\colon \infShv(X_{\leta},\infSpc)\to \infShv(Y_{\leta},\infSpc)$ is a geometric morphism of $\infty$-topoi.
Combined with \eqref{ketreal.9.2}, 
we define the functor 
\begin{equation}
\label{ketreal.10.2}
\Pi^{\leta}
\colon
\lSch \to \infPro(\infSpc)
\end{equation}
by sending $X\in \lSch$ to the shape $\Pi^{\leta}(X_{\leta})$.
Composing with the localization functor we obtain the functor
\begin{equation}
\label{ketreal.10.3}
\Pi_{\ell}^\leta
\colon
\lSch\to \infPro(\infSpc)^{\Z/\ell}.
\end{equation}

\begin{lem}
\label{ketreal.17}
Let $U_\bullet \to X$ be a log \'etale hypercover in $\lSch$.
Then there is an equivalence of pro-spaces
\[
\Pi^{\leta}(X)
\simeq
\colimit_i(\Pi^{\leta}(U_i)).
\]
\end{lem}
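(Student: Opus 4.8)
The plan is to use the fact, recalled in Definition~\ref{ketreal.9}, that the shape functor $\Pi\colon \infTopos^R\to \infPro(\infSpc)$ preserves small colimits. Set $\cX:=\infShv(X_{\leta},\infSpc)$ and let $f\colon \cX\to \infSpc$ be the terminal geometric morphism, with $f_!$ the pro-left adjoint of $f^*$, so that $\Pi^{\leta}(X)=f_!\one_{\cX}$. The argument has two inputs. First, because $U_\bullet\to X$ is a log \'etale hypercover, the simplicial diagram of representable sheaves $y(U_\bullet)$ exhibits $\one_{\cX}=y(X)$ as its colimit in $\cX$. Second, for each $i$ one has $f_!\,y(U_i)\simeq \Pi^{\leta}(U_i)$, naturally in the simplicial variable. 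Granting these, and using that $f_!$ preserves small colimits,
\[
\Pi^{\leta}(X)
=
f_!\one_{\cX}
=
f_!\Big(\colimit_{\Delta^{\mathrm{op}}}y(U_\bullet)\Big)
\simeq
\colimit_{\Delta^{\mathrm{op}}}f_!\,y(U_\bullet)
\simeq
\colimit_{\Delta^{\mathrm{op}}}\Pi^{\leta}(U_\bullet),
\]
which is the claimed equivalence (with $\colimit_i$ understood as the geometric realization over $\Delta^{\mathrm{op}}$).

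For the second input, I would use the identification of slice topoi. Since each $U_i\to X$ is log \'etale, the small log \'etale site of $X$ based at $U_i\to X$ coincides with the small log \'etale site of $U_i$ --- this uses that log \'etale morphisms are stable under composition and that a morphism over $X$ between objects log \'etale over $X$ is again log \'etale. Hence $\cX_{/y(U_i)}\simeq \infShv((U_i)_{\leta},\infSpc)$, the \'etale geometric morphism $j_i\colon \cX_{/y(U_i)}\to\cX$ sends the terminal object to $y(U_i)$, and $f\circ j_i$ is the terminal geometric morphism of the slice. Since $f_!\circ j_{i,!}\simeq(f\circ j_i)_!$, we obtain $f_!\,y(U_i)\simeq(f j_i)_!\one\simeq\Pi(\infShv((U_i)_{\leta},\infSpc))=\Pi^{\leta}(U_i)$; functoriality of these constructions in $i$ upgrades this to an equivalence of $\Delta^{\mathrm{op}}$-indexed diagrams of pro-spaces.

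The main obstacle is the first input: the map $\colimit_{\Delta^{\mathrm{op}}}y(U_\bullet)\to\one_{\cX}$ attached to a hypercover is, by definition, $\infty$-connective, but it is an equivalence only after hypercompletion. If $\infShv(X_{\leta},\infSpc)$ is taken hypercomplete the input is immediate; otherwise one must check that $\Pi$ (equivalently $f_!$) inverts this $\infty$-connective map. For that I would invoke that $f^*$ is left exact and therefore preserves $n$-truncated objects for every $n$, so that $\Pi$ tested against truncated coefficient spaces is insensitive to $\infty$-connective maps --- which is enough for the pro-spaces at issue, in particular after the $\Z/\ell$-localization used for $\Pi_\ell^{\leta}$ in the sequel. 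Making this last point precise, and thereby reducing from a general hypercover to the (unconditionally valid) case where the colimit is literally $\one_{\cX}$, is the only non-formal step; the rest is bookkeeping with geometric morphisms and the universal properties of shapes.
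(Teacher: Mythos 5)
Your argument is essentially the paper's own: the paper disposes of this lemma by citing Hoyois's Remark 1.6 and noting that "the proof uses descent and the fact that \eqref{ketreal.9.2} preserves small colimits," which are precisely the two inputs you spell out (the hypercover exhibiting the terminal sheaf as a colimit, the slice-topos identification $\infShv(X_{\leta})_{/y(U_i)}\simeq \infShv((U_i)_{\leta})$, and colimit-preservation of the shape/pro-left adjoint). The hypercompleteness caveat you flag for general hypercovers is genuine but is equally implicit in the paper's citation, and your proposed resolution --- testing against truncated constant sheaves, which is all that matters for the $\Z/\ell$-local realization this lemma feeds into --- is the standard way it is handled, so your proposal does not diverge from the paper's route.
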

\begin{proof}
This is a special case of the claim in \cite[Remark 1.6]{Hoyoisetalesymmetric}.
The proof uses descent and the fact that \eqref{ketreal.9.2} preserves small colimits.
\end{proof}

The functor $\Pi_{\ell}^\leta$ in \eqref{ketreal.10.3} satisfies $\boxx$-invariance by Theorem \ref{ketreal.1}, 
Proposition \ref{ketreal.15}, and \eqref{ketreal.10.1}.
Moreover, 
$\Pi_{\ell}^\leta$ satisfies log \'etale descent due to Lemma \ref{ketreal.17}.
Hence by Proposition \ref{univlogSH.1}, 
for every $S\in \lSch$, 
we obtain a functor\index{Rellet @ $\Real_{\ell}^{\leta}$}
\begin{equation}
\label{ketreal.17.1}
\Real_{\ell}^{\leta}
\colon
\infShv_{\leta}(\lSm/S)[\square^{-1}]
\to
\infPro(\infSpc)^{\Z/\ell}
\end{equation}
sending $X\in \lSm/S$ to $\Pi_{\ell}^\leta(X)$.

\begin{lem}
\label{ketreal.19}
Suppose $k$ is a separably closed field of characteristic $p$ and $\ell\neq p$.
For every $X\in \lSm/k$ and integer $i$, there is a canonical isomorphism of abelian groups
\[
H_{\leta}^i(X,\Z/\ell)
\cong
H_{\et}^i(X-\partial X,\Z/\ell).
\]
\end{lem}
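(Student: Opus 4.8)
The plan is to reduce log étale cohomology to Kummer étale cohomology and then invoke Nakayama's comparison theorem identifying the latter with the étale cohomology of the locus where the log structure is trivial — the same tool already used in the proof of Lemma~\ref{ketreal.3}. First I would apply Proposition~\ref{ketreal.15}: since $\Z/\ell$ is a torsion abelian group, there is a canonical isomorphism $H^*_{\leta}(X,\Z/\ell)\cong H^*_{\ket}(X,\Z/\ell)$, and hence $H^i_{\leta}(X,\Z/\ell)\cong H^i_{\ket}(X,\Z/\ell)$ for every $i$. This reduces the statement to producing a canonical isomorphism $H^i_{\ket}(X,\Z/\ell)\cong H^i_{\et}(X-\partial X,\Z/\ell)$.

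For the latter I would apply \cite[Theorem~7.4]{MR1922832} to $X$. The hypotheses are met: $X\in\lSm/k$ is log smooth of finite type over $\Spec{k}$ equipped with the trivial (hence log regular) log structure, so $X$ is a log regular fs log scheme by stability of log regularity under log smooth base change \cite{Ogu}; the locus $X-\partial X$ is exactly the open subscheme of $\underline{X}$ on which $\cM_X$ is trivial by \cite[Proposition III.1.2.8]{Ogu}, regarded as a scheme; and $\ell$ is invertible in $k$ because $\ell\neq p=\mathrm{char}(k)$. Under these conditions \cite[Theorem~7.4]{MR1922832} asserts that restriction along the open immersion $X-\partial X\hookrightarrow X$ induces an isomorphism $H^i_{\ket}(X,\Z/\ell)\xrightarrow{\simeq}H^i_{\et}(X-\partial X,\Z/\ell)$ for all $i$. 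Composing with the isomorphism of Proposition~\ref{ketreal.15} yields the desired map, which is canonical (and functorial in $X$, as will be needed downstream) since both constituents are induced by canonical morphisms of sites. Note that separable closedness of $k$ plays no role in the argument; it matters only for the use of this lemma in computing shapes.

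The main obstacle is purely a matter of confirming that \cite[Theorem~7.4]{MR1922832} applies to an \emph{arbitrary} log smooth fs log scheme over $k$, whose underlying scheme may be a singular toric variety with toric boundary rather than a smooth scheme with a strict normal crossings divisor. If one wishes to invoke the comparison theorem only in the normal crossings case, one can first choose, by toric resolution of singularities \cite[Theorem 11.1.9]{CLStoric} together with Example~\ref{logtop.8bis}, a dividing cover $p\colon X'\to X$ with $X'\in\SmlSm/k$ (cf.\ the proof of Lemma~\ref{logSH.12}); then $H^*_{\leta}(X,\Z/\ell)\cong H^*_{\leta}(X',\Z/\ell)$ because $p$ is a covering monomorphism in the log étale topology, so its \v{C}ech nerve is constant, while $X'-\partial X'\to X-\partial X$ is an isomorphism by the definition of a dividing cover, so the target is unchanged as well; one then applies the comparison theorem to $X'$, whose boundary is a strict normal crossings divisor. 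I expect this detour to be unnecessary, as Nakayama's theorem is designed for exactly the log regular setting.
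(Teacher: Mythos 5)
Your proposal is correct and follows essentially the same route as the paper: reduce to Kummer étale cohomology via Proposition \ref{ketreal.15} and then invoke the Kummer-étale-to-étale comparison for log smooth fs log schemes from \cite{MR1922832} (the paper cites Corollary 7.5 there, you cite Theorem 7.4, but it is the same comparison result). The extra discussion about dividing covers and resolution is unnecessary, as you anticipated, since the comparison theorem applies directly in the log regular setting.
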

\begin{proof}
Combine Proposition \ref{ketreal.15} and \cite[Corollary 7.5]{MR1922832}.
\end{proof}

\begin{lem}
\label{ketreal.18}
Suppose $k$ is a separably closed field of characteristic $p$ and $\ell\neq p$.
For every $X,Y\in \lSm/k$, there exists a canonical
equivalence in $\infPro(\infSpc)^{\Z/\ell}$
\[
\Pi_{\ell}^{\leta}(X\times_k Y)
\simeq
\Pi_{\ell}^{\leta}(X)
\times
\Pi_{\ell}^{\leta}(Y).
\]
\end{lem}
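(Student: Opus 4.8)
The plan is to reduce the statement to the Künneth formula in étale cohomology, following the strategy of Hoyois \cite{Hoyoisetalesymmetric} and Zargar \cite{zbMATH07103864} for ordinary schemes, the new ingredient being the identification of Kummer étale cohomology of $X$ with étale cohomology of $X-\partial X$ provided by Lemma \ref{ketreal.19}. First I would construct the comparison map: the projections $p\colon X\times_k Y\to X$ and $q\colon X\times_k Y\to Y$ induce, by functoriality of $\Pi_\ell^{\leta}$ in \eqref{ketreal.10.3}, a map
\[
\Pi_\ell^{\leta}(X\times_k Y)\longrightarrow \Pi_\ell^{\leta}(X)\times \Pi_\ell^{\leta}(Y)
\]
in $\infPro(\infSpc)^{\Z/\ell}$, where the product on the right may be computed already in $\infPro(\infSpc)$, since a product of $(\Eilenberg\Z/\ell)^*$-local pro-spaces is again $(\Eilenberg\Z/\ell)^*$-local (a mapping space into a product is the product of the mapping spaces, so it carries $(\Eilenberg\Z/\ell)^*$-local equivalences to equivalences). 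By Definition \ref{ketreal.7} and the standard fact that a local equivalence between local objects is an equivalence, it suffices to show this map induces an isomorphism on $H^*(-,\Z/\ell)$.

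Next I would compute both sides. For the source, combining the shape-cohomology identification \eqref{ketreal.10.1} with Lemma \ref{ketreal.19}, together with the formula $(X\times_k Y)-\partial(X\times_k Y)\cong (X-\partial X)\times_k(Y-\partial Y)$ (valid for the product in $\lSch$ of objects of $\lSm/k$ with their compactifying log structures, as in the proof of Proposition \ref{logtop.5}), gives
\[
H^*(\Pi_\ell^{\leta}(X\times_k Y),\Z/\ell)\cong H_{\et}^*\bigl((X-\partial X)\times_k(Y-\partial Y),\Z/\ell\bigr).
\]
For the target, presenting $\Pi_\ell^{\leta}(X)$ and $\Pi_\ell^{\leta}(Y)$ as pro-objects of spaces with degreewise finite $\Z/\ell$-cohomology and using that the cohomology of a cofiltered product of pro-spaces is the filtered colimit of the topological Künneth formulas of the levels — together with $\Z/\ell$ being a field and the fact that filtered colimits commute with tensor products and finite direct sums — one gets
\[
H^*\bigl(\Pi_\ell^{\leta}(X)\times\Pi_\ell^{\leta}(Y),\Z/\ell\bigr)\cong H_{\et}^*(X-\partial X,\Z/\ell)\otimes_{\Z/\ell}H_{\et}^*(Y-\partial Y,\Z/\ell),
\]
again via \eqref{ketreal.10.1} and Lemma \ref{ketreal.19}. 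Under these identifications the comparison map becomes the étale cohomological cross product $a\otimes b\mapsto p^*a\smile q^*b$.

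Finally I would invoke the Künneth isomorphism in étale cohomology: since $k$ is separably closed, $\ell$ is invertible in $k$, and $X-\partial X$, $Y-\partial Y$ are of finite type over $k$ — so their $\Z/\ell$-étale cohomology groups are finite by Deligne's finiteness theorem — the cross product
\[
H_{\et}^*(X-\partial X,\Z/\ell)\otimes_{\Z/\ell}H_{\et}^*(Y-\partial Y,\Z/\ell)\xrightarrow{\ \cong\ }H_{\et}^*\bigl((X-\partial X)\times_k(Y-\partial Y),\Z/\ell\bigr)
\]
is an isomorphism, whence the comparison map is an $(\Eilenberg\Z/\ell)^*$-local equivalence, hence an equivalence in $\infPro(\infSpc)^{\Z/\ell}$. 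The main obstacle I anticipate is organizing the two Künneth computations so that they are genuinely compatible — in particular, ensuring the $\ell$-complete pro-spaces $\Pi_\ell^{\leta}(X)$, $\Pi_\ell^{\leta}(Y)$ can be presented with degreewise finite $\Z/\ell$-cohomology so that topological Künneth applies without $\mathrm{Tor}$-terms, and matching this with the $\mathrm{Tor}$-free étale Künneth formula, which itself rests on finiteness of étale cohomology for finite-type schemes over a separably closed field.
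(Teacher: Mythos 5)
Your proposal is correct and follows essentially the same route as the paper: both sides are identified, via \eqref{ketreal.10.1} and Lemma \ref{ketreal.19}, with étale cohomology of $(X-\partial X)\times_k(Y-\partial Y)$ and with the tensor product $H_{\et}^*(X-\partial X,\Z/\ell)\otimes_{\Z/\ell}H_{\et}^*(Y-\partial Y,\Z/\ell)$, and the comparison is then the Künneth isomorphism (topological for the product of pro-spaces, étale via SGA~4\textonehalf, Th.\ finitude). Your extra remarks on constructing the comparison map and on locality of the product just make explicit what the paper leaves implicit.
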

\begin{proof}
We have isomorphisms of graded rings
\begin{equation}
\label{ketreal.18.1}
\begin{split}
H^*(\Pi_{\ell}^{\leta}(X)\times \Pi_\ell^{\leta}(Y),\Z/\ell)
&\cong
H^*(\Pi_{\ell}^{\leta}(X),\Z/\ell)
\otimes_{\Z/\ell}
H^*(\Pi_{\ell}^{\leta}(Y),\Z/\ell)
\\
&\cong
H_{\leta}^*(X,\Z/\ell)
\otimes_{\Z/\ell}
H_{\leta}^*(Y,\Z/\ell)
\\
&\cong
H_{\et}^*(X-\partial X,\Z/\ell)
\otimes_{\Z/\ell}
H_{\et}^*(Y-\partial Y,\Z/\ell),
\end{split}
\end{equation}
where the first isomorphism comes from the K\"unneth formula in algebraic topology, the second isomorphism comes from \eqref{ketreal.10.1}, and the third isomorphism comes from Lemma \ref{ketreal.19}.

We also have isomorphisms of graded rings
\begin{equation}
\label{ketreal.18.2}
\begin{split}
H^*(\Pi_{\ell}^{\leta}(X\times_k Y),\Z/\ell)
&\cong
H_{\leta}^*(X\times_k Y,\Z/\ell)
\\
&\cong
H_{\et}^*(X-\partial X\times_k Y-\partial Y,\Z/\ell)
\end{split}
\end{equation}
where the first isomorphism comes from \eqref{ketreal.10.1}, and the second isomorphism comes Lemma \ref{ketreal.19}.
The K\"unneth formula
\[
\R\Gamma_{\et}(X-\partial X\times_k Y-\partial Y,\Z/\ell)
\simeq
\R\Gamma_{\et}(X-\partial X,\Z/\ell)
\otimes^{\bbL}
\R\Gamma_{\et}(Y-\partial Y,\Z/\ell)
\]
in \cite[{Th.\ finitude, Corollaire 1.11}]{SGA4half} gives an isomorphism of graded rings
\begin{equation}
\label{ketreal.18.3}
H_{\et}^*(X-\partial X,\Z/\ell)
\otimes_{\Z/\ell}
H_{\et}^*(Y-\partial Y,\Z/\ell)
\cong H_{\et}^*(X-\partial X\times_k Y-\partial Y,\Z/\ell)
\end{equation}
since $\Z/\ell$ is a field.

Combine \eqref{ketreal.18.1}, \eqref{ketreal.18.2}, and \eqref{ketreal.18.3} to conclude the desired equivalence.
\end{proof}

If $k$ is a separably closed field of characteristic $p$ and $\ell\neq p$, 
we have the $\ell$-adic Tate module $T_{\ell}\mu:=\limit_r \mu_{\ell^r}$.
There is an isomorphism of pro-groups $T_{\ell}\mu \cong \Z_\ell$.
In $\infPro(\infSpc)^{\Z/\ell}$,
we have the equivalent mixed spheres
\[
S_{\ell}^{1,0}:=K(\Z_\ell,1)
\text{ and }
S_{\ell}^{1,1}:=K(T_\ell \mu,1),
\]
where $K(-,1)$ denotes the Eilenberg-MacLane space. 
The naturally induced functor $\infSpc\to \infPro(\infSpc)^{\Z/\ell}$ sends $S^1$ to $S_\ell^{1,0}$.
Hence $S_\ell^{1,0}$ is invertible in $\infPro(\infSpt)^{\Eilenberg \Z/\ell}$.
It follows that $S_{\ell}^{1,1}$ is invertible in 
$\infPro(\infSpt)^{\Eilenberg \Z/\ell}$ too.

\begin{lem}
\label{ketreal.12}
Suppose $k$ is a separably closed field of characteristic $p$ and $\ell\neq p$.
In $\infPro(\infSpc)^{\Z/\ell}$, 
there is an equivalence
\[
\Pi_{\ell}^{\leta}(\Gmm)/\Pi_p^{\leta}(\pt)
\simeq
S_{\ell}^{1,1}.
\]
\end{lem}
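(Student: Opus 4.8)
The plan is to reduce this to the computation of Kummer \'etale cohomology of $\Gmm$ over a log geometric point carried out in the previous subsection, exactly as the analogous $\A^1$-homotopy-theoretic fact $\mathrm{Re}_\ell(\G_m/1)\simeq S_\ell^{1,1}$ is proven. Since $\Real_\ell^{\leta}$ preserves finite colimits (being a left adjoint / colimit-preserving functor in the sense of Proposition \ref{univlogSH.1}, restricted appropriately), the cofiber $\Pi_\ell^{\leta}(\Gmm)/\Pi_p^{\leta}(\pt)$ is the realization of the pointed log motivic space $\Gmm/1$. It therefore suffices to identify this with $S_\ell^{1,1}$ in $\infPro(\infSpc)^{\Z/\ell}$.

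First I would use the fact that $(\Eilenberg\Z/\ell)^*$-local pro-spaces are detected by $\Z/\ell$-cohomology: two such objects are equivalent if a natural map induces an isomorphism on $H^*(-,\Z/\ell)$, and more precisely one constructs the equivalence via the first Chern class / the Kummer sequence. Concretely, $\Gmm=(\P^1,0+\infty)$, and $\Gmm-\partial\Gmm\cong \G_m$ over $k$; so by Lemma \ref{ketreal.19} we have $H_{\leta}^i(\Gmm,\Z/\ell)\cong H_{\et}^i(\G_m,\Z/\ell)$, which together with \eqref{ketreal.10.1} gives $H^i(\Pi_\ell^{\leta}(\Gmm),\Z/\ell)\cong H^i(\G_m,\Z/\ell)$. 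This is $\Z/\ell$ in degrees $0$ and $1$ and vanishes otherwise, with the degree $1$ part canonically identified (via the Kummer sequence) with $\mu_\ell^\vee$, i.e.\ a copy of $T_\ell\mu/\ell$. Passing to the reduced cohomology of the cofiber $\Gmm/1$ kills the degree $0$ class, leaving exactly $\mu_\ell^\vee$ in degree $1$, which is the reduced $\Z/\ell$-cohomology of $S_\ell^{1,1}=K(T_\ell\mu,1)$.

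Next I would produce an actual map rather than just a cohomology comparison. The map $1\to \Gmm$ and the Kummer sequence $0\to \mu_{\ell^r}\to \cM^\gp\xrightarrow{\cdot \ell^r}\cM^\gp\to 0$ on $\Gmm$ yield, after taking the limit over $r$, a class in $H_{\leta}^1(\Gmm, T_\ell\mu)$ restricting to $0$ on the basepoint; equivalently a map of pro-spaces $\Pi_\ell^{\leta}(\Gmm)/\Pi_\ell^{\leta}(\pt)\to K(T_\ell\mu,1)=S_\ell^{1,1}$. (Here one uses that $H_{\leta}^1(\Gmm,\cM^\gp)$ and $H_{\leta}^1(\pt,\cM^\gp)$ are controlled by Proposition \ref{logPic.6}, so the relevant class exists and its restriction to the basepoint vanishes — this is the log analogue of $\cO(1)$ generating $\Pic(\P^1)$.) By the cohomology computation of the previous paragraph this map is an $(\Eilenberg\Z/\ell)^*$-local equivalence between $(\Eilenberg\Z/\ell)^*$-local objects, hence an equivalence in $\infPro(\infSpc)^{\Z/\ell}$.

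The main obstacle I expect is the second step: checking that the candidate map is genuinely an equivalence after $(\Eilenberg\Z/\ell)^*$-localization, not merely a $\Z/\ell$-cohomology isomorphism in each individual degree. One must know that a map of $(\Eilenberg\Z/\ell)^*$-local pro-spaces inducing an isomorphism on $H^*(-,\Z/\ell)$ is an equivalence — this is the pro-space analogue of the mod-$\ell$ Whitehead theorem and is available in the setup of Hoyois \cite{Hoyoisetalesymmetric} and Zargar \cite{zbMATH07103864}, but one has to be careful that $\Pi_\ell^{\leta}(\Gmm)$ is connected and has the right (pro-)fundamental group, i.e.\ that there are no phantom contributions. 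A secondary subtlety is matching the \emph{twist}: one must check the degree $1$ cohomology class is identified with $T_\ell\mu$ (giving weight $(1,1)$) and not with the constant $\Z_\ell$, which is exactly what the Kummer sequence rather than Artin–Schreier or a constant-coefficient argument provides, and is why the statement is phrased with $S_\ell^{1,1}$. Once these points are in place the rest is formal.
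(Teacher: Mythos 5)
Your proposal is correct and essentially matches the paper's argument: both rest on the computation $H^i_{\leta}(\Gmm,\Z/\ell)\cong H^i_{\et}(\G_m,\Z/\ell)$ (so $\Z/\ell$ in degrees $0,1$ and nothing above), both produce a comparison map to $K(T_\ell\mu,1)=\lim_r \clspace\mu_{\ell^r}$ encoding the $\ell$-power Kummer torsor, and both conclude by noting that a $\Z/\ell$-cohomology isomorphism is by definition an $(\Eilenberg\Z/\ell)^*$-local equivalence, hence an equivalence in $\infPro(\infSpc)^{\Z/\ell}$ — no separate pro-Whitehead input is needed beyond this. The only (cosmetic) difference is that the paper builds the map geometrically from the Kummer \'etale covers $m_r\colon \Gmm\to\Gmm$, $x\mapsto x^{\ell^r}$, whose \v{C}ech nerves have connected component $\clspace\mu_{\ell^r}$, rather than from the class in $H^1$ furnished by the Kummer sequence; these encode the same torsor, and the compatibility in $r$ you worried about is handled there by the evident tower of covers.
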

\begin{proof}
By combining \cite[Corollary 7.5]{MR1922832} and Proposition \ref{ketreal.15} we obtain the computation of 
log \'etale cohomology groups
\begin{equation}
\label{ketreal.12.2}
H_{\leta}^i(\Gmm,\Z/\ell)
\cong
\left\{
\begin{array}{ll}
\Z/\ell & \text{if }i=0,1,
\\
0 & \text{if }i>1.
\end{array}
\right.
\end{equation}

The fs log scheme $\Gmm$ has a Zariski cover consisting of the two patches
\[
U_1:=\Spec{\N x\to \Z[x]}
\text{ and }
U_2:=\Spec{\N (1/x) \to \Z[1/x]}.
\]
For every integer $r\geq 1$, there is a morphism
\[
m_r
\colon
\Gmm\to \Gmm
\]
given by the formula $x\mapsto x^{\ell^r}$.
The assumption $\ell\neq p$ implies that $m_{\ell^r}$ is Kummer \'etale for every integer $r\geq 1$.
The connected component of the \v{C}ech nerve associated with $m_r$ is $\clspace \mu_{\ell^r}$.
Hence we have a morphism of pro-spaces
\[
\alpha_{\ell^r}
\colon
\Pi_{\ell}^{\leta}(\Gmm) \to \clspace \mu_{\ell^r}.
\]
One can readily check that the naturally induced homomorphism of abelian groups
\begin{equation}
\label{ketreal.12.1}
\alpha_{\ell^r}^*
\colon
H^i(\clspace \mu_{\ell^r},\Z/\ell)
\to
H^i(\Pi_{\ell}^{\leta}(\Gmm),\Z/\ell)
\cong
H_{\leta}^i(\Gmm,\Z/\ell)
\end{equation}
is an isomorphism for $i=0,1$.
Collecting $\alpha_{\ell^r}$ for all $r\geq 1$ we produce a morphism of pro-spaces
\[
\beta_{\ell}
\colon
\Pi_{\ell}^{\leta}(\Gmm) \to \limit_r \clspace \mu_{\ell^r}.
\]
The quotient homomorphism $\mu_{\ell^{r+1}}\to \mu_{\ell^r}$ naturally induces a homomorphism
\[
H^i(\clspace\mu_{\ell^r},\Z/\ell)
\to
H^i(\clspace\mu_{\ell^{r+1}},\Z/\ell)
\]
for every integer $i\geq 0$.
This is an isomorphism for $i=0,1$ and the zero map for every integer $i>1$, see \cite[Exercise 1, \S XIII.1]{zbMATH03657912}.
Since \eqref{ketreal.12.1} is an isomorphism for $i=0,1$, we deduce that $\beta_\ell$ is an equivalence in $\infPro(\infSpc)^{\Z/\ell}$ together with the computation \eqref{ketreal.12.2}.
\end{proof}

Suppose $k$ is a separably closed field of characteristic $p$ and $\ell\neq p$.
The composite
\[
\lSm/k
\xrightarrow{\Pi_{\ell}^{\leta}}
\infPro(\infSpc)^{\Z/\ell}
\to
\infPro(\infSpt)^{\Eilenberg \Z/\ell}
\]
is symmetric monoidal by Lemma \ref{ketreal.18}, 
and it sends $1\to \P^1$ to a map in $\infPro(\infSpt)^{\Eilenberg \Z/\ell}$ whose cofiber is an invertible object by 
$\boxx$-invariance of $\Pi_{\ell}^{\leta}$ and Lemma \ref{ketreal.12}.
Hence Proposition \ref{univlogSH.5} gives a functor\index{Rellet @ $\Real_{\ell}^{\leta}$}
\begin{equation}
\label{ketreal.12.3}
\Real_{\ell}^{\leta}
\colon
\inflogSH_{\leta}(k)
\to
\infPro(\infSpt)^{\Eilenberg \Z/\ell}
\end{equation}
of $\infty$-categories, which we call the \emph{stable $\ell$-adic realization functor}.

\begin{lem}
\label{ketreal.16}
Let $f\colon Y\to X$ be a log \'etale morphism of fs log schemes.
If there is a strict morphism $X\to \A_\N$,
then $f$ is Kummer \'etale.
\end{lem}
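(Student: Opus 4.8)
The plan is to reduce to a pointwise computation on stalks of the sheaf of monoids. Recall that $f$ is Kummer precisely when, for every $y\in Y$ with image $x=f(y)$, the induced homomorphism of sharp fs monoids $\ol{\cM}_{X,x}\to\ol{\cM}_{Y,y}$ is Kummer, i.e.\ injective and $\Q$-surjective; see \cite[\S 2]{MR1922832}. So I would fix such $x$ and $y$. Since $X\to\A_\N$ is strict, $\ol{\cM}_X$ is the pullback of $\ol{\cM}_{\A_\N}$, and as $\A_\N$ is $\A^1$ with the log structure along the origin (so $\ol{\cM}_{\A_\N}$ is $\N$ along that divisor and $0$ elsewhere), the monoid $\ol{\cM}_{X,x}$ is either $0$ (when $x\in X-\partial X$) or $\N$.

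First I would dispose of the case $\ol{\cM}_{X,x}=0$. Then $X$ has trivial log structure on an open neighbourhood $U$ of $x$, so $f^{-1}(U)\to U$ is log \'etale over a scheme with trivial log structure. By the chart characterisation of log \'etale morphisms (the proposition following Definition~\ref{def:chart_logsmooth}, or \cite[Theorem IV.3.3.1]{Ogu}), \'etale-locally near $y$ such a morphism is the base change of $\A_\theta$ for a monoid homomorphism $\theta\colon 0\to Q$ with $Q$ fs and $\coker(\theta^{\gp})=Q^{\gp}$ finite; since $Q^{\gp}$ is torsion-free this forces $Q=0$, so $f$ is strict near $y$. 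In particular $\ol{\cM}_{Y,y}=0$ and $\ol{\cM}_{X,x}\to\ol{\cM}_{Y,y}$ is trivially Kummer.

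The remaining case is $\ol{\cM}_{X,x}=\N$. Here the pullback of $\N$ is a chart of $X$ at $x$ inducing an isomorphism $\N\xrightarrow{\simeq}\ol{\cM}_{X,x}$, and I would apply Kato's structure theorem for log \'etale morphisms in its neat-chart form: \'etale-locally near $y$ there is a chart $\theta\colon \N\to Q$ of $f$ with $Q$ sharp and fs, which is exactly the stalk map $\ol{\cM}_{X,x}\to\ol{\cM}_{Y,y}$, such that $\theta$ is injective and $\coker(\theta^{\gp})$ is finite. It then remains to observe that any such $\theta$ is automatically $\Q$-surjective: given $q\in Q$, choose $N\ge 1$ with $Nq=\theta^{\gp}(m)$ for some $m\in\Z$; if $m<0$ then $Nq+\theta(-m)=0$ in $Q^{\gp}$ with $Nq,\theta(-m)\in Q$, and sharpness of $Q$ forces $Nq=\theta(-m)=0$, hence $q=0$ and, by injectivity of $\theta^{\gp}$, $m=0$, a contradiction; so $m\ge 0$ and $Nq=\theta(m)\in\theta(\N)$. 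Thus $\ol{\cM}_{X,f(y)}\to\ol{\cM}_{Y,y}$ is Kummer for every $y$, so $f$ is Kummer, and being also log \'etale it is Kummer \'etale.

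The step I expect to be the only real subtlety is the invocation of Kato's theorem in exactly the form needed: that the chart $\theta$ may be taken to be the stalk map of $\ol{\cM}$ (both monoids sharp), so the monoid computation above literally describes $\ol{\cM}_{X,x}\to\ol{\cM}_{Y,y}$ rather than an auxiliary chart. If only the version with a general chart $P\to Q$ of $f$ (with $P$ a chart of $X$) is at hand, I would supplement it with the standard fact that Kummer homomorphisms are stable under base change (again \cite[\S 2]{MR1922832}) to pass from the chart level back to the stalk level.
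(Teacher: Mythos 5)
Your proof is correct in substance, but it takes a genuinely different route from the paper's. The paper argues via one global chart: étale-locally on $Y$ it invokes \cite[Theorem IV.3.3.1]{Ogu} to obtain a chart $\theta\colon \N\to P$ of $f$ with $\theta$ injective, $\coker(\theta^{\gp})$ finite, and $Y\to X\times_{\A_\N}\A_P$ strict étale; it then checks by an elementary monoid computation that $\theta$ itself is Kummer, so the projection $X\times_{\A_\N}\A_P\to X$ is Kummer étale, and concludes because Kummer étale morphisms are stable under composition with strict étale ones. You instead verify the definition pointwise on the sharpened stalks $\ol{\cM}_{X,x}\to\ol{\cM}_{Y,y}$, split according to $\ol{\cM}_{X,x}\in\{0,\N\}$, and reduce to the same rank-one computation (an injective homomorphism from $\N$ with finite cokernel of groupifications into a sharp fs monoid is Kummer). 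The paper's route buys brevity and avoids any neat-chart input; yours works directly with the definition of Kummer and makes explicit exactly where sharpness is used.

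Two local repairs are needed. First, in the case $\ol{\cM}_{X,x}=0$, the justification ``since $Q^{\gp}$ is torsion-free this forces $Q=0$'' is not valid for an arbitrary fs chart monoid $Q$: only \emph{sharp} fs monoids have torsion-free groupification (e.g.\ $Q=\Z/2$ is fs with $Q^{\gp}$ finite and nonzero). The conclusion you want still holds and is easy to restore: $Q^{\gp}$ finite and $Q$ saturated force $Q$ to be a finite group, so the associated log structure is trivial and $f$ is strict near $y$; alternatively use a neat (hence sharp) chart in this case as well. Second, your fallback for the main case --- applying base-change stability of Kummer homomorphisms to a general Kato chart --- does not work as stated, because the chart homomorphism $\N\to Q$ produced by the structure theorem need not be Kummer: over the locus where the log structure trivializes it can be $\N\hookrightarrow\Z$. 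So you should either stick with the neat-chart form you describe (it is available étale-locally, cf.\ the neat-chart statements in \cite{Ogu} such as Theorem III.1.2.7, since the relevant torsion orders are invertible for a log étale morphism), or relate the general chart to the stalk map by sharpening/localizing at $y$ before running the rank-one computation.
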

\begin{proof}
The question is \'etale local on $Y$, 
so owing to \cite[Theorem IV.3.3.1]{Ogu} we may assume $f$ admits a chart $\theta\colon \N\to P$ 
with the following properties: 
$\theta$ is injective, 
the cokernel of $\theta^{\gp}$ is finite, 
and the naturally induced morphism $g\colon Y\to X\times_{\A_\N}\A_P$ is strict \'etale.

An elementary argument shows that for every $x\in P$, 
there exists an integer $n>0$ such that $nx$ is in the image of $\theta$.
In other words, $\theta$ is Kummer.
Hence the projection $p\colon X\times_{\A_\N}\A_P\to X$ is Kummer \'etale.
Since $g$ is strict \'etale, $f=pg$ is Kummer \'etale.
\end{proof}

\begin{lem}
\label{ketreal.14}
Suppose $k$ is a separably closed field of characteristic $p>0$.
In $\infPro(\infSpc)^{\Z/p}$, there is an equivalence
\[
\Pi_{p}^{\leta}(\Gmm)/\Pi_p^{\leta}(\pt)
\simeq
0.
\]
\end{lem}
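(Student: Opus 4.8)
The plan is to mirror the proofs of Lemma~\ref{ketreal.2} (the case $n=p$) and Lemma~\ref{ketreal.12}, the key difference being that in characteristic $p$ the sheaf $\Z/p$ is governed by Artin--Schreier theory rather than Kummer theory. First I would move from log \'etale to Kummer \'etale cohomology: since $\Z/p$ is a torsion abelian group, Proposition~\ref{ketreal.15} supplies isomorphisms of graded rings $H_{\leta}^*(\Gmm,\Z/p)\cong H_{\ket}^*(\Gmm,\Z/p)$ and $H_{\leta}^*(\Spec{k},\Z/p)\cong H_{\ket}^*(\Spec{k},\Z/p)$, compatible with the structure morphism $\Gmm\to \Spec{k}$. (Here $\ul{\Gmm}=\P^1_k$ and $\Gmm$ is pointed at $1$.)

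Next I would compute $H_{\ket}^*(\Gmm,\Z/p)$ from the Artin--Schreier sequence $0\to \Z/p\to \cO\xrightarrow{F-\id}\cO\to 0$ of Kummer \'etale sheaves (the morphism $F-\id\colon \G_a\to \G_a$ is \'etale and surjective, so the sequence is exact on $\Gmm_{\ket}$, exactly as in the proof of Lemma~\ref{ketreal.2}). By \cite[Proposition~6.5]{KatoLog2} (see also \cite[Proposition~3.27]{MR2452875}), one has $H_{\ket}^i(\Gmm,\cO)\cong H_{\et}^i(\ul{\Gmm},\cO_{\ul{\Gmm}})=H_{Zar}^i(\P^1_k,\cO_{\P^1_k})$, which is $k$ for $i=0$ and $0$ for $i\geq 1$. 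The induced additive endomorphism $F-\id\colon k\to k$ has kernel $\mathbb{F}_p$ and is surjective, since $k$ is separably closed and $x^p-x-a$ is a separable polynomial for every $a\in k$. The long exact cohomology sequence then gives $H_{\ket}^0(\Gmm,\Z/p)\cong \mathbb{F}_p\cong \Z/p$, $H_{\ket}^1(\Gmm,\Z/p)\cong \coker(F-\id)=0$, and $H_{\ket}^i(\Gmm,\Z/p)=0$ for $i\geq 2$. Running the same argument with $\P^1_k$ replaced by $\Spec{k}$ yields $H_{\ket}^*(\Spec{k},\Z/p)\cong \Z/p$ concentrated in degree $0$, and the structure morphism $\Gmm\to \Spec{k}$ induces an isomorphism on $\ket$-cohomology (the unit class maps to the unit class).

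Finally, using the identification $H_{\leta}^*(X,\Z/p)\cong H^*(\Pi^{\leta}(X),\Z/p)$ of \eqref{ketreal.10.1}, the morphism $\Pi^{\leta}(\Gmm)\to \Pi^{\leta}(\Spec{k})$ is a $(\Eilenberg \Z/p)^*$-local equivalence, hence an equivalence $\Pi_p^{\leta}(\Gmm)\xrightarrow{\simeq}\Pi_p^{\leta}(\pt)$ in $\infPro(\infSpc)^{\Z/p}$; its section given by the distinguished point $1\colon \pt\to \Gmm$ is therefore also a $(\Eilenberg \Z/p)^*$-local equivalence. Since the localization functor $\infPro(\infSpc)_*\to \infPro(\infSpc)^{\Z/p}$ is a left adjoint and thus preserves cofibers, $\Pi_p^{\leta}(\Gmm)/\Pi_p^{\leta}(\pt)=\cofib\bigl(\Pi_p^{\leta}(\pt)\to \Pi_p^{\leta}(\Gmm)\bigr)$ is the cofiber of an equivalence, so it is $\simeq 0$. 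The only step that genuinely uses the hypothesis that $k$ has characteristic $p$, and the closest thing to a real obstacle, is the surjectivity of $F-\id$ on $k$, which forces the vanishing of $H_{\ket}^1(\Gmm,\Z/p)$; this is precisely the mechanism behind the contrast with Lemma~\ref{ketreal.12}, where $\mu_{\ell^r}$ is Kummer \'etale and the mixed sphere $S_\ell^{1,1}$ survives $\ell$-adically.
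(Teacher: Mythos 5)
Your proposal is correct and is essentially the paper's argument: both reduce the statement to the computation $H^{i}_{\leta}(\Gmm,\Z/p)\cong\Z/p$ for $i=0$ and $0$ for $i>0$, obtained from the Artin--Schreier sequence together with the identification of the structure-sheaf cohomology with $H^{i}_{\et}(\P^1,\cO_{\P^1})$ via \cite[Proposition 6.5]{KatoLog2}, and then conclude that $\Pi_p^{\leta}(\Gmm)\to\Pi_p^{\leta}(\pt)$ is a $(\Eilenberg\Z/p)^*$-local equivalence. The only (cosmetic) difference is that you transfer the $\Z/p$-coefficient cohomology to the Kummer \'etale site via Proposition \ref{ketreal.15} before running Artin--Schreier, whereas the paper runs the sequence log-\'etale-locally and compares $\cO$-cohomology using Lemma \ref{ketreal.16}; you also spell out the surjectivity of $F-\id$ on $k$ and the final cofiber-of-an-equivalence step, which the paper leaves implicit.
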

\begin{proof}
We need to check that 
\[
H_{\leta}^i(\Gmm,\Z/p)
\cong
\left\{
\begin{array}{ll}
\Z/p & \text{if }i=0,
\\
0 & \text{if }i>0.
\end{array}
\right.
\]
The Artin-Schreier sequence
\[
0\to \Z/p\to \mathbb{G}_a\to \mathbb{G}_a\to 0
\]
gives a long exact sequence
\[
\cdots
\to
H_{\leta}^i(\Gmm,\Z/p)
\to
H_{\leta}^i(\Gmm,\cO_{\Gmm})
\to
H_{\leta}^i(\Gmm,\cO_{\Gmm})
\to
H_{\leta}^{i+1}(\Gmm,\Z/p)
\to
\cdots.
\]
Hence it remains to check the vanishing $H_{\leta}^i(\Gmm,\cO_{\Gmm})=0$ for $i>0$.
Lemma \ref{ketreal.16} implies an isomorphism of cohomology groups
\[
H_{\leta}^i(\Gmm,\cO_{\Gmm})
\cong
H_{\ket}^i(\Gmm,\cO_{\Gmm})
\]
for every integer $i\geq 0$.
Furthermore, 
\cite[Proposition 6.5]{KatoLog2} (see also \cite[Proposition 3.27]{MR2452875}) implies an 
isomorphism of abelian groups
\[
H_{\ket}^i(\Gmm,\cO_{\Gmm})
\cong
H_{\et}^i(\P^1,\cO_{\P^1})
\]
for every integer $i\geq 0$.
A classical computation of the Zariski cohomology of $\P^1$ and the comparison between Zariski 
and \'etale cohomology show that the last group vanishes for $i>0$.
\end{proof}

Suppose $k$ is a separably closed field of characteristic $p>0$.
Due to Lemma \ref{ketreal.14}, 
we \emph{cannot} apply Proposition \ref{univlogSH.5} to the composite
\[
\lSm/k
\xrightarrow{\Pi_{p}^{\leta}}
\infPro(\infSpc)^{\Z/p}
\to
\infPro(\infSpt)^{\Eilenberg \Z/p}.
\]

This suggests that one may need to enlarge $\infPro(\infSpt)^{\Eilenberg \Z/p}$ in order to construct a 
stable $p$-adic realization from $\inflogSH_{\leta}(k)$.

\subsection{Kato-Nakayama realizations}

Throughout this subsection, 
we fix a field $k$ of characteristic $0$ and an embedding $k\hookrightarrow \C$.

There is a functor to the category of  topological spaces\index[notation]{log @ $(-)^{\log}$}
\begin{equation}
(-)^{\log}
\colon
\lSch/k
\to
\Spc
\end{equation}
sending $X\in \lSch/k$ to its Kato-Nakayama space $X^{\log}:=(X\times_k \Spec{\C})^{\log}$.
We refer to \cite[\S 1]{MR1700591} for details.
This naturally induces a functor to the $\infty$-category of spaces
\begin{equation}
\label{KNreal.1.1}
(-)^{\log}
\colon
\lSch/k
\to
\infSpc
\end{equation}

If $P$ is an fs monoid, 
we set\index[notation]{RP @ $\rR_P$}\index[notation]{TP @ $\rT_P$}
\[
\rR_P:=\hom(P,\R_{\geq 0})
\text{ and }
\rT_P:=\hom(P,S^1),
\]
where the monoid operations on $\R_{\geq 0}$ and $S^1$ are the multiplications.
We can view $\rR_P$ and $\rT_P$ as topological spaces.
According to \cite[Example 1.2.1.1]{MR1700591}, 
there is a canonical homeomorphism
\begin{equation}
\label{KNreal.1.2}
(\A_P)^{\log}
\cong
\rR_P \times \rT_P.
\end{equation}

\begin{prop}
\label{KNreal.1}
Suppose $f\colon X\to Z$ and $g\colon Y\to Z$ are morphisms in $\lSch/k$.
If either $f$ or $g$ is exact, then there is a canonical homeomorphism
\[
(X\times_Z Y)^{\log}
\cong
X^{\log} \times_{Z^{\log}} Y^{\log}.
\]
\end{prop}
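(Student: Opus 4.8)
The plan is to write down the comparison map and then check that it is a homeomorphism by descending to a statement about monoids. By the functoriality of $(-)^{\log}$ in \eqref{KNreal.1.1}, the projections of the fibre product together with the structure morphisms induce a canonical continuous map
\[
\alpha\colon (X\times_Z Y)^{\log}\longrightarrow X^{\log}\times_{Z^{\log}}Y^{\log},
\]
and it suffices to prove that $\alpha$ is a homeomorphism. Since $(-)^{\log}$ depends only on the underlying complex-analytic space together with the log structure, and is compatible with strict \'etale localization, the first step is to reduce to the case where $X$, $Y$, $Z$ are affine and carry charts: fs monoids $P$, $Q$, $R$ charting $\cM_Z$, $\cM_X$, $\cM_Y$, and compatible homomorphisms $P\to Q$, $P\to R$. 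Using the exactness hypothesis on (say) $f$, a further refinement of charts lets us take $P\to Q$ exact. A morphism carrying a chart factors, strict-\'etale locally, as a \emph{strict} morphism followed by the base change of $\A_\theta\colon\A_Q\to\A_P$ along the structure morphism $Z\to\A_P$; and for a strict morphism $W\to V$ one has $W^{\log}=\underline{W}^{\mathrm{an}}\times_{\underline{V}^{\mathrm{an}}}V^{\log}$, so $(-)^{\log}$ trivially commutes with base change along strict morphisms. Propagating this factorization through the fibre product reduces the claim to the single model $\A_Q\leftarrow\A_P\rightarrow\A_R$ with $P\to Q$ exact, i.e.\ to identifying $(\A_Q\times_{\A_P}\A_R)^{\log}$ with $(\A_Q)^{\log}\times_{(\A_P)^{\log}}(\A_R)^{\log}$.

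In this model the fs fibre product is $\A_S$, where $S$ is the fine-saturation of the pushout $Q\oplus_P R$ formed in commutative monoids, and the homeomorphism $(\A_S)^{\log}\cong\rR_S\times\rT_S$ of \eqref{KNreal.1.2} reduces matters to showing that the natural maps
\[
\rR_S\longrightarrow\rR_Q\times_{\rR_P}\rR_R,\qquad
\rT_S\longrightarrow\rT_Q\times_{\rT_P}\rT_R
\]
are homeomorphisms. Here one uses that the monoid pushout is carried to a topological fibre product by $\hom(-,\R_{\geq0})$ and by $\hom(-,S^1)$ — this is immediate from the universal property of the pushout — together with the fact that, for an exact $\theta$, passing from $Q\oplus_P R$ to $S$ does not change these $\hom$-spaces: a homomorphism from $Q\oplus_P R$ to the saturated target $\R_{\geq0}$, resp.\ $S^1$, factors uniquely through $S$, and the resulting bijection is a homeomorphism since all monoids in sight are finitely generated. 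Combining the two factors gives $(\A_S)^{\log}\cong(\rR_Q\times\rT_Q)\times_{\rR_P\times\rT_P}(\rR_R\times\rT_R)=(\A_Q)^{\log}\times_{(\A_P)^{\log}}(\A_R)^{\log}$, which is the desired local statement; feeding it back through the reductions of the previous paragraph shows that $\alpha$ is a homeomorphism in general.

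The main obstacle is exactly the point just alluded to: controlling the effect of the fine-saturation of the monoid pushout — equivalently, of the integralization and normalization of the underlying scheme of the fs fibre product — on the Kato--Nakayama space, i.e.\ the surjectivity of $\alpha$. Without any hypothesis, $\underline{X\times_Z Y}^{\mathrm{an}}$ can be a proper modification of $\underline{X}^{\mathrm{an}}\times_{\underline{Z}^{\mathrm{an}}}\underline{Y}^{\mathrm{an}}$ and the comparison of $\hom$-spaces above may fail to be bijective; I would handle this by reducing, via an exactification of $\theta$, to the case where $\theta$ is, up to a split factor coming from the group of units, the inclusion of a face composed with a strict homomorphism, for which $Q\oplus_P R$ is already fine and saturated and no modification occurs. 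The remaining verification that $\alpha^{-1}$ is continuous is routine, since in each chart the computation above exhibits $\alpha$ as an explicit homeomorphism of locally compact Hausdorff spaces.
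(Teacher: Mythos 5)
Your reduction is fine as far as it goes: the comparison map exists by functoriality, strict morphisms are harmless because for strict $W\to V$ one has $W^{\log}\cong \ul{W}^{\mathrm{an}}\times_{\ul{V}^{\mathrm{an}}}V^{\log}$, the chart morphism $Z\to\A_P$ is itself strict, and so everything does collapse to the toric model $\A_Q\leftarrow\A_P\to\A_R$; there the circle factor causes no trouble since $S^1$ is a group and $S^{\gp}=(Q\oplus_P R)^{\gp}$, so $\rT_S\to\rT_Q\times_{\rT_P}\rT_R$ is a homeomorphism with no hypothesis on $\theta$. The genuine gap is the $\R_{\geq 0}$ factor, i.e.\ the claim that for exact $\theta$ every homomorphism $Q\oplus_P R\to\R_{\geq0}$ factors (uniquely) through the fs-ification $S$. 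Your stated reason --- that $\R_{\geq0}$ is a ``saturated target'' --- does not address the real issue: $\R_{\geq0}$ is saturated but \emph{not integral} ($0$ is absorbing), and the problematic step is not the saturation (that part is indeed automatic, by unique positive real roots) but the integralization: a homomorphism out of the possibly non-integral pushout $Q\oplus_P R$ need not factor through its image in the groupification, precisely because it may vanish on the element one would like to cancel. This is exactly the point where exactness of $\theta$ must do work, and it is exactly the content of the result the paper invokes (the paper's proof is a one-line citation of Ogus, Proposition V.1.2.11); you have asserted it rather than proved it.

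The fallback you offer to close this gap does not work. Exact homomorphisms of fs monoids are not, even up to a split unit factor, inclusions of faces followed by strict homomorphisms: multiplication by $2$ on $\N$ is exact, injective, and of neither form (``exactification'' is a device for rendering a non-exact morphism exact after a subdivision of the base, not a structure theorem for morphisms that are already exact). Moreover, even for exact $\theta$ the pushout $Q\oplus_P R$ can fail to be saturated: for $\N\xleftarrow{\cdot 2}\N\xrightarrow{\cdot 3}\N$ the pushout is the numerical monoid $\langle 2,3\rangle$, so ``no modification occurs'' is false, and in this example the comparison of $\hom(-,\R_{\geq0})$-spaces happens to hold only because the modification is invisible to $\R_{\geq0}$-valued homomorphisms --- which is precisely the statement that still needs an argument. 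To repair the proof you must prove the monoid lemma directly: if $\theta\colon P\to Q$ is exact and $(q_1,r_1),(q_2,r_2)$ have the same image in $(Q\oplus_P R)^{\gp}$, then $h_Q(q_1)h_R(r_1)=h_Q(q_2)h_R(r_2)$ for every compatible pair $(h_Q,h_R)$ of $\R_{\geq0}$-valued homomorphisms; writing the discrepancy as $a\in P^{\gp}$ and decomposing $a=a_+-a_-$ with $a_\pm\in P$ settles the case where $h$ is nonzero on $a_\pm$, but the degenerate case (where $h\circ\theta$ vanishes there) is where exactness must be exploited, and your proposal never does so.
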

\begin{proof}
This is a special case of \cite[Proposition V.1.2.11]{Ogu}.
\end{proof}

\begin{prop}
\label{KNreal.2}
Suppose $X\in \lSch/k$.
Then there are canonical homotopy equivalences
\[
(X\times \boxx)^{\log}
\simeq
(X\times \A^1)^{\log}
\simeq
X^{\log}
\]
\end{prop}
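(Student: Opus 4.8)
The statement is Proposition \ref{KNreal.2}: for $X\in \lSch/k$, the two projections induce homotopy equivalences
\[
(X\times \boxx)^{\log}
\simeq
(X\times \A^1)^{\log}
\simeq
X^{\log}.
\]
The first thing to observe is that $\A^1\to \Spec{k}$ is strict (trivial log structure on both source and target), hence exact, so Proposition \ref{KNreal.1} gives a canonical homeomorphism $(X\times \A^1)^{\log}\cong X^{\log}\times_{(\Spec k)^{\log}}(\A^1)^{\log}=X^{\log}\times (\A^1)^{\log}$. Since $(\A^1)^{\log}$ is just the underlying topological space of $\A^1(\C)=\C$, which is contractible, the projection $X^{\log}\times \C\to X^{\log}$ is a homotopy equivalence. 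This handles the second equivalence.

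For the first equivalence, the plan is to reduce to a local computation on $\boxx=(\P^1,\infty)$ and use the chart description. The morphism $\boxx\to \Spec{k}$ is log smooth but not strict, so I cannot directly apply Proposition \ref{KNreal.1} with $\boxx$ in the ``free'' slot; however, the projection $X\times \boxx\to X$ is the base change of $\boxx\to \Spec{k}$ along the strict (hence exact) morphism $X\to \Spec{k}$, so Proposition \ref{KNreal.1} does apply and yields $(X\times\boxx)^{\log}\cong X^{\log}\times \boxx^{\log}$ (here using that $X\to\Spec k$ is exact). Thus it suffices to show that $\boxx^{\log}$ is contractible, and more precisely that the map $\boxx^{\log}\to (\Spec k)^{\log}=\mathrm{pt}$ is a homotopy equivalence, compatibly enough to conclude $X^{\log}\times\boxx^{\log}\simeq X^{\log}$. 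Cover $\P^1$ by $\A^1$ (trivial log structure, with $\boxx^{\log}$ restricting to $\C$ there) and by $U=\Spec{\N\to k[t]}$ with $t$ the coordinate vanishing at $\infty$, which has a chart $\A_\N$; by \eqref{KNreal.1.2}, $U^{\log}\cong \rR_\N\times \rT_\N=\R_{\geq 0}\times S^1$, which is the closed half-infinite cylinder, i.e.\ the realization of $\boxx$ near $\infty$ is a half-open annulus whose boundary circle sits over $\infty\in\P^1$. Gluing $\C$ and $\R_{\geq 0}\times S^1$ along $\C^\times\cong \R_{>0}\times S^1$ produces precisely the closed unit disk (this is the content of Figure \ref{fig:exmp}: $\boxx^{\log}$ is the closed $2$-disk, a manifold with boundary), which is contractible.

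The remaining work is to promote ``$\boxx^{\log}$ is contractible'' to ``the projection $X^{\log}\times\boxx^{\log}\to X^{\log}$ is a homotopy equivalence.'' This is formal once we know $\boxx^{\log}$ is contractible: a product projection $Y\times C\to Y$ with $C$ contractible is a homotopy equivalence in $\infSpc$. So the real content is the identification of $\boxx^{\log}$ with the closed disk (or at least with a contractible space), which I would carry out explicitly via the two charts above and the homeomorphism \eqref{KNreal.1.2}, checking that the gluing map on the overlap $\C^\times$ matches the standard one $\R_{>0}\times S^1\hookrightarrow \R_{\geq 0}\times S^1$ and $\R_{>0}\times S^1\cong \C^\times\subset \C$.

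\textbf{Main obstacle.} The only delicate point is making the gluing description of $\boxx^{\log}$ rigorous: one must check that the Kato--Nakayama functor is compatible with the Zariski gluing $\boxx=\A^1\cup U$ (this follows since $(-)^{\log}$ can be computed Zariski-locally, or from Proposition \ref{KNreal.1} applied to the open immersions, which are strict hence exact), and that under \eqref{KNreal.1.2} the chart $\A_\N$ identifies $U^{\log}$ with $\R_{\geq 0}\times S^1$ in a way compatible with the coordinate $t\in \C$ on $\A^1$, i.e.\ $t\mapsto (|t|,t/|t|)$ on the overlap. Once these bookkeeping points are settled, contractibility of the closed disk finishes the argument. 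I do not expect any homotopy-theoretic subtlety beyond ``contractible factor can be projected away.''
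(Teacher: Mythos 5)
Your argument is correct and is essentially the paper's proof: reduce via Proposition \ref{KNreal.1} to the case $X=\Spec{k}$, then show $(\A^1)^{\log}\cong \R^2$ is contractible and that $\boxx^{\log}$ is contractible by gluing $(\A^1)^{\log}$ and $(\P^1-0,\infty)^{\log}\cong \R_{\geq 0}\times S^1$ along $(\G_m)^{\log}\cong \R_{>0}\times S^1$, identifying $\boxx^{\log}$ with a closed disk. One small correction: for general $X\in \lSch/k$ the structure morphism $X\to \Spec{k}$ is exact but not strict (unless $X$ has trivial log structure); since Proposition \ref{KNreal.1} only requires exactness of one leg, your reduction still goes through unchanged.
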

\begin{proof}
Owing to Proposition \ref{KNreal.1},
we reduce to the case when $X=k$.
By \eqref{KNreal.1.2},
there are homeomorphisms
\[
(\P^1-0,\infty)^{\log}\cong \R_{\geq 0}\times S^1,
\;
(\A^1)^{\log} \cong \R^2,
\text{ and }
(\mathbb{G}_m)^{\log} \cong \R_{>0}\times S^1.
\]
In particular, $(\A^1)^{\log}$ is contractible.
From the cartesian square
\[
\begin{tikzcd}
(\G_m)^{\log}\ar[d]\ar[r]&
(\P^1-0,\infty)^{\log}\ar[d]
\\
(\A^1)^{\log}\ar[r]&
\boxx^{\log}
\end{tikzcd}
\]
we see that $\boxx^{\log}$ is homeomorphic to $[0,1]^2$,
where we regard $[0,1]$ as a compactification of $(0,1)\cong \R$.
This shows that $\boxx^{\log}$ is contractible too.
\end{proof}

A continuous map $f\colon X\to Y$ of spaces is called \emph{\'etale} (resp.\ an \'etale cover) 
if $f$ is a local homeomorphism (resp.\ surjective local homeomorphism).

\begin{prop}
\label{KNreal.3}
Suppose $f\colon \sX\to X$ is a Kummer \'etale hypercover.
Then the naturally induced map $f^{\log}\colon \sX^{\log}\to X^{\log}$ is an \'etale hypercover.
\end{prop}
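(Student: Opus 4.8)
The plan is to reduce the statement to two ingredients: that $(-)^{\log}$ commutes with the fiber products occurring in the coskeleta of a Kummer \'etale hypercover, and that $(-)^{\log}$ carries a single Kummer \'etale cover to a surjective local homeomorphism. Recall that $f\colon \sX\to X$ being a Kummer \'etale hypercover means that $\sX_\bullet$ is a simplicial object of $\lSch/k$ augmented to $X$ such that for every $n\geq 0$ the matching map $\sX_n\to (\mathrm{cosk}_{n-1}^{X}\sX)_n$ is a Kummer \'etale cover, with the convention $(\mathrm{cosk}_{-1}^{X}\sX)_0:=X$; each matching object is a finite limit over $X$ of a diagram whose structure morphisms are among the face maps $\sX_k\to \sX_{k-1}$. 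A short check shows that every such face map is a morphism over $X$ between log schemes Kummer \'etale over $X$, hence is itself Kummer \'etale, and in particular Kummer; Kummer morphisms of integral log schemes are exact (see \cite[\S I.4]{Ogu}), and both exactness and the Kummer property are stable under the base changes at hand, so every morphism appearing in the formation of the matching objects is exact.

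First I would establish the coskeleton compatibility. Computing the matching objects as iterated fiber products and applying Proposition \ref{KNreal.1} at each stage (one of the two legs being exact), one obtains canonical homeomorphisms $(\mathrm{cosk}_{n-1}^{X}\sX)_n^{\log}\cong (\mathrm{cosk}_{n-1}^{X^{\log}}\sX^{\log})_n$, compatible as $n$ varies, under which the map induced by the matching morphism is identified with $(\sX_n\to (\mathrm{cosk}_{n-1}^{X}\sX)_n)^{\log}$. Thus $f^{\log}$ is an \'etale hypercover of topological spaces as soon as $(-)^{\log}$ sends every matching Kummer \'etale cover to a surjective local homeomorphism, which reduces everything to the following local claim.

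Next I would prove that if $g\colon Y\to Z$ is a Kummer \'etale cover in $\lSch/k$, then $g^{\log}\colon Y^{\log}\to Z^{\log}$ is a surjective local homeomorphism. Since $k$ has characteristic $0$, the structure theory of Kummer \'etale morphisms (\cite[\S 2]{MR1922832}, or \cite{Ogu}) shows that, strict \'etale locally on $Z$ and after a Kummer \'etale refinement, $g$ factors as $Y\to Z\times_{\A_P}\A_Q\to Z$ with $\theta\colon P\to Q$ a Kummer homomorphism of fs monoids, $Z\to \A_P$ strict, and $Y\to Z\times_{\A_P}\A_Q$ strict \'etale. For a strict morphism the Kato--Nakayama space is the base change of that of the target along the underlying analytic morphism, so a strict \'etale morphism induces a local homeomorphism; and by \eqref{KNreal.1.2} the morphism $\A_Q^{\log}\to \A_P^{\log}$ is the product of $\rR_Q\to \rR_P$ --- a homeomorphism, since $\hom(-,\R_{\geq 0})$ inverts Kummer homomorphisms ($\R_{\geq 0}$ being uniquely divisible) --- with $\rT_Q=\hom(Q^{\gp},S^1)\to \hom(P^{\gp},S^1)=\rT_P$, a finite covering map because $P^{\gp}\to Q^{\gp}$ is injective with finite cokernel and $S^1$ is divisible. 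Being stable under base change and composition, these facts show $g^{\log}$ is a local homeomorphism. For surjectivity one uses the structure maps $\tau\colon (-)^{\log}\to (-)^{\mathrm{an}}$, which are proper with nonempty fibers (compact tori $\rT_{\ol{\cM}}$), together with the surjectivity of $\ul{Y}\to \ul{Z}$, to see that $g^{\log}$ meets every fiber of $\tau_Z$. Alternatively, this local statement may simply be quoted from \cite{MR1700591}.

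The main obstacle is the bookkeeping around the coskeleta: one must genuinely verify that $(-)^{\log}$ commutes with the matching objects of a Kummer \'etale hypercover, which hinges on the exactness of the Kummer face maps (so that Proposition \ref{KNreal.1} applies) and on the stability of exactness and of the Kummer property under the base changes used to build those finite limits. Once \eqref{KNreal.1.2} and the divisibility properties of $\R_{\geq 0}$ and $S^1$ are in hand, the local claim is essentially classical, so the remaining work is organizational.
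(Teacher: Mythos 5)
Your proposal is correct and follows essentially the same route as the paper, which proves this by combining Proposition \ref{KNreal.1} (compatibility of $(-)^{\log}$ with fiber products along exact, in particular Kummer, morphisms) with the local statement for Kummer \'etale covers quoted from \cite[Lemma 2.2]{MR1700591}; you simply spell out the coskeleton bookkeeping and reprove the local lemma by hand. The only slip is in your surjectivity sketch (showing the image merely \emph{meets} each fiber of $\tau_Z$ is not enough; one should note the induced maps on the torus fibers $\hom(\ol{\cM}^{\gp}_{Y,y},S^1)\to\hom(\ol{\cM}^{\gp}_{Z,z},S^1)$ are surjective since $S^1$ is divisible), but since you also offer to quote \cite{MR1700591} this does not affect the argument.
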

\begin{proof}
Combine \cite[Lemma 2.2]{MR1700591} and Proposition \ref{KNreal.1}.
\end{proof}

\begin{lem}
\label{logtop.7}
Let $f\colon Y\to X$ be a dividing cover of fs log schemes.
If $X$ is quasi-compact and has a chart $P$ such that $P$ is sharp, then there exists a subdivision $\Sigma\to \Spec{P}$ of toric fans such that the naturally induced morphism
\[
Y\times_{\A_P}\T_\Sigma\to X\times_{\A_P}\T_{\Sigma}
\]
is an isomorphism of fs log schemes.
\end{lem}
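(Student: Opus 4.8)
The plan is to study $f$ through the chart $P$, base-change to a toric refinement of $\Spec{P}$, and recognise the result as a strict proper surjective \'etale monomorphism of underlying schemes, hence an isomorphism.

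First I would pass to $\A_P$. The chart is a strict morphism $X\to\A_P$, and since $P$ is sharp it identifies $X-\partial X$ with the preimage of the open torus $\A_P-\partial\A_P$. The composite $g\colon Y\to\A_P$ is log \'etale, being the composite of the log \'etale $f$ with a strict morphism, and since $f$ is a dividing cover the induced map $Y-\partial Y\to X-\partial X$ is an isomorphism by \cite[Remark 3.1.7(1)]{logDM}; hence $g$ is an isomorphism over $\A_P-\partial\A_P$. Because $f$ is proper and $X$ is quasi-compact, $Y$ is quasi-compact, so I may fix a \emph{finite} \'etale covering $\{Y_i\to Y\}_{i\in I}$ such that, by Kato's chart theorem for log \'etale morphisms \cite[Corollary IV.3.1.10, Theorem IV.3.3.1]{Ogu}, each $Y_i\to\A_P$ carries a chart $\theta_i\colon P\to Q_i$ with $\theta_i$ injective, $\operatorname{coker}(\theta_i^\gp)$ finite, and $Y_i\to X\times_{\A_P}\A_{Q_i}$ strict \'etale; replacing $Q_i$ by its saturation I may take $Q_i$ to be fs. Since $f$ is a monomorphism which is an isomorphism over the torus, the degree $|\operatorname{coker}(\theta_i^\gp)|$ of $\A_{Q_i}\to\A_P$ over the torus is forced to be $1$, so $P^\gp\xrightarrow{\ \sim\ }Q_i^\gp$ and $Q_i$ is the saturation of $P$ along a rational polyhedral subcone $\sigma_i$ of $\sigma_P:=P^\vee$. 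In other words each $Y_i\to\A_P$ is, up to strict \'etale base change, the partial fan inclusion $\sigma_i\hookrightarrow\Spec{P}$.

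Next I would take a common refinement. Given finitely many rational polyhedral subcones $\sigma_i$ of $\sigma_P$, standard toric geometry (cf.\ \cite[Theorem II.1.9.3]{Ogu}) produces a subdivision $\Sigma\to\Spec{P}$ of toric fans such that every $\sigma_i$ is a union of cones of $\Sigma$; fix such a $\Sigma$. I claim it works. Consider $h\colon Y\times_{\A_P}\T_\Sigma\to X\times_{\A_P}\T_\Sigma$. As a base change of the dividing cover $f$ it is again proper, surjective, log \'etale, and a monomorphism. It is moreover strict: strictness and \'etaleness may be tested \'etale-locally on the source, and base-changing $Y_i\to X\times_{\A_P}\A_{Q_i}$ along $(-)\times_{\A_P}\T_\Sigma$ gives a strict \'etale map onto $X\times_{\A_P}\T_{\Sigma_i}$, where $\Sigma_i=\{\tau\in\Sigma:\tau\subseteq\sigma_i\}$, and $X\times_{\A_P}\T_{\Sigma_i}$ is an open subscheme of $X\times_{\A_P}\T_\Sigma$ since $\Sigma_i$ is a subfan of $\Sigma$. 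Thus $h$ is, \'etale-locally on the source, a strict \'etale map followed by an open immersion, so $h$ is strict \'etale. A strict proper surjective \'etale monomorphism of fs log schemes is an isomorphism: on underlying schemes it is a proper surjective \'etale monomorphism, hence an isomorphism by \cite[Th\'eor\`eme IV.17.9.1]{EGA}, and strictness upgrades this to an isomorphism of log schemes. This gives the required $\Sigma$.

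I expect the main obstacle to be the passage, in the second paragraph, from the \'etale-local Kato charts to a uniform picture over $X$: one must argue that the cone $\sigma_i\subseteq\sigma_P$ produced from a local chart is intrinsic --- it is read off from the stalks of $\ol{\cM}_Y$ together with the monomorphism property of $f$ --- so that these cones really constitute a finite collection of subcones of the \emph{one fixed} cone $\sigma_P$ that can be refined simultaneously, and that the underlying scheme map of $f$ is indeed \'etale-locally an open immersion into a toric modification of $X$. This is the log-geometric heart of the statement, amounting to the classification of proper log \'etale monomorphisms over $\A_P$ by subdivisions of $\Spec{P}$; I would either quote this from the toric/log literature or re-derive it as in \cite[Appendix A]{logDM} via the universal property of log blow-ups.
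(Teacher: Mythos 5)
The paper offers no argument of its own here---its proof is the single line ``this is a special case of \cite[Proposition A.11.5]{logDM}''---so your proposal is in effect a reconstruction of that cited result, and its architecture (local cone charts, a common refinement of the finitely many subcones of $P^\vee$, base change along $\T_\Sigma\to\A_P$ turning $f$ into a strict proper \'etale monomorphism, then \cite[Th\'eor\`eme IV.17.9.1]{EGA}) is the right one. But as written there is a genuine gap at the step you yourself flag as the heart of the matter: the claim that the monomorphism property ``forces'' $\lvert\coker(\theta_i^\gp)\rvert=1$ does not follow. Kato charts are not canonical, and $Y_i\to X\times_{\A_P}\A_{Q_i}$ is only \'etale, not surjective, so no degree count over the torus is available: even the identity of $X$ admits a Kato chart $\theta\colon P\to P\oplus\Z/n$ (send the generator of $\Z/n$ to $1$), with $X\to X\times_{\A_P}\A_{P\oplus\Z/n}\cong X\times\mu_n$ the unit section, which is \'etale wherever $n$ is invertible although $\coker(\theta^\gp)=\Z/n$ and the map over the torus has degree $n$. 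What your argument actually needs is the local structure theorem for log \'etale monomorphisms---Zariski locally there are charts with $\theta^\gp$ an isomorphism (and even $Y\to X\times_{\A_P}\A_Q$ an open immersion); this is \cite[Lemma A.11.3]{logDM}, with the correction recorded in Remark \ref{logtop.10}, i.e.\ \cite[Proposition 3.4.1]{Parthesis}. Your proposed fallback of quoting ``the classification of proper log \'etale monomorphisms over $\A_P$ by subdivisions'' would be circular, since that classification is precisely \cite[Proposition A.11.5]{logDM}, the statement being proved; the non-circular citation is the local lemma just mentioned.

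A second gap: you assert that $h=f\times_{\A_P}\T_\Sigma$ is surjective ``as a base change of the dividing cover $f$''. Surjectivity is \emph{not} stable under fs base change, because the underlying scheme of an fs fibre product is not the scheme-theoretic fibre product; this is exactly why Definition \ref{logtop.3}(3) requires universally surjective families and why Example \ref{logtop.8bis} has to invoke \cite[Lemma A.11.4]{logDM} to get universal surjectivity even for fan subdivisions. You also cannot argue via density of the torus locus, since $X-\partial X$ may be empty (e.g.\ $X$ a log point with chart $P$); so surjectivity of $Y\times_{\A_P}\T_\Sigma\to X\times_{\A_P}\T_\Sigma$ needs its own citation or argument. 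Two minor slips, harmless to the substance: $Y\to\A_P$ is not log \'etale (a log \'etale map composed with a strict morphism need not be log \'etale), and ``$g$ is an isomorphism over $\A_P-\partial\A_P$'' should read ``$f$ is an isomorphism over $X-\partial X$''; Kato's theorem is in any case applied to $f$ relative to the chart on $X$, which is what you use. Granting the two inputs above, the rest of your argument is sound: the common refinement exists, a convex union of cones of $\Sigma$ inside a cone $\tau\in\Sigma$ is a face of $\tau$, so $\A_{Q_i}\times_{\A_P}\T_\Sigma\cong\T_{\Sigma_i}$ is open in $\T_\Sigma$, strictness and \'etaleness descend along the strict \'etale cover, and the conclusion follows from \cite[Th\'eor\`eme IV.17.9.1]{EGA}.
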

\begin{proof}
This is a special case of \cite[Proposition A.11.5]{logDM}.
\end{proof}

\begin{prop}
\label{KNreal.4}
Suppose $f\colon Y\to X$ is a dividing cover in $\lSch/k$.
Then the naturally induced map
\[
f^{\log}
\colon
Y^{\log}
\to
X^{\log}
\]
is an equivalence of spaces.
\end{prop}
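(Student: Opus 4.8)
The plan is to reduce, by localizing and introducing charts, to the case of a toric modification $\T_\Sigma\to\A_P$, and then to analyze directly the Kato--Nakayama spaces of affine toric log schemes; note that $f^{\log}$ is in general \emph{not} a homeomorphism (already for a log blow-up it collapses a contractible real-toric fibre to a point), so some genuine input is needed. First I would observe that the assertion is local on $X$ for the strict \'etale (and hence Zariski) topology: $X$ is quasi-compact, so it has a finite Zariski cover $\{U_i\}$, and $\{U_i^{\log}\}$ is then an open cover of the paracompact space $X^{\log}$ exhibiting it, and likewise $Y^{\log}$, as a homotopy colimit of the corresponding \v{C}ech nerve; passing further to a strict \'etale cover and using that $(-)^{\log}$ sends strict \'etale covers to \'etale covers of spaces (Proposition \ref{KNreal.3}), for which the analogous descent holds, we may assume that $X$ and $Y$ are fs and that $X$ carries a sharp fs chart, i.e.\ a morphism $X\to\A_P$ with $P$ a sharp fs monoid.

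By Lemma \ref{logtop.7} there is then a subdivision of fans $\Sigma\to\Spec{P}$ such that the projection $Y\times_{\A_P}\T_\Sigma\to X\times_{\A_P}\T_\Sigma$ is an isomorphism of fs log schemes. Applying $(-)^{\log}$ to the commutative square
\[
\begin{tikzcd}
Y\times_{\A_P}\T_\Sigma \ar[r]\ar[d,"\cong"'] & Y\ar[d,"f"] \\
X\times_{\A_P}\T_\Sigma \ar[r] & X
\end{tikzcd}
\]
yields a commutative square of spaces whose left vertical arrow is a homeomorphism. Hence it suffices to prove the following claim: for the toric modification $\pi\colon\T_\Sigma\to\A_P$ and for any morphism $V\to\A_P$ of fs log schemes (we will use $V=X$ and $V=Y$), the base-changed map $(V\times_{\A_P}\T_\Sigma)^{\log}\to V^{\log}$ is an equivalence of spaces. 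Granting this for $V=X$ and $V=Y$, three of the four edges of the square above are equivalences, so the fourth, $f^{\log}$, is one.

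For the claim, I would use the explicit description of Kato--Nakayama spaces of toric log schemes. By \eqref{KNreal.1.2} one has $(\A_P)^{\log}\cong\rR_P\times\rT_P$, and since all affine charts $\A_{Q_i}$ of $\T_\Sigma$ share the group $Q_i^{\gp}=P^{\gp}$, gluing the homeomorphisms $(\A_{Q_i})^{\log}\cong\rR_{Q_i}\times\rT_{Q_i}=\rR_{Q_i}\times\rT_P$ identifies $(\T_\Sigma)^{\log}\cong\rR_\Sigma\times\rT_P$, where $\rR_\Sigma$ is the space of non-negative real points of the toric variety $\ul{\T_\Sigma}$, obtained by gluing the $\rR_{Q_i}$, and $\pi^{\log}=\rho\times\id_{\rT_P}$ with $\rho\colon\rR_\Sigma\to\rR_P$. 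The map $\rho$ is proper and surjective, being the restriction of the proper surjective toric map $\ul{\T_\Sigma}\to\ul{\A_P}$, and all its fibres are contractible: the fibre over a point of $\rR_P$ cutting out a face $F\subseteq P$ is the non-negative real locus of the toric variety attached to the star of $F$, which is an affine piece of the form $\hom(Q,\R_{\geq 0})$ — contractible via a weighted rescaling homotopy $\phi\mapsto(q\mapsto \lambda^{\deg q}\phi(q))$ for a strictly positive grading of $Q$ — or, in general, is homeomorphic to a polytope. Now $(-)^{\log}$ preserves properness, so $(V\times_{\A_P}\T_\Sigma)^{\log}\to V^{\log}$ is proper, and its fibres are the fibres of $\rho$, hence contractible; a proper surjection with contractible fibres between paracompact, locally contractible spaces (here, manifolds with corners) is an equivalence of spaces.

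The main obstacle is exactly this last point. The morphism $\pi\colon\T_\Sigma\to\A_P$ is \emph{not} exact, so Proposition \ref{KNreal.1} does not apply and one cannot simply identify $(V\times_{\A_P}\T_\Sigma)^{\log}$ with $V^{\log}\times_{(\A_P)^{\log}}(\T_\Sigma)^{\log}$; the real work is to show that, despite this, the Kato--Nakayama realization of the proper toric modification still computes fibres correctly after arbitrary base change, which is a case of the Nakayama--Ogus rounding theorem, and then to invoke the fact that a proper map with contractible fibres between sufficiently nice spaces is a homotopy equivalence (equivalently, that it is a cell-like map). If one prefers to avoid citing the rounding theorem, an alternative is to build a homotopy inverse and the homotopies toric-chart by toric-chart on $\T_\Sigma$ and glue them, but this amounts to reproving the same statement.
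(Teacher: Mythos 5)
Your overall skeleton matches the paper's: reduce via Lemma \ref{logtop.7} to showing that the projection $(V\times_{\A_P}\T_\Sigma)^{\log}\to V^{\log}$ is an equivalence for $V=X$ and $V=Y$, and conclude from properness plus contractibility of the point-inverses by a Vietoris--Smale (cell-like map) argument. The genuine gap is exactly at the point you yourself flag as the ``main obstacle'': you state the claim for an \emph{arbitrary} morphism $V\to\A_P$, correctly note that $\pi\colon\T_\Sigma\to\A_P$ is not exact so Proposition \ref{KNreal.1} does not identify $(V\times_{\A_P}\T_\Sigma)^{\log}$ with the fibre product of Kato--Nakayama spaces, and then defer the needed identification of point-inverses to the Nakayama--Ogus rounding theorem without verifying that it applies (that theorem concerns proper \emph{exact} log smooth morphisms, and the fs fibre product here involves saturation, so it is not ``a case of'' it in any evident sense). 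As written, the decisive step of your argument is therefore unproved.

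The missing idea --- and the way the paper closes this gap with tools already at hand --- is that Proposition \ref{KNreal.1} only requires \emph{one} of the two legs to be exact, and you can arrange this on the \emph{other} leg: the question is Zariski local on $V$, so you may assume the structure morphism $V\to\A_P$ factors through a neat chart $V\to\A_Q$ with $Q$ sharp and $\ol{\cM}_{V,x}\cong Q$; a chart morphism is strict, hence exact. Replacing $(P,\Sigma)$ by $(Q,\Spec{Q}\times_{\Spec{P}}\Sigma)$, which is again a subdivision, Proposition \ref{KNreal.1} (equivalently \cite[Lemma 1.3]{MR1700591}) yields a cartesian square of topological spaces, so the point-inverse over $x$ is literally the fibre of $(\T_{\Sigma'})^{\log}\to(\A_Q)^{\log}$ over the origin (the neatness of the chart at $x$ forces the $\rR_Q$-coordinate of the image of $x$ to be the origin, and the $\rT_Q$-coordinate is irrelevant by the torus action). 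That fibre is then contracted by gluing, chart by chart over the maximal cones of $\Sigma'$, the scaling homotopies induced by $\R_{\geq 0}\times[0,1]\to\R_{\geq 0}$, $(r,t)\mapsto rt$ --- essentially the rescaling you sketch, but note your description of the fibre as a single affine piece $\hom(Q,\R_{\geq 0})$ is not correct: it is glued from one piece per maximal cone (already for $\Blow_O(\A^2)$ it is an interval), which is why the contraction must be constructed locally and glued rather than written down on one chart.
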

\begin{proof}
We only need to consider the case when $X$ is connected.
Then $f^{\log}$ is a proper surjective map of connected finite CW-complexes.
By \cite{zbMATH03146521}, 
it suffices to show that $(f^{\log})^{-1}(x)$ is contractible for every $x\in X^{\log}$.
This question is Zariski local on $X$, 
so we may assume that $X$ admits a chart $X\to \A_P$, 
where $P$ is sharp, 
see \cite[Proposition II.2.3.7]{Ogu}.
According to Lemma \ref{logtop.7}, 
there exists a subdivision $\Sigma\to \Spec{P}$ of toric fans such that the naturally induced morphism
\[
Y\times_{\A_P}\T_\Sigma
\to
X\times_{\A_P}\T_\Sigma
\]
is an isomorphism.
Hence it suffices to show that both maps
\[
(Y\times_{\A_P}\T_\Sigma)^{\log}\to Y^{\log}
\text{ and }
(X\times_{\A_P}\T_\Sigma)^{\log}\to X^{\log}
\]
are equivalences of spaces.
This means that we only need to consider the case when the morphism $f$ is the projection $X\times_{\A_P}\T_\Sigma\to X$ 
for some morphism $u\colon X\to \A_P$ and subdivision $\Sigma\to \Spec{P}$.

\ 

Since this question is Zariski local on $X$, 
we may assume that $u$ factors through a chart $X\to \A_Q$ such that $Q$ is sharp and $\ol{\cM}_{X,x}\cong Q$, 
see \cite[Proposition II.2.3.7]{Ogu}.
Since $\Spec{Q}\times_{\Spec{P}}\Sigma\to \Spec{Q}$ is again a subdivision, 
we may reduce to the following case:
The morphism $f$ is the projection $X\times_{\A_P}\T_\Sigma\to X$ for some chart $X\to \A_P$ and subdivision 
$\Sigma\to \Spec{P}$, where $P$ is sharp and $\ol{\cM}_{X,x}\cong P$.
Now, 
by \cite[Lemma 1.3]{MR1700591}, 
there is a cartesian square
\[
\begin{tikzcd}
Y^{\log}\ar[d,"f^{\log}"']\ar[r]&
(\T_\Sigma)^{\log}\ar[d,"g^{\log}"]
\\
X^{\log}\ar[r]&
(\A_P)^{\log}.
\end{tikzcd}
\]
Here $g\colon \T_\Sigma\to \A_P$ is the naturally induced morphism.
Hence we are reduced to the case when $f$ is the morphism $\T_\Sigma\to \A_P$ and $x$ is the origin of $(\A_P)^{\log}$.
In this case, 
let $\sigma_1,\ldots,\sigma_n$ be the cones of $\Sigma$.
Then $f$ is locally given by $f_i\colon \T_{\sigma_i^\wedge}\to \A_P$.
Hence the morphism $f^{\log}$ is locally given by the product of the naturally induced maps 
$\rR_{f_i}\colon \rR_{\sigma_i^\vee}\to \rR_P$ and $\rT_{f_i}\colon \rT_{\sigma_i^\vee}\to \rT_P$.

\ 

The multiplication map $\R_{\geq 0}\times [0,1]\to \R_{\geq 0}$ given by $(x,y)\mapsto xy$ 
naturally induces a commutative square
\begin{equation}
\label{KNreal.4.4}
\begin{tikzcd}
\rR_{\sigma_i^\vee}\times [0,1]\ar[r]\ar[d,"\rR_{f_i}\times \id"']&
\rR_{\sigma_i^\vee}\ar[d,"\rR_{f_i}"]
\\
\rR_P\times [0,1]\ar[r]&
\rR_P.
\end{tikzcd}
\end{equation}
Since $\rT_{f_i}$ is a homeomorphism, \eqref{KNreal.4.4} gives a commutative square
\[
\begin{tikzcd}
(\T_{\sigma_i^\vee})^{\log}\times [0,1]\ar[d]\ar[r]&
(\T_{\sigma_i^\vee})^{\log}\ar[d]
\\
(\A_P)^{\log}\times [0,1]\ar[r]&
(\A_P)^{\log}.
\end{tikzcd}
\]
By gluing these squares together we obtain the commutative square
\[
\begin{tikzcd}
(\T_\Sigma)^{\log}\times [0,1]\ar[d]\ar[r]&
(\T_\Sigma)^{\log}\ar[d,"g^{\log}"]
\\
(\A_P)^{\log}\times [0,1]\ar[r]&
(\A_P)^{\log}.
\end{tikzcd}
\]
As a consequence, 
we obtain a map $h\colon (g^{\log})^{-1}(0)\times [0,1]\to (g^{\log})^{-1}(0)$ such that $h(u,0)=0$ 
and $h(u,1)=u$ for all $u$.
This shows that $(g^{\log})^{-1}(0)$ is contractible.
\end{proof}

Propositions \ref{KNreal.2}, \ref{KNreal.3}, and \ref{KNreal.4} show  
the Kato-Nakayama realization $(-)^{\log}$ is $\boxx$-invariant and satisfies $dNis$-descent.
Hence by Proposition \ref{univlogSH.1}, we obtain a functor\index[notation]{ReKN @ $\Real_{\mathrm{KN}}$}
\begin{equation}
\label{KNreal.4.1}
\Real_{\mathrm{KN}}
\colon
\infShv_{dNis}(\lSm/S)[\square^{-1}]
\to
\infSpc
\end{equation}
sending $X\in \lSm/S$ to $X^{\log}$ for every $S\in \lSch/k$ and $X\in \lSm/S$.
The constant presheaf functor $\infSpc \to \Psh(\lSm/S)$ naturally induces a functor $\infSpc\to \inflogH(S)$.
This is a section of \eqref{KNreal.4.1}.
Similarly, 
in the spectral setting, 
have a functor
\begin{equation}
\label{KNreal.4.2}
\Real_{\mathrm{KN}}
\colon
\inflogSH^\eff(S)
\to
\infSpt.
\end{equation}
It also admits a section.

\ 

Due to Proposition \ref{KNreal.1}, the square
\begin{equation}
\label{KNreal.4.5}
\begin{tikzcd}
\inflogSH^\eff(S)\ar[d,"-\otimes T"']\ar[r]&
\infSpt\ar[d,"-\otimes S^2"]
\\
\inflogSH^\eff(S)\ar[r]&
\infSpt
\end{tikzcd}
\end{equation}
commutes.
Taking colimits in $\LPr$ we obtain a functor
\begin{equation}
\label{KNreal.4.3}
\Real_{\mathrm{KN}}
\colon
\inflogSH(S)
\to
\infSpt.
\end{equation}
We refer to $\Real_{\mathrm{KN}}$ as the \emph{stable Kato-Nakayama realization functor}.
It also admits a section.
It sends the suspension spectrum $\Sigma_T^\infty X_+$ to $\Sigma_{S^2}^\infty (X^{\log})_+$ for every $X\in \lSm/S$.

Suppose $S\in \lSch/k$ has the chart $\N$ or $0$.
Then every morphism $f\colon X\to S$ in $\lSch/k$ is exact, see \cite[Proposition I.4.2.1(4)]{Ogu}.
Hence, by Proposition \ref{KNreal.1}, the functor
\[
(-)^{\log}
\colon
\lSm/S
\to
\infSpc
\]
is symmetric monoidal.
This implies that the enhanced Kato-Nakayama realization functors \eqref{KNreal.4.1}, \eqref{KNreal.4.2}, 
and \eqref{KNreal.4.3} are symmetric monoidal.

\begin{prop}
\label{KNreal.5}
There exists a functor
\[
\Real_\mathrm{KN}
\colon
\inflogH(k)
\to
\infSpc
\]
such that the triangle
\begin{equation}
\label{KNreal.5.1}
\begin{tikzcd}
\inflogH(k)\ar[rr,"\omega_\sharp"]\ar[rd,"\Real_{\mathrm{KN}}"']&
&
\infH(k)\ar[ld,"\Real_\Betti"]
\\
&
\infSpc
\end{tikzcd}
\end{equation}
commutes,
where $\Real_{\Betti}$\index[notation]{ReBetti @ $\Real_{\Betti}$} denotes the Betti realization functor 
$\infH(k)\to \infSpc$.
There is a similar commutative triangle for $\inflogSH(k)$, $\infSH(k)$, and $\infSpt$.
\end{prop}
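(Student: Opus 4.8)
The plan is to construct the commutative triangle \eqref{KNreal.5.1} by exploiting the universal properties established in \S\ref{subsection:universal}, reducing everything to a statement about the behaviour of the two realizations on representables. Recall from Construction \ref{compth.7} that $\omega_\sharp\colon \inflogH(k)\to \infH(k)$ is the left adjoint induced by the functor $\omega\colon \lSm/k\to \Sm/k$, $X\mapsto X-\partial X$. On the other side, the classical Betti realization $\Real_{\Betti}\colon \infH(k)\to \infSpc$ is the colimit-preserving functor induced by the analytification $\Sm/k\to \infSpc$, $Y\mapsto Y(\C)^{\mathrm{an}}$; its existence follows from Proposition \ref{univlogSH.1} applied to $\Sm/k$ with the Nisnevich topology and $\A^1$-localization, since the analytification functor is $\A^1$-invariant and satisfies Nisnevich descent. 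Both $\Real_{\mathrm{KN}}$ and $\Real_{\Betti}\circ\omega_\sharp$ are colimit-preserving functors $\inflogH(k)\to \infSpc$, so by the fully faithfulness in Proposition \ref{univlogSH.1} it suffices to exhibit an equivalence between their restrictions along the Yoneda functor $\lSm/k\to \inflogH(k)$, i.e.\ a natural equivalence of functors $\lSm/k\to \infSpc$.

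First I would record that, for $X\in \lSm/k$, $\Real_{\mathrm{KN}}$ sends (the image of) $X$ to the Kato-Nakayama space $X^{\log}$, by the very construction \eqref{KNreal.4.1}, while $\Real_{\Betti}\circ\omega_\sharp$ sends $X$ to $(X-\partial X)(\C)^{\mathrm{an}}$. The key geometric input is then the classical fact that for any fs log scheme $X$ log smooth over $k$, there is a natural homotopy equivalence $X^{\log}\simeq (X-\partial X)(\C)^{\mathrm{an}}$: the Kato-Nakayama space of $X$ deformation retracts onto the open part $X-\partial X$, since locally $X^{\log}$ looks like $\rR_P\times\rT_P\times(\text{strict part})$ by \eqref{KNreal.1.2}, and the inclusion of $(X-\partial X)^{\log}=(X-\partial X)(\C)^{\mathrm{an}}$ (the log structure being trivial there) is a deformation retract because the fibres of $\rR_P\to \{0\}$-type maps are contractible, exactly as in the homotopy arguments used in the proof of Proposition \ref{KNreal.4}. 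This equivalence is natural in $X$ for morphisms in $\lSm/k$ because the retraction is compatible with the charts. Having this natural equivalence on representables, the universal property upgrades it to a natural equivalence $\Real_{\mathrm{KN}}\simeq \Real_{\Betti}\circ\omega_\sharp$ of colimit-preserving functors, which is precisely the commutative triangle \eqref{KNreal.5.1}.

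For the stable (and pointed) versions I would argue identically, now using Proposition \ref{univlogSH.5} (resp.\ Proposition \ref{univlogSH.4} in the pointed unstable case): the symmetric monoidal functor $\lSm/k^\times\to \infSpt$ underlying $\Real_{\mathrm{KN}}\colon\inflogSH(k)\to\infSpt$ of \eqref{KNreal.4.3} sends $\Sigma_T^\infty X_+$ to $\Sigma^\infty_{S^2}(X^{\log})_+$, while $\Real_{\Betti}\circ\omega_\sharp$ sends it to $\Sigma^\infty_{S^2}((X-\partial X)(\C)^{\mathrm{an}})_+$ (the Betti realization of $\A^1$-motivic $\SH(k)$ sends $\Sigma_T^\infty Y_+$ to $\Sigma^\infty_{S^2}(Y(\C)^{\mathrm{an}})_+$, as $T^{\mathrm{an}}\simeq S^2$); both invert $\P^1$ appropriately. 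The same natural equivalence $X^{\log}\simeq (X-\partial X)(\C)^{\mathrm{an}}$, which is plainly compatible with products by Proposition \ref{KNreal.1} (all morphisms to $\Spec k$ are exact in characteristic zero), promotes to a symmetric monoidal natural equivalence, and the universal property delivers the stable triangle.

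The main obstacle is verifying the naturality and, crucially, the \emph{symmetric monoidal} compatibility of the deformation-retract equivalence $X^{\log}\simeq (X-\partial X)(\C)^{\mathrm{an}}$ — i.e.\ checking that the homotopy retracting $X^{\log}$ onto its open part can be chosen functorially in $X$ and compatibly with $X^{\log}\times Y^{\log}\cong (X\times Y)^{\log}$. This is where one has to be careful: the local description via $\rR_P\times\rT_P$ and the associated contracting homotopies must be glued along a chart-independent recipe, and one must check that different choices of charts give homotopic retractions. In practice this is standard — it is essentially the content of the homotopy arguments already deployed in Propositions \ref{KNreal.2} and \ref{KNreal.4} — but it is the step that requires genuine verification rather than formal manipulation. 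Everything else is a routine invocation of the universal properties of \S\ref{subsection:universal} together with the known description of the classical Betti realization.
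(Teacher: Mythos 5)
Your overall strategy for the unstable triangle is the same as the paper's: reduce, via the universal property of Proposition \ref{univlogSH.1}, to comparing the two composites on representables, where the content is that $(X-\partial X)(\C)^{\mathrm{an}}\to X^{\log}$ is an equivalence for $X\in\lSm/k$. Where you differ is in how this key equivalence is justified and in how the stable case is handled. The paper does not run a local deformation-retraction argument over charts: it quotes Ogus's theorem that $X^{\log}$ is a manifold with boundary whose interior is exactly $(X-\partial X)^{\Betti}$, and then the collar neighborhood theorem immediately gives that the inclusion of the interior is a homotopy equivalence. This sidesteps entirely the local-to-global issue that is the soft spot in your write-up: gluing chart-wise contractions of $\rR_P$-factors into a global retraction is not something you can wave through with ``compatible with the charts,'' and as written that step is a genuine gap. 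The gap is, however, also unnecessary for your own argument, because the natural comparison map you need is simply the inclusion $(X-\partial X)^{\mathrm{an}}\hookrightarrow X^{\log}$, which is automatically natural in $X$ and compatible with products (Proposition \ref{KNreal.1}); you only have to check it is an \emph{objectwise} equivalence, not produce a natural or monoidal retraction. So your stated ``main obstacle'' is misplaced, and the clean fix is exactly the paper's: cite the manifold-with-boundary structure plus the collar theorem (or, if you insist on the local route, supplement it with a Čech/open-cover argument showing a map that is a weak equivalence on all members of a suitable cover and their intersections is a weak equivalence).

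For the stable statement your route through the symmetric monoidal universal property (Proposition \ref{univlogSH.5}) is viable, since over a base with trivial log structure the stable Kato--Nakayama realization is symmetric monoidal, but it forces you to verify monoidal coherence of the comparison. The paper avoids this by instead using the presentations \eqref{alter.1.3} and \eqref{compth.2.1} of $\inflogSH(k)$ and $\infSH(k)$ as colimits in $\LPr$ of the pointed unstable categories along $T$- resp.\ $\P^1$-suspension: once the pointed unstable triangle commutes and the realizations intertwine suspension (both send $T$ to $S^2$, using Proposition \ref{KNreal.1}), the stable triangle follows levelwise. That is a slightly more economical argument, needing only the unstable equivalence plus compatibility with the bonding functors rather than a monoidal natural equivalence.
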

\begin{proof}
According to \cite[Theorem V.1.3.1]{Ogu}, 
$X^{\log}$ is a manifold with boundary, 
and its interior is the Betti realization $(X-\partial X)^{\Betti}$ of $X-\partial X$.
The collar neighborhood theorem \cite[Theorem 2]{zbMATH03320239} implies that the inclusion
\[
(X-\partial X)^{\Betti}
\to
X^{\log}
\]
is a homotopy equivalence.
This means that the triangle
\[
\begin{tikzcd}
\lSm/S\ar[rr,"X\mapsto X-\partial X"]\ar[rd,"X\mapsto X^{\log}"']&
&
\infH(k)\ar[ld,"\Real_\Betti"]
\\
&
\infSpc
\end{tikzcd}
\]
commutes.
Together with Proposition \ref{univlogSH.1}, we obtain \eqref{KNreal.5.1}.

We can similarly show that the $S^1$-spectral version of \eqref{KNreal.5.1} commutes.
For the $\P^1$-stable version, 
we use \eqref{KNreal.4.5}.
\end{proof}

\newpage

\appendix

\section{Cubes and total cofibers}
\label{cubes}
To efficiently deal with iterated cofibers, 
we will introduce the notion of the total cofibers of $n$-cubes in this subsection.
We will use this notion in Theorem \ref{Hoch.13}.

\begin{df}
\label{Thomdf.3}
Let $\cC$ be a pointed $\infty$-category with small colimits.
For an integer $n\geq 0$, an \emph{$n$-cube}\index{cube} in $\cC$ is a map
\[
Q\colon (\Delta^1)^n \to \cC,
\]
compare \cite[Definition 6.1.1.2]{HA}.
An \emph{$n$-cubical horn} \index{cubical horn} in $\cC$ is a map
\[
Q\colon (\Delta^1)^n-\{(1,\ldots,1)\} \to \cC.
\]
For every $n$-cube $Q$, we can naturally associate an $n$-cubical horn denoted by 
$\horn(Q)$. \index[notation]{horn @ $\horn(Q)$}
Moreover, 
we set $\vertex(Q):=Q(1,\ldots,1)$. \index[notation]{vertex @ $\vertex(Q)$}
\end{df}

The cofiber construction applies to $1$-cubes.
We give a generalization to $n$-cubes as follows.

\begin{df}
\label{Thomdf.10}
Let $\cC$ be a pointed $\infty$-category with small colimits.
An $n$-cube $Q$ in $\cC$ naturally induces a map
\[
\colimit(\horn(Q))
\to
\vertex(Q).
\]
The \emph{total cofiber}\index{total cofiber}\index[notation]{tcofib @ $\tcofib(Q)$} of $Q$, 
denoted by $\tcofib(Q)$, 
is the cofiber of this map.
Thus there is a cofiber sequence
\begin{equation}
\label{Thomdf.10.1}
\colimit(\horn(Q))
\to
\vertex(Q)
\to
\tcofib(Q).
\end{equation}
If $Q$ is a $1$-cube, then $\tcofib(Q)=\cofib(Q)$.
We say that $Q$ is \emph{cocartesian} \index{cocartesian cube} if there is an equivalence $\tcofib(Q)\simeq 0$ in $\cC$.
\end{df}

\begin{df}
\label{Thomdf.16}
Let $\cC$ be a pointed $\infty$-category with limits.
An $n$-cube in $\cC$ naturally induces a map
\[
Q(0,\ldots,0)\to \limit(Q|_{(\Delta^1)^n-\{0,\ldots,0\}}).
\]
The \emph{total fiber}\index{total fiber} of $Q$, 
denoted by $\tfib(Q)$, \index[notation]{tfib @ $\tfib(Q)$} 
is the fiber of this map.
We say that $Q$ is \emph{cartesian}\index{cartesian cube} if there is an equivalence $\tfib(Q)\simeq 0$ in $\cC$.
\end{df}

Note that if $Q$ is a $1$-cube, then $\tfib(Q)=\fib(Q)$.

\begin{prop}
\label{Thomdf.5}
Let $\cC$ be a pointed $\infty$-category with small colimits.
For every $n$-cube $Q\colon (\Delta^1)^n\to \cC$ and integer $1\leq i\leq n$, 
there is a canonically induced cocartesian square
\begin{equation}
\label{Thomdf.5.2}
\begin{tikzcd}
\colimit(\horn(Q|_{(\Delta^1)^{i-1}\times \{0\}\times (\Delta^1)^{n-i}}))\ar[d]\ar[r]&
\colimit(\horn(Q|_{(\Delta^1)^{i-1}\times \{1\}\times (\Delta^1)^{n-i}}))\ar[d]
\\
\vertex(Q|_{(\Delta^1)^{i-1}\times \{0\}\times (\Delta^1)^{n-i}})\ar[r]&
\colimit(\horn(Q)).
\end{tikzcd}
\end{equation}
\end{prop}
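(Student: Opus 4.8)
The plan is to recognize the shape indexing $\horn(Q)$ as a pushout of $\infty$-categories obtained by deleting a single maximal object of the index poset, and then to distribute the colimit over that pushout. Throughout, move the $i$-th coordinate to the last factor, so $(\Delta^1)^n\simeq(\Delta^1)^{n-1}\times\Delta^1$; under this identification $Q$ becomes a morphism $Q_0\to Q_1$ of $(n-1)$-cubes with $Q_\epsilon:=Q|_{(\Delta^1)^{n-1}\times\{\epsilon\}}$, and $Q_0,Q_1$ are exactly the two restrictions appearing in the statement. Write $P:=(\Delta^1)^n\setminus\{\mathbf 1\}$ for the poset indexing $\horn(Q)$ and $H:=(\Delta^1)^{n-1}\setminus\{\mathbf 1'\}$ for the poset indexing $\horn(Q_0)$ and $\horn(Q_1)$, where $\mathbf 1,\mathbf 1'$ denote the terminal objects. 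Two elementary colimit identities will be used, each from the presence of a terminal object: $\colimit_{(\Delta^1)^{n-1}}Q_0\simeq Q_0(\mathbf 1')=\vertex(Q_0)$, and $\colimit_{H\times\Delta^1}(Q|_{H\times\Delta^1})\simeq\colimit_H(Q_1|_H)=\colimit(\horn(Q_1))$, the latter because $\Delta^1$ has a terminal object, so a colimit over $Y\times\Delta^1$ is the colimit over $Y$ of the value at $1$. I would record this second identity as a short lemma.

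First I would single out $x:=(\mathbf 1',0)\in P$. It is a maximal element of $P$, since the only object of $(\Delta^1)^n$ strictly above it, namely $\mathbf 1$, is not in $P$. Hence $P_{/x}=(\Delta^1)^{n-1}\times\{0\}=(H\times\{0\})^{\triangleright}$ (with cone point $x$), a full subposet of $P$, while $P\setminus\{x\}=H\times\Delta^1$ is another full subposet; their intersection is $H\times\{0\}$ and their union is $P$. The combinatorial point is that every chain in $P$ lies entirely inside $P\setminus\{x\}$ or entirely inside $P_{/x}$: a chain containing $x$ must have $x$ as its greatest element (maximality), hence consists of objects $\le x$. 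Since the nerve of a poset takes intersections of full subposets to intersections, $\mathrm N(P)$ is the genuine pushout of simplicial sets $\mathrm N(P\setminus\{x\})\cup_{\mathrm N(H\times\{0\})}\mathrm N(P_{/x})$; as all three maps are monomorphisms, hence cofibrations in the Joyal model structure, this exhibits $P$ as a pushout of $\infty$-categories $P\simeq(P\setminus\{x\})\sqcup_{H\times\{0\}}(H\times\{0\})^{\triangleright}$.

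The next step is that $\colimit$ distributes over this pushout: for $F\colon P\to\cC$,
\[
\colimit_P F\;\simeq\;\colimit_{P\setminus\{x\}}(F|)\;\sqcup_{\colimit_{H\times\{0\}}(F|)}\;\colimit_{(H\times\{0\})^{\triangleright}}(F|).
\]
This is formal: mapping out of the left side into a test object $Y$ computes as a limit over $P$ of $\mathrm{Map}_{\cC}(F(-),Y)$, such limits turn the above colimit of index $\infty$-categories into the corresponding pullback, and Yoneda gives the claim. Applying this with $F=Q$: the cone $(H\times\{0\})^{\triangleright}$ has a terminal object mapping to $x$, so $\colimit_{(H\times\{0\})^{\triangleright}}(Q|)\simeq Q(x)=\vertex(Q_0)$; by the two identities above, $\colimit_{P\setminus\{x\}}(Q|)=\colimit_{H\times\Delta^1}(Q|)\simeq\colimit(\horn(Q_1))$ and $\colimit_{H\times\{0\}}(Q|)=\colimit(\horn(Q_0))$. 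Substituting gives $\colimit(\horn(Q))\simeq\vertex(Q_0)\sqcup_{\colimit(\horn(Q_0))}\colimit(\horn(Q_1))$, and I would then check that the structure maps of this pushout are exactly those displayed in the proposition: the map $\colimit(\horn(Q_0))\to\vertex(Q_0)$ comes from $H\times\{0\}\hookrightarrow(H\times\{0\})^{\triangleright}$ and is the canonical cone map; $\colimit(\horn(Q_0))\to\colimit(\horn(Q_1))$ comes from $H\times\{0\}\hookrightarrow H\times\Delta^1$ and is induced by $Q_0\to Q_1$; and the two maps into $\colimit(\horn(Q))=\colimit_P Q$ are the structure maps along $P_{/x}\hookrightarrow P$ and $P\setminus\{x\}\hookrightarrow P$.

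The main obstacle is the soft $\infty$-categorical bookkeeping in the two middle steps: verifying that the elementary poset decomposition really is a homotopy (hence $\infty$-categorical) pushout, and that $\colimit$ distributes over it. Neither is deep, but both must be phrased carefully — in particular the ``every chain lies in one piece'' observation that makes the pushout of nerves strict. The only concrete computation, the identity $\colimit_{H\times\Delta^1}(Q|)\simeq\colimit(\horn(Q_1))$, is routine once stated as the lemma above; everything else reduces to matching canonical maps.
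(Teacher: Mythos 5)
Your proof is correct and is essentially the paper's argument in different clothing: your decomposition of the horn into $P\setminus\{x\}=H\times\Delta^1$ and $P_{/x}=(\Delta^1)^{n-1}\times\{0\}$ glued along $H\times\{0\}$ is exactly the paper's cover of $D$ by $C$ and $A_0$ along $B_0$ (up to permuting the distinguished coordinate), and you use the same two inputs, namely finality of $\{1\}\subset\Delta^1$ and the terminal object of the cone. The distributivity of colimits over the pushout of index simplicial sets that you sketch via mapping spaces is precisely the cited result \cite[Proposition 4.4.2.2]{HTT}, so no genuinely new route is taken.
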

\begin{proof}
For simplicity of notation, we may assume $i=1$.
We set
\[
A_i:=\{i\}\times \Delta^{n-1},
\;
B_i:=\{i\}\times (\Delta^{n-1}-\{(1,\ldots,1)\})
\]
for $i=0,1$ and
\[
C:=\Delta^1\times (\Delta^{n-1}-\{(1,\ldots,1)\})
\;
D:=\Delta^{n}-\{(1,\ldots,1)\}.
\]

Since $A_0$ has a final object, we have an equivalence in $\cC$
\begin{equation}
\label{Thomdf.5.5}
\vertex(Q|_{A_0})
\simeq
\colimit(Q|_{A_0}).
\end{equation}
The inclusion $\{1\} \to \Delta^1$ is right anodyne, so it is final due to \cite[Corollary 4.1.1.12]{HTT}.
Together with \cite[Corollary 4.1.2.7]{HTT}, we deduce that the inclusion $B_1\to C$ is final.
It follows that we have an equivalence in $\cC$
\begin{equation}
\label{Thomdf.5.6}
\colimit(Q|_{B_1})
\simeq
\colimit(Q|_{C}).
\end{equation}

The square
\[
\begin{tikzcd}
B_0\ar[r]\ar[d]&
C\ar[d]
\\
A_0\ar[r]&
D
\end{tikzcd}
\]
is cocartesian.
Apply \cite[Proposition 4.4.2.2]{HTT} to deduce that the square
\begin{equation}
\label{Thomdf.5.4}
\begin{tikzcd}
\colimit(Q|_{B_0})\ar[d]\ar[r]&
\colimit(Q|_{C})\ar[d]
\\
\colimit(Q|_{A_0})\ar[r]&
\colimit(Q|_D)
\end{tikzcd}
\end{equation}
is cocartesian.
Together with \eqref{Thomdf.5.5} and \eqref{Thomdf.5.6}, we see that \eqref{Thomdf.5.2} is cocartesian.
\end{proof}

\begin{prop}
\label{Thomdf.13}
Let $\cC$ be an $\infty$-category with small colimits.
For every $n$-cube $Q\colon (\Delta^1)^n\to \cC$ and integer $1\leq i\leq n$, 
there is a canonically induced cofiber sequence
\begin{equation}
\label{Thomdf.5.3}
\tcofib (Q|_{(\Delta^1)^{i-1}\times \{0\}\times (\Delta^1)^{n-i}})
\to
\tcofib (Q|_{(\Delta^1)^{i-1}\times \{1\}\times (\Delta^1)^{n-i}})
\to
\tcofib(Q).
\end{equation}
\end{prop}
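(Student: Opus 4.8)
The plan is to reduce to the case $i=1$ and then build the cofiber sequence directly from the cocartesian square supplied by Proposition \ref{Thomdf.5}, using only two elementary facts about pushout squares in a pointed $\infty$-category with small colimits: (a) parallel edges of a cocartesian square have equivalent cofibers; and (b) a composable pair $A\xrightarrow{f}B\xrightarrow{g}C$ sits in a cofiber sequence $\cofib(f)\to\cofib(gf)\to\cofib(g)$, obtained by pasting the defining pushout squares of $\cofib(f)$ and $\cofib(gf)$.

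First I would invoke the action of the symmetric group on $(\Delta^1)^n$ permuting the coordinates: it identifies the $i$-th coordinate with the first, and both the construction of $\tcofib$ and the asserted sequence are invariant under it, so we may assume $i=1$. Set $Q_j:=Q|_{\{j\}\times(\Delta^1)^{n-1}}$ for $j=0,1$ and abbreviate $H:=\colimit(\horn(Q))$, $V:=\vertex(Q)=Q(1,\dots,1)$, $H_j:=\colimit(\horn(Q_j))$, $V_j:=\vertex(Q_j)$; thus $V_1=V$, and by definition $\tcofib(Q)=\cofib(H\to V)$ and $\tcofib(Q_j)=\cofib(H_j\to V_j)$. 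Writing out the indexing posets, the subposet $\{1\}\times\big((\Delta^1)^{n-1}\setminus\{(1,\dots,1)\}\big)$ sits inside $(\Delta^1)^n\setminus\{(1,\dots,1)\}$ and carries $\horn(Q_1)$ into $\horn(Q)$, inducing a canonical map $f\colon H_1\to H$; and Proposition \ref{Thomdf.5} with $i=1$ says precisely that
\[
\begin{tikzcd}
H_0\ar[r]\ar[d]&H_1\ar[d,"f"]\\
V_0\ar[r]&H
\end{tikzcd}
\]
is cocartesian.

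Then I would argue as follows. Applying (a) to the vertical edges of this square gives $\cofib(f)\simeq\cofib(H_0\to V_0)=\tcofib(Q_0)$. Let $g\colon H\to V$ denote the canonical cocone map; applying (b) to $H_1\xrightarrow{f}H\xrightarrow{g}V$ yields a cofiber sequence $\cofib(f)\to\cofib(gf)\to\cofib(g)$. Here $\cofib(g)=\tcofib(Q)$ by definition, while $gf\colon H_1\to V$ is, after unwinding the cocone structures and using $V=V_1$, exactly the canonical map $\colimit(\horn(Q_1))\to\vertex(Q_1)$, so $\cofib(gf)=\tcofib(Q_1)$. Substituting these three identifications turns the sequence into the desired $\tcofib(Q_0)\to\tcofib(Q_1)\to\tcofib(Q)$.

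Finally I would note that Proposition \ref{Thomdf.5}, the pasting in (b), and the cancellation in (a) are all natural in $Q\in\Fun((\Delta^1)^n,\cC)$, which gives the claimed functoriality. I do not expect a real obstacle: the only points requiring care are bookkeeping ones — that the map $H_1\to H$ occurring in Proposition \ref{Thomdf.5} coincides with the map $f$ induced by the inclusion of horns, and that the composite $gf$ is the structural map defining $\tcofib(Q_1)$ — and both become transparent once the poset $(\Delta^1)^n\setminus\{(1,\dots,1)\}$ is spelled out and decomposed along its first coordinate.
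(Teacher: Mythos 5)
Your proof is correct and follows essentially the same route as the paper: the key input is the cocartesian square of Proposition \ref{Thomdf.5}, and the rest is the same pasting-law bookkeeping, which you package as two standard lemmas (equivalent cofibers of parallel edges of a pushout, and the cofiber sequence of a composite) while the paper assembles one larger diagram and identifies the intermediate cofiber $\cF$ with $\tcofib(Q_0)$. Your explicit check that the composite $H_1\to H\to V$ is the structural map of $Q_1$, and the reduction to $i=1$ by permuting coordinates, are exactly the implicit identifications the paper relies on.
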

\begin{proof}
We set $Q_j:=Q|_{(\Delta^1)^{i-1}\times \{j\}\times (\Delta^1)^{n-i}}$ for $j=0,1$ and
\[
\cF:=\cofib(\colimit(\horn(Q_0))\to \colimit(\horn(Q))).
\]
Consider the commutative diagram
\[
\begin{tikzcd}[column sep=small, row sep=small]
\colimit(\horn(Q_0))\ar[r]\ar[d]\ar[rd,phantom,"(a)" description]&
\colimit(\horn(Q_1))\ar[d]\ar[r]\ar[rd,phantom,"(b)" description]&
0\ar[d]
\\
\vertex(Q_0)\ar[r]&
\colimit(\horn(Q))\ar[d]\ar[r]\ar[rd,phantom,"(c)" description]&
\cF\ar[d]\ar[r]\ar[rd,phantom,"(d)" description]&
0\ar[d]
\\
&
\vertex(Q)\ar[r]&
\tcofib(Q_1)\ar[r]&
\tcofib(Q).
\end{tikzcd}
\]
By definition, $(b)$, $(b)\cup (c)$, and $(c)\cup (d)$ are cocartesian.
Proposition \ref{Thomdf.5} shows that $(a)$ is cocartesian.
Use the pasting law to deduce that $(a)\cup (b)$ is cocartesian.
This implies there is an equivalence in $\cC$
\begin{equation}
\label{Thomdf.13.1}
\cF
\simeq
\tcofib(Q_0).
\end{equation}
Use the pasting law two times to deduce that $(d)$ is cocartesian.
Together with \eqref{Thomdf.13.1}, we have the desired cofiber sequence.
\end{proof}

\begin{prop}
\label{Thomdf.12}
Let $Q\colon (\Delta^1)^n \to \cC$ be an $n$-cube in a pointed symmetric monoidal $\infty$-category $\cC$ with small colimits, 
and let $X$ be an object of $\cC$.
If the tensor product operation on $\cC$ preserves small colimits in each variable, then there is a canonical equivalence in $\cC$
\[
\tcofib(Q)\otimes X
\simeq
\tcofib(Q'),
\]
where $Q'$ is the naturally associated $n$-cube sending any point $(a_1,\ldots,a_n)$ to $Q(a_1,\ldots,a_n)\otimes X$.
\end{prop}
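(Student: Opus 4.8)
The plan is to apply the colimit-preserving endofunctor $-\otimes X$ to the defining cofiber sequence \eqref{Thomdf.10.1} for $Q$ and then identify each term with the corresponding term for $Q'$. First, note that by construction $Q'$ is the composite
\[
(\Delta^1)^n \xrightarrow{\ Q\ } \cC \xrightarrow{\ -\otimes X\ } \cC.
\]
Hence the $n$-cubical horn $\horn(Q')$ is $\horn(Q)$ postcomposed with $-\otimes X$, and $\vertex(Q')\simeq \vertex(Q)\otimes X$. Since the tensor product on $\cC$ preserves small colimits in each variable, $-\otimes X$ preserves the colimit of the horn diagram, so there is a canonical equivalence $\colimit(\horn(Q'))\simeq \colimit(\horn(Q))\otimes X$.

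Next, the cofiber construction is a pushout, hence preserved by $-\otimes X$. Reading \eqref{Thomdf.10.1} as the pushout of $0 \leftarrow \colimit(\horn(Q)) \to \vertex(Q)$ and applying $-\otimes X$, together with the identifications of the previous paragraph, gives a cofiber sequence
\[
\colimit(\horn(Q'))\to \vertex(Q')\to \tcofib(Q)\otimes X
\]
in $\cC$, whose first map is the canonical one induced by $Q'$. By Definition \ref{Thomdf.10}, the third term of this cofiber sequence is $\tcofib(Q')$, which yields the asserted canonical equivalence $\tcofib(Q)\otimes X\simeq \tcofib(Q')$.

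The only point needing care — and the main (if minor) obstacle — is naturality: one must check that the canonical comparison map $\colimit(\horn(Q'))\to \vertex(Q')$ coincides, under the equivalence $\colimit(\horn(Q'))\simeq \colimit(\horn(Q))\otimes X$ and $\vertex(Q')\simeq\vertex(Q)\otimes X$, with $-\otimes X$ applied to the canonical map $\colimit(\horn(Q))\to \vertex(Q)$. This holds because both maps are induced by the same cocone on the horn diagram with cone point $\vertex$, and $-\otimes X$, being a functor preserving this colimit, carries that cocone to the corresponding one; the identification of the induced maps is then the universal property of the colimit. Once this compatibility is in place, the argument above is formal.
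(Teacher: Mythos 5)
Your argument is correct and is exactly the paper's proof: apply $-\otimes X$, which preserves small colimits, to the defining cofiber sequence \eqref{Thomdf.10.1}, identifying the horn colimit, the vertex, and the cofiber term for $Q'$. The naturality check you add at the end is a sensible (if routine) elaboration of the same idea.
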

\begin{proof}
Apply the assumption that the tensor product operation preserves small colimits in each variable to \eqref{Thomdf.10.1}.
\end{proof}

\begin{prop}
\label{Thomdf.11}
Let $Q\colon (\Delta^1)^n \to \cC$ be an $n$-cube in a pointed symmetric monoidal $\infty$-category $\cC$ with small colimits, 
and let $i\colon X_1\to X_0$ be a map in $\cC$.
If the tensor product operation on $\cC$ preserves small colimits in each variable, then there is a canonical equivalence in $\cC$
\[
\tcofib(Q) \otimes \cofib(f)
\simeq
\tcofib(Q'),
\]
where $Q'$ is the naturally associated $(n+1)$-cube sending any point $(a_1,\ldots,a_{n+1})$ to 
$Q(a_1,\ldots,a_n)\otimes X_{a_{n+1}}$.
\end{prop}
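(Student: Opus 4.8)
The plan is to deduce the statement directly from Propositions \ref{Thomdf.13} and \ref{Thomdf.12} by slicing the $(n+1)$-cube $Q'$ in its last coordinate. First I would set $Q'_j := Q'|_{(\Delta^1)^n\times \{j\}}$ for $j=0,1$; by the very definition of $Q'$, the cube $Q'_j$ is the $n$-cube $(a_1,\ldots,a_n)\mapsto Q(a_1,\ldots,a_n)\otimes X_j$, where the $1$-cube in the last coordinate is (up to the evident indexing convention) the $1$-cube underlying $i$, whose total cofiber is $\cofib(i)$ by Definition \ref{Thomdf.10}. Proposition \ref{Thomdf.13}, applied to the last coordinate, then supplies a canonical cofiber sequence
\[
\tcofib(Q'_0)\to \tcofib(Q'_1)\to \tcofib(Q')
\]
in $\cC$.

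Next I would identify the first two terms using Proposition \ref{Thomdf.12}, which gives equivalences $\tcofib(Q'_j)\simeq \tcofib(Q)\otimes X_j$ for $j=0,1$. The one point that needs a little care is that the equivalence of Proposition \ref{Thomdf.12} is \emph{natural} in the object $X$: under these identifications the connecting map $\tcofib(Q'_0)\to \tcofib(Q'_1)$ must be shown to agree with $\tcofib(Q)\otimes i$. This is immediate from the construction of that equivalence, which is obtained by applying $(-)\otimes X$ to the defining cofiber sequence \eqref{Thomdf.10.1} and invoking the hypothesis that $(-)\otimes X$ preserves small colimits; every arrow in that construction is functorial in $X$, so the naturality is built in.

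Finally, combining the two steps turns the cofiber sequence above into
\[
\tcofib(Q)\otimes X_0\to \tcofib(Q)\otimes X_1\to \tcofib(Q').
\]
Since $\otimes$ preserves small colimits in each variable, the functor $\tcofib(Q)\otimes(-)$ carries the cofiber sequence underlying $i$ to a cofiber sequence, so its cofiber is $\tcofib(Q)\otimes\cofib(i)$; comparing with the displayed sequence yields the desired equivalence $\tcofib(Q')\simeq \tcofib(Q)\otimes\cofib(i)$. I do not expect any essential obstacle here once Propositions \ref{Thomdf.12} and \ref{Thomdf.13} are available — the only thing to be careful about is the bookkeeping of the cube-slicing conventions and the naturality of Proposition \ref{Thomdf.12} in its tensor variable. (Alternatively one may present the whole argument as a single application of Proposition \ref{Thomdf.12}, reading $Q'$ as the cube obtained from the $1$-cube $i$ by tensoring over the last coordinate, which makes the inductive role of Proposition \ref{Thomdf.13} transparent.)
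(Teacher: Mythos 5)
Your proposal is correct and follows essentially the same route as the paper's own proof: slice $Q'$ along the last coordinate, identify $\tcofib(Q'_j)\simeq\tcofib(Q)\otimes X_j$ via Proposition \ref{Thomdf.12} (the paper records the naturality you worry about as a commutative square with vertical equivalences), and compare the resulting cofiber sequence with the one from \eqref{Thomdf.5.3}. The only discrepancy is the harmless indexing/direction slippage around $i\colon X_1\to X_0$ versus $\cofib(f)$, which is already present in the paper's statement and which you handle correctly.
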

\begin{proof}
We set
\[
Q_i':=Q'|_{(\Delta^1)^n \times \{i\}}
\]
for $i=0,1$.
Owing to Proposition \ref{Thomdf.12}, there is a commutative square in $\cC$ with vertical equivalences
\[
\begin{tikzcd}
\tcofib(Q)\otimes X_0\ar[d,"\simeq"']\ar[r]&
\tcofib(Q)\otimes X_1\ar[d,"\simeq"]
\\
\tcofib(Q_0')\ar[r]&
\tcofib(Q_1').
\end{tikzcd}
\]
Since the tensor product operation preserves small colimits, we have a cofiber sequence
\[
\tcofib(Q)\otimes X_0\to \tcofib(Q)\otimes X_1\to \tcofib(Q)\otimes \cofib(f).
\]
Combine with \eqref{Thomdf.5.3} to conclude.
\end{proof}

\begin{prop}
\label{Thomdf.6}
Let $i_1\colon X_{01}\to X_{11}$, $\ldots$, $i_n\colon X_{0n}\to X_{1n}$ be maps in a pointed symmetric monoidal 
$\infty$-category $\cC$ with small colimits.
If the tensor product operation on $\cC$ preserves small colimits in each variable, 
then there is a canonical equivalence in $\cC$
\[
\cofib(i_1)\otimes \cdots \otimes \cofib(i_n)
\simeq
\tcofib(Q_{i_1\cdots i_n}).
\]
Here we write $Q$ for the naturally associated $n$-cube sending any point $(a_1,\ldots,a_n)$ to 
$X_{a_1 1}\otimes \cdots \otimes X_{a_n n}$.
\end{prop}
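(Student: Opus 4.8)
The plan is to argue by induction on $n$, using Proposition \ref{Thomdf.11} as the inductive engine. For $n=1$ the $1$-cube $Q_{i_1}$ is simply the arrow $i_1\colon X_{01}\to X_{11}$, and by Definition \ref{Thomdf.10} we have $\tcofib(Q_{i_1})=\cofib(i_1)$, so the asserted equivalence is the identity. There is nothing to prove in the base case beyond unwinding the definitions.

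For the inductive step, suppose $n>1$ and that the statement holds for $n-1$. Write $Q'$ for the $(n-1)$-cube sending $(a_1,\ldots,a_{n-1})$ to $X_{a_1 1}\otimes\cdots\otimes X_{a_{n-1}\,n-1}$, so that the inductive hypothesis supplies a canonical equivalence $\cofib(i_1)\otimes\cdots\otimes\cofib(i_{n-1})\simeq\tcofib(Q')$. Since the tensor product on $\cC$ preserves small colimits in each variable, Proposition \ref{Thomdf.11} applies to the $(n-1)$-cube $Q'$ and the map $i_n\colon X_{0n}\to X_{1n}$, yielding a canonical equivalence
\[
\tcofib(Q')\otimes\cofib(i_n)\simeq\tcofib(Q''),
\]
where $Q''$ is the $n$-cube sending $(a_1,\ldots,a_n)$ to $Q'(a_1,\ldots,a_{n-1})\otimes X_{a_n n}=X_{a_1 1}\otimes\cdots\otimes X_{a_{n-1}\,n-1}\otimes X_{a_n n}$. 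By the associativity constraint of the symmetric monoidal structure this $n$-cube is canonically identified with $Q_{i_1\cdots i_n}$. Tensoring the inductive-hypothesis equivalence with $\cofib(i_n)$ and composing with the displayed equivalence then produces the desired canonical equivalence $\cofib(i_1)\otimes\cdots\otimes\cofib(i_n)\simeq\tcofib(Q_{i_1\cdots i_n})$.

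The only points requiring care are bookkeeping: one must check that the $n$-cube $Q''$ produced by Proposition \ref{Thomdf.11} agrees with $Q_{i_1\cdots i_n}$ as a functor $(\Delta^1)^n\to\cC$, which is immediate from the formula for $Q''$ together with the coherent associativity of $\otimes$, and that all equivalences involved are functorial in the $i_j$ so that the composite is canonical. I do not expect a genuine obstacle here: Proposition \ref{Thomdf.11} already encapsulates the ``adjoin one tensor factor'' step, so the argument is a direct induction, with the coherence issues inherent to symmetric monoidal $\infty$-categories absorbed into the ambient formalism. If one wishes to make naturality fully explicit, one can instead run the same induction at the level of the two functors $\cC^{\times 2n}\to\cC$ computing the two sides, but this adds nothing essential to the argument.
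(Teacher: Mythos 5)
Your inductive argument is exactly the paper's proof: the paper simply says ``Apply Proposition \ref{Thomdf.11} repeatedly,'' which is the induction you spell out, with the base case and the identification of the resulting cube with $Q_{i_1\cdots i_n}$ made explicit. Your write-up is correct and matches the intended approach.
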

\begin{proof}
Apply Proposition \ref{Thomdf.11} repeatedly.
\end{proof}

\begin{df}
\label{Thomdf.9}
Let $\cC$ be an $\infty$-category with fiber products.
For any object $X\in \cC$, 
the \emph{cube associated with a family of maps $\{U_i\to X\}_{1\leq i\leq n}$} 
\index{cube associated with a family} is the $n$-cube
\[
(\Delta^1)^n\to \cC
\]
sending a vertex of the form $(a_1,\ldots,a_n)$ to $Y_{a_1 1}\times_X \cdots \times_X Y_{a_n n}$, 
where $Y_{0i}:=U_i$ and $Y_{1i}:=X$.
The \emph{cubical horn associated with $\{U_i\to X\}_{1\leq i\leq n}$} \index{cubical horn associated with a family} 
is the restriction of the above cube to $(\Delta^1)^n-\{(1,\ldots,1)\}$.
For example, if $n=2$, then the cube associated with $U_1\to X$ and $U_2\to X$ is the $2$-cube
\[
\begin{tikzcd}
U_1\times U_2\ar[r]\ar[d]&U_2\ar[d]
\\
U_1\ar[r]&X.
\end{tikzcd}
\]
\end{df}

Any stable $\infty$-category has small colimits and limits, see \cite[Proposition 1.1.3.4]{HA}.

\begin{prop}
\label{Thomdf.15}
Let $\cC$ be a stable $\infty$-category.
Then an $n$-cube $Q$ in $\cC$ is cartesian if and only if it is cocartesian.
\end{prop}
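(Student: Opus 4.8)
The plan is to reduce the statement ``$Q$ is cartesian iff $Q$ is cocartesian'' to the classical $n=1$ case by an induction on $n$, using the decomposition lemmas for total (co)fibers already established in this appendix. First I would recall that in a stable $\infty$-category both small colimits and limits exist (\cite[Proposition 1.1.3.4]{HA}), so that $\tcofib(Q)$ and $\tfib(Q)$ are both defined for every $n$-cube $Q$ and every $n \geq 0$; the statement to prove is then that the canonical object $\tcofib(Q)$ vanishes exactly when $\tfib(Q)$ does. The base case $n=0$ is trivial (a $0$-cube is a single object, $\tcofib = \tfib = $ that object), and the case $n=1$ is the defining property of a stable $\infty$-category: a square of the shape $\fib(Q) \to Q(0) \to Q(1)$ versus $Q(0) \to Q(1) \to \cofib(Q)$ shows $\fib(Q) \simeq 0 \iff \cofib(Q) \simeq 0$ since $\fib$ and $\cofib$ differ by a shift on the relevant cofiber/fiber sequence.

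For the inductive step, fix $n \geq 2$ and write $Q$ as a map of $(n-1)$-cubes $Q_0 \to Q_1$ along, say, the last coordinate, i.e. $Q_j := Q|_{(\Delta^1)^{n-1} \times \{j\}}$ for $j = 0,1$. Proposition \ref{Thomdf.13} (with $i = n$) furnishes a cofiber sequence
\[
\tcofib(Q_0) \to \tcofib(Q_1) \to \tcofib(Q)
\]
in $\cC$. Dually, there is a fiber sequence
\[
\tfib(Q) \to \tfib(Q_0) \to \tfib(Q_1),
\]
which one obtains by the evident dual of Proposition \ref{Thomdf.13} — or, more cleanly in the stable setting, by observing that $\tfib$ of an $n$-cube is computed as an iterated fiber in either order of the $n$ coordinates, exactly as $\tcofib$ is an iterated cofiber, so peeling off the last coordinate gives the total fiber of the induced map of $(n-1)$-cubes $\tfib(Q_0) \to \tfib(Q_1)$. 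Because $\cC$ is stable, a fiber sequence is the same data as a cofiber sequence, so $\tfib(Q) \simeq 0$ iff $\tfib(Q_0) \to \tfib(Q_1)$ is an equivalence, and likewise $\tcofib(Q) \simeq 0$ iff $\tcofib(Q_0) \to \tcofib(Q_1)$ is an equivalence.

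It then remains to see that $\tcofib(Q_0) \to \tcofib(Q_1)$ is an equivalence iff $\tfib(Q_0) \to \tfib(Q_1)$ is. This is where the inductive hypothesis enters: by stability, ``$\tcofib$ of the map of $(n-1)$-cubes $Q_0 \to Q_1$ vanishes'' can be re-read as ``$\tcofib$ of the $n$-cube $Q$ vanishes'', which is circular — so instead I would argue directly that both conditions are equivalent to the $n$-cube $Q$, viewed now through the \emph{same} decomposition, being an equivalence of $(n-1)$-cubes in the strong sense that it is a pointwise equivalence after applying $\tfib$ (resp.\ $\tcofib$). The clean way: apply the inductive hypothesis to the $(n-1)$-cubes $Q_0$, $Q_1$ themselves to know $\tfib(Q_j) \simeq \Sigma^{-(n-1)}\tcofib(Q_j)$ up to a fixed suspension (more precisely, total fiber and total cofiber of an $m$-cube in a stable $\infty$-category agree up to the shift $[-m]$, which is itself proven by the same induction using the two sequences above). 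Granting that shift identity for $(n-1)$-cubes, the map $\tfib(Q_0)\to\tfib(Q_1)$ is identified with $\Sigma^{-(n-1)}$ of $\tcofib(Q_0)\to\tcofib(Q_1)$, hence one is an equivalence iff the other is, and we are done.

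\textbf{Main obstacle.} The genuine content is the bookkeeping that $\tfib$ (resp. $\tcofib$) of an $n$-cube may be computed by peeling off coordinates in any order and that, in a stable $\infty$-category, $\tfib(Q) \simeq \tcofib(Q)[-n]$ naturally — this ``shift identity'' is what makes the two vanishing conditions literally the same, and proving it requires care that the equivalences of Proposition \ref{Thomdf.5} and \ref{Thomdf.13} (and their duals) are compatible across the induction. Everything else is formal manipulation of (co)fiber sequences using that fiber and cofiber sequences coincide in $\cC$.
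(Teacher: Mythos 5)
Your argument is correct in outline, but it takes a different route from the paper's proof at the decisive step. Both proofs induct on $n$ by splitting $Q$ into a map of $(n-1)$-cubes $Q_0\to Q_1$ and both use the cofiber sequence of Proposition \ref{Thomdf.13} (and, implicitly, its dual for total fibers). The difference is where the stability of $\cC$ and the inductive hypothesis are brought to bear. You reduce everything to the natural shift identity $\tcofib(R)\simeq \Sigma^{m}\,\tfib(R)$ for $m$-cubes $R$, proved by the same induction, and then identify the map $\tfib(Q_0)\to\tfib(Q_1)$ with a desuspension of $\tcofib(Q_0)\to\tcofib(Q_1)$; this proves a stronger statement than the proposition, but, as you note yourself, it requires verifying that the equivalences of Propositions \ref{Thomdf.5} and \ref{Thomdf.13} (and their duals) are natural in the cube, so that the two comparison maps really correspond under the shift — that coherence is the real work in your version. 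The paper sidesteps naturality entirely: it forms the \emph{pointwise} cubes $\cofib(Q_0\to Q_1)$ and $\fib(Q_0\to Q_1)$ in the stable functor category $\Fun((\Delta^1)^{n-1},\cC)$, where $\cofib(Q_0\to Q_1)\simeq \Sigma\,\fib(Q_0\to Q_1)$, applies the inductive hypothesis to \emph{that} $(n-1)$-cube, and uses that $\tcofib$ commutes with pointwise cofibers to translate ``$Q$ cocartesian'' into ``$\cofib(Q_0\to Q_1)$ cocartesian'' (and dually for cartesian). So the paper's induction only ever needs vanishing statements plus exactness, while yours buys the extra dividend of the shift formula $\tfib\simeq\Omega^{n}\tcofib$ at the cost of tracking naturality through the induction. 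Either way the proposition follows; if you keep your version, state the shift identity (with its naturality) as the inductive claim from the start, since ``cartesian iff cocartesian'' alone is not strong enough to run your argument, as you correctly observed.
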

\begin{proof}
We proceed by induction on $n$.
The claim is clear if $n=1$.
Suppose $n>1$.
We set $Q_i:=Q|_{\{i\}\times (\Delta^1)^{n-1}}$ for $i=0,1$.
Owing to \cite[Proposition 1.1.3.1]{HA}, the $\infty$-category $\Fun((\Delta^1)^n,\cC)$ is stable.
Furthermore, the $(n-1)$-cubes $\cofib(Q_0\to Q_1)$ and $\fib(Q_0\to Q_1)$ can be computed pointwise.
Since $\cC$ is stable, we have an equivalence in $\cC$
\[
\cofib(Q_0\to Q_1)\simeq  \Sigma \fib(Q_0\to Q_1),
\]
where $\Sigma$ is the suspension functor.
In particular, $\cofib(Q_0\to Q_1)$ is cocartesian if and only if $\fib(Q_0\to Q_1)$ is cartesian by induction.

\ 

Owing to \eqref{Thomdf.5.3}, $Q$ is cocartesian if and only if $\cofib(\tcofib(Q_0)\to \tcofib(Q_1))\simeq 0$, 
which is equivalent to $\tcofib(\cofib(Q_0\to Q_1))\simeq 0$.
This is equivalent to saying that $\cofib(Q_0\to Q_1)$ is cocartesian.
We can similarly show that $Q$ is cartesian if and only if $\fib(Q_0\to Q_1)$ is cartesian.
This finishes the proof.
\end{proof}

\newpage

\bibliography{biblogSH}
\bibliographystyle{siam}

\newpage

\printindex
\printindex[notation]
\newpage
\end{document}